\newcommand{\wwidehat}[1]{\widehat{#1}}
\newcommand{\cR}{\mathcal{R}}
\renewcommand{\H}{\mathbb{H}}
\newcommand{\N}{\mathbb{N}}
\newcommand{\R}{\mathbb{R}}
\newcommand{\Z}{\mathbb{Z}}
\newcommand{\cC}{\mathcal{C}}
\newcommand{\cD}{\mathcal{D}}
\newcommand{\cE}{\mathcal{E}}
\newcommand{\cH}{\mathcal{H}}
\newcommand{\cI}{\mathcal{I}}
\newcommand{\cK}{\mathcal{K}}
\newcommand{\cL}{\mathcal{L}}
\newcommand{\cM}{\mathcal{M}}
\newcommand{\cN}{\mathcal{N}}
\newcommand{\cP}{\mathcal{P}}
\newcommand{\cS}{\mathcal{S}}
\newcommand{\cT}{\mathcal{T}}
\newcommand{\cU}{\mathcal{U}}
\newcommand{\cV}{\mathcal{V}}
\renewcommand{\epsilon}{\varepsilon}
\newcommand{\cc}{{\mathsf{c}}}
\newcommand{\vv}{\mathsf{v}}
\newcommand{\HH}{\mathsf{H}}
\newcommand{\VV}{\mathsf{V}}
\newcommand{\XX}{\mathsf{X}}
\newcommand{\YY}{\mathsf{Y}}
\newcommand{\e}{\varepsilon}
\renewcommand{\approx}{\asymp}
\newcommand{\bbM}{\mathbb{M}}
\DeclareMathOperator{\trace}{\bf Trace}
\newcommand{\ud}[0]{\,\mathrm{d}}
\def\resetMathstrut@{%
  \setbox\z@\hbox{%
    \mathchardef\@tempa\mathcode`\(\relax
    \def\@tempb##1"##2##3{\the\textfont"##3\char"}%
    \expandafter\@tempb\meaning\@tempa \relax
  }%
  \ht\Mathstrutbox@1.2\ht\z@ \dp\Mathstrutbox@1.2\dp\z@
}
\newcommand{\0}{\mathbf{0}}
\renewcommand{\1}{\mathbf 1}
\newcommand{\one}{\mathbf{1}}
\newcommand{\from}{\colon}
\newcommand{\symdiff}{\mathbin{\triangle}}
\renewcommand{\mid}{:}
\renewcommand{\supset}{\supseteq}
\renewcommand{\subset}{\subseteq}
\newcommand{\eqdef}{\stackrel{\mathrm{def}}{=}}
\renewcommand{\ge}{\geqslant}
\renewcommand{\le}{\leqslant}
\renewcommand{\setminus}{\smallsetminus}
\newcommand{\n}{\{1,\ldots,n\}}
\renewcommand{\emptyset}{\varnothing}
\newtheorem{thm}{Theorem}[section]
\newtheorem{question}[thm]{Question}
\newtheorem{lemma}[thm]{Lemma}
\newtheorem{prop}[thm]{Proposition}
\newtheorem{cor}[thm]{Corollary}
\newtheorem{defn}[thm]{Definition}
\theoremstyle{remark}
\newtheorem{remark}[thm]{Remark}
\newtheorem{conjecture}[thm]{Conjecture}
\newcommand{\vpfl}[1]{{\overline{\mathsf{v}}_{\!#1}}}
\newcommand{\vpP}[1]{{\overline{\mathsf{v}}^{P}_{\!#1}}}
\newcommand{\cDv}{{\mathcal{D}_{\mathsf{V}}}}
\newcommand{\bJ}{\mathbf{J}}
\DeclareMathOperator{\NM}{NM}
\DeclareMathOperator{\ENM}{ENM}
\DeclareMathOperator{\id}{id}
\DeclareMathOperator{\diam}{diam}
\DeclareMathOperator{\Lip}{Lip}
\DeclareMathOperator{\inter}{int}
\DeclareMathOperator{\supp}{supp}
\DeclareMathOperator{\Per}{Per}
\newcommand{\pd}[2]{\frac{\partial #2}{\partial #1}}
\newcommand{\cVv}{{\mathcal{V}_{\mathsf{V}}}}
\newcommand{\cVh}{{\mathcal{V}_{\mathsf{H}}}}
\title{Foliated corona decompositions}
\dedicatory{Dedicated to the memory of Louis Nirenberg}
\author[Assaf Naor]{Assaf Naor}
\address{(A.N.) Princeton University,
Department of Mathematics,
Fine Hall, Washington Road,
Princeton, NJ 08544-1000, USA. E-mail  address: \tt{naor@math.princeton.edu}.}
\author[Robert Young]{Robert Young}
\address{(R.Y.) New York University, Courant Institute of Mathematical Sciences, 251 Mercer Street, New York, NY 10012, USA. E-mail  address: \tt{ryoung@cims.nyu.edu}.}
\thanks{A.N.~was supported by the BSF, the Packard Foundation and the Simons Foundation.  R.Y.~was supported by NSF grant 1612061 and the Sloan Foundation. The research that is presented here was conducted under the auspices of the Simons Algorithms and Geometry (A\&G) Think Tank.}
\begin{document}
\begin{abstract}
We prove that the $L_4$ norm of the vertical perimeter of any measurable subset of the $3$--dimensional Heisenberg group $\H$ is at most a universal constant multiple of the (Heisenberg) perimeter of the subset. We show that this isoperimetric-type inequality is optimal in the sense that there are sets for which it fails to hold with the $L_4$ norm replaced by the $L_q$ norm for any $q<4$. This  is in contrast to the $5$--dimensional setting, where the above result holds with the $L_4$ norm replaced by the $L_2$ norm.

The proof of the aforementioned isoperimetric inequality introduces a new structural methodology for understanding the geometry of surfaces in $\H$. In previous work (2017) we showed how to obtain a hierarchical decomposition of Ahlfors-regular surfaces into pieces that are approximately intrinsic Lipschitz graphs.  Here we prove that any such graph admits a {\em foliated corona decomposition}, which is a family of nested partitions into pieces that are close to ruled surfaces.

Apart from the intrinsic geometric and analytic significance of these results, which settle questions posed by  Cheeger--Kleiner--Naor (2009) and Lafforgue--Naor (2012), they have several noteworthy implications. We deduce that the $L_1$ distortion of a word-ball of radius $n\ge 2$ in the discrete $3$--dimensional Heisenberg group is bounded above and below by universal constant multiples of $\sqrt[4]{\log n}$; this is in contrast to  higher dimensional Heisenberg groups, where our previous work (2017) showed that the distortion of a word-ball of radius $n\ge 2$ is of order $\sqrt{\log n}$. We also show that for any $p>2$ there is a metric space that embeds into both $\ell_1$ and $\ell_p$, yet not into a Hilbert space.  This answers the classical question of whether there is a metric analogue of the Kadec--Pe{\l}czy\'nski theorem (1962), which implies that a normed space that embeds into both $L_p$ and $L_q$ for $p<2<q$ is isomorphic to a Hilbert space. Another consequence is that for any $p>2$ there is a Lipschitz function $f\from\ell_p\to \ell_1$ that cannot be factored through a subset of a Hilbert space using Lipschitz functions, i.e., there are no Lipschitz functions $g\from\ell_p\to \ell_2$ and $h\from g(\ell_p)\to \ell_1$ such that $f=h\circ g$; this answers the question, first broached by Johnson--Lindenstrauss (1983), whether there is an analogue of Maurey's theorem  (1974)  that such a factorization  exists if $f$ is linear. Finally, we obtain conceptually new examples that demonstrate  the failure of the Johnson--Lindenstrauss dimension reduction lemma (1983) for subsets of $\ell_1$; these are markedly different from the previously available examples (Brinkman--Charikar, 2003) which do not embed into any uniformly convex normed space, while for any $p>2$ we obtain subsets of $\ell_1$ for which the Johnson--Lindenstrauss lemma fails, yet they  embed into $\ell_p$.
\end{abstract}

\maketitle
\setcounter{tocdepth}{4}
\tableofcontents


\section{Introduction}\label{sec:intro}

Since our main theorem (Theorem~\ref{thm:XYD} below) can be stated without the need to recall any specialized background, we will start by formulating it. After doing so, we will explain its significance and context, as well as geometric applications that answer longstanding open questions. We will then describe our  main conceptual contribution, called a {\em foliated corona decomposition}, which is a new structural methodology that we introduce in the {\em proof of} this theorem; see Remark~\ref{rem:overview} and mainly Section~\ref{sec:overview}  for an overview.

For a smooth function $f\from\R^3\to \R$ define $\XX f,\YY f\from\R^3\to \R$ by setting for $h=(x,y,z)\in \R^3$,
\begin{equation}\label{eq:def XY}
 \XX f(h)\eqdef \frac{\partial f}{\partial x}(h)+\frac{1}{2}y\frac{\partial f}{\partial z}(h)\qquad\mathrm{and}\qquad  \YY f(h)\eqdef \frac{\partial f}{\partial y}(h)-\frac{1}{2}x\frac{\partial f}{\partial z}(h).
\end{equation}
Also,  for $t\in (0,\infty)$ define $D_\vv^tf\from\R^3\to \R$ by setting for $h=(x,y,z)\in \R^3$,
\begin{equation}\label{eq:def Dv}
{D}_{\vv}^t(h)\eqdef \frac{f(x,y,z+t)-f(h)}{\sqrt{t}}.
\end{equation}

\begin{thm}\label{thm:XYD} Every compactly supported smooth function $f\from\R^3\to \R$ satisfies\footnote{We will use throughout the following (standard) asymptotic notation. For $a,b\in (0,\infty)$, the notations
$a\lesssim b$ and $b\gtrsim a$  mean that $a\le Cb$ for some
universal constant $C\in (0,\infty)$. The notation $a\asymp b$
stands for $(a\lesssim b) \wedge  (b\lesssim a)$. If we need to allow for dependence on parameters, we indicate this by subscripts. For example, in the presence of an auxiliary parameter $q$, the notation $a\lesssim_qb$ means that $a\le C(q)b$, where $C(q)\in (0,\infty)$ is allowed to depend only on $q$, and analogously for the notations $a\gtrsim_q b$ and $a\asymp_q b$.}
\begin{equation}\label{eq:our main in intro}
\left(\int_0^\infty\left(\int_{\R^3} |D_{\vv}^tf(h)|\ud h\right)^4\!\frac{\ud t}{t}\right)^{\frac14}\lesssim \int_{\R^3} \left(|\XX f(h)|+|\YY f(h)|\right)\ud h.
\end{equation}
Moreover, one cannot replace the $L_4(\frac{\ud t}{t})$ norm above by an $L_q(\frac{\ud t}{t})$ norm for any $0<q<4$.
\end{thm}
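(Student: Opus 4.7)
The first step is to pass from smooth $f$ to indicator functions. Writing $f=\int_\R(\one_{\{f>s\}}-\one_{\{s<0\}})\,\ud s$ via the layer-cake formula, using the linearity of $D_\vv^t$ in $f$, and applying Minkowski's integral inequality in $s$ for the $L_4(\ud t/t)$ norm, one obtains
\begin{equation*}
\left(\int_0^\infty \|D_\vv^t f\|_{L_1(\ud h)}^4\,\frac{\ud t}{t}\right)^{\!1/4}\le \int_\R \left(\int_0^\infty \|D_\vv^t\one_{\{f>s\}}\|_{L_1(\ud h)}^4\,\frac{\ud t}{t}\right)^{\!1/4}\ud s.
\end{equation*}
The Heisenberg coarea formula gives $\int_\R\Per_\H(\{f>s\})\,\ud s\asymp\int_{\R^3}(|\XX f|+|\YY f|)\,\ud h$. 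So \eqref{eq:our main in intro} reduces to the isoperimetric-type inequality
\begin{equation*}
\left(\int_0^\infty \vpfl{E}(t)^4\,\frac{\ud t}{t}\right)^{\!1/4}\lesssim \Per_\H(E),\qquad\text{where}\quad \vpfl{E}(t)\eqdef \|D_\vv^t\one_E\|_{L_1(\ud h)},
\end{equation*}
for every measurable $E\subseteq\R^3$.

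\textbf{Reduction to intrinsic Lipschitz graphs.} Next I would invoke the hierarchical decomposition of codimension-one Ahlfors-regular sets from the authors' 2017 work: on each dyadic Heisenberg cube this approximates (a neighborhood of) $\partial E$ by a piece of an intrinsic Lipschitz graph $\Gamma$, with the approximating pieces enjoying a Carleson packing bound by $\Per_\H(E)$. A standard Carleson-packing / stopping-time argument in the vertical scale $t$ then reduces the global estimate on $\vpfl{E}$ to a local one: on each intrinsic Lipschitz graph $\Gamma$, the subgraph's contribution to $\vpfl{E}$ at vertical scale $t$ must satisfy a localized $L_4$ Carleson bound in terms of the area of $\Gamma$.

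\textbf{The foliated corona decomposition -- main obstacle.} Here the paper's principal new tool enters, and I expect it to carry most of the work. For each intrinsic Lipschitz graph $\Gamma$ I would construct a nested dyadic partition of $\Gamma$ -- the foliated corona decomposition -- whose pieces are quantitatively close to ruled (intrinsic affine) surfaces along a horizontal $1$-parameter foliation, with the deviations from ruled satisfying a Carleson packing condition. On a ruled surface, the vertical perimeter at scale $t$ can be computed explicitly from the ruling direction and is of order $\sqrt{t}\cdot\text{area}$, which is acceptable after the Carleson sum. On a general $\Gamma$, the ruling direction changes between dyadic pieces, and each change contributes a transversal ``jump'' to $\vpfl{E}$ at the corresponding scale; the packing condition of the foliated corona decomposition is tailored precisely so that the resulting square function is summable in $L_4(\ud t/t)$. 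The appearance of $L_4$ rather than $L_2$ reflects the dimensional geometry of $\H$: the horizontal tangent space of a hypersurface in the $3$-dimensional Heisenberg group is $1$-dimensional, so the ruling foliation is rigidly one-parameter and only an $L_4$ Carleson packing survives, whereas in the $5$-dimensional Heisenberg group an extra horizontal transverse direction is available and an $L_2$ Carleson packing yields the stronger bound.

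\textbf{Sharpness.} For the sharpness of the exponent $4$, I would construct a self-similar measurable set $E\subseteq \R^3$ -- an iteratively twisted slab in $\H$, designed to be uniformly far from ruled at every location and every dyadic scale -- whose Heisenberg perimeter is finite by a direct computation under Heisenberg scaling, but whose vertical perimeter profile $\vpfl{E}(t)$ is comparable to its worst-case size throughout a full range of dyadic scales, forcing $\int_0^\infty \vpfl{E}(t)^q\,\frac{\ud t}{t}=\infty$ for every $q<4$; coupling this with a compactly supported smooth approximation of $\one_E$ then defeats \eqref{eq:our main in intro} with the $L_q$ norm.
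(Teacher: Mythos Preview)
Your high-level architecture matches the paper exactly: coarea reduction to sets, reduction via~\cite{NY18} to intrinsic Lipschitz graphs, and then a foliated corona decomposition of the graph. The reductions are correct as you state them.

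The gap is in your description of the foliated corona decomposition itself, where you elide precisely the new idea. You describe pieces ``close to ruled surfaces'' with a ``packing condition tailored so that the square function is summable in $L_4$,'' but you do not say what that condition is. In the paper, the pieces are \emph{pseudoquads of unbounded aspect ratio} $\alpha(Q)=\delta_x(Q)/\sqrt{\delta_z(Q)}$, and the packing condition is a \emph{weighted} Carleson bound
\[
\sum_{Q'\subset Q,\ Q'\text{ vertically cut}} \alpha(Q')^{-4}|Q'|\lesssim |Q|.
\]
The exponent $4$ is not a dimensional accident of the $1$--dimensional horizontal tangent: it is forced by requiring the coercive quantity driving the subdivision (the \emph{extended parametric nonmonotonicity} $\Omega^P_{\Gamma^+,R}$) to be invariant under the stretch automorphisms $s_{a,b}(x,y,z)=(ax,by,abz)$, under which nonmonotonicity and aspect ratio scale so that $\Omega^P/\alpha^4$ is the natural invariant density. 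Your heuristic ``the ruling foliation is rigidly one-parameter'' does not produce this. Separately, your claim that on a ruled surface the vertical perimeter at scale $t$ is of order $\sqrt{t}\cdot\mathrm{area}$ is false: on a vertical plane (intrinsic graph of an affine function of $x$) the vertical perimeter vanishes identically, and it is this vanishing, combined with the $\sigma$--closeness of each pseudoquad to a plane and a H\"older step, that yields on each scale $\sum_{Q}\alpha(Q)^{-1}|Q|$ and hence the $L_4$ bound after the fourth power via the weighted packing.

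For sharpness, ``an iteratively twisted slab'' is too vague to pin down $q=4$. The paper's construction is specific: one adds $\alpha^4$ layers of oblong bumps, each of aspect ratio $\alpha$ (width $\sim\alpha^{-1}r_i$, depth $\sim\alpha r_i$, height $\sim r_i^2$), aligned along the characteristic curves of the previous layer; each layer multiplies perimeter by $1+O(\alpha^{-4})$, so $\alpha^4$ layers keep bounded perimeter while making the surface $\alpha^{-1}$--far from a plane at $\alpha^4$ scales, forcing $\|\vpfl{}\|_{L_q}\gtrsim \alpha^{4/q-1}$. Without the aspect-ratio arithmetic you cannot see why $q=4$ is the threshold.
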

The second assertion (sharpness) of Theorem~\ref{thm:XYD} resolves negatively the conjecture of~\cite{LafforgueNaor} that~\eqref{eq:our main in intro} holds with the $L_4(\frac{\ud t}{t})$ norm in the left hand side replaced by the $L_2(\frac{\ud t}{t})$ norm. Notwithstanding the optimality of~\eqref{eq:our main in intro}, it should be noted that  it was previously unknown whether such a bound holds true merely for {\em some} finite exponent, namely that there exists $0<p<\infty$  such that in the setting of Theorem~\ref{thm:XYD} we have
\begin{equation}\label{eq:our main in intro q version}
\left(\int_0^\infty\left(\int_{\R^3} |D_{\vv}^tf(h)|\ud h\right)^p\!\frac{\ud t}{t}\right)^{\frac1{p}}\lesssim \int_{\R^3} \left(|\mathbf{\XX} f(h)|+|\YY f(h)|\right)\ud h.
\end{equation}
It is simple to justify (see~\cite[Remark~4]{NY18}) that if~\eqref{eq:our main in intro q version} holds, then the analogous bound  holds for any larger exponent $P>p$.

\begin{remark}\label{rem:overview}
To briefly indicate what goes into Theorem~\ref{thm:XYD}, we first note that the functional inequality~\eqref{eq:our main in intro} is equivalent to a certain  isoperimetric-type inequality (see~\eqref{eq:coarea}) for sufficiently smooth surfaces in $\R^3$. By~\cite{NY18}, it turns out  that it suffices to prove this isoperimetric-type inequality  for  a more restricted class of surfaces (intrinsic Lipschitz graphs; see Section~\ref{sec:intrinsic graphs}).  Such surfaces can still be very complicated, as one can see in Figure~\ref{fig:Lip10}. However, notice that the example in Figure~\ref{fig:Lip10} has an anisotropic texture, with features of many different scales that line up along a one-dimensional foliation.

\begin{figure}
  \begin{center}\includegraphics[width=\textwidth]{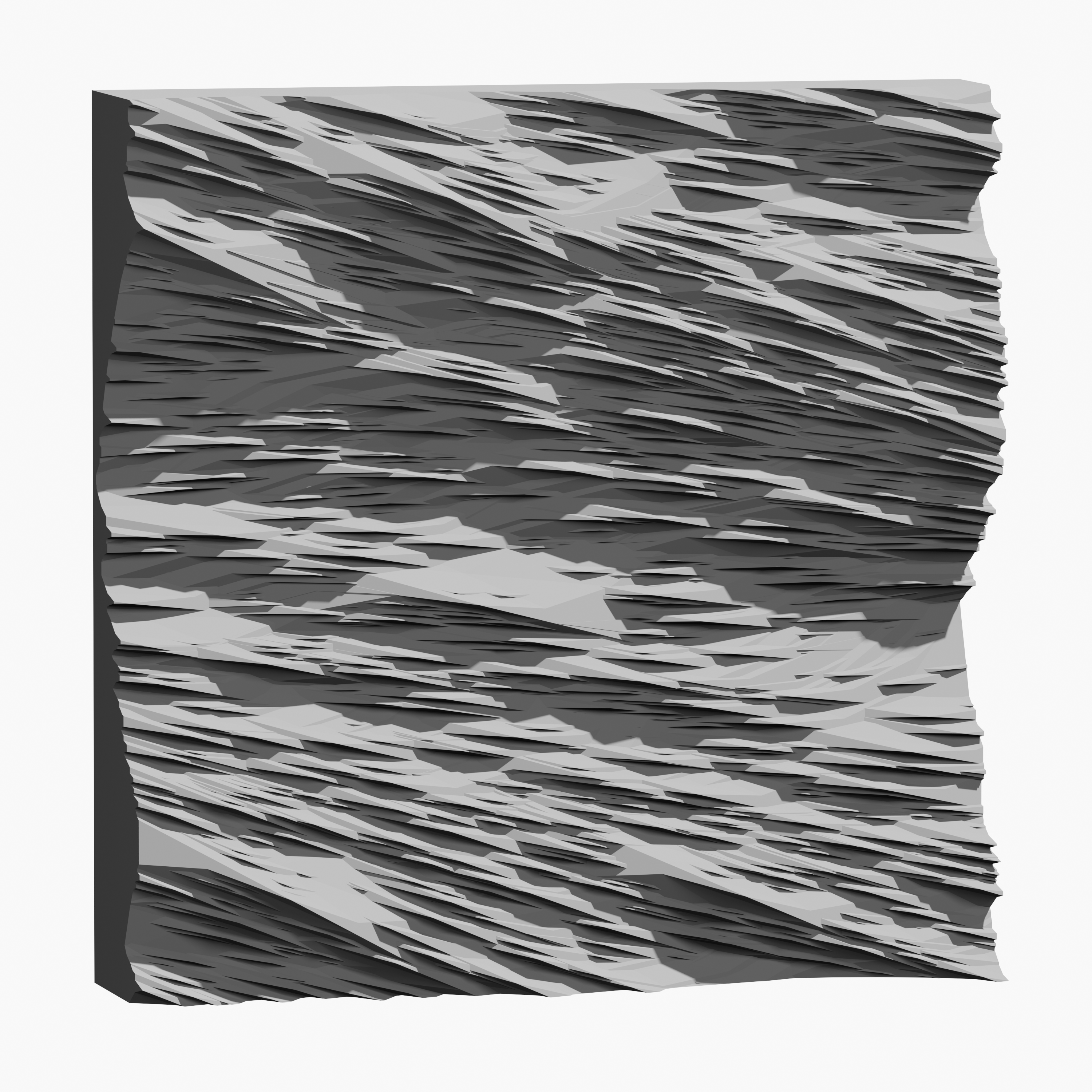}
\end{center}
\caption{\label{fig:Lip10}
 \small \em An example of an intrinsic Lipschitz graph.}
\end{figure}

We prove the desired isoperimetric-type inequality by showing that the texture of {\em any} intrinsic Lipschitz graph can be encoded as a foliated corona decomposition, which is a multi-scale hierarchical partition of the surface.  The pieces of this decomposition are roughly rectangular regions that mimic the dimensions and orientation of the features of the surface. Crucially, we can control the number and size of these pieces.  The desired inequality holds locally on each piece up to suitably controlled error, and the full inequality is obtained by summing the resulting estimates.  This process is illustrated in Figure~\ref{fig:foliated corona of bump} and Figure~\ref{fig:bumpy surface}, and a more detailed overview  can be found in Section~\ref{sec:overview}.
\end{remark}

In contrast to Theorem~\ref{thm:XYD}, we have the following theorem, the case $p=2$ of which is due to~\cite{AusNaoTes} and the case $p\in (1,2]$ of which is due (via a different proof) to~\cite{LafforgueNaor}.
\begin{thm}\label{thm:LN-XYD} If $f\from\R^3\to \R$ is smooth and compactly supported, then for every $p\in (1,2]$,
\begin{equation}\label{eq:quote LN q-1}
\bigg(\int_0^\infty\left(\int_{\R^3} |D_{\vv}^tf(h)|^p\ud h\right)^{\frac{2}{p}}\!\frac{\ud t}{t}\bigg)^{\frac12}\lesssim \frac{1}{\sqrt{p-1}}\bigg(\int_{\R^3} \left(|\mathbf{\XX} f(h)|^p+|\YY f(h)|^p\right)\ud h\bigg)^{\frac{1}{p}}.
\end{equation}
\end{thm}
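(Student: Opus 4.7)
My plan is to separate the proof into the case $p=2$, handled by direct spectral analysis in Fourier, and the extension to $p\in(1,2)$, which requires vector-valued square-function estimates for the sub-Riemannian heat semigroup on $\H$.

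For $p=2$, I would take a partial Fourier transform in the central coordinate $z$. Writing $\tilde f(x,y,\lambda)\eqdef\int_\R f(x,y,z)e^{-2\pi i\lambda z}\ud z$, the vector fields $\XX,\YY$ become the twisted first-order operators $\XX_\lambda=\partial_x+\pi i\lambda y$ and $\YY_\lambda=\partial_y-\pi i\lambda x$ on $\R^2$, which satisfy $[\XX_\lambda,\YY_\lambda]=-2\pi i\lambda\,\mathrm{id}$. The positive operator $-\XX_\lambda^*\XX_\lambda-\YY_\lambda^*\YY_\lambda$ is then a Landau Hamiltonian on $\R^2$ with magnetic field strength $2\pi\lambda$, so its spectrum $\{(2n+1)\cdot 2\pi|\lambda|:n\geq 0\}$ is bounded below by $2\pi|\lambda|$. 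Combined with the elementary identity $\int_0^\infty t^{-2}|e^{2\pi i\lambda t}-1|^2\ud t=2\pi^2|\lambda|$, Plancherel in $z$ reduces the $p=2$ instance of \eqref{eq:quote LN q-1} to the pointwise (in $\lambda$) spectral-gap inequality
\[
|\lambda|\,\|\tilde f(\cdot,\lambda)\|_{L_2(\R^2)}^2\lesssim\|\XX_\lambda\tilde f(\cdot,\lambda)\|_{L_2(\R^2)}^2+\|\YY_\lambda\tilde f(\cdot,\lambda)\|_{L_2(\R^2)}^2.
\]

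For $p\in(1,2)$, I would reinterpret the left-hand side of \eqref{eq:quote LN q-1} as the $L_p(\H;L_2(\R_+,\ud t/t))$-norm of the $L_2$-valued operator $\mathcal{S}f\eqdef(D_\vv^tf)_{t>0}$. Using the commutator identity $\partial_z=-[\XX,\YY]$ together with the sub-Riemannian heat semigroup $P_s\eqdef e^{s(\XX^2+\YY^2)}$, I would approximate $D_\vv^tf$ by $\sqrt{t}\,\partial_zP_tf$ plus a controllable error, so that $\mathcal{S}$ becomes (up to such errors) a Littlewood--Paley-type square function subordinate to the horizontal heat flow. The desired estimate then reduces to proving
\[
\left\|\left(\int_0^\infty s|\partial_zP_sf|^2\,\frac{\ud s}{s}\right)^{1/2}\right\|_{L_p(\H)}\lesssim\frac{1}{\sqrt{p-1}}\bigl(\|\XX f\|_{L_p(\H)}+\|\YY f\|_{L_p(\H)}\bigr),
\]
which I would obtain by combining the identity $\partial_zP_sf=-P_{s/2}[\XX,\YY]P_{s/2}f$ with Stein's Littlewood--Paley inequality for symmetric Markov semigroups (whose $L_p$-norm is $\asymp 1/\sqrt{p-1}$ as $p\to 1^+$) and the sharp heat-kernel smoothing bound $\|\XX P_s\|_{L_p\to L_p}+\|\YY P_s\|_{L_p\to L_p}\lesssim 1/\sqrt{s}$.

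The hardest part will be obtaining the correct $1/\sqrt{p-1}$ dependence rather than the weaker $1/(p-1)$ that would come from a routine vector-valued Calder\'on--Zygmund argument using the UMD constant of $L_p$. Gaining the square root requires exploiting the Hilbertian structure of the $L_2(\ud t/t)$ target; concretely, one interpolates a trivial endpoint bound against the self-adjoint $L_2$ estimate from the first part, which is precisely Stein's strategy. Additional care is required when passing from the smoothed square function in $\partial_zP_sf$ back to the raw difference quotient $D_\vv^tf$, for which I would write $f(x,y,z+t)-f(x,y,z)=\int_0^t\partial_zf(x,y,z+r)\ud r$ and apply Minkowski's inequality together with the horizontal smoothing of $P_s$ to absorb the single $\partial_z$ into a pair of horizontal derivatives via $\partial_z=-[\XX,\YY]$.
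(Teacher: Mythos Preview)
Your $p=2$ argument via the Landau Hamiltonian spectral gap is correct and is essentially the approach of \cite{AusNaoTes}.

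For $1<p<2$, your route differs from the paper's and has a genuine gap at the sharp constant. You assert that Stein's $g$-function inequality for symmetric Markov semigroups has $L_p$ constant $\asymp(p-1)^{-1/2}$, but you do not justify this; the mechanisms you name---interpolation against a ``trivial endpoint'', or Stein's original strategy---do not obviously yield better than $O((p-1)^{-1})$, and the paper's Appendix~\ref{sec:littlewood paley} explicitly flags the sharp $L_p$ constant in the relevant $\mathcal{G}$-function bound as an open question. The paper bypasses this by a different mechanism. Rather than the sub-Riemannian heat semigroup on $\H$, it (following \cite{LafforgueNaor}) uses only the one-dimensional Poisson semigroup acting in the central variable $z$, and reduces the $L_p$ case to the already-proved $L_2$ case with Banach-space target $E=L_p(\cH^4)$ via the transference function $F_M(h)(g)=\beta_M(h)f(gh)$. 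After letting the cutoff $\beta_M$ exhaust $\H$, the constant that emerges is the $2$-uniform convexity constant $K_2(L_p)\asymp(p-1)^{-1/2}$, entering through the vector-valued Littlewood--Paley--Stein inequality of \cite{HN19}. A secondary gap in your outline is the step ``approximate $D_\vv^t f$ by $\sqrt{t}\,\partial_z P_t f$'': these quantities are not pointwise comparable, and your FTC-in-$z$ sketch does not explain how the heat flow $P_s$ is inserted or why that insertion is harmless for the square-function estimate.
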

See~\cite{LafforgueNaor} for a variant of Theorem~\ref{thm:LN-XYD} when $p>2$.  The pertinent point of comparison to~\eqref{eq:our main in intro}  is as $p\to 1^+$, namely there is a jump discontinuity at the endpoint $p=1$.

It should be noted that the dependence on $p$ in the right hand side of~\eqref{eq:quote LN q-1} is not specified  in~\cite{LafforgueNaor}, but one  obtains~\eqref{eq:quote LN q-1} in the form stated above by tracking the dependence on $p$ in the proof of~\cite{LafforgueNaor}; we explain how to do so in Appendix~\ref{sec:littlewood paley} below. We conjecture that the following  bound holds, which is better than~\eqref{eq:quote LN q-1} only in terms of the dependence on $p$; its geometric ramifications will be derived later (see Remark~\ref{rem:sharp in p}), at which point it will become clear why we need to record an explicit (power-type) dependence as  $p\to 1^+$ in~\eqref{eq:quote LN q-1} rather than using the implicit $\lesssim_p$ notation as done in~\cite{LafforgueNaor}.
\begin{conjecture}\label{conj:function of p} In the setting of Theorem~\ref{thm:LN-XYD} we have
\begin{equation}\label{eq:ask sqrt4}
\bigg(\int_0^\infty\left(\int_{\R^3} |D_{\vv}^tf(h)|^p\ud h\right)^{\frac{2}{p}}\!\frac{\ud t}{t}\bigg)^{\frac12}\lesssim \frac{1}{\sqrt[4]{p-1}}\bigg(\int_{\R^3} \left(|\mathbf{\XX} f(h)|^p+|\YY f(h)|^p\right)\ud h\bigg)^{\frac{1}{p}}.
\end{equation}
\end{conjecture}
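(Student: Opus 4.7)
The plan is to derive Conjecture~\ref{conj:function of p} by interpolating the $p=1$ endpoint of Theorem~\ref{thm:XYD} with the $p=2$ case of Theorem~\ref{thm:LN-XYD} (due to Austin--Naor--Tessera), and then combining the resulting mixed-norm estimate with the Lafforgue--Naor bound of Theorem~\ref{thm:LN-XYD} in a manner that exploits the $L_4(\tfrac{\ud t}{t})$ exponent on the $t$-integration appearing at the $p=1$ endpoint.

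To this end, regard $f \mapsto (t \mapsto D_{\vv}^t f)$ as a linear operator $T$ from smooth compactly supported functions on $\R^3$ into the mixed-norm Lebesgue spaces $L_q(\tfrac{\ud t}{t};\,L_p(\R^3))$. Theorem~\ref{thm:XYD} is the assertion that $\|Tf\|_{L_4(L_1)} \lesssim \|\XX f\|_1 + \|\YY f\|_1$, while the case $p=2$ of Theorem~\ref{thm:LN-XYD} reads $\|Tf\|_{L_2(L_2)} \lesssim \|\XX f\|_2 + \|\YY f\|_2$. Complex interpolation between these two endpoints yields, for each $p \in (1,2)$, the estimate $\|Tf\|_{L_{q(p)}(L_p)} \lesssim \|\XX f\|_p + \|\YY f\|_p$ with $q(p) = \tfrac{4p}{3p-2}$ and with an absolute implicit constant. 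Since $q(p) > 2$ for $p \in (1,2)$, this gives stronger control on the $t$-integration than Lafforgue--Naor but at the wrong $t$-exponent.

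The next step is to convert this $L_{q(p)}(L_p)$ bound into the desired $L_2(L_p)$ bound with loss only $(p-1)^{-1/4}$. A natural route is to decompose the $t$-integration dyadically and, on each scale, balance the interpolation-produced $L_{q(p)}(L_p)$ bound (with absolute constant) against the Lafforgue--Naor $L_2(L_p)$ bound (with constant $(p-1)^{-1/2}$), choosing the balancing parameter to optimize the overall dependence on $p-1$. The heuristic underlying the $(p-1)^{-1/4}$ prediction is that complex interpolation between an $L_4(\tfrac{\ud t}{t})$ endpoint and an $L_2(\tfrac{\ud t}{t})$ endpoint brings in the fourth root of the usual square-root Littlewood--Paley loss.

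The main obstacle is in making this last balancing argument rigorous: the measure $\tfrac{\ud t}{t}$ on $(0,\infty)$ is infinite and scale-invariant, so there is no automatic way to trade $L_{q(p)}$ control for $L_2$ control by H\"older. A more substantive approach would revisit the Lafforgue--Naor proof itself (as tracked in Appendix~\ref{sec:littlewood paley}), replacing the vector-valued Littlewood--Paley square function estimate---whose $L_p$ norm scales as $(p-1)^{-1/2}$---by a variant anchored in the $L_4(\tfrac{\ud t}{t})$ endpoint of Theorem~\ref{thm:XYD}, from which one would expect to extract the fourth-root loss $(p-1)^{-1/4}$. Identifying the precise form of the endpoint-improved square function estimate, and checking that it delivers the sharpened dependence on $p-1$ when fed into the remainder of the Lafforgue--Naor machinery, is the principal technical difficulty.
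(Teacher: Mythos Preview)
The statement you are addressing is not proved in the paper: it is labeled Conjecture~\ref{conj:function of p} and left open. The paper only records the known bound from~\cite{LafforgueNaor} with loss $(p-1)^{-1/2}$ (Theorem~\ref{thm:LN-XYD}), explains in Remark~\ref{rem:sharp in p} what a positive resolution would yield, and in Appendix~\ref{sec:littlewood paley} traces the $(p-1)^{-1/2}$ to the $p$--dependence in the vector-valued Littlewood--Paley--Stein inequality. No proof of the conjectured $(p-1)^{-1/4}$ is offered.

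Your proposal is likewise not a proof, and you acknowledge as much. The interpolation step is correct: complex interpolation between the $p=1$, $L_4(L_1)$ endpoint of Theorem~\ref{thm:XYD} and the $p=2$, $L_2(L_2)$ endpoint of~\cite{AusNaoTes} does produce an $L_{q(p)}(L_p)$ bound with $q(p)=4p/(3p-2)>2$ and a universal constant (the paper itself carries out the analogous interpolation in the five-dimensional case; see the discussion after Theorem~\ref{thm:XYD-NY17}). But this says nothing about the target $L_2(L_p)$ norm. Since $q(p)>2$ and the measure $\tfrac{\ud t}{t}$ is infinite, there is no H\"older or functional-analytic mechanism to pass from $L_{q(p)}(\tfrac{\ud t}{t})$ control to $L_2(\tfrac{\ud t}{t})$ control; the dyadic balancing you sketch fails for the same reason, and having both the $L_{q(p)}$ bound with constant $O(1)$ and the $L_2$ bound with constant $(p-1)^{-1/2}$ cannot improve the latter (log-convexity of $L_p$ norms only bounds intermediate exponents). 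The alternative route you outline---reopening the Lafforgue--Naor argument and replacing the Littlewood--Paley input by an $L_4$--based variant anchored in Theorem~\ref{thm:XYD}---is exactly the open problem, not a proof of it.
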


Another key point of comparison between Theorem~\ref{thm:XYD} and the literature is with its higher-dimensional counterpart due to~\cite{NY18}. For a smooth function $f\from\R^5\to \R$,  denote  in analogy to~\eqref{eq:def XY} and~\eqref{eq:def Dv} for every $h=(x_1,y_1,x_2,y_2,z)\in \R^5$ and $t\in (0,\infty)$,
$$
\XX_1f(h)\eqdef \frac{\partial f}{\partial x_1}(h)-\frac{1}{2}y_1\frac{\partial f}{\partial z}(h),\qquad \XX_2f(h)\eqdef \frac{\partial f}{\partial x_2}(h)-\frac{1}{2}y_2\frac{\partial f}{\partial z}(h),
$$
$$
\YY_1f(h)\eqdef \frac{\partial f}{\partial y_1}(h)+\frac{1}{2}x_1\frac{\partial f}{\partial z}(h),\qquad \YY_2f(h)\eqdef \frac{\partial f}{\partial y_2}(h)+\frac{1}{2}x_2\frac{\partial f}{\partial z}(h),
$$
and
$$
{D}_{\vv}^t(h)\eqdef \frac{f(x_1,y_1,x_2,y_2,z+t)-f(h)}{\sqrt{t}}.
$$
We then have the following theorem (it holds with $\R^5$ replaced mutatis mutandis by $\R^{2k+1}$ for every $k\ge 2$; we are focusing only on $\R^5$ because the crucial qualitative  difference that we establish here is between dimension $3$ and all the larger odd dimensions).
\begin{thm}\label{thm:XYD-NY17}  If $f\from\R^5\to \R$ is smooth and compactly supported, then for every $p\in [1,2]$,
\begin{align}\label{eq:quote NY interpolated}
\begin{split}
\bigg(\int_0^\infty\bigg(\int_{\R^5}& |D_{\vv}^tf(h)|^p\ud h\bigg)^{\frac{2}{p}}\!\frac{\ud t}{t}\bigg)^{\frac12}\\&\lesssim \bigg(\int_{\R^5} \left(|\mathbf{\XX}_1 f(h)|^p+|\YY_1 f(h)|^p+|\mathbf{\XX}_2 f(h)|^q+|\YY_2 f(h)|^p\right)\ud h\bigg)^{\frac{1}{p}}.
\end{split}
\end{align}
\end{thm}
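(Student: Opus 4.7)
I would deduce \eqref{eq:quote NY interpolated} from the two endpoint estimates $p=1$ and $p=2$ by interpolating in $p$.

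The $p=1$ case is the principal result of our earlier work \cite{NY18}; I would simply quote it rather than reprove it.  Its proof employs a hierarchical decomposition of Ahlfors-regular hypersurfaces in $\H^5$ into approximate intrinsic Lipschitz graphs (analogous to the decomposition used for $\H^3$ in Remark~\ref{rem:overview}), with one crucial difference: in $\H^5$ the horizontal layer is two-dimensional, and this extra transverse room permits a sharp $L_2(\frac{\ud t}{t})$ bound on the vertical perimeter rather than the $L_4(\frac{\ud t}{t})$ bound forced in $\H^3$ by Theorem~\ref{thm:XYD}.  For $p=2$, the inequality reduces by Plancherel on $\H^5$ to a pointwise inequality in the Fourier variable: using the Bargmann--Fock representations of $\H^5$ one diagonalises the vertical translations and the horizontal sub-Laplacian simultaneously, and the two sides of \eqref{eq:quote NY interpolated} become explicit expressions whose comparison is an elementary integral computation (this is the framework of \cite{AusNaoTes}).

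With both endpoints in hand, I would obtain the full range $p\in(1,2)$ by interpolation.  The left-hand side of \eqref{eq:quote NY interpolated} is the mixed-norm $L_2(\frac{\ud t}{t};L_p(\R^5))$ of the linear map $Tf\eqdef (D_\vv^tf)_{t>0}$, and the right-hand side is the seminorm of $f$ in the homogeneous Heisenberg Sobolev space $\dot W^{1,p}_{\mathrm{H}}(\R^5)$.  The target mixed-norm scale complex-interpolates in $p$ with outer exponent $2$ fixed, by Benedek--Panzone, and the Heisenberg Sobolev scale interpolates in the same way for $1<p<\infty$ by standard subelliptic theory.  Stein's complex interpolation theorem, applied to the analytic family obtained from $T$ by composing with the imaginary powers of the sub-Laplacian on $\H^5$, then delivers \eqref{eq:quote NY interpolated} for every intermediate $p$.

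The step I expect to be the most delicate is the inclusion of the $p=1$ endpoint in the interpolation: Riesz transforms on $\H^5$ are not bounded on $L_1$, so $\dot W^{1,1}_{\mathrm{H}}$ is not amenable to the standard complex-interpolation setup via fractional sub-Laplacians.  I would avoid this issue by using real interpolation ($K$-method) on the couple $(\dot W^{1,1}_{\mathrm{H}}(\R^5),\dot W^{1,2}_{\mathrm{H}}(\R^5))$, which handles the $L_1$ endpoint gracefully and is compatible with the mixed-norm target.  An alternative route, perhaps more in the spirit of \cite{NY18}, is to note that the $p=1$ bound there is in fact proved scale-by-scale through a Carleson-type packing estimate on each dyadic scale; one can interpolate at each scale against the $p=2$ Plancherel bound and then sum, bypassing the need to interpolate the Sobolev norm as a whole.
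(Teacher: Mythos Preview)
Your overall strategy matches the paper's: quote the endpoint $p=1$ from \cite{NY18}, quote the endpoint $p=2$ from \cite{AusNaoTes}, and interpolate.  The paper's justification of the interpolation step is, however, considerably shorter than what you outline.  It simply observes that \eqref{eq:quote NY interpolated} is the boundedness of a fixed linear operator, that the target $L_2\bigl(\frac{\ud t}{t};L_p\bigr)$ scale is an interpolation family by classical theory \cite{BL76}, and that the homogeneous Sobolev scale on the source side is an interpolation family by \cite[Theorem~8.8]{Bad09}.  No analytic families, imaginary powers of the sub-Laplacian, or scale-by-scale argument are invoked.

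The concern you flag about the $p=1$ endpoint is real, and your instinct to resolve it via real interpolation is exactly right: the cited result of Badr \cite{Bad09} is precisely a real-interpolation theorem for Sobolev spaces that covers $p=1$ (it works in the generality of doubling metric measure spaces satisfying a Poincar\'e inequality, hence on $\H^5$).  So your ``alternative route'' via the $K$-method is essentially what the paper does, just packaged as a citation rather than carried out by hand.  The complex-interpolation route through Stein's theorem and imaginary powers of the sub-Laplacian that you propose first would be more laborious and, as you yourself note, genuinely problematic at $p=1$; there is no need for it.
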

The case $p=2$ of Theorem~\ref{thm:XYD-NY17} is from~\cite{AusNaoTes} and in the range $p\in (1,2]$ the bound~\eqref{eq:quote NY interpolated} but with $\lesssim$ replaced by $\lesssim_p$ is from~\cite{LafforgueNaor}. The case $p=1$ of Theorem~\ref{thm:XYD-NY17} is from~\cite{NY18}. Inequality~\eqref{eq:quote NY interpolated} as stated above, i.e., with the right hand side multiplied by a universal constant rather than a constant that depends on $p$ as in~\cite{LafforgueNaor},  follows by interpolating between the cases $p=1$ and $p=2$ of~\cite{NY18} and~\cite{AusNaoTes}, respectively. Indeed, \eqref{eq:quote NY interpolated} asserts the boundedness of a linear operator, the $L_2(L_p)$ norms in the left hand side of~\eqref{eq:quote NY interpolated}  are an interpolation family by classical interpolation theory~\cite{BL76}, and the Sobolev $W^{1,p}$  norms  in the right hand side of~\eqref{eq:quote NY interpolated} are  an interpolation family by~\cite[Theorem~8.8]{Bad09}.

\subsection{Geometric implications}  Let $\H$ be the $3$--dimensional Heisenberg group with real coefficients. As a set,  $\H$ is identified  with $\R^{3}$, and the group structure on $\H$ is  given by
\begin{equation}\label{eq:def Heisenberg product}
\forall g=(x,y,z),h=(\chi, \upsilon,\zeta)\in \R^3,\qquad g h\eqdef \Big(x+\chi,y+\upsilon,z+\zeta+\frac{1}{2}(x\upsilon- y\chi)\Big).
\end{equation}
The identity element is $\0=(0,0,0)$ and the inverse of $g=(x,y,z)$  is $g^{-1}=(-x,-y,-z)$. The center of $\H$ is $\{0\}\times \{0\}\times \R$ and if we let $\H_\Z$ be the discrete subgroup of $\H$ that is generated by $(1,0,0)$ and $(0,1,0)$, then we have
$$\H_\Z=\bigg\{\Big(x,y,z+\frac{xy}{2}\Big)\mid x,y,z\in \Z\bigg\}\subset \Z\times \Z\times \frac{\Z}{2}.$$

Let $d_W\from\H_\Z\times \H_\Z\to \N\cup\{0\}$ be the left-invariant word metric on $\H_\Z$ that is induced by the symmetric set of generators $\{(-1,0,0),(1,0,0),(0,-1,0),(0,1,0)\}$. It is well-known (and elementary to verify) that for every $g=(x,y,z),h=(\chi, \upsilon,\zeta)\in \H_\Z$ we have
\begin{equation}\label{eq:equiv to Kor}
 d_W(g,h)\asymp |x-\chi|+|y-\upsilon|+\sqrt{|2z-2\zeta-x\upsilon+y\chi|}
\end{equation}
In fact, an exact formula for $d_W(g,h)$, which directly implies~\eqref{eq:equiv to Kor}, is derived in~\cite{Bla03}. For every $n\in \N$, denote   the word-ball of radius $n$   centered at the identity element by
\begin{equation}\label{eq:word ball}
\mathcal{B}_n\eqdef \left\{g\in \H_\Z:\ d_W(g,\0)\le n\right\}.
\end{equation}

\subsubsection{Embeddings} Recall that a metric space $(M,d_M)$ is said to admit a bi-Lipschitz embedding into a Banach space $(X,\|\cdot\|_X)$ if there exist $D\in [1,\infty)$ and $\phi\from M\to X$ such that
\begin{equation}\label{eq:def D embedding}
\forall x,y\in M,\qquad d_M(x,y)\le \|\phi(x)-\phi(y)\|_X\le Dd_M(x,y).
\end{equation}
The infimum over those $D\in [1,\infty)$ for which this holds is called the $X$--distortion of $M$ and is denoted $\cc_X(M)$. If no such $D$ exists, then one writes $\cc_X(M)=\infty$.

Theorem~\ref{thm:sqrt4} below is a sharp asymptotic evaluation of $\cc_{\ell_1}(\mathcal{B}_n)$. It answers a question posed in~\cite{LN06,CK10,CheegerKleinerMetricDiff,CKN09,CKN,Nao10,Pan13,LafforgueNaor}; these references ask for the asymptotic evaluation of $\cc_{\ell_1}(\mathcal{B}_n)$, but most of them also conjecture that $\cc_{\ell_1}(\mathcal{B}_n)\asymp \sqrt{\log n}$, so Theorem~\ref{thm:sqrt4} constitutes both a resolution of an open problem, and an unexpected answer. The fact that $\lim_{n\to \infty}\cc_{\ell_1}(\mathcal{B}_n)=\infty$ is due to~\cite{CK10}, the previously best known upper bound~\cite{Ass83} was  $\cc_{\ell_1}(\mathcal{B}_n)\lesssim \sqrt{\log n}$ and the previously best-known lower bound~\cite{CKN} was $\cc_{\ell_1}(\mathcal{B}_n)\ge (\log n)^\delta$ for some positive but very small universal constant $\delta$; thus both the upper and the lower bounds of Theorem~\ref{thm:sqrt4} are new.

\begin{thm}\label{thm:sqrt4} $\cc_{\ell_1}(\mathcal{B}_n)\asymp \sqrt[4]{\log n}$ for every integer $n\ge 2$.
\end{thm}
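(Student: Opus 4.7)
The plan is to prove the two bounds separately, both via the cut-measure representation of $\ell_1$-valued maps together with Theorem~\ref{thm:XYD}. The lower bound is the cleaner direction and follows directly from the sharp $L_4$ vertical perimeter inequality; the upper bound will run a similar argument in reverse, starting from the sharpness examples that appear in the second assertion of Theorem~\ref{thm:XYD}.

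\textbf{Lower bound.} Suppose $\phi:\mathcal{B}_n\to\ell_1$ satisfies~\eqref{eq:def D embedding} with constant $D$. After a standard Lipschitz extension to $\H_\Z$, express the pullback pseudometric as a cut metric: there is a positive measure $\mu$ on subsets $E\subseteq\H_\Z$ with $\|\phi(x)-\phi(y)\|_1=\int|\mathbf{1}_E(x)-\mathbf{1}_E(y)|\,d\mu(E)$. The upper Lipschitz estimate applied to horizontal edges $(x,xs)$ and summed over $x$ in a slightly enlarged ball yields $\int\Per_\H(E)\,d\mu(E)\lesssim D|\mathcal{B}_n|$, where $\Per_\H(E)$ is the discrete Heisenberg perimeter. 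The lower Lipschitz estimate applied to central pairs $(x,x\cdot(0,0,t))$, which by~\eqref{eq:equiv to Kor} satisfy $d_W\asymp\sqrt{t}$, summed over $x$, gives for every $t\in[1,cn^2]$
\begin{equation*}
\int\vpf^{t}(E)\,d\mu(E)\gtrsim|\mathcal{B}_n|,\qquad\text{where }\vpf^{t}(E)\eqdef\frac{|E\triangle E(0,0,t)|}{\sqrt{t}}.
\end{equation*}
Taking the $L_4(\ud t/t)$-norm on $[1,cn^2]$ of this pointwise bound and invoking Minkowski's integral inequality together with the isoperimetric form of Theorem~\ref{thm:XYD} produces
\begin{equation*}
(\log n)^{1/4}|\mathcal{B}_n|\;\lesssim\;\int\|\vpf^{\cdot}(E)\|_{L_4(\ud t/t)}\,d\mu(E)\;\lesssim\;\int\Per_\H(E)\,d\mu(E)\;\lesssim\;D|\mathcal{B}_n|,
\end{equation*}
from which $D\gtrsim(\log n)^{1/4}$.

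\textbf{Upper bound.} Here the strategy is to run the above argument in reverse, using a cut measure that nearly saturates the vertical--horizontal trade-off in Theorem~\ref{thm:XYD}. Concretely, I would construct a positive measure $\nu$ on a family $\{E_\omega\}_{\omega\in\Omega}$ of subsets of $\H_\Z$ such that
\begin{equation*}
\int\Per_\H(E_\omega)\,d\nu(\omega)\lesssim(\log n)^{1/4}|\mathcal{B}_n|\quad\text{and}\quad\int\vpf^{t}(E_\omega)\,d\nu(\omega)\gtrsim|\mathcal{B}_n|\;\;\forall\,t\in[1,cn^2].
\end{equation*}
The map $\Phi(x)\eqdef(\mathbf{1}_{E_\omega}(x))_\omega\in L_1(d\nu)$ then furnishes the required $\ell_1$-embedding of $\mathcal{B}_n$: the horizontal Lipschitz bound follows from the first estimate, the vertical bi-Lipschitz bound from the second, and bi-Lipschitz control on general pairs comes by interpolating horizontal and vertical steps along word-metric geodesics.

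\textbf{Main obstacle.} The hard direction is the upper bound: one must build the cut family $\{E_\omega\}$ so that both estimates above hold simultaneously with the correct logarithmic exponent. This is exactly where the foliated corona hierarchy for intrinsic Lipschitz graphs introduced in the paper earns its keep. It encodes, across all scales $1\le t\le n^2$, the anisotropic multi-scale textures that simultaneously minimize horizontal perimeter and maximize vertical perimeter, and it is precisely this hierarchical structural decomposition that can be dualized into a coherent family of cuts producing an $\ell_1$-embedding of $\mathcal{B}_n$ with distortion $\asymp(\log n)^{1/4}$.
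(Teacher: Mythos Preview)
Your lower-bound sketch is essentially the argument the paper uses: the $L_4$ vertical-versus-horizontal inequality of Theorem~\ref{thm:XYD}, transferred to the discrete setting via the cut-measure representation of $\ell_1$-valued maps and the machinery of~\cite[\S3]{NY18}, yields $D\gtrsim(\log n)^{1/4}$ exactly as you describe.

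The upper bound, however, has a genuine gap, and your diagnosis of where the work lies is inverted. You correctly identify the target: a cut measure $\nu$ on subsets of $\H_\Z$ whose integrated horizontal perimeter is $\lesssim(\log n)^{1/4}|\mathcal{B}_n|$ while the integrated vertical perimeter at every scale $t\in[1,cn^2]$ is $\gtrsim|\mathcal{B}_n|$. But you then assert that this construction comes from ``dualizing'' the foliated corona decomposition. This is not what the paper does, and it is not clear how such a dualization would work. The foliated corona decomposition is the engine behind the \emph{lower} bound---it is what proves the inequality~\eqref{eq:our main in intro} in Theorem~\ref{thm:XYD}. The upper bound comes from an entirely separate, explicit construction (Section~\ref{sec:counterexampleConstruction}): one builds, for each $\alpha\in\N$, a smooth $\Z^2$-periodic function $\psi_\alpha\from V_0\to\R$ by iteratively adding $\alpha^4$ layers of bump functions of aspect ratio $\alpha$, aligned along characteristic curves of the previous stage. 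The resulting intrinsic graph has bounded horizontal energy but is $\alpha^{-1}$-far from planes at $\alpha^4$ scales. Randomizing over translates and rotations of the epigraphs of these surfaces (with $\alpha\asymp(\log n)^{1/4}$) and rescaling produces the cut measure with the desired properties, and hence an $\ell_1$-embedding with distortion $O((\log n)^{1/4})$; this is the content of Theorem~\ref{thm:counterexample embedding} and its proof in Section~\ref{sec:proof of ctr embedding}.

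In short: the existence of the sharp cut family is the entire substance of the upper bound, and you have stated what it must satisfy without indicating how to build it. The construction is not a formal consequence of the decomposition theorem; it is an independent object whose analysis (bounding $\|\partial_\psi\psi\|_{L_2}$ via near-orthogonality of the layer increments, and bounding the vertical perimeter from below at each scale) occupies Section~\ref{sec:start construction}.
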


In contrast, the word-ball of radius $n\ge 2$ in the $5$--dimensional Heisenberg group has $\ell_1$\nobreakdash--distortion of order $\sqrt{\log n}$; this was proved in~\cite{NY18} using Theorem~\ref{thm:XYD-NY17}.

The statement of Theorem~\ref{thm:sqrt4} has two parts. While the lower bound $\cc_{\ell_1}(\mathcal{B}_n)\gtrsim \sqrt[4]{\log n}$ is framed above as a ``negative result'' (impossibility of embedding), it encapsulates a ``positive result,'' namely the aforementioned new structural information on surfaces in $\H$, to which most of this article is devoted. The upper bound $\cc_{\ell_1}(\mathcal{B}_n)\lesssim \sqrt[4]{\log n}$ is a ``positive result,'' namely a new geometric realization of $\mathcal{B}_n$, but we will soon see that it has ramifications for counterexamples to  natural geometric questions.

The estimate~\eqref{eq:our main in intro} of Theorem~\ref{thm:XYD} implies the lower bound $\cc_{\ell_1}(\mathcal{B}_n)\gtrsim \sqrt[4]{\log n}$. In fact, such {\em vertical-versus-horizontal Poincar\'e inequalities }were originally envisaged as obstructions to embeddings of  $\mathcal{B}_n$ into various spaces; see~\cite{AusNaoTes,NN12,LafforgueNaor,NY-STOC}, and most pertinently Section~3 of~\cite{NY18}, where we treated such matters in greater generality than what is needed here; in particular, for any $p\ge 1$,  if every compactly supported smooth function $f\from\R^3\to \R$ satisfies the inequality
\begin{equation}\label{eq:q main in intro}
\left(\int_0^\infty\left(\int_{\R^3} |D_{\vv}^tf(h)|\ud h\right)^p\!\frac{\ud t}{t}\right)^{\frac1{p}}\lesssim \int_{\R^3} \left(|\mathbf{\XX} f(h)|+|\YY f(h)|\right)\ud h,
\end{equation}
then by~\cite[\S~3]{NY18} and the reasoning in~\cite[\S~1.3]{NY18} we have $\cc_{\ell_1}(\mathcal{B}_n)\gtrsim (\log n)^{\frac{1}{p}}$.

Thus, $\cc_{\ell_1}(\mathcal{B}_n)\gtrsim \sqrt[4]{\log n}$, since  Theorem~\ref{thm:XYD}  asserts that~\eqref{eq:q main in intro} holds for $p=4$. This also demonstrates that the matching upper bound $\cc_{\ell_1}(\mathcal{B}_n)\lesssim \sqrt[4]{\log n}$ of Theorem~\ref{thm:sqrt4} implies the second assertion of Theorem~\ref{thm:XYD}, namely the optimality of the $L_4(\frac{\ud t}{t})$ norm in the left hand side of~\eqref{eq:our main in intro}. Here we prove the following more refined embedding statement which we formulate as a separate theorem because it has further noteworthy applications.

\begin{thm}\label{thm:twist the center} For every $\vartheta\ge  \frac14$ and every integer $n\ge 2$ there exists $\phi=\phi_{n,\vartheta}\from\H_\Z\to \ell_1$ with respect to which every two points $g=(x,y,z),h=(\chi,\upsilon,\zeta)\in \H_\Z$ with $d_W(g,h)\le 2n$ satisfy
\begin{equation}\label{eq:twisted z}
\|\phi(g)-\phi(h)\|_{\ell_1}\asymp |x-\chi|+|y-\upsilon|+\frac{\sqrt{|2z-2\zeta-x\upsilon+y\chi|}}{(\log n)^\vartheta}.
\end{equation}
\end{thm}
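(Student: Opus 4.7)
The strategy is to write $\phi=\phi_{n,\vartheta}$ as an $\ell_1$--direct sum $\phi_0\oplus \psi$, where $\phi_0\from\H_\Z\to \ell_\infty^2\subset \ell_1$ is the horizontal coordinate projection $(x,y,z)\mapsto (x,y)$. This contributes $|x-\chi|+|y-\upsilon|$ exactly to $\|\phi(g)-\phi(h)\|_{\ell_1}$, recovering the first two terms of~\eqref{eq:twisted z} isometrically. It then suffices to construct a ``vertical embedding'' $\psi\from \H_\Z\to \ell_1$ satisfying
\[
\|\psi(g)-\psi(h)\|_{\ell_1}\asymp \frac{\sqrt{|2z-2\zeta-x\upsilon+y\chi|}}{(\log n)^\vartheta}
\]
for all pairs with $d_W(g,h)\le 2n$, up to an additive $O(|x-\chi|+|y-\upsilon|)$ error on each side that is absorbed by $\phi_0$ when the two components are reassembled.

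I would build $\psi$ as an $\ell_1$--sum of indicator functions of random cuts of $\H_\Z$ indexed by dyadic scales $k\in\{0,\ldots,K\}$ with $K=\lceil 2\log_2 n\rceil$. At each scale $k$, rather than using an arbitrary coordinate partition of $\H$, the cuts would be half-spaces bounded by randomly translated intrinsic Lipschitz graphs at Heisenberg scale $2^k$ (horizontal diameter $\asymp 2^k$, vertical extent $\asymp 2^{2k}$), together with a carefully tuned scale weight $\alpha_k$. The target calibration is: (i) a horizontal displacement of size $r$ should cross $\lesssim r/2^k$ scale-$k$ boundaries in expectation, giving a total contribution $\sum_k \alpha_k\min\{1,r/2^k\}$ absorbable into the horizontal part of~\eqref{eq:twisted z}; (ii) a vertical Heisenberg displacement of size $\sqrt v$ should saturate to a constant separation probability at the matching scale $k^\ast$ with $2^{2k^\ast}\asymp v$, producing the claimed $\sqrt v/(\log n)^\vartheta$ after summation with the chosen weights $\alpha_k$.

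The hard part will be choosing the random intrinsic Lipschitz cuts so that the sharp exponent $\vartheta=\tfrac14$ is achieved, this being what produces the matching upper bound $\cc_{\ell_1}(\mathcal{B}_n)\lesssim \sqrt[4]{\log n}$ of Theorem~\ref{thm:sqrt4}. A naive Assouad-type embedding of the doubling metric space $(\mathcal{B}_n,d_W)$ yields only $\vartheta=\tfrac12$, and using coordinate-box partitions of $\H$ at each scale does not do any better: because of the Frobenius non-integrability of the horizontal distribution, level sets of affine coordinate functions on $\H$ have horizontal Lipschitz constants that blow up like $n$ on $\mathcal{B}_n$, so they cannot simultaneously be horizontally $O(1)$--Lipschitz and vertically sensitive at the right scale. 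The way past the $\tfrac12$--barrier is to use cell boundaries that are genuinely sub-Riemannian: intrinsic Lipschitz graphs at scale $2^k$ are automatically horizontally Lipschitz and yet accumulate vertical Heisenberg area of the correct order, which in effect gives an extra square-root saving when summing over the $K$ scales. Producing a random family of intrinsic Lipschitz graph cuts with the required quantitative separation probabilities at both horizontal and vertical Heisenberg scales is the technical core of the construction, and this is precisely where the structural theory developed in the rest of the paper—the foliated corona decomposition of intrinsic Lipschitz graphs into pieces approximable by ruled surfaces—enters the argument.
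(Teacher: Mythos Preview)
Your high-level framework—the horizontal projection $\uppi$ direct-summed with an $\ell_1$ combination of cut semimetrics whose cuts are half-spaces bounded by intrinsic graphs—matches the paper's. But you have identified the wrong technical engine, and without the right one the proposal does not close. The foliated corona decomposition (Theorem~\ref{thm:ilg admits fcd intro}) is the tool for the \emph{opposite} direction: it proves the $L_4$ isoperimetric inequality~\eqref{eq:coarea}, which bounds the vertical perimeter of an \emph{arbitrary} intrinsic Lipschitz graph from \emph{above} and thereby yields the distortion \emph{lower} bound $\cc_{\ell_1}(\mathcal{B}_n)\gtrsim\sqrt[4]{\log n}$. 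For Theorem~\ref{thm:twist the center} you need the reverse: a surface whose vertical perimeter is \emph{large} at many scales simultaneously, so that vertical displacements are detected by its epigraph cuts. A decomposition theorem for generic graphs cannot manufacture such a surface, and ``random intrinsic Lipschitz graphs at scale $2^k$'' does not yet name which graphs; a generic choice will not beat $\vartheta=\tfrac12$.

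What the paper actually does (Section~\ref{sec:counterexampleConstruction}, through Theorem~\ref{thm:counterexample embedding} and Proposition~\ref{prop:counterexample function}) is build a \emph{single} explicit intrinsic graph $\Gamma_\psi$—the maximally bumpy surface of Section~\ref{sec:maximally bumpy}—by iteratively gluing $\alpha^4$ layers of oblong bumps of aspect ratio $\alpha\asymp\sqrt[4]{\log n}$, each layer aligned with the characteristic curves of the previous approximation. This yields $\|\partial_\psi\psi\|_{L_2(U)}\lesssim 1$ while $\vpP{U,\psi}(a)\gtrsim \alpha^{-1}$ at $\alpha^4$ separated scales. The embedding is then assembled from random translates and rotations of the one epigraph $\Gamma_\psi^+$, integrated over a \emph{bounded} window of scales; the multi-scale structure that beats $\vartheta=\tfrac12$ is built into $\psi$ itself, not obtained by summing independent scale-$2^k$ cuts as you propose. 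Exhibiting such an extremal surface is the entire content of Proposition~\ref{prop:counterexample function}, and it is logically independent of the foliated corona decomposition.
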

By~\eqref{eq:equiv to Kor} and the case $\vartheta=\frac14$ of Theorem~\ref{thm:twist the center}, the following weakening of~\eqref{eq:twisted z} holds.
$$
\forall g,h\in \mathcal{B}_n,\qquad \frac{d_W(g,h)}{\sqrt[4]{\log n}} \lesssim \|\phi(g)-\phi(h)\|_{\ell_1}\lesssim d_W(g,h).
$$
So, the upper bound $\cc_{\ell_1}(\mathcal{B}_n)\lesssim \sqrt[4]{\log n}$  of Theorem~\ref{thm:sqrt4} follows from Theorem~\ref{thm:twist the center}. However, Theorem~\ref{thm:twist the center} is of further use thanks to the following embedding result of~\cite{LN14}. At present, the fact that both  our embedding and that of~\cite{LN14} yield the same  expression (up to universal constant factors) for the metric in the image seems to be a fortunate and consequential coincidence; it would be valuable, if possible, to explain conceptually why those formulas  coincided   (e.g.~is this inevitable due to underlying symmetries?).

\begin{thm}\label{thm:twist the center Lp} For any $p>2$, any $\vartheta\ge \frac{1}{p}$ and any integer $n\ge 2$ there is $\psi=\psi_{n,p,\vartheta}\from\H_\Z\to \ell_p$ such that every $g=(x,y,z),h=(\chi,\upsilon,\zeta)\in \H_\Z$ with $d_W(g,h)\le 2n$ satisfy
\begin{equation}\label{eq:twisted z L4}
\|\psi(g)-\psi(h)\|_{\ell_p}\asymp |x-\chi|+|y-\upsilon|+\frac{\sqrt{|2z-2\zeta-x\upsilon+y\chi|}}{(\log n)^\vartheta}.
\end{equation}
\end{thm}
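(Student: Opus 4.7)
The plan is to deduce Theorem~\ref{thm:twist the center Lp} from the embedding construction of Lafforgue--Naor~\cite{LN14}, augmented with a simple upscaling argument to cover all $\vartheta\ge \frac{1}{p}$.

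First I would invoke the embedding of~\cite{LN14}. In the base case $\vartheta=\frac{1}{p}$, their construction produces for each integer $N\ge 2$ a map $\Phi_N\from\H_\Z\to \ell_p$, built as an Assouad-type sum of coordinates at dyadic scales $2^0,2^1,\ldots,2^{O(\log N)}$ with weights tuned to the Carnot dilation structure of $\H$, such that every $g=(x,y,z),h=(\chi,\upsilon,\zeta)\in \H_\Z$ with $d_W(g,h)\le 2N$ satisfies
\begin{equation*}
\|\Phi_N(g)-\Phi_N(h)\|_{\ell_p}\asymp |x-\chi|+|y-\upsilon|+\frac{\sqrt{|2z-2\zeta-x\upsilon+y\chi|}}{(\log N)^{1/p}}.
\end{equation*}
Heuristically, the horizontal displacement is captured at the coarsest scale by a direct Euclidean coordinate $(x,y)\in \R^2\hookrightarrow \ell_p$, while a center displacement of magnitude $r=\sqrt{|2z-2\zeta-x\upsilon+y\chi|}$ is only detected on the $\asymp \log N$ dyadic scales lying between $r$ and $r\cdot N$, and $\ell_p$-summation of the essentially independent contributions of those scales produces the factor $(\log N)^{1/p}$ in the vertical term.

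To obtain the general case $\vartheta\ge \frac{1}{p}$ I would rescale the parameter. Set $N\eqdef \lceil \exp((\log n)^{p\vartheta})\rceil$, so that $N\ge n$ (since $p\vartheta\ge 1$) and $(\log N)^{1/p}\asymp (\log n)^{\vartheta}$. Defining $\psi_{n,p,\vartheta}\eqdef \Phi_N$, the preceding estimate applies to every pair with $d_W\le 2N$, hence in particular to every pair with $d_W\le 2n$, and the factor $(\log n)^{\vartheta}$ in the denominator of~\eqref{eq:twisted z L4} appears.

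The essential difficulty is not in the rescaling, but in the first step: one must verify that the~\cite{LN14} construction actually produces the two-sided \emph{twisted} estimate above, rather than only a distortion bound of the form $\cc_{\ell_p}(\mathcal{B}_N)\lesssim (\log N)^{1/p}$. The upper bound is the natural output of the Assouad-type sum, but the matching lower bound on the vertical contribution uses the uniform convexity of $\ell_p$ for $p>2$ in a critical way: this is precisely the point where the sharp vertical-versus-horizontal Poincar\'e inequality of~\cite{LafforgueNaor} for $\ell_p$ enters to force the $(\log N)^{1/p}$ loss in the vertical direction to be tight. One should also check that appending the direct horizontal coordinate $(x,y)$ does not perturb the $\ell_p$-computation, thereby ensuring that $|x-\chi|+|y-\upsilon|$ appears in~\eqref{eq:twisted z L4} without any logarithmic loss. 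This is the step that explains the (at present, still mysterious) coincidence between the image metrics produced by the $\ell_1$-embedding of Theorem~\ref{thm:twist the center} and the $\ell_p$-embedding of~\cite{LN14}.
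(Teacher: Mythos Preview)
Your outline lands on the same cited input as the paper---the embedding of \cite{LN14}---but misstates what that reference actually provides. Lemma~3.1 of \cite{LN14} (combined with \cite{Ost12} to pass from $L_p$ to $\ell_p$) gives, for each $\e\in(0,\tfrac12]$, a map $\sigma_\e\from\H\to\ell_p$ satisfying
\[
\|\sigma_\e(g)-\sigma_\e(h)\|_{\ell_p}\asymp |x-\chi|^{1-\e}+|y-\upsilon|^{1-\e}+\e^{1/p}|2z-2\zeta-x\upsilon+y\chi|^{(1-\e)/2},
\]
with implied constants independent of $\e$. This is a snowflake embedding parametrized by $\e$, not a truncated Assouad sum over $O(\log N)$ dyadic scales; the ``$\Phi_N$'' you describe is obtained by setting $\e=1/\log N$, after which the exponents $1-\e$ become harmless on integer points with $d_W\le 2N$. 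The paper then takes $\e=1/\log n$ and defines $\psi=(\log n)^{-(\vartheta-1/p)}\sigma_\e\oplus\uppi$ with $\uppi(x,y,z)=(x,y)$, the summand $\uppi$ restoring the undamped horizontal term after the downscaling. Your alternative for general $\vartheta$---taking $N\approx\exp((\log n)^{p\vartheta})$ rather than rescaling and adding $\uppi$---is a valid substitute and arguably tidier.

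Your final paragraph, however, contains a genuine misconception. The two-sided estimate for $\sigma_\e$, including the lower bound on the vertical term, is proved in \cite{LN14} by direct analysis of the construction; it uses neither the uniform convexity of $\ell_p$ nor the Poincar\'e inequality of \cite{LafforgueNaor}. The latter is an \emph{obstruction}: it shows that no Lipschitz embedding of $\mathcal{B}_N$ into $\ell_p$ can compress the center by less than a factor $(\log N)^{1/p}$, which is the opposite direction from verifying that a specific embedding realizes a given formula. Your heuristic for the origin of the $(\log N)^{1/p}$ factor is also garbled---an $\ell_p$ sum of $\log N$ comparable contributions would place that factor in the numerator, not the denominator.
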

Theorem~\ref{thm:twist the center Lp} is not formulated explicitly in~\cite{LN14}, but  it is a direct consequence of  Lemma~3.1 in~\cite{LN14} combined with the finite-determinacy theorem of~\cite{Ost12}, which together imply that for every $\e\in (0,\frac12]$ there exists an embedding $\sigma=\sigma_{\e,p}\from\H\to \ell_p$ for which  every $g=(x,y,z),h=(\chi,\upsilon,\zeta)\in \H_\Z$ satisfy
\begin{equation}\label{eq:psi'}
\|\sigma(g)-\sigma(h)\|_{\ell_p}\asymp |x-\chi|^{1-\e}+|y-\upsilon|^{1-\e}+\e^{\frac{1}{p}}|2z-2\zeta-x\upsilon+y\chi|^{\frac{1-\e}{2}}.
\end{equation}
(Without reference to~\cite{Ost12}, Lemma~3.1 in~\cite{LN14} asserts the existence of such an embedding into $L_p$ rather than into $\ell_p$.)
To derive Theorem~\ref{thm:twist the center Lp} from~\eqref{eq:psi'}, let $\uppi\from \H\to \R$ be the map that is given by setting  $\uppi(x,y,z)= (x,y)$ for  $(x,y,z)\in \H$ and choose
\begin{equation}\label{eq:psi parametrs}\e=\frac{1}{\log n}\qquad  \mathrm{and}\qquad  \psi=\frac{\sigma}{(\log n)^{\vartheta-\frac{1}{p}}}\oplus \uppi\from\H_\Z\to \ell_p\oplus \R^2\cong \ell_p.
\end{equation}

\subsubsection{Aspects of the Ribe program}\label{sec:aspects of ribe} Inspired by a fundamental rigidity theorem of~\cite{Rib76} and first put forth in~\cite{Bou86}, the Ribe program is a web of conjectures and analogies whose goal is to transfer linear phenomena in the geometry of Banach spaces to questions about metric spaces, where  Lipschitz mappings take the role of bounded linear operators; see e.g.~the surveys~\cite{Kal08-survey,Nao12,Bal13,Ost13,Nao18}. We will next explain how the above results answer natural questions in this area.

Theorem~\ref{thm:two Lps} below follows from Theorem~\ref{thm:twist the center}, Theorem~\ref{thm:twist the center Lp} and~\cite{AusNaoTes,LafforgueNaor}. It  answers a  longstanding  question in metric embedding theory; even though (to the best of our knowledge) this question never appeared in published\footnote{We have seen it appear in writing  only in  grant proposals, and it was posed verbally among experts. In particular, we are indebted to Gideon Schechtman for valuable discussions on this matter over the years.} texts, it was a  folklore open problem. To briefly explain the context, the classical work~\cite{KP61} (together with a differentiation argument of~\cite{Man72}) implies that for  $1\le p< r< q<\infty$, if a Banach space $X$ admits a bi-Lipschitz embedding into both $L_p$ and $L_q$, then $X$ also admits a bi-Lipschitz embedding into $L_r$. The case $r=2$ of this statement is that if $X$ embeds into $L_p$ for two finite values of $p$ that lie on both sides of $2$, then $X$ must embed into (hence, by~\cite{Enf70},  be linearly isomorphic to) a Hilbert space; a different proof of the latter statement, as a special case of a much more general phenomenon, follows from~\cite{Kwa72}. In light of these facts about the geometry of Banach spaces, one is naturally led to ask if a metric space $M$ that embeds bi-Lipschitzly into $L_p$ for two finite values of $p$ that lie on both sides of $2$ must admit a bi-Lipschitz embedding into a Hilbert space.

\begin{thm}\label{thm:two Lps} For any $2<p\le 4$ there  is a metric space $M$  that admits a bi-Lipschitz embedding into $\ell_1$ and into $\ell_r$ for all $r\ge p$, yet $M$ does not admit a bi-Lipschitz embedding into $L_q$ for any $1<q<p$. More generally, $M$ does not admit a bi-Lipschitz embedding into a Banach space whose modulus of uniform convexity has power type $q$ for  $2\le q<p$.
\end{thm}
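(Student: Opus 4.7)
The plan is to take $M$ to be a metric wedge-sum of appropriately rescaled word-balls in $\H_\Z$. Fix $2<p\le 4$ and, for each integer $n\ge 2$, equip $\mathcal{B}_n$ with the distance
\[
\rho_n\bigl((x,y,z),(\chi,\upsilon,\zeta)\bigr)\eqdef |x-\chi|+|y-\upsilon|+\frac{\sqrt{|2z-2\zeta-x\upsilon+y\chi|}}{(\log n)^{1/p}},
\]
namely the right-hand side of~\eqref{eq:twisted z} with $\vartheta=1/p$; combining this with~\eqref{eq:equiv to Kor} yields
\[
\rho_n(g,h)\lesssim d_W(g,h)\lesssim (\log n)^{1/p}\,\rho_n(g,h)\quad\text{for all } g,h\in \mathcal{B}_n.
\]
Let $M$ be the pointed wedge-sum of $\{((\mathcal{B}_n,\rho_n),\0)\}_{n\ge 2}$: as a set, $M$ is the quotient of $\bigsqcup_n \mathcal{B}_n$ obtained by identifying all copies of $\0$, with metric $\rho_n$ on each $\mathcal{B}_n$ and $d_M(g,h)=\rho_n(g,\0)+\rho_m(\0,h)$ when $g\in \mathcal{B}_n$, $h\in \mathcal{B}_m$, and $n\ne m$.

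For the positive direction, the plan is to apply Theorem~\ref{thm:twist the center} and Theorem~\ref{thm:twist the center Lp}, both with $\vartheta=1/p$; these are admissible because $p\le 4$ forces $1/p\ge 1/4$, and for any $r\ge p$ one has $1/p\ge 1/r$. These theorems supply maps $\phi_n\from\mathcal{B}_n\to\ell_1$ and $\psi_{n,r}\from\mathcal{B}_n\to\ell_r$ which are bi-Lipschitz with respect to $\rho_n$, with distortion bounded by a universal constant (respectively, by a constant depending only on $r$). Translate so that $\phi_n(\0)=\psi_{n,r}(\0)=0$ and place each image into its own coordinate block of $\ell_1\cong(\ell_1\oplus \ell_1\oplus \cdots)_{\ell_1}$ or $\ell_r\cong(\ell_r\oplus \ell_r\oplus \cdots)_{\ell_r}$. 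For points in the same block, the resulting map inherits the bi-Lipschitz bounds of $\phi_n$ or $\psi_{n,r}$; for points in distinct blocks, the ambient norm is the sum (or $\ell_r$-sum) of the individual block norms, which is comparable to $\rho_n(g,\0)+\rho_m(\0,h)=d_M(g,h)$ up to a universal factor (in the $\ell_r$ case, the replacement of an $\ell_1$-sum by an $\ell_r$-sum contributes at most $2^{1-1/r}\le \sqrt2$). This produces the required bi-Lipschitz embeddings $M\hookrightarrow \ell_1$ and $M\hookrightarrow \ell_r$ for every $r\ge p$.

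For the non-embedding direction, let $X$ be any Banach space whose modulus of uniform convexity has power type $q\in [2,p)$. The vector-valued vertical-versus-horizontal Poincar\'e inequality of~\cite{LafforgueNaor}, combined with the standard derivation of $\mathcal{B}_n$-distortion lower bounds from such inequalities (cf.~\cite[\S 3]{NY18}), gives $\cc_X(\mathcal{B}_n,d_W)\gtrsim (\log n)^{1/q}$. Since $(\mathcal{B}_n,\rho_n)$ sits isometrically inside $M$ and $d_W\lesssim (\log n)^{1/p}\rho_n$, any bi-Lipschitz embedding $M\hookrightarrow X$ of distortion $D$ restricts to a bi-Lipschitz embedding $(\mathcal{B}_n,d_W)\hookrightarrow X$ of distortion $\lesssim D(\log n)^{1/p}$; consequently
\[
\cc_X(M)\,\gtrsim\,\frac{\cc_X(\mathcal{B}_n,d_W)}{(\log n)^{1/p}}\,\gtrsim\,(\log n)^{\frac1q-\frac1p}\,\xrightarrow[n\to\infty]{}\,\infty.
\]
Hence $\cc_X(M)=\infty$, which is the more general assertion. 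The first assertion follows because for $1<q<p$ the space $L_q$ is uniformly convex with modulus of convexity of power type $\max(q,2)<p$ (the hypothesis $p>2$ is used here).

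The main obstacle will be the Poincar\'e inequality at the heart of the non-embedding step: one needs to verify the sharp power-type dependence of~\cite{LafforgueNaor} so as to extract the correct $(\log n)^{1/q}$ lower bound for targets of power type $q$, paralleling the scalar quantitative form~\eqref{eq:quote LN q-1}. Otherwise the proof is essentially a packaging of the quantitative embedding data already contained in Theorem~\ref{thm:twist the center} and Theorem~\ref{thm:twist the center Lp}, combined through a wedge-sum construction that preserves the distortion.
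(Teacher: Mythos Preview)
Your approach is essentially the same as the paper's: assemble the word-balls $\mathcal{B}_n$, twisted at level $\vartheta=1/p$ via Theorems~\ref{thm:twist the center} and~\ref{thm:twist the center Lp}, into a single space, embed block-by-block into $\ell_1$ and $\ell_r$, and invoke the distortion lower bound of~\cite{LafforgueNaor} for the non-embedding. The paper takes $M$ to be a union of widely-spaced translates of $\phi_{n,1/p}(\mathcal{B}_n)$ inside $\ell_1$ rather than a wedge-sum, but this difference is cosmetic.

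There is, however, one genuine gap: the quantity $\rho_n$ you write down is \emph{not} a metric on $\mathcal{B}_n$. With $g=(1,0,0)$, $h=(0,1,0)$ and $\0=(0,0,0)$ one has $\rho_n(g,\0)=\rho_n(\0,h)=1$, while $|2z-2\zeta-x\upsilon+y\chi|=1$ gives $\rho_n(g,h)=2+(\log n)^{-1/p}$, so the triangle inequality fails through $\0$. Consequently your wedge-sum $M$ is not a metric space. The fix is immediate and is precisely what the paper does: rather than declaring $\rho_n$ itself to be a distance, take $M_n\eqdef\phi_{n,1/p}(\mathcal{B}_n)\subset\ell_1$ with the inherited $\ell_1$-metric (which by~\eqref{eq:twisted z} is bi-Lipschitz equivalent to your $\rho_n$), and build $M$ from the $M_n$. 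With this adjustment the rest of your argument goes through unchanged.
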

For the statement of Theorem~\ref{thm:two Lps}, recall that a Banach space $(X,\|\cdot\|)$ has modulus of uniform convexity of power type $q$ if there is $C>0$ such that the sharpened triangle inequality $\|x+y\|\le 2-C\|x-y\|^q$ holds for any unit vectors $x,y\in X$. By~\cite{Cla36,Han56},  for $1<q<\infty$ any $L_q(\mu)$ space has modulus of uniform convexity of power type $\max\{q,2\}$.

\begin{proof}[Proof of Theorem~\ref{thm:two Lps} assuming Theorem~\ref{thm:twist the center} and Theorem~\ref{thm:twist the center Lp}]  For every $n\in \N$, we define  $M_n=\phi_{n,\vartheta}(\mathcal{B}_n)\subset \ell_1$, where  $\phi_{n,\vartheta}$ is as in Theorem~\ref{thm:twist the center} applied with $\vartheta=\frac{1}{p}\ge \frac14$.

By considering the union of sufficiently widely-spaced translations in $\ell_1$ of the finite sets $\{M_n\}_{n=1}^\infty$, we see that there is $M\subset \ell_1$ such that $\sup_{n\in \N} \cc_M(M_n)<\infty$.

For every $r\ge p$, consider $\psi_{n,r,\vartheta}(\mathcal{B}_n)\subset \ell_r$, where $\psi_{n,r,\vartheta}$ is as in  Theorem~\ref{thm:twist the center Lp}. Theorem~\ref{thm:twist the center} and Theorem~\ref{thm:twist the center Lp} show that $\psi_{n,r,\vartheta}(\mathcal{B}_n)$ is bi-Lipschitz equivalent with $O(1)$ distortion to $M_n$. Hence, by  considering a suitable union of translations in $\ell_r$ of the finite sets $\{\psi_{n,r,\vartheta}(\mathcal{B}_n)\}_{n=1}^\infty$, we see that $\cc_{\ell_r}(M)<\infty$. Let $X$ be a Banach space whose modulus of uniform convexity has power type $q$ for  $2\le q<p$.  By~\cite{LafforgueNaor} we have
$$
(\log n)^{\frac{1}{q}}\lesssim_X \cc_{X}(\mathcal{B}_n) \lesssim (\log n)^\vartheta\cc_{X}(M_n)=(\log n)^{\frac{1}{p}}\cc_{X}(M_n),
$$
where the penultimate step holds because, due to~\eqref{eq:twisted z}, $M_n$ and $\mathcal{B}_n$ are bi-Lipschitz equivalent with distortion $O((\log n)^\vartheta)$.  Therefore, since $q<p$,
$$
\cc_{X}(M_n)\gtrsim_X (\log n)^{\frac{1}{q}-\frac{1}{p}} \xrightarrow[n\to \infty]{} \infty.
$$
Hence, $\cc_X(M)=\infty$, as required. For future reference we record in passing that we obtained the following bound when $X=L_q$ and $1<q<p$.
\begin{equation}\label{eq:Lq case}
\cc_{L_q}(M_n)\gtrsim_q (\log n)^{\frac{1}{\max\{q,2\}}-\frac{1}{p}}.
\end{equation}

Note that the  bound in~\cite{AusNaoTes}, which is asymptotically weaker than that of~\cite{LafforgueNaor},  suffices for the qualitative conclusion $\cc_X(M)=\infty$ of Theorem~\ref{thm:two Lps}. The above estimates seem to be the best that one could achieve using available methods; it would be very interesting to determine the optimal behavior, e.g.~if an $n$--point metric space $W$ embeds with $O(1)$ distortion into  $\ell_1$ and also into $\ell_p$ for some $p>2$, how large can $\cc_{\ell_2}(W)$ be?
\end{proof}

\begin{remark} With more care it is possible to ensure that the metric space $M$ of Theorem~\ref{thm:two Lps} is  a left-invariant metric $\updelta=\updelta_p$ on $\H_\Z$; see  Theorem~\ref{thm:invariant metric on two sides}. Concretely, for $p=4$ the metric $\updelta_4$ can be taken to satisfy the following bounds for any $(a,b,c)\in \H_\Z$ with $|c|\ge 3$.
$$
\delta_4\big(\0,(a,b,c)\big)\asymp |a|+|b|+\frac{\sqrt{|c|}}{\sqrt[4]{\log |c|}\cdot (\log\log |c|)^2}.
$$
By the reasoning in~\cite[Section~9]{NP11}, since $\H_\Z$ is amenable, it follows that $(\H_\Z,\updelta)$ admits a bi-Lipschitz embedding into $L_1$ and $L_r$ for all $r\ge p$ which is also equivariant (with respect to an action of $\H_\Z$ on, respectively, $L_1$ and $L_p$ by affine isometries); we did not investigate if this holds for equivariant embeddings into the sequence spaces $\ell_1$ and $\ell_r$.

The natural question how the embeddability of a group into $L_p$ depends on $p$ was also studied in the literature; see~\cite{CDH10,Czu17}, and especially the recent solution of this question in~\cite{MS20}, where it is proved that the phenomenon of Theorem~\ref{thm:two Lps} does not hold for equivariant coarse embeddings (namely, for such embeddings the corresponding set of $p$ is always an interval). Note that for coarse embeddings that need not be equivariant, the statement of~\cite{MS20} was previously known as a direct consequence of~\cite[Remark~5.10]{MN04} (from here, using~\cite{NP11}, one gets the full equivariant statement of~\cite{MS20}  for amenable groups). Theorem~\ref{thm:invariant metric on two sides} shows that the situation is markedly different if one considers bi-Lipschitz embeddings rather than coarse embeddings.
\end{remark}

The following question arises naturally from Theorem~\ref{thm:two Lps} and seems quite difficult.

\begin{question}\label{Q:crazy set}
For a metric space $M$, how complicated can the following set be?
$$\big\{1\le p<\infty: \cc_{L_p}(M)<\infty\big\}.$$
\end{question}

Theorem~\ref{thm:two Lps} leaves the possibility that there is better behavior in the reflexive range, i.e., that if a metric space $M$ embeds bi-Lipschitzly into  $\ell_p$ and $\ell_q$ for $1<q<2<p<\infty$, then $M$  embeds bi-Lipschitzly into a Hilbert space. If true, this would be an excellent theorem, but due to Theorem~\ref{thm:two Lps} we speculate that the answer is negative. A substantial new idea seems to be needed here. Less ambitiously, does the above assumption (even allowing $q=1$) imply that $M$ embeds into a Hilbert space with finite average distortion (see~\cite{Nao19} for the relevant definition)? Does this imply that every $n$--point subset of $M$ embeds into a Hilbert space with bi-Lipschitz distortion $o(\log n)$, i.e., asymptotically better than the distortion that is guaranteed by the general embedding theorem of~\cite{Bou85}?

\medskip

The above reasoning also leads to Theorem~\ref{thm:factorization} below, which answers another natural question arising in the Ribe program, on the factorization of Lipschitz functions.

We first briefly make preparatory observations that will be also useful elsewhere. Recall that for $K\in \N$ a metric space $X$ is said to be $K$-{\em doubling} if for every $r>0$, any ball $B\subset X$ of radius $r$ can be covered by $K$ balls of radius $r/2$. $X$ is doubling if it is $K$--doubling for some $K\in \N$. The metric space $M$ of Theorem~\ref{thm:two Lps} can be taken to be doubling. Indeed, fix $p>2$ and $n\in \N$. As in the proof of Theorem~\ref{thm:two Lps}, write $\vartheta=1/\min\{p,4\}$. It was shown in~\cite{LN14} that $\psi_{n,p,\vartheta}(\H_\Z)$ is a $O(1)$--doubling subset of $\ell_p$. Let $S\subset \ell_p$ be the disjoint union of  translates in $\ell_p$ of the finite sets $\{\psi_{n,p,\vartheta}(\mathcal{B}_n)\}_{n=1}^\infty$ that are sufficiently widely-spaced so as to ensure that $S$ is a doubling subset of $\ell_p$, and $\sup_{n\in \N}\cc_S(M_n)<\infty$. As in the proof of Theorem~\ref{thm:two Lps}, using Theorem~\ref{thm:twist the center} we get an embedding $\varphi\from S\to \ell_1$ satisfying $\|\varphi(x)-\varphi(y)\|_{\ell_1}\asymp \|x-y\|_{\ell_p}$ for all $x,y\in S$. Thus $\varphi(S)=M$ is a doubling subset of $\ell_1$.

Since $S$ is doubling, by~\cite{LN05} we can extend $\varphi$ to a Lipschitz function $f\from\ell_p\to \ell_1$. If there were Lipschitz mappings  $g\from\ell_p\to \ell_2$ and $h\from g(\ell_p)\to \ell_1$ such that $f=h\circ g$, then it would follow that for all $x,y\in S$ we have
$$
\|x-y\|_{\ell_p}\asymp \|\varphi(x)-\varphi(y)\|_{\ell_1}=\left\|h\big(g(x)\big)-h\big(g(y)\big)\right\|_{\ell_1}\lesssim \|g(x)-g(y)\|_{\ell_2}\lesssim \|x-y\|_{\ell_p}.
$$
Therefore, $g\circ\varphi^{-1}$ would be a bi-Lipschitz embedding of $M$ into $\ell_2$, which we proved above was impossible. We thus arrive at the following statement.

\begin{thm}\label{thm:factorization} For any $2<p<\infty$ there is a Lipschitz mapping $f\from \ell_p\to \ell_1$ that cannot be factored through a subset of a Hilbert space using Lipschitz mappings. Namely, there do not exist Lipschitz mappings $g\from \ell_p\to \ell_2$ and $h\from g(\ell_p)\to \ell_1$ such that $f=h\circ g$. More generally, $f$ cannot be factored  using Lipschitz mappings through a subset of a Banach space whose modulus of uniform convexity has power type $q$ for $2\le q<\min\{4,p\}$.
\end{thm}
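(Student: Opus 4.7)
The plan is to derive Theorem~\ref{thm:factorization} from Theorem~\ref{thm:two Lps} via a Lipschitz extension argument. The essential point is that Theorem~\ref{thm:two Lps} (through the bound \eqref{eq:Lq case} and its uniform-convexity generalization) produces a subset $M \subset \ell_1$ that bi-Lipschitz embeds into $\ell_p$ but into no Banach space whose modulus of uniform convexity has power type $q$ for $2 \le q < \min\{4,p\}$. I would like to upgrade the bi-Lipschitz embedding of $M$ into $\ell_p$ to a Lipschitz map on all of $\ell_p$, so that any Lipschitz factorization through a uniformly convex space would restrict to a bi-Lipschitz embedding of $M$, contradicting Theorem~\ref{thm:two Lps}.

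First I would arrange that $M$ sits as $\varphi(S)$ for a \emph{doubling} subset $S \subset \ell_p$ and a bi-Lipschitz bijection $\varphi \from S \to M$. Setting $\vartheta = 1/\min\{4,p\}$, I take $S$ to be a disjoint union of sufficiently widely-spaced translates in $\ell_p$ of the finite sets $\psi_{n,p,\vartheta}(\mathcal{B}_n)$ from Theorem~\ref{thm:twist the center Lp}; by the doubling property of these embeddings established in \cite{LN14}, adequately spaced translates yield a doubling $S$. Comparing the expressions \eqref{eq:twisted z L4} and \eqref{eq:twisted z} for the metrics in the images, the maps $\phi_{n,\vartheta} \circ \psi_{n,p,\vartheta}^{-1}$ assemble into a bi-Lipschitz bijection $\varphi \from S \to M$ with $\|\varphi(x)-\varphi(y)\|_{\ell_1} \asymp \|x-y\|_{\ell_p}$ for all $x,y \in S$.

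Since $S$ is a doubling subset of $\ell_p$ and $\varphi$ is Lipschitz, the Lee--Naor extension theorem \cite{LN05} extends $\varphi$ to a Lipschitz map $f \from \ell_p \to \ell_1$. Suppose toward a contradiction that $f = h \circ g$ with $g \from \ell_p \to Y$ and $h \from g(\ell_p) \to \ell_1$ Lipschitz, where $Y$ has modulus of uniform convexity of power type $q$ for some $2 \le q < \min\{4,p\}$. Then for every $x,y \in S$,
$$
\|x-y\|_{\ell_p} \asymp \|\varphi(x)-\varphi(y)\|_{\ell_1} = \|h(g(x))-h(g(y))\|_{\ell_1} \lesssim \|g(x)-g(y)\|_Y \lesssim \|x-y\|_{\ell_p},
$$
so $g \circ \varphi^{-1} \from M \to Y$ is bi-Lipschitz, contradicting Theorem~\ref{thm:two Lps}. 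The Hilbert-space case is $Y=\ell_2$, $q=2$.

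The main subtlety is ensuring that $S$ is doubling, since that is what allows Lee--Naor to produce a Lipschitz extension defined on the whole of $\ell_p$; this depends crucially on the doubling property of the specific $\H_\Z \hookrightarrow \ell_p$ embedding of \cite{LN14}, and on spacing the translates widely enough that the doubling constant is not spoiled at the seams. Everything else is a transfer of the nonembeddability of $M$ through the hypothesized factorization and standard Lipschitz bookkeeping.
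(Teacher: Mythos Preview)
Your argument is correct and matches the paper's own proof essentially line for line: construct a doubling $S\subset \ell_p$ from widely-spaced translates of $\psi_{n,p,\vartheta}(\mathcal{B}_n)$ with $\vartheta=1/\min\{4,p\}$, build the bi-Lipschitz $\varphi\from S\to M\subset \ell_1$ by matching \eqref{eq:twisted z} with \eqref{eq:twisted z L4}, extend $\varphi$ to $f\from \ell_p\to \ell_1$ via Lee--Naor~\cite{LN05}, and observe that a Lipschitz factorization $f=h\circ g$ through $Y$ would make $g\circ\varphi^{-1}$ a bi-Lipschitz embedding of $M$ into $Y$, contradicting Theorem~\ref{thm:two Lps}. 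You also correctly flag the doubling of $S$ (via~\cite{LN14}) as the one nontrivial ingredient enabling the extension.
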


By~\cite[Theorem~5.2]{LP68}, for $p\ge 2$ any linear operator from $\ell_p$ to $\ell_1$ factors through $\ell_2$ (the factorization is via linear operators, though by~\cite{JMS09} this is equivalent to factorization using Lipschitz functions as above). Theorem~\ref{thm:factorization} demonstrates that there is no analogue of this factorization phenomenon for Lipschitz mappings.

Such investigations arose in the Ribe program  in the seminal work~\cite{JL84} which had a major influence on the subsequent fruitful efforts by many mathematicians in search of metric analogues of the extension and factorization paradigm of~\cite{Mau74}.  This search is itself intimately intertwined with the search for metric theories of type and cotype.

We refer to the survey~\cite{Mau03} for an exposition of the powerful and deep theory of type and cotype of Banach spaces; it suffices to say here that one can define linear invariants of Banach spaces that are called {\em type $2$} and {\em cotype $2$}, such that  $L_p$ has type $2$ if $2\le p<\infty$ and  cotype $2$ if $1\le p\le 2$, and such that the following extension and factorization phenomenon~\cite{Mau74} holds.

Suppose that $Y$ is a Banach space of type $2$ and that $Z$ is a Banach space of  cotype $2$. Let $X$ be a linear subspace of $Y$ and let $\tau\from X\to Z$ be a bounded linear operator. Then there exist a bounded linear operator $T\from Y\to Z$ that extends $\tau$, a Hilbert space $H$ and bounded linear operators $A\from Y\to H$,  $B\from A(Y)\to Z$ with $T=BA$.

\cite{JL84} raised the question of when the analogous statement holds  in the metric setting. Namely, now $Y,Z$ are metric spaces, $X$ is an arbitrary subset of $Y$, $f\from X\to Z$ is a Lipschitz mapping, and we ask for the same extension and factorization through a Hilbert space $H$, i.e., to establish the existence of Lipschitz mappings $F\from Y\to Z$, $\alpha\from Y\to H$ and $\beta\from \alpha(Y)\to Z$, such that the following diagram commutes.
\begin{equation}\label{eq:extension and factorization}
\begin{split}
\xymatrix  {
Y \ar[r]^\alpha  \ar[dr]^F &
\alpha(Y)\ar@{^{(}->}[r]^{\subset} \ar[d]^\beta & H\\
\strut   X \ar@{^{(}->}[u]^{\subset} \ar[r]^{f} & Z &
}
\end{split}
\end{equation}
An implicit but central part of this endeavor encompasses the important issue of how to define useful notions of type 2 and cotype 2 for metric spaces so that, at the very least, $\ell_p$ has type $2$ for $2\le p<\infty$ and cotype $2$ for $1\le p\le 2$. Clearly~\eqref{eq:extension and factorization} has two components. The first is if $f$ admits the Lipschitz extension $F$. The second is if $F$ can be factored through a subset of a  Hilbert space.  While these questions come hand-in-hand in the linear theory of~\cite{Mau74} (see also~\cite{Pis86}), they are different issues in the metric setting.

The main focus of~\cite{JL84} was the Lipschitz extension problem, so it highlighted the first component above. At the time, the metric version of the extension problem was a bold and speculative question, but~\cite{Bal92} introduced metric notions of type $2$ and cotype $2$ and obtained a powerful extension result for maps from spaces of Markov type $2$ to spaces of Markov cotype $2$.  Combined with~\cite{NPSS06}, this provides a quite satisfactory understanding of the extension component of~\eqref{eq:extension and factorization} when the target space is $\ell_p$, $1<p<2$.  However, this understanding is currently confined to the reflexive range, and the question remains a major open problem when the target space is $\ell_1$ (see~\cite{Kal12,MN13-bary} for a partial negative answer, and~\cite{MM16} for an intriguing algorithmic reformulation).

In contrast to the achievement of~\cite{Bal92}, Theorem~\ref{thm:factorization} demonstrates that there is \emph{no} way to define notions of type 2 and cotype 2 for metric spaces so that any map from a space of type $2$ to a space of cotype $2$ factors through Hilbert space and such that $\ell_p$ has type 2 when $2< p<\infty$ and cotype $2$ when $p=1$.  Though this resolves the factorization question when the target is $\ell_1$, it remains a fascinating open problem to see if a factorization theory analogous to~\cite{Bal92} can be developed when the target is $\ell_q$ for $1<q<2$.

It is instructive to examine the dual interpretation of Theorem~\ref{thm:factorization}. Just as the dual formulation of the linear factorization and extension problems was key to~\cite{Mau74}, duality also plays an important role in the nonlinear theory.  The duality lemma that was found in~\cite{Bal92} for Lipschitz extension\footnote{Quoting what \cite{Bal92} says about this crucial duality step: ``{\em This lemma is a variant of one used by Maurey. A related lemma was
found earlier by Johnson, Lindenstrauss and Schechtman: their result actually characterises extensions which factor through subsets of
Hilbert space, a problem much closer to Maurey's argument. Their lemma provided much of the stimulus for the present work.}'' Unfortunately, it seems that the work of Johnson, Lindenstrauss and Schechtman that is mentioned in~\cite{Bal92} was never published.} does not shed light on Lipschitz factorization, but the factorization issue was broached in~\cite{FJ09,CD14}. One can deduce from~\cite{CD14} the following factorization criterion. Given $\Phi>0$, metric spaces $(X,d_X)$, $(Z,d_Z)$ and $f\from X\to Z$, there exists a Hilbert space $H$ and a factorization $f=\beta\circ \alpha$ for some Lipschitz mappings $\alpha\from X\to H$ and $\beta\from \alpha(X)\to Z$ with $\|\alpha\|_{\Lip}\|\beta\|_{\Lip}\le \Phi$ if and only if
for all $n\in \N$ and $x_1,\ldots,x_n\in X$, any two symmetric stochastic matrices  $\mathsf{A}=(a_{ij}),\mathsf{B}=(b_{ij})\in \mathsf{M}_n(\R)$ such that $\mathsf{A}-\mathsf{B}$ is positive semidefinite satisfy the following quadratic  inequality.
\begin{equation}\label{eq:quadratic dual}
\sum_{i=1}^n\sum_{j=1}^n a_{ij} d_Z\big(f(x_i),f(x_j)\big)^2\le \Phi^2 \sum_{i=1}^n\sum_{j=1}^n b_{ij} d_X(x_i,x_j)^2.
\end{equation}
Theorem~\ref{thm:factorization} yields the first example of a Lipschitz mapping $f\from \ell_p\to \ell_1$ for $2<p<\infty$ that fails to satisfy~\eqref{eq:quadratic dual} for any $\Phi>0$, despite the fact that if $f$ were  a linear operator, then by~\cite{Mau74} it would automatically  satisfy~\eqref{eq:quadratic dual} with $\Phi\lesssim_p \|f\|_{\Lip}$.

\begin{remark}\label{rem:J}
  Another counterexample to the nonlinear version of~\cite{Mau74} arises from an embedding of the Laakso graphs into a non-classical Banach space.  Let $\{\Lambda_n\}_{n=1}^\infty$ be the Laakso graphs~\cite{Laa00,Laa02}, indexed so that $|\Lambda_n|=n$; these are series-parallel (hence planar) graphs that are $O(1)$--doubling when equipped with their shortest-path metric.

  On one hand, the Laakso graphs do not admit a bi-Lipschitz embedding into a Hilbert space.  In fact, by~\cite{Laa00,LP01}, we have $\cc_{\ell_2}(\Lambda_n)\gtrsim \sqrt{\log n}$ (this is sharp by the general embedding theorem of~\cite{Rao99}).  Moreover, by~\cite{MN-SOCG}, for every uniformly convex Banach space $X$ we have $\lim_{n\to \infty}\cc_X(\Lambda_n)=\infty$.

  On the other hand, by~\cite{GNRS04}, we have $\sup_{n\in \N}\cc_{\ell_1}(\Lambda_n)<\infty$, and by~\cite{JS09}, we have $\sup_{n\in \N}\cc_{Y}(\Lambda_n)<\infty$ when $Y$ is a Banach space that is not reflexive.  By considering translates of the images of the embeddings in $\ell_1$ that are sufficiently widely spaced, we obtain a doubling subset $\Lambda\subset \ell_1$ such that $\cc_Y(\Lambda)<\infty$ for any nonreflexive Banach space $Y$ and $\cc_{X}(\Lambda)=\infty$ for any uniformly convex Banach space $X$.

  By~\cite{Jam78}, there exists a Banach space $\mathbb{J}$ that has  type $2$, yet $\mathbb{J}$ is not reflexive; a different construction of such a Banach space was found in~\cite{PX87}.  So, $\Lambda$ embeds bi-Lipschitzly into both the cotype $2$ space $\ell_1$ and the type $2$ space $\mathbb{J}$, yet not into a Hilbert space.  This is impossible in the linear setting; by~\cite{Kwa72}  a Banach space of type $2$ and cotype $2$ is isomorphic to a Hilbert space (this is a far reaching generalization of the aforementioned consequence of~\cite{KP61} that motivates Theorem~\ref{thm:two Lps}).   This reasoning also produces a stronger asymptotic estimate than \eqref{eq:Lq case}, since $\cc_{\ell_2}(\Lambda_n)\gtrsim \sqrt{\log n}$, but it cannot shed light on the $\ell_p$ setting of~\eqref{eq:Lq case} because it relies precisely on the non-reflexivity of $\mathbb{J}$ (through the use of~\cite{JS09}) to deduce that $\sup_{n\in \N} \cc_{\mathbb{J}}(\Lambda_n)<\infty$.

  The Laakso graphs also lead to a counterexample to the metric version of~\cite{Mau74}.  Let $\varphi\from \Lambda\to \mathbb{J}$ be a bilipschitz embedding.  Since $\Lambda$ is a doubling subset of $\ell_1$, one can use~\cite{LN05} to construct a Lipschitz map $f\from  \ell_1\to \mathbb{J}$ that extends $\varphi$.  As above, $f$ cannot factor through a Hilbert space (or even through any uniformly convex Banach space $X$) by Lipschitz maps, because such a factorization would produce a bilipschitz embedding of $\Lambda$ into a Hilbert space (respectively, into $X$).

  This discussion shows that if one is allowed to replace $\ell_p$ in Theorem~\ref{thm:two Lps}  and Theorem~\ref{thm:factorization}  by non-classical (indeed, ``exotic'' and hard to come by) Banach spaces such as $\mathbb{J}$, then it is possible to demonstrate the failure of the metric space version of~\cite{Mau74} and its important precursor~\cite{Kwa72} using well-known examples.

  Part of the impetus for the search for definitions of metric space notions of type $2$ and cotype $2$  was the hope of obtaining a metric version of the theorem of~\cite{Kwa72}, but it was well-known to experts that the metric definitions of type $2$ and cotype $2$ found over the past decades are not suitable for this purpose (see e.g.~the discussion in~\cite{DLP13}).  The above discussion demonstrates conclusively that it is impossible to define metric space notions of type $2$ and cotype $2$  that are bi-Lipschitz invariant, pass to subsets, coincide for Banach spaces with  type $2$ and cotype $2$, and for which~\cite{Kwa72} holds for doubling metric spaces, i.e., any doubling space that has both type $2$ and cotype $2$ admits a bi-Lipschitz embedding into a Hilbert space (the corresponding statement with $\Lambda$ replaced by a metric space that is not doubling follows by using~\cite{Bou86} instead of the Laakso graphs in the above reasoning; in fact, using the improvement~\cite{Bau07} of~\cite{Bou86}, the infinite binary tree embeds bilipschitzly into both $\ell_1$ and $\mathbb{J}$, but not into a Hilbert space).   Theorem~\ref{thm:two Lps}  shows that this is so even if one restricts attention to subsets of $\ell_p$ for $p>2$.
\end{remark}

\subsubsection{Dimension reduction}\label{sec:dim reduction} By a highly influential lemma of~\cite{JL84}, any finite subset $S$ of a Hilbert space embeds with bi-Lipschitz distortion $O(1)$ into a $k$--dimensional Hilbert space for $k\lesssim \log |S|$; see~\cite{Nao18} for an indication of the significance of this statement. The question whether this phenomenon holds with Hilbert space replaced by $\ell_1$ was a prominent open problem until it was resolved negatively in~\cite{BC05}, where it was shown that for  arbitrarily large $n\in \N$ there is an $n$--point subset $D_n$ of $\ell_1$ such that if $D_n$ embeds with bi-Lipschitz distortion $O(1)$ into $\ell_1^k$, then necessarily $k\ge n^c$ for some universal constant $c>0$. In~\cite{LMN05} it was shown that $D_n$ can be taken   to be $O(1)$--doubling, and in~\cite{NPS18} it was shown that $\ell_1^k$ can be replaced by an arbitrary $k$--dimensional subspace of the Schatten--von Neumann trace class $\mathsf{S}_1$; both of these enhancements hold without changing the conclusion (other than perhaps values of universal constants).

The examples $\{D_n\}_{n=1}^\infty$ of~\cite{BC05} are the diamond graphs~\cite{NR03}, while their aforementioned doubling counterparts in~\cite{LMN05}  are the Laakso graphs $\{\Lambda_n\}_{n=1}^\infty$ that we discussed in Remark~\ref{rem:J}. By~\cite{MN-SOCG,JS09} we have $\sup_{n\in \N}\cc_X(D_n)=\sup_{n\in \N}\cc_X(\Lambda_n)=\infty$ for every uniformly convex Banach space $X$. In fact, by~\cite{JS09} the converse of this statement holds true (though we do not need it below), namely $X$ admits an equivalent uniformly convex norm if and only if $\sup_{n\in \N}\cc_X(D_n)=\infty$ or $\sup_{n\in \N}\cc_X(\Lambda_n)=\infty$. Theorem~\ref{thm:dim reduction}  below obtains new examples that demonstrate the failure of dimension reduction in $\ell_1$ à la~\cite{JL84}, which are qualitatively different than the previously known examples, since our examples do admit a bi-Lipschitz embedding into a uniformly  convex Banach space (specifically, into $\ell_p$ for any $p>2$). At present, this comes with a worse lower bound on the target dimension, but see Remark~\ref{rem:sharp in p} below which explains how Conjecture~\ref{conj:function of p} would remedy this (for the very same example that we consider in Theorem~\ref{thm:dim reduction}).

\begin{thm}\label{thm:dim reduction} There is a universal constant $c>0$ with the following property.  For every $n\in \N$ and $2<p\le 4$ there exists a  $O(1)$--doubling subset $\mathscr{H}_n=\mathscr{H}_n(p)$ of $\ell_1$ with $|\mathscr{H}_n|\le n$ such that $\cc_{\ell_q}(\mathscr{H}_n)\lesssim 1$ for all $q\ge p$, and for every $D\ge 1$, if $X$ is a finite-dimensional subspace of the Schatten--von Neumann trace class $\mathsf{S}_1$ for which $\cc_X(\mathscr{H}_n)\le D$, then necessarily
\begin{equation}\label{eq:dim X}
\dim(X)\ge \exp \bigg(\frac{c}{D^2}(\log n)^{1-\frac{2}{p}}\bigg).
\end{equation}
\end{thm}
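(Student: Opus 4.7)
The plan is to take $\mathscr{H}_n$ to be the compressed $\ell_1$--image of a Heisenberg word-ball under the embedding $\phi_{m,1/p}$ of Theorem~\ref{thm:twist the center} for $m\asymp n^{1/4}$, to read off $\cc_{\ell_q}(\mathscr{H}_n)\lesssim 1$ and doubling from Theorems~\ref{thm:twist the center} and~\ref{thm:twist the center Lp} together with~\cite{LN14}, and to run the nuclear-norm dimension reduction mechanism of~\cite{NPS18} against the sharp distortion exponent~\eqref{eq:Lq case}. The compression rate $(\log n)^{1/p}$ is engineered so that the $L_2$ Lafforgue--Naor inequality (and not the new $L_4$ inequality of Theorem~\ref{thm:XYD}) already yields the target exponent $1-2/p$ in~\eqref{eq:dim X}; the role of Theorem~\ref{thm:XYD} here is confined to supplying the upper embedding of Theorem~\ref{thm:twist the center} and to witnessing the tightness of the compression via~\eqref{eq:Lq case}.

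For the construction, pick $m\in\N$ maximal with $|\mathcal{B}_m|\le n$, so $m\asymp n^{1/4}$ and $\log m\asymp \log n$, and set $\mathscr{H}_n\eqdef \phi_{m,1/p}(\mathcal{B}_m)\subset \ell_1$ (the hypothesis $\vartheta=1/p\ge 1/4$ of Theorem~\ref{thm:twist the center} is satisfied since $p\le 4$). By~\eqref{eq:twisted z} the inherited metric on $\mathscr{H}_n$ is comparable to
\begin{equation*}
 \rho(g,h)\eqdef |x-\chi|+|y-\upsilon|+\frac{\sqrt{|2z-2\zeta-x\upsilon+y\chi|}}{(\log m)^{1/p}}.
\end{equation*}
For any $q\ge p$ the hypothesis $\vartheta=1/p\ge 1/q$ of Theorem~\ref{thm:twist the center Lp} is satisfied, and applying that theorem (with $q$ in place of $p$ and $\vartheta=1/p$) produces $\psi_{m,q,1/p}(\mathcal{B}_m)\subset \ell_q$ with metric again comparable to $\rho$, yielding $\cc_{\ell_q}(\mathscr{H}_n)\lesssim 1$. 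Doubling follows exactly as in the discussion preceding Theorem~\ref{thm:factorization}: by~\cite{LN14} the set $\psi_{m,p,1/p}(\H_\Z)$ is $O(1)$--doubling in $\ell_p$, and $\mathscr{H}_n$ is $O(1)$--bi-Lipschitz equivalent to $\psi_{m,p,1/p}(\mathcal{B}_m)$.

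Now suppose that $\mathscr{H}_n$ embeds with distortion $D$ into a $k$--dimensional subspace $X\subset\mathsf{S}_1$. The identity map $(\mathcal{B}_m,d_W)\to(\mathscr{H}_n,\rho)$ is $1$--Lipschitz with $(\log m)^{1/p}$--Lipschitz inverse, so composing gives $\cc_X(\mathcal{B}_m)\le D(\log m)^{1/p}$. The technical heart is an $X$--valued analogue of Theorem~\ref{thm:LN-XYD} with $p=2$ whose constant is $O(\sqrt{\log k})$; this can be assembled by combining the $L_2$ Littlewood--Paley framework of~\cite{LafforgueNaor} with the noncommutative cotype~$2$ estimate $O(\sqrt{\log k})$ for $k$--dimensional subspaces of $\mathsf{S}_1$, exactly as implemented in~\cite{NPS18} for the diamond-graph obstruction. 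Feeding this $X$--valued inequality into the Cheeger--Kleiner--Naor mechanism reviewed in~\cite[\S3]{NY18} and~\cite[\S1.3]{NY18} yields $\cc_X(\mathcal{B}_m)\gtrsim \sqrt{\log m}/\sqrt{\log k}$, and comparing the two estimates,
\begin{equation*}
 D\sqrt{\log k}\gtrsim (\log m)^{1/2-1/p}\asymp (\log n)^{1/2-1/p},
\end{equation*}
which rearranges to the required $k\ge\exp(c(\log n)^{1-2/p}/D^2)$.

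The main obstacle is confirming that the $X$--valued Poincaré inequality really holds with the precise constant $O(\sqrt{\log k})$, rather than the trivial $O(\sqrt{k})$ given by John's theorem. The key ingredient is a noncommutative cotype~$2$ estimate with this logarithmic growth for $k$--dimensional subspaces of $\mathsf{S}_1$, which is available in the literature; the care lies in tracking its constant through Littlewood--Paley, vector-valued differentiation, and the Cheeger--Kleiner--Naor reduction without incurring extra losses. A pleasant feature of this approach is that the whole lower bound is driven by the classical $L_2$ inequality of~\cite{LafforgueNaor}: the new $L_4$ inequality of Theorem~\ref{thm:XYD} is needed only to make the compression factor $(\log n)^{1/p}$ asymptotically tight (via~\eqref{eq:Lq case}).
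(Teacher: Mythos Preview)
Your construction of $\mathscr{H}_n$ and the verification of the $\ell_q$--embeddability and doubling are identical to the paper's. For the lower bound you reach the correct inequality $D\sqrt{\log k}\gtrsim(\log n)^{1/2-1/p}$, but the paper gets there by a cleaner route that sidesteps any $X$--valued functional inequality. Namely, \cite[Theorem~12]{NPS18} gives the embedding estimate $\cc_{\mathsf{S}_r}(X)\le(\dim X)^{1-1/r}$ for every $1<r\le 2$; chaining distortions yields
\[
\sqrt{(r-1)\log n}\lesssim \cc_{\mathsf{S}_r}(\mathcal{B}_m)\lesssim (\log n)^{1/p}\,D\,(\dim X)^{1-1/r},
\]
where the left inequality is the $\mathsf{S}_r$--valued Lafforgue--Naor bound with its explicit $\sqrt{r-1}$ dependence (tracked in Appendix~\ref{sec:littlewood paley}). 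Optimizing with $r-1\asymp 1/\log\dim(X)$ then gives~\eqref{eq:dim X} directly.

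Your ``noncommutative cotype~$2$ estimate $O(\sqrt{\log k})$'' is this same optimization in disguise, but the terminology is off: Rademacher cotype~$2$ of $\mathsf{S}_1$ (hence of any subspace) is already $O(1)$ by Tomczak-Jaegermann, and that invariant does not feed into the Lafforgue--Naor machinery. What you actually need is the power-type~$2$ uniform convexity constant $K_2$ after renorming, and the way to see $K_2(X)\lesssim\sqrt{\log k}$ is precisely the \cite{NPS18} embedding into $\mathsf{S}_r$ at $r-1\asymp 1/\log k$ combined with $K_2(\mathsf{S}_r)\asymp 1/\sqrt{r-1}$. So your approach is correct once unwound, but the paper's distortion-chain formulation is shorter and avoids invoking an $X$--valued Poincar\'e inequality or the Cheeger--Kleiner--Naor reduction at all.
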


In the statement of Theorem~\ref{thm:dim reduction}, recall that for $p\ge 1$ the Schatten--von Neumann trace class $\mathsf{S}_p$ is the Banach space of all the compact operators $T\from \ell_2\to \ell_2$ that satisfy
\begin{equation*}
\|T\|_{\mathsf{S}_p}\eqdef\Big(\trace\big[(T^*T)^{\frac{p}{2}}\big]\Big)^{\frac{1}{p}}<\infty.
\end{equation*}
Note that $\ell_p$ is the subspace of $\mathsf{S}_p$ consisting of the diagonal operators. Thus, the dimension reduction lower bound~\eqref{eq:dim X} holds in particular for any subspace $X$ of $\ell_1$.

The proof of Theorem~\ref{thm:dim reduction} is short (modulo previously stated results and the available literature), so we present the quick derivation now instead of postponing it to a later section; it mimics the reasoning of~\cite{LN04} while combining it with~\cite{LafforgueNaor},  Theorem~\ref{thm:twist the center} and Theorem~\ref{thm:twist the center Lp}, as well as structural information on subspaces of $\mathsf{S}_1$ from~\cite{NPS18}.

\begin{proof}[Proof of Theorem~\ref{thm:dim reduction}]By~\eqref{eq:equiv to Kor} we have $|\mathcal{B}_m|\asymp m^4$ for all $m\in \N$. So, fix $m\in \N$ with $m\asymp{\sqrt[4]{n}}$ such that $n\lesssim |\mathcal{B}_m|\le n$. Using the mapping $\phi_{m,\frac{1}{p}}\from \H_\Z\to\ell_1$ of Theorem~\ref{thm:twist the center}, define
$$
\mathscr{H}_n\eqdef \phi_{m,\frac{1}{p}}(\mathcal{B}_m).
$$
By combining Theorem~\ref{thm:twist the center} and Theorem~\ref{thm:twist the center Lp}, we indeed have  $\cc_{\ell_q}(\mathscr{H}_n)\lesssim 1$ for all $q\ge p$.

Let $X$ be a finite-dimensional subspace of $\mathsf{S}_1$. Fix $1<r\le 2$  whose value will be specified later so as to optimize the ensuing reasoning. By~\cite[Theorem~12]{NPS18}, we have\footnote{If one only wishes to rule out embeddings into low-dimensional subspaces of $\ell_1$ rather than of $\mathsf{S}_1$, then it suffices to use here~\cite[Theorem~1.2]{LT80}, which yields an embedding into $\ell_r$ rather than $\mathsf{S}_r$.}
$$
\cc_{\mathsf{S}_r}(X)\le \dim(X)^{1-\frac{1}{r}}.
$$
Hence, if $\cc_X(\mathscr{H}_n)\le D$, then, since $\cc_{\mathscr{H}_n}(\mathcal{B}_m)\lesssim (\log n)^{\frac{1}{p}}$ by Theorem~\ref{thm:twist the center}, we have
$$
\cc_{\mathsf{S}_r}(\mathcal{B}_m)\lesssim (\log n)^{\frac{1}{p}}\cc_{\mathsf{S}_r}(\mathscr{H}_n)\le (\log n)^{\frac{1}{p}}D\cc_{\mathsf{S}_r}(X)\le (\log n)^{\frac{1}{p}}D\dim(X)^{1-\frac{1}{r}}.
$$
At the same time, by~\cite{LafforgueNaor} we have\footnote{As in the discussion before Conjecture~\ref{conj:function of p}, the dependence on $r$ in this estimate is not stated in~\cite{LafforgueNaor}, while it is crucial for us here; a justification why the reasoning in~\cite{LafforgueNaor} implies   this appears in Appendix~\ref{sec:littlewood paley}.} $\cc_{\mathsf{S}_r}(\mathcal{B}_m)\gtrsim \sqrt{(r-1)\log n}$, so we conclude that
$$
\inf_{1<r\le 2}\frac{\dim(X)^{1-\frac{1}{r}}}{\sqrt{r-1}}\gtrsim \frac{(\log n)^{\frac12-\frac1{p}}}{D}.
$$
This gives the desired bound~\eqref{eq:dim X} by choosing $r-1\asymp 1/\log(\dim(X))$.
\end{proof}

\begin{remark}\label{rem:sharp in p} By substituting~\eqref{eq:ask sqrt4} into the reasoning of~\cite{LafforgueNaor}, a positive resolution of Conjecture~\ref{conj:function of p} would imply that for every  $r\in (1,2]$ and $n\in \N$ we have
\begin{equation}\label{eq:fourth dist ellp}
\cc_{\ell_r}(\mathcal{B}_n)\gtrsim \sqrt[4]{r-1}\cdot\sqrt{\log n}.
\end{equation}
An incorporation  of this improved distortion lower bound into the above proof of Theorem~\ref{thm:dim reduction} (while using~\cite{LT80} in place of~\cite{NPS18} since we are in the simpler $\ell_p$ setting) would imply that for any finite-dimensional subspace $X$ of $\ell_1$, if $\cc_X(\mathscr{H}_n(p))\le D$, then the following improvement over~\eqref{eq:dim X} holds true.
\begin{equation}\label{eq:D4}
\dim(X)\ge \exp \bigg(\frac{c}{D^4}(\log n)^{2-\frac{4}{p}}\bigg).
\end{equation}
Notably, for $p=4$ this would be an improvement from $\dim(X)\ge \exp \left(\frac{c}{D^2}\sqrt{\log n}\right)$ to
\begin{equation}\label{eq:power type D4}
\dim(X)\ge n^{\frac{c}{D^4}},
\end{equation}
namely a power-type dimension reduction lower bound as in~\cite{BC05}. Understanding what is the correct behavior as $p\to 2^+$ remains an intriguing open question; some deterioration of the lower bound as in~\eqref{eq:dim X} or~\eqref{eq:D4} must occur because by~\cite{JL84} logarithmic dimension reduction is possible for finite subsets of a Hilbert space.

Another question that this discussion obviously raises is if~\eqref{eq:fourth dist ellp} could be enhanced to
\begin{equation}\label{eq:fourth dist Sp}
\cc_{\mathsf{S}_r}(\mathcal{B}_n)\gtrsim \sqrt[4]{r-1}\cdot\sqrt{\log n}.
\end{equation}
If so, then~\eqref{eq:power type D4} would hold when $X$ is a subspace of $\mathsf{S}_1$ rather than $\ell_1$. More substantially, this would resolve a difficult open question (see the discussion following Question~13 in~\cite{NY18}) by showing that $\H_\Z$ does not admit a bi-Lipschitz embedding into $\mathsf{S}_1$. In fact, for the latter conclusion it would suffice to establish the weaker property
\begin{equation}\label{eq:eta p}
\lim_{n\to \infty} \cc_{\mathsf{S}_{1+\frac{1}{\log n}}}(\mathcal{B}_n)=\infty.
\end{equation}
Indeed, by~\cite{NPS18} we have $\cc_{\mathsf{S}_1}(\mathcal{B}_n)\gtrsim \cc_{\mathsf{S}_r}(\mathcal{B}_n)$ when $r=1+1/\log n$.  Due to its  significant consequences, we expect that proving~\eqref{eq:eta p}, and all the more so its stronger version~\eqref{eq:fourth dist Sp}, would  require a major and  conceptually new idea.
\end{remark}

We end this discussion on dimension reduction by noting that~\cite{Tao19} shows that one could embed $\mathcal{B}_n$ with optimal distortion (up to universal constant factors) into Euclidean space of dimension $O(1)$.  Theorem~\ref{thm:dim reduction} shows that this fails badly  if one aims for optimal $\ell_1$--distortion embedding of $\mathcal{B}_n$ into a bounded dimensional subspace of $\ell_1$.

\subsubsection{Permanence of compression rates of groups} Suppose that $(M,d_M)$ is a metric and $(X,\|\cdot\|_X)$ is a Banach space. The {\em compression rate} of a Lipschitz mapping $f\from M\to X$ is the non-decreasing function $\omega_f\from  [0,\infty)\to [0,\infty)$  that is defined~\cite{Gro93} by
\begin{equation}\label{eq:compression rate}
\forall\, s\ge 0,\qquad \omega_f(s)\eqdef \inf_{\substack{x,y\in M\\ d_M(x,y)\ge s}} \|f(x)-f(y)\|_X.
\end{equation}
Equivalently, $\omega_f$ is the largest non-decreasing function from $[0,\infty)$ to $[0,\infty)$ such that $$\forall\, x,y\in M,\qquad \|f(x)-f(y)\|_X\ge \omega_f\big(d_M(x,y)\big).$$

There is a great deal of  interest in determining the largest possible compression rate of $1$--Lipschitz mappings from a finitely generated group $G$ (equipped with a word metric that is induced by some finite generating set) to certain Banach spaces, notable and useful examples of which are Hilbert space and $L_1$. The  literature on this topic is too extensive to discuss here, and we only mention that a substantial part of it is devoted to understanding the extent to which compression rates are preserved under various group operations (e.g.~various semidirect products). Theorem~\ref{thm:permanence} below provides a new example of the lack of such permanence which does not seem to be accessible using previously available methods. It leverages the fact that we establish here a marked difference between the $L_1$ embeddability of Heisenberg groups of dimension $3$ and dimension $5$.

\begin{thm}\label{thm:permanence} There exists a finitely group $G$  that has two finitely generated normal subgroups $H,K\triangleleft G$ such that the following properties hold true.
\begin{enumerate}
    \item Any $h\in H$ and $k\in K$ commute.
    \item $H\cap K$ is the center of $G$.
    \item $H$ and $K$ are isomorphic.
    \item $H$ and $K$ are undistorted in $G$; in fact, they admit generating sets $S_H$ and $S_K$ such that $S_H\cup S_K$ generates $G$ and the word metric on $G$ that is induced by $S_H\cup S_K$ restricts to the word metrics on $H$ and $K$ that are induced by $S_H$ and $S_K$, respectively.
    \item The $L_1$ compression of $G$ is asymptotically smaller than that of $H$ (hence also of $K\cong H$). Concretely, there exists a Lipschitz mapping $f\from H\to \ell_1$ that satisfies
        \begin{equation}\label{eq:H3 compression}
        \forall\, s\ge 3,\qquad \omega_{f}(s)\gtrsim \frac{s}{\sqrt[4]{\log s}\cdot (\log \log s)^2},
        \end{equation}
        yet for any Lipschitz mapping $F\from G\to L_1$ there are arbitrarily large $s\ge 4$ for which
        \begin{equation}\label{eq:H5 compression}
        \omega_F(s)\le \frac{s}{\sqrt{(\log s)\log\log s}}.
        \end{equation}
\end{enumerate}
\end{thm}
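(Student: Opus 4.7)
The plan is to take $G$ to be the discrete $5$-dimensional Heisenberg group, presented as $G=\langle X_1,Y_1,X_2,Y_2\rangle$ with the relations that $Z:=[X_1,Y_1]=[X_2,Y_2]$ is central and that $[X_i^{\pm 1},X_j^{\pm 1}]=[Y_i^{\pm 1},Y_j^{\pm 1}]=[X_i^{\pm 1},Y_j^{\pm 1}]=1$ whenever $i\ne j$. Set $S_H:=\{X_1^{\pm 1},Y_1^{\pm 1}\}$, $S_K:=\{X_2^{\pm 1},Y_2^{\pm 1}\}$, $H:=\langle S_H\rangle\cong\H_\Z$, and $K:=\langle S_K\rangle\cong\H_\Z$. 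Both $H$ and $K$ are normal in $G$ (each is centralized by the other) and properties (1)--(3) are immediate. For (4), take any $(S_H\cup S_K)$-word $w$ representing $g\in H$. Since $[S_H,S_K]=1$ in $G$, one may shuffle $w$ into the form $w_Hw_K$ without changing its length or the element it represents, where $w_H$ uses only $S_H$-letters and $w_K$ only $S_K$-letters. Because $g$ and $w_H$ lie in $H$, the element represented by $w_K$ lies in $H\cap K=\langle Z\rangle$, say it equals $Z^m$. The involution of $G$ swapping $X_1\leftrightarrow X_2$ and $Y_1\leftrightarrow Y_2$ preserves the presentation and fixes $Z$, hence maps $w_K$ letter-by-letter to an $S_H$-word $w_K'$ of the same length representing $Z^m$. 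Concatenation yields $w_Hw_K'$, an $S_H$-word of length $|w|$ representing $g$ in $H$; this proves $d_H(\0,g)\le|w|$, and the reverse inequality is trivial.

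\textbf{The two compression estimates.} For~\eqref{eq:H3 compression}, invoke Theorem~\ref{thm:invariant metric on two sides} (the left-invariant refinement of Theorem~\ref{thm:two Lps} announced in the surrounding remark), which furnishes a left-invariant metric $\updelta_4$ on $\H_\Z\cong H$ that bi-Lipschitzly embeds into $\ell_1$ via some $j$ and satisfies $\updelta_4(\0,(a,b,c))\asymp|a|+|b|+\sqrt{|c|}/\bigl(\sqrt[4]{\log|c|}\,(\log\log|c|)^2\bigr)$ for $|c|\ge 3$. Since $\updelta_4\lesssim d_W$, the map $f:=j\from(H,d_W)\to\ell_1$ is Lipschitz and satisfies $\|f(g)-f(h)\|_{\ell_1}\asymp\updelta_4(g,h)$; the infimum of $\updelta_4(g,h)$ subject to $d_W(g,h)\ge s$ is attained when $g^{-1}h=(0,0,c)$ with $|c|\asymp s^2$, yielding $\omega_f(s)\gtrsim s/\bigl(\sqrt[4]{\log s}\,(\log\log s)^2\bigr)$. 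For~\eqref{eq:H5 compression}, combine Theorem~\ref{thm:XYD-NY17} with the argument of~\cite[\S 3]{NY18} to obtain the distortion bound $\cc_{L_1}(\mathcal{B}_n(G))\gtrsim\sqrt{\log n}$, and then convert this to a compression-rate bound: for each $n$ choose a pair in $\mathcal{B}_n(G)$ witnessing the distortion, and use a dyadic averaging over scales, as in the compression estimates of~\cite{NY18}, to produce a sparse sequence $s_k\to\infty$ along which every Lipschitz $F\from G\to L_1$ satisfies $\omega_F(s_k)\le s_k/\sqrt{(\log s_k)\log\log s_k}$.

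\textbf{Main obstacle.} Items (1)--(4) reduce to bookkeeping once the presentation of $G$ is fixed, and the upper bound~\eqref{eq:H5 compression} is essentially imported from the $5$-dimensional machinery of~\cite{NY18}. The substantive point is the lower bound~\eqref{eq:H3 compression}, which depends on Theorem~\ref{thm:invariant metric on two sides}: one must assemble the scale-dependent embeddings $\phi_{n,1/4}$ of Theorem~\ref{thm:twist the center} into a \emph{single} left-invariant metric on all of $\H_\Z$, with the delicate bookkeeping tracking exactly the $(\log\log|c|)^2$ loss in the $z$-coordinate that arises from patching embeddings across dyadic scales. Without this double-logarithmic slack the $L_1$-compression of $H$ would fail to exceed that of $G$ by a definite factor, so the permanence failure asserted in (5) hinges precisely on the correct control of this doubly-logarithmic term.
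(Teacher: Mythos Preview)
Your proposal is correct and uses the same group as the paper: $G$ is the discrete $5$--dimensional Heisenberg group and $H,K$ are the two obvious copies of $\H_\Z$. Your verification of (1)--(4) is more detailed than the paper's (which simply says ``one directly checks''), and your argument for (4) via the involution swapping the two copies is clean.

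The only noteworthy difference is in the derivation of~\eqref{eq:H3 compression}. You route through Theorem~\ref{thm:invariant metric on two sides}, using the left-invariant metric $\updelta_4$ and its bi-Lipschitz embedding into $\ell_1$. The paper instead writes down the embedding directly as
\[
f=\bigoplus_{n=1}^\infty \frac{1}{n^2}\,\phi_{2^{2^n},\frac14},
\]
a weighted $\ell_1$--direct sum of the maps from Theorem~\ref{thm:twist the center}. These are essentially the same construction: the proof of Theorem~\ref{thm:invariant metric on two sides} builds $\updelta_4$ as a comparable weighted sum of the metrics $\Delta_k$ underlying the $\phi_{n,1/4}$, so your route simply packages the direct sum inside that theorem. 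Your ``Main obstacle'' paragraph slightly overstates the difficulty by insisting on left-invariance; the paper's one-line direct sum shows that for the compression statement alone one does not need an invariant metric, only a single Lipschitz map with the right lower envelope, and the $(\log\log s)^2$ loss falls out of the $1/n^2$ weights just as in your analysis. For~\eqref{eq:H5 compression} both you and the paper defer to~\cite{NY18}; the paper cites Theorem~9 there directly rather than sketching the distortion-to-compression conversion.
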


\begin{proof} Let $G_\R$ be the $5$--dimensional Heisenberg group, i.e., $\R^5$ with the group operation
\begin{multline*}
(x_1,y_1,x_2,y_2,z)(x_1',y_1',x_2',y_2',z')\\=\Big(x_1+x_1',y_1+y_1',x_2+x_2',y_2+y_2',z+z'+\frac{1}{2}(x_1y_1'+x_2y_2'- y_1x_1'-y_2x_2')\Big)
\end{multline*}
for $(x_1,y_1,x_2,y_2,z), (x_1',y_1',x_2',y_2',z')\in \R^5$.  Let $G$ be the $5$--dimensional integer Heisenberg group, which is the subgroup $G=\big\{(x_1,y_1,x_2,y_2,z+(x_1y_1+x_2y_2)/2)  : x_1,x_2,y_1,y_2,z\in \Z\big\}.$
The subgroups $H,K$ are natural copies of $\H_\Z$ in $G$, namely $$H=\{(x_1,y_1,x_2,y_2,z)\in G: x_2=y_2=0\} \qquad \mathrm{and} \qquad K=\{(x_1,y_1,x_2,y_2,z)\in G: x_1=y_1=0\}.$$ One directly checks the first four assertions of Theorem~\ref{thm:permanence}. The bound~\eqref{eq:H3 compression} follows by considering the mapping $f\from \H_\Z\to \ell_1(\ell_1)\cong\ell_1$ that is given by
$$
f\eqdef \bigoplus_{n=1}^\infty \frac{1}{n^2}\phi_{2^{2^n},\frac14},
$$
where the mappings that are being concatenated are those of Theorem~\ref{thm:twist the center}. The final assertion~\eqref{eq:H5 compression} of Theorem~\ref{thm:permanence} follows from~\cite[Theorem~9]{NY18}.
\end{proof}

\begin{remark} The term $\log\log s$ in~\eqref{eq:H3 compression} and~\eqref{eq:H5 compression} can be improved slightly; for~\eqref{eq:H3 compression} this follows by examining the above proof, and  for~\eqref{eq:H5 compression} this is explained by~\cite[Theorem~9]{NY18}. However, some unbounded lower-order correction is necessary in~\eqref{eq:H3 compression} for the specific groups that we used in the proof of Theorem~\ref{thm:permanence}; see Remark~\ref{rem:lower order term needed}.
\end{remark}

Obviously, Theorem~\ref{thm:permanence}  raises the  question if a similar phenomenon could occur for embeddings into a Hilbert space rather than into $L_1$. Also, in Theorem~\ref{thm:permanence} the compression rate of the subgroups $H,K$ grows roughly (suppressing lower-order factors) like $s/\sqrt[4]{\log s}$ as $s\to \infty$, while the compression rate of $G$  grows slower than $s/\sqrt{\log s}$. What are the possible asymptotic profiles of the compression rates that exhibit such phenomena?

\subsection{Decomposing surfaces into approximately ruled pieces} \label{sec:overview} In the previous sections, we discussed consequences of Theorem~\ref{thm:XYD} (and the refined version of its second part in Theorem~\ref{thm:twist the center}).  In this section, we will give an overview of the concepts involved in the proof of Theorem~\ref{thm:XYD}, especially our main contribution, which is a new way to describe the structure of surfaces in $\H$.

The statement of Theorem~\ref{thm:XYD} is in terms of smooth functions $f\from \H\to\R$, but the main bound~\eqref{eq:our main in intro} has an equivalent formulation in terms of surfaces in $\H$; see~\eqref{eq:use previous corona} below.  We will prove it by showing that surfaces in $\H$ admit a multi-scale hierarchical decomposition into pieces that are close to ruled surfaces (unions of horizontal lines) and that most of these pieces (in a quantitative sense) are long and narrow, giving the decomposition the appearance of a Venetian blind with many narrow slats; see Figure~\ref{fig:foliated corona of bump} and Figure~\ref{fig:bumpy surface} for examples. For reasons that will be clarified soon, we call the above structure a {\em foliated corona decomposition}.  This decomposition is conceptually central to this work, and the most involved part of this paper is to formulate this decomposition, prove its existence, and demonstrate its utility for the aforementioned applications (more are forthcoming).

The defining feature of this decomposition is that its pieces, which we call \emph{pseudoquads}, have widely varying aspect ratios.  Each pseudoquad is roughly rectangular, and we define the {\em aspect ratio} of a pseudoquad to be its width divided by its height; long, narrow rectangles have large aspect ratios, while tall, skinny rectangles have small aspect ratios.  The fact that the pieces of the decomposition (the slats of the Venetian blind) can have unbounded aspect ratios allows the decomposition to have additional symmetries and ultimately leads to the exponent $4$ in Theorem~\ref{thm:XYD}.

Specifically, in order to work with long, narrow pieces, we must prove results on the geometry of $\H$ that are invariant not only under the usual scaling automorphisms, but also under automorphisms that stretch and shear $\H$.  The resulting automorphism-invariant bounds allow us to produce a decomposition that is likewise invariant under rescaling, stretching, and shearing.  Furthermore, the overlap of the pieces of our decomposition is controlled by a coercive quantity that scales like the fourth power of the aspect ratio under automorphisms.  This leads to a new {\em weighted Carleson packing condition} in which overlaps are normalized by the fourth power of the aspect ratio; this condition leads directly to the exponent $4$ in the bound~\eqref{eq:our main in intro} of Theorem~\ref{thm:XYD}.

Proving the optimality of Theorem~\ref{thm:XYD} entails finding a surface for which~\eqref{eq:use previous corona} is sharp.  Part of the construction of such a surface can be seen in Figure~\ref{fig:bumpy surface}.  The surface in the figure can be viewed as a surface with a foliated corona decomposition for which the weighted Carleson packing condition is sharp.  For this reason, it is pedagogically beneficial to describe that  construction after describing foliated corona decompositions. In truth, the general decomposition methodology and the construction that demonstrates its optimality are intertwined:  limitations of such a construction indicate what decomposition to look for.  We therefore suggest to also consider the alternative route of first examining the construction of the specific (sharp) example prior to considering the task of decomposing general surfaces; the proofs in the rest of this article follow the latter (``reverse'') route as this leads to a more gradual introduction of notations and concepts.

The ensuing considerations belong firmly to the setting of the continuous Heisenberg group and its Carnot--Carath\'eodory geometry. They therefore assume some familiarity with notions from that setting; the pertinent background appears in Section~\ref{sec:prem} below.

\subsubsection{Fractal Venetian blinds abound}\label{sec:corona intro} In what follows, for any $s>0$ the  Hausdorff measure $\cH^s$ on $\H$ will be with respect to the Carnot--Carath\'eodory metric $d$ on $\H$. We denote the standard generators of $\H$ by $X=(1,0,0), Y=(0,1,0)$, and $Z=(0,0,1)$.

For $\Omega\subset \H$ and $a\in \R$, consider the symmetric difference
\begin{equation}\label{eq:def D set}
  \mathsf{D}_a\Omega\eqdef  \Omega \symdiff \Omega Z^{2^{-2a}}= \bigl(\Omega\setminus \Omega Z^{2^{-2a}}\bigr) \cup \bigl(\Omega Z^{2^{-2a}}\setminus \Omega\bigr)
\end{equation}
If $\Omega,U\subset \H$ are measurable, then, following~\cite{LafforgueNaor,NY18}, we define  $\vpfl{U}(\Omega)\from \R\to \R$ by
\begin{equation}\label{eq:def vert per}
\forall\, a\in \R,\qquad \vpfl{U}(\Omega)(a)\eqdef 2^a \cH^4\left(U\cap \mathsf{D}_a\Omega\right)=2^a\int_U\big|\1_\Omega(h)-\1_\Omega\bigl(hZ^{-2^{-2a}}\bigr)\big|\ud\cH^4(h).
\end{equation}
Thus, $\vpfl{U}(\Omega)(a)$ is a (normalized) measurement of the amount that $\Omega$ changes within $U$ when translated up and down by the specified (Carnot--Carath\'eodory) distance $2^{-a}$.

By~\cite[Lemma~38]{NY18}, in order to prove the first part of Theorem~\ref{thm:XYD}, namely inequality~\eqref{eq:our main in intro} for any compactly supported smooth function $f\from \H\to \R$, it suffices to prove that every measurable subset $\Omega\subset \H$ satisfies the following isoperimetric-type inequality.
\begin{equation}\label{eq:coarea}
\big\|\vpfl{\H}(\Omega)\big\|_{L_4(\R)}\lesssim \cH^3(\partial \Omega).
\end{equation}
This amounts in essence to an application of the coarea formula (e.g.~\cite{Amb01}).

A central step of~\cite{NY18} is a further reduction of~\eqref{eq:coarea} to the special case that $\Omega$ is (a piece of) an   {\em intrinsic Lipschitz epigraph}.  An intrinsic Lipschitz epigraph $\Gamma^+$ is a region of $\H$ that is bounded by an intrinsic Lipschitz graph $\Gamma$.  The notion of an intrinsic Lipschitz graph was introduced in~\cite{FSSC06} and all of the relevant background is explained in Section~\ref{sec:intrinsic graphs} below.  The intrinsic Lipschitz condition is parametrized by an intrinsic Lipschitz constant $\lambda\in (0,1)$.  By combining Proposition~55, Theorem~57 and Lemma~58 of~\cite{NY18} (see the deduction on page~232 of~\cite{NY18}) it follows that to prove~\eqref{eq:coarea} it suffices to show that for every  $0<\lambda<1$ the vertical perimeter of any intrinsic $\lambda$--Lipschitz epigraph $\Gamma^+\subset \H$ satisfies the growth bound
\begin{equation}\label{eq:use previous corona}
\forall\, r>0,\qquad \big\|\vpfl{B_r(\0)}\big(\Gamma^+\big)\big\|_{L_4(\R)}\lesssim_\lambda r^3,
\end{equation}
where  $B_r(\0)$ denotes the (Carnot--Carath\'eodory) ball   of radius $r$  centered at $\0=(0,0,0)$.

The structural information that underlies the reduction of~\eqref{eq:coarea} to~\eqref{eq:use previous corona}  is that for any $0<\lambda<1$, any (sufficiently nice; see~\cite{NY18} for precise assumptions) surface in $\H$ has a multi-scale hierarchical decomposition into pieces that are close to intrinsic $\lambda$--Lipschitz graphs, and moreover that decomposition has controlled overlap in the sense that it satisfies  a $O(1)$--Carleson packing condition. As such, this decomposition is an intrinsic Heisenberg analog of the {\em corona decompositions} that were introduced and developed for subsets of Euclidean space in~\cite{DavidSemmesSingular} and have since led to a variety of powerful applications in harmonic analysis (see also the monograph~\cite{DSAnalysis}).

The corona decomposition of~\cite{NY18} is in some respects a Heisenberg variant of a ``vanilla'' corona decomposition.  Like corona decompositions in $\R^n$, it is a hierarchical partition of a surface into pieces of bounded aspect ratio, and the Carleson packing condition governing overlaps of pieces depends only on the diameter of the pieces. Nevertheless, there are  key differences, including the fact that the proof in~\cite{NY18} relies on a new ``stopping rule'' (based on the quantitative nonmonotonicity of~\cite{CKN}) that yields, in fact, a different proof of the existence of corona decompositions even in Euclidean space (though, for less general sets than those that~\cite{DavidSemmesSingular} treats).  In addition, while ``vanilla'' Euclidean corona decompositions cover a surface in $\R^n$ by pieces that are approximately graphs of Lipschitz functions, the approximating graphs in~\cite{NY18} are intrinsic Lipschitz, like the surface depicted in Figure~\ref{fig:Lip10}. While Lipschitz graphs in Euclidean space vary slowly in all directions, intrinsic Lipschitz graphs vary slowly in horizontal directions but can vary quickly in vertical directions and can have Hausdorff dimension 2.5 with respect to the Euclidean metric~\cite{KirSC04}.
This can make these graphs difficult to analyze, and even after the decomposition step of~\cite{NY18}, the challenge of establishing estimates such as~\eqref{eq:use previous corona}  remains.

In~\cite{NY18}, we addressed this challenge for the $5$--dimensional Heisenberg group $\H^5$, but our techniques do not shed light on the $3$--dimensional setting of Theorem~\ref{thm:XYD}.  An intrinsic Lipschitz graph in $\H^5$ is the intrinsic graph of a function $\psi$ that is defined on a $4$--dimensional vertical hyperplane $V_0$. An inspection of the intrinsic Lipschitz condition shows that  the restriction of $\psi$ to any coset of $\H$ that is contained in $V_0$ is Lipschitz with respect to the Carnot--Carath\'eodory metric on $\H$. In~\cite{NY18}, we applied a representation-theoretic functional inequality of~\cite{AusNaoTes} to each of these restrictions, yielding a bound on the vertical variation of $\psi$. The desired control on the vertical perimeter of intrinsic Lipschitz graphs in $\H^5$ followed by  integrating this bound over the cosets of $\H$ in $V_0$.

In the  $3$--dimensional setting of the present work, the intrinsic graph $\Gamma$ in~\eqref{eq:use previous corona} corresponds to an intrinsic Lipschitz function $\psi\from V_0\to \R$, where $V_0$ is a $2$--dimensional vertical plane in $\H$. For concreteness, assume in what follows that $V_0=\{(x,0,z):\ x,z\in \R\}$ is the $xz$--plane.  The reasoning of~\cite{NY18} is irrelevant to proving~\eqref{eq:use previous corona}: one cannot restrict $\psi$ to cosets of a lower-dimensional Heisenberg group, as there is no such group!

Our strategy here is therefore entirely different from that of~\cite{NY18}. We will prove~\eqref{eq:use previous corona} by finding a new structural  description of intrinsic Lipschitz graphs in $\H$.  Specifically, we will prove that they admit a hierarchical family of partitions into pieces that are approximately ruled surfaces and bound the total error of these approximations.

We call this description of $\Gamma$ a \emph{foliated corona decomposition}.  It is a sequence of nested partitions of $\Gamma$ into approximately rectangular regions, called \emph{pseudoquads}, of varying heights and widths.  On each pseudoquad, $\Gamma$ is close to a vertical plane, and these vertical planes can be glued together to form a collection of ruled surfaces such that at most locations and scales, $\Gamma$ is approximated by one of the ruled surfaces; see Remark~\ref{rem:ruled-surfaces}.  Furthermore, the decomposition satisfies a new {\em weighted} variant of the classical  \emph{Carleson packing condition}.  Namely, we bound the weighted sum of the measures of the pseudoquads in the decomposition, where the measure of each pseudoquad is normalized by the fourth power of its aspect ratio.  We will see that the occurrence of the fourth power here is {\em dictated} by the requirement that this decomposition should be invariant under certain automorphisms of $\H$ (scaling, stretch, and shear automorphisms).

\begin{thm}\label{thm:ilg admits fcd intro}
  Any intrinsic Lipschitz graph in $\H$ has a foliated corona decomposition.
\end{thm}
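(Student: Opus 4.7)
The plan is to construct the foliated corona decomposition via a stopping-time procedure adapted to the anisotropic geometry of $\H$. First I would introduce a coercive non-planarity functional $\beta(Q)$ on pseudoquads $Q$ that measures, in a scale- and stretch-shear-invariant way, how far the restriction of $\Gamma$ to $Q$ is from coinciding with a single vertical plane; a natural candidate is an $L^2$-type beta number weighted so as to transform trivially under the automorphism group generated by the Carnot parabolic scalings together with the horizontal stretch and shear automorphisms of $\H$. The normalization should be chosen so that the expected mass that each pseudoquad contributes to the packing is $\beta(Q)^2 \cH^3(Q)/\alpha(Q)^4$, where $\alpha(Q)$ denotes the aspect ratio; this places the fourth power in the right hand side exactly where it is needed to feed into~\eqref{eq:use previous corona}.

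Next I would run a corona-style stopping rule on this functional: starting from a top-level pseudoquad, declare it \emph{good} when $\beta(Q)$ is below a small threshold $\epsilon=\epsilon(\lambda)$, and otherwise subdivide it. The crucial subtlety, which distinguishes this construction from the decomposition of~\cite{NY18}, is that the subdivisions must be allowed to be anisotropic; rather than always halving both dimensions, I would split in the direction in which $\Gamma$ is furthest from its best-fit plane, potentially producing children with much larger or much smaller aspect ratio than the parent. The output is a nested family of pseudoquads, and on each good leaf $\Gamma$ is close to a vertical plane; consecutive good leaves that share a horizontal side can then be glued to form the ruled-surface approximants of Remark~\ref{rem:ruled-surfaces}.

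To establish the weighted Carleson packing condition, I would prove a single-scale inequality of the shape
\[
\sum_{Q'\text{ child of }Q} \frac{\beta(Q')^2\, \cH^3(Q')}{\alpha(Q')^4}\;\lesssim_\lambda\; \frac{\cH^3(Q)}{\alpha(Q)^4} \;+\; \text{(geometric error)},
\]
and then iterate it down the tree. The analytic input here is a stretch-shear-invariant version of the quantitative nonmonotonicity of~\cite{CKN} used in~\cite{NY18}: a lower bound on $\beta(Q)$ forces a definite amount of nonmonotonicity on pairs of horizontal cosets in $Q$, and the normalization $\cH^3(Q)/\alpha(Q)^4$ is precisely the one under which the nonmonotonicity integral is invariant under the full stretch-shear-scale group. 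Summing geometrically yields the weighted Carleson bound; the target estimate~\eqref{eq:use previous corona} then follows by relating the vertical-perimeter contribution of each pseudoquad to its $\beta$ value and the good/bad status assigned by the stopping rule.

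The main obstacle is the identification of $\beta$ together with the correct packing normalization: in~\cite{NY18} pieces have bounded aspect ratio, so one can work with a classical beta number and an isotropic nonmonotonicity inequality, whereas here the pieces must be allowed to have unbounded $\alpha(Q)$, and the pair $(\beta,\alpha^{-4})$ has to be tailored so that the stretch-shear subgroup acts trivially on the packing inequality. Proving the single-scale nonmonotonicity estimate with exactly the invariant normalization, rather than with some suboptimal power of $\alpha(Q)$, is the heart of the argument and is what ultimately forces the exponent $4$ in Theorem~\ref{thm:XYD}; in particular, verifying that this exponent is the unique invariant choice, and that the error terms in the single-scale inequality really do sum telescopically along the tree of pseudoquads, is where I expect the bulk of the technical work to lie.
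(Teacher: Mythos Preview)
Your outline captures the correct invariance principle---the $\alpha(Q)^{-4}$ weight is indeed forced by stretch--shear invariance---and you are right that a nonmonotonicity-type coercive quantity is the key input. But two concrete steps in your plan would not go through as stated.

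First, an $L^2$--type $\beta$--number measures only closeness of $\Gamma$ to a plane in measure; it says nothing about the characteristic curves of $\Gamma$. In the paper's construction (Definition~\ref{def:rectilinear foliated patchwork} and Lemma~\ref{lem:subdivision algorithm}), when a pseudoquad $Q$ is cut horizontally, the cut is made along a characteristic curve of $\Gamma$, and the two children must again be $\mu$--rectilinear. This requires that the characteristic curves of $\Gamma$ through $Q$ are uniformly close to the characteristic curves (i.e.\ parabolas) of the approximating plane---this is part~\ref{it:omega control characteristics} of Proposition~\ref{prop:Omega control}, and it is \emph{not} a consequence of smallness of any $\beta$--number. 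The paper spends Sections~\ref{sec:extended monotone}--\ref{sec:l1 and characteristic} establishing this implication from small extended nonmonotonicity, and the argument is delicate precisely because after normalizing $Q$ by a stretch the intrinsic Lipschitz constant of $\Gamma$ is lost, so no a~priori regularity of characteristic curves is available.

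Second, your packing argument (single-scale inequality plus telescoping) implicitly assumes that the coercivity estimate ``$\beta(Q)$ large $\Rightarrow$ nonmonotonicity on $Q$ large'' holds with $Q$ on both sides, or at worst with $rQ$ for a universal $r$. The CKN stability theorem does not give this: if $\Gamma$ is $\eta$--monotone on $rQ$ one only concludes closeness to a plane on a ball of radius $\epsilon(r)\cdot\mathrm{diam}(Q)$ with $\epsilon(r)\to 0$, so the $r$ needed to make children $\mu$--rectilinear depends on $\mu$ and blows up. Remark~\ref{rem:stronger-conclusion} explains why this destroys the Vitali covering step (Lemma~\ref{lem:column coverings}) underlying the weighted Carleson bound. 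The paper's fix is to replace nonmonotonicity by the \emph{$R$--extended} nonmonotonicity $\Omega^P_{\Gamma^+,R}$ of Section~\ref{sec:parametric monotonicity}, which records how lines intersect $\Gamma$ in an $R$--neighborhood of $Q$; with this stronger hypothesis one gets closeness to a plane on $Q$ itself (Proposition~\ref{prop:stability of extended monotone}), at the cost of a kinematic formula (Lemma~\ref{lem:sums of OmegaP}) rather than a single-scale telescoping identity. Your proposal does not identify this extension, and without it the packing inequality you wrote cannot be closed.
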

The above description of foliated corona decompositions and the statement of Theorem~\ref{thm:ilg admits fcd intro} clearly lack rigorous definitions, but they convey the essence of what is achieved here. The necessary  technical matters are treated  in Section~\ref{sec:decompose here} below, where a precise formulation of  Theorem~\ref{thm:ilg admits fcd intro}  appears as Theorem~\ref{thm:corona graph formulated}. The justification that Theorem~\ref{thm:ilg admits fcd intro} can be used to achieve our goal~\eqref{eq:use previous corona} is carried out in Section~\ref{sec:fcd bounds vper} below; the groundwork of constructing a foliated corona decomposition makes this deduction quite mechanical.

We will next cover a few technical details necessary to describe foliated corona decompositions and the subdivision mechanism that produces them.  Recall that $V_0\subset \H$ is the $xz$--plane. Fix $0<\lambda<1$  and let $\Gamma$ be an intrinsic $\lambda$--Lipschitz graph that is the intrinsic graph of $\psi\from V_0\to \R$.  That is, $\Gamma=\Psi(V_0)$, where $\Psi(v)=vY^{\psi(v)}$ for all $v\in V_0$.  The function $\psi$ satisfies the intrinsic Lipschitz condition (Definition~\ref{def:intrinsic lipschitz graph}); the nonlinear nature of this condition is the source of subtleties that ensue (and the reason why basic questions on the rectifiability properties of intrinsic Lipschitz graphs remain open; see e.g.~\cite{DFO20}).

For any $p\in \Gamma$, there is a horizontal curve $\gamma$ contained in $\Gamma$ that passes through $p$, so $\Gamma$ is the union of all such curves. It is often convenient to work in $V_0$ instead of $\Gamma$.  To this end, let $\Pi\from \H\to V_0$ be the  projection to $V_0$, so $\Pi(\Psi(v))=v$ for  $v\in V_0$.  The projected curve $\Pi\circ \gamma$ is a curve in $V_0$ which we call a \emph{characteristic curve}; see Section~\ref{sec:char} for a detailed discussion.  Parametrize $\gamma$ so that $\Pi(\gamma(t))=(t,0,g(t))$ for some continuous function $g$.  This function is a solution of the differential equation $g'(t)=-\psi(t,0,g(t))$, and conversely, each solution gives  a characteristic curve.  If $\Gamma$ is a vertical plane, then $\psi(x,0,z)=ax+b$ for some $a,b\in \R$, in which case the characteristic curves are parallel parabolas.

Since horizontal curves pass through every point of $\Gamma$, there is a characteristic curve through every point of $V_0$, so one can reconstruct $\Gamma$ from its set of characteristic curves.  Note that the characteristic curve through $p$ is not necessarily unique: when $\psi$ is not smooth, these curves can split and rejoin~\cite{BigolinCaravennaSerraCassano}.  When $\psi$ is smooth, the characteristic curves foliate $V_0$, so there is a coordinate system on $V_0$ such that the foliation forms one set of coordinate lines.  However,  it is  difficult to use this coordinate system to study the geometry of $\Gamma$ because the distance between two characteristic curves can vary wildly. Foliated corona decompositions provide a way to overcome this difficulty.

A \emph{pseudoquad} for $\Gamma$ is a region in $V_0$ that is bounded by characteristic curves above and below and by vertical line segments on either side.  We call a pseudoquad $Q$ \emph{rectilinear} if its top and bottom boundaries approximate two parallel parabolas; if the top and bottom boundaries of $Q$ are exactly two parallel parabolas, we call $Q$ a \emph{parabolic rectangle}.  Parabolic rectangles are the projections to $V_0$ of rectangles in $\H$ bounded by two horizontal line segments and two vertical line segments.  The width $\delta_x(Q)$ and height $\delta_z(Q)$ of such a pseudoquad are defined to be, respectively, the width and height of its approximating parabolic rectangle; see Section~\ref{sec:pseudoquads}. The  aspect ratio of $Q$ is $\alpha(Q)=\delta_x(Q)/\sqrt{\delta_z(Q)}$.

Let $Q_0\subset V_0$ be a rectilinear pseudoquad.  A foliated corona decomposition for $\Gamma$ with root at $Q_0$ is a sequence of nested partitions of $Q_0$ into rectilinear pseudoquads. We construct such a decomposition using the following {\em subdivision algorithm} which, importantly, outputs pseudoquads that can be divided into two sets $\cVv$ and $\cVh$, called, respectively, the {\em vertically cut} pseudoquads and {\em horizontally cut} pseudoquads. The algorithm repeatedly cuts pseudoquads into halves.  Let $Q$ be a pseudoquad in the decomposition.  If $\Psi(Q)$ is a region in $\Gamma$ that is sufficiently close to a vertical plane $V_Q$ and if the characteristic curves through $Q$ are close to characteristic curves for $V_Q$, then  cut $Q$ in half along one of the characteristic curves of $\Gamma$.  In this case, say that $Q$ is horizontally cut and add it to $\cVh$.  Otherwise, cut $Q$ in half along a vertical line through its center, say that $Q$ is vertically cut, and add it to $\cVv$.  By applying this procedure iteratively, we obtain a sequence of nested partitions of $Q_0$; see Figure~\ref{fig:foliated corona of bump}.

\begin{figure}
  \begin{centering}
    \includegraphics[width=.8\textwidth]{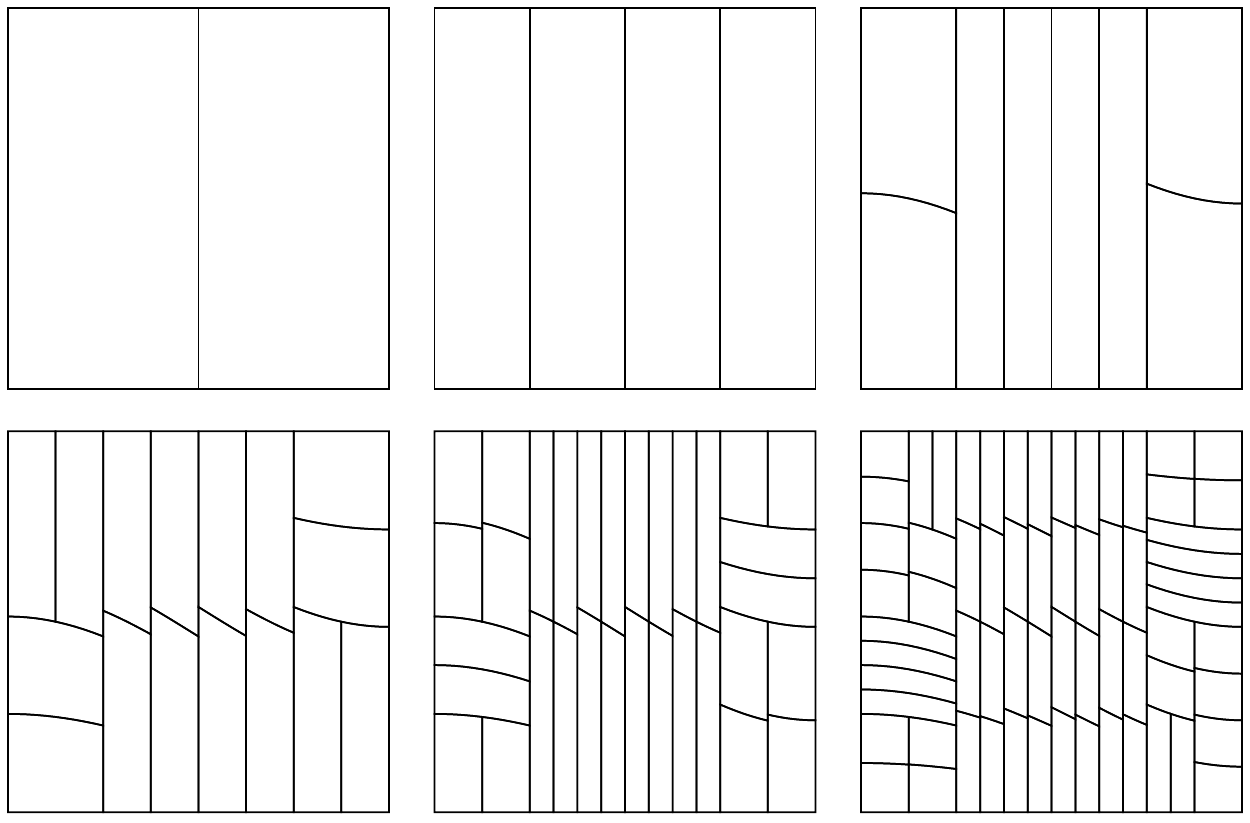}\\
    \includegraphics[width=.6\textwidth]{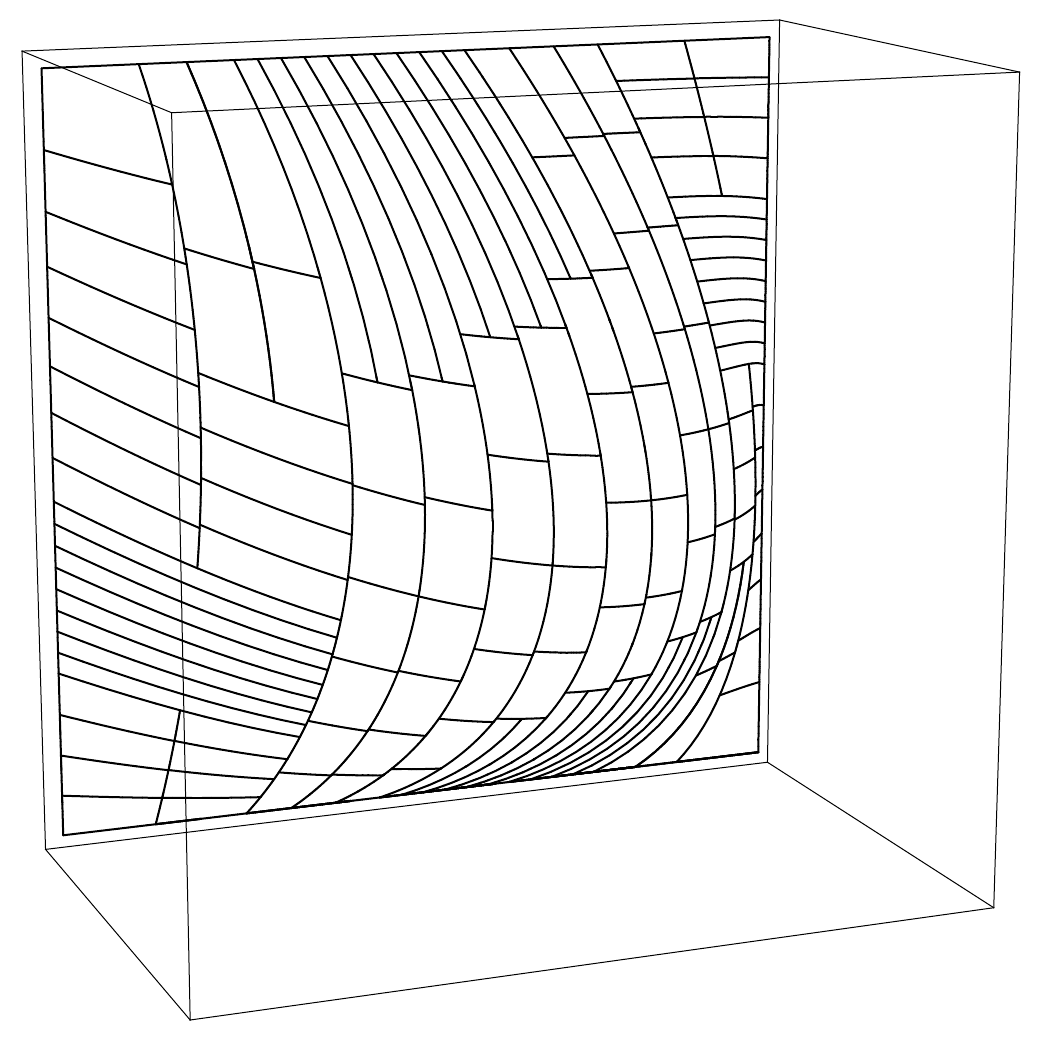}
  \end{centering}
  \caption{\label{fig:foliated corona of bump}{\small \em Stages in the construction of a foliated corona decomposition for a bump function as in the top row of Figure~\ref{fig:bumpy surface}.  The aspect ratio of the regions in the decomposition varies widely.  On the sides, where the surface is close to a vertical plane, the aspect ratio is large and the regions are short and wide; near the top and bottom, where it is further from a plane, the regions are tall and narrow.}}
\end{figure}

A crucial part of the algorithm is the mechanism determining whether to cut the  pseudoquad horizontally or vertically.  We stated qualitatively how this step depends on the geometry of $\Psi(Q)$, but we implement it quantitatively by introducing a coercive quantity called \emph{$R$--extended nonmonotonicity}.  This is a family of measures $\Omega^P_{\Gamma^+,R}$ on the vertical plane $V_0$, parametrized by  $R>0$; see Section~\ref{sec:parametric monotonicity}. These are inspired by the quantitative nonmonotonicity of~\cite{CKN}, but there are key differences. For instance, while the nonmonotonicity of $\Gamma$ on a subset $U\subset \H$ measures how lines intersect $\Gamma$ inside $U$, the $R$--extended nonmonotonicity of $\Gamma$ on a subset $W\subset V_0$ measures how lines intersect $\Gamma$ inside an $R$--neighborhood of $\Psi(W)$.
We refer to Section~\ref{sec:parametric monotonicity} for the details, in particular to Lemma~\ref{lem:sums of OmegaP} which shows that for any measurable $U\subset V_0$,
\begin{equation}\label{eq:kinematic intro}
    \sum_{i\in \Z} \Omega^P_{\Gamma^+,2^{-i}}(U)\lesssim_{\lambda} |U|,
\end{equation}
where $|U|$ is the area of $U$ and $\lambda$ is the intrinsic Lipschitz constant of $\psi$.

Analogously to~\cite{CKN}, extended nonmonotonicity is coercive in the following sense.  Let $U=[0,1]\times\{0\}\times [0,1]\subset V_0$ and for $r>0$, let $rU$ be the square of side $r$ concentric with $U$.  There is a universal constant $r>1$ such that if $\delta$ is sufficiently small, $R$ is sufficiently large, $\psi(0)$ is bounded, and $\Omega_{\Gamma^+,R}(rU)<\delta$, then $\Psi(U)$ is close to a vertical plane and the characteristic curves that pass through $U$ are close to characteristic curves of that vertical plane (i.e., parabolas).  The proof of this geometric statement (whose precise formulation appears as Proposition~\ref{prop:Omega control})  is the most technically involved part of this work; it is outlined in Section~\ref{sec:omega control outline} and carried out in Section~\ref{sec:extended monotone} and Section~\ref{sec:l1 and characteristic}.

By translation, rescaling, and applying a shear automorphism, a similar coercive property applies to any pseudoquad of aspect ratio $1$, but for the subdivision algorithm, we need a coercive property for pseudoquads of arbitrary aspect ratio.  If $Q$ is a pseudoquad of aspect ratio $\alpha(Q)$, the \emph{stretch automorphism} $s(x,y,z)=(\alpha(Q)^{-1}x, \alpha(Q) y, z)$ sends $Q$ to a pseudoquad of aspect ratio $1$.  The extended nonmonotonicity of $s(Q)$ scales like $\alpha(Q)^4$, so if the extended nonmonotonicity of $\Gamma^+$ on $Q$ is at most $\delta |Q|/\alpha(Q)^{4}$, then $\Psi(Q)$ is close to a vertical plane and the characteristic curves that pass through $Q$ are close to characteristic curves of that vertical plane.

Therefore, in the subdivision algorithm above, there is  $\delta>0$ such that we cut $Q$ horizontally if and only if the extended nonmonotonicity of $\Gamma^+$ on $Q$ is at most $\delta |Q|/\alpha(Q)^{4}$.  This criterion, combined with \eqref{eq:kinematic intro}, leads to a crucial bound on the total pseudoquads that have been vertically cut by the subdivision algorithm.  Specifically, if $Q$ is a pseudoquad of the decomposition and $\mathcal{D}_{\mathsf{V}}(Q)$ is the set of vertically cut pseudoquads $Q'$ in the decomposition that are contained in $Q$, then
\begin{equation}\label{eq:Carleson weighted intro}
\sum_{Q'\in \mathcal{D}_{\mathsf{V}}(Q)} \frac{|Q'|}{\alpha(Q')^{4}}\lesssim_\lambda |Q|.
\end{equation}
The condition~\eqref{eq:Carleson weighted intro} is the aforementioned weighted Carleson packing condition, and the $L_4$ norm that appears in Theorem~\ref{thm:XYD} arises directly from the exponent $4$ in \eqref{eq:Carleson weighted intro}.

Thus, the $L_4$ norm in Theorem~\ref{thm:XYD} is ultimately dictated by having to prove a coercive property for intrinsic Lipschitz graphs that is invariant under stretch automorphisms.  This stretch-invariance has multiple effects.  On one hand, stretch-invariance means that it suffices to prove the coercive property for pseudoquads of aspect ratio $1$; indeed, it is enough to consider pseudoquads that approximate the unit square.  On the other hand, it induces a substantial complication in the proofs: since the intrinsic Lipschitz constant is not invariant under stretch automorphisms, the coercivity must be independent of the intrinsic Lipschitz constant.

\subsubsection{A maximally bumpy surface}\label{sec:maximally bumpy}
The optimality part of Theorem~\ref{thm:XYD} corresponds to constructing (in Section~\ref{sec:counterexampleConstruction}) an intrinsic Lipschitz graph for which the $L_4(\R)$ norm in~\eqref{eq:use previous corona} cannot be replaced by the $L_q(\R)$ norm for any $0<q<4$. Theorem~\ref{thm:twist the center} is deduced in Section~\ref{sec:proof of ctr embedding} by analyzing this construction; the level sets of the resulting embedding into $L_1$ are a superposition of certain random rotations, scalings and translations of this surface.

We will show that for any sufficiently small $\epsilon>0$, there are intrinsic Lipschitz surfaces in $\H$ of bounded (Heisenberg) perimeter that are $\epsilon$--far from planes at $\epsilon^{-4}$ different scales, many more than the $\epsilon^{-2}$ different scales that are possible (by~\cite{NY18}) for such surfaces in the $5$--dimensional Heisenberg group $\H^5$ (or, for that matter, in $\R^n$, by the Jones travelling salesman theorem~\cite{Jon90} and the higher-dimensional analogues thereof~\cite{DavidSemmesSingular}).

We construct these surfaces by adding bumps to a vertical plane.  While  surfaces that demonstrate that the bound of~\cite{NY18} for $\H^5$ is optimal can be constructed by adding round bumps with equal width and height, it is more natural in $\H$ to add oblong bumps with width (horizontal size) $w$, depth $d$ (size perpendicular to the surface), and height $h$ (vertical size).  The automorphisms of the Heisenberg group preserve the ratio $dw/h$, so we can construct a family of bump functions by applying automorphisms to a prototype bump with $d=w=h=1$.  The resulting bumps have $h=dw$, and we define the aspect ratio $\alpha$ of such a bump to be
$$\alpha=\frac{w}{\sqrt{h}}=\frac{w}{\sqrt{d w}}=\sqrt{\frac{w}{d}}.$$
A horizontal curve connecting one side of the bump to its other side has slope roughly $d/w=\alpha^{-2}$, so adding a layer of bumps with aspect ratio $\alpha\ge 1$ to a surface multiplies its perimeter by roughly $1+\alpha^{-4}$. Thus, we can start with a unit square, then add $\epsilon^{-4}$ layers of bumps of width $\epsilon^{-1} r_i$, depth $\epsilon r_i$, and height $r_i^2$, for $r_1 \gg \dots \gg r_{\epsilon^{-4}}$.  These bumps all have aspect ratio $\epsilon^{-1}$, so the resulting surface $\Sigma$ has bounded perimeter, and for any $x\in \Sigma$, the intersections $B_{r_i}(x)\cap \Sigma$ are each $\epsilon r_i$--far away from any plane.  So, $\Sigma$ is $\epsilon$--far from planes at $\epsilon^{-4}$ different scales. The implementation of this strategy in Section~\ref{sec:counterexampleConstruction} is in essence an example of a foliated corona decomposition. At each stage we use the characteristic curves of the surface that was obtained in the previous stage to guide us where to glue the next layer of bumps.   Figure~\ref{fig:bumpy surface} shows a sketch of the construction.
\begin{figure}
  \begin{center}\includegraphics[width=.33\textwidth]{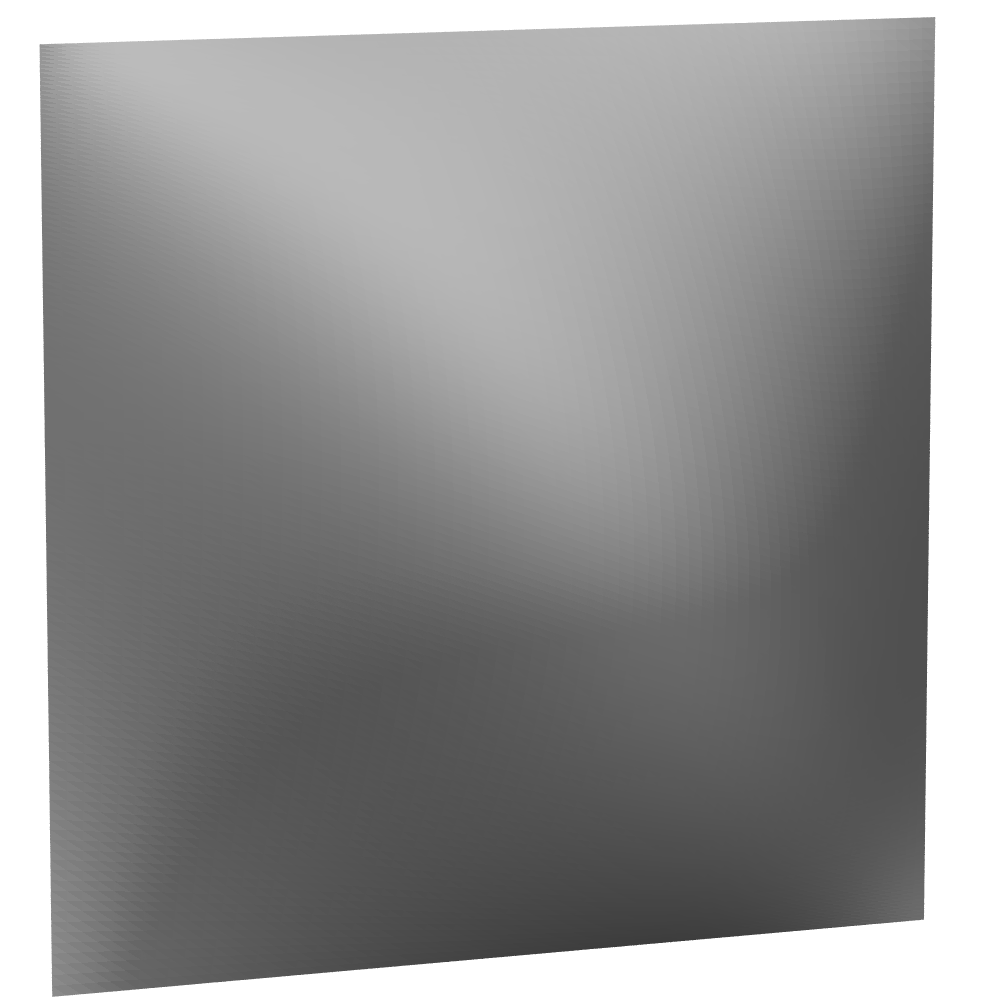} \includegraphics[width=.33\textwidth]{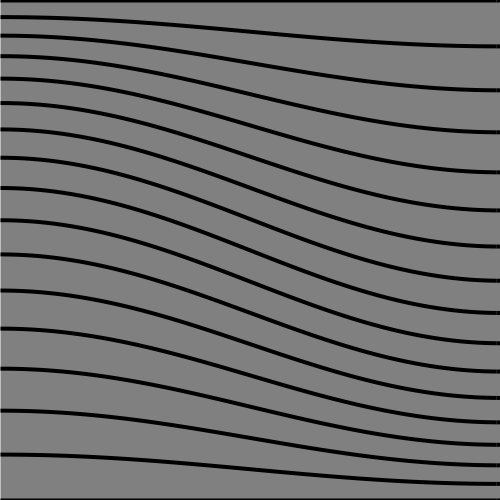}\includegraphics[width=.33\textwidth]{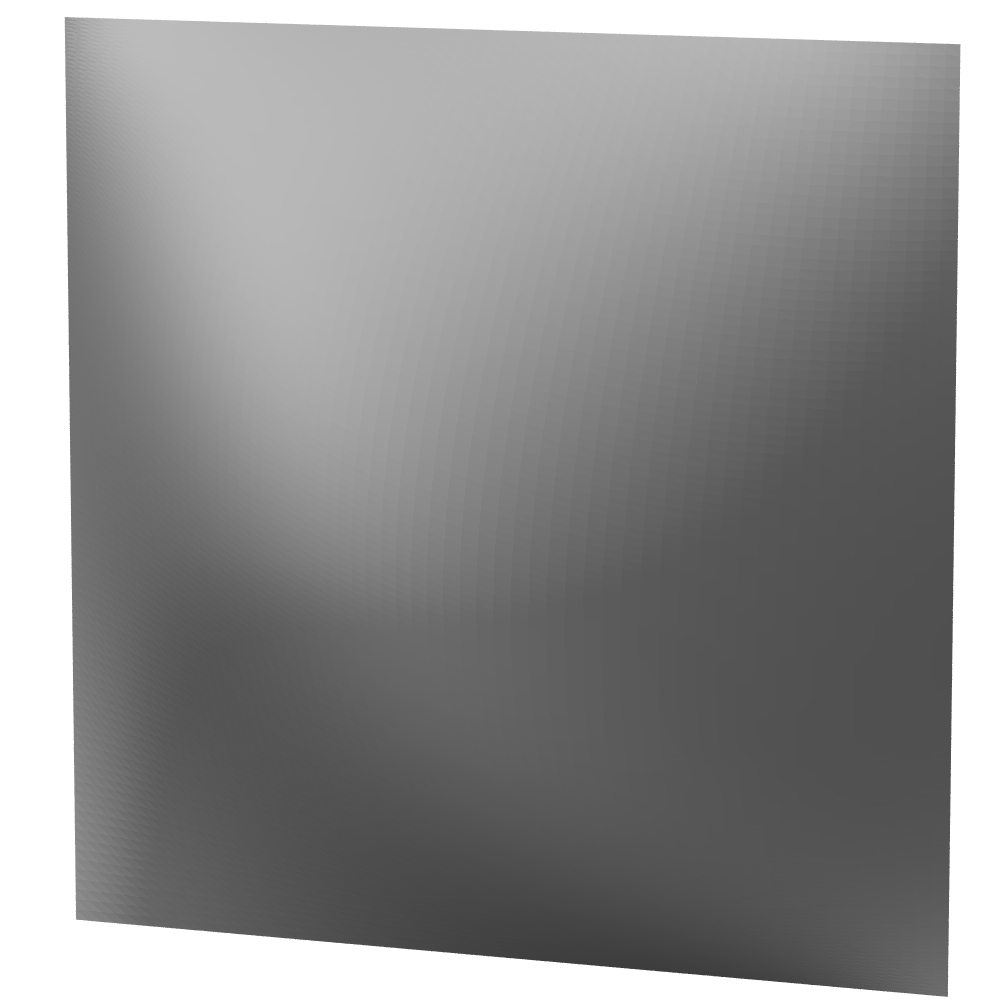}

  \vspace{.2in}

  \includegraphics[width=.33\textwidth]{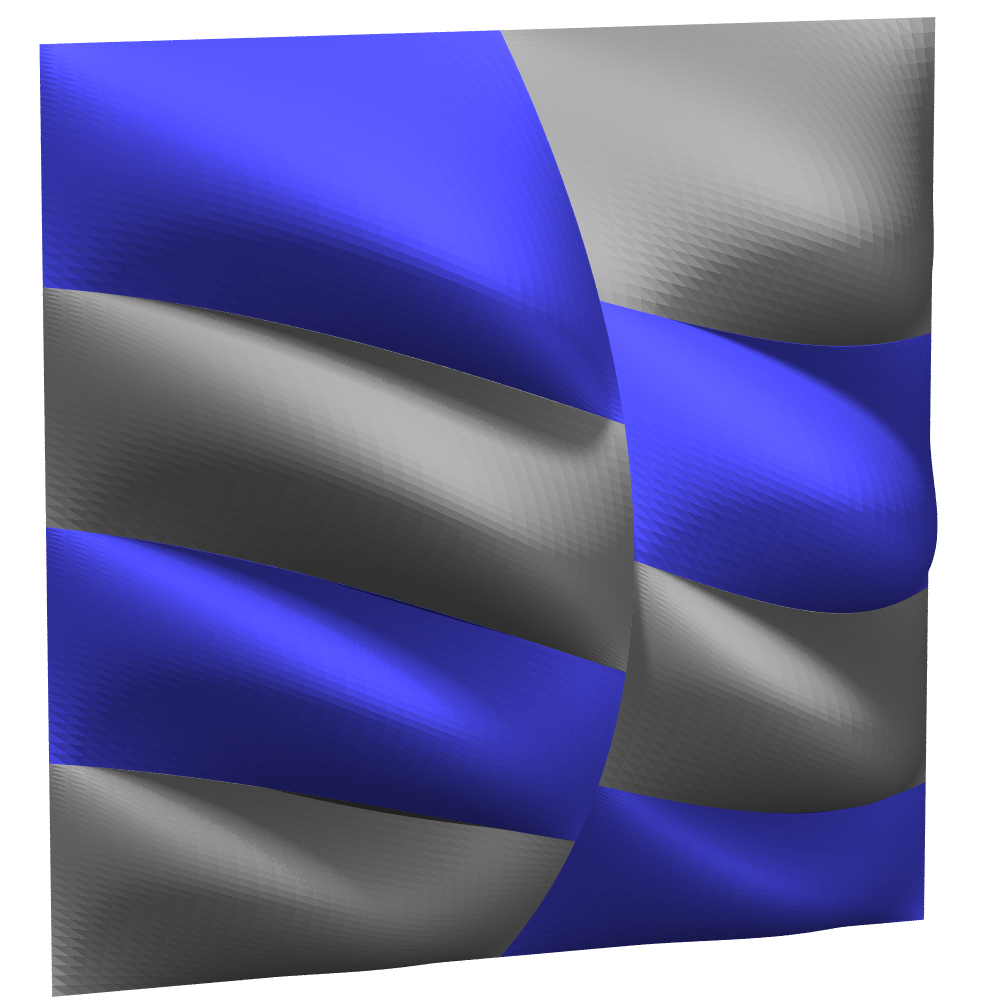} \includegraphics[width=.33\textwidth]{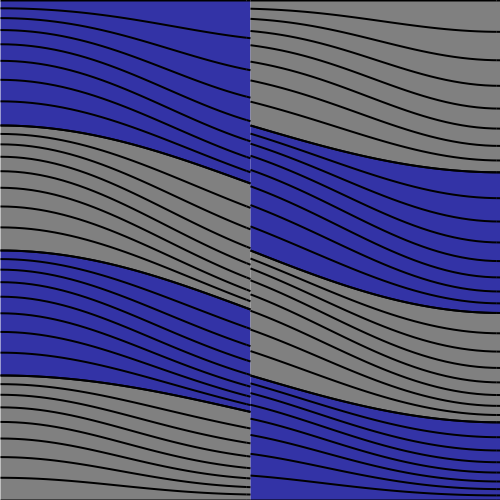}\includegraphics[width=.33\textwidth]{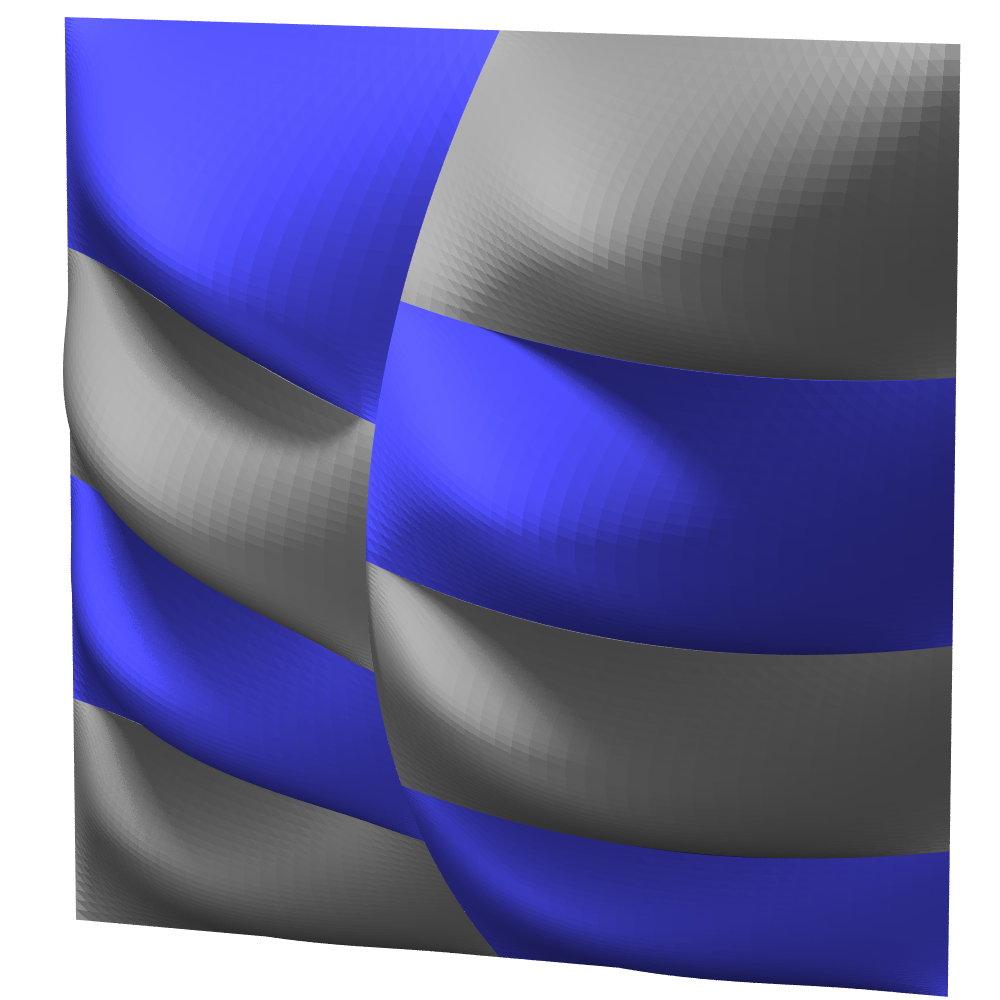}

  \vspace{.2in}

  \includegraphics[width=.33\textwidth]{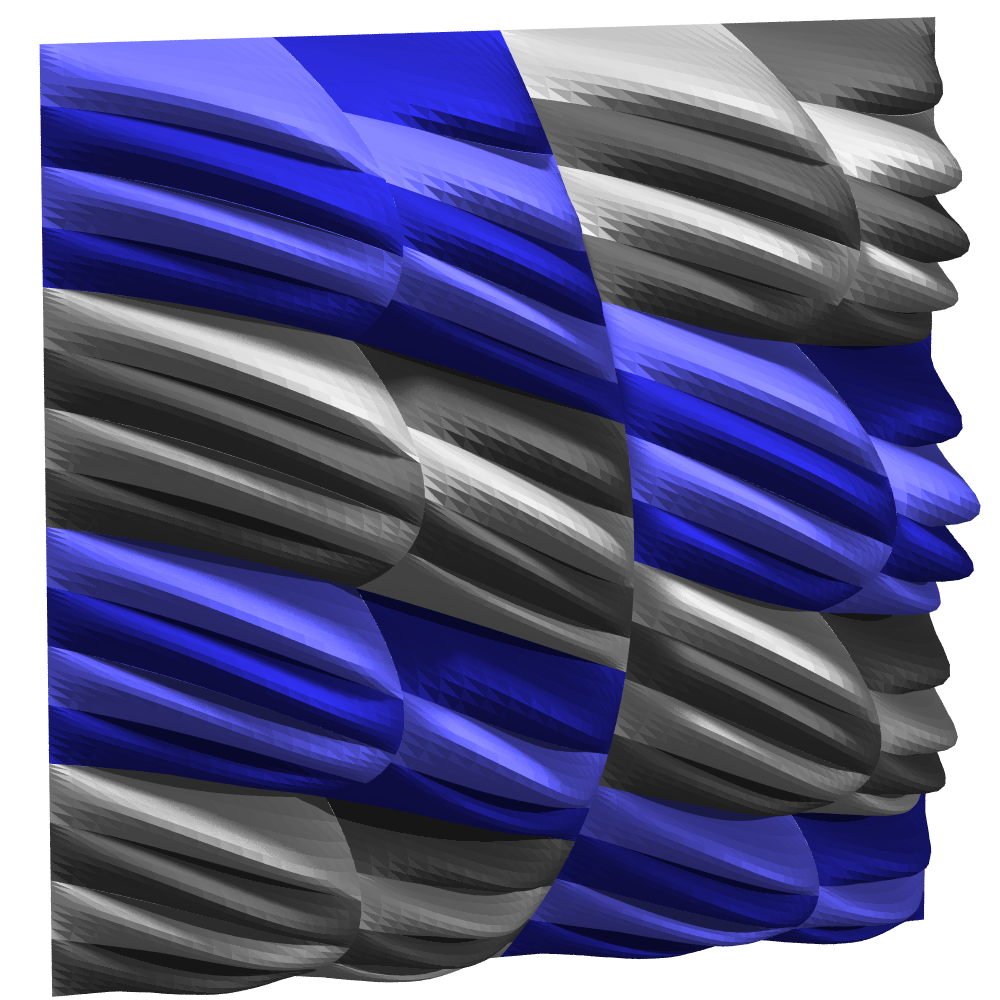} \includegraphics[width=.33\textwidth]{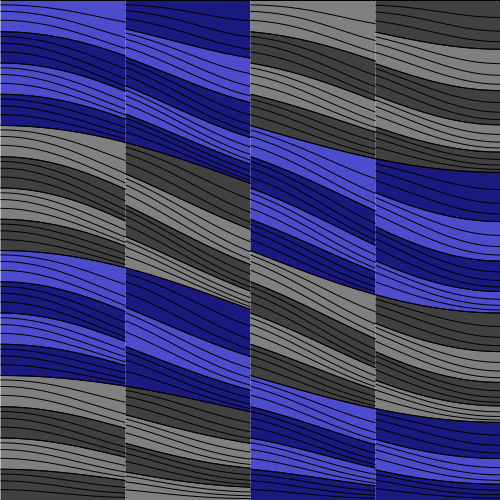}\includegraphics[width=.33\textwidth]{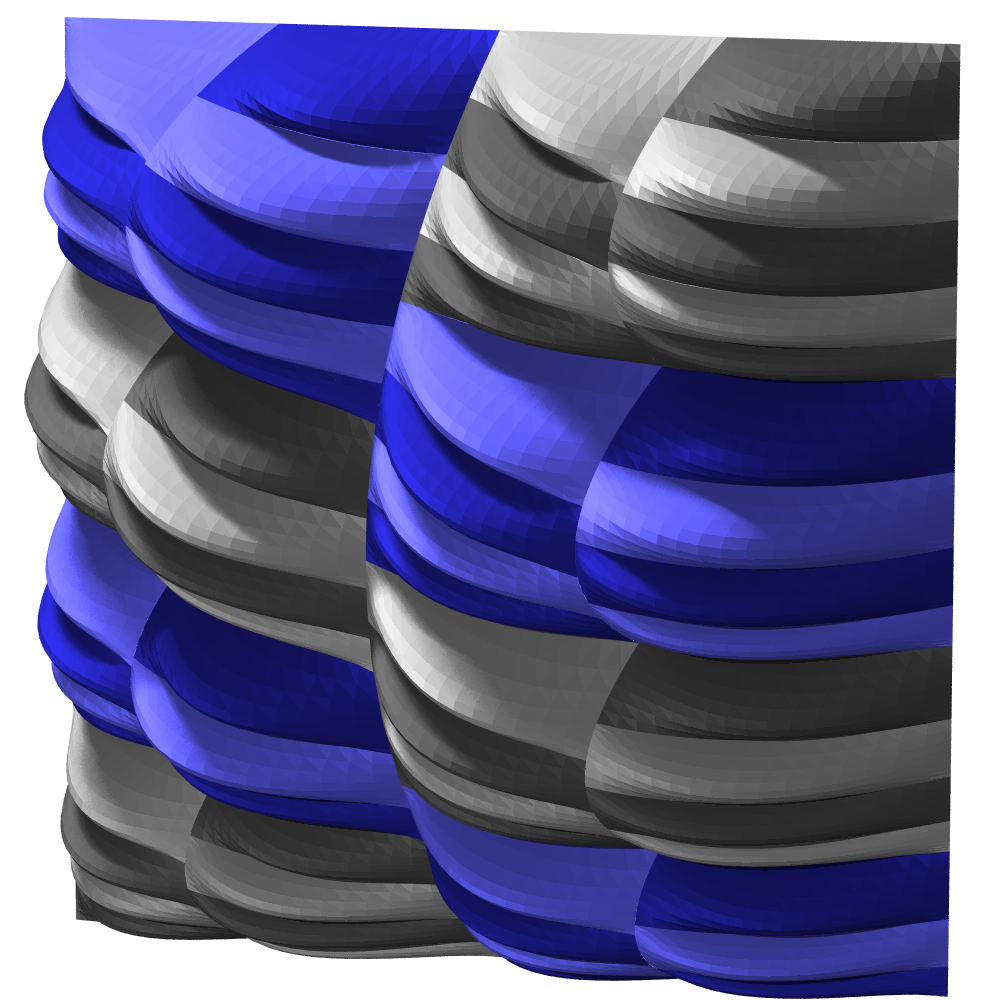}
\end{center}
\caption{\label{fig:bumpy surface}
  {\small\em The first three steps of the construction of a maximally rough surface in $\H$.  The left and right column show the same surface from two different angles.  The center column shows a projection of the surface to the plane, with characteristic curves marked.  Since the second derivatives of these curves are small, the Heisenberg area of the surface is bounded, but the surface can be made $\epsilon$--far from a plane at $\epsilon^{-4}$ different scales --- much more than what is possible in $\H^5$.}}
\end{figure}

It is highly informative to examine why  this construction does not work in $\H^5$.  Bumps on a surface in $\H^5$ have five dimensions, which we denote $w_1, w_2, d_1, d_2$, and $h$, so that $h$ is vertical, the other four dimensions are horizontal, and $d_2$ is normal to the surface.  The automorphisms of $\H^5$ preserve the ratios $d_1w_1/ (d_2w_2)$,  $d_1w_1/h$, and $d_2w_2/h$.  If $\beta$ is a bump with $d_1w_1=d_2w_2=h$ and $d_2\le  w_2$, then the slopes of $\beta$ in the three horizontal directions are roughly $d_2/w_1$, $d_2/w_2$, and $d_2/d_1$.  So, adding $\beta$ to a vertical rectangle with dimensions $w_1\times w_2\times d_1\times h$ increases the volume of the rectangle by a factor of roughly
$$\nu(w_1, w_2, d_1, d_2, h)\eqdef 1+ \max\left\{\frac{d^2_2}{w^2_1}, \frac{d^2_2}{w^2_2}, \frac{d^2_2}{d^2_1}\right\},$$
and the resulting bump is roughly $d_2/\sqrt{h}$--far from a $4$--dimensional hyperplane at scale $\sqrt{h}$.  If $d_2/\sqrt{h}=\epsilon$, then $d_1 w_1=h=\epsilon^{-2}d_2^2$, and
$$\nu(w_1, w_2, d_1, d_2, h) \ge 1+ \frac{d^2_2}{\max\{ d_1^2,w^2_1\}} \ge 1+\frac{d_2^2}{d_1 w_1} = 1+ \epsilon^{2}.$$
Hence, this construction results, at best, in a surface that is $\epsilon$--far from planes at $\epsilon^{-2}$ different scales. One may also consider bumps where $d_1 w_1$, $d_2 w_2$, and $h$ are not proportional, such as bumps with $d_1=w_1=w_2=d_2^{-1} =r \gg 1=h$.  This is more subtle than it might initially  seem.  Indeed, because the $d_1$-- and $w_1$--directions do not commute, there are no $r\times r \times r\times r^{-1}\times 1$ boxes in $\H^5$ that stay close to horizontal.  Consequently, a bump of these dimensions behaves similarly to a collection of smaller bumps with $d_1 w_1=d_2 w_2=h$, which are governed by the previous reasoning.

\subsection{Roadmap}

In Section~\ref{sec:prem}, we present notation for working with the Heisenberg group and some definitions and results related to intrinsic graphs and characteristic curves.  In Section~\ref{sec:counterexampleConstruction}, we construct an intrinsic graph with large vertical perimeter and use it to construct the embeddings used in Theorem~\ref{thm:twist the center} and its consequences.

The rest of the paper is devoted to defining and constructing foliated corona decompositions and using them to prove equation~\eqref{eq:use previous corona} bounding the vertical perimeter of an intrinsic Lipschitz graph.  In Sections~\ref{sec:pseudoquads}, we define a rectilinear foliated patchwork, which decomposes an intrinsic Lipschitz graph into rectilinear pseudoquads, and in Section~\ref{sec:decompose here}, we define the weighted Carleson packing condition required for such a patchwork to be a foliated corona decomposition.  Then, in Section~\ref{sec:fcd bounds vper}, we show that an intrinsic Lipschitz graph that admits a foliated corona decomposition satisfies equation~\eqref{eq:use previous corona}.

It remains to show that every intrinsic Lipschitz graph admits a foliated corona decomposition.  We produce foliated corona decompositions by the subdivision algorithm described in Section~\ref{sec:subdiv algorithm}.  The fact that the patchworks produced by this algorithm satisfy the weighted Carleson packing condition relies on careful analysis of a coercive quantity, the extended parametric nonmonotonicity, defined in Section~\ref{sec:parametric monotonicity}.  When this coercive quantity is small, the graph satisfies strong geometric bounds, detailed in Proposition~\ref{prop:Omega control}.  Assuming Proposition~\ref{prop:Omega control}, we prove the weighted Carleson condition in Section~\ref{sec:weighted Carleson}.  In Section~\ref{sec:omega control outline}, we outline the proof of Proposition~\ref{prop:Omega control}, and in Sections~\ref{sec:extended monotone}--\ref{sec:l1 and characteristic}, we prove it.

\section{Preliminaries}\label{sec:prem}

Most of this section presents initial facts about the Heisenberg group that will be used throughout what follows. However, we will start by briefly setting  notation for measure theoretical boundaries and interiors that are best described in greater generality (though they will be applied below only to either the Heisenberg group or the real line).

Let $(\bbM,d_\bbM,\mu)$ be  a non-degenerate metric measure space, i.e., $(\bbM,d_\bbM)$ is a metric space and $\mu$ is a Borel measure on $\bbM$ such that $\mu(B_\bbM(x,r))>0$ for all $x\in \bbM$ and $r>0$, where $B_\bbM(x,r)=\{y\in \bbM:\ d_\bbM(x,y)\le r\}$ is the closed $d_\bbM$--ball of radius $r$ centered at $x$.

Given a subset $S\subset \bbM$, we define the {\em measure-theoretic support} $\supp_{\mu}(S)$ of $S$ to be the usual measure-theoretic support of the indicator function $\1_S\from \bbM\to \{0,1\}$, namely
\begin{equation}\label{eq:def supp mu}
\supp_{\mu}(S)\eqdef \bigcap_{r>0} \big\{x\in \bbM:\ \mu(B_\bbM(x,r)\cap S)>0\big\}.
\end{equation}
The {\em measure-theoretic boundary} of $S$ is defined as
\begin{equation}\label{eq:def partial mu}
\partial_\mu S\eqdef \supp_\mu(S) \cap \supp_\mu(\bbM\setminus S)= \bigcap_{r>0} \Big\{x\in \bbM:\ 0<\frac{\mu(B_\bbM(x,r)\cap S)}{\mu(B_\bbM(x,r))}<1\Big\}.
\end{equation}
The {\em measure-theoretic interior} of $S$ is defined as
\begin{equation}\label{eq:def inter mu}
\inter_\mu (S)\eqdef \bbM\setminus \supp_\mu(\bbM\setminus S)= \bigcup_{r>0}\big\{x\in \bbM: \ \mu(B_\bbM(x,r)\setminus S)=0\big\}.
\end{equation}
These definitions are nonstandard; other works define the measure-theoretic boundary as the set of points where the density of $S$ is not $0$ or $1$.  The advantage of our definition is that one may check that  $\inter_\mu (S)$ is open in $\bbM$ and its (topological) boundary $\partial \inter_\mu (S)$ is contained in $\partial_\mu S$. The sets $\inter_\mu (S)$, $\inter_\mu (\bbM\setminus S)$, $\partial_\mu S$ are disjoint and their union is $\bbM$, i.e.,
\begin{equation}\label{eq:3disjoint union}
\bbM=\inter_\mu (S)\bigsqcup \big(\inter_\mu (\bbM\setminus S)\big)\bigsqcup \partial_\mu S.
\end{equation}

\subsection{The Heisenberg group}\label{sec:Prelim heis} Here we summarize basic notation and terminology related to the Heisenberg group.

Throughout what follows, $\|\cdot\|\from \R^3\to \R$ will denote the Euclidean norm on $\R^3$, namely  $\|(a,b,c)\|=\sqrt{a^2+b^2+c^2}$ for all $a,b,c\in \R$.  Let $$X\eqdef(1,0,0),\ Y\eqdef(0,1,0),\ Z\eqdef (0,0,1)$$ be the standard basis of $\R^3$, and let $x,y,z\from \R^3\to \R$ be the coordinate functions.  Namely, for $u=(a,b,c)\in \R^3$ we set $x(u)=a$, $y(u)=b$ and $z(u)=c$.  With this notation, the Heisenberg group operation~\eqref{eq:def Heisenberg product} can be written as
\begin{equation}\label{eq:group operation}
\forall  u,v\in \H=\R^3,\qquad   u v = u+v+ \frac{x(u)y(v)-y(u)x(v)}{2} Z.
\end{equation}

The linear span of a set of vectors $S\subset \R^3$ will be denoted $\langle S\rangle$. The plane $\mathsf{H}\eqdef \langle X,Y\rangle$ is called the space of \emph{horizontal vectors}.  Let $\uppi\from \R^3\to \mathsf{H}$ be the orthogonal projection.  A \emph{horizontal line} in $\H$ is a coset of the form $w\langle h\rangle\subset \H$ for some $w\in \H$ and $h\in \mathsf{H}$.  

The union of the horizontal lines passing through a point $u\in \H$ is the  plane $u\mathsf{H}$, which we denote $\mathsf{H}_u$ and call the \emph{horizontal plane centered at $u$}.  Every plane $P\subset \R^3$ either contains a coset of $\langle Z\rangle$ (a \emph{vertical line}), in which case we call $P$ a \emph{vertical plane}, or can be written $P=\mathsf{H}_u$ for some unique $u\in \H$.

If $I\subset \R$ is an interval and $\gamma\from I\to \H$ is a curve such that $x\circ \gamma, y\circ \gamma, z\circ\gamma\from I\to \R$ are  Lipschitz, then $\gamma'(t)$ is defined for almost all $t\in I$.  One then says that $\gamma$ is a \emph{horizontal curve} if $\gamma$ is tangent to $\mathsf{H}_{\gamma(t)}$ at $\gamma(t)$ for almost all $t\in I$, i.e., for almost all $t\in I$ we have
$$
\left.\frac{\ud}{\ud s} \left(\gamma(t)^{-1}\gamma(s)\right)\right|_{s=t}\in \mathsf{H}.
$$
Note that horizontality is left-invariant; if $\gamma$ is a horizontal curve and $g\in \H$, then $g\cdot \gamma$ is also a horizontal curve.
If $\gamma(t)=(\gamma_x(t),\gamma_y(t),\gamma_z(t))$, then this requirement  is equivalent to the differential equation $2\gamma_z'(t)=\gamma_x(t)\gamma_y'(t)-\gamma_y(t)\gamma_x'(t)$.

Define
$$\ell(\gamma)\eqdef \int_I \|\uppi(\gamma'(t))\|\ud t.$$
The sub-Riemannian or Carnot--Carath\'eodory metric $d\from \H\times \H\to [0,\infty)$ is defined by letting $d(v,w)$ be the infimum of $\ell(\gamma)$ over all horizontal curves $\gamma$ connecting $v\in \H$ to $w\in \H$.  This metric is left-invariant, i.e., $d(ga,gb)=d(a,b)$ for all $a,b,g\in \H$.

If $\gamma$ is a horizontal curve connecting $v$ to $w$, then $\uppi\circ \gamma$ is a curve in $\R^2$ of the same length connecting $\uppi(v)$ to $\uppi(w)$, so $d(v,w)\ge \|\uppi(v)-\uppi(w)\|$.  Consequently, any horizontal line in $\H$ is a geodesic.  Also, $d$ satisfies (e.g.~\cite{BR96,Gro96,Mon02}) the ball-box inequality
\begin{equation}
  \label{eq:metric approximation}
  \forall  h=(x,y,z)\in \H,\qquad   d(\0,h) \le |x|+ |y|+4\sqrt{|z|} \le 2 d(\0,h)  + 4\cdot \frac{d(\0,h)}{\sqrt{2\pi}} \le 4 d(\0,h).
\end{equation}

For $h\in \H$ and $r\ge 0$  we let $B_r(h)=\{g\in \H:\ d(g,h)\le r\}= hB_r(\0)$ denote the closed ball of radius $r$ centered at $h$ with respect to the sub-Riemannian metric $d$ on $\H$; throughout what follows we will not use this notation for balls with respect to any other metric.

For $\sigma>0$ denote by $\cH^\sigma$ the $\sigma$--dimensional Hausdorff measure that $d$ induces on $\H$. Thus $\cH^4$ is the Lebesgue measure on $\R^3$, which is also the Haar measure on $\H$. Given a measurable subset $E\subset \H$, the associated {\em perimeter measure} that is induced by $d$ will be denoted by $\Per_E(\cdot)$; we refer to~\cite{FSSCRectifiability} for background on this fundamental notion, noting  only that there exists $\eta>0$ such that if $E\subset \H$ has a piecewise smooth boundary, then  $\Per_E(U)=\eta \cH^3(U\cap \partial E)$ for every open subset $U\subset \H$.

It is  also beneficial to describe the group operation on $\H$ in terms of a symplectic form.  Let $\omega_{\R^2}\from \R^2\times \R^2\to \R$ be the standard symplectic form, i.e.,
$$
\forall (a,b),(\alpha,\beta)\in \R^2,\qquad \omega_{\R^2}\big((a,b),(\alpha,\beta)\big)\eqdef a\beta-b\alpha=\det \begin{pmatrix} a & b \\
           \alpha & \beta
                       \end{pmatrix}.
$$
Under this notation,  \eqref{eq:group operation} can be written as follows.
\begin{equation}\label{eq:group operation symplectic}
\forall  u,v\in \H,\qquad   u v = u+v+ \frac{\omega_{\R^2}(\uppi(u),\uppi(v))}{2} Z.
\end{equation}

This lets us define automorphisms of $\H$.
Let $A\from \R^2\to \R^2$ be an invertible linear map with determinant $J\in \R \setminus \{0\}$, so that $\omega_{\R^2}(A(v),A(w))=J\omega_{\R^2}(v,w)$ for any $v,w\in \R^2$.  It follows from \eqref{eq:group operation symplectic} that the map $\tilde{A}\from \H\to \H$ that is defined by
\begin{equation}\label{eq:def tilde A}
\forall (x,y,z)\in \H,\qquad \tilde{A}(x,y,z)\eqdef(A(x,y), J z)
\end{equation}
is an automorphism of $\H$ which,   since $\tilde{A}(\mathsf{H})=\mathsf{H}$,  sends horizontal curves to horizontal curves and is thus Lipschitz with respect to the sub-Riemannian metric on $\H$.  If $A$ is an orthogonal matrix, then $\tilde{A}$ is an isometry.  As a notable special case, for $a,b>0$, we define
\begin{equation}\label{eq:def s a b}
\forall (x,y,z)\in \H,\qquad s_{a,b}(x,y,z)\eqdef (ax,by,abz),
\end{equation}
which we call a \emph{stretch map}.
When $a=b=t$, $s_{t,t}$ is the usual scaling automorphism of $\H$, which scales the sub-Riemannian metric on $\H$ by a factor of $t$.  For simplicity, in what follows we will sometimes write $s_{t,t}=s_t$.

\subsection{Intrinsic graphs and intrinsic Lipschitz graphs}\label{sec:intrinsic graphs} Throughout what follows, we denote the $xz$--plane by $V_0$, namely
$$
V_0\eqdef \{(x,y,z)\in \H:\ y=0\}=\R\times \{0\}\times \R\subset \H.
$$
Note that the restriction of $\cH^3$ to $V_0$ is proportional to the Lebesgue measure on $V_0$.

Fix $U\subset V_0$.  The intrinsic graph of a function $\psi\from U\to \R$ is defined in~\cite{FSSC06} to be
\begin{equation}\label{eq:def intrinsic graph}
\Gamma_\psi\eqdef \left\{v Y^{\psi(v)} \mid v\in U\right\}=\Big\{\big(x(v),\psi(v),z(v)+\frac12x(v)\psi(v)\big):\ v\in U\Big\}\subset \H,
\end{equation}
where in~\eqref{eq:def intrinsic graph}, as well as throughout  what follows, it is convenient to use the exponential notation  $u^t=tu=(tx(u),ty(u),tz(u))$ for $u\in \H$ and $t\in \R$.
Observe that any coset of $\langle Y\rangle$ that passes through $U$ intersects $\Gamma_\psi$ in exactly one point. We will also use the following notation for the intrinsic epigraph of $\psi$.
$$
\Gamma_\psi^+\eqdef  \left\{v Y^{t} \mid (v,t)\in U\times (\psi(v),\infty)\right\}.
$$

Suppose that $U\subset V_0$ is an open subset of $V_0$ and that $g\from U\to \R$ is smooth. For every $\psi \from U\to \R$ define a function $\partial_\psi g\from U\to \R$ by
\begin{equation}\label{eq: d psi}
\partial_\psi g\eqdef \frac{\partial g}{\partial x}- \psi\frac{\partial g}{\partial z}.
\end{equation}
If $\psi$ is smooth, then we define the \emph{horizontal derivative} of $\psi$ to be the function
\begin{equation}\label{eq:d psi psi}
  \partial_\psi \psi=\frac{\partial \psi}{\partial x}- \psi\frac{\partial \psi}{\partial z}.
\end{equation}

Let $v\in U$ and let $p\eqdef vY^{\psi(v)}\in \Gamma_\psi$.
One can interpret $\partial_\psi \psi$ by considering the horizontal plane $\mathsf{H}_p$. This plane locally intersects $\Gamma_\psi$ in a curve, and the tangent vector of this curve at $p$ is given by $X+\partial_\psi \psi(v)Y$. The horizontal derivative also determines the slope of the intrinsic tangent plane to $\Gamma_\psi$, where the slope of a vertical plane is the slope of its projection to $\mathsf{H}$.
As $r\to 0$, rescalings of the intersections $B_r(p)\cap \Gamma_\psi$ converge to a vertical tangent plane with slope $\partial_\psi\psi(v)$.

The following proposition is part of Theorem~1.2 of \cite{AVSCIntrinsic}. It expresses the area $\cH^{3}(\Gamma_\psi)$ of $\Gamma_\psi$, namely the 3-dimensional Hausdorff measure (with respect to the sub-Riemannian metric) of $\Gamma_\psi$, in terms of $\partial_\psi \psi$.
\begin{prop}[\cite{AVSCIntrinsic}]\label{prop:areaIntegral}
  There exists a constant  $c>0$ such that if $U\subset V_0$ is an open set and $\psi\from U\to \R$ is smooth, then
  \begin{equation}\label{eq:intrinsic area formula}
    \cH^3(\Gamma_\psi)\approx \cS^{3}(\Gamma_\psi)=c\int_U \sqrt{1+(\partial_\psi \psi)^2}\ud w\asymp \cH^3(U)+\|\partial_\psi \psi\|_{L_1(U)},
  \end{equation}
  where $\cS^3$ is the $3$--dimensional spherical Hausdorff measure on $\H$.
\end{prop}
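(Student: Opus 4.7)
Since the identity $\cS^{3}(\Gamma_\psi)=c\int_U \sqrt{1+(\partial_\psi \psi)^2}\ud w$ is precisely Theorem~1.2 of~\cite{AVSCIntrinsic}, the only work needed is to explain the three comparisons in~\eqref{eq:intrinsic area formula}; the middle equality will be taken as a black box from that reference, and we only indicate the structural ideas behind it for orientation. The overall plan proceeds in three steps.

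First, I would reduce $\cH^3(\Gamma_\psi)$ to the spherical Hausdorff measure $\cS^3(\Gamma_\psi)$. For any metric space and any $\sigma>0$, the $\sigma$--dimensional Hausdorff and spherical Hausdorff measures are comparable with ratio depending only on $\sigma$ (the spherical measure restricts the allowable covers to balls, which are at worst a bounded factor less efficient than general sets of the same diameter). Applied to the metric space $(\H,d)$ with $\sigma=3$, this yields $\cH^3(\Gamma_\psi)\asymp \cS^3(\Gamma_\psi)$ with a universal constant.

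Second, I would describe the intrinsic area formula. The map $\Psi\from U\to \H$ defined by $\Psi(v)=vY^{\psi(v)}$ parametrizes $\Gamma_\psi$. At $p=\Psi(v)$, the horizontal plane $\mathsf{H}_p$ meets $\Gamma_\psi$ transversally in a smooth curve whose tangent, as discussed just before the proposition, points in the direction $X+\partial_\psi\psi(v)\,Y$ and therefore has Euclidean length $\sqrt{1+(\partial_\psi\psi(v))^2}$ in the horizontal plane. The coset $p\langle Z\rangle$ provides a transverse ``vertical'' direction. The sub-Riemannian $3$--dimensional spherical area of a piece of surface is computed by integrating, with respect to Lebesgue measure on $V_0$, the $1$--dimensional Euclidean length of the horizontal tangent line (the $Z$--factor contributes a universal geometric constant $c$ coming from the shape of small metric balls about points of a vertical surface). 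This yields precisely the integral formula; the detailed justification, which identifies the metric-derivative Jacobian of $\Psi$ with $\sqrt{1+(\partial_\psi\psi)^2}$ and handles the $\cS^3$ normalization, is the content of~\cite[Theorem~1.2]{AVSCIntrinsic}.

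Third, the final comparison is an entirely elementary pointwise inequality: for every $t\in\R$,
\begin{equation*}
\tfrac{1}{\sqrt 2}\bigl(1+|t|\bigr)\le \sqrt{1+t^2}\le 1+|t|,
\end{equation*}
which after integration over $U$ gives
\begin{equation*}
\int_U \sqrt{1+(\partial_\psi\psi)^2}\ud w\asymp \cH^3(U)+\|\partial_\psi\psi\|_{L_1(U)},
\end{equation*}
with universal constants, and in particular the multiplicative constant $c$ can be absorbed into the $\asymp$. Concatenating the three steps completes the chain. The only nontrivial ingredient is the second step; I expect the main obstacle in a fully self-contained derivation to be the careful computation of the metric Jacobian of $\Psi$ with respect to $\cS^3$, which is why we invoke~\cite{AVSCIntrinsic} for this identity rather than reprove it here.
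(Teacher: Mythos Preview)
Your proposal is correct and matches the paper's treatment: the paper does not prove this proposition but attributes the identity $\cS^{3}(\Gamma_\psi)=c\int_U \sqrt{1+(\partial_\psi \psi)^2}\ud w$ to~\cite[Theorem~1.2]{AVSCIntrinsic}, exactly as you do, with the flanking comparisons being standard (the general fact $\cH^\sigma\asymp\cS^\sigma$ and the pointwise bound $\sqrt{1+t^2}\asymp 1+|t|$). The paper additionally remarks, citing~\cite{JulNicVit}, that the first $\asymp$ can in fact be upgraded to proportionality on $\Gamma_\psi$, but this refinement is not needed for the statement as written.
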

Recent work \cite{JulNicVit} has shown that the spherical Hausdorff measure and the Hausdorff measure on $\Gamma_\psi$ are equal up to a multiplicative constant, so the first equivalence in \eqref{eq:intrinsic area formula} can be replaced with an equality up to a constant factor.

For $\lambda\in (0,1)$, define the double cone
$$
\mathrm{Cone}_\lambda\eqdef \left\{h\in \H \mid |y(h)|>\lambda d(\0,h)\right\}.
$$
This is a cone centered on the horizontal line $\langle Y\rangle$ which is scale-invariant, i.e., $$\forall  t>0,\qquad s_{t,t}(\mathrm{Cone}_\lambda)=\mathrm{Cone}_\lambda.$$
The intersection $\mathsf{H}\cap \mathrm{Cone}_\lambda$ is a double cone in $\mathsf{H}$ with angle depending on $\lambda$.  Specifically,
\begin{align}\label{eq:formula for intersection with cone}
\begin{split}
  \mathsf{H}\cap \mathrm{Cone}_\lambda
  &=\left\{(x,y,0)\in \H\mid |y| > \lambda \sqrt{x^2+y^2}\right\}\\
  &=\left\{(x,y,0)\in \H\mid |y| > \frac{\lambda}{\sqrt{1-\lambda^2}} |x|\right\}.
  \end{split}
\end{align}

\begin{defn}\label{def:intrinsic lipschitz graph}
  Let $U\subset V_0$ and let $\Gamma\subset \H$ be an intrinsic graph over $U$.  For  any $\lambda\in (0,1)$, we say that $\Gamma$ is an {\em intrinsic $\lambda$--Lipschitz graph} if $(h \mathrm{Cone}_\lambda(V_0))\cap \Gamma=\emptyset$ for every $h\in \Gamma$.  Equivalently, for every $p,q\in \Gamma$,
  $$|y(q)-y(p)| \le \lambda d(p,q).$$
  We say that $\Gamma$ is an \emph{intrinsic Lipschitz graph} if it is intrinsic $\lambda$--Lipschitz for some $\lambda\in (0,1)$.  If $\Gamma=\Gamma_\psi$ for some $\psi\from U\to \R$, then we say that $\psi$ is an \emph{intrinsic Lipschitz function}.
\end{defn}
Definition~\ref{def:intrinsic lipschitz graph} gives the same class of intrinsic Lipschitz graphs as the definition introduced in~\cite{FSSC06}, but it gives different classes of intrinsic $\lambda$--Lipschitz graphs; see Section~3.2 of \cite{RigotQuantitative} for a proof that the definitions are equivalent.

The following simple bound will be convenient later.
\begin{lemma}\label{lem:ilg line distance}
  Let $0\le \lambda\le 1$ and let $\Gamma=\Gamma_\psi$ be an intrinsic $\lambda$--Lipschitz graph of a function $\psi\from U\subset V_0\to \R$.  Let $v, w\in U$ and write $p=vY^{\psi(v)}\in \Gamma$ and $q=wY^{\psi(w)}\in \Gamma$.  Then
  $$|y(p)-y(q)|=|\psi(v)-\psi(w)|\le \frac{2}{1-\lambda} d(p, q\langle Y\rangle).$$
\end{lemma}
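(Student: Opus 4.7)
The plan is to bound $d(p,q)$ in terms of $d(p,q\langle Y\rangle)$ and $|y(p)-y(q)|$, and then substitute into the intrinsic Lipschitz inequality $|y(p)-y(q)| \le \lambda d(p,q)$ and solve for $|y(p)-y(q)|$.

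Concretely, I would proceed as follows. For any $t \in \R$, let $r_t \eqdef qY^t$, a point on the vertical line $q\langle Y\rangle$. Since $Y$ is horizontal and horizontal lines are geodesics, the left-invariance of $d$ gives
\begin{equation*}
  d(r_t,q) = d(qY^t,q) = d(\0,Y^t) = |t|.
\end{equation*}
Moreover, the projection $\uppi\from \H\to \mathsf{H}$ is $1$-Lipschitz from $(\H,d)$ to $(\R^2,\|\cdot\|)$ because any horizontal curve from $p$ to $r_t$ projects to a planar curve of the same length. In particular, since $y$ is a coordinate of $\uppi$,
\begin{equation*}
  |y(p) - y(r_t)| \le \|\uppi(p) - \uppi(r_t)\| \le d(p, r_t).
\end{equation*}
Since $y(r_t) = y(q) + t$, the triangle inequality in $\R$ yields
\begin{equation*}
  |t| = |y(r_t) - y(q)| \le |y(r_t) - y(p)| + |y(p) - y(q)| \le d(p,r_t) + |y(p)-y(q)|.
\end{equation*}

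Next I would combine these estimates with the triangle inequality in $(\H,d)$:
\begin{equation*}
  d(p,q) \le d(p,r_t) + d(r_t,q) = d(p,r_t) + |t| \le 2\,d(p,r_t) + |y(p)-y(q)|.
\end{equation*}
Taking the infimum over $t \in \R$ gives
\begin{equation*}
  d(p,q) \le 2\,d(p,q\langle Y\rangle) + |y(p)-y(q)|.
\end{equation*}
Now I invoke the intrinsic $\lambda$-Lipschitz property from Definition~\ref{def:intrinsic lipschitz graph}, namely $|y(p)-y(q)| \le \lambda\, d(p,q)$, to conclude
\begin{equation*}
  |y(p)-y(q)| \le \lambda\bigl(2\,d(p,q\langle Y\rangle) + |y(p)-y(q)|\bigr),
\end{equation*}
so $(1-\lambda)|y(p)-y(q)| \le 2\lambda\, d(p,q\langle Y\rangle) \le 2\,d(p,q\langle Y\rangle)$, which is the desired bound (in fact slightly sharper, with $\frac{2\lambda}{1-\lambda}$ in place of $\frac{2}{1-\lambda}$). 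The identity $|y(p)-y(q)| = |\psi(v)-\psi(w)|$ is immediate from the definition of the intrinsic graph.

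There is no real obstacle here; the only subtle point is recognizing that the $y$-coordinate is controlled by the sub-Riemannian distance via the projection $\uppi$, which together with the fact that vertical lines $q\langle Y\rangle$ are horizontal (hence geodesic) lines is what lets us estimate $|t|$ both by $d(r_t,q)$ and by $d(p,r_t) + |y(p)-y(q)|$ and thereby relate $d(p,q)$ to $d(p,q\langle Y\rangle)$.
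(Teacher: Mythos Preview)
Your proof is correct and follows essentially the same approach as the paper's: pick a point $c$ on $q\langle Y\rangle$ realizing (or approximating) the distance $m=d(p,q\langle Y\rangle)$, use that $y$ is $1$--Lipschitz to control $|y(p)-y(c)|$ by $m$, bound $d(p,q)$ via the triangle inequality through $c$, and substitute into the intrinsic Lipschitz inequality. The paper organizes the algebra by first isolating $|y(c)-y(q)|\le \frac{1+\lambda}{1-\lambda}m$ and then adding $|y(p)-y(c)|\le m$, whereas you go directly to $d(p,q)\le 2m+|y(p)-y(q)|$; your route is slightly cleaner and yields the marginally sharper constant $\frac{2\lambda}{1-\lambda}$.
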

\begin{proof}
  Denote $m=d(p, w\langle Y\rangle)$. Let $c\in w\langle Y\rangle$ be a point such that $d(p,c)=m$.  By the intrinsic Lipschitz condition,
  $$|y(c)-y(q)|\le m+|y(p)-y(q)|\le m + \lambda d(p,q) \le m + \lambda (m + |y(c)-y(q)|).$$
 This simplifies to give
 $$
 |y(c)-y(q)|\le \frac{1+\lambda}{1-\lambda}m.
 $$
 Hence,
  \begin{equation*}
  |y(p)-y(q)| \le |y(p)-y(c)|+|y(c)-y(q)|\le \frac{2m}{1-\lambda}.\qedhere
  \end{equation*}
\end{proof}

Intrinsic Lipschitz graphs satisfy the following  version of Rademacher's differentiation theorem due to~\cite[Theorem~4.29]{FSSCDifferentiability}.
\begin{thm}[\cite{FSSCDifferentiability}]
  Let $0 < \lambda < 1$, let $U\subset V_0$ be an open set and let $f\from U\to \R$ be a function such that $\Gamma_\psi \subset \H$ is an intrinsic $\lambda$--Lipschitz graph.  Then for almost every $p\in U$, $\Gamma_\psi$ has an intrinsic tangent plane at $pY^{\psi(p)}$ whose slope satisfies
  \begin{equation}\label{eq:upper hor der}
  |\partial_\psi \psi(p)|\le \frac{\lambda}{\sqrt{1-\lambda^2}}.
\end{equation}
\end{thm}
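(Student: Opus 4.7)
The plan is to combine the Rademacher-type differentiation theorem of Franchi--Serapioni--Serra Cassano with the cone condition built into Definition~\ref{def:intrinsic lipschitz graph}. The assertion that $\Gamma_\psi$ admits an intrinsic tangent plane at $pY^{\psi(p)}$ for almost every $p\in U$ is the content of Theorem~4.29 of~\cite{FSSCDifferentiability}, which I would invoke as a black box. Given this, the only thing left to verify is the quantitative slope bound~\eqref{eq:upper hor der}, and this is a soft consequence of cone avoidance, done via a blow-up argument.

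Fix a point $p\in U$ at which the intrinsic tangent plane exists and write $q\eqdef pY^{\psi(p)}\in \Gamma_\psi$. Then $q^{-1}\Gamma_\psi$ is again an intrinsic $\lambda$--Lipschitz graph (the cone condition of Definition~\ref{def:intrinsic lipschitz graph} is manifestly left-invariant), and as recalled in the discussion preceding Proposition~\ref{prop:areaIntegral}, its intrinsic tangent plane $T$ at $\0$ is the Hausdorff limit as $r\to 0^+$ of the rescalings
$$
\Gamma_r\eqdef s_{1/r,1/r}\bigl(B_r(\0)\cap q^{-1}\Gamma_\psi\bigr)\subset B_1(\0).
$$
Moreover $T$ is a vertical plane through $\0$ whose slope equals $\sigma\eqdef \partial_\psi\psi(p)$; concretely, $T=\{(x,\sigma x,z):x,z\in \R\}$. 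So the task reduces to bounding $|\sigma|$.

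Applied at $q$, the intrinsic $\lambda$--Lipschitz condition gives $\mathrm{Cone}_\lambda\cap q^{-1}\Gamma_\psi=\emptyset$. Since $\mathrm{Cone}_\lambda$ is scale-invariant under every $s_{t,t}$ (as noted just below~\eqref{eq:formula for intersection with cone}), this disjointness propagates through every rescaling to give $\Gamma_r\cap \mathrm{Cone}_\lambda=\emptyset$ for every $r>0$. As $\H\setminus \mathrm{Cone}_\lambda$ is closed, Hausdorff convergence $\Gamma_r\to T\cap B_1(\0)$ forces $T\cap B_1(\0)\subset \H\setminus \mathrm{Cone}_\lambda$, and scale-invariance of both $T$ and $\mathrm{Cone}_\lambda$ then upgrades this to $T\subset \H\setminus \mathrm{Cone}_\lambda$. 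In particular, the point $(1,\sigma,0)\in T$ satisfies
$$
|\sigma|=|y(1,\sigma,0)|\le \lambda\,d\bigl(\0,(1,\sigma,0)\bigr)=\lambda\sqrt{1+\sigma^2},
$$
where in the final step I used that the sub-Riemannian distance agrees with the Euclidean norm on horizontal vectors (the horizontal line through $\0$ and $(1,\sigma,0)$ realises the distance). Solving this inequality for $|\sigma|$ yields exactly~\eqref{eq:upper hor der}; equivalently, one may read the bound directly off the formula for $\mathsf{H}\cap \mathrm{Cone}_\lambda$ in~\eqref{eq:formula for intersection with cone}.

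The only substantive content imported as a black box is the existence of the blow-up limit and its identification as a vertical plane of slope $\partial_\psi\psi(p)$; this is the technical heart of~\cite{FSSCDifferentiability}, relying on covering arguments together with intrinsic area-formula machinery in the spirit of Proposition~\ref{prop:areaIntegral}. By contrast, the quantitative slope estimate, which is the only new statement being recorded here for later use, reduces to the elementary cone-avoidance computation above once the tangent plane has been produced.
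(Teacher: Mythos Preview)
The proposal is correct and follows essentially the same approach as the paper: invoke \cite[Theorem~4.29]{FSSCDifferentiability} as a black box for the a.e.\ existence of the intrinsic tangent plane, then deduce the slope bound from cone avoidance via~\eqref{eq:formula for intersection with cone}. Your blow-up argument fleshes out what the paper states in one line (``the intrinsic tangent plane at $pY^{\psi(p)}$ is disjoint from $p\mathrm{Cone}_\lambda$''), but the content is the same.
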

We note that~\cite[Theorem~4.29]{FSSCDifferentiability} is concerned with the (almost everywhere) existential statement of horizontal derivatives. The upper bound in~\eqref{eq:upper hor der} follows from~\eqref{eq:formula for intersection with cone} and the fact that the intrinsic tangent plane at $pY^{\psi(p)}$ is disjoint from $p\mathrm{Cone}_\lambda$
(see also Lemma~\ref{lem:char curve z bounds}). This bound on the horizontal derivatives of an intrinsic Lipschitz graph leads to a bound on the perimeter measure.
The following result follows from Theorem~4.1 of \cite{FSSC-AreaFormula}, which proves a similar bound on the Hausdorff measure of
$\Gamma$, and the results of~\cite{FSSCRectifiability}, which imply that the Hausdorff measure of $\Gamma$ and the perimeter measure of $\Gamma^+$ differ by at most a multiplicative constant.  Let $\Pi\from \H \to V_0$ be the natural (nonlinear) projection to $V_0$ along cosets of $\langle Y\rangle$, i.e., $\Pi(v)=vY^{-y(v)}$ for every $v\in \H$. Equivalently,
\begin{equation}\label{eq:Pi formula}
\forall(x,y,z)\in \H,\qquad   \Pi(x,y,z)\eqdef \big(x,0,z-\frac{1}{2}xy\big).
\end{equation}
\begin{lemma}[{\cite{FSSC-AreaFormula}}]\label{lem:intrinsic Lipschitz perimeter}
  Fix $\lambda\in (0,1)$. Let $\psi\from V_0\to \R$ be $\lambda$--intrinsic Lipschitz.  The perimeter measure $\Per_{\Gamma_\psi^+}$ satisfies the following equivalence for measurable subsets $A\subset \Gamma_\psi$.
  $$\Per_{\Gamma_\psi^+}(A) \asymp_\lambda |\Pi(A)|,$$
  where here, and henceforth, $|\cdot|$ denotes the Haar measure on $V_0$, normalized to coincide with the usual $2$--dimensional area measure in $\R^3$.
\end{lemma}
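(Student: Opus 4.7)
My plan is to reduce the desired equivalence to two ingredients already in the literature: the identification (up to a multiplicative constant) of the perimeter measure of the epigraph with the intrinsic Hausdorff measure on the graph, and the intrinsic area formula controlling the latter in terms of the horizontal derivative $\partial_\psi\psi$.

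First I would reduce to a Hausdorff-measure statement on $\Gamma_\psi$ itself. Since $\Gamma_\psi$ is the topological (and measure-theoretic) boundary of the epigraph $\Gamma_\psi^+$, the rectifiability results of~\cite{FSSCRectifiability} give a constant $\kappa>0$ such that $\Per_{\Gamma_\psi^+}(A)=\kappa\,\cH^3(A)$ for every Borel $A\subset \Gamma_\psi$. Thus the claim becomes
\begin{equation*}
\cH^3(A)\asymp_\lambda |\Pi(A)|
\end{equation*}
for measurable $A\subset \Gamma_\psi$, where $\Pi\from\H\to V_0$ is the projection from~\eqref{eq:Pi formula}. Note that since $\Gamma_\psi$ is an intrinsic graph, $\Pi$ restricts to a bijection $\Gamma_\psi\to U$ sending $vY^{\psi(v)}\mapsto v$.

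Next I would import the intrinsic area formula. When $\psi$ is smooth, Proposition~\ref{prop:areaIntegral} gives
\begin{equation*}
\cH^3\bigl(\Psi(W)\bigr)\asymp \int_W \sqrt{1+(\partial_\psi\psi)^2}\ud w\qquad \text{for measurable }W\subset U,
\end{equation*}
and the Rademacher-type theorem quoted before~\eqref{eq:upper hor der} gives $|\partial_\psi\psi|\le \lambda/\sqrt{1-\lambda^2}$, so the integrand is pinched between $1$ and a constant depending only on $\lambda$. Consequently $\cH^3(\Psi(W))\asymp_\lambda |W|$, which via the bijection $\Psi\from U\to \Gamma_\psi$ translates into the desired equivalence for every measurable $A\subset \Gamma_\psi$. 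For general intrinsic $\lambda$--Lipschitz $\psi$ the smoothness hypothesis is removed by the area formula of~\cite{FSSC-AreaFormula} (Theorem~4.1), which extends~\eqref{eq:intrinsic area formula} to the intrinsic Lipschitz setting with the horizontal derivative understood in the a.e.~sense guaranteed by~\cite{FSSCDifferentiability}; alternatively, one mollifies $\psi$ to a sequence of smooth intrinsic $\lambda'$--Lipschitz graphs (with $\lambda'$ arbitrarily close to $\lambda$) and passes to the limit, using the uniform bound on the area integrand together with the lower semicontinuity of perimeter.

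The main (and really the only nontrivial) step is the second one: reducing to the a.e.~bound on $\partial_\psi\psi$ is straightforward once one has the intrinsic area formula in the Lipschitz category, but the proof of that formula is precisely the technical content of~\cite{FSSC-AreaFormula} and requires handling the nonlinear behaviour of characteristic curves. Since the present lemma is invoked only through the two-sided equivalence (with constants that may depend on $\lambda$), no finer information about the constants is needed, and the combination of~\cite{FSSC-AreaFormula} and~\cite{FSSCRectifiability} cited in the statement suffices.
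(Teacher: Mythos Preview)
Your proposal is correct and matches the paper's own justification: the paper does not give a separate proof but states (just before the lemma) that the result follows from Theorem~4.1 of \cite{FSSC-AreaFormula} for the Hausdorff-measure bound on $\Gamma$ together with the results of \cite{FSSCRectifiability} identifying Hausdorff measure and perimeter up to a constant. Your sketch spells out precisely this reduction, including the use of the a.e.\ bound~\eqref{eq:upper hor der} on $\partial_\psi\psi$ to pinch the area integrand.
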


\subsection{Characteristic curves}\label{sec:char}
Let $U\subset V_0$ be an open set and let $\psi\from U\to \R$ be a continuous function.  The differential operator $\partial_\psi$ given in~\eqref{eq: d psi} defines a  vector field on $V_0$ that is continuous and has $x$--coordinate $1$, so by the Peano existence theorem, there is at least one flow line of $\partial_\psi$ through every point of $U$, defined on an interval. These flow lines are the graphs of functions $g\from I\to \R$ satisfying
\begin{equation}\label{eq:horizontal curve eq}
\forall  t\in I,\qquad   g'(t)+\psi\big(t,0,g(t)\big)=0.
\end{equation}
We call these flow lines \emph{characteristic curves} of $\Gamma_\psi$.

The solution to \eqref{eq:horizontal curve eq} guaranteed by the Peano existence theorem is only local, but when $\psi$ is intrinsic Lipschitz, we can define $g$ on all of $\R$. Indeed, by the Peano existence theorem, if $S_r=[-1,1]\times \{0\}\times [-r,r]$ and $\sup_{q\in S_r}|\psi(q)| \le r$, then there exists a $g\from (-1,1)\to [-r,r]$ that solves \eqref{eq:horizontal curve eq} with initial condition $g(0)=0$.  Let $(x,0,z)\in S_r$. By Lemma~\ref{lem:ilg line distance} with $v=\0$, $w=(x,0,z)$, there is some $C=C_\psi>0$ such that
$$|\psi(x,0,z)| \le |\psi(\0)| + \frac{2}{1-\lambda} d\left(Y^{\psi(\0)}, (x,0,z)\right) \le C + C|x| + C\sqrt{|z|} \le 2 C + C\sqrt{r}.$$
If $r$ is sufficiently large, then $\sup_{q\in S_r}|\psi(q)| \le r$, so \eqref{eq:horizontal curve eq} can be solved on $(-1,1)$. More generally, for any $x_0,z_0$, there is a $g\from (x_0-1,x_0+1)\to \R$ that solves \eqref{eq:horizontal curve eq} with initial condition $g(x_0)=z_0$. By patching together such solutions, we obtain a global solution to \eqref{eq:horizontal curve eq}.

In this section, we will show that the characteristic curves of $\Gamma_\psi$ are the projections of horizontal curves in $\Gamma_\psi$ and use them to describe $\Gamma_\psi$.  In the next section, we will describe how characteristic curves transform under automorphisms of $\H$; later, we will use these curves to describe how horizontal lines intersect an intrinsic Lipschitz graph.

\begin{lemma}\label{lem:char}
  Let $\Gamma=\Gamma_\psi$.  The characteristic curves of $\Gamma$ are exactly the projections (under $\Pi$) of horizontal curves $\phi\from I\to \Gamma$ such that $x(\phi(t))=t$ for every $t\in I$.
\end{lemma}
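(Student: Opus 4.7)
The plan is to prove the two implications by direct computation in coordinates. In both directions, the key observation is that if $\phi\from I\to \Gamma_\psi$ satisfies $x(\phi(t))=t$, then uniqueness of the representation $h=vY^{\psi(v)}$ for $h\in \Gamma_\psi$ forces $\phi$ to have the form $\phi(t)=v(t)Y^{\psi(v(t))}$ where $v(t)\in V_0$, and moreover the $x$--coordinate of $vY^{\psi(v)}$ agrees with that of $v$, so $v(t)=(t,0,g(t))$ for some function $g\from I\to \R$. A quick application of~\eqref{eq:Pi formula} shows that $\Pi(\phi(t))=(t,0,g(t))$, so $\Pi\circ\phi$ is the graph of $g$.

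The main step is then to translate the horizontality condition on $\phi$ into the ODE~\eqref{eq:horizontal curve eq} on $g$. Writing $\Psi(t)\eqdef \psi(t,0,g(t))$ and using the group law~\eqref{eq:group operation}, one has
\[
\phi(t)=\Big(t,\ \Psi(t),\ g(t)+\tfrac12 t\Psi(t)\Big).
\]
Substituting into the equation $2\phi_z'(t)=\phi_x(t)\phi_y'(t)-\phi_y(t)\phi_x'(t)$, which (for a curve with Lipschitz coordinates) is equivalent to horizontality, the terms involving $\Psi'(t)$ cancel and one is left with $2g'(t)+2\Psi(t)=0$, i.e.\ precisely~\eqref{eq:horizontal curve eq}. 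Conversely, given a continuous solution $g$ of~\eqref{eq:horizontal curve eq}, the curve $\phi(t)\eqdef (t,0,g(t))Y^{\Psi(t)}$ lies in $\Gamma_\psi$, satisfies $x(\phi(t))=t$, and the same algebraic identity shows it is horizontal (noting that $\phi$ is $C^1$ because $g'=-\Psi$ is continuous).

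Care must be taken with regularity: in the ``only if'' direction $\phi$ is only assumed to have Lipschitz coordinates, so $g$ is Lipschitz, and the horizontality identity and~\eqref{eq:horizontal curve eq} hold almost everywhere rather than pointwise. However, continuity of $\psi$ makes $\Psi$ continuous along the curve, so $g'=-\Psi$ holds almost everywhere for a Lipschitz function with continuous derivative, which then upgrades $g$ to $C^1$ and makes the identity hold everywhere; this matches the definition of characteristic curve given via the Peano existence theorem. The hardest part of the argument is really just the bookkeeping of this regularity together with the algebra; no deep geometric input is needed beyond the definitions of $\Gamma_\psi$, $\Pi$, horizontality, and the formula~\eqref{eq:horizontal curve eq} for characteristic curves.
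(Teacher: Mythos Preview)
Your forward direction (horizontal curve in $\Gamma$ with $x(\phi(t))=t$ projects to a characteristic curve) is correct and matches the paper's argument.

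The converse has a genuine regularity gap. You assert that $\phi(t)=(t,0,g(t))Y^{\Psi(t)}$ is $C^1$ ``because $g'=-\Psi$ is continuous,'' but that only makes $g$ a $C^1$ function. The $y$--coordinate of $\phi$ is $\Psi(t)=\psi(t,0,g(t))$, and nothing you have written forces $\Psi$ to be Lipschitz, let alone $C^1$; for a merely continuous $\psi$ this can fail. Since the paper's definition of horizontal curve requires Lipschitz coordinate functions, and since the algebraic horizontality identity you want to reuse involves $\Psi'(t)$, the argument cannot proceed without first knowing $\Psi$ is Lipschitz.

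This is exactly where the paper invokes outside input: it cites Theorems~1.1 and~1.2 of \cite{BigolinCaravennaSerraCassano} to conclude that $f(t)=\psi(t,0,g(t))$ is Lipschitz along any characteristic curve. That is a nontrivial fact about intrinsic (Lipschitz) graphs, not mere bookkeeping, so your closing claim that ``no deep geometric input is needed beyond the definitions'' is precisely the point at which the argument breaks. If you are willing to assume $\psi$ is smooth, your argument goes through as written; in the generality of the lemma you need the cited regularity result.
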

Because characteristic curves can branch and rejoin (see~\cite{BigolinCaravennaSerraCassano} for such examples), there are intrinsic Lipschitz graphs with horizontal curves whose $x$--coordinate is not monotone.  Thus the condition $x(\phi(t))=t$ of Lemma~\ref{lem:char} cannot be dropped.
\begin{proof}[Proof of Lemma~\ref{lem:char}]

  First, we claim that if $\phi$ is a horizontal curve in $\Gamma$ with $x(\phi(t))=t$, then $\Pi\circ \phi$ is a characteristic curve of $\Gamma$.
  Write $\Gamma=\Gamma_\psi$ and let $\phi\from I\to \Gamma$ be a horizontal curve of the form $\phi(t)=X^t Y^{f(t)}Z^{g(t)}$.  Then $f$ and $g$ are Lipschitz, $\Pi(\phi(t))=(t,0,g(t))$, and, since $\phi(t)\in \Gamma$, we have $f(t)=\psi(t,0,g(t))$.  Since $\phi$ is horizontal,
  $$\frac{\ud}{\ud u} \phi(t)^{-1} \phi(t+u)\bigg|_{u=0}\in \mathsf{H}$$
  for almost every $t\in I$.  Observe that
  \begin{align*}
    \phi(t)^{-1} \phi(t+u)
    &=\bigl(X^{t}Y^{f(t)}Z^{g(t)}\bigr)^{-1} \bigl(X^{t+u}Y^{f(t+u)}Z^{g(t+u)}\bigr) \\
    &=X^{u}Y^{f(t+u)-f(t)}Z^{g(t+u)-g(t)+u f(t)}.
  \end{align*}
  Since $f$ and $g$ are Lipschitz, the following identity holds almost everywhere.
  \begin{equation}\label{eq:tangent to gamma}
    \frac{\ud}{\ud u} \phi(t)^{-1} \phi(t+u)\bigg|_{u=0}=X+f'(t) Y+(g'(t)+f(t))Z.
  \end{equation}
  That is, $g$ satisfies \eqref{eq:horizontal curve eq}.

  Conversely, suppose that $g$ is a solution of \eqref{eq:horizontal curve eq} and let $f(t)=\psi(t,0,g(t))$.  By Theorem~1.1 and Theorem~1.2 of \cite{BigolinCaravennaSerraCassano}, $f$ is Lipschitz.  Therefore, $\phi(t)=X^t Y^{f(t)}Z^{g(t)}$ is a Lipschitz curve in $\Gamma$ such that $\Pi(\phi(t))=(t,0,g(t))$ and such that $\phi$ satisfies \eqref{eq:tangent to gamma} almost everywhere.  In combination with~\eqref{eq:horizontal curve eq}, this implies that $\phi$ is horizontal.
\end{proof}

If $\psi$ is smooth, the characteristic curves of $\Gamma_\psi$ foliate $U$. If $\psi$ is merely intrinsic Lipschitz, characteristic curves can branch and rejoin, but if two characteristic curves pass through the same point, then they are tangent at that point; see Figure~1 of \cite{BigolinCaravennaSerraCassano} for an example of this phenomenon.

Characteristic curves satisfy bounds based on the intrinsic Lipschitz constant of $\Gamma$.
\begin{lemma}\label{lem:char curve z bounds} Fix $\lambda\in (0,1)$ and denote
$$
L\eqdef \frac{\lambda}{\sqrt{1-\lambda^2}}.
$$
  Let $\Gamma=\Gamma_\psi$ be an intrinsic $\lambda$--Lipschitz graph over an open set and let $\gamma\from I\to V_0$ be a characteristic curve for $\Gamma$ parametrized so that $x(\gamma(t))=t$ for all $t\in I$.   Then,
  \begin{equation}\label{eq:f gamma Lipschitz}
 \forall  s,t\in I,\qquad    |\psi(\gamma(s))-\psi(\gamma(t))|\le L|s-t| .
  \end{equation}
Also, if we denote $g(t)=z(\gamma(t))$, then
  \begin{equation}\label{eq:g Taylor}
  \forall  s,t\in I,\qquad     \big|g(t) - g(s) - g'(s) \cdot (t-s)\big|\le L\frac{(t-s)^2}{2}.
  \end{equation}
\end{lemma}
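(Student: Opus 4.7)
The plan is to lift $\gamma$ to a horizontal curve in $\Gamma$ via Lemma~\ref{lem:char} and then to derive both bounds from the intrinsic Lipschitz condition applied to this lift. By Lemma~\ref{lem:char} there is a horizontal curve $\phi\from I\to \Gamma$ with $\Pi\circ\phi=\gamma$ and $x(\phi(t))=t$; writing $\phi(t)=X^{t}Y^{f(t)}Z^{g(t)}$ as in the proof of Lemma~\ref{lem:char}, one has $f(t)=\psi(\gamma(t))$, $g(t)=z(\gamma(t))$, and from~\eqref{eq:horizontal curve eq} the identity $g'(t)=-f(t)$. Moreover $f$ is Lipschitz on $I$ by the cited Theorems~1.1 and~1.2 of \cite{BigolinCaravennaSerraCassano} (as was already used in the proof of Lemma~\ref{lem:char}), and from \eqref{eq:tangent to gamma} combined with horizontality the curve $\phi$ has almost-everywhere tangent $\phi'(u)=X+f'(u)Y$.

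The key step for \eqref{eq:f gamma Lipschitz} is to show that $|f'(u)|\le L$ at almost every $u\in I$. At a point of differentiability, apply the intrinsic $\lambda$--Lipschitz condition to the points $\phi(u),\phi(u+h)\in\Gamma$:
\begin{equation*}
  |f(u+h)-f(u)|=|y(\phi(u+h))-y(\phi(u))|\le \lambda\, d(\phi(u+h),\phi(u)).
\end{equation*}
Since $\phi$ is a Lipschitz horizontal curve, its endpoint distance is bounded by its horizontal length, so
\begin{equation*}
  d(\phi(u+h),\phi(u))\le \left|\int_{u}^{u+h}\|\uppi(\phi'(v))\|\ud v\right|=\left|\int_{u}^{u+h}\sqrt{1+f'(v)^{2}}\,\ud v\right|.
\end{equation*}
Dividing by $|h|$ and sending $h\to 0$ yields $|f'(u)|\le \lambda\sqrt{1+f'(u)^{2}}$, which rearranges to $|f'(u)|\le \lambda/\sqrt{1-\lambda^{2}}=L$. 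Since $f$ is Lipschitz, it is absolutely continuous, so integrating the a.e.\ bound $|f'|\le L$ gives \eqref{eq:f gamma Lipschitz}.

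For \eqref{eq:g Taylor}, use the horizontality identity $g'(u)=-f(u)$, valid for every $u\in I$ because $\psi$ is continuous and $g$ is the $C^{1}$ solution of~\eqref{eq:horizontal curve eq}. Then
\begin{equation*}
  g(t)-g(s)-g'(s)(t-s)=\int_{s}^{t}\bigl(g'(u)-g'(s)\bigr)\,\ud u = -\int_{s}^{t}\bigl(f(u)-f(s)\bigr)\,\ud u,
\end{equation*}
and applying \eqref{eq:f gamma Lipschitz} inside the integrand gives, for either sign of $t-s$,
\begin{equation*}
  |g(t)-g(s)-g'(s)(t-s)|\le L\int_{\min(s,t)}^{\max(s,t)}|u-s|\,\ud u = L\frac{(t-s)^{2}}{2}.
\end{equation*}

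The only subtle point (and hence the main, though rather mild, obstacle) is justifying the pointwise bound $|f'|\le L$ from the chord-based intrinsic Lipschitz condition: this requires that $\phi$ be rectifiable with horizontal length equal to $\int\sqrt{1+(f')^{2}}$, which in turn uses Lipschitz continuity of $f$ from~\cite{BigolinCaravennaSerraCassano}. Once that is in hand, both inequalities reduce to elementary integration, and the sharp constant $L=\lambda/\sqrt{1-\lambda^{2}}$ is the one dictated by the geometry of $\mathrm{Cone}_{\lambda}\cap\mathsf{H}$ via~\eqref{eq:formula for intersection with cone}.
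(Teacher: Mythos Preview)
Your proof is correct and follows essentially the same approach as the paper: lift $\gamma$ to a horizontal curve $\phi$ in $\Gamma$, use the intrinsic Lipschitz condition on nearby points of $\phi$ to deduce the a.e.\ bound $|f'|\le L$, and then integrate. The only cosmetic differences are that the paper invokes Pansu's theorem to identify the tangent vector of $\phi$ (whereas you use that the Carnot--Carath\'eodory distance is bounded by the horizontal arc length and then differentiate), and the paper phrases \eqref{eq:g Taylor} via $|g''|\le L$ and the integral remainder $\int_s^t (t-u)g''(u)\,\ud u$ rather than your equivalent $\int_s^t (f(u)-f(s))\,\ud u$; your route is marginally more elementary.
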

\begin{proof}
  Since $\gamma$ is characteristic, the curve
  $\phi(t)=\gamma(t)\cdot Y^{\psi(\gamma(t))}$
  is horizontal.  The intrinsic Lipschitz condition implies that
  \begin{equation}\label{eq:lambda upper for m}\forall  \delta\in \R\setminus \{0\},\qquad \frac{|y(\phi(t+\delta))-y(\phi(t))|}{d(\phi(t), \phi(t+\delta))} \le \lambda.\end{equation}

  By Pansu's theorem \cite{Pan89}, for almost every $t\in I$, there is a vector $h_t\in \mathsf{H}$ such that
  $$\lim_{\delta\to 0} \frac{d\left(\phi(t) h_t^{\delta},\phi(t+\delta)\right)}{\delta} = 0.$$
  Indeed, $h_t=(1, m,0)$, where $m=(\psi\circ \gamma)'(t)$.  Then
  $$\liminf_{\delta\to 0} \frac{|y(\phi(t+\delta))-y(\phi(t))|}{d(\phi(t), \phi(t+\delta))} \ge \liminf_{\delta\to 0} \frac{|\delta m| - d(\phi(t) h_t^{\delta},\phi(t+\delta))}{\delta\|h_t\|+d(\phi(t) h_t^{\delta},\phi(t+\delta))} = \frac{|m|}{\sqrt{1+m^2}}.$$
  By~\eqref{eq:lambda upper for m} it follows that $\frac{|m|}{\sqrt{1+m^2}} \le \lambda$, so for almost every $t\in I$,
  \begin{equation}\label{eq:m upper}|(\psi\circ \gamma)'(t)|=|m| \le L.\end{equation}
  This implies \eqref{eq:f gamma Lipschitz}. By \eqref{eq:horizontal curve eq}, $g'(t)=-\psi(\gamma(t))$, so it follows from~\eqref{eq:m upper} that $|g''(t)|\le L$  for almost every $t\in I$. The remaining bound \eqref{eq:g Taylor} is therefore justified as follows.
  \begin{equation*}
  \left|g(t) - \big(g(s) + g'(s) \cdot (t-s)\big)\right|=\bigg|\int_s^t  (t-u)g''(u)\ud u\bigg|\le L\bigg|\int_s^t  (t-u)\ud u\bigg|=L\frac{(t-s)^2}{2}.\qedhere
  \end{equation*}
\end{proof}

Since there is a characteristic curve through every point $p\in U$ and the derivative of such a curve at $p$ is $-\psi(p)$, an intrinsic graph $\Gamma$ can be reconstructed from its characteristic curves.  Indeed, one way to construct intrinsic Lipschitz graphs is to construct a foliation of $V_0$ by $C_1$ curves $\{z=g_\alpha(x)\}$, $\alpha\in A$ such that $\Lip(g_\alpha')\lesssim 1$ for every $\alpha\in A$.  Each such curve lifts to a horizontal curve, and one can show that the union of these lifts is an intrinsic Lipschitz graph.  (This is how the graphs in Figure~\ref{fig:bumpy surface} were constructed.)

For illustration, we consider planes in $\H$.  A vertical plane $V$ that is not orthogonal to $V_0$ is an intrinsic graph over $V_0$.  The horizontal curves in $V$ are parallel lines; let $L$ be one such line.  The image $\Pi(L)$ is a parabola in $V_0$, and the characteristic curves of $V$ are the parabolas parallel to $\Pi(L)$.  The second derivative of these parabolas depends on the angle between $V$ and $V_0$.

Let $v\in \H$.  The horizontal plane $\mathsf{H}_v$ centered at $v$ is not an intrinsic graph, but the horizontal line $v\langle Y\rangle$ divides $\mathsf{H}_v$ into two intrinsic graphs.  The horizontal lines in $\mathsf{H}_v$ all pass through $v$, and their projections to $V_0$ are parabolas through $\Pi(v)$.  Since they all intersect at $v$, their projections are all tangent at $\Pi(v)$.  These parabolas foliate the complement in $V_0$ of the vertical line through $\Pi(v)$.  They have unboundedly large second derivatives, so the two halves of $\mathsf{H}_v$ are locally intrinsic Lipschitz graphs, but not globally.

\subsection{Automorphisms and characteristic curves}\label{sec:automs and ccs} Recall that any invertible linear map $A\from \R^2\to \R^2$ induces an automorphism $\tilde A$ of $\H$ as in~\eqref{eq:def tilde A}.  We are particularly interested in the case that $Y$ is an eigenvector of $A$.  In this case, $\tilde{A}(\langle Y\rangle)=\langle Y\rangle$, so $\tilde{A}$ sends cosets of $\langle Y\rangle$ to cosets of $\langle Y\rangle$.  A set $\Gamma$ is an intrinsic graph if and only if it intersects each coset of $\langle Y\rangle$ at most once, so $\tilde{A}$ sends intrinsic graphs to intrinsic graphs.

One family of maps with this property are the stretch maps $s_{a,b}(x,y,z)=(ax,by,abz)$ defined in~\eqref{eq:def s a b}.  To construct a second family of maps with the above property, let $b\in \R$ and consider the linear map $A_b(x,y)=(x,y+bx)$, which is a shear of the plane $\R^2$. The induced map $\tilde{A}_b$, is an automorphism of $\H$ given by the formula
$$\forall (x,y,z)\in \H,\qquad \tilde{A}_b(x,y,z)=(x,y+bx,z),$$
and we call such maps \emph{shear maps}.  (Note that these are different from the shear maps considered in~\cite{Xie16}.)

Let $\Pi\from \H \to V_0$ be as in~\eqref{eq:Pi formula}, i.e., the projection to $V_0$ along cosets of $\langle Y\rangle$. The maps above preserve cosets of $\langle Y\rangle$, so composed with $\Pi$ they induce maps from $V_0$ to $V_0$.
\begin{lemma}\label{lem:v0 transforms}
Fix $h=(x_0,y_0,z_0)\in \H$ and $v=(x,0,z) \in V_0$. For any $a,b,t\in \R$ we have
  $$\Pi\big(s_{a,b}(v Y^t)\big)=s_{a,b}(v)=(ax,0,abz),$$
  $$\Pi\big(\tilde{A}_{b}(vY^t)\big)=\big(x,0,z-\frac12 bx^2\big),$$
  and
  $$\Pi(h vY^t)=\big(x+x_0, 0, z+z_0-xy_0-\frac12x_0y_0\big).$$
\end{lemma}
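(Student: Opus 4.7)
The proof is a direct computation from the explicit formulas. The plan is to use the Heisenberg product~\eqref{eq:group operation} to expand $vY^t$ for $v=(x,0,z)\in V_0$, apply each of the three transformations coordinate-wise, and then apply the projection formula~\eqref{eq:Pi formula}. Since $\omega_{\R^2}((x,0),(0,t))=xt$, the common starting identity $vY^t=(x,t,z+xt/2)$ supports all three calculations.

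A mildly slicker organization is to observe that both $s_{a,b}$ and $\tilde{A}_b$ are automorphisms that preserve cosets of $\langle Y\rangle$ (since $s_{a,b}(Y^t)=Y^{bt}$ and $\tilde{A}_b(Y^t)=Y^t$), while left multiplication by $h\in\H$ trivially preserves such cosets. Since $\Pi$ is the projection along cosets of $\langle Y\rangle$, in each case it suffices to identify which coset of $\langle Y\rangle$ contains the image and then compute its intersection with $V_0$ via~\eqref{eq:Pi formula}. For the stretch map this reduces the calculation immediately to $s_{a,b}(v)\in V_0$, giving the first identity. For the shear $\tilde{A}_b$ one traces the $y$-coordinate contribution $bx$ through~\eqref{eq:Pi formula}, producing the correction $-\tfrac{1}{2}bx^2$. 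For left multiplication by $h=(x_0,y_0,z_0)$, two applications of the group law followed by~\eqref{eq:Pi formula} yield the claimed formula after cancellation of the $\tfrac{1}{2}xt$ and $\tfrac{1}{2}x_0 t$ cross terms.

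The main obstacle, such as it is, is purely bookkeeping: ensuring that the various symplectic correction terms introduced by the group law and by $\Pi$ cancel correctly. There is no conceptual difficulty; the three identities are essentially forced once one writes out the definitions, and each should fall out in a few lines of algebra.
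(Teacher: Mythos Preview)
Your proposal is correct and takes essentially the same approach as the paper: both observe that $\Pi(gY^t)=\Pi(g)$ (equivalently, that the maps preserve cosets of $\langle Y\rangle$ and $\Pi$ projects along those cosets), use the homomorphism property of $s_{a,b}$ and $\tilde{A}_b$ to split off the $Y$-factor, and then apply the explicit formula for $\Pi$. The paper goes straight to your ``slicker organization'' without first passing through the coordinate expansion of $vY^t$, but the content is identical.
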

\begin{proof}
  $\Pi(gY^t)=\Pi(g)$ for all $g\in \H$ and $t\in \R$.  Since $s_{a,b}$ and $\tilde{A}_b$ are homomorphisms,
  $$\Pi\big(s_{a,b}(v  Y^t)\big)=\Pi\big(s_{a,b}(v) Y^{bt}\big)=s_{a,b}(v)=(ax,0,abz),$$
  and
  $$\Pi\big(\tilde{A}_b(v Y^t)\big)=\Pi\big(\tilde{A}_b(v) Y^{t}\big)=(x,bx,z)Y^{-bx}=\big(x,0,z-\frac12bx^2\big).$$
  Finally,
  \begin{align*}\Pi(h v Y^t)=\Pi(h v)&=\big(x_0+x,y_0,z_0+z-\frac12xy_0\big)Y^{-y_0}\\&=\big(x_0+x,0,z_0+z-\frac12xy_0-\frac12(x_0+x)y_0\big).\qedhere
  \end{align*}
\end{proof}

We next describe how these maps affect characteristic curves and intrinsic graphs.
\begin{lemma}\label{lem:curve transforms} Fix $U\subset V_0$ and $\psi\from U\to \R$ be a continuous function. Write $\Gamma=\Gamma_\psi$. Let $C=\{(x,0,z)\in V_0:\ z=g(x)\}$ be a characteristic curve of $\Gamma$.  Let $q\from \H\to \H$ be a stretch map, shear map, or left translation, and let $\hat{q}\from V_0\to V_0$, $\hat{q}(v)=\Pi(q(v))$ be the map that $q$ induces on $V_0$.  Then $q(\Gamma)$ is the intrinsic graph of a function $\hat{\psi}\from \hat{q}(U)\to \R$ and $\hat{q}(C)$ is a characteristic curve of $q(\Gamma)$. Also,
\begin{itemize}
\item  If $a, b\in \R\setminus \{0\}$ and $q=s_{a,b}$, then $\hat{\psi}(\hat{q}(v))=b \psi(v)$ for all $v\in U$.

\item  If $b\in \R$ and $q=\tilde{A}_b$, then $\hat{\psi}(\hat{q}(v))=\psi(v)+bx(v)$ for all $v\in U$.

 \item  If $h\in \H$ and $q(p)=hp$ for all $p\in \H$, then $\hat{\psi}(\hat{q}(v))=\psi(v)+y(h)$ for all $v\in U$.
  \end{itemize}
\end{lemma}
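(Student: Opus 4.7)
\medskip

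My plan is to separate the lemma into two assertions: first, that $q(\Gamma)$ is an intrinsic graph whose defining function is $\hat{\psi}$ with the stated formula, and second, that $\hat{q}(C)$ is a characteristic curve of $q(\Gamma)$. The unifying observation that makes both assertions fall out is that each of the three maps $q$ preserves the family of cosets of $\langle Y\rangle$. For stretch and shear maps this holds because $Y$ is an eigenvector of the underlying linear map on $\R^2$, so by \eqref{eq:def tilde A} the induced automorphism fixes $\langle Y\rangle$ as a subgroup; left translation trivially sends $vY^{\R}$ to $hvY^{\R}$. Consequently $q$ sends intrinsic graphs to intrinsic graphs, and $\hat{q} = \Pi\circ q|_{V_0}$ is a well-defined bijection $U \to \hat{q}(U)$ by Lemma~\ref{lem:v0 transforms}.

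To compute $\hat{\psi}$, I will take $v\in U$ and rewrite $q(vY^{\psi(v)})$ in the normal form $\hat{q}(v)\,Y^{\hat{\psi}(\hat{q}(v))}$. For the stretch map $q=s_{a,b}$, since $s_{a,b}$ is a homomorphism with $s_{a,b}(Y)=bY$, we have $s_{a,b}(vY^{\psi(v)}) = s_{a,b}(v)\,Y^{b\psi(v)}$, and since $s_{a,b}(v)\in V_0$ this already has the right form, giving $\hat{\psi}(\hat{q}(v))=b\psi(v)$. For the shear $q=\tilde{A}_b$, we have $\tilde{A}_b(Y)=Y$ and $\tilde{A}_b(v)=(x,bx,z)$ for $v=(x,0,z)$; multiplying out $(x,bx,z)\cdot Y^{\psi(v)}$ using~\eqref{eq:group operation} and comparing with $\hat{q}(v)\cdot Y^{t}$ using the formula for $\hat{q}(v)$ from Lemma~\ref{lem:v0 transforms} forces $t=\psi(v)+bx(v)$. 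For left translation by $h$, a direct computation of $h v Y^{\psi(v)}$ via~\eqref{eq:group operation} and comparison with $\hat{q}(v)Y^t$ (again using Lemma~\ref{lem:v0 transforms}) forces $t=\psi(v)+y(h)$. Each computation is routine bookkeeping in the group law, but the key point is to isolate the $y$-coordinate of $q(vY^{\psi(v)})$ since that coordinate is precisely the exponent of $Y$ needed in the normal form.

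For the characteristic curves, I will invoke Lemma~\ref{lem:char}: the curve $C$ is the image under $\Pi$ of a horizontal curve $\phi\from I\to \Gamma$ with $x(\phi(t))=t$. Because stretch maps, shear maps, and left translations are all Lipschitz automorphisms of $\H$ preserving $\mathsf{H}$ (as in the discussion around~\eqref{eq:def tilde A}), the curve $q\circ \phi$ is horizontal and lies in $q(\Gamma)$. In each case the $x$-coordinate of $q(\phi(t))$ is an affine function of $t$: it is $at$ for $s_{a,b}$, unchanged for $\tilde{A}_b$, and $t+x(h)$ for left translation by $h$. Reparametrizing $q\circ\phi$ so that its $x$-coordinate equals the parameter (and, in the stretch case, noting $a\neq 0$) and applying Lemma~\ref{lem:char} to $q(\Gamma)$ identifies $\Pi(q(\phi(\cdot)))$, which equals $\hat{q}(C)$ up to reparametrization, as a characteristic curve of $q(\Gamma)$.

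The main obstacle, such as it is, is purely bookkeeping: making sure that in each of the three cases one correctly identifies $\hat{q}(v)$ in $V_0$ (which is handled once and for all by Lemma~\ref{lem:v0 transforms}) and then matches the residual $Y$-exponent against the stated formula for $\hat{\psi}$. No genuine difficulty arises, since the three maps have been chosen precisely so that they preserve both the cosets of $\langle Y\rangle$ and the horizontal distribution. In particular, the proof does not require any of the finer intrinsic Lipschitz structure---it works verbatim for continuous intrinsic graphs.
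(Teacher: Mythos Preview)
Your proposal is correct and follows essentially the same approach as the paper: both arguments rest on the observation that $q$ preserves cosets of $\langle Y\rangle$, compute $\hat{\psi}$ by writing $q(vY^{\psi(v)})=\hat{q}(v)Y^{\hat{\psi}(\hat{q}(v))}$, and handle the characteristic curve by noting that $q$ sends horizontal curves to horizontal curves and invoking Lemma~\ref{lem:char}. One small wording issue: left translations are not automorphisms of $\H$, but they do preserve the (left-invariant) horizontal distribution, which is all you need.
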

\begin{proof}
  Any coset of $\langle Y\rangle$ intersects $q(\Gamma)$ at most once, so $q(\Gamma)$ is an intrinsic graph with domain $\Pi(q(\Gamma))=\hat{q}(\Gamma)$.

  Let $\gamma\subset \Gamma$ be the horizontal curve such that $\Pi(\gamma)=C$.  Then $q(\gamma)$ is a horizontal curve in $q(\Gamma)$.  For all $g\in \H$ and $t\in \R$ we have $\Pi(gY^t)=\Pi(g)$. Consequently, we have $\Pi(q(\gamma))=\Pi(q(C))=\hat{q}(C)$, and $\hat{q}(C)$ is characteristic for $q(\Gamma)$.

  For any $v\in U$, we have $q(vY^{\psi(v)})\in q(\Gamma)$, and since $q(\Gamma)$ is an intrinsic graph, we must have $q(vY^{\psi(v)})=\hat{q}(v) Y^{\hat{\psi}(\hat{q}(v))}$.  The claimed expressions for $\hat{\psi}$ follow directly.
\end{proof}

Observe that if $q\from \H \to \H$ preserves cosets of $\langle Y\rangle$, then
\begin{equation}\label{eq:preserve-cosets}
  q(\Pi(p))\in q(p \langle Y \rangle) = q(p) \langle Y \rangle,
\end{equation}
so $\Pi \circ q =\Pi \circ q\circ \Pi$. In particular, if $q_1$ and $q_2$ are stretch maps, shear maps, or left translations, then
$$\hat{q_1}\circ \hat{q_2} = \Pi \circ q_1\circ \Pi \circ q_2 = \Pi\circ q_1\circ q_2= \widehat{q_1\circ q_2}.$$
Consequently, if $a,b,c\in \R$ and $q(v)=Y^{b} Z^{-c} \tilde{A}_{2a}(v)$ for all $v\in \H$, then $\hat{q}(x,0,z)=(x,0,z - ax^2-bx-c)$.  That is, for any quadratic function $f$, there is a map $q\from \H\to \H$ so that the characteristic curves of $q(\Gamma)$ are the characteristic curves of $\Gamma$ translated by $f$.  

Finally, stretch maps and shear maps send intrinsic Lipschitz graphs to intrinsic Lipschitz graphs (with a possible change in the Lipschitz constant).
\begin{lemma}\label{lem:automs preserve intrinsic lipschitz}
  Let $\Gamma$ be an intrinsic Lipschitz graph, and let $a,b\in \R\setminus \{0\}$.  Then $s_{a,b}(\Gamma)$ and $\tilde{A}_b(\Gamma)$ are intrinsic Lipschitz graphs, with an intrinsic Lipschitz constant depending on $a,b$, and the intrinsic Lipschitz constant of $\Gamma$.
\end{lemma}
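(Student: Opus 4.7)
The plan is to combine the fact (already part of Lemma~\ref{lem:curve transforms}) that $s_{a,b}(\Gamma)$ and $\tilde A_b(\Gamma)$ are intrinsic graphs over their natural domains in $V_0$, with a soft compactness argument addressing only the cone condition from Definition~\ref{def:intrinsic lipschitz graph}. My goal is to produce, for each $q\in\{s_{a,b},\tilde A_b\}$, some $\lambda'\in(0,1)$ depending on $\lambda,a,b$ such that $\mathrm{Cone}_{\lambda'}\subset q(\mathrm{Cone}_\lambda)$. Granting such a $\lambda'$, for every $p'=q(p)\in q(\Gamma)$ the automorphism property of $q$ gives $p'\cdot q(\mathrm{Cone}_\lambda)\cap q(\Gamma)=q\bigl(p\cdot \mathrm{Cone}_\lambda\cap \Gamma\bigr)=\emptyset$, and then shrinking the cone yields $p'\cdot \mathrm{Cone}_{\lambda'}\cap q(\Gamma)=\emptyset$, i.e., $q(\Gamma)$ is intrinsic $\lambda'$--Lipschitz.

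To find $\lambda'$ I introduce the continuous function $\rho\from\H\setminus\{\0\}\to(0,1]$ defined by $\rho(h)\eqdef |y(h)|/d(\0,h)$, so that $\mathrm{Cone}_\lambda=\{\rho>\lambda\}$; note that $\rho$ attains the value $1$ precisely on $\langle Y\rangle\setminus\{\0\}$, because the horizontal line $\langle Y\rangle$ is a CC geodesic and hence $d(\0,(0,y,0))=|y|$. The structural features of $q$ I will use are: (i) $q$ is a homeomorphism of $\H$, being a polynomial automorphism with polynomial inverse; (ii) $q$ preserves $\langle Y\rangle$ setwise, since $s_{a,b}(Y)=bY$ and $\tilde A_b(Y)=Y$; and (iii) $q$ commutes with the usual scalings $s_t=s_{t,t}$, as a one-line check from the coordinate formulas shows. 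Consequently, $q(\mathrm{Cone}_\lambda)$ is an open, scale-invariant subset of $\H$ that contains $\langle Y\rangle\setminus\{\0\}$.

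The remaining step is a compactness argument on the CC unit sphere $S\eqdef\{h\in\H:d(\0,h)=1\}$, which is compact because the CC topology coincides with the Euclidean topology on $\H\cong\R^3$ (by~\eqref{eq:metric approximation}, $S$ is closed and bounded in $\R^3$). The closed set $S\setminus q(\mathrm{Cone}_\lambda)$ is therefore compact and disjoint from $S\cap \langle Y\rangle=\{\pm(0,1,0)\}$; since $\rho<1$ off $\langle Y\rangle$, continuity and compactness give $\lambda'\eqdef \sup\{\rho(h):h\in S\setminus q(\mathrm{Cone}_\lambda)\}<1$. Scale-invariance of $\rho$, $\mathrm{Cone}_{\lambda'}$, and $q(\mathrm{Cone}_\lambda)$ will then upgrade $\mathrm{Cone}_{\lambda'}\cap S\subset q(\mathrm{Cone}_\lambda)$ to the desired $\mathrm{Cone}_{\lambda'}\subset q(\mathrm{Cone}_\lambda)$.

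The only (mild) obstacle is the temptation to pursue an explicit quantitative bound: a naive estimate using the bi-Lipschitz constants of $q$ gives only $|y(p')-y(q')|/d(p',q')\lesssim_{a,b}\lambda$, whose implied constant can exceed $1$ and is therefore useless for producing $\lambda'<1$. The compactness argument above circumvents this by exploiting that the CC metric is distorted by $q$ most severely in directions where $|y|$ is small, which are precisely the directions excluded by the cone condition. Since the lemma only asserts dependence of the new intrinsic Lipschitz constant on $\lambda,a,b$ with no explicit rate, this qualitative route is sufficient.
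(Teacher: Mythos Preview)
Your proof is correct and follows essentially the same route as the paper: both argue that $q(\mathrm{Cone}_\lambda)$ is an open, scale-invariant set containing $\langle Y\rangle\setminus\{\0\}$, and hence contains some $\mathrm{Cone}_{\lambda'}$, which immediately transfers the cone condition from $\Gamma$ to $q(\Gamma)$. The paper phrases the existence of $\lambda'$ abstractly via $\bigcap_{\lambda\in(0,1)}\mathrm{Cone}_\lambda=\langle Y\rangle\setminus\{\0\}$, while you unpack this with the continuous function $\rho$ and compactness of the CC unit sphere; the content is the same.
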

\begin{proof}
  Let $q=s_{a,b}$ or $q=\tilde{A}_b$.  As $\Gamma$ is an intrinsic Lipschitz graph, there is a scale-invariant double cone $C\subset \H$ containing a neighborhood of $Y$ such that $pC\cap \Gamma=\emptyset$ for all $p\in \Gamma$.  The image $q(C)$ is a scale-invariant double cone containing a neighborhood of $Y$. Since
  $$\bigcap_{\lambda\in (0,1)} \mathrm{Cone}_\lambda = \langle Y\rangle\setminus \{\0\},$$ there is a $0<\lambda<1$ such that $\mathrm{Cone}_\lambda\subset q(C)$. For all $p\in \Gamma$,
  $$q(p) \mathrm{Cone}_\lambda \cap q(\Gamma) \subset q(p)q(C) \cap q(\Gamma)=q(pC\cap \Gamma)=\emptyset,$$
  so $q(\Gamma)$ is intrinsic $\lambda$--Lipschitz.
\end{proof}

\subsection{Measures on lines and the kinematic formula}\label{sec:kinematic}

Let $\cL$ be the space of horizontal lines in $\H$.  For $U\subset \H$, denote    the set of horizontal lines that intersect $U$ by
$$\cL(U)\eqdef \{L\in \cL:\ L\cap U\neq \emptyset\}.
$$
Let $\cN$ be the unique (up to constants) measure on $\cL$ that is invariant under the action of the isometry group of $\H$.  Scalings of horizontal lines are horizontal lines, so scaling automorphisms of $\H$ act on $\cL$, and $\cL(s_{t,t}(M))=t^3 \cL(M)$ for all $t>0$.  Henceforth $\cN$ will be normalized  so that $\cN(\cL(B_r(x)))=r^3$ for every $r>0$ and $x\in \H$.

The Heisenberg group satisfies the following kinematic formula, which we record here for ease of later use (see~\cite{Mon05} or equation (6.1) in~\cite{CKN}).  There exists a constant $c>0$ such that for any finite-perimeter set $E\subset \H$ and any open subset $U\subset \H$,
\begin{equation}\label{eq:kinematic}
  \Per_E(U)=c\int_{\cL} \Per_{E\cap L}(U\cap L) \ud\cN(L).
\end{equation}

Consider also the set $\cL^\#\eqdef \{(L,p):\ L\in \cL\ \wedge \ p\in L\}$ of {\em pointed horizontal lines}. Associate to each measurable subset $K\subset \cL^\#$ the following two quantities.
\begin{equation}\label{eq:first measure}
\int_\cL \cH^1\big(\{p\in L:\ (L,p)\in K\}\big) \ud \cN(L),
\end{equation}
and
\begin{equation}\label{eq:second measure}
\int_0^{2\pi}\int_{\H} \one_{K}\big(p\langle \cos (\theta )X+\sin(\theta)Y\rangle ,p\big) \ud\cH^4(p) \ud\theta.
\end{equation}
Both of the expressions in~\eqref{eq:first measure} and~\eqref{eq:second measure}   define measures on $\cL^\#$ that are invariant under the isometry group of $\H$, which acts transitively on $\cL^\#$. Therefore, they are proportional, and there is a constant $C>0$ such that for every measurable $K\subset \cL^\#$,
\begin{equation}\label{eq:pointed line identity}
\int_\cL \cH^1(K_L) \ud \cN(L) = C \int_{0}^{2\pi} \int_{\H} \one_{K}(L_{p,\theta},p) \ud\cH^4(p) \ud\theta,
\end{equation}
where we use the following notations for every $L\in \cL$, $p\in \H$ and $\theta\in [0,2\pi]$.
\begin{equation}\label{eq:L p theta}
K_L\eqdef \{p\in L\mid (L,p)\in K\}\subset L\qquad\mathrm{and}\qquad L_{p,\theta}\eqdef p\langle \cos (\theta )X+\sin(\theta)Y\rangle  \in \cL.
\end{equation}

\subsection{Vertical perimeter and parametric vertical perimeter} Given a measurable subset $E\subset V_0$, a measurable function  $\psi\from V_0\to \R$ and (a scale)  $a\in \R$, we define the {\em (normalized) parametric vertical perimeter at scale $a$  of $\psi$ on $E$} by
\begin{equation}\label{eq:def par vert perimeter}
\vpP{E,\psi}(a)\eqdef \frac{\int_E \big|\psi(v)-\psi\big(v Z^{-2^{-2a}}\big)\big|\ud \cH^3(v)}{2^{-a}}.
\end{equation}
This notion relates to the usual vertical perimeter~\eqref{eq:def vert per}  of the epigraph of $\psi$ as follows.
\begin{lemma}[parametric vertical perimeter versus vertical perimeter of epigraph]\label{lem:vpP and vpfl}
  For any measurable subset $E\subset V_0$, any measurable function $\psi\from V_0\to \R$, and any $a\in \R$, $$\vpP{E,\psi}(a)=\vpfl{\Pi^{-1}(E)}\big(\Gamma_\psi^+\big)(a).$$
\end{lemma}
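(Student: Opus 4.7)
The plan is to unfold both sides and use a Fubini-type change of variables on $\H$ that writes any $h\in \Pi^{-1}(E)$ uniquely as $h=vY^{t}$ with $v\in E$ and $t\in\R$.

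First I would rewrite the right-hand side directly from~\eqref{eq:def vert per}:
\begin{equation*}
  \vpfl{\Pi^{-1}(E)}\bigl(\Gamma_\psi^+\bigr)(a)= 2^{a}\int_{\Pi^{-1}(E)} \bigl|\1_{\Gamma_\psi^+}(h)-\1_{\Gamma_\psi^+}\bigl(hZ^{-2^{-2a}}\bigr)\bigr|\ud\cH^{4}(h).
\end{equation*}
I would then parametrize $\Pi^{-1}(E)=\{vY^{t}:(v,t)\in E\times\R\}$ by $\Psi_V\from V_0\times\R\to\H$, $\Psi_V(v,t)\eqdef vY^{t}$. Writing $v=(x_0,0,z_0)$ one has $\Psi_V(v,t)=(x_0,t,z_0+x_0t/2)$, whose Jacobian determinant equals $-1$. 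Consequently $\Psi_V$ is a measure-preserving bijection from $E\times\R$ (with product measure $\cH^{3}|_{V_0}\otimes\cH^{1}$) onto $\Pi^{-1}(E)$ (with $\cH^{4}$), and Fubini gives
\begin{equation*}
  \vpfl{\Pi^{-1}(E)}\bigl(\Gamma_\psi^+\bigr)(a)=2^{a}\int_{E}\int_{\R}\bigl|\1_{\Gamma_\psi^+}\bigl(vY^{t}\bigr)-\1_{\Gamma_\psi^+}\bigl(vY^{t}Z^{-2^{-2a}}\bigr)\bigr|\ud t\ud\cH^{3}(v).
\end{equation*}

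The second step is the central observation: since $Z$ is central in $\H$ it commutes with $Y^{t}$, so $vY^{t}Z^{-2^{-2a}}=(vZ^{-2^{-2a}})Y^{t}$, with $vZ^{-2^{-2a}}\in V_0$. By the definition of the intrinsic epigraph,
\begin{equation*}
  \1_{\Gamma_\psi^+}(vY^{t})=\1_{\{t>\psi(v)\}}\qquad\mathrm{and}\qquad \1_{\Gamma_\psi^+}\bigl(vY^{t}Z^{-2^{-2a}}\bigr)=\1_{\{t>\psi(vZ^{-2^{-2a}})\}}.
\end{equation*}
The symmetric difference of two half-lines $(\psi(v),\infty)$ and $(\psi(vZ^{-2^{-2a}}),\infty)$ has Lebesgue measure $|\psi(v)-\psi(vZ^{-2^{-2a}})|$, so carrying out the $t$--integral yields exactly $|\psi(v)-\psi(vZ^{-2^{-2a}})|$. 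Substituting back and comparing with~\eqref{eq:def par vert perimeter} gives the claimed identity.

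There is essentially no obstacle here: the lemma is a bookkeeping statement, and the only two points that need to be checked carefully are (i) the centrality of $Z$, which makes the downward translation in $\H$ coincide with a translation inside $V_0$ after projection by $\Pi$, and (ii) the Jacobian-$1$ parametrization $\Psi_V$, which matches the normalization convention in which $\cH^{3}|_{V_0}$ agrees with the $2$--dimensional Lebesgue measure on $V_0$ (consistent with the normalization adopted in Lemma~\ref{lem:intrinsic Lipschitz perimeter}). No other ingredients are required.
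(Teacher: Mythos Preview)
Your proof is correct and follows essentially the same approach as the paper's: both compute the measure of $\Pi^{-1}(E)\cap\mathsf{D}_a\Gamma_\psi^+$ via the Jacobian-$1$ change of variables $(v,t)\mapsto vY^t$ and identify the fiber over $v$ as the interval between $\psi(v)$ and $\psi(vZ^{-2^{-2a}})$. Your version is slightly more explicit in isolating the role of the centrality of $Z$ and in writing the inner integral as the measure of a symmetric difference of rays, but the argument is the same.
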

\begin{proof}
  Recalling~\eqref{eq:def D set},  for $\Omega\subset \H$ and $a\in \R$ we denote $\mathsf{D}_a\Omega= \Omega \symdiff \Omega Z^{2^{-2a}}$.  Then
    \begin{equation*}
    \mathsf{D}_a\Gamma_\psi^+ = \big\{v Y^t \mid v\in V_0\ \wedge\  \psi(v)< t \le \psi\big(vZ^{-2^{-2a}}\big)\big\}
    \bigcup \big\{v Y^t \mid v\in V_0 \ \wedge\  \psi\big(vZ^{-2^{-2a}}\big) < t \le \psi(v)\big\},
  \end{equation*}
  since, by definition, $\Gamma_\psi^+ Z^{2^{-2a}}=\big\{v Y^t \mid v\in V_0\ \wedge\  \psi\big(vZ^{-2^{-2a}}\big)<t\big\}$.  Therefore,
  \begin{align*}
    \vpfl{\Pi^{-1}(E)}\big(\Gamma_\psi^+\big)(a)
    = \frac{\cH^4\big(\Pi^{-1}(E)\cap \mathsf{D}_a\Gamma^+_\psi\big)}{2^{-a}}
     =\frac{\int_{E} \big|\psi(v)-\psi\big(v Z^{-2^{-2a}}\big)\big|\ud \cH^3(v)}{2^{-a}}
    =\vpP{E,\psi}(a),
  \end{align*}
  where the second equality uses the fact that the map $(x,y,z)\mapsto (x,0,z)\cdot Y^y = (x,y,z+\frac{xy}{2})$ has constant Jacobian $1$.
\end{proof}

An advantage of the parametric vertical perimeter is that it increases or decreases by a constant factor under a stretch map or a shear map, as computed in the following lemma.
\begin{lemma}\label{lem:pvp is invariant}
  Let $\psi\from V_0\to \R$ and  $E\subset V_0$  be  measurable.  Let $q\from \H\to \H$, $\hat{q}\from V_0\to V_0$, and $\hat{\psi}\from V_0\to \R$ be as in Lemma~\ref{lem:curve transforms}, i.e., $q$ is a stretch map or a shear map, $\hat{q}$ is the map induced on $V_0$, and $\hat{\psi}$ is the function such that $q(\Gamma_\psi)=\Gamma_{\hat{\psi}}$. Then for all $t\in \R$ we have
\begin{itemize}
 \item  If $a, b\in \R\setminus \{0\}$ and $q=s_{a,b}$, then
  $\vpP{\hat{q}(E),\hat{\psi}}(t)=|ab|^{\frac{3}{2}}\cdot\vpP{E,\psi}\Big(t+\log_2\sqrt{|ab|}\Big).$
\item   If $b\in \R\setminus\{0\}$ and $q=\tilde{A}_b$, then
  $\vpP{\hat{q}(E),\hat{\psi}}(t)=\vpP{E,\psi}(t).$
  \end{itemize}
\end{lemma}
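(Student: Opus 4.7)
The strategy is to make the substitution $v=\hat q(u)$ in the integral~\eqref{eq:def par vert perimeter} defining $\vpP{\hat q(E),\hat\psi}(t)$, rewrite $\hat\psi(v)$ via Lemma~\ref{lem:curve transforms}, and then analyze what the displacement $v\mapsto vZ^{-2^{-2t}}$ in $V_0$ becomes when pulled back to $E$ via $\hat q^{-1}$. In both cases the map $\hat q\from V_0\to V_0$ is an affine bijection (it is linear of determinant $ab$ for the stretch case by Lemma~\ref{lem:v0 transforms}, and a $z$--shear of determinant $1$ for the shear case), so the change of variables is straightforward, and the only thing to track is how the vertical shift in the image corresponds to a vertical shift in the source.

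For $q=s_{a,b}$, Lemma~\ref{lem:v0 transforms} gives $\hat q(x,0,z)=(ax,0,abz)$, with Jacobian $|a|^2|b|$. Writing $u=(x,0,z)\in E$ and $v=\hat q(u)$, the point $vZ^{-2^{-2t}}=(ax,0,abz-2^{-2t})$ is $\hat q(u Z^{-2^{-2t}/(ab)})$, so if we set $t'=t+\log_2\sqrt{|ab|}$ then (using $ab>0$ after absorbing a possible sign into the definition of the stretch) $\hat q^{-1}(vZ^{-2^{-2t}})=uZ^{-2^{-2t'}}$. Lemma~\ref{lem:curve transforms} gives $\hat\psi(\hat q(w))=b\psi(w)$ for every $w\in V_0$, hence the integrand transforms by a factor $|b|$ and the change of variables supplies an extra $|a|^2|b|$, yielding
\[
\vpP{\hat q(E),\hat\psi}(t)=\frac{|a|^2|b|^2\int_E\big|\psi(u)-\psi(uZ^{-2^{-2t'}})\big|\ud\cH^3(u)}{2^{-t}}=|a|^2|b|^2\cdot 2^{t-t'}\,\vpP{E,\psi}(t').
\]
Since $2^{t-t'}=|ab|^{-1/2}$, this collapses to $|ab|^{3/2}\vpP{E,\psi}(t')$, as required.

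For $q=\tilde A_b$, Lemma~\ref{lem:v0 transforms} gives $\hat q(x,0,z)=(x,0,z-\tfrac12 bx^2)$, which is a shear of $V_0$ in the $z$--direction with Jacobian $1$; crucially it commutes with $Z$--translation, so $\hat q^{-1}(vZ^{-2^{-2t}})=uZ^{-2^{-2t}}$ when $v=\hat q(u)$. Lemma~\ref{lem:curve transforms} gives $\hat\psi(\hat q(w))=\psi(w)+bx(w)$, so the $bx$ terms at $u$ and $uZ^{-2^{-2t}}$ (which share the same $x$--coordinate) cancel in the difference $\hat\psi(v)-\hat\psi(vZ^{-2^{-2t}})$. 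The scale parameter $t$ is unchanged, and the integral transforms to exactly $\vpP{E,\psi}(t)$.

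There is no serious obstacle; the only thing that requires care is matching the scale shift $t\mapsto t+\log_2\sqrt{|ab|}$ with the $Z$--dilation factor of $ab$ built into the stretch $s_{a,b}$, and checking that the power of $|ab|$ arising from Jacobian, the factor $b$ from $\hat\psi$, and the normalization $2^{-t}$ combine to $|ab|^{3/2}$.
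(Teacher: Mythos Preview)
Your proposal is correct and follows essentially the same approach as the paper: a direct change of variables $v=\hat q(u)$ in the defining integral, combined with the formulas for $\hat\psi\circ\hat q$ from Lemma~\ref{lem:curve transforms} and the observation that $\hat q$ intertwines (or commutes with) vertical translation. Your remark flagging the sign issue when $ab<0$ is apt---the paper's own computation also tacitly assumes $ab>0$ at the step identifying $Z^{-(ab)^{-1}2^{-2t}}$ with $Z^{-2^{-2t'}}$, and in any case only $a,b>0$ is used elsewhere in the paper.
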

\begin{proof}
  If   $q=s_{a,b}$ for some $a, b\in \R\setminus \{0\}$, then   $\hat{q}(x,0,z)=(ax,0, abz)$ and $\hat{\psi}(\hat{q}(v))=b \psi(v)$ for every $v=(x,0,z)\in V_0$. So,
  \begin{align*}
    \vpP{\hat{q}(E),\hat{\psi}}(t)
    &=2^t\int_{\hat{q}(E)} \big|\hat{\psi}(v)-\hat{\psi}\big(v Z^{-2^{-2t}}\big)\big| \ud \cH^3(v)\\
    &=2^t |b| \int_{\hat{q}(E)} \big|\psi\big(\hat{q}^{-1}(v)\big)-\psi\big(\hat{q}^{-1}(v) Z^{-(ab)^{-1}2^{-2t}}\big)\big| \ud \cH^3(v)\\
    &=2^t a^2b^2 \int_{E} \big|\psi(v)-\psi\big(v Z^{-(ab)^{-1}2^{-2t}}\big)\big| \ud \cH^3(v)\\
    &=|ab|^{\frac{3}{2}}\cdot\vpP{E,\psi}\Big(t+\log_2\sqrt{|ab|}\Big).
  \end{align*}

 Next, if $q=\tilde{A}_b$ for some $b\in \R\setminus \{0\}$, then $\hat{\psi}(\hat{q}(v))=\psi(v)+bx(v)$ for all $v=(x,0,z)\in E$, and by Lemma~\ref{lem:v0 transforms} we have
  $$\hat{q}(v)=\Pi\big(q(x,0,z)\big)=\big(x,0,z-\frac12 bx^2\big).$$
 So, $\hat{\psi}(vZ^{-2t})=\psi(\hat{q}^{-1}(vZ^{-2^{-2t}}))+bx(\hat{q}^{-1}(vZ^{-2^{-2t}}))=\psi(\hat{q}^{-1}(v)Z^{-2^{-2t}})+bx(v)$, and hence
  \begin{multline*}
    \vpP{\hat{q}(E),\hat{\psi}}(t)
    =2^t \int_{\hat{q}(E)} \big|\psi\big(\hat{q}^{-1}(v)\big)-\psi\big(\hat{q}^{-1}(v) Z^{-2^{-2t}}\big)\big| \ud \cH^3(v)\\
    =2^t \int_{E} \big|\psi(v)-\psi\big(v Z^{-2^{-2t}}\big)\big| \ud\cH^3(v)
    =\vpP{E,\psi}(t).\tag*{\qedhere}
  \end{multline*}
\end{proof}

We end this section by recording  a straightforward \emph{a priori} upper bound on $\vpP{E,\psi}(a)$.

\begin{lemma}\label{lem:vpP exp decay} Suppose that $E\subset V_0$ is measurable and $\psi\from V_0\to \R$ is smooth. Then
  $$\forall  a\in \R,\qquad \vpP{E, \psi}(a)\le \min \left\{ 2^{a+1} \|\psi\|_{L_\infty(V_0)}, 2^{-a} \left\|\pd{z}{\psi}\right\|_{L_\infty(V_0)}\right\}\cH^3(E).$$
\end{lemma}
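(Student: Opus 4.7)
The bound is elementary, following directly from two ways to estimate the integrand $|\psi(v) - \psi(vZ^{-2^{-2a}})|$. The key observation is that for $v = (x,0,z) \in V_0$, the formula~\eqref{eq:group operation} gives
\begin{equation*}
vZ^{-2^{-2a}} = (x,0,z)(0,0,-2^{-2a}) = (x,0,z-2^{-2a}),
\end{equation*}
so the translation by $Z^{-2^{-2a}}$ moves $v$ purely in the $z$-direction by $-2^{-2a}$.

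The plan is to apply two trivial bounds and insert them into the definition~\eqref{eq:def par vert perimeter} of $\vpP{E,\psi}(a)$. For the first bound, by the triangle inequality,
\begin{equation*}
\big|\psi(v) - \psi(vZ^{-2^{-2a}})\big| \le 2\|\psi\|_{L_\infty(V_0)}.
\end{equation*}
Substituting into~\eqref{eq:def par vert perimeter} yields $\vpP{E,\psi}(a) \le 2^{a+1} \|\psi\|_{L_\infty(V_0)} \cH^3(E)$. For the second bound, the fundamental theorem of calculus applied to the smooth function $t \mapsto \psi(x,0,t)$ gives
\begin{equation*}
\big|\psi(v) - \psi(vZ^{-2^{-2a}})\big| = \bigg|\int_{z-2^{-2a}}^{z} \pd{z}{\psi}(x,0,t)\ud t\bigg| \le 2^{-2a}\left\|\pd{z}{\psi}\right\|_{L_\infty(V_0)}.
\end{equation*}
Substituting into~\eqref{eq:def par vert perimeter} yields $\vpP{E,\psi}(a) \le 2^{-a} \|\partial\psi/\partial z\|_{L_\infty(V_0)} \cH^3(E)$. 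Taking the minimum of the two bounds completes the proof. There is no main obstacle here; the only thing to verify is the calculation of $vZ^{-2^{-2a}}$, which is immediate from~\eqref{eq:group operation}.
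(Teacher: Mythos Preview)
Your proof is correct and follows essentially the same approach as the paper: both use the trivial bound $|\psi(v)-\psi(vZ^{-2^{-2a}})|\le 2\|\psi\|_{L_\infty}$ and the mean-value/FTC bound $|\psi(x,0,z)-\psi(x,0,z-2^{-2a})|\le 2^{-2a}\|\partial_z\psi\|_{L_\infty}$, then integrate over $E$.
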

\begin{proof}
  For all $v=(x,0,z)\in E$, we (trivially) have
  $$
 \big|\psi(v)-\psi\big(v Z^{-2^{-2a}}\big)\big|= |\psi(x,0,z)-\psi(x,0,z-2^{-2a})|\le 2\|\psi\|_{L_\infty(V_0)},
  $$
  and
  $$\big|\psi(v)-\psi\big(v Z^{-2^{-2a}}\big)\big|= |\psi(x,0,z)-\psi(x,0,z-2^{-2a})|\le 2^{-2a} \left\|\pd{z}{\psi}\right\|_{L_\infty(V_0)}.$$
  Recalling the definition~\eqref{eq:def par vert perimeter}, we obtain the desired inequality by integrating over $E$.
\end{proof}


\section{Constructing surfaces and embeddings} \label{sec:counterexampleConstruction}

In this section, we will prove Proposition~\ref{prop:counterexample function}, following the reasoning sketched in Section~\ref{sec:maximally bumpy}, to construct surfaces that are $\alpha$--far from planes at $\alpha^{-4}$ different scales.  We use these surfaces to prove the following theorem.
\begin{thm}\label{thm:counterexample embedding}
  For any $k>1$, there is a left-invariant metric $\Delta=\Delta_k\from \H\times \H\to [0,\infty)$ on $\H$ and a measure space $(\mathscr{S},\mu)$ such that $(\H,\Delta)$ embeds isometrically in $L_1(\mu)$ and such that for any $h=(a,b,c)\in \H$ we have
 \begin{equation}\label{eq:for all h}|a|+|b|\lesssim \Delta(\mathbf{0},h)\lesssim |a|+|b|+\frac{\min\left\{\sqrt{|c|},k\right\}}{\sqrt[4]{\log k}}.\end{equation}
If moreover $1\le |c|\le k^2$, then, in fact
\begin{equation}\label{eq:with restriction on c}
\Delta(\mathbf{0},h)\asymp |a|+|b|+\frac{\sqrt{|c|}}{\sqrt[4]{\log k}}.
\end{equation}
\end{thm}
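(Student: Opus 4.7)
The plan is to produce $\Delta_k$ as the pseudometric induced by a positive cut measure on subsets of $\H$, built from copies of a single ``maximally bumpy'' intrinsic Lipschitz surface. Concretely, apply Proposition~\ref{prop:counterexample function} (the analytic heart of Section~\ref{sec:counterexampleConstruction}, following the strategy of Section~\ref{sec:maximally bumpy}) with parameter $\epsilon\asymp(\log k)^{-1/4}$, obtaining an intrinsic Lipschitz function $\psi_0\from V_0\to\R$, compactly supported in the unit square, whose epigraph $E_0=\Gamma_{\psi_0}^+$ has Heisenberg perimeter $\asymp 1$ and whose vertical perimeter satisfies $\vpfl{\H}(E_0)(a)\asymp\epsilon$ uniformly on an interval $I_0\subset\R$ of length $\asymp\log k$ (the quantitative expression of the fact that $\Gamma_{\psi_0}$ is $\epsilon$--far from every vertical plane at $\epsilon^{-4}=\log k$ simultaneous scales). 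Its stretch $E_k\eqdef s_{k,k}(E_0)$ is then supported in a ball of radius $\asymp\sqrt{k}$, has perimeter $\asymp k^3$, and by Lemma~\ref{lem:pvp is invariant} satisfies $\vpfl{\H}(E_k)(a)\asymp k^3\epsilon$ at all scales corresponding to $|c|\in[1,k^2]$.

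With this geometric input, define the cut measure $\mu=\mu_1+\mu_2$ on measurable subsets of $\H$ as follows: $\mu_1$ is the union of the pushforward of Lebesgue measure $dt$ under $t\mapsto\{h\in\H:x(h)<t\}$ with the analogous measure in the $y$--coordinate; $\mu_2$ is $k^{-3}$ times the pushforward of $dp\otimes d\theta$ on $\H\times[0,2\pi]$ under $(p,\theta)\mapsto p\cdot R_\theta(E_k)$, where $R_\theta$ is the Heisenberg isometry given by rotation of the $xy$--plane by angle $\theta$. The pseudometric $\Delta_k(g,h)\eqdef\int|\mathbf{1}_E(g)-\mathbf{1}_E(h)|\,d\mu(E)$ embeds $(\H,\Delta_k)$ isometrically into $L_1(\mu)$ via the cut embedding $g\mapsto\bigl(E\mapsto\mathbf{1}_E(g)-\mathbf{1}_E(\mathbf{0})\bigr)$, is left-invariant by translation invariance of $dp$ and of $\mu_1$, and is a genuine metric because $\mu_1$ alone separates points with different $\uppi$--projection.

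By Fubini and unimodularity of $\H$,
\begin{equation*}
\Delta_k(\mathbf{0},h)\;=\;(|a|+|b|)\;+\;k^{-3}\int_0^{2\pi}\cH^4\bigl(R_\theta E_k\triangle R_\theta E_k\cdot h\bigr)\,d\theta,
\end{equation*}
where the first summand is exactly the contribution of $\mu_1$. To bound the second summand it suffices, by the triangle inequality and the factorization $h=(a,b,0)\cdot Z^c$, to treat $h$ horizontal and $h=Z^c$ separately. For horizontal $h$, the inequality $\cH^4(E\triangle Eh)\le d(\mathbf{0},h)\Per_E(\H)$, obtained by sliding $E$ along a horizontal geodesic and applying the kinematic formula~\eqref{eq:kinematic}, gives $\lesssim(|a|+|b|)k^3$; combined with the trivial bound by a bounded thickening of the support of $E_k$ when $|a|+|b|>\sqrt{k}$ and then with the $k^{-3}$ prefactor, this produces the $\min\{|a|+|b|,k\}$ contribution. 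For $h=Z^c$, rotations fix $Z^c$ and Lemma~\ref{lem:vpP and vpfl} gives $\cH^4(E_k\triangle E_k Z^c)=\sqrt{|c|}\,\vpfl{\H}(E_k)(-\tfrac12\log_2|c|)$; the bumpy estimate from Step~1 yields $\asymp k^3\sqrt{|c|}\epsilon$ for $|c|\in[1,k^2]$ and saturates at $\cH^4(\{E_k\triangle E_kZ^c\})\lesssim k^4$ for larger $|c|$, producing $\asymp\sqrt{|c|}/(\log k)^{1/4}$ and $\lesssim k/(\log k)^{1/4}$ respectively after normalization. The matching lower bounds in~\eqref{eq:with restriction on c} come from the lower bound $\vpfl{\H}(E_0)(a)\gtrsim\epsilon$ at bumpy scales and from the $\mu_1$--contribution $|a|+|b|$.

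The main obstacle is Proposition~\ref{prop:counterexample function} itself: producing a single compactly supported intrinsic Lipschitz surface of bounded Heisenberg perimeter that is $\epsilon$--far from every vertical plane at $\epsilon^{-4}$ distinct scales simultaneously, by iteratively gluing oblong bumps of decreasing size whose aspect ratios are dictated by the Heisenberg commutator relations and whose placement is guided by the characteristic curves of the surface built at previous stages. Once this geometric object is in hand, the rest of the proof reduces to checking normalizations: the $k^{-3}$ prefactor on $\mu_2$ is chosen precisely to cancel the $k^3$ growth of $\Per(E_k)$, the bounded support of $E_k$ on scale $\sqrt{k}$ produces the truncation at $k$ in~\eqref{eq:for all h}, and the simultaneous bumpiness of $\Sigma_0$ at $\log k$ scales delivers exactly the $(\log k)^{-1/4}$ suppression of vertical distances in $\Delta_k$.
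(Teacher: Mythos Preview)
Your approach is essentially the same cut-measure construction as the paper's, with the minor variant that you work with a compactly supported $\psi_0$ and integrate its translates over all of $\H$, whereas the paper takes the \emph{periodic} $\psi$ from Proposition~\ref{prop:counterexample function} and integrates over a fundamental domain for the lattice generated by $X$ and $Z$. Both setups are reasonable; note, however, that Proposition~\ref{prop:counterexample function} as stated yields a periodic function, so you must either modify the construction to be compactly supported or adopt the quotient formulation. Also, $s_{k,k}(E_0)$ sits in a ball of radius $\asymp k$, not $\sqrt k$; this is harmless for the truncation at $k$ in~\eqref{eq:for all h}, but should be corrected.

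The genuine gap is your assertion that $\vpfl{\H}(E_0)(a)\asymp\epsilon$ \emph{uniformly} on an interval of length $\asymp\log k$. Proposition~\ref{prop:counterexample function} only gives the lower bound $\vpP{U,\psi}(a)\gtrsim 1/\alpha$ on the disjoint union of short intervals $I+\log_2(\alpha\rho^n)$, $0\le n<\alpha^4$, and these have genuine gaps of width $\asymp\log_2\rho$ between them (indeed, the periodic model $\phi$ satisfies $\vpP{U,\phi}(s)=0$ whenever $2^{-2s}\in\Z$, so a uniform lower bound is not available from that argument). With only a single scale in your $\mu_2$, one gets $\Delta_k(\mathbf 0,Z^c)_{\mu_2}\asymp\sqrt{|c|}\cdot\vpP{U,\psi_0}\bigl(\log_2 k - \tfrac12\log_2|c|\bigr)$, and the lower bound in~\eqref{eq:with restriction on c} would fail for those $|c|\in[1,k^2]$ whose scale lands in a gap. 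The paper repairs this by inserting one more averaging layer: it integrates the metric $M$ over a bounded window of dilations $s_{2^{-a},2^{-a}}$, with $a$ ranging over an interval of length $R-r+2\log_2\rho$ (the functional $\Lambda$; see Lemma~\ref{lem:Lambda on center}), which guarantees that for every $|c|$ in the target range the window meets at least one of the good intervals from Proposition~\ref{prop:counterexample function}.(\ref{it:ctr interval lower}). Add this scale-averaging step and your argument goes through.
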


We will prove Theorem~\ref{thm:counterexample embedding} in Section~\ref{sec:proof of ctr embedding} after deriving  two of its applications, and stating Proposition~\ref{prop:counterexample function}.  The first application of Theorem~\ref{thm:counterexample embedding} is the proof of Theorem~\ref{thm:twist the center}.

\begin{proof}[Proof of Theorem~\ref{thm:twist the center} assuming Theorem~\ref{thm:counterexample embedding}] Letting $\Delta$ and $(\mathscr{S},\mu)$  be as in Theorem~\ref{thm:counterexample embedding}, fix $\xi\from \H\to L_1(\mu)$ such that $\|\xi(g)-\xi(h)\|_{L_1(\mu)}=\Delta(g,h)$ for all $g,h\in \H$. Also, using~\cite{Ass83}, fix $m\in \N$ and $\varphi\from \H\to \R^m$ such that $\|\varphi(g)-\varphi(h)\|_{\ell_1^m}\asymp \sqrt{d(g,h)}$ for all $g,h\in \H$.

Suppose that $\vartheta\ge \frac14$. Consider the function $\tau\from \H\to L_1(\mu)\oplus \R^2\oplus \R^m\cong L_1(\nu)$ (for a suitable measure $\nu$) that is given by
\begin{equation}\label{eq:tau formula}
\tau\eqdef \frac{\xi}{(\log k)^{\vartheta-\frac14}}\oplus \uppi\oplus \frac{\varphi}{(\log k)^\vartheta}.
\end{equation}
Since $\Delta$ is left-invariant, every $g=(x,y,z),h=(\chi,\upsilon,\zeta)\in \H$ with $1\le d(g,h)\le k$ satisfy
\begin{equation}\label{eq:tau theta}
\|\tau(g)-\tau(h)\|_{L_1(\nu)}\asymp |x-\chi|+|y-\upsilon|+\frac{\sqrt{|2z-2\zeta-x\upsilon+y\chi|}}{(\log k)^\vartheta},
\end{equation}
using~\eqref{eq:metric approximation}  and Theorem~\ref{thm:counterexample embedding}. While~\eqref{eq:tau theta} would hold even without the third component of $\tau$ in~\eqref{eq:tau formula}, thanks to that component $\tau(\H_\Z)$ is a locally-finite subset of $L_1(\nu)$. Every finite subset of $L_1(\nu)$ embeds with distortion $O(1)$ in $\ell_1$ (by approximating by simple functions), so by~\cite{Ost12}, it follows that $\tau(\H_\Z)$ admits a bi-Lipschitz embedding into $\ell_1$ of distortion  $O(1)$. As the word metric $d_W$ on $\H_\Z$ is bounded above and below by universal constant multiples of $d$, this gives Theorem~\ref{thm:twist the center} provided $k$ is a large enough universal constant multiple of $n$.
\end{proof}

A second application of Theorem~\ref{thm:counterexample embedding} is to construct a left-invariant metric on $\H_\Z$ with the properties of Theorem~\ref{thm:two Lps}, at the cost of losing an iterated logarithm in the associated distortion bounds that we derived in the proof of Theorem~\ref{thm:two Lps}. While the power of the iterated logarithm can be improved by taking more care in the ensuing reasoning, some  unbounded lower-order  loss {\em must} be incurred here; see Remark~\ref{rem:lower order term needed}.

\begin{thm}\label{thm:invariant metric on two sides} For any $2<p\le 4$ there is a left-invariant metric $\updelta=\updelta_p$ on $\H_\Z$ that admits a bi-Lipschitz embedding into both $\ell_1$ and $\ell_q$ for all $q\ge p$, yet not into any Banach space whose modulus of uniform convexity has power-type $r$ for  $2\le r<p$ (in particular, $(\H_\Z,\updelta)$ does not admit a bi-Lipschitz embedding into a Hilbert space or $\ell_s$ for $1<s<p$). Moreover, if we denote $\vartheta=1/p$, then  for every $h=(a,b,c)\in \H_\Z$ with $|c|\ge 3$ we have
\begin{equation}\label{eq:updelta metric}
\updelta(\0,h)\asymp |a|+|b|+\frac{\sqrt{|c|}}{(\log |c|)^\vartheta (\log\log |c|)^2}.
\end{equation}
\end{thm}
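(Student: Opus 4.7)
The plan is to assemble $\updelta_p$ as a weighted sum of the left-invariant metrics of Theorem~\ref{thm:counterexample embedding}, evaluated at the doubly-exponential sequence of scales $k_n\eqdef 2^{2^n}$. Concretely, I would define
\[
  \updelta_p\eqdef\sum_{n=1}^{\infty}\frac{\Delta_{k_n}}{n^2(\log k_n)^{\frac{1}{p}-\frac{1}{4}}}.
\]
Each summand is a scalar multiple of a left-invariant metric on $\H$, and the pointwise bound $\Delta_{k_n}(\0,h)\lesssim |a|+|b|+\sqrt{|c|}/\sqrt[4]{\log k_n}$ from Theorem~\ref{thm:counterexample embedding} makes the series converge for every fixed $h$, so $\updelta_p$ is a well-defined left-invariant metric on $\H_\Z$. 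Concatenating the isometric $L_1$-embeddings furnished by Theorem~\ref{thm:counterexample embedding} (each scaled by the corresponding weight) then gives an isometric embedding of $(\H_\Z,\updelta_p)$ into a single $L_1$-space, and this upgrades to an $O(1)$-distortion embedding into $\ell_1$ by the finite-determinacy theorem of~\cite{Ost12}.

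To verify~\eqref{eq:updelta metric} I would fix $h=(a,b,c)\in\H_\Z$ with $|c|\ge 3$ and take $N=N(c)$ to be the smallest integer with $k_N\ge\sqrt{|c|}$, so that $N\asymp \log\log|c|$ and $\log k_N\asymp\log|c|$. Inserting the two-sided bound of Theorem~\ref{thm:counterexample embedding} into the defining series and splitting at $n=N$, the doubly-exponential growth of $k_n$ turns both the $n<N$ head (where $\min\{\sqrt{|c|},k_n\}=k_n$) and the $n\ge N$ tail (where $\min\{\sqrt{|c|},k_n\}=\sqrt{|c|}$) into geometric series dominated by their boundary terms, each contributing $\sqrt{|c|}/((\log|c|)^{1/p}(\log\log|c|)^2)$ up to universal constants. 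The matching lower bound is obtained from the single term $n=N$, since the assumption $|c|\le k_N^2$ activates the sharp lower half of Theorem~\ref{thm:counterexample embedding}, together with the $n=1$ term for the $|a|+|b|$ part.

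The step I expect to be the main obstacle is embedding $(\H_\Z,\updelta_p)$ into $\ell_q$ for $q\ge p$, because the $\ell_q$-norm of a direct sum is not itself a sum. The plan is to set $\psi_n\eqdef\psi_{k_n,q,1/p}$ (which is allowed since $q\ge p>2$ and $\vartheta=1/p\ge 1/q$, so Theorem~\ref{thm:twist the center Lp} applies) and to consider $\Psi\eqdef\bigoplus_{n=1}^\infty\psi_n/n^2\from\H_\Z\to\ell_q(\ell_q)\cong\ell_q$. Using the global form of the~\cite{LN14}-type bound~\eqref{eq:psi'} in the ambient space $\ell_q$ with $\e_n=1/\log k_n$, the $n$-th summand of $\|\Psi(g)-\Psi(h)\|_{\ell_q}^q$ has, as a function of the vertical coordinate $|c|$ of $g^{-1}h$, the form $(|a|+|b|)^q/n^{2q}+|c|^{q/2}e^{-q(\log|c|)/(2\log k_n)}/(n^{2q}(\log k_n)^{q/p})$. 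The exponential factor decays super-geometrically as $n$ drops below $N$, while the compensating $(\log k_n)^{-q/p}\asymp 2^{(N-n)q/p}$ only grows geometrically, so the $\ell_q$-sum concentrates in an $O(1)$-window around $n=N$ and evaluates to $\asymp(|a|+|b|)^q+|c|^{q/2}/((\log|c|)^{q/p}(\log\log|c|)^{2q})$. Taking $q$-th roots reproduces $\updelta_p(g,h)$.

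Finally, the non-embeddability assertion follows by contradiction from the distortion lower bound $(\log n)^{1/r}\lesssim_X\cc_X(\mathcal{B}_n)$ of~\cite{LafforgueNaor}, valid for every Banach space $X$ whose modulus of uniform convexity has power type $r\in[2,p)$. Combining~\eqref{eq:updelta metric} with~\eqref{eq:equiv to Kor} shows that the identity map $(\mathcal{B}_n,d_W)\to(\mathcal{B}_n,\updelta_p)$ has distortion $\lesssim(\log n)^{1/p}(\log\log n)^2$, so a $D$-bi-Lipschitz embedding of $(\H_\Z,\updelta_p)$ into $X$ would force $(\log n)^{1/r}\lesssim_X D(\log n)^{1/p}(\log\log n)^2$, which fails as $n\to\infty$ because $1/r>1/p$. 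In particular this rules out embeddings into Hilbert space or into any $\ell_s$ with $1<s<p$.
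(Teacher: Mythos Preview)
Your proposal is correct and follows essentially the same route as the paper: a weighted sum of the $\Delta_k$ at doubly-exponential scales (the paper uses $e^{e^{4n}}$ where you use $2^{2^n}$), the asymptotic~\eqref{eq:updelta metric} verified by splitting the sum at the critical index $N\asymp\log\log|c|$, the $\ell_1$ embedding via concatenation and~\cite{Ost12}, and non-embeddability via~\cite{LafforgueNaor}. For the $\ell_q$ step the paper works directly with the left-invariant metrics $\uprho_\e$ of~\cite{LN14} (combined in an $\ell_q$-sum) rather than with the maps $\psi_n$ of Theorem~\ref{thm:twist the center Lp}, but since the $\psi_n$ are built from those same $\sigma_\e$ plus~$\uppi$, the concentration computation is exactly the one you sketch.
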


\begin{proof} Define a left-invariant metric $\updelta\from \H_\Z\times \H_\Z\to [0,\infty)$ as  a superposition of the metrics $\{\Delta_k\}_{k>0}$ of Theorem~\ref{thm:counterexample embedding}, by setting for every $h=(a,b,c)\in \H_\Z$,
\begin{equation}\label{eq:def updelta}
\updelta(\0,h)\eqdef \sum_{n=1}^\infty \frac{1}{n^2e^{(4\vartheta-1)n}} \Delta_{e^{e^{4n}}}(\0,h).
\end{equation}
We will first verify~\eqref{eq:updelta metric}, which in particular implies that the sum defining $\updelta$ converges, and hence by Theorem~\ref{thm:counterexample embedding} we would know  that $\updelta$ is indeed a left-invariant metric on $\H_\Z$, and that $(\H_\Z,\updelta)$  admits an isometric embedding into $\ell_1(L_1(\mu))$. By~\cite{Ost12}, it follows from this that  $(\H_\Z,\updelta)$ also admits a bi-Lipschitz embedding into the sequence space $\ell_1$.

Fix $h=(a,b,c)\in \H_\Z$ with $|c|\ge e^{e^4}$ and choose $m=m(c)\in \N$ such that
\begin{equation}\label{eq:choose mc}
e^{e^{4m}}\le \sqrt{|c|}<e^{e^{4(m+1)}}.
\end{equation}
 Then,
\begin{multline*}
\updelta(\0,h)\stackrel{\eqref{eq:for all h}}{\lesssim} |a|+|b|+\sum_{n=1}^\infty \frac{\min\left\{\sqrt{|c|},e^{e^{4n}}\right\}}{n^2e^{4\vartheta n}}\lesssim |a|+|b|+\sum_{n=1}^m \frac{e^{e^{4n}}}{n^2e^{4\vartheta n}} +\sum_{n=m+1}^\infty \frac{\sqrt{|c|}}{n^2e^{4\vartheta n}}\\
\asymp |a|+|b|+\frac{e^{e^{4m}}}{m^2e^{4\vartheta m}} +\frac{\sqrt{|c|}}{m^2e^{4\vartheta m}}\stackrel{\eqref{eq:choose mc}}{\lesssim} |a|+|b|+\frac{\sqrt{|c|}}{(\log |c|)^\vartheta (\log\log |c|)^2}.
\end{multline*}
Conversely, since the sum in~\eqref{eq:def updelta} is at least its summands for $n=1$ and $n=m+1$,
$$
\updelta(\0,h)\gtrsim |a|+|b|+\frac{\sqrt{|c|}}{(m+1)^2e^{4\vartheta (m+1)}}\stackrel{\eqref{eq:choose mc}}{\gtrsim} |a|+|b|+\frac{\sqrt{|c|}}{(\log |c|)^\vartheta (\log\log |c|)^2}.
$$
This is~\eqref{eq:updelta metric} if $|c|\ge e^{e^4}$, but then~\eqref{eq:updelta metric} follows formally in the remaining range  $3\le |c|<e^{e^4}$ (simply use  the triangle inequality to reduce  the upper bound to the case of large enough $|c|$ that we just proved, and take only the $n=1$ summand in~\eqref{eq:def updelta} for the lower bound).

By contrasting~\eqref{eq:updelta metric} with~\eqref{eq:equiv to Kor} we see that for every integer $n\ge 3$,
\begin{equation}\label{eq:loglog1}
\cc_{(\mathcal{B}_n,\updelta)}(\mathcal{B}_n,d_W)\lesssim (\log n)^\vartheta(\log\log n)^2.
\end{equation}
At the same time, if $2\le r<p$ and $X$ is a Banach space whose modulus of uniform convexity has power-type $r$, then by~\cite{LafforgueNaor} we have
\begin{equation}\label{eq:loglog2}
\cc_X(\mathcal{B}_n,d_W)\gtrsim_X (\log n)^{\frac{1}{r}}.
\end{equation}
By combining~\eqref{eq:loglog1} and~\eqref{eq:loglog2} we deduce that
$$
\cc_X(\mathcal{B}_n,\delta)\gtrsim_X \frac{(\log n)^{\frac{1}{r}-\vartheta}}{(\log\log n)^2}= \frac{(\log n)^{\frac{1}{r}-\frac{1}{p}}}{(\log\log n)^2}\xrightarrow[n\to \infty]{} \infty.
$$
Consequently, $(\H_\Z,\updelta)$ does not admit a bi-Lipschitz embedding into $X$.

It remains to show that $(\H_\Z,\updelta)$ admits a bi-Lipschitz embedding into $\ell_q$ for any $q\ge p$. As before, finite subsets of $L_q$ embed with distortion $O(1)$ in $\ell_p$ (by approximating by simple functions). Thus, due to~\cite{Ost12}, since $(\H_\Z,\updelta)$ is locally finite, it suffices to show  that $(\H_\Z,\updelta)$ admits a bi-Lipschitz embedding into $L_q$. By~\cite[Lemma~3.1]{LN14}, for any $0<\e<\frac{1}{2}$, there exists a left-invariant metric $\uprho_\e$ on $\H_\Z$ such that $(\H_\Z,\uprho_\e)$ embeds isometrically into $L_q$, and
\begin{equation}\label{eq:quote snowflake again}
\forall h=(a,b,c)\in \H_\Z,\qquad \uprho_\e(\0,h)\asymp |a|^{1-\e}+|b|^{1-\e}+\e^{\frac{1}{q}} |c|^{\frac{1-\e}{2}}.
\end{equation}
Define a left-invariant metric $\uprho\from \H_\Z\times \H_\Z\to [0,\infty)$  by setting for every $h=(a,b,c)\in \H_\Z$,
$$
\uprho(\0,h)\eqdef \bigg(|a|^q+|b|^q+\sum_{n=1}^\infty \frac{1}{n^{2q}e^{n(q\vartheta-1) }}\uprho_{2e^{-n}}(\0,h)^q \bigg)^{\frac{1}{q}}.
$$
By design, $(\H_\Z,\uprho)$ embeds isometrically into $\ell_q(L_q)$. So, the proof of Theorem~\ref{thm:invariant metric on two sides} will be complete if we show that $\updelta(\0,h)\asymp \uprho(\0,h)$ for all $h=(a,b,c)\in \H_\Z$ with, say, $|c|\ge 300$. To see this, by combining~\eqref{eq:updelta metric} and~\eqref{eq:quote snowflake again} it suffices to show that
\begin{equation}\label{eq:second sum to split}
\bigg(\sum_{n=1}^\infty \frac{1}{n^{2q}e^{nq\vartheta}|c|^{qe^{-n}}}\bigg)^{\frac{1}{q}}\asymp \frac{1}{(\log |c|)^\vartheta (\log\log |c|)^2}.
\end{equation}
Fix $s=s(c)\in \N$ such that $2e^{s}\le \log |c|<2e^{s+1}$ (this is possible because $|c|\ge 300$). Then,
\begin{multline*}
\bigg(\sum_{n=1}^\infty \frac{1}{n^{2q}e^{nq\vartheta}|c|^{qe^{-n}}}\bigg)^{\frac{1}{q}}\lesssim \bigg(\sum_{k=0}^{s-1}\frac{1}{(s-k)^{2q}e^{(s-k)q\vartheta}|c|^{qe^{-(s-k)}}}\bigg)^{\frac{1}{q}} +\bigg(\sum_{n=s+1}^\infty \frac{1}{n^{2q}e^{nq\vartheta}}\bigg)^{\frac{1}{q}}\\\asymp \frac{1}{e^{s\vartheta}}\bigg(\sum_{k=0}^{s-1}\frac{e^{qk\vartheta}}{(s-k)^{2q}(|c|^{e^{-s}})^{qe^k}}\bigg)^{\frac{1}{q}} +\frac{1}{s^2e^{s\vartheta}}\asymp \frac{1}{s^2e^{s\vartheta}}\asymp \frac{1}{(\log |c|)^\vartheta (\log\log |c|)^2},
\end{multline*}
where the final step holds by our choice of $s$, and the penultimate step holds as $|c|^{e^{-s}}\ge 2e$ by our choice of $s$, and therefore the sum in question is dominated by its $k=0$ summand. This proves half of the equivalence~\eqref{eq:second sum to split}, and the remaining direction of~\eqref{eq:second sum to split} follows by bounding from below the sum in the left hand side of~\eqref{eq:second sum to split} by its $n=s$ summand.
\end{proof}

\begin{remark}\label{rem:lower order term needed} It is evident from the above proof of Theorem~\ref{thm:invariant metric on two sides} that the power $2$ of $\log\log |c|$ in~\eqref{eq:updelta metric} can be improved to any fixed power that is strictly larger than $1$. However, the lower order term cannot be removed altogether. Specifically, suppose that $\mathfrak{d}$ is a left-invariant metric on $\H_\Z$ such that  every $h=(a,b,c)\in \H_\Z$ with $|c|\ge 3$ satisfies
\begin{equation}\label{eq:mathfrak d metric}
\mathfrak{d}(\0,h)\asymp |a|+|b|+\frac{\sqrt{|c|}}{\sqrt[4]{\log|c|}}.
\end{equation}
We claim that neither $\ell_1$ nor $\ell_4$ contains a bi-Lipschitz copy of $(\H_\Z,\mathfrak{d})$. In fact, we will next show that for every integer $n\ge 3$ the word-ball $\mathcal{B}_n\subset \H_\Z$ satisfies the distortion bounds
\begin{equation}\label{eq:ell1 loglog n}
\sqrt[4]{\log\log n}\lesssim \cc_{\ell_1}(\mathcal{B}_n,\mathfrak{d})\lesssim\log\log n,
\end{equation}
and,
\begin{equation}\label{eq:ell4 loglog n}
\cc_{\ell_4}(\mathcal{B}_n,\mathfrak{d})\asymp \sqrt[4]{\log\log n}.
\end{equation}
We conjecture that the first inequality in~\eqref{eq:ell1 loglog n} is sharp.

To prove~\eqref{eq:ell1 loglog n}, by substituting Theorem~\ref{thm:XYD} into~\cite[Lemma~33]{NY18}, and then substituting the resulting inequality into~\cite[Lemma~30]{NY18}, we get that there is a universal constant $\kappa\ge 5$ such that for every integer $n\ge 3$, every function $f:\H_\Z\to \ell_1$ satisfies
\begin{equation}\label{eq:local inequality mathfrak}
\bigg(\sum_{c=1}^{n^2} \frac{1}{c^3} \Big( \sum_{h\in \mathcal{B}_n} \|f(hZ^c)-f(h)\|_{\ell_1} \Big)^4\bigg)^{\frac14} \lesssim
\sum_{h\in \mathcal{B}_{\kappa n}} \big( \|f(hX)-f(h)\|_{\ell_1}+\|f(hY)-f(h)\|_{\ell_1}\big).
\end{equation}
Suppose that $D\ge 1$ is such that $\mathfrak{d}(g,h)\le \|f(g)-f(h)\|_{\ell_1}\le D\mathfrak{d}(g,h)$ for all $g,h\in \mathcal{B}_{2\kappa n}$. Then, by~\eqref{eq:mathfrak d metric} and~\eqref{eq:local inequality mathfrak} we have
\begin{equation}\label{eq:D log log}
D\gtrsim \bigg(\sum_{c=3}^{n^2} \frac{1}{c^3} \Big( \frac{\sqrt{c}}{\sqrt[4]{\log c}} \Big)^4\bigg)^{\frac14}=\bigg(\sum_{c=3}^{n^2} \frac{1}{c\log c} \bigg)^{\frac14}\asymp\sqrt[4]{\log\log n}.
\end{equation}
This proves the first inequality in~\eqref{eq:ell1 loglog n}. For the second inequality in~\eqref{eq:ell1 loglog n} consider the sum
$$
\mathfrak{d}_{1,n}= \sum_{j=0}^{5\lceil \log\log n\rceil}\Delta_{2^{2^j}}
$$
of metrics from Theorem~\ref{thm:counterexample embedding}. Then, by Theorem~\ref{thm:counterexample embedding} the metric space $(\H_\Z,\mathfrak{d}_{1,n})$ embeds isometrically into $\ell_1$ and $\mathfrak{d}\lesssim \mathfrak{d}_{1,n} \lesssim (\log\log n)\mathfrak{d}$ on $\mathcal{B}_n\times \mathcal{B}_n$.

The proof of~\eqref{eq:ell4 loglog n} is analogous. For the lower bound on $\cc_{\ell_4}(\mathcal{B}_n,\mathfrak{d})$  use (the case $q=4$ of) Theorem~1.1 in~\cite{LafforgueNaor} to get the following estimate for any function $f:\H_\Z\to\ell_4$.
$$
\sum_{c=1}^{n^2} \frac{1}{c^3} \Big( \sum_{h\in \mathcal{B}_n} \|f(hZ^c)-f(h)\|_{\ell_4} \Big)^4 \lesssim
\sum_{h\in \mathcal{B}_{21 n}} \big( \|f(hX)-f(h)\|_{\ell_4}^2+\|f(hY)-f(h)\|_{\ell_4}^4\big).
$$
With this inequality at hand, the desired lower bound follows as in~\eqref{eq:D log log}.  For the upper bound on $\cc_{\ell_4}(\mathcal{B}_n,\mathfrak{d})$, use the following metric on $\H_\Z$ which embeds isometrically into $\ell_4$.
$$
\mathfrak{d}_{4,n}= \bigg(\sum_{j=0}^{5\lceil \log\log n\rceil}\Delta_{2^{2^j}}^4\bigg)^{\frac14}.
$$

The above reasoning also shows mutatis mutandis that an unbounded lower-order factor loss is needed in the compression bound~\eqref{eq:H3 compression} of Theorem~\ref{thm:permanence}. Specifically, there is no mapping $f:\H_\Z\to \ell_1$ that is Lipschitz with respect to the word metric on $\H_\Z$ and whose compression rate (recall~\eqref{eq:compression rate}) satisfies $\omega_f(s)\gtrsim s/\sqrt[4]{\log s}$ when $s\ge 2$. It would be worthwhile to obtain a characterization of the possible compression rates of embeddings of $\H_\Z$ into $\ell_1$ in the spirit of~\cite[Theorem~9]{NY18}, but this would require more work. Specifically, one would need to replace the use in~\cite{NY18} of~\cite[Corollary~5]{Tes08} by a better embedding of $\H_\Z$ into $\ell_1$; we expect that the existence of such an embedding could could be deduced using the ideas of the present section, but we did not attempt to carry this out.
\end{remark}

The main ingredient in the proof of Theorem~\ref{thm:counterexample embedding} is the following proposition, which is proved in Section~\ref{sec:start construction}.  It constructs a function $\psi\from V_0\to \R$ whose intrinsic graph has small horizontal perimeter but large vertical perimeter due to bumps at many different scales.
Here and throughout the rest of this section, we denote the unit square in $V_0$ by $U$, i.e.,
$$U\eqdef [0,1]\times \{0\}\times [0,1]\subset V_0.$$

\begin{prop}\label{prop:counterexample function}
  There are universal constants $\rho,R,r\in \R$ with $R>r$ and $\rho>2^{2(R-r)}$ such that for any $\alpha\in \N$, there is a smooth function $\psi\from V_0\to \R$ that has the following properties.
  \begin{enumerate}
  \item\label{it:ctr periodic}
    $\psi$ is periodic with respect to the integer lattice $\Z\times \{0\}\times \Z$ of $V_0$.
  \item\label{it:ctr horiz bound}
    $\|\partial_\psi \psi\|_{L_2(U)}\lesssim 1$.
  \item \label{it:ctr linf}
    $\|\psi\|_{L_\infty(V_0)}\le \frac{1}{\alpha^2}$.
  \item \label{it:ctr interval lower}
    $\vpP{U,\psi}(a)\gtrsim \frac{1}{\alpha}$ for any integer $0\le n<\alpha^4$ and any $a\in I+  \log_2(\alpha\rho^{n})$, where $I=[r,R]$. Hence,
    $$\left\|\vpP{U,\psi}\right\|_{L_1([\log_2(\alpha\rho^{n})+r,\log_2(\alpha\rho^{n})+R])}\gtrsim \frac{1}{\alpha},$$
  \item \label{it:ctr lp vper}
    For any $q>0$, we have
    \begin{equation*}\label{eq:q=4 critical}\left\|\vpP{U,\psi}\right\|_{L_q(\R)} \gtrsim  \alpha^{\frac{4}{q}-1}.\end{equation*}
    \item $\vpP{U,\psi}(a)\lesssim \min\left\{\frac{1}{\alpha},\frac{2^a}{\alpha^2}\right\}$ for any $a\in \R$.
  \end{enumerate}
\end{prop}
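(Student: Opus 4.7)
The plan is to realize the ``fractal Venetian blind'' sketched in Section~\ref{sec:maximally bumpy}: I will construct $\psi=\sum_{i=0}^{\alpha^{4}-1}\Xi_{i}$ as a sum of $\alpha^{4}$ layers of anisotropic bumps, each at a different Heisenberg scale. First I fix a smooth $\Z\times\Z$--periodic prototype bump $\beta:V_{0}\to\R$ with $\|\beta\|_{\infty}\lesssim 1$, $\|\partial_{\beta}\beta\|_{L_{2}(U)}\lesssim 1$, $\vpP{U,\beta}(a)\gtrsim 1$ for $a$ in a compact interval $I=[r,R]$, with $\vpP{U,\beta}(a)$ decaying geometrically in $|a|$ outside $I$, and with $\beta$ of mean zero in $z$ (for instance a smoothed tensor product $\eta(x)\sin(2\pi z)$). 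Setting $r_{i}\eqdef\rho^{-i}/\alpha$ for a large universal integer power of two $\rho>2^{2(R-r)}$, I let $\tilde{\beta}_{i}$ be the image of $\beta$ under the anisotropic stretch $s_{\alpha^{-1},\alpha}$ composed with the Heisenberg scaling $s_{r_{i}}$; by Lemma~\ref{lem:curve transforms} and Lemma~\ref{lem:pvp is invariant} this yields an atom of width $\alpha r_{i}$ in $x$, height $r_{i}^{2}$ in $z$, and $L_{\infty}$--norm $r_{i}/\alpha$, whose horizontal derivative and parametric vertical perimeter relate back to those of $\beta$ by explicit scaling factors.

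Next I build $\psi$ inductively: $\psi_{0}\equiv 0$, and $\psi_{i}=\psi_{i-1}+\Xi_{i}$, where $\Xi_{i}$ is a $\Z\times\Z$--periodic tiling of translated copies of $\tilde{\beta}_{i}$ placed along the family of characteristic curves $\{z=g_{k}^{(i-1)}(x)\}_{k\in\Z}$ of $\psi_{i-1}$ issuing from lattice points of $z$--spacing $r_{i}^{2}$:
\begin{equation*}
\Xi_{i}(x,0,z)\eqdef\sum_{j,k\in\Z}\tilde{\beta}_{i}\bigl(x-j\alpha r_{i},\,0,\,z-g_{k}^{(i-1)}(x)\bigr).
\end{equation*}
This characteristic-curve alignment is the essential feature. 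Periodicity (property~1) is built in once $\rho$ is a power of $2$. The bound $\|\psi\|_{\infty}\le 1/\alpha^{2}$ (property~3) follows because within each $\Xi_{i}$ at most one bump is active at any point (the bumps occupy disjoint $z$--bands of thickness $r_{i}^{2}$) and $\sum_{i}\|\Xi_{i}\|_{\infty}\le\sum_{i}r_{i}/\alpha$ is a geometric series, whose universal constant I absorb by rescaling $\beta$. In property~(6), the estimate $\vpP{U,\psi}(a)\lesssim 2^{a}/\alpha^{2}$ is immediate from Lemma~\ref{lem:vpP exp decay} and the $L_{\infty}$ bound; the complementary bound $\lesssim 1/\alpha$ follows by summing per-layer contributions, with layer $\Xi_{i}$ producing a quantity peaked at $a_{i}\eqdef\log_{2}(\alpha\rho^{i})$ of height $\asymp 1/\alpha$ and decaying geometrically in $|a-a_{i}|$ (the decay coming from Lemma~\ref{lem:vpP exp decay} on each side), so at any $a$ only $O(1)$ layers contribute non-negligibly.

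The technical core is properties~(2), (4) and (5), where the characteristic-curve alignment works its magic. Localising $\Xi_{i}$ near a single curve as $\tilde{\beta}_{i}(x,z-g^{(i-1)}(x))$ and substituting the defining ODE $(g^{(i-1)})'(x)=-\psi_{i-1}(x,g^{(i-1)}(x))$ gives
\begin{equation*}
\partial_{\psi}\Xi_{i}=(\partial_{x}\tilde{\beta}_{i})+\bigl(\psi_{i-1}(x,g^{(i-1)}(x))-\psi(x,z)\bigr)(\partial_{z}\tilde{\beta}_{i}).
\end{equation*}
A transverse Taylor expansion of $\psi_{i-1}$ together with the cumulative $L_{\infty}$ bound on $\psi-\psi_{i-1}$ controls the parenthesised factor by $O(r_{i}/\alpha)$ on $\supp\tilde{\beta}_{i}$, so pulling back to $\beta$ via Lemma~\ref{lem:pvp is invariant} yields $\|\partial_{\psi}\Xi_{i}\|_{L_{2}(U)}\lesssim 1/\alpha^{2}$. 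Because the $\partial_{\psi}\Xi_{i}$ inherit mean zero in $z$ at scale $r_{i}^{2}$ from $\beta$, different-scale layers are near-orthogonal in $L_{2}(U)$, and summing squares over the $\alpha^{4}$ layers gives $\|\partial_{\psi}\psi\|_{L_{2}(U)}^{2}\lesssim\alpha^{4}\cdot\alpha^{-4}=1$, which is~(2). For~(4), at $a\in I+\log_{2}(\alpha\rho^{n})$, Lemma~\ref{lem:pvp is invariant} transfers the prototype's lower bound to $\vpP{U,\Xi_{n}}(a)\gtrsim 1/\alpha$, while coarser layers ($i<n$) are nearly constant under a $z$--shift of $r_{n}^{2}$ and contribute $O(\rho^{-(n-i)}/\alpha)$ each, and finer layers ($i>n$) are averaged out by the shift and also contribute $O(\rho^{-(i-n)}/\alpha)$ each; summing geometrically and taking $\rho$ large enough makes these corrections dominated by the main term, so $\vpP{U,\psi}(a)\gtrsim 1/\alpha$. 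Integrating over the $\alpha^{4}$ disjoint scale intervals yields $\|\vpP{U,\psi}\|_{L_{q}(\R)}^{q}\gtrsim\alpha^{4}\cdot|I|\cdot(1/\alpha)^{q}\asymp\alpha^{4-q}$, i.e.~(5).

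The main obstacle is propagating the inductive hypothesis about the characteristic foliation: I must show that at each stage $i$ the curves $\{g_{k}^{(i-1)}\}_{k}$ remain a well-separated $C^{1,1}$--foliation of $V_{0}$ with $z$--separation $\asymp r_{i}^{2}$ (so scale-$i$ bumps fit inside their strips without overlap), with derivatives bounded by $\|\psi_{i-1}\|_{\infty}\lesssim 1/\alpha^{2}$, and with the Taylor-expansion error controlling the parenthesised factor above uniformly in $i$. This will rest on the geometric smallness $|\partial_{z}\psi_{i-1}|\cdot r_{i}^{2}\lesssim (1/(\alpha r_{i-1}))\cdot r_{i}^{2}\asymp \rho^{-1}r_{i}/\alpha$, i.e.\ over the vertical extent of a scale-$i$ bump the preceding foliation drifts by less than $\rho^{-1}$ of a bump-height; this is what allows $\rho$ to be chosen once and for all, closing the induction and bringing all of the above bounds together with explicit universal constants.
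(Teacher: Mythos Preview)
Your construction is essentially the paper's: build $\psi=\psi_{\alpha^4}$ inductively by adding scale-$i$ bumps aligned with the characteristic curves of $\psi_{i-1}$, close an induction on $\|\partial_z\psi_i\|_\infty\lesssim\rho^{i-1}$, and extract properties~(2)--(6) from scale separation and near-orthogonality of the horizontal-derivative layers. The scales, the aspect ratio $\alpha$, and the identification of the $\partial_z\psi_i$ bound as the load-bearing inductive step all match. Two technical points where your shortcuts do not quite land and the paper's mechanisms differ:

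\textbf{The appeal to Lemma~\ref{lem:pvp is invariant}.} That lemma covers only stretches and shears of $\H$, whereas your bumps are placed via the \emph{non-affine} shift $(x,z)\mapsto(x,z-g_k^{(i-1)}(x))$, so it does not by itself deliver $\vpP{U,\Xi_n}(a)\gtrsim 1/\alpha$. The paper instead works in flow coordinates $(s,t)\mapsto\Phi(\psi_i)_s(v_{i,j}Z^t)$ and controls the Jacobian $\partial z/\partial t\in[e^{-2/\rho},e^{2/\rho}]$ via the inductive bound on $\partial_z\psi_i$ (Lemma~\ref{lem:derivs of h}); the lower bound on $\vpP{U,\beta_i}$ then follows by an honest change of variables (Lemma~\ref{lem:relate to phi}).

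\textbf{Near-orthogonality.} Your claim that $\partial_\psi\Xi_i$ inherits mean zero in $z$ at scale $r_i^2$ is not exact: after integrating by parts, the term $-\psi\,\partial_z\Xi_i$ leaves a residue $\int(\partial_z\psi)\,\Xi_i\,dz$, and the curve-shifted $\Xi_i$ is only approximately $z$-periodic anyway (again by the $\partial z/\partial t$ drift). The paper sidesteps this by using a \emph{different} decomposition: it telescopes $\partial_\psi\psi=\sum_i D_i$ with $D_i=\partial_{\psi_{i+1}}\psi_{i+1}-\partial_{\psi_i}\psi_i$ and observes via Green's theorem (Lemma~\ref{lem:Heisenberg Stokes}) that $\int_{Q_{n,j}}D_n=0$ \emph{exactly} on each pseudoquad, since $\psi_{n+1}=\psi_n$ on $\partial Q_{n,j}$. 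Combined with slow variation of $D_m$ across $Q_{n,j}$ for $m<n$ (Lemma~\ref{lem:variation of Di}), this gives $|\langle D_m,D_n\rangle_U|\lesssim\alpha^{-4}\rho^{-|m-n|}$. Your approximate-mean-zero route can be repaired using the same Jacobian control, but the Stokes cancellation is what makes the bookkeeping clean.
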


By Proposition~\ref{prop:areaIntegral}, the second assertion of Proposition~\ref{prop:counterexample function} implies that $\cH^3(\partial E)\lesssim 1$, where $E$ is the epigraph of the restriction of $\psi$ to the unit square $U\subset V_0$.  In combination with Proposition~\ref{prop:counterexample function}.(\ref{it:ctr lp vper}), since $\alpha$ can be arbitrarily large, this shows that the $L_4(\R)$ norm in~\eqref{eq:coarea} cannot be replaced by $L_q(\R)$ for any $q\in (0,4)$; as explained in the introduction, this also implies the optimality of Theorem~\ref{thm:XYD}. Furthermore, since~\eqref{eq:coarea} is a consequence of~\eqref{eq:use previous corona}, Proposition~\ref{prop:counterexample function} also implies that for any $q\in (0,4)$, there is a $\lambda\in (0,1)$ such that for any $c>0$, there is an intrinsic $\lambda$--Lipschitz graph $\Gamma$ satisfying
$$\big\|\vpfl{B_1(\0)}\big(\Gamma^+\big)\big\|_{L_4(\R)}\ge c.$$
We expect that the construction in Section~\ref{sec:start construction} can be modified to produce an intrinsic Lipschitz graph directly (for instance, by stopping the construction early in regions where $\nabla_\psi \psi$ gets too large), but this is not needed here, so we leave the details to future work.

 Proposition~\ref{prop:counterexample function}.(\ref{it:ctr lp vper}) follows directly from Proposition~\ref{prop:counterexample function}.(\ref{it:ctr interval lower}). Indeed, since $\rho>2^{2(R-r)}$, the intervals $\{[\log_2(\alpha\rho^{n})+r,\log_2(\alpha\rho^{n})+R]\}_{n\in \Z}$ are disjoint. Consequently,
\begin{align}\label{eq:4 from 3}
\begin{split}
\left\|\vpP{U,\psi}\right\|_{L_q(\R)}^q&\ge \sum_{n=0}^{\alpha^4-1}  \left\|\vpP{U,\psi}\right\|_{L_q([\log_2(\alpha\rho^{n})+r,\log_2(\alpha\rho^{n})+R])}^q\\&\ge \sum_{n=0}^{\alpha^4-1}  \frac{1}{(R-r)^{q-1}}\left\|\vpP{U,\psi}\right\|_{L_1([\log_2(\alpha\rho^{n})+r,\log_2(\alpha\rho^{n})+R])}^q\gtrsim \alpha^{4-q}.
\end{split}
\end{align}
where the penultimate step is an application of Jensen's inequality and the final step holds because $R-r>0$ is a constant and by Proposition~\ref{prop:counterexample function}.(\ref{it:ctr interval lower}), each of the summands is at least a universal constant multiple of $\alpha^{-q}$.

\subsection{Obtaining an embedding from an intrinsic graph}\label{sec:proof of ctr embedding} Here we show how Theorem~\ref{thm:counterexample embedding} follows from Proposition~\ref{prop:counterexample function}.
Let $\rho,r, R>0$ be the universal constants of Proposition~\ref{prop:counterexample function}. Without loss of generality, we may take $k> 8$.  Let $\alpha\in \N$ be the unique integer satisfying
\begin{equation}\label{eq:alpha choice}
\sqrt[4]{\log_\rho \left(\frac{k}{8}\right)}\le \alpha <1+\sqrt[4]{\log_\rho \left(\frac{k}{8}\right)}.
\end{equation}

Let $\psi=\psi_\alpha$ be the function produced by Proposition~\ref{prop:counterexample function}.  Write $\Gamma=\Gamma_\psi$ and $\Gamma^+=\Gamma_\psi^+$. Denote by $A\subset V_0\cap \H_\Z$ the discrete subgroup that is generated by $X$ and $Z$, so that as a subset of $\R^3$ we have $A=\Z\times \{0\}\times \Z$.  For every $p\in \H$ define
$$
\forall  h_1,h_2\in \H,\qquad \lambda_p(h_1,h_2)\eqdef \left|\1_{p^{-1}\Gamma^+}(h_1)-\1_{p^{-1}\Gamma^+}(h_2)\right|=\left\{\begin{array}{ll}1 &\mathrm{if}\ |\{ph_1,ph_2\}\cap \Gamma^+|=1,\\
0&\mathrm{otherwise.}\end{array}\right.
$$
By the $A$--periodicity of $\psi$ we have $a\Gamma=\Gamma$ and  $\lambda_{ap}(h_1,h_2)=\lambda_p(h_1,h_2)$ for all $a\in A$ and $p,h_1,h_2\in \H$. We can therefore  define $\lambda_p$ also when $p$ is an equivalence class in the quotient $A\backslash \H$. Consider the following fundamental domain for $A$.
$$
P\eqdef \big\{X^aZ^cY^b:\ a,c\in [0,1)\ \mathrm{and}\ b\in \R\big\}=\big\{\big(a,b,c+\frac12 ab\big):\ (a,b,c)\in [0,1)\times \R\times [0,1)\big\}.
$$
We may define $l\from \H\times \H\to [0,\infty)$ by
$$
l(h_1,h_2)\eqdef \int_{A\textbackslash \H}\lambda_p(h_1,h_2)\ud \cH^4(p)=\int_{P} \lambda_p(h_1,h_2)\ud \cH^4(p).
$$
Since $\H$ is a unimodular group (namely, one directly checks that the Lebesgue measure $\cH^4$ is a bi-invariant Haar measure on $\H$), and $\lambda_p(gh_1,gh_2)=\lambda_{pg}(h_1,h_2)$, we have
$$
\forall  g,h_1,h_2\in \H,\qquad l(gh_1,gh_2)=l(h_1,h_2),
$$
i.e., $l$ is a left-invariant semi-metric on $\H$.

\begin{lemma}\label{lem:l is vper}
  For every $a\in \R$ we have
  $l\big(\mathbf{0},Z^{2^{-2a}}\big)=2^{-a} \cdot \vpP{U,\psi}(a).$
\end{lemma}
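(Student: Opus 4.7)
\medskip

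\textbf{Plan of proof.} The statement is essentially a computational unwinding of the definitions of $l$, $\Gamma^+$, and $\vpP{U,\psi}$, together with an application of the periodicity of $\psi$ from Proposition~\ref{prop:counterexample function}.(\ref{it:ctr periodic}). I would proceed as follows.

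First, substitute $h_1=\mathbf{0}$ and $h_2=Z^{2^{-2a}}$ into the definition of $\lambda_p$ to obtain
\begin{equation*}
  l\bigl(\mathbf{0},Z^{2^{-2a}}\bigr)=\int_P\bigl|\mathbf{1}_{\Gamma^+}(p)-\mathbf{1}_{\Gamma^+}\bigl(pZ^{2^{-2a}}\bigr)\bigr|\,\mathrm{d}\mathcal{H}^4(p).
\end{equation*}
The natural next step is to choose explicit coordinates on the fundamental domain $P=\{X^{x}Z^{z}Y^{y}\mid x,z\in[0,1),\ y\in\R\}$. A direct computation using the group law~\eqref{eq:group operation} shows that $X^{x}Z^{z}Y^{y}=\bigl(x,y,z+\tfrac{xy}{2}\bigr)$, i.e.~the map $(x,y,z)\mapsto X^{x}Z^{z}Y^{y}$ has Jacobian one as a map $[0,1)\times\R\times[0,1)\to P$, so $\mathrm{d}\mathcal{H}^4(p)=\mathrm{d}x\,\mathrm{d}y\,\mathrm{d}z$ in these coordinates.

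Second, I would identify membership in $\Gamma^+$ in these coordinates. Since any element of $\H$ can be written uniquely as $vY^{t}$ with $v\in V_0$ and $t\in\R$, the element $p=X^{x}Z^{z}Y^{y}$ has $\Pi(p)=(x,0,z)$ and $y$-exponent $y$; therefore $p\in\Gamma^+$ iff $y>\psi(x,0,z)$. Right-multiplying by $Z^{2^{-2a}}$ changes the point $(x,y,z+\tfrac{xy}{2})$ to $(x,y,z+\tfrac{xy}{2}+2^{-2a})=(x,0,z+2^{-2a})Y^{y}$, and consequently $pZ^{2^{-2a}}\in\Gamma^+$ iff $y>\psi(x,0,z+2^{-2a})$. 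Integrating in $y\in\R$ and using $\int_\R|\mathbf{1}_{y>\alpha}-\mathbf{1}_{y>\beta}|\,\mathrm{d}y=|\alpha-\beta|$ collapses the $y$-integral to
\begin{equation*}
  l\bigl(\mathbf{0},Z^{2^{-2a}}\bigr)=\int_{[0,1)^2}\bigl|\psi(x,0,z)-\psi\bigl(x,0,z+2^{-2a}\bigr)\bigr|\,\mathrm{d}x\,\mathrm{d}z.
\end{equation*}

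Finally, I would reconcile the sign of the translation with the definition~\eqref{eq:def par vert perimeter}, which uses $\psi(vZ^{-2^{-2a}})=\psi(x,0,z-2^{-2a})$. By Proposition~\ref{prop:counterexample function}.(\ref{it:ctr periodic}), $\psi$ is $1$-periodic in $z$, so the change of variable $z\mapsto z-2^{-2a}$ in the inner integral is harmless and yields
\begin{equation*}
  \int_{0}^{1}\bigl|\psi(x,0,z)-\psi(x,0,z+2^{-2a})\bigr|\,\mathrm{d}z=\int_{0}^{1}\bigl|\psi(x,0,z)-\psi(x,0,z-2^{-2a})\bigr|\,\mathrm{d}z.
\end{equation*}
Comparing with~\eqref{eq:def par vert perimeter} applied to $E=U=[0,1]\times\{0\}\times[0,1]$ (the boundary contributes zero measure) gives the asserted identity $l(\mathbf{0},Z^{2^{-2a}})=2^{-a}\vpP{U,\psi}(a)$. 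There is no real obstacle here: the only point that requires a moment of care is the sign discrepancy in the $Z$-translation, and this is handled cleanly by periodicity.
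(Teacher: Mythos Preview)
Your proof is correct and follows essentially the same approach as the paper: parametrize $P$ via $(x,y,z)\mapsto (x,0,z)Y^{y}$ with unit Jacobian, note that $p\in\Gamma^+$ iff $y>\psi(x,0,z)$ and $pZ^{2^{-2a}}\in\Gamma^+$ iff $y>\psi(x,0,z+2^{-2a})$ (using centrality of $Z$), integrate out $y$, and identify the result with the definition of $\vpP{U,\psi}(a)$. You are in fact slightly more careful than the paper in explicitly invoking the $z$--periodicity of $\psi$ to reconcile the sign of the $Z$--translation with~\eqref{eq:def par vert perimeter}; the paper's proof simply writes the final equality without comment.
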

\begin{proof}
  For  $v\in V_0$ and $b\in \R$, we have $v Y^b \in \Gamma^+$ if and only if $b>\psi(v)$.  So, for any $c>0$,
  \begin{align*}
    \lambda_{vY^b}(\mathbf{0},Z^c)
    =\lambda_{\mathbf{0}}(vY^{b}, vZ^c Y^{b})
    = \begin{cases}
      1 & \psi(v)< b \le \psi(vZ^c)\text{ or } \psi(vZ^c) < b \le \psi(v).\\
      0 & \text{otherwise.}
    \end{cases}
  \end{align*}
  Consequently,
  $$\int_{\R} \lambda_{v Y^{b}}(\mathbf{0},Z^c)\ud b = |\psi(vZ^c)-\psi(v)|.$$
  Therefore, fixing $a\in \R$ and denoting $c=2^{-2a}$, we see that
  \begin{multline*}
    l\big(\mathbf{0},Z^{2^{-2a}}\big)=\int_{P}\lambda_{p}(\mathbf{0},Z^{c})\ud p
     =\int_{\R} \int_U \lambda_{vY^b}(\mathbf{0},Z^{c}) \ud v \ud b
   \\ =\int_U |\psi(vZ^{c})-\psi(v)| \ud v
     =2^{-a} \cdot \vpP{U,\psi}(a).\tag*{\qedhere}
  \end{multline*}
\end{proof}

For every $\theta\in [0,2\pi)$ let $R_\theta\from \H\to \H$ be  rotation around the $z$--axis by angle $\theta$.  Define the following left-invariant semi-metric on $\H$, which is also (by design) invariant under the family of $\{R_\theta: \theta\in [0,2\pi)\}$ automorphisms of $\H$.
$$
\forall  h_1,h_2\in \H,\qquad M(h_1,h_2)\eqdef \int_0^{2\pi} l\big(R_\theta(h_1),R_\theta(h_2)\big)\ud \theta.
$$

\begin{lemma}\label{lem:ctr M lipschitz}
  For every $w\in \mathsf{H}$ we have $M(\0,w)\lesssim \|w\|$.
\end{lemma}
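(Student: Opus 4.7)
The plan is to dominate $\lambda_p(\0,R_\theta(w))$ by the number of times the horizontal segment $[p,pR_\theta(w)]$ crosses $\Gamma$, pass to the periodic quotient $T\eqdef A\backslash\H$, and then apply a Heisenberg analogue of Cauchy's projection formula --- a direct consequence of the kinematic formula~\eqref{eq:kinematic} --- to bound the total crossing count by a universal constant multiple of $\|w\|\cdot \Per_{\Gamma^+}(T)$, which is itself $O(1)$ by Proposition~\ref{prop:areaIntegral} combined with Proposition~\ref{prop:counterexample function}.(\ref{it:ctr horiz bound}).

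Since $R_\theta$ fixes $\0$ and $l$ is left-invariant, $M(\0,w)=\int_0^{2\pi}l(\0,R_\theta(w))\,d\theta$. Writing $w=\|w\|\hat w$ for a horizontal unit vector $\hat w$, we may shift the $\theta$-variable to reduce to $\hat w=X$, so $R_\theta(w)=\|w\|\hat v_\theta$ with $\hat v_\theta\eqdef \cos\theta\, X+\sin\theta\, Y\in\mathsf H$, as in~\eqref{eq:L p theta}. Because $\Gamma=\partial\Gamma^+$ is smooth, any jump of $\one_{\Gamma^+}$ along the horizontal segment of length $\|w\|$ starting at $p$ occurs at a crossing with $\Gamma$, giving the pointwise bound
$$\lambda_p(\0,R_\theta(w))=\bigl|\one_{\Gamma^+}(p)-\one_{\Gamma^+}(p\hat v_\theta^{\|w\|})\bigr|\le \#\bigl([p,p\hat v_\theta^{\|w\|}]\cap\Gamma\bigr).$$
By the $A$-invariance of $\one_{\Gamma^+}$ verified in the proof of Lemma~\ref{lem:l is vper}, the right-hand side is $A$-periodic in $p$, so integrating over $P$ is equivalent to integrating over $T$.

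The direction $\hat v_\theta$ is a left-invariant horizontal vector field, hence descends to $T$ and induces a horizontal line foliation of $T$. A Fubini computation along this foliation --- the content of the fixed-direction specialization of~\eqref{eq:kinematic} --- yields the Heisenberg Cauchy projection identity
$$\int_T \#\bigl([\bar p,\bar p\hat v_\theta^{\|w\|}]\cap\bar\Gamma\bigr)\,d\bar p = \|w\|\int_{\bar\Gamma}|\hat v_\theta\cdot\nu_\H|\,d\Per_{\bar\Gamma^+},$$
where $\bar\Gamma\eqdef A\backslash\Gamma$ and $\nu_\H$ denotes the horizontal unit normal to $\Gamma$. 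Integrating over $\theta\in[0,2\pi)$, swapping the order of integration, and using $\int_0^{2\pi}|\hat v_\theta\cdot\nu_\H(q)|\,d\theta=4$ for each $q\in\Gamma$ then yields $M(\0,w)\le 4\|w\|\Per_{\bar\Gamma^+}(T)$. Finally, Proposition~\ref{prop:areaIntegral} combined with Cauchy--Schwarz (using $|U|=1$) and Proposition~\ref{prop:counterexample function}.(\ref{it:ctr horiz bound}) gives
$$\Per_{\bar\Gamma^+}(T)\asymp \cH^3(\Gamma\cap P)\asymp 1+\|\partial_\psi\psi\|_{L_1(U)}\le 1+\|\partial_\psi\psi\|_{L_2(U)}\lesssim 1,$$
whence $M(\0,w)\lesssim \|w\|$, as required.

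The main obstacle is justifying the Cauchy projection identity in the Heisenberg setting. It follows from~\eqref{eq:kinematic} by restricting to horizontal lines of fixed direction $\hat v_\theta$ and fibering $T$ over a transversal to this direction: each crossing point $q\in\bar\Gamma$ contributes a one-dimensional family of base-points $p=q\hat v_\theta^{-s}$ with $s\in[0,\|w\|]$, and the transversality factor $|\hat v_\theta\cdot\nu_\H(q)|$ arises as the Jacobian relating the perimeter measure on $\bar\Gamma$ to the product of the transverse measure with arc length along a leaf, exactly in parallel with the Euclidean Cauchy formula for $\R^n$.
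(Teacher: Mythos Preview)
Your proof is correct and takes a genuinely different route from the paper's. Both arguments bound $\lambda_p(\0,R_\theta(w))$ by the crossing number of the horizontal segment $[p,pR_\theta(w)]$ with $\Gamma$ and then relate crossings to perimeter, but the implementations diverge.

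The paper first reduces via left-invariance and the triangle inequality to $M(X^{-t},X^t)\lesssim t$ for small $t$, then uses $\|\psi\|_\infty\le 1$ to localize: any crossing of the short segment with $\Gamma$ must project under $\Pi$ into a fixed bounded set $U'=B_8\cap V_0$. This localization allows the paper to invoke the pointed-line identity~\eqref{eq:pointed line identity} together with the kinematic formula~\eqref{eq:kinematic} over the bounded set $\Pi^{-1}(U')$, yielding $\lesssim t\,\Per_{\Gamma^+}(\Pi^{-1}(U'))$.

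Your approach avoids both the small-$t$ reduction and the localization by working directly in the quotient $T=A\backslash\H$, where $\bar\Gamma$ is compact and has finite perimeter. The key ingredient is the single-direction Cauchy projection formula, which you justify by the area formula for $\bar\Phi\colon\bar\Gamma\times[0,\|w\|]\to T$, $(\bar q,s)\mapsto\bar q\hat v_\theta^{-s}$; since right translation has Euclidean Jacobian $1$, the Jacobian of $\bar\Phi$ is $|\hat V\cdot\nu^{\mathrm{Euc}}|$, and one checks $|\hat V\cdot\nu^{\mathrm{Euc}}|\,d\cH^2_{\mathrm{Euc}}=|\hat v_\theta\cdot\nu_\H|\,d\Per_{\bar\Gamma^+}$. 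One caveat: calling this formula a ``specialization of~\eqref{eq:kinematic}'' is imprecise, since~\eqref{eq:kinematic} integrates over all directions and does not directly yield a fixed-direction identity; but the direct coarea argument you sketch in the final paragraph is the correct justification. Your route is somewhat cleaner conceptually; the paper's route has the advantage of using only tools already established in the text.
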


\begin{proof}
  By the rotation-invariance of $M$, it suffices to show that $M(\0,X^t)\lesssim |t|$ for all $t$.  In fact, by the left-invariance of $M$ and the triangle inequality, it suffices to prove that $M(X^t,X^{-t})\lesssim t$ for $0<t<\frac14$.

  Let $L_0=\langle X\rangle\subset \H$ be the $x$--axis.  Recall that $L_{p,\theta}=pR_\theta(L_0)$ for  $p\in \H$ and $\theta\in [0,2\pi)$.  The map $(p,\theta)\mapsto (L_{p, \theta},p)$ is a bijection between $\H\times [0,\pi)$ and the set of pointed lines $\cL^\#=\{(L,p)\mid L\in \cL\ \wedge\  p\in L\}$.

  By the above definitions, we have
  $$M(X^{-t},X^t) = \int_{0}^{2\pi} \int_{P} \lambda_{p}\big(R_\theta(X^{-t}),R_\theta(X^t)\big) \ud\cH^4(p) \ud\theta.$$
  Let $K\subset P \times [0,2\pi)$ be the set of pairs $(p,\theta)$ such that $L_{p,\theta}$ intersects $\Gamma$ transversally, i.e., $L_{p,\theta}$ crosses the tangent plane of $\Gamma$ at every intersection. Since $\Gamma$ is smooth, the complement of $K$ has measure zero.

  Let $U'=B_{8}(\0)\cap V_0$.
  Let $(p,\theta)\in K$ be such that $\lambda_{p}(R_\theta(X^{-t}),R_\theta(X^t))\ne 0$. Then the line segment from $pR_\theta(X^{-t})$ to $pR_\theta(X^t)$ crosses $\Gamma$ at some point $g\in \Gamma$; we claim that $\Pi(g)\in U'$.

  By Proposition~\ref{prop:counterexample function}.(\ref{it:ctr linf}), we have $\|\psi\|_{L_\infty(V_0)}\le 1$, so $|y(g)|\le 1$ and $|y(p)|\le |y(g)|+t\le 2$. Since $p\in P$, there are $a, b,c\in \R$ such that $p=X^aZ^cY^b$, and these satisfy $|a|\le 1$, $|c|\le 1$, and $|b|=|y(p)|\le 2$. By \eqref{eq:metric approximation},
  $$d(\0, g)\le |a| + 4\sqrt{|c|} + |b| \le 7$$
  and
  $$d(\0, \Pi(g))\le d(\0, g) + |y(g)|\le 8,$$
  so $\Pi(g)\in U'$.

  Let $\Gamma(U')=\Gamma\cap \Pi^{-1}(U')=\Gamma_{\psi|_{U'}}$ and for $L\in \cL$, let
  $$I_{L}=\big\{p\in L\mid d\big(p,L \cap \Gamma(U')\big)\le t\big\}.$$
  We have seen above that if $(p,\theta)\in K$ and $\lambda_{p}(R_\theta(X^{-t}),R_\theta(X^t))\ne 0$, then there is some $g\in L_{p,\theta}\cap \Gamma(U')$ such that $d(p,g)\le t$. That is, $p\in I_{L_{p,\theta}}$. Furthermore, if $L$ intersects $\Gamma$ transversally, then
  $$\cH^1(I_L) \le 2t |L \cap \Gamma(U')| = 2t \Per_{\Gamma^+ \cap L}\big(\Pi^{-1}(U')\big).$$
  Hence,
  \begin{eqnarray*}
    M(X^{-t},X^t)&=&\int_{0}^{2\pi} \int_{P} \lambda_p\big(R_\theta(X^{-t}),R_\theta(X^t)\big) \ud\cH^4(p) \ud\theta\\
                 &\stackrel{\eqref{eq:pointed line identity}}{\lesssim}& \int_\cL \cH^1(I_L) \ud \cN(L) \\
                 &\lesssim& t \int_\cL \Per_{\Gamma^+ \cap L}\big(\Pi^{-1}(U')\big) \ud \cN(L) \\
                 &\stackrel{\eqref{eq:kinematic}}{\asymp}& t \Per_{\Gamma^+}\big(\Pi^{-1}(U')\big)\lesssim t.
  \end{eqnarray*}
  where $\Per_{\Gamma^+}\big(\Pi^{-1}(U')\big)\lesssim 1$ by Proposition~\ref{prop:areaIntegral} and Proposition~\ref{prop:counterexample function}.(\ref{it:ctr horiz bound}).
\end{proof}
  Next, define a left-invariant semi-metric $\Lambda$ on $\H$ by
  $$
  \forall  h_1,h_2\in \H,\qquad \Lambda(h_1,h_2)\eqdef \int_{r-\log_2\rho}^{R+\log_2\rho} 2^{a} M\big(s_{2^{-a}}(h_1), s_{2^{-a}}(h_2)\big) \ud a.
  $$

  \begin{lemma}\label{lem:Lambda on center} For all $c>0$ we have  $$\Lambda(\0,Z^c)=\Lambda(\0,Z^{-c})\lesssim \min\left\{\frac{\sqrt{c}}{\alpha},\frac{1}{\alpha^2}\right\}.$$
 Also, for all   $\frac{1}{\alpha^{2}\rho^{2\alpha^4}}\le c\le \frac{1}{\alpha^{2}}$ we have
   $$
   \Lambda(\0,Z^c)=\Lambda(\0,Z^{-c})\gtrsim \frac{\sqrt{c}}{\alpha}.$$
  \end{lemma}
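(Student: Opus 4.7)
The plan is to reduce the computation of $\Lambda(\0,Z^c)$ to an integral of $\vpP{U,\psi}$ and then invoke the relevant parts of Proposition~\ref{prop:counterexample function}. First, since $\lambda_p$ is symmetric in its two arguments and $l$ is left-invariant, we have $l(\0,Z^c)=l(Z^{-c},\0)=l(\0,Z^{-c})$, and the same identity propagates to $M$ and then to $\Lambda$; hence I may assume $c>0$. The rotations $R_\theta$ fix every point on the $z$--axis, so $M(\0,Z^c)=2\pi\, l(\0,Z^c)$, and since $s_{2^{-a}}(Z^c)=Z^{2^{-2a}c}$, I obtain
\begin{equation*}
\Lambda(\0,Z^c)=2\pi\int_{r-\log_2\rho}^{R+\log_2\rho} 2^a\, l\bigl(\0,Z^{2^{-2a}c}\bigr)\,\ud a.
\end{equation*}
Writing $2^{-2a}c=2^{-2a'}$ with $a'=a-\tfrac12\log_2 c$, Lemma~\ref{lem:l is vper} gives $l(\0,Z^{2^{-2a}c})=2^{-a}\sqrt{c}\,\vpP{U,\psi}(a')$; after the change of variables $b=a-\tfrac12\log_2 c$ this becomes
\begin{equation*}
\Lambda(\0,Z^c)=2\pi\sqrt{c}\int_{J(c)} \vpP{U,\psi}(b)\,\ud b,\qquad J(c)\eqdef\bigl[r-\log_2\rho-\tfrac12\log_2 c,\ R+\log_2\rho-\tfrac12\log_2 c\bigr],
\end{equation*}
an interval of length $R-r+2\log_2\rho=O(1)$.

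For the upper bound, Proposition~\ref{prop:counterexample function}.(6) supplies the two pointwise estimates $\vpP{U,\psi}(b)\lesssim 1/\alpha$ and $\vpP{U,\psi}(b)\lesssim 2^b/\alpha^2$. The first directly yields $\Lambda(\0,Z^c)\lesssim \sqrt{c}/\alpha$ because $|J(c)|=O(1)$. The second gives
\begin{equation*}
\Lambda(\0,Z^c)\lesssim \frac{\sqrt{c}}{\alpha^2}\int_{J(c)} 2^b\,\ud b \asymp \frac{\sqrt{c}}{\alpha^2}\cdot\frac{2^{R+\log_2\rho}}{\sqrt{c}}\asymp\frac{1}{\alpha^2},
\end{equation*}
since integration of $2^b$ over $J(c)$ produces a factor of $1/\sqrt{c}$ together with an $O(1)$ constant. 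Combining the two bounds gives the asserted minimum.

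For the matching lower bound, assume $\frac{1}{\alpha^2\rho^{2\alpha^4}}\le c\le \frac{1}{\alpha^2}$ and set $s\eqdef-\tfrac12\log_2 c-\log_2\alpha$, so $0\le s\le \alpha^4\log_2\rho$. I choose $n\eqdef\min\{\lfloor s/\log_2\rho\rfloor,\ \alpha^4-1\}\in\{0,1,\ldots,\alpha^4-1\}$; a direct check using $0\le s-n\log_2\rho\le\log_2\rho$ shows that the interval $[r+\log_2(\alpha\rho^n),R+\log_2(\alpha\rho^n)]$ is contained in $J(c)$. Since $\vpP{U,\psi}\ge 0$, Proposition~\ref{prop:counterexample function}.(4) yields
\begin{equation*}
\int_{J(c)}\vpP{U,\psi}(b)\,\ud b\ \ge\ \int_{r+\log_2(\alpha\rho^n)}^{R+\log_2(\alpha\rho^n)}\vpP{U,\psi}(b)\,\ud b\ \gtrsim\ \frac{1}{\alpha},
\end{equation*}
and multiplying by $2\pi\sqrt{c}$ gives $\Lambda(\0,Z^c)\gtrsim \sqrt{c}/\alpha$, as desired. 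The only mildly delicate point is ensuring $n<\alpha^4$ at the extreme endpoint $c=1/(\alpha^2\rho^{2\alpha^4})$, which is why the truncation by $\alpha^4-1$ is introduced; this is the one place where keeping careful track of the constants $r,R,\rho$ from Proposition~\ref{prop:counterexample function} actually matters.
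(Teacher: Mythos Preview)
Your proof is correct and follows essentially the same approach as the paper: both reduce $\Lambda(\0,Z^c)$ via Lemma~\ref{lem:l is vper} to $2\pi\sqrt{c}\int_{J(c)}\vpP{U,\psi}$, invoke Proposition~\ref{prop:counterexample function}.(6) for the upper bound, and for the lower bound locate an integer $0\le n<\alpha^4$ with $[r+\log_2(\alpha\rho^n),R+\log_2(\alpha\rho^n)]\subset J(c)$ so that Proposition~\ref{prop:counterexample function}.(4) applies. Your explicit truncation $n=\min\{\lfloor s/\log_2\rho\rfloor,\alpha^4-1\}$ is a slightly more careful handling of the endpoint $c=1/(\alpha^2\rho^{2\alpha^4})$ than the paper's phrasing, but the argument is the same.
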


  \begin{proof} Write $c=2^{-2t}$ for some $t\in \R$. Since $\Lambda$ is a left-invariant metric, $\Lambda(\0,Z^c)=\Lambda(\0,Z^{-c})$.
    By Lemma~\ref{lem:l is vper} we have the following identity.
\begin{equation}\label{eq:Lambda idenity}
  \Lambda(\0,Z^c)=2\pi \int_{r-\log_2\rho}^{R+\log_2\rho} 2^{a} l\left(\0,Z^{2^{-2(t+a)}}\right)\ud a=2\pi 2^{-t} \int_{r-\log_2\rho}^{R+\log_2\rho} \vpP{U,\psi}(t+a)\ud a.
  \end{equation}
So, $\Lambda(\0,Z^c)\lesssim \min\left\{\frac{\sqrt{c}}{\alpha},\frac{1}{\alpha^2}\right\}$ for  $c\in (0,\infty)$ by~\eqref{eq:Lambda idenity} and  the final assertion of Proposition~\ref{prop:counterexample function}.

  If $\frac{1}{\alpha^{2}\rho^{2\alpha^4}}\le c\le \frac{1}{\alpha^{2}}$, then  $t\in [\log_2(\alpha\rho^n),\log_2(\alpha\rho^{n+1})]$ for some  integer $0\le n< \alpha^4$. Hence, $$[t-\log_2\rho+r,t+\log_2\rho+R]\supset [\log_2(\alpha\rho^n)+r,\log_2(\alpha \rho^n)+R],$$ so~\eqref{eq:Lambda idenity} implies  that
  $$
  \Lambda(\0,Z^c)\ge 2\pi 2^{-t} \left\|\vpP{U,\psi}\right\|_{L_1([\log_2(\alpha\rho^{n})+r,\log_2(\alpha\rho^{n})+R])}\gtrsim \frac{\sqrt{c}}{\alpha},
  $$
  where the final step is the third assertion of Proposition~\ref{prop:counterexample function} (and the definition of $t$).
  \end{proof}

  \begin{lemma}\label{lem:Lambda is Lipschitz} $\Lambda(h_1,h_2)\lesssim d(h_1,h_2)$ for all $h_1,h_2\in \H$.
  \end{lemma}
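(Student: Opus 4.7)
The plan is to exploit left-invariance of $\Lambda$ to reduce to estimating $\Lambda(\0,h)$, factor $h$ into a horizontal part followed by a central (purely vertical) part, and then bound the two pieces using Lemma~\ref{lem:ctr M lipschitz} and Lemma~\ref{lem:Lambda on center} respectively.

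Since $\Lambda$ is left-invariant, $\Lambda(h_1,h_2)=\Lambda(\0,h_1^{-1}h_2)$, so it suffices to show that $\Lambda(\0,h)\lesssim d(\0,h)$ for every $h=(x,y,z)\in\H$. Using the group law~\eqref{eq:group operation}, one checks directly that $h=wZ^z$, where $w\eqdef(x,y,0)\in\mathsf{H}$. Applying the triangle inequality together with the left-invariance of $\Lambda$ gives
$$\Lambda(\0,h)\le \Lambda(\0,w)+\Lambda(w,wZ^z)=\Lambda(\0,w)+\Lambda(\0,Z^z).$$

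For the horizontal factor, recall that $s_{2^{-a}}(w)=(2^{-a}x,2^{-a}y,0)\in\mathsf{H}$, so $\|s_{2^{-a}}(w)\|=2^{-a}\|w\|$. Lemma~\ref{lem:ctr M lipschitz} therefore yields
$$\Lambda(\0,w)=\int_{r-\log_2\rho}^{R+\log_2\rho} 2^{a}M\bigl(\0,s_{2^{-a}}(w)\bigr)\,\ud a\lesssim \int_{r-\log_2\rho}^{R+\log_2\rho} 2^{a}\cdot 2^{-a}\|w\|\,\ud a\lesssim \|w\|\asymp |x|+|y|,$$
where the final implicit constant depends only on the universal constants $r,R,\rho$. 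For the central factor, the first assertion of Lemma~\ref{lem:Lambda on center} (discarding the harmless factor $1/\alpha\le 1$) gives $\Lambda(\0,Z^z)\lesssim \sqrt{|z|}$. Combining these two bounds with the ball-box inequality~\eqref{eq:metric approximation} produces
$$\Lambda(\0,h)\lesssim |x|+|y|+\sqrt{|z|}\asymp d(\0,h),$$
which is precisely the desired estimate.

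There is no real obstacle: the only content beyond bookkeeping is the identity $h=wZ^z$ in the Heisenberg product and the observation that the stretch map $s_{2^{-a}}$ acts on horizontal vectors as isotropic scaling by $2^{-a}$, which causes the factor $2^{a}$ in the integrand defining $\Lambda$ to cancel exactly against the horizontal bound from Lemma~\ref{lem:ctr M lipschitz}. The range of integration being a universal bounded interval then absorbs the resulting constant.
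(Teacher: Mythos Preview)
Your proof is correct. It follows the same basic scheme as the paper --- reduce by left-invariance to $\Lambda(\0,h)$ and use Lemma~\ref{lem:ctr M lipschitz} to bound the horizontal contribution --- but the handling of the central part differs. You decompose $h=wZ^z$ and quote the already-established Lemma~\ref{lem:Lambda on center} for $\Lambda(\0,Z^z)\lesssim\sqrt{|z|}$, whereas the paper writes $h=X^aY^b[X^c,Y^c]$ with $|a|,|b|,|c|\lesssim d(\0,h)$ and bounds everything via the triangle inequality using \emph{only} the horizontal estimates $\Lambda(\0,X^t),\Lambda(\0,Y^t)\lesssim|t|$. The paper's route thus avoids any appeal to Lemma~\ref{lem:Lambda on center} and shows that the Lipschitz property follows from the horizontal bound alone; your route is slightly shorter because Lemma~\ref{lem:Lambda on center} is already available. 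Either way the content is the same.
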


  \begin{proof}
    By Lemma~\ref{lem:ctr M lipschitz} we have $M(\0,X^t)\lesssim |t|$ for any $t\in \R$, so
    \begin{equation*}
      \Lambda(\0,X^t)=\int_{r-\log_2\rho}^{R+\log_2\rho} 2^a M\big(\0,X^{2^{-a}t}\big)\ud a\lesssim t (R-r+2\log_2\rho)\lesssim |t|.
    \end{equation*}
    Therefore also $\Lambda(\0,Y^t)=\Lambda(\0,X^t)\lesssim |t|$, by the rotation-invariance of $\Lambda$.  Since $\Lambda$ is left-invariant it suffices to show that $\Lambda(\0,h)\lesssim d(\0,h)$ for all $h\in \H$. Any $h\in \H$ can be written as $h=X^aY^b[X^c,Y^c]$ for  $a,b,c\in \R$ satisfying $|a|,|b|,|c|\lesssim d(\0,h)$, so
    \begin{equation*}
      \Lambda(\0,h)\le \Lambda(\0,X^a) + \Lambda(\0,Y^b) + 2 \Lambda(\0,X^c) + 2 \Lambda(\0,Y^c)\lesssim d(\0,h).\tag*{\qedhere}
    \end{equation*}
  \end{proof}

  \begin{proof}[Proof of Theorem~\ref{thm:counterexample embedding}] Define a semi-metric $\Delta$ on $\H$ by setting for every $h_1,h_2\in \H$,
 \begin{equation}\label{eq:def Delta}
  \Delta(h_1,h_2)\eqdef k\alpha \Lambda\big(s_{\frac{1}{k\alpha}}(h_1),s_{\frac{1}{k\alpha}}(h_2)\big)+\sqrt{\big(x(h_1)-x(h_2)\big)^2+\big(y(h_1)-y(h_2)\big)^2}.
\end{equation}
Observe that $(\H,\Delta)$ embeds isometrically into $L_1$ because $\Lambda$ is an integral of so-called cut semimetrics (see e.g.~\cite[4.1]{DL97} for the definition). Such semimetrics embed isometrically into $\R$, so an integral of cut semimetrics embeds isometrically in $L_1$. By construction, $\Lambda$ is both left-invariant and invariant under the rotations $\{R_\theta:\ \theta\in [0,2\pi]\}$.

 Suppose that $v=(a,b,c)\in \H$ and let $w=(a,b,0)$ so that $w\in \mathsf{H}$ and $v=wZ^c$.  By Lemma~\ref{lem:ctr M lipschitz} and the second part of Lemma~\ref{lem:Lambda on center}, we have
 $$
|a|+|b|\lesssim \Delta(\0,v)\le \Delta(\0,w)+\Delta(\0,Z^c)\lesssim |a|+|b|+\frac{\min\left\{\sqrt{|c|},k\right\}}{\alpha}.
 $$
 Recalling that $\alpha\asymp \sqrt[4]{\log k}$ is given in~\eqref{eq:alpha choice}, this establishes~\eqref{eq:for all h}.

 To prove Theorem~\ref{thm:counterexample embedding}, it therefore remains to establish~\eqref{eq:with restriction on c}, i.e.,
 \begin{equation}\label{eq:goal for distance in range}
   1\le |c|\le k^2\implies  \Delta(\0,v)\gtrsim  |a|+|b|+\frac{\sqrt{|c|}}{\alpha}.
 \end{equation}
 By Lemma~\ref{lem:Lambda is Lipschitz}, there is  $L>0$ such that $\Delta(h_1,h_2)\le L d(h_1,h_2)$ for any $h_1,h_2\in \H$. By the first part of Lemma~\ref{lem:Lambda on center}, there is  $C>0$ such that $\Delta(\0,Z^c)\ge \frac{C\sqrt{c}}{\alpha}$ for all $1\le c \le k^2$.  On one hand, if $\|w\| \ge \frac{C\sqrt{|c|}}{2 L \alpha}$, then $\Delta(\0,v)\ge \|w\| \approx |a|+|b|+\frac{\sqrt{|c|}}{\alpha}$.  On the other hand, if $\|w\| < \frac{C \sqrt{|c|}}{2 L \alpha}$, then
 $$\Delta(\0,v)\ge \Delta(\0,Z^c)-\Delta(\0,w) \ge \frac{C\sqrt{c}}{\alpha} - L\|w\| \ge \frac{C \sqrt{|c|}}{2\alpha}\gtrsim |a|+|b|+\frac{\sqrt{|c|}}{\alpha}.$$
 In either case, \eqref{eq:goal for distance in range} holds.
\end{proof}

\subsection{Constructing a bumpy intrinsic graph}\label{sec:start construction}

In this section, we prove Proposition~\ref{prop:counterexample function}.  We start with a brief overview of our strategy. As sketched in Section~\ref{sec:maximally bumpy}, we will prove Proposition~\ref{prop:counterexample function} by constructing a smooth function $\psi\from V_0\to \R$ whose intrinsic graph is roughly $\alpha^{-1}$--far from a vertical plane at $\alpha^4$ different scales.  Specifically, for a suitable choice of universal constant $\rho>1$ we will  construct $\psi$ as a sum $\psi=\sum_{i=0}^{\alpha^4-1}\beta_i$.  Each of the summands $\beta_i\from V_0\to \R$ will itself be a sum of smooth bump functions of amplitude $\|\beta_i\|_{L_\infty(V_0)}\approx \alpha^{-2} \rho^{-i}$ that are supported on regions whose  width ($x$--coordinate) is $\rho^{-i}$ and whose height ($z$--coordinate) is roughly $\alpha^{-2}\rho^{-2i}$; their aspect ratio is therefore roughly $$\frac{\rho^{-i}}{\sqrt{\alpha^{-2}\rho^{-2i}}}\approx \alpha.$$  These regions cover $V_0$ and have disjoint interiors.  We will see that the bumpiness of $\beta_i$ at scale $\alpha^{-2}\rho^{-2i}$ implies the desired lower bounds on $\vpP{U,\psi}(t)$ when $t$ is near $\log_2(\alpha\rho^i)$.

In order to ensure that $\|\partial_\psi \psi\|_{L_2(U)}$ is bounded, we construct $\beta_i$ iteratively.  For $i\in \N$, we denote $\psi_{i}=\sum_{j=0}^{i-1}\beta_j$ and align the long axis of the bump functions making up $\beta_i$ with the characteristic curves  of $\Gamma_{\psi_{i}}$. This ensures that the characteristic curves of $\Gamma_{\psi}$ cross the bumps from left to right.  Since $\partial_{\psi} f$ measures the change in $f\from V_0\to \R$ along the characteristic curves of $\Gamma_\psi$ and each bump has amplitude roughly $\alpha^{-2} \rho^{-i}$ and width $\rho^{-i}$, we have $|\partial_{\psi} \beta_i| \lesssim  \alpha^{-2} \rho^{-i}/\rho^{-i}\asymp \alpha^{-2}$.

This iterative procedure is one of the motivations for the definition of a foliated corona decomposition.  A foliated corona decomposition of an arbitrary intrinsic graph $\Gamma$ can be viewed as a sequence of partitions of $V_0$ into regions as above, where the pieces of the partition are aligned with the characteristic curves of $\Gamma$.  One can use these partitions to reconstruct $\Gamma$ as a sum of perturbations, just as we constructed $\psi$ as a sum of bump functions.  Theorem~\ref{thm:ilg admits fcd intro} then states that any intrinsic Lipschitz graph can be constructed by such a process.

This construction also demonstrates the importance of the aspect ratio.  If the construction is modified so that the bump functions making up $\beta_i$ are supported on regions of aspect ratio $\alpha_i$, then $\|\partial_\psi \beta_i\|_{L_2(U)}\approx \alpha_i^{-2}$.  If the scales of the bump functions are sufficiently separated, then $\{\partial_\psi \beta_i\}_{i\ge 0}$ are roughly orthogonal in $L_2(U)$ and
$$\|\partial_\psi \psi\|_{L_2(U)}^2\approx \sum_{i\ge 0} \|\partial_\psi \beta_i\|_{L_2(U)}^2\approx \sum_{i\ge 0} \alpha_i^{-4}.$$
For $\psi$ to be intrinsic $\lambda$--Lipschitz, we must have $\|\partial_\psi \psi\|_{L_2(U)}^2\lesssim_\lambda 1$, which necessitates that $\sum_i \alpha_{i\ge 0}^{-4}\lesssim_\lambda 1$. This motivates the $\alpha(Q)^{-4}$ factor in the weighted Carleson condition~\eqref{eq:Carleson weighted intro}.

We next set some notation in preparation for the proof of Proposition~\ref{prop:counterexample function}.  If $\psi\from V_0\to \R$ is smooth, then the vector field $$M_\psi\eqdef \frac{\partial}{\partial x}-\psi \frac{\partial}{\partial z}$$ corresponding to $\partial_\psi$ is smooth (recall the definitions in Section~\ref{sec:intrinsic graphs}).  The flow lines of $M_\psi$ are the characteristic curves of $\Gamma_\psi$, which foliate $V_0$ (recall the terminology in Section~\ref{sec:char}).  For $s\in \R$, let $\Phi(\psi)_s\from V_0\to V_0$ be the flow of $M_\psi$, so that $\Phi(\psi)_0=\id_{V_0}$ and such that for any $v\in V_0$, the curve $s\mapsto \Phi(\psi)_s(v)$ is a characteristic curve of $\Gamma_\psi$.

Denote $\psi_0\equiv 0$ and let $\Gamma_0=\Gamma_{\psi_0}=V_0$. This function and graph are periodic with respect to $\Z\times \{0\}\times \Z$ and $\psi_0$ is zero on $\partial U$. Suppose that $i\ge 0$ and that $\psi_i\from V_0\to \R$ is smooth, periodic with respect to $\Z\times \{0\}\times \Z$, and zero on $\partial U$. We construct $\psi_{i+1}\from V_0\to \R$ as follows. Let
  \begin{equation}\label{eq:grid Gi}
  G_i\eqdef \big\{(m\rho^{-i},0,n\alpha^{-2}\rho^{-2i}):\ m,n\in \Z\big\}\subset V_0.
  \end{equation}
  Label the points in $G_i$ arbitrarily as $v_{i,1},v_{i,2},\ldots$ and note that the points $U\cap \{v_{i,1},v_{i,2},\ldots \}$ form a $\rho^i\times \alpha^2\rho^{2i}$ grid in $U$.  For each $j\in \N$ and $s,t\in \R$ define
\begin{equation}\label{eq:def Rij}
R_{i,j}(s,t)\eqdef \Phi(\psi_i)_s(v_{i,j}Z^t)\in V_0.
\end{equation}

Each $R_{i,j}$ is a diffeomorphism from $\R^2$ to $V_0$.
For any $s_0, t_0\in \R$, the image $R_{i,j}(s_0\times \R)$ is a vertical line and $R_{i,j}(\R \times t_0)$ is a characteristic curve of $\Gamma_{\psi_i}$.  Using the terminology of foliated patchworks that we will introduce in Section~\ref{sec:pseudoquads}, the map $R_{i,j}$ sends rectangles in $V_0$ to pseudoquads of $\Gamma_{\psi_i}$ (regions in $V_0$ that are bounded by characteristic curves of $\Gamma_{\psi_i}$  above and below and by vertical line segments on either side).  Denote
\begin{equation}\label{eq:def Qij}
Q_{i,j}\eqdef R_{i,j}\big([0,\rho^{-i}]\times [0,\alpha^{-2}\rho^{-2i}]\big)\subset V_0.
\end{equation}
Thus, $Q_{i,j}$ is a pseudoquad whose lower-left corner is $v_{i,j}$.  The sets $Q_{i,1},Q_{i,2},\ldots$ cover $V_0$ and have disjoint interiors.  They are obtained by cutting $V_0$ into vertical strips of width $\rho^{-i}$, then cutting each vertical strip along characteristic curves separated by $\alpha^{-2}\rho^{-2i}$.

Since $\psi_i$ is zero on $\partial U$, the top and bottom edges of $U$ are characteristic curves of $\Gamma_{\psi_i}$.  The bottom boundary of each $Q_{i,0}$ and the top boundary of $Q_{i,\alpha^2\rho^{2i}-1}$ thus lie in $\partial U$, and the $Q_{i,j}$'s partition $U$ (up to overlap on boundaries). In particular, the resulting partition of $V_0$ is periodic with respect to $\Z\times \{0\}\times \Z$.

Note, however, that the $Q_{i,j}$'s from one step in this construction generally do not partition the $Q_{i,j}$'s from another step. One can modify the construction so that the partitions in each step are nested, as in Figure~\ref{fig:foliated corona of bump}, but it requires some additional care.

Let $\beta\from V_0\to \R$ be a smooth function supported on the unit square $U$ such that $\beta$ is not identically zero and its partial derivatives of order at most $2$ are all in the interval $[-1,1]$. Fix also $\alpha,\rho\in \N$ with $\rho>1$.
Define $\beta_{i,j}\from V_0\to \R$ by setting it to be $0$ on $V_0\setminus Q_{i,j}$, and for all $R_{i,j}(s,t)\in Q_{i,j}$,
\begin{equation}\label{eq:def beta ij}
\beta_{i,j}\big(R_{i,j}(s,t)\big)\eqdef \alpha^{-2}\rho^{-i} \beta(\rho^is,0,\alpha^2\rho^{2i}t).
\end{equation}
Thus $\beta_{i,j}$ is a bump function supported on $Q_{i,j}$. Write
\begin{equation}\label{eq:def beta i}
\beta_i\eqdef \sum_{j=1}^\infty \beta_{i,j},
\end{equation}
and
\begin{equation}\label{eq:recurse psi}
\psi_{i+1}\eqdef \psi_i+\beta_i.
\end{equation}
Since $Q_{i,1},Q_{i,2},\ldots $ have disjoint interiors, $\|\psi_{i+1}\|_{L_\infty(V_0)}\le \|\psi_{i}\|_{L_\infty(V_0)} + \alpha^{-2}\rho^{-i}$, so by induction we have
\begin{equation}\label{eq:L infty psi i}
\|\psi_{i}\|_{L_\infty(V_0)}\le \frac{\alpha^{-2}}{\rho-1}\le \alpha^{-2}.
\end{equation}
Since the $Q_{i,j}$'s form a periodic partition of $V_0$, $\psi_{i+1}$ is periodic.
Since $\partial U$ is contained in the boundaries of the $Q_{i,j}$, we have $\psi_{i+1}|_{\partial U}=\psi_i|_{\partial U}=0$.

Thus, by induction, for any integer  $i\ge 0$, $\psi_i$ satisfies the first and third assertions (periodicity and $L_\infty$ boundedness) of Proposition~\ref{prop:counterexample function}.  We will show that if $\rho$ is large enough (depending only on $\beta$), then $\psi=\psi_{\alpha^4}$ satisfies the remaining assertions of Proposition~\ref{prop:counterexample function}, namely, the stated upper bounds on $\partial_\psi \psi$ and lower bounds on $\vpP{U,\psi}(a)$.

\subsubsection{The horizontal perimeter of $\Gamma_{\psi_i}$}\label{sec: per of example}

In this section, we prove the second assertion of Proposition~\ref{prop:counterexample function} by bounding $\|\partial_{\psi_i}\psi_i\|_{L_2(U)}$.  This bound, combined with Proposition~\ref{prop:areaIntegral}, gives an upper bound on $\cH^3(\Gamma_{\psi_i|_U})$.

Write for simplicity $\partial_i\eqdef \partial_{\psi_i}$ and let $D_i\eqdef \partial_{i+1} \psi_{i+1}-\partial_i \psi_i$. For $f,g\in L_2(U)$ we write
$$
\langle f,g\rangle_U\eqdef\int_U fg\ud \cH^3.
$$

\begin{lemma}\label{lem:alomost orthonormal} For every $\rho\ge 5$ and $\alpha\ge 1$,
$$
\forall  i\in \N,\qquad \|D_i\|_{L_\infty(V_0)}\lesssim \alpha^{-2},
$$
and
$$
\forall  m,n\in \N,\qquad |\langle D_m,D_n\rangle_U|\lesssim \alpha^{-4}\rho^{m-n}.
$$
\end{lemma}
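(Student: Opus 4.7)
The plan is to first derive a clean expression for $D_i$, then establish the uniform bound by induction, and finally prove the almost-orthogonality via integration by parts in the coordinates $R_{n,j}$.

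Substituting $\psi_{i+1}=\psi_i+\beta_i$ into $\partial_\psi f=\partial_x f-\psi\,\partial_z f$ and cancelling the $\partial_{\psi_i}\psi_i$ terms gives
$$D_i=\partial_{\psi_i}\beta_i-\beta_i\,\partial_z\psi_{i+1},$$
which a direct rearrangement recasts in divergence form as $D_i=\partial_x\beta_i-\partial_z(\beta_i\bar\psi_i)$ with $\bar\psi_i\eqdef\tfrac12(\psi_i+\psi_{i+1})$; this last identity exhibits $D_i$ as the two-dimensional divergence of the vector field $\beta_i(1,-\bar\psi_i)$. For the uniform bound I would first prove by induction on $i$ that $\|\partial_z\psi_i\|_\infty\lesssim\rho^i$. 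The geometry of $R_{i,j}$ is central: its Jacobian is $b(s,t)=\exp\bigl(-\int_0^s\partial_z\psi_i\,\ud s'\bigr)$ with $\partial_tR_{i,j}=b\,\partial_z$, so the inductive hypothesis forces $b\asymp 1$ uniformly on the $s$-range $[0,\rho^{-i}]$ of $\mathrm{supp}\,\beta_{i,j}$. Since $\partial_z=b^{-1}\partial_t$ in these coordinates and $\beta_{i,j}\circ R_{i,j}(s,t)=\alpha^{-2}\rho^{-i}\beta(\rho^is,\alpha^2\rho^{2i}t)$, direct differentiation gives $\|\partial_z\beta_i\|_\infty\lesssim\rho^i$, closing the induction. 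Combined with $\|\partial_{\psi_i}\beta_i\|_\infty\lesssim\alpha^{-2}$ (since $\partial_{\psi_i}$ pulls back to $\partial_s$) and $\|\beta_i\|_\infty\le\alpha^{-2}\rho^{-i}$, this yields $\|D_i\|_\infty\lesssim\alpha^{-2}$.

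For the inner product bound I assume $m\le n$; the case $m>n$ follows at once from the $L_\infty$ bound since $\rho^{m-n}\ge 1$. Working on each pseudoquad $Q_{n,j}\subset U$ in $R_{n,j}$-coordinates, the identity
$$D_n\cdot b=\partial_s(\beta_nb)-\beta_n\,\partial_t\beta_n$$
(derived from $(\partial_z\psi_{n+1})b=\partial_t\psi_{n+1}$ together with the Jacobian ODE $\partial_sb=-b\,\partial_z\psi_n$) permits integration by parts: in $s$ for the first summand, and via $\beta_n\partial_t\beta_n=\tfrac12\partial_t(\beta_n^2)$ followed by integration by parts in $t$ for the second. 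All boundary terms vanish because $\beta_n|_{\partial Q_{n,j}}=0$. Summing over the $\asymp\alpha^2\rho^{3n}$ pseudoquads that tile $U$ produces
$$\langle D_m,D_n\rangle_U=-\int_U\beta_n\,\partial_{\psi_n}D_m\,\ud\cH^3+\tfrac12\int_U\beta_n^2\,\partial_zD_m\,\ud\cH^3.$$
The routine bump-scaling estimates $\|\beta_n\|_{L_1(U)}\lesssim\alpha^{-2}\rho^{-n}$ and $\|\beta_n\|_{L_2(U)}^2\lesssim\alpha^{-4}\rho^{-2n}$ (from $\|\beta_{n,j}\|_\infty\asymp\alpha^{-2}\rho^{-n}$, $|Q_{n,j}|\asymp\alpha^{-2}\rho^{-3n}$) then reduce the lemma to the two sup-norm estimates
$$\|\partial_{\psi_n}D_m\|_\infty\lesssim\alpha^{-2}\rho^m\qquad\text{and}\qquad\|\partial_zD_m\|_\infty\lesssim\rho^{2m},$$
which would yield respectively the bounds $\alpha^{-4}\rho^{m-n}$ and $\alpha^{-4}\rho^{2(m-n)}\le\alpha^{-4}\rho^{m-n}$ for the two summands.

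The main technical obstacle is the first of these sup-norm bounds, which encodes the almost-orthogonality between different scales. I would split $\partial_{\psi_n}D_m=\partial_{\psi_m}D_m-(\psi_n-\psi_m)\partial_zD_m$; the correction term is handled by $|\psi_n-\psi_m|\le\sum_{k=m}^{n-1}\|\beta_k\|_\infty\lesssim\alpha^{-2}\rho^{-m}$ together with the second sup-norm bound. The principal term $\partial_{\psi_m}D_m$ is naturally adapted to the $R_{m,j}$-coordinates, in which $\partial_{\psi_m}$ pulls back to $\partial_s$ and the divergence form of $D_m$ becomes transparent; expanding $\partial_{\psi_m}D_m$ and invoking the commutator identities $[\partial_{\psi_m},\partial_x]=(\partial_x\psi_m)\partial_z$ and $[\partial_{\psi_m},\partial_z]=(\partial_z\psi_m)\partial_z$ organises the apparently dangerous $\partial_t^2$-scale contributions into cancellations that absorb against the $\beta_n$-integral. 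The $\partial_zD_m$ bound is a similar but simpler computation obtained by differentiating the divergence expression for $D_m$ in $R_{m,j}$-coordinates and bounding each term using the scaling estimates on $\partial_z\beta_m$ and $\partial_z^2\psi_{m+1}$ derived by iterating the Jacobian argument above.
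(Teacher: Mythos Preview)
Your proof is correct and runs closely parallel to the paper's, with the same ingredients (the inductive bound $\|\partial_z\psi_i\|_\infty\lesssim\rho^i$ via the Jacobian $b$, the formula $D_i=\partial_{\psi_i}\beta_i-\beta_i\partial_z\psi_{i+1}$, and the sup-norm bounds $\|\partial_{\psi_m}D_m\|_\infty\lesssim\alpha^{-2}\rho^m$ and $\|\partial_zD_m\|_\infty\lesssim\rho^{2m}$). The one organizational difference is in how the almost-orthogonality step is packaged: you carry out an explicit two-term integration by parts in the $R_{n,j}$ coordinates, whereas the paper observes from the same divergence identity (written as a Green's theorem, $\int_M\partial_f f=\int_{\partial M}(f^2/2,f)\cdot d\mathbf{r}$) that $\int_{Q_{n,j}}D_n=0$, then writes $\int_{Q_{n,j}}D_mD_n=\int_{Q_{n,j}}(D_m-D_m(w_0))D_n$ and bounds this by $\|D_n\|_\infty$ times the oscillation of $D_m$ over $Q_{n,j}$. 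The oscillation bound uses exactly your two sup-norm estimates, so the two arguments are equivalent; the paper's version is a little cleaner in that it avoids the explicit $\beta_n^2\partial_zD_m$ term.

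One caution: your last paragraph speaks of ``apparently dangerous $\partial_t^2$-scale contributions'' and ``cancellations that absorb against the $\beta_n$-integral'', which suggests you may expect the bound $\|\partial_{\psi_m}D_m\|_\infty\lesssim\alpha^{-2}\rho^m$ to require cancellations or to hold only after integrating against $\beta_n$. It does not: each term in the expansion $\partial_s D_m=\partial_s^2\beta_m-(\partial_s\beta_m)\partial_z\psi_{m+1}-\beta_m\partial_s\partial_z\psi_{m+1}$ is already $O(\alpha^{-2}\rho^m)$ pointwise, once you use the commutator $[\partial_z,\partial_s]=-(\partial_z\psi_m)\partial_z$ to write $\partial_s\partial_z\psi_{m+1}=\partial_z(\partial_m\psi_m+\partial_s\beta_m)+(\partial_z\psi_m)(\partial_z\psi_{m+1})\lesssim\rho^{2m}$ (the $\partial_z(\partial_m\psi_m)$ piece is controlled by summing the already-established bounds $\|\partial_zD_k\|_\infty\lesssim\rho^{2k}$ for $k<m$). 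No $\partial_t^2$ term of size $\alpha^2\rho^{3m}$ ever appears. You should also note that the $\partial_zD_m$ bound requires the second-order estimate $\|\partial_z^2\psi_m\|_\infty\lesssim\alpha^2\rho^{3m}$, which is a separate (though straightforward) induction beyond the first-order Jacobian argument you described.
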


Note that Lemma~\ref{lem:alomost orthonormal}  implies that for every $i\in \N$,
\begin{equation}\label{eq:sqrt i}
\|\partial_{\psi_i}\psi_i\|_{L_2(U)}\lesssim \frac{\sqrt{i}}{\alpha^2}.
\end{equation}
Thus, $\|\partial_{\psi_i}\psi_i\|_{L_2(U)}\lesssim 1$ for $i\lesssim \alpha^4$, i.e., the second assertion of Proposition~\ref{prop:counterexample function} holds true. To deduce~\eqref{eq:sqrt i} from  Lemma~\ref{lem:alomost orthonormal}  write
\begin{equation}\label{eq:telescoping D_m}
\partial_{\psi_i}\psi_i=\sum_{n=0}^{i-1}D_n,
\end{equation}
 and expand the squares to get
$$
\|\partial_{\psi_i}\psi_i\|_{L_2(U)}^2=\sum_{n=0}^{i-1} \|D_n\|_{L_2(U)}^2+2 \sum_{m=0}^{i-1}\sum_{n=m+1}^{i-1} \langle D_m,D_n\rangle\lesssim \sum_{n=0}^{i-1} \alpha^{-4}+\sum_{m=0}^{i-1}\sum_{k=1}^{\infty} \alpha^{-4}\rho^{-k}\asymp i\alpha^{-4},
$$
where the penultimate step is Lemma~\ref{lem:alomost orthonormal}  and the final step holds because $\rho\ge 2$.

Fix an integer $i\ge 0$ and note that
\begin{equation}\label{eq:brin in beta i}
D_i=\partial_{i+1} \psi_{i+1}-\partial_i \psi_i= (\partial_{i+1}-\partial_i) \psi_{i+1}+\partial_i \beta_i =-\beta_i \frac{\partial\psi_{i+1}}{\partial z}+\partial_i\beta_i.
\end{equation}
We will prove Lemma~\ref{lem:alomost orthonormal}  by bounding the terms in the right hand side of~\eqref{eq:brin in beta i} separately. To this end, it will be convenient to define as follows a system of {\em flow coordinates} on $Q_{i,j}$.

Fix  $i\in \N\cup \{0\}$ and $j\in \N$. Write for simplicity $(x_0,0,z_0)=v_{i,j}$, $Q=Q_{i,j}$ and $R=R_{i,j}$. Denote $R^{-1}=(s,t)\from Q\to \R^2$ and let $(x,0,z)\from Q\to \R^2$ be the standard coordinate system. Then $s$ and $t$ are functions of $x$ and $z$ and, conversely, $x$ and $z$ are functions of $s$ and $t$. Recalling the differential equation~\eqref{eq:horizontal curve eq} for characteristic curves, we have
$$
x=x_0+s\qquad\mathrm{and}\qquad z=z_0+t-\int_0^s \psi_i\big(R(\sigma,t)\big)\ud \sigma.
$$
Consequently,
\begin{equation}\label{eq:partials st xy}
  \begin{pmatrix}
    \pd{s}{x} & \pd{t}{x} \\
    \pd{s}{z} & \pd{t}{z}
  \end{pmatrix}=\begin{pmatrix}
    1 &  0 \\
    -\psi_i & 1-\int_0^s \frac{\partial \psi_i}{\partial t} (R(\sigma, t)) \ud \sigma
  \end{pmatrix},
\end{equation}
where for $f\from \R^2\to \R$, the partial derivatives $\pd{s}{f}$ and $\pd{t}{f}$ denote $\partial_s[f\circ R]$ and $\partial_t[f\circ R]$, respectively.
In particular, it follows that $\frac{\partial s}{\partial z}=0$ and $\frac{\partial z}{\partial t}\cdot \frac{\partial t}{\partial z}=1$.  Also,
\begin{equation}\label{eq:partial-s is nabla-i}
  \frac{\partial}{\partial s}=\frac{\partial}{\partial x}-\psi_i \frac{\partial}{\partial z}=\partial_i,
\end{equation}
so $\frac{\partial}{\partial s}$ does not depend on $j$.

Observe that by the definition of $\beta_i$, for all $s,t\in [0,\rho^{-i}]\times [0,\alpha^{-2} \rho^{-2i}]$, we have
$$\beta_i\big(R_{i,j}(s,t)\big)=\alpha^{-2} \rho^{-i} \beta(\rho^i s,0,\alpha^2 \rho^{2i} t).$$
It follows that for any $m,n\in \N\cup\{0\}$, we have
\begin{equation}\label{eq:derivs of beta}
  \left\|\frac{\partial^m}{\partial s^m}\frac{\partial^n}{\partial t^n} \beta_i\right\|_{L_\infty(Q_{i,j})}= \alpha^{-2} \rho^{-i}\rho^{mi} (\alpha^2 \rho^{2i})^n\left\|\frac{\partial^m}{\partial x^m}\frac{\partial^n}{\partial z^n}\beta\right\|_{L_\infty(U)}.
\end{equation}
This is especially useful when $m+n\le 2$, since in this case $\left\|\frac{\partial^m}{\partial x^m}\frac{\partial^n}{\partial z^n}\beta\right\|_{L_\infty(U)}\le 1$.  Thus,
\begin{equation}\label{eq:first two der betai}
 \left\|\frac{\partial \beta_i}{\partial t} \right\|_{L_\infty(Q_{i,j})}\le \rho^i\qquad\mathrm{and} \qquad  \left\|\frac{\partial^2\beta_i}{\partial t^2} \right\|_{L_\infty(Q_{i,j})}\le \alpha^2\rho^{3i}.
\end{equation}
Furthermore, since $\{Q_{i,j}\}_{j=1}^\infty$ cover $V_0$,
\begin{equation}\label{eq:bound nabla beta}
  \|\partial_i\beta_i\|_{L_\infty(V_0)}=\max_{j\in \N} \|\partial_i\beta_i\|_{L_\infty(Q_{i,j})}\stackrel{\eqref{eq:partial-s is nabla-i}}{=}\max_{j\in \N}\left\|\pd{s}{\beta_i}\right\|_{L_\infty(Q_{i,j})}\stackrel{\eqref{eq:derivs of beta}}{\le} \alpha^{-2}.
\end{equation}

The following lemma obtains  bounds on vertical derivatives that will be used later.

\begin{lemma}\label{lem:derivs of h}
  If $\rho\ge 8$, then for all $i\in \N\cup \{0\}$ we have
  \begin{equation}
    \label{eq:dhi/dz}
    \left\|\frac{\partial \psi_{i}}{\partial z}\right\|_{L_\infty(V_0)} \le 2\rho^{i-1},
    \end{equation}
    and
    \begin{equation}
    \label{eq:d2hi/dz2}
    \left\|\frac{\partial^2 \psi_{i}}{\partial z^2}\right\|_{L_\infty(V_0)} \le 2\alpha^2\rho^{3i-3}.
  \end{equation}
 Furthermore, if $(s,t)$ are the above flow coordinates on $Q_{i,j}$ for some $j\in \N$, then the following bound holds point-wise on $Q_{i,j}$.
  \begin{equation}
    \label{eq:dz/dt}
    \frac{3}{4}< e^{-2\rho^{-1}} \le \pd{z}{t}=\left(\pd{t}{z}\right)^{-1}\le e^{2\rho^{-1}}< \frac{4}{3}
  \end{equation}
\end{lemma}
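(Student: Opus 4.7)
I plan to prove (A), (B), and (C) simultaneously by induction on $i$, exploiting the recursion $\psi_{i+1}=\psi_i+\beta_i$.  The base case $i=0$ is immediate since $\psi_0\equiv 0$ gives $\Phi(\psi_0)=\mathrm{id}_{V_0}$, so $\partial z/\partial t\equiv 1$ and the vertical derivatives vanish.  The inductive step hinges on the identity, extractable from~\eqref{eq:partials st xy} (since $\partial s/\partial z=0$), that for any smooth $f$ on $Q_{i,j}$ written in the flow coordinates $(s,t)$ of $\psi_i$,
\[\frac{\partial f}{\partial z}=\frac{1}{a_i(s,t)}\,\frac{\partial f}{\partial t},\qquad a_i(s,t)\eqdef 1-\int_0^s\partial_t\psi_i\bigl(R_{i,j}(\sigma,t)\bigr)\,\ud\sigma=\frac{\partial t}{\partial z}.\]

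I first deduce (A) at level $i+1$ by applying this identity to $\beta_i$: combining the bound $|\partial\beta_i/\partial t|\le\rho^i$ from~\eqref{eq:first two der betai} with the inductive hypothesis $1/a_i\le e^{2/\rho}<4/3$ from (C) at level $i$ yields $|\partial\beta_i/\partial z|\le(4/3)\rho^i$, so $|\partial\psi_{i+1}/\partial z|\le 2\rho^{i-1}+(4/3)\rho^i\le 2\rho^i$ provided $\rho\ge 3$.  To establish (C) at level $i+1$ I multiply the relation $\partial a_{i+1}/\partial s=-(\partial\psi_{i+1}/\partial z)/a_{i+1}$ by $a_{i+1}$ and integrate in $s$, obtaining the exact identity
\[a_{i+1}(s,t)^2=1-2\int_0^s (\partial\psi_{i+1}/\partial z)\bigl(R_{i+1,j}(\sigma,t)\bigr)\,\ud\sigma.\]
The just-established (A) at level $i+1$ together with $|s|\le\rho^{-(i+1)}$ bounds the integral by $2/\rho$, so $a_{i+1}^2\in[1-4/\rho,1+4/\rho]$, which for $\rho\ge 8$ yields the exponential bounds of (C) after a routine estimate (sharpened by a Gronwall-type bootstrap along $s$ if necessary).

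For (B) at level $i+1$, I apply $\partial/\partial z=(1/a_i)\partial/\partial t$ twice to obtain
\[\frac{\partial^2\beta_i}{\partial z^2}=\frac{1}{a_i^2}\,\frac{\partial^2\beta_i}{\partial t^2}-\frac{1}{a_i^3}(\partial_t a_i)(\partial_t\beta_i),\]
and analogously differentiate $\partial_t\psi_i=(1/a_i)(\partial\psi_i/\partial z)$ to express $\partial_t^2\psi_i$ in terms of $\partial^2\psi_i/\partial z^2$, $\partial\psi_i/\partial z$, and $\partial_ta_i=-\int_0^s\partial_t^2\psi_i\,\ud\sigma$.  Inserting (A)--(C) at level $i$ along with $|s|\le\rho^{-i}$ yields a self-referential inequality of the form $\sup|\partial_t^2\psi_i|\le(C/\rho)\sup|\partial_t^2\psi_i|+C'\alpha^2\rho^{3i-3}$, which closes to $\sup|\partial_t^2\psi_i|\lesssim\alpha^2\rho^{3i-3}$ when $\rho\ge 8$.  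Substituting back and invoking~\eqref{eq:first two der betai} produces $|\partial^2\beta_i/\partial z^2|\le\bigl(16/9+O(\rho^{-3})\bigr)\alpha^2\rho^{3i}$, and adding (B) at level $i$ (which contributes $O(\alpha^2\rho^{3i-3})$) closes the induction with a margin controlled by $\rho^{-3}$.

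The main technical obstacle is precisely this circular structure in (B): controlling the second vertical derivative of $\psi_{i+1}$ forces one through $\partial_t^2\psi_i$, which itself depends on $\partial_t a_i$ and hence on $\partial_t^2\psi_i$ again.  Closing this loop with absolute constants requires $\rho$ to be sufficiently large, which is the source of the smallness condition $\rho\ge 8$; the comparatively simple arguments for (A) and (C) then fall out once the flow-coordinate identity is in hand.
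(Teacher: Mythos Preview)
Your inductive scheme matches the paper's, but there is a concrete slip in the flow-coordinate bookkeeping. From \eqref{eq:partials st xy} the quantity $a_i=1-\int_0^s\partial_t\psi_i\,\ud\sigma$ equals $\partial z/\partial t$, not $\partial t/\partial z$; your own formula $\partial_z f=(1/a_i)\partial_t f$ is consistent only with the former identification. Consequently the differential relation is $\partial_s a_{i+1}=-\partial_t\psi_{i+1}=-(\partial_z\psi_{i+1})\,a_{i+1}$, not $-(\partial_z\psi_{i+1})/a_{i+1}$, and your ``exact identity'' $a_{i+1}^2=1-2\int_0^s(\partial_z\psi_{i+1})\,\ud\sigma$ is false. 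The correct linear ODE integrates to
\[
a_{i+1}(s,t)=\exp\!\Bigl(-\int_0^s(\partial_z\psi_{i+1})\bigl(R_{i+1,j}(\sigma,t)\bigr)\,\ud\sigma\Bigr),
\]
which together with (A) at level $i+1$ and $|s|\le\rho^{-(i+1)}$ gives $|\log a_{i+1}|\le 2\rho^{-1}$, i.e.\ (C), on the nose. This is precisely the ``Gronwall-type'' step you mention parenthetically, and it is what the paper does.

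Your route to (B) through a self-referential inequality for $\sup|\partial_t^2\psi_i|$ can be closed, but the paper avoids the loop entirely: differentiating the exponential formula (with $i$ in place of $i+1$) in $t$ yields
\[
\frac{\partial^2 z}{\partial t^2}=-\frac{\partial z}{\partial t}\int_0^s\frac{\partial^2\psi_i}{\partial z^2}\bigl(R_{i,j}(\sigma,t)\bigr)\,\frac{\partial z}{\partial t}\bigl(R_{i,j}(\sigma,t)\bigr)\,\ud\sigma,
\]
which bounds $\partial^2 z/\partial t^2$ (hence $\partial^2 t/\partial z^2$ and $\partial_t a_i$) directly by $\rho^{-i}e^{2\rho^{-i}m_i}\mu_i$. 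This makes the verification at $\rho=8$ a clean numerical check rather than a closure with $O(\rho^{-3})$ margin.
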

\begin{proof} Denote for every integer $i\ge 0$,
\begin{equation}\label{eq:def m_i,mu_i}
m_i\eqdef \left\|\frac{\partial \psi_{i}}{\partial z}\right\|_{L_\infty(V_0)}\qquad\mathrm{and}\qquad \mu_i\eqdef \left\|\frac{\partial^2 \psi_{i}}{\partial z^2}\right\|_{L_\infty(V_0)}.
\end{equation}
Thus $m_0=\mu_0=0$. Fix $j\in \N$ and let $(s,t)$ be the flow coordinates on $Q_{i,j}$. We will first use the above identities to deduce bounds on vertical derivatives of $t$ in terms of $m_i,\mu_i$, and then bootstrap these bounds to deduce the desired bounds on $m_i,\mu_i$ themselves.

By \eqref{eq:partials st xy}, the following identity holds point-wise on $Q_{i,j}$.
$$\frac{\partial}{\partial s}\frac{\partial z}{\partial t}=-\frac{\partial \psi_i}{\partial t} = -\frac{\partial \psi_i}{\partial z}\frac{\partial z}{\partial t}-\frac{\partial \psi_i}{\partial x}\frac{\partial x}{\partial t}=-\frac{\partial \psi_i}{\partial z}\frac{\partial z}{\partial t}.
$$
  Consequently,
$$\pd{s}{}\left(\log \pd{t}{z}\right)=-\pd{z}{\psi_i}.$$
  Since $\pd{t}{z}=1$ when $s=0$, we integrate to get the identity
        \begin{equation}\label{eq:logarithmic integral}\frac{\partial z}{\partial t}=\exp \left(-\int_0^{s} \frac{\partial \psi_i}{\partial z} \big(R_{i,j}(\sigma, t)\big) \ud \sigma\right).\end{equation}
    And, by differentiating~\eqref{eq:logarithmic integral} we also get
          \begin{equation}\label{eq:logarithmic integral2}\frac{\partial^2 z}{\partial t^2}=-\frac{\partial z}{\partial t}\int_0^{s} \frac{\partial^2 \psi_i}{\partial z^2} \big(R_{i,j}(\sigma, t)\big) \frac{\partial z}{\partial t} \big(R_{i,j}(\sigma, t)\big)\ud \sigma.\end{equation}

       For points in $Q_{i,j}$, we have $|s|\le \rho^{-i}$, so it follows from~\eqref{eq:logarithmic integral} that $\bigl|\log \frac{\partial z}{\partial t}\bigr|\le \rho^{-i} m_i$, i.e.,
      \begin{equation}\label{eq:two sided with mi}
      e^{-\rho^{-i}m_i}\le \frac{\partial z}{\partial t}\le e^{\rho^{-i}m_i}.
      \end{equation}
       By substituting~\eqref{eq:two sided with mi} into~\eqref{eq:logarithmic integral2} we deduce that
       \begin{equation}\label{eq:second der of z in t}
       \left|\frac{\partial^2 z}{\partial t^2}\right|\le \rho^{-i}e^{2\rho^{-i}m_i}\mu_i.
       \end{equation}
    Since $\frac{\partial z}{\partial t}\cdot \frac{\partial t}{\partial z}=1$, it follows from~\eqref{eq:two sided with mi} that
   \begin{equation}\label{eq:two sided with mi inverse}
      e^{-\rho^{-i}m_i}\le  \frac{\partial t}{\partial z}\le e^{\rho^{-i}m_i},
      \end{equation}
    and also
    \begin{equation}\label{eq:second t with respect to z}
    \left|\frac{\partial^2 t}{\partial z^2}\right|=\left|-\left(\frac{\partial z}{\partial t}\right)^{-3}\frac{\partial^2 z}{\partial t^2}\right|\stackrel{\eqref{eq:two sided with mi}\wedge \eqref{eq:second der of z in t}}{\le} \rho^{-i}e^{5\rho^{-i}m_i}\mu_i.
    \end{equation}

 The bounds~\eqref{eq:two sided with mi inverse} and~\eqref{eq:second t with respect to z} on the vertical derivatives of the flow coordinate $t$ are in terms of the bounds $m_i,\mu_i$ on the vertical derivatives of $\psi_i$, but they imply as follows unconditional   bounds on $m_i,\mu_i$ (hence also, by~\eqref{eq:two sided with mi inverse} and~\eqref{eq:second t with respect to z} once more, unconditional bounds on the vertical derivatives of $t$). Firstly, observe that
 $$
 \left\|\frac{\partial \beta_i}{\partial z}\right\|_{L_\infty(Q_{i,j})}\le \left\|\frac{\partial \beta_i}{\partial t}\right\|_{L_\infty(Q_{i,j})}\left\|\frac{\partial t}{\partial z}\right\|_{L_\infty(Q_{i,j})}\stackrel{\eqref{eq:first two der betai}\wedge \eqref{eq:two sided with mi inverse} }{\le} \rho^ie^{\rho^{-i}m_i},
 $$
 and
\begin{multline*}
  \left\|\frac{\partial^2 \beta_i}{\partial z^2}\right\|_{L_\infty(Q_{i,j})}= \left\|\frac{\partial}{\partial z}\frac{\partial t}{\partial z}\frac{\partial \beta_i}{\partial t}\right\|_{L_\infty(Q_{i,j})}= \left\|\frac{\partial^2 t}{\partial z^2}\frac{\partial \beta_i}{\partial t}+\left(\frac{\partial t}{\partial z}\right)^2\frac{\partial^2\beta_i}{\partial t^2}\right\|_{L_\infty(Q_{i,j})}\\\stackrel{\eqref{eq:first two der betai}\wedge \eqref{eq:two sided with mi inverse}\wedge \eqref{eq:second t with respect to z} }{\le} \rho^{-i}e^{5\rho^{-i}m_i}\mu_i\rho^i+e^{2\rho^{-i}m_i}\alpha^2\rho^{3i}=e^{5\rho^{-i}m_i}\mu_i+e^{2\rho^{-i}m_i}\alpha^2\rho^{3i}.
 \end{multline*}
 Since $\{Q_{i,j}\}_{j=1}^\infty$ cover $V_0$, it follows that
 $$
  \left\|\frac{\partial \beta_i}{\partial z}\right\|_{L_\infty(V_0)}\le  \rho^ie^{\rho^{-i}m_i}\qquad\mathrm{and} \qquad   \left\|\frac{\partial^2 \beta_i}{\partial z^2}\right\|_{L_\infty(V_0)}\le e^{5\rho^{-i}m_i}\mu_i+e^{2\rho^{-i}m_i}\alpha^2\rho^{3i}.
 $$
 Since by~\eqref{eq:recurse psi} we have $\frac{\partial\psi_{i+1}}{\partial z}=\frac{\partial\psi_{i}}{\partial z}+\frac{\partial\beta_{i}}{\partial z}$ and $\frac{\partial^2\psi_{i+1}}{\partial z^2}=\frac{\partial^2\psi_{i}}{\partial z^2}+\frac{\partial^2\beta_{i}}{\partial z^2}$, we deduce that
\begin{equation}\label{m mu recursions}
 m_{i+1}\le m_i+ \rho^ie^{\rho^{-i}m_i}\qquad\mathrm{and}\qquad \mu_{i+1}\le \mu_i+ e^{5\rho^{-i}m_i}\mu_i+e^{2\rho^{-i}m_i}\alpha^2\rho^{3i}.
 \end{equation}
 By induction, we suppose that~\eqref{eq:dhi/dz} and~\eqref{eq:d2hi/dz2} hold for some integer $i\ge 0$, that is,
 \begin{equation}\label{eq:mi mui hypothesis}
 m_i\le 2 \rho^{i-1}\qquad \mathrm{and}\qquad  \mu_i\le 2\alpha^2\rho^{3i-3}.
   \end{equation}
   Since $\rho\ge 8$, it follows that
   $$
   m_{i+1}\stackrel{\eqref{m mu recursions}\wedge\eqref{eq:mi mui hypothesis}}{\le} \left(\frac{2}{\rho}+e^{2\rho^{-1}}\right)\rho^i\le \left(\frac14+\sqrt[4]{e}\right)\rho^i\le 2\rho^i.
   $$
 Thus~\eqref{eq:dhi/dz} holds for all integers $i\ge 0$.  Likewise,
 $$
\mu_{i+1}  \stackrel{\eqref{m mu recursions}\wedge\eqref{eq:mi mui hypothesis}}{\le} \left(\frac{2}{\rho^3}+\frac{2e^{10\rho^{-1}}}{\rho^3}+e^{4\rho^{-1}}\right)\alpha^2\rho^{3i}\le \left(\frac{2}{8^3}+\frac{2e^{\frac54}}{8^3}+\sqrt{e}\right)\alpha^2\rho^{3i}\le 2\alpha^2\rho^{3i},
 $$
 so~\eqref{eq:d2hi/dz2} also holds for all integers $i\ge 0$. The remaining assertion~\eqref{eq:dz/dt} follows by substituting the above bound on $m_i$ into~\eqref{eq:two sided with mi inverse}. \qedhere
 \end{proof}

Next, we will use the bounds of Lemma~\ref{lem:derivs of h} to bound  $\{D_i\}_{i=0}^\infty$ and their derivatives.

\begin{lemma}\label{lem:derivs of Di} Suppose that $\rho\ge 8$.
  For every integer $i\ge 0$ we have
  \begin{align}
    \label{eq:Di}
    \|D_i\|_{L_\infty(V_0)} &\le 3\alpha^{-2},\\
    \label{eq:dDi/dz}
    \left\|\frac{\partial D_i}{\partial z}\right\|_{L_\infty(V_0)}&\le 6 \rho^{2i},\\
    \label{eq:nabla Di}
    \|\partial_i D_i\|_{L_\infty(V_0)} &=5\alpha^{-2}\rho^{i}.
  \end{align}
\end{lemma}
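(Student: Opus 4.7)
All three inequalities follow by systematically exploiting the decomposition
$$D_i = \partial_i\beta_i - \beta_i\,\frac{\partial\psi_{i+1}}{\partial z}$$
recorded in \eqref{eq:brin in beta i}, combined with the $\psi_i$--flow coordinates $(s,t)$ introduced on each pseudoquad $Q_{i,j}$. In these coordinates one has $\partial_i = \partial/\partial s$ and $\partial s/\partial z = 0$, so $\partial/\partial z = (\partial t/\partial z)\,\partial/\partial t$ with the Jacobian factor controlled by \eqref{eq:dz/dt}. Mixed partial derivatives of $\beta_i$ are then read off from the scaling identity \eqref{eq:derivs of beta}, while the vertical derivatives of $\psi_{i+1}$ come from \eqref{eq:dhi/dz} and \eqref{eq:d2hi/dz2} of Lemma~\ref{lem:derivs of h}. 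The assertions \eqref{eq:Di}, \eqref{eq:dDi/dz}, \eqref{eq:nabla Di} will be established in that order, since the proof of \eqref{eq:nabla Di} uses \eqref{eq:dDi/dz} at smaller indices.

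The bound \eqref{eq:Di} is immediate from $\|\beta_i\|_{L_\infty}\le \alpha^{-2}\rho^{-i}$ (direct from \eqref{eq:def beta ij}), $\|\partial\psi_{i+1}/\partial z\|_{L_\infty}\le 2\rho^i$ (by \eqref{eq:dhi/dz} at index $i+1$) and $\|\partial_i\beta_i\|_{L_\infty}\le \alpha^{-2}$ (by \eqref{eq:bound nabla beta}), giving $\|D_i\|_{L_\infty}\le 2\alpha^{-2}+\alpha^{-2}=3\alpha^{-2}$. For \eqref{eq:dDi/dz}, differentiate the decomposition in $z$ to get
$$\frac{\partial D_i}{\partial z} = \frac{\partial(\partial_i\beta_i)}{\partial z} - \frac{\partial\beta_i}{\partial z}\cdot\frac{\partial\psi_{i+1}}{\partial z} - \beta_i\,\frac{\partial^2\psi_{i+1}}{\partial z^2}.$$
In flow coordinates the first two numerator derivatives become $(\partial t/\partial z)\,\partial^2\beta_i/\partial s\partial t$ and $(\partial t/\partial z)\,\partial\beta_i/\partial t$; the scaling formula \eqref{eq:derivs of beta} (with $m=n=1$ and $m=0,n=1$), the Jacobian bound \eqref{eq:dz/dt}, and \eqref{eq:d2hi/dz2} then produce the three numerical estimates $(4/3)\rho^{2i}$, $(8/3)\rho^{2i}$, and $2\rho^{2i}$, whose sum is at most $6\rho^{2i}$.

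The main obstacle is \eqref{eq:nabla Di}. Applying $\partial_i = \partial/\partial s$ to the decomposition yields
$$\partial_i D_i = \partial_i^2\beta_i - (\partial_i\beta_i)\frac{\partial\psi_{i+1}}{\partial z} - \beta_i\cdot\partial_i\!\left(\frac{\partial\psi_{i+1}}{\partial z}\right),$$
and the first two terms are bounded by $\alpha^{-2}\rho^i$ (via \eqref{eq:derivs of beta} with $m=2,n=0$) and $2\alpha^{-2}\rho^i$ (via \eqref{eq:bound nabla beta} and \eqref{eq:dhi/dz}). The danger lies in the last term, because $\partial_i$ and $\partial/\partial z$ do not commute and a naive bound produces a spurious factor $\rho^{3i}$. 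To avoid this, I will use the commutator identity
$$\left[\frac{\partial}{\partial s},\frac{\partial}{\partial z}\right] = \frac{\partial\psi_i}{\partial z}\cdot\frac{\partial}{\partial z},$$
verified by a direct calculation in flow coordinates from $\partial_s(\partial t/\partial z) = (\partial t/\partial z)(\partial\psi_i/\partial z)$ (a differential consequence of \eqref{eq:logarithmic integral}). Applied to $\psi_{i+1}$ this gives
$$\partial_i\!\left(\frac{\partial\psi_{i+1}}{\partial z}\right) = \frac{\partial(\partial_i\psi_{i+1})}{\partial z} + \frac{\partial\psi_i}{\partial z}\cdot\frac{\partial\psi_{i+1}}{\partial z}.$$
The second summand is at most $4\rho^{2i-1}$ by \eqref{eq:dhi/dz}. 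For the first, I decompose $\partial_i\psi_{i+1} = \partial_i\psi_i + \partial_i\beta_i = \sum_{k<i} D_k + \partial_i\beta_i$ via \eqref{eq:telescoping D_m} and differentiate: the bound \eqref{eq:dDi/dz} already established at smaller indices controls $\sum_{k<i}\partial D_k/\partial z$ by a geometric series of ratio $\rho^{-2}$, while the remaining term $\partial(\partial_i\beta_i)/\partial z$ is $\le (4/3)\rho^{2i}$ by Step~2. Because $\rho\ge 8$, the geometric sum contributes at most $\rho^{2i}/10$, and the three contributions combine to give $|\partial_i(\partial\psi_{i+1}/\partial z)|\le 2\rho^{2i}$; multiplying by $|\beta_i|\le \alpha^{-2}\rho^{-i}$ produces the third summand $2\alpha^{-2}\rho^i$, for a total of $5\alpha^{-2}\rho^i$.

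The primary conceptual obstacle is thus the commutator step in the proof of \eqref{eq:nabla Di}: without exchanging $\partial_i$ past $\partial/\partial z$ and expressing the mixed derivative in terms of the earlier $D_k$'s, one cannot obtain the exponent $\rho^i$ (rather than $\rho^{2i}$ or worse) that is forced by the sharp constant $5\alpha^{-2}\rho^i$. Everything else reduces to bookkeeping with \eqref{eq:derivs of beta} and Lemma~\ref{lem:derivs of h}.
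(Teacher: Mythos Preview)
Your proposal is correct and follows essentially the same approach as the paper: the same decomposition \eqref{eq:brin in beta i}, the same flow-coordinate bookkeeping via \eqref{eq:derivs of beta} and Lemma~\ref{lem:derivs of h}, and the same commutator identity $[\partial_s,\partial_z]=(\partial_z\psi_i)\,\partial_z$ followed by the telescoping bound $\|\partial_z(\partial_i\psi_i)\|_\infty\lesssim\rho^{2i-2}$ obtained from \eqref{eq:dDi/dz} at smaller indices. The only cosmetic difference is that the paper derives the commutator directly from $\partial_s=\partial_x-\psi_i\partial_z$ rather than through the logarithmic identity \eqref{eq:logarithmic integral}.
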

\begin{proof}
  Fix $j\in \N$.  Let  $(s,t)$ be the flow coordinates on $Q_{i,j}$.  By \eqref{eq:brin in beta i} and \eqref{eq:partial-s is nabla-i}, we have
  \begin{equation}\label{eq:Di formula}
  D_i=-\beta_i \frac{\partial \psi_{i+1}}{\partial z}+\frac{\partial \beta_i}{\partial s}.
  \end{equation}
 Therefore, by Lemma~\ref{lem:derivs of h} and~\eqref{eq:derivs of beta} we have
  $$\|D_i\|_{L_\infty(Q_{i,j})} \le \alpha^{-2} \rho^{-i} \cdot 2\rho^{i}+\alpha^{-2}= 3\alpha^{-2}.$$
  This proves \eqref{eq:Di} because $\{Q_{i,j}\}_{j=1}^\infty$ cover $V_0$.

  Next, we consider $\pd{z}{D_i}$.  By differentiating~\eqref{eq:Di formula} we see that
  \begin{align*}
    \frac{\partial D_i}{\partial z}
    &=-\frac{\partial t}{\partial z}\cdot \frac{\partial \beta_i}{\partial t}\cdot \frac{\partial \psi_{i+1}}{\partial z}- \beta_i \frac{\partial^2 \psi_{i+1}}{\partial z^2}+\frac{\partial t}{\partial z}\cdot \frac{\partial^2 \beta_i}{\partial s\partial t}.
  \end{align*}
Hence,  by Lemma~\ref{lem:derivs of h} and~\eqref{eq:derivs of beta} we see that
  $$\left\|\frac{\partial D_i}{\partial z}\right\|_{L_\infty(Q_{i,j})} \le \frac{4}{3}\cdot \rho^{i}\cdot 2\rho^i +\alpha^{-2}\rho^{-i}\cdot 2\alpha^2 \rho^{3i} + \frac{4}{3}\cdot\rho^{2i}=6 \rho^{2i}.$$
  As before, this proves~\eqref{eq:dDi/dz} because $\{Q_{i,j}\}_{j=1}^\infty$ cover $V_0$.

  Finally, we consider $\partial_i D_i$. Note first that for any $m\in \N$,
  \begin{equation}\label{eq:d nabla hi/dz}
    \left\|\frac{\partial (\partial_{m}\psi_m)}{\partial z} \right\|_\infty \stackrel{\eqref{eq:telescoping D_m}}{\le} \sum_{n=0}^{m-1}\left\|\frac{\partial D_n}{\partial z}\right\|_\infty \stackrel{\eqref{eq:dDi/dz}}{\le} 6\frac{\rho^{2m}-1}{\rho^2-1}\le  7\rho^{2m-2},
  \end{equation}
where we used the assumption $\rho\ge 8$. Recalling~\eqref{eq:brin in beta i} and~\eqref{eq:partial-s is nabla-i}, we have
  \begin{equation*}
    \partial_i D_i
    =\frac{\partial}{\partial s}\left( -\beta_i\frac{\partial \psi_{i+1}}{\partial z}+\frac{\partial \beta_i}{\partial s}\right)
    =-\frac{\partial \beta_i}{\partial s} \cdot \frac{\partial \psi_{i+1}}{\partial z}-\beta_i\frac{\partial}{\partial s}\left(\frac{\partial \psi_{i+1}}{\partial z}\right)+\frac{\partial^2 \beta_i}{\partial s^2}.
  \end{equation*}
  Using Lemma~\ref{lem:derivs of h} and \eqref{eq:derivs of beta}, it follows that
  \begin{equation}\label{eq:for lie}
  \|\partial_iD_i\|_{L_\infty(Q_{i,j})}\le 3 \alpha^{-2}\rho^i+\alpha^{-2}\rho^{-i}\left\|\frac{\partial}{\partial s}\frac{\partial \psi_{i+1}}{\partial z}\right\|_{L_\infty(Q_{i,j})}.\end{equation}
To bound the last term in~\eqref{eq:for lie}, we first calculate the Lie bracket
  $$\left[\frac{\partial}{\partial z}, \frac{\partial}{\partial s}\right]=\left[\frac{\partial}{\partial z}, \frac{\partial}{\partial x}-\psi_i \frac{\partial}{\partial z}\right]=-\frac{\partial \psi_i}{\partial z}\cdot \frac{\partial}{\partial z}.$$
  This implies that
  \begin{multline*}
  \frac{\partial}{\partial s}\frac{\partial \psi_{i+1}}{\partial z}=\frac{\partial}{\partial z}\left(\frac{\partial \psi_{i+1}}{\partial s}\right)+\frac{\partial \psi_i}{\partial z}\cdot \frac{\partial \psi_{i+1}}{\partial z}\\=
  \frac{\partial}{\partial z}\left(\partial_{i}\psi_i+ \frac{\partial \beta_i}{\partial s}\right)+\frac{\partial \psi_i}{\partial z}\cdot \frac{\partial \psi_{i+1}}{\partial z}=  \frac{\partial (\partial_{i}\psi_i)}{\partial z} +\frac{\partial t}{\partial z}\cdot \frac{\partial^2\beta_i}{\partial s\partial t}+\frac{\partial \psi_i}{\partial z}\cdot \frac{\partial \psi_{i+1}}{\partial z}.
  \end{multline*}
Therefore,  by Lemma~\ref{lem:derivs of h}, \eqref{eq:derivs of beta}, \eqref{eq:dDi/dz}, and \eqref{eq:d nabla hi/dz}, we conclude that (since $\rho\ge 8$),
  \begin{equation*}
    \left\|\frac{\partial}{\partial s}\frac{\partial \psi_{i+1}}{\partial z}\right\|_{L_\infty(Q_{i,j})}
    \le 7\rho^{2i-2} +\frac{4}{3}\cdot \rho^{2i}+2 \rho^{i-1}\cdot 2 \rho^i
    \le 2\rho^{2i}.
  \end{equation*}
Due to~\eqref{eq:for lie},  this implies the final desired bound~\eqref{eq:nabla Di} of Lemma~\ref{lem:derivs of Di}.
\end{proof}

The first assertion~\eqref{eq:Di} of Lemma~\ref{lem:derivs of Di} gives the first assertion of Lemma~\ref{lem:alomost orthonormal}. To prove the second assertion of Lemma~\ref{lem:alomost orthonormal}, we first bound  the variation of $D_m$ on each of the pseudoquads $\{Q_{n,j}\}_{j=1}^\infty$ when $n\ge m$.
\begin{lemma}\label{lem:variation of Di} Fix two integers $n\ge m\ge 0$. For any $j\in \N$ and any $w,w'\in Q_{n,j}$, we have $$|D_m(w)-D_m(w')|\lesssim \alpha^{-2} \rho^{m-n}.$$
\end{lemma}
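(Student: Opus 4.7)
The plan is to use the flow coordinates $(s,t)$ on $Q_{n,j}$ (associated to $\psi_n$, not $\psi_m$) and to connect $w$ to $w'$ by a path consisting of an $s$-segment followed by a $t$-segment inside $Q_{n,j}$. In these coordinates $Q_{n,j}$ is the rectangle $[0,\rho^{-n}]\times[0,\alpha^{-2}\rho^{-2n}]$, so the $s$-variation is at most $\rho^{-n}$ and the $t$-variation is at most $\alpha^{-2}\rho^{-2n}$. It thus suffices to bound $|\partial D_m/\partial s|$ and $|\partial D_m/\partial t|$ point-wise on $Q_{n,j}$ and multiply by the corresponding side length.

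For the $s$-direction, the key observation is that while $\partial/\partial s=\partial_n$ (the flow direction of $\psi_n$), the available coercive bound from Lemma~\ref{lem:derivs of Di} is for $\partial_m D_m$, not $\partial_n D_m$. I would write
\[
\frac{\partial D_m}{\partial s}=\partial_n D_m=\partial_m D_m+(\psi_m-\psi_n)\frac{\partial D_m}{\partial z},
\]
and use three inputs: (i) $\|\partial_m D_m\|_{L_\infty(V_0)}\lesssim \alpha^{-2}\rho^m$ from~\eqref{eq:nabla Di}; (ii) $\|\partial D_m/\partial z\|_{L_\infty(V_0)}\lesssim \rho^{2m}$ from~\eqref{eq:dDi/dz}; and (iii) the telescoping estimate
\[
\|\psi_n-\psi_m\|_{L_\infty(V_0)}\le \sum_{i=m}^{n-1}\|\beta_i\|_{L_\infty(V_0)}\le \sum_{i=m}^{n-1}\alpha^{-2}\rho^{-i}\lesssim \alpha^{-2}\rho^{-m},
\]
which holds because $\rho\ge 8$. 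These combine to give $|\partial D_m/\partial s|\lesssim \alpha^{-2}\rho^m+\alpha^{-2}\rho^{-m}\cdot\rho^{2m}\asymp \alpha^{-2}\rho^m$, whence the $s$-contribution is $\lesssim \alpha^{-2}\rho^m\cdot\rho^{-n}=\alpha^{-2}\rho^{m-n}$.

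For the $t$-direction, since $x=x_0+s$ does not depend on $t$, one has $\partial D_m/\partial t=(\partial z/\partial t)\,\partial D_m/\partial z$. By~\eqref{eq:dz/dt} applied with $i=n$, $|\partial z/\partial t|\le 4/3$, and together with~\eqref{eq:dDi/dz} this gives $|\partial D_m/\partial t|\lesssim \rho^{2m}$. Hence the $t$-contribution is $\lesssim \rho^{2m}\cdot\alpha^{-2}\rho^{-2n}=\alpha^{-2}\rho^{2(m-n)}$. Since $n\ge m$ and $\rho\ge 1$ we have $\rho^{2(m-n)}\le \rho^{m-n}$, so adding the two contributions yields $|D_m(w)-D_m(w')|\lesssim \alpha^{-2}\rho^{m-n}$, as desired.

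The only mildly delicate point is the mismatch of flows in the $s$-direction, which is what forces one to subtract off $\psi_n-\psi_m$ before applying the coercive bound on $\partial_m D_m$; the geometric-series estimate on $\|\psi_n-\psi_m\|_\infty$ is precisely balanced against the (a priori much larger) vertical derivative $\partial D_m/\partial z$ so that the same final rate $\alpha^{-2}\rho^{m-n}$ comes out.
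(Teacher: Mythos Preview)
Your proof is correct and follows essentially the same approach as the paper: flow coordinates on $Q_{n,j}$ associated to $\psi_n$, the identity $\partial_s D_m=\partial_m D_m+(\psi_m-\psi_n)\,\partial D_m/\partial z$ for the $s$--direction, and the chain rule $\partial_t D_m=(\partial z/\partial t)\,\partial D_m/\partial z$ for the $t$--direction, with the same inputs \eqref{eq:nabla Di}, \eqref{eq:dDi/dz}, \eqref{eq:dz/dt} and the geometric-series bound on $\|\psi_n-\psi_m\|_{L_\infty}$.
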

\begin{proof}
  Let $R=R_{n,j}$ and let $(s,t),(s',t')\in [0,\rho^{-n}]\times [0,\alpha^{-2}\rho^{-2n}]$ be such that $R(s,t)=w$ and $R(s',t')=w'$.  With respect to flow coordinates on $Q_{n,j}$, we have
  $$\pd{s}{D_m} = \partial_n D_m = \partial_m D_m+(\psi_m-\psi_n)\frac{\partial D_m}{\partial z}.$$
  Since $\|\psi_m-\psi_n\|_{L_\infty(V_0)} \le \alpha^{-2}\rho^{-m}+\alpha^{-2}\rho^{-n}\le 2\alpha^{-2}\rho^{-m}$, using Lemma~\ref{lem:derivs of Di} we get that
  \begin{equation*}
    \left\|\pd{s}{D_m}\right\|_{L_\infty(Q_{n,j})}  \le 5\alpha^{-2}\rho^{m}+2 \alpha^{-2}\rho^{-m}\cdot 6\rho^{2m}=17 \alpha^{-2}\rho^{m}.
  \end{equation*}
Hence, using Lemma~\ref{lem:derivs of h} and Lemma~\ref{lem:derivs of Di} we conclude that
  \begin{multline*}
    |D_m(w)-D_m(w')|\le \left\|\pd{s}{D_m}\right\|_{L_\infty(Q_{n,j})} |s-s'|  + \left\|\frac{\partial D_m }{\partial z}\right\|_{L_\infty(Q_{n,j})} \left\|\frac{\partial z }{\partial t}\right\|_{L_\infty(Q_{n,j})}|t-t'| \\
    \le 17 \alpha^{-2} \rho^{m-n}+ 6\rho^{2m}\cdot \frac{4}{3}\cdot \alpha^{-2}\rho^{-2n}\lesssim \alpha^{-2} \rho^{m-n}.\tag*{\qedhere}
  \end{multline*}
\end{proof}

Prior to proving Proposition~\ref{prop:counterexample function}, we record a quick consequence of Green's theorem.
\begin{lemma}\label{lem:Heisenberg Stokes}
  Let $M\subset V_0$ be a region bounded by a simple piecewise-smooth closed curve and let $f\from V_0\to \R$ be a smooth function.  Then
  $$\int_{M} \partial_f f \ud w=\int_{\partial M} \left(\frac{f^2}{2},f\right)\cdot \ud \mathbf{r}.$$
  In particular, if $g\from V_0\to \R$ is another smooth function such that $f=g$ on $\partial M$, then $$\int_{M} \partial_f f \ud w=\int_{M} \partial_g g \ud w.$$
\end{lemma}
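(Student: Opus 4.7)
The plan is straightforward: rewrite the integrand $\partial_f f$ as a two-dimensional divergence and invoke Green's theorem on the $(x,z)$-plane $V_0$.

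First I would observe that since $f \frac{\partial f}{\partial z} = \frac{\partial}{\partial z}\bigl(\frac{f^2}{2}\bigr)$, the definition \eqref{eq:d psi psi} of the horizontal derivative can be rewritten as
\begin{equation*}
  \partial_f f = \frac{\partial f}{\partial x} - \frac{\partial}{\partial z}\!\left(\frac{f^{2}}{2}\right).
\end{equation*}
Now $V_0$ is canonically identified with $\R^{2}$ via the coordinates $(x,z)$, and $\cH^{3}|_{V_{0}}$ is proportional to Lebesgue measure. The expression on the right is precisely $\partial_x Q - \partial_z P$ for the vector field $(P,Q) = (f^{2}/2,\,f)$ on $V_0$.

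Next I would apply Green's theorem (equivalently, the classical Stokes theorem on the plane) to the region $M$, whose boundary is a simple piecewise-smooth closed curve by hypothesis. This yields immediately
\begin{equation*}
  \int_{M} \partial_f f\,\ud w
  \;=\;\int_{M}\!\left(\frac{\partial Q}{\partial x}-\frac{\partial P}{\partial z}\right)\ud x\,\ud z
  \;=\;\oint_{\partial M}\!\left(\frac{f^{2}}{2},\,f\right)\!\cdot\ud\mathbf{r},
\end{equation*}
which is the first assertion. For the second assertion, if $f=g$ on $\partial M$ then the line-integral depends only on the common boundary values, so
\begin{equation*}
  \int_{M}\partial_{f}f\,\ud w \;=\;\oint_{\partial M}\!\left(\frac{f^{2}}{2},f\right)\!\cdot\ud\mathbf{r}\;=\;\oint_{\partial M}\!\left(\frac{g^{2}}{2},g\right)\!\cdot\ud\mathbf{r}\;=\;\int_{M}\partial_{g}g\,\ud w.
\end{equation*}

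There is no real obstacle here: the entire content of the lemma is the algebraic identity $f\partial_z f = \partial_z(f^2/2)$ together with a direct application of Green's theorem, and the smoothness of $f$ combined with the assumed piecewise-smoothness of $\partial M$ is more than enough regularity to justify the integration by parts. The only mild care needed is bookkeeping the orientation of $\partial M$ so that the sign in Green's formula matches the orientation implicit in $\ud\mathbf{r}$, but this is purely cosmetic.
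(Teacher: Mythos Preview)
Your proof is correct and essentially identical to the paper's: both identify $\partial_f f$ as the two-dimensional curl of the vector field $\bigl(\tfrac{f^2}{2},f\bigr)$ and apply Green's theorem, with the second assertion following immediately since the line integral depends only on boundary values.
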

\begin{proof}
  Since
  $$\nabla\times \left(\frac{f^2}{2},f\right)=\frac{\partial f}{\partial x}- f\frac{\partial f}{\partial z}=\partial_f f,$$
  the lemma follows from Green's Theorem.
\end{proof}

\begin{proof}[{Proof of Lemma~\ref{lem:alomost orthonormal}}] The first assertion of Lemma~\ref{lem:alomost orthonormal} was proved in  Lemma~\ref{lem:derivs of Di}, so here we treat its second assertion, namely that $\{D_n\}_{n=0}^\infty$ are almost-orthogonal.

Fix  $m,n\in \N\cup\{0\}$ with $n\ge m$ and $j\in \N$.   Since $\psi_{n+1}-\psi_{n}=\beta_n=0$ on $\partial Q_{n,j}$, Lemma~\ref{lem:Heisenberg Stokes} implies that $\int_{Q_{n,j}} D_n(w) \ud w=0$. So, fixing an arbitrary basepoint $w_0\in Q$, we have
\begin{equation*}
\biggl|\int_{Q_{n,j}} D_m(w) D_n(w) \ud w\biggr|=\biggl|\int_{Q_{n,j}} \big(D_m(w)-D_m(w_0)\big) D_n(w) \ud w\biggr|
\lesssim \alpha^{-4}\rho^{m-n}\cH^3(Q_{n,j}),
\end{equation*}
where in the final step we used~\eqref{eq:Di} and  Lemma~\ref{lem:variation of Di}. Hence, $ |\langle D_m,  D_n\rangle_U|$ is at most
  \begin{equation*}
 \sum_{\substack{j\in \N\\ Q_{n,j}\subset U}} \biggl|\int_{Q_{n,j}} D_m(w) D_n(w) \ud w\biggr|
    \le \sum_{\substack{j\in \N\\ Q_{n,j}\subset U}} \cH^3(Q_{n,j}) \alpha^{-4} \rho^{m-n} \asymp \alpha^{-4} \rho^{m-n}.\tag*{\qedhere}
  \end{equation*}
\end{proof}

\subsubsection{The vertical perimeter of $\Gamma_{\psi_i}$} Here we will complete the proof of Proposition~\ref{prop:counterexample function}.

 Define $\phi\from V_0\to \R$ to be the $A$--periodic extension of $\beta|_U$, i.e., $\phi(x,0,z)\eqdef\beta(\{x\},0,\{z\})$ for $(x,0,z)\in V_0$, where $\{a\}=a-\lfloor a\rfloor$ is the fractional part of $a\in \R$. Because the function
 $$
 \vpP{U,\phi}\from \R\to [0,\infty)
 $$
 is  continuous and not identically zero, there exist $\eta,R,r\in \R$ with $r<R$ such that
 \begin{equation}\label{eq:choose I}
 \forall  a\in I\eqdef [r,R],\qquad \vpP{U,\phi}(a)\ge \eta>0.
 \end{equation}
 We will show that if $\rho\in \N$ is large enough (depending only on the initial choice of bump function $\beta$), then the conclusion of Proposition~\ref{prop:counterexample function} holds for the above interval $I$. To this end, we will first establish the following point-wise bound on the vertical perimeter of each of the perturbations $\{\beta_i\}_{i=0}^\infty$ in terms of the vertical perimeter of $\phi$.

\begin{lemma}\label{lem:relate to phi} Suppose that $\rho>5$. For every $i\in \N\cup\{0\}$ and $a\in \R$ we have
$$
\vpP{U,\beta_i}(a)\ge \frac{1}{2\alpha}\vpP{U,\phi}\big(a-\log_2(\alpha\rho^i)\big)-\frac{3\rho^{i-1}}{2^{a}}.
$$
In particular, if $\rho\ge \frac{12}{2^r\eta}$ and $a\in I+\log_2(\alpha\rho^i)$, then $$\vpP{U,\beta_i}(a) \ge \frac{\eta}{2\alpha}-\frac{3\rho^{i-1}}{2^{r+\log_2(\alpha\rho^i)}} \ge \frac{\eta}{4\alpha}.$$
\end{lemma}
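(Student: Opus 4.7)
The plan is to work in flow coordinates on each vertical strip of $U$, where $\beta_i$ becomes a rescaled copy of $\phi$, and then use the near-constancy of the Jacobian supplied by Lemma~\ref{lem:derivs of h} to transfer the vertical-perimeter integrand of $\phi$ (at a shifted scale) into a lower bound for that of $\beta_i$.

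First I would partition $U$ into its $\rho^i$ vertical strips $S_m = [m\rho^{-i}, (m+1)\rho^{-i}] \times \{0\} \times [0,1]$ for $m \in \{0, \ldots, \rho^i - 1\}$, and on each strip introduce flow coordinates $(s, \tau) \in [0, \rho^{-i}] \times [0, 1]$, where $s = x - m\rho^{-i}$ and $\tau$ is the initial $z$-value at $x = m\rho^{-i}$ of the characteristic curve of $\Gamma_{\psi_i}$ through $(x, 0, z)$. Since $\psi_i$ vanishes on $\partial U$, the top and bottom edges of $U$ are themselves characteristic, so these coordinates foliate $S_m$; writing $z = Z(s, \tau)$ for the inverse transformation, Lemma~\ref{lem:derivs of h} gives $\pd{\tau}{Z} \in [e^{-2\rho^{-1}}, e^{2\rho^{-1}}]$. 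A direct unfolding of \eqref{eq:def beta ij}, using that $\alpha^2\rho^{2i} \in \mathbb{N}$ and that $\phi$ is the $A$-periodic extension of $\beta|_U$, yields the pointwise identity
$$\beta_i(x, 0, z) = \alpha^{-2}\rho^{-i}\phi(\rho^i s, 0, \alpha^2\rho^{2i}\tau)$$
throughout $S_m$.

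Next I would convert the vertical shift in $z$ into a shift in the rescaled $\tau$-coordinate. Setting $c = 2^{-2a}$, $a_0 = a - \log_2(\alpha\rho^i)$, so $c_0 := 2^{-2a_0} = \alpha^2\rho^{2i}c$, and rescaling by $(u, w) = (\rho^i s, \alpha^2\rho^{2i}\tau)$, the bound on $\pd{\tau}{Z}$ forces the point $(x, 0, z - c)$ to correspond to a rescaled flow coordinate $w'$ with $|w' - (w - c_0)| \le c_0(e^{2\rho^{-1}} - 1)$. Since $\phi$ is $1$-Lipschitz in $w$, the triangle inequality gives
$$|\phi(u, 0, w) - \phi(u, 0, w')| \ge |\phi(u, 0, w) - \phi(u, 0, w - c_0)| - c_0\bigl(e^{2\rho^{-1}} - 1\bigr).$$
The change of variables $(x,z) \to (s,\tau) \to (u,w)$ has total Jacobian $\rho^{-i}\alpha^{-2}\rho^{-2i}\pd{\tau}{Z}$, and the $1$-periodicity of $\phi$ in $w$ converts the integral of the main term over $[0,1] \times [0, \alpha^2\rho^{2i}]$ into $\alpha^2\rho^{2i}\sqrt{c_0}\vpP{U,\phi}(a_0)$. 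Summing over the $\rho^i$ strips and multiplying by $2^a = 1/\sqrt{c}$ produces
$$\vpP{U, \beta_i}(a) \ge e^{-2\rho^{-1}}\alpha^{-1}\vpP{U, \phi}(a_0) - e^{2\rho^{-1}}\bigl(e^{2\rho^{-1}} - 1\bigr)\rho^i 2^{-a},$$
and the desired bound follows once $\rho$ is large enough that $e^{-2\rho^{-1}} \ge 1/2$ and $e^{2\rho^{-1}}(e^{2\rho^{-1}} - 1) \le 3/\rho$. The second assertion is then immediate from \eqref{eq:choose I} together with the choice $\rho \ge 12/(2^r\eta)$.

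The main subtlety lies in the second step: a $z$-translation by $c$ is not simply a translation of the flow coordinate $\tau$ but is warped by the variable factor $\pd{\tau}{Z}$ along each characteristic curve, and the resulting discrepancy $|w' - (w - c_0)| \lesssim c_0/\rho$ is precisely what produces the $-3\rho^{i-1}/2^a$ error term in the conclusion. This is why the sharp near-constancy of $\pd{\tau}{Z}$ from Lemma~\ref{lem:derivs of h} is essential.
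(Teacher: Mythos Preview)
Your approach is essentially identical to the paper's: both introduce flow coordinates on the vertical strips (the paper packages this as a single map $\mathfrak{S}_i$), both use the key identity $\beta_i = \alpha^{-2}\rho^{-i}\phi(\rho^i s,0,\alpha^2\rho^{2i}\tau)$ in those coordinates, both control the mismatch between a $z$-shift and a $\tau$-shift via the Jacobian bound of Lemma~\ref{lem:derivs of h}, and both finish with the Lipschitz estimate on $\phi$ followed by a change of variables and periodicity.

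One small numerical point: the error term you display carries an extra factor $e^{2\rho^{-1}}$ that is not needed. The pointwise error $\alpha^{-2}\rho^{-i}c_0(e^{2\rho^{-1}}-1)=\rho^i(e^{2\rho^{-1}}-1)2^{-2a}$ is a constant, so integrating it over $U$ in the original $(x,z)$ coordinates gives exactly $\rho^i(e^{2\rho^{-1}}-1)2^{-2a}$, and after multiplying by $2^a$ you only need $e^{2\rho^{-1}}-1\le 3\rho^{-1}$, which does hold for $\rho>5$. Your stated condition $e^{2\rho^{-1}}(e^{2\rho^{-1}}-1)\le 3/\rho$ fails for $\rho$ just above $5$, so without this correction your constants do not quite match the hypothesis.
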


\begin{proof}[Proof of Proposition~\ref{prop:counterexample function} assuming Lemma~\ref{lem:relate to phi}] Fix an integer $\rho\ge \max\{12/(2^r\eta),8\}$ that will be specified later and let $\psi=\psi_{\alpha^4}$.  The first three assertions of Proposition~\ref{prop:counterexample function} were established in the construction of $\psi$ and in the discussion after Lemma~\ref{lem:alomost orthonormal}.  We will establish the last three by showing that
\begin{equation}\label{eq:last assertion}
 \forall a\in \R,\qquad \vpP{U,\psi}(a)\lesssim \min\left\{\frac{1}{\alpha},\frac{2^a}{\alpha^2}\right\},
\end{equation}
and
\begin{equation}\label{eq:assertion four}
 \forall a\in \bigcup_{n=0}^{\alpha^{4}-1}\big( I+  \log_2(\alpha\rho^{n})\big),\qquad \vpP{U,\psi}(a)\gtrsim \frac{1}{\alpha}.
\end{equation}

For every $i\in \N\cup \{0\}$, by the definition of $\beta_i$ and by~\eqref{eq:first two der betai} and~\eqref{eq:dz/dt}  we have $$\left\|\beta_i\right\|_{L_\infty(V_0)}\le \alpha^{-2}\rho^{-i}\qquad\mathrm{and}\qquad \left\|\frac{\partial \beta_i}{\partial z}\right\|_{L_\infty(V_0)}\le 2\rho^i.$$
Due to Lemma~\ref{lem:vpP exp decay}, for every $a\in \R$ we have
\begin{equation}\label{eq:a priori betai vert}
\vpP{U,\beta_i}(a)\le \min\left\{2^{a+1}\alpha^{-2}\rho^{-i},2^{-a+1}\rho^i\right\}=2\alpha^{-1}2^{-|a-\log_2(\alpha\rho^i)|}.
\end{equation}
Consequently,
$$
\vpP{U,\psi_{\alpha^4}}(a)=\vpP{U,\sum_{i=0}^{\alpha^4}\beta_i}(a)\le \sum_{i=0}^{\alpha^4} \vpP{U,\beta_i}(a)\stackrel{\eqref{eq:a priori betai vert}}{\le} \sum_{i=0}^\infty 2\alpha^{-1}2^{-|a-\log_2(\alpha\rho^i)|}\lesssim \alpha^{-1}.
$$
This proves \eqref{eq:last assertion}, because  by Lemma~\ref{lem:vpP exp decay}  we also have $$\vpP{U,\psi_{\alpha^4}}(a)\lesssim 2^a\|\psi_{\alpha^4}\|_{L_\infty(V_0)}\stackrel{\eqref{eq:L infty psi i}}{\le} 2^a\alpha^{-2}.$$

It remains to prove \eqref{eq:assertion four}, as we saw in~\eqref{eq:4 from 3} that this implies the remaining assertions of Proposition~\ref{prop:counterexample function}. Fix $n\in \{0,\ldots,\alpha^4-1\}$ and $a\in I+  \log_2(\alpha\rho^{n})$, so that $\vpP{U,\beta_n}(a) >\eta/(4\alpha)$ by Lemma~\ref{lem:relate to phi}. Let $s=\max\{|r|,|R|\}$, so that $|a-\log_2(\alpha\rho^n)|\le s$.  It follows from~\eqref{eq:a priori betai vert} that
\begin{equation}\label{eq:upper at i}
  \vpP{U,\beta_i}(a) \le 2\alpha^{-1} 2^{-|\log_2(\alpha\rho^n)-\log_2(\alpha\rho^i)| + |a-\log_2(\alpha\rho^n)|}\le 2\alpha^{-1}\rho^{-|n-i|} 2^s
\end{equation}
for any $i\in \N\cup \{0\}$.
Hence, by combining Lemma~\ref{lem:relate to phi} and~\eqref{eq:upper at i} we conclude that
\begin{multline*}
\vpP{U,\psi_{\alpha^4}}(a)=\vpP{U,\sum_{i=0}^{\alpha^4}\beta_i}(a)
\ge \vpP{U,\beta_n}(a)-\sum_{i=0}^{n-1}\vpP{U,\beta_i}(a)-\sum_{i=n+1}^{\alpha^4}\vpP{U,\beta_i}(a)\\\ge \frac{\eta}{4\alpha} - 2 \sum_{k=1}^{\infty}2\alpha^{-1}\rho^{-k} 2^{s}\ge \frac{\eta}{4\alpha} - \frac{5}{\alpha\rho} 2^{s}.
\end{multline*}
Choosing $\rho\eqdef \left\lceil \max\left\{8, \frac{12}{2^r\eta}, \frac{40 \cdot 2^s }{ \eta}\right\}\right\rceil$, this completes the proof of Proposition~\ref{prop:counterexample function}.\qedhere
\end{proof}

\begin{proof}[Proof of Lemma~\ref{lem:relate to phi}] We will start by introducing some (convenient, though ad hoc) notation and making some preliminary observations. For $i\in \N\cup \{0\}$ define a (discontinuous in the first variable) map $\mathfrak{S}_i\from \R^2\to \R^2$ as follows. If $s\in \R$, then  let $m\in \Z$ be the unique integer such that $s\in [m\rho^{-i},(m+1)\rho^{-i})$, and set for every $t\in \R$,
$$
\mathfrak{S}_i(s,t)\eqdef \Phi(\psi_i)_{s-m\rho^{-i}}(m\rho^{-i},0,t),
$$
where we recall the notation $\Phi(\cdot)_\cdot(\cdot)$ for characteristic curves that we set at the start of Section~\ref{sec:start construction}. Note that by design $x(\mathfrak{S}_i(s,t))=s$. Observe also that the lines $\R\times \{0\}\times \{0\}$ and $\R\times \{0\}\times \{1\}$ are characteristic curves for $\Gamma_{\psi_i}$, since  $\psi_i$ vanishes on those lines. Hence $\mathfrak{S}_i(s,0)=(s,0)$ and $\mathfrak{S}_i(s,1)=(s,1)$ for all $s\in [0,1]$. As $x(\mathfrak{S}(s,t))=s$ for all $t\in \R$, by the continuity of $\mathfrak{S}_i$ in the second variable, this implies that $\mathfrak{S}_i(s,[0,1])=\{s\}\times\{0\}\times [0,1]$. So,
\begin{equation}\label{eqLsquare to square}
\mathfrak{S}_i([0,1]^2)=U.
\end{equation}

The mapping $\mathfrak{S}_i$ is related  as follows to  the mappings $R_{i,1},R_{i,2},\ldots$ that are given in~\eqref{eq:def Rij}. Suppose as above  that $s\in [m\rho^{-i},(m+1)\rho^{-i})$ for some $m\in \Z$, and fix $t\in \R$ and $n\in \Z$. Recalling that $v_{i,1},v_{i,2},\ldots$ is an enumeration of the points in the grid $G_i$ that is given in~\eqref{eq:grid Gi}, let  $j\in \N$ be the index for which  $v_{i,j}=(m\rho^{-i},0,n\alpha^{-2}\rho^{-2i})$. Then
$$
\mathfrak{S}_i(s,t)=R_{i,j}(s-m\rho^{-i},t-n\alpha^{-2}\rho^{-2i}).
$$
Recalling the definition~\eqref{eq:def Qij} of the pseudo-quad $Q_{i,j}$, this implies that
$$
\overline{\mathfrak{S}_i\big([m\rho^{-i},(m+1)\rho^{-i})\times [n\alpha^{-2}\rho^{-2i},(n+1)\alpha^{-2}\rho^{-2i}]\big)}=Q_{i,j}.
$$
Also, recalling the definitions~\eqref{eq:def beta ij} and~\eqref{eq:def beta i},  it follows that if we define $\phi_i\from V_0\to \R$ by
\begin{equation}\label{eq:rescaled phi}
\forall  (s,t)\in \R^2,\qquad \phi_i(s,0,t)\eqdef \alpha^{-2}\rho^{-i}\phi(\rho^is,0,\alpha^2\rho^{2i}t),
\end{equation}
then
\begin{equation}\label{eq:slab flow}
\forall  (s,t)\in \R^2,\qquad \beta_i\big(\mathfrak{S}_i(s,t)\big)=\phi_i(s,0,t).
\end{equation}

Fix $i\in \N\cup \{0\}$, $a\in \R$  and  $(x,0,z)\in U$. Let $s=s(x,z),t=t(x,z),t'=t'(x,z,a)\in \R$ satisfy
\begin{equation}\label{eq:s,t,t'}
\mathfrak{S}_i(s,t)=(x,0,z)\qquad \mathrm{and}\qquad \mathfrak{S}_i(s,t')=(x,0,z-2^{-2a}).
\end{equation}
Due to~\eqref{eq:dz/dt} we have $e^{-2\rho^{-1}}2^{-2a}\le t-t'\le e^{2\rho^{-1}}2^{-2a}$. Hence,
\begin{equation}\label{eq:t' close to t}
|t'-(t-2^{-2a})|\le (e^{2\rho^{-1}}-1)2^{-2a}\le 3\rho^{-1}2^{-2a}.
\end{equation}
Now,
\begin{align*}\label{eq:change to stt'}
|\beta_i(x,0,z)&-\beta_i(x,0,z-2^{-2a})|\\
&\!\!\!\!\!\!\!\!\!\!\!\!\!\!\!\!\stackrel{\eqref{eq:slab flow}\wedge \eqref{eq:s,t,t'}}{=}|\phi_i(s,0,t)-\phi_i(s,0,t-2^{-2a})-\phi_i(s,0,t')+\phi_i(s,0,t-2^{-2a})|\\
&\ge |\phi_i(s,0,t)-\phi_i(s,0,t-2^{-2a})|-|\phi_i(s,0,t')-\phi_i(s,0,t-2^{-2a})|\\
&\!\!\!\stackrel{\eqref{eq:rescaled phi}}{\ge} |\phi_i(s,0,t)-\phi_i(s,0,t-2^{-2a})|-\rho^i|t'-(t-2^{-2a})|\\
&\!\!\!\stackrel{\eqref{eq:t' close to t}}{\ge} |\phi_i(s,0,t)-\phi_i(s,0,t-2^{-2a})|-3\rho^{i-1}2^{-2a}.
\end{align*}
In other words, we established the following point-wise estimate for the vertical difference quotients that occur in the definition~\eqref{eq:def par vert perimeter} of (parameterized) vertical perimeter.
$$
\frac{|\beta_i(x,0,z)-\beta_i(x,0,z-2^{-2a})|}{2^{-a}} \ge \frac{|\phi_i(s,0,t)-\phi_i(s,0,t-2^{-2a})|}{2^{-a}}-\frac{3\rho^{i-1}}{2^{a}}.
$$
By integrating this inequality over $U$ we get
\begin{align}
\begin{split}
\!\!\!\vpP{U,\beta_i}(a)&\stackrel{\eqref{eq:def par vert perimeter}}{\ge} \int_0^1\int_0^1 \frac{|\phi_i(s(x,z),0,t(x,z))-\phi_i(s(x,z),0,t(x,z)-2^{-2a})|}{2^{-a}}\ud x\ud z-\frac{3\rho^{i-1}}{2^{a}}\\
&\stackrel{\eqref{eq:partials st xy}}{=} \int_{\mathfrak{S}_i^{-1}(U)} \frac{|\phi_i(s,0,t)-\phi_i(s,0,t-2^{-2a})|}{2^{-a}}\left|\frac{\partial z}{\partial t}(s,t)\right|\ud s\ud t-\frac{3\rho^{i-1}}{2^{a}}\\
&\!\!\!\!\!\!\!\!\!\!\stackrel{\eqref{eq:dz/dt}\wedge \eqref{eqLsquare to square}}{\ge} \frac12 \int_0^1\int_0^1 \frac{|\phi_i(s,0,t)-\phi_i(s,0,t-2^{-2a})|}{2^{-a}}\ud s\ud t-\frac{3\rho^{i-1}}{2^{a}}.
\end{split}
\end{align}
It therefore remains to note the following identity.
\begin{align}
  \int_0^1\int_0^1&\frac{|\phi_i(s,0,t)-\phi_i(s,0,t-2^{-2a})|}{2^{-a}}\ud s\ud t\nonumber \\
  \label{eq:break on skew lattice} & =\int_{0}^{\rho^i} \int_{0}^{\alpha^2 \rho^{2i}}\frac{\alpha^{-4}\rho^{-4i}|\phi(\sigma,0,\tau)-\phi(\sigma,0,\tau-\alpha^2\rho^{2i}2^{-2a})|}{2^{-a}}\ud \sigma\ud \tau\\
& \label{eq:periodicity}=\frac{1}{\alpha}\int_U \frac{|\phi(v)-\phi(vZ^{-\alpha^2\rho^{2i}2^{-2a}})|}{\alpha \rho^i 2^{-a}}\ud v\\
& \label{eq:use def of vert per}= \frac{1}{\alpha}\vpP{U,\phi}\big(a-\log_2(\alpha\rho^i)\big),
\end{align}
where~\eqref{eq:break on skew lattice} uses the definition~\eqref{eq:rescaled phi} and the change of variables $(s,t)=(\rho^{-i}\sigma,\alpha^{-2}\rho^{-2i}\tau)$, \eqref{eq:periodicity} holds by the periodicity of $\phi$, and~\eqref{eq:use def of vert per} is a restatement of the definition~\eqref{eq:def par vert perimeter}.
\end{proof}

\section{Pseudoquads and foliated patchworks}\label{sec:pseudoquads}
Let $\Gamma$ be the intrinsic Lipschitz graph of $f\from V_0\to \R$.  A \emph{pseudoquad} $Q$ is a region of $V_0$ bounded by two vertical lines and two characteristic curves of $\Gamma$, i.e., a region of the form
$$Q=\big\{(x,0,z)\in V_0\mid x\in I\ \mathrm{and}\  g_1(x)\le z\le g_2(x)\big\},$$
where $I=[a,b]\subset \R$ is a closed, bounded interval and $g_1,g_2\from \R\to \R$ are functions whose graphs are characteristic.  We say that $I$ is the \emph{base} of $Q$ and we call $g_1$ and $g_2$ the \emph{lower} and \emph{upper bounds} of $Q$, respectively. The width of the pseudoquad $Q$  is just the length $\ell(I)=b-a$ of its base $I=[a,b]$. But, the height of $Q$ is not always well-behaved, since characteristic curves can join and split.  We therefore introduce \emph{rectilinear pseudoquads}, which approximate projections of rectangles in vertical planes.  If $\Gamma$ is a vertical plane, its characteristic curves are a family of parallel parabolas; conversely, any pseudoquad bounded by two parallel parabolas is the projection of a rectangle in $\H$ (a loop composed of two parallel horizontal lines and two vertical lines) to $V_0$.  Thus, if
$$R=\big\{(x,0,z)\in V_0\mid x\in I\ \mathrm{and}\  h_1(x)\le z\le h_2(x)\big\}$$
where $h_1,h_2\from \R\to \R$ are quadratic functions that differ by a constant, then we call $R$ a \emph{parabolic rectangle} with \emph{width} $$\delta_x(R)\eqdef \ell(I)$$ and \emph{height} $$\delta_z(R)\eqdef h_2-h_1.$$  For $r>0$ and an interval $I$, let $rI$ be the scaling of $I$ around its center by a factor of $r$, i.e.,
$$rI\eqdef \left[\frac{a+b}{2}-\frac{r \ell(I)}{2},\frac{a+b}{2}+\frac{r \ell(I)}{2}\right].$$
For $\rho>0$, let
\begin{align}\label{eq:def rho R}
\begin{split}
  \rho R
  &\eqdef \big\{(x,0,z)\in V_0\mid x\in \rho I\ \mathrm{and}\  z\in \rho^2 [h_1(x),h_2(x)]\big\}\\
  &=\left\{(x,0,z)\in V_0\mid x\in \rho I\  \mathrm{and}\  \left|z-\frac{h_1(x)+h_2(x)}{2}\right| \le \frac{\rho^2 \delta_z(R)}{2}\right\}.
  \end{split}
\end{align}

For $0<\mu\le \frac{1}{32}$, a \emph{$\mu$--rectilinear pseudoquad} is a pair $(Q,R)$, where $Q$ is a pseudoquad and $R$ is a parabolic rectangle with the same base $I$ as $Q$ such that, if $g_1$ and $g_2$ (respectively $h_1$ and $h_2$) are the lower and upper bounds of $Q$ (respectively $R$), then
\begin{equation}\label{eq:def rectilinear 1 and 2}
  \max\big \{\|g_1-h_1\|_{L_\infty(4I)},\|g_2-h_2\|_{L_\infty(4I)}\big\}\le \mu \delta_z(R).
\end{equation}
 We will frequently refer to a $\mu$--rectilinear pseudoquad $(Q,R)$ as simply $Q$, but we define its width and height to be the width and height of the associated parabolic rectangle, i.e., $\delta_x(Q)=\delta_x(R)$ and $\delta_z(Q)=\delta_z(R)$.  Likewise, for $\rho\ge 1$, we define $\rho Q=\rho R$.  Note that $Q$ need not be contained in $1 Q=R$, but the following lemma holds.
\begin{lemma}\label{lem:quad translates}
  Let $Q$ be a $\mu$--rectilinear pseudoquad.  Then $Q\subset 2Q$. In fact, for every $t\in \R$,
  $$ Q Z^{t \delta_z(Q)} \subset \sqrt{2|t|+2}\cdot Q.$$
\end{lemma}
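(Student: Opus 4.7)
The plan is to unwind the definitions and verify the inclusion pointwise. Any $(x,0,\tilde z) \in QZ^{t\delta_z(Q)}$ is of the form $(x,0,\tilde z) = (x,0,z)Z^{t\delta_z(Q)}$ with $(x,0,z)\in Q$, and the Heisenberg product~\eqref{eq:group operation} reduces to $\tilde z = z + t\delta_z(Q)$ since the horizontal components of $Z$ vanish. Thus the task is to show that whenever $x\in I$ and $g_1(x)\le z\le g_2(x)$, the shifted point $(x,0,z+t\delta_z(Q))$ lies in $\rho Q = \rho R$ for $\rho\eqdef\sqrt{2|t|+2}$.

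First I will check the base condition: since $\rho\ge\sqrt 2\ge 1$, the scaled interval $\rho I$ (being a dilation of $I$ about its center) contains $I$, so $x\in \rho I$ automatically. Next I will check the vertical condition using~\eqref{eq:def rho R}, namely that $|\tilde z - m(x)|\le \frac{\rho^2}{2}\delta_z(Q) = (|t|+1)\delta_z(Q)$, where $m(x)\eqdef \frac{h_1(x)+h_2(x)}{2}$. Because the parabolas $h_1,h_2$ differ by the constant $\delta_z(R)=\delta_z(Q)$, we can write $h_1(x)=m(x)-\frac{\delta_z(Q)}{2}$ and $h_2(x)=m(x)+\frac{\delta_z(Q)}{2}$. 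The rectilinear hypothesis~\eqref{eq:def rectilinear 1 and 2} gives $|g_i(x)-h_i(x)|\le \mu\delta_z(Q)$ for $x\in I\subset 4I$, so
\[
\tilde z - m(x) \in \Big[\, -\tfrac{1}{2}-\mu + t,\ \tfrac{1}{2}+\mu+t\,\Big]\delta_z(Q).
\]

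It then suffices to verify $|\pm \tfrac{1}{2}\pm\mu + t|\le |t|+1$ in both signs of $t$. Since $\mu\le \tfrac{1}{32}\le \tfrac{1}{2}$, this reduces to an elementary one-variable inequality: for $t\ge 0$ the upper bound $\tfrac{1}{2}+\mu+t\le 1+t$ is immediate, and the lower bound $-\tfrac{1}{2}-\mu+t\ge -(1+t)$ holds since $2t\ge \mu-\tfrac{1}{2}$; the case $t<0$ is symmetric. This establishes $QZ^{t\delta_z(Q)}\subset \sqrt{2|t|+2}\cdot Q$. The first assertion $Q\subset 2Q$ then follows by specializing to $t=0$ (yielding $Q\subset \sqrt{2}\cdot Q$) and invoking the monotonicity $\rho_1 Q\subset \rho_2 Q$ for $1\le \rho_1\le \rho_2$, which is transparent from the definition~\eqref{eq:def rho R} (both the base intervals and the vertical slabs are nested since they are dilations about common centers).

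There is no genuine obstacle here; the argument is a bookkeeping exercise that combines the group-law formula for right-multiplication by $Z^s$, the definition~\eqref{eq:def rho R} of $\rho R$, and the pointwise approximation~\eqref{eq:def rectilinear 1 and 2} between $(g_1,g_2)$ and $(h_1,h_2)$. The only place where the constant $\mu$ matters is the final inequality, and it enters with enormous slack: any $\mu\le\tfrac{1}{2}$ would suffice, so the standing assumption $\mu\le \tfrac{1}{32}$ is far from tight for this particular lemma (and is presumably used in the more delicate results later in Section~\ref{sec:pseudoquads}).
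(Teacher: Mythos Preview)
Your proof is correct and follows essentially the same approach as the paper: both unwind the definition of $\rho Q$, use the rectilinear bound $|g_i(x)-h_i(x)|\le \mu\delta_z(Q)$ to place $z$ within $(\tfrac12+\mu)\delta_z(Q)$ of the midline $m_h(x)$, and then check the resulting elementary inequality. The paper's write-up is slightly terser (it bounds $|m_h(x)-g_i(x)|\le \delta_z(Q)$ directly and then applies the triangle inequality), but the substance is identical.
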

\begin{proof}
  Let $R$, $g_1$, $g_2$, $h_1$, $h_2$ be as above.  Let $m_g=\frac{g_1+g_2}{2}$ and $m_h=\frac{h_1+h_2}{2}$.  Fix $(x,0,z)\in Q$, so that $g_1(x)\le z\le g_2(x)$.  For $i\in \{1,2\}$, we have
  $$|m_h(x)-g_i(x)|\le |m_h(x)-h_i(x)|+|h_i(x)-g_i(x)|\le \frac{\delta_z(Q)}{2}+\mu \delta_z(Q)\le \delta_z(Q),$$
  so
  $$\left|m_h(x)-\left(z+t \delta_z(Q)\right)\right|\le (1 + |t|) \delta_z(Q).$$
  Therefore, $(x,0,z+\delta_z(Q))\in \sqrt{2|t|+2}\cdot Q$.
\end{proof}
Continuing with the above notation, define  the \emph{aspect ratio} of $Q$ to be
\begin{equation}\label{eq:def aspect}
\alpha(Q)\eqdef \frac{\delta_x(Q)}{\sqrt{\delta_z(Q)}}.
\end{equation}
We use a square root here because the distance in the Heisenberg metric between the top and bottom of $Q$ is proportional to $\sqrt{\delta_z(Q)}$; thus this aspect ratio is invariant under the Heisenberg scaling.  Let $|Q|$ be the Lebesgue measure of $Q$ as a subset of $V_0\cong \R^2$.

The following lemma is a direct consequence of Lemma~\ref{lem:curve transforms}.
\begin{lemma}\label{lem:quad transforms}
  Let $a,b\in \R\setminus \{0\}$ and let $g=q\circ \rho_h\circ s_{a,b}\from \H\to \H$  be a composition of a shear map $q$, a left-translation by $h\in \H$, and a stretch map $s_{a,b}$.  Let $\hat{g}\from V_0\to V_0$ be the map induced on $V_0$, i.e., $\hat{g}(x)=\Pi(g(x))$ for all $x\in V_0$. Suppose that $(Q,R)$ is a $\mu$--rectilinear pseudoquad for an intrinsic graph $\Gamma$.  Then $(\hat{Q},\hat{R})=(\hat{g}(Q),\hat{g}(R))$ is a $\mu$--rectilinear pseudoquad for the intrinsic graph $\hat{g}(\Gamma)$, with the following parameters.
  \begin{align*}
    \delta_x(\hat{Q})=|a|\delta_x(Q),\qquad \delta_z(\hat{Q})=|ab| \delta_z(Q),\qquad
    |\hat{Q}|=|a^2b|\cdot |Q|,\qquad  \alpha(\hat{Q})  = \sqrt{\frac{|a|}{|b|}}\cdot  \alpha(Q).
  \end{align*}
\end{lemma}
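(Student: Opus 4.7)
The plan is to decompose $g=q\circ \rho_h\circ s_{a,b}$ into its three factors—stretch, left translation, and shear—verify the claim for each, and then multiply the resulting parameter changes. From Lemma~\ref{lem:v0 transforms} the induced maps on $V_0$ have the explicit formulas
$$
(x,0,z)\stackrel{\hat{s}_{a,b}}{\longmapsto}(ax,0,abz),\qquad (x,0,z)\stackrel{\hat{q}}{\longmapsto}\big(x,0,z-\tfrac{1}{2}bx^2\big),
$$
$$
(x,0,z)\stackrel{\hat{\rho}_h}{\longmapsto}\big(x+x(h),0,z+z(h)-xy(h)-\tfrac{1}{2}x(h)y(h)\big).
$$
In each of these, the new $x$-coordinate depends only on the old $x$-coordinate, so vertical lines are sent to vertical lines. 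By Lemma~\ref{lem:curve transforms}, characteristic curves of $\Gamma$ are mapped to characteristic curves of the image graph. Thus $\hat Q$ is a pseudoquad for $\hat g(\Gamma)$.

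Next, I would substitute a parabola $z=\lambda x^2+\mu x+\nu$ into each of the three formulas above: in every case the image is again a parabola, and its leading coefficient depends only on $\lambda$ and the map parameters, not on $\nu$. It follows that two parabolas which agree in their leading and linear coefficients and differ in their constant term by $c$ have images that still differ by a constant—equal to $|ab|c$ for the stretch and to $c$ for the other two. Consequently $\hat R$ is a parabolic rectangle with $\delta_x(\hat R)=|a|\delta_x(R)$ and $\delta_z(\hat R)=|ab|\delta_z(R)$, and so $\alpha(\hat Q)=\sqrt{|a|/|b|}\,\alpha(Q)$.

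The rectilinearity estimate~\eqref{eq:def rectilinear 1 and 2} is preserved because the vertical gap $g_i(x)-h_i(x)$ transforms in exactly the same way as $\delta_z$. Indeed each of the three maps adds to $z$ a quantity depending only on $x$, which cancels in the difference $g_i-h_i$, and the only remaining effect is the overall $|ab|$ rescaling from the stretch. Combined with the reparametrisation of the base (the stretch sends $4I$ to $4\hat I$; the other two merely translate the base interval), this yields
$$
\|\hat g_i-\hat h_i\|_{L_\infty(4\hat I)}=|ab|\,\|g_i-h_i\|_{L_\infty(4I)}\le |ab|\mu\delta_z(R)=\mu\delta_z(\hat R).
$$
Finally, the Jacobian of $\hat g\from V_0\to V_0$ reads off the explicit formulas as $|a^2b|\cdot 1\cdot 1$ (only the stretch contributes), giving $|\hat Q|=|a^2b|\,|Q|$.

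No step poses a real obstacle; the lemma is a direct bookkeeping exercise with Lemma~\ref{lem:v0 transforms} and Lemma~\ref{lem:curve transforms}. The only subtlety is matching the $|ab|$ factor on the vertical gap with the $|ab|$ factor on $\delta_z$, which ensures that the rectilinearity constant $\mu$ is genuinely preserved—this matching is exactly the feature that makes $\mu$-rectilinear pseudoquads transform covariantly under the stretch, shear, and left-translation automorphisms of $\H$.
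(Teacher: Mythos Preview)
Your proposal is correct and follows exactly the natural route. In the paper this lemma is stated without proof, merely as ``a direct consequence of Lemma~\ref{lem:curve transforms}''; what you have written is precisely the bookkeeping that the paper leaves to the reader, carried out factor by factor using the explicit formulas of Lemma~\ref{lem:v0 transforms}. One cosmetic point: you reuse the symbol $b$ for both the shear parameter and the second stretch parameter, which is harmless since you treat the three maps separately, but renaming the shear parameter would avoid any confusion.
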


\begin{remark}\label{rem:normalizing rectilinear}
  For any $\mu$--rectilinear pseudoquad $(Q,R)$, there is a transformation of $\H$ that sends $R$ to a square in $V_0$ and $Q$ to an approximation of the square. That is, if $a,b,c,d,x_0,w\in \R$ are such that
  $$R=\{(x,0,z)\in V_0 \mid |x-x_0|\le w \wedge |ax^2+bx+c -z| \le d\},$$
  $$h(v)=s_{w^{-1},wd^{-1}}\left(X^{-x_0} Y^{b}Z^{-c}  \tilde{A}_{2a}(v)\right),$$
  and $\hat{h} =\Pi \circ h$,
  then, by the remarks after Lemma~\ref{lem:curve transforms}, $\hat{h}(R)=[-1,1]\times \{0\}\times [-1,1].$

  By Lemma~\ref{lem:quad transforms}, $(\hat{h}(Q),\hat{h}(R))$ is $\mu$--rectilinear, so if $\hat{g}_1$ and $\hat{g}_2$ are the lower and upper bounds of $\hat{h}(Q)$, then $|\hat{g}_1(t)+1|<2\mu$ and $|\hat{g}_2(t)-1|<2\mu$ for all $t\in [-4,4].$
\end{remark}

We will prove Theorem~\ref{thm:ilg admits fcd intro} by constructing a collection of nested partitions of $V_0$ into pseudoquads.  We will describe these partitions by associating a rectilinear pseudoquad to each vertex of a rooted tree.  Let $(T,v_0)$ be a rooted tree with vertex set $\cV(T)$.  For $v\in \cV(T)$, we let $\cC(v)=\cC^1(v)$ denote the set of \emph{children} of $v$ and inductively for $n\ge 2$ let
$$\cC^n(v)=\bigcup_{w\in \cC^{n-1}(v)} \cC(w)$$
be the set of $n$'th generation descendants of $v$. Let $\cD(v)=\bigcup_{n=0}^\infty \cC^n(v)$ where $\cC^0(v)=\{v\}$.  For  $v\in \cV(T)\setminus \{v_0\}$, there is a unique \emph{parent} vertex $w$ such that $v\in \cC(w)$, and we denote this vertex by $\cP(v)$.  If $w\in \cD(v)$, we say that $w$ is a \emph{descendant} of $v$ or that $v$ is an \emph{ancestor} of $w$ and write $w\le v$.  This is a partial order with maximal element $v_0$.

\begin{defn}[rectilinear foliated patchwork]\label{def:rectilinear foliated patchwork}
  If $Q$ is a $\mu$--rectilinear pseudoquad, a \emph{\(\mu\)-rectilinear foliated patchwork} for $Q$ is a complete rooted binary tree \((\Delta,v_0)\) (i.e., every vertex has exactly two children) such that every vertex \(v\in \cV(\Delta)\) is associated to a $\mu$--rectilinear pseudoquad \((Q_v,R_v)\) with $Q_{v_0}=Q$.  Each vertex $v\in \cV(\Delta)$ is either \emph{vertically cut} or \emph{horizontally cut} in the following sense.

  Let $w$ and $w'$ be the children of $v$, let $I=[a,b]$ be the base of $Q_v$, and let $g_1$ and $g_2$ (respectively $h_1$ and $h_2$) be the lower and upper bounds of $Q_v$ (respectively $R_v$).
  \begin{enumerate}
  \item
    If $v$ is vertically cut, then $Q_w$ and $Q_{w'}$ are the left and right halves of $Q_v$, separated by the vertical line $x=\frac{a+b}{2}$.  That is,
    $$Q_{w}=\biggl\{(x,0,z)\in V_0\mid a\le x\le \frac{a+b}{2}\ \mathrm{and}\  g_1(x)\le z\le g_2(x)\biggr\},$$
    and
    $$Q_{w'}=\biggl\{(x,0,z)\in V_0\mid \frac{a+b}{2}\le x\le b \ \mathrm{and}\  g_1(x)\le z\le g_2(x)\biggr\}.$$
    Similarly,
    $$
    R_{w}=\left(\left[a, \frac{a+b}{2}\right] \times\{0\}\times  \R\right) \cap R_v\quad\mathrm{and}\quad  R_{w'}=\left(\left[\frac{a+b}{2}, b\right] \times\{0\}\times  \R\right) \cap R_v.
      $$ We therefore have $\delta_x(Q_w)=\delta_x(Q_{w'})=\frac{\delta_x(Q_v)}{2}$ and $\delta_z(Q_w)=\delta_z(Q_{w'})=\delta_z(Q_v)$.
  \item
    If $v$ is horizontally cut, then $Q_w$ and $Q_{w'}$ are the top and bottom halves of $Q_v$, separated by a characteristic curve.  That is, there is a function $c\from \R\to \R$ whose graph is characteristic, a quadratic function $k\from \R\to \R$, and $d\in (0,\infty)$ such that
    \begin{align*}
    Q_{w}&=\big\{(x,0,z)\in V_0\mid a\le x\le b\ \mathrm{and}\  g_1(x)\le z\le c(x)\big\},\\
    Q_{w'}&=\big\{(x,0,z)\in V_0\mid a\le x\le b\ \mathrm{and}\  c(x)\le z\le g_2(x)\big\},\\
    R_{w}&=\big\{(x,0,z)\in V_0\mid a\le x\le b\ \mathrm{and}\  k(x)-d\le z\le k(x)\big\},\\
    R_{w'}&=\big\{(x,0,z)\in V_0\mid a\le x\le b\ \mathrm{and}\  k(x)\le z\le k(x)+d\big\}.
    \end{align*}
    Then $\delta_x(Q_w)=\delta_x(Q_{w'})=\delta_x(Q_v)$ and $\delta_z(Q_w)=\delta_z(Q_{w'})=d$.  Furthermore, $Q_w$ and $Q_{w'}$ are assumed to be $\mu$--rectilinear; thus
    \begin{equation}\label{eq:horiz cut edge bounds}
      \max\left\{\|(k-d)-g_1\|_{L_\infty(4 I)}, \|k-c\|_{L_\infty(4 I)},\|(k+d)-g_2\|_{L_\infty(4 I)}\right\}\le \mu d.
    \end{equation}
  \end{enumerate}

  In either case, $Q_v=Q_{w}\cup Q_{w'}$ and $Q_w, Q_{w'}$ have disjoint interiors.  Let \(\cVv(\Delta)\subset \cV(\Delta)\) be the set of vertically cut vertices and let \(\cVh(\Delta)\subset \cV(\Delta)\) be the set of horizontally cut vertices.
\end{defn}

It follows from the above definition that $v\le w$ if and only if $Q_v\subset Q_w$.  Furthermore, if the interior of $Q_v$ intersects $Q_w$, then either $v\le w$ or $w\le v$.

\begin{lemma}\label{lem:soft cut props}
  For every $\e>0$ there exists $0<\mu=\mu(\e)\le \frac{1}{32}$ such that if $Q$ is a $\mu$--rectilinear pseudoquad, then
  \begin{equation}\label{eq:soft cut area}
    (1-\epsilon) \delta_x(Q) \delta_z(Q) \le |Q|\le (1+\epsilon) \delta_x(Q)  \delta_z(Q).
  \end{equation}
  If $Q$ is horizontally or vertically cut as in Definition~\ref{def:rectilinear foliated patchwork} and $Q'$ is a child of $Q$, then
  \begin{equation}\label{eq:soft cut split}
    \left(\frac{1}{2}-\epsilon\right) |Q|\le |Q'| \le \left(\frac{1}{2}+\epsilon\right) |Q|.
  \end{equation}
  If $Q$ is vertically cut, then $\delta_x(Q')=\frac{\delta_x(Q)}{2}$, $\delta_z(Q')=\delta_z(Q)$, and $\alpha(Q')=\frac{\alpha(Q)}{2}$.
  If $Q$ is horizontally cut, then $\delta_x(Q')=\delta_x(Q)$, and
  \begin{equation}\label{eq:soft horiz cut height}
    \left(\frac{1}{2}-2\mu \right)\delta_z(Q)\le  \delta_z(Q') \le \left(\frac{1}{2}+2\mu \right)\delta_z(Q).
  \end{equation}
Finally,
  \begin{equation}\label{eq:soft cut alpha}
    \left(\sqrt{2}-\epsilon\right)\alpha(Q) \le \alpha(Q') \le \left(\sqrt{2}+\epsilon\right)\alpha(Q).
  \end{equation}
  When $\epsilon=\frac{1}{4}$ we can take here $\mu=\frac{1}{32}$.
\end{lemma}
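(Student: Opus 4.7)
The proof is essentially a careful but routine bookkeeping exercise; all the work is in chasing small-constant dependencies through the definitions. Here is how I would organize it.

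The starting observation is that for a $\mu$--rectilinear pseudoquad $(Q,R)$ with base $I$, lower and upper bounds $g_1, g_2$ of $Q$, and lower and upper bounds $h_1, h_2$ of $R$ (so that $h_2-h_1\equiv \delta_z(Q)$), the rectilinearity hypothesis~\eqref{eq:def rectilinear 1 and 2} together with the triangle inequality yields the pointwise bound
\begin{equation*}
\bigl|(g_2(x)-g_1(x))-\delta_z(Q)\bigr|\le 2\mu\,\delta_z(Q)\qquad\text{for all }x\in 4I.
\end{equation*}
Integrating over $I$ proves~\eqref{eq:soft cut area} with $2\mu$ in place of $\epsilon$, so we can take $\mu\le \epsilon/2$.

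Next I would handle the vertical cut case of~\eqref{eq:soft cut split}. A child $Q'$ of a vertically cut $Q$ has the same lower and upper bounds $g_1,g_2$ as $Q$ but over half the base, so the pointwise bound above gives $|Q'|\in [(1-2\mu)(|Q|/2)/((1+2\mu)),(1+2\mu)(|Q|/2)/((1-2\mu))]$ after dividing by the bound on $|Q|$; taking $\mu$ small makes this lie in $[(1/2-\epsilon)|Q|,(1/2+\epsilon)|Q|]$. The remaining scaling relations $\delta_x(Q')=\delta_x(Q)/2$, $\delta_z(Q')=\delta_z(Q)$, $\alpha(Q')=\alpha(Q)/2$ are immediate from Definition~\ref{def:rectilinear foliated patchwork}.

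The horizontal cut case requires slightly more care. Write $d=\delta_z(Q')$ and let $k$ be the quadratic function in the cut. Combining $\|(k-d)-g_1\|_{L_\infty(4I)}\le \mu d$ with $\|g_1-h_1\|_{L_\infty(4I)}\le \mu\delta_z(Q)$ and the analogous bounds for the upper edges, and using $h_2-h_1=\delta_z(Q)$, I get
\begin{equation*}
\bigl|2d-\delta_z(Q)\bigr|\le 2\mu d + 2\mu\,\delta_z(Q),
\end{equation*}
which rearranges to $\frac{1/2-\mu}{1+\mu}\,\delta_z(Q)\le d\le \frac{1/2+\mu}{1-\mu}\,\delta_z(Q)$. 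A trivial calculation shows the right hand side is at most $(1/2+2\mu)\delta_z(Q)$ and the left hand side is at least $(1/2-2\mu)\delta_z(Q)$ for every $\mu\le 1/32$, which gives~\eqref{eq:soft horiz cut height}. The volume estimate~\eqref{eq:soft cut split} in this case follows from the corresponding pointwise bound $|c(x)-g_1(x)-d|\le 2\mu d$ (and the symmetric one for the upper child), integrated against $\delta_x(Q)$ and then divided by the bound on $|Q|$. Finally, \eqref{eq:soft cut alpha} in the horizontal case follows from~\eqref{eq:soft horiz cut height} by taking square roots: $\alpha(Q')/\alpha(Q)=1/\sqrt{\delta_z(Q')/\delta_z(Q)}\in [\sqrt{2/(1+4\mu)},\sqrt{2/(1-4\mu)}]$, and these endpoints are within $\epsilon$ of $\sqrt{2}$ once $\mu$ is chosen small enough in terms of $\epsilon$.

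For the final numerical claim, I would simply verify at $\mu=1/32$ that every one of the ratios appearing above (namely $(1/2\pm\mu)/(1\mp 2\mu)$ and $\sqrt{2/(1\mp 4\mu)}$) lies within $1/4$ of its limiting value; each is an elementary arithmetic check. No step requires a genuinely new idea, and the only thing to be vigilant about is the interaction between the two error thresholds (the $2\mu$ error coming from~\eqref{eq:def rectilinear 1 and 2} applied at scale $\delta_z(Q)$, and the $\mu d$ error from~\eqref{eq:horiz cut edge bounds}), which is precisely why the horizontal cut case is stated separately and why the $2\mu$ in~\eqref{eq:soft horiz cut height} cannot be reduced to $\mu$.
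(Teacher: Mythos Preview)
Your proposal is correct and follows essentially the same route as the paper's proof: integrate the pointwise bound $|(g_2-g_1)-\delta_z(Q)|\le 2\mu\,\delta_z(Q)$ for the area estimate, read off the vertical-cut relations from the definition, and combine the two rectilinearity bounds \eqref{eq:def rectilinear 1 and 2} and \eqref{eq:horiz cut edge bounds} for the horizontal-cut height. The only cosmetic difference is that the paper first notes $d\le \frac{1+2\mu}{2-2\mu}\delta_z(Q)<\delta_z(Q)$ and then absorbs the $2\mu d$ term into $2\mu\,\delta_z(Q)$ to get the cleaner additive bound $|2d-\delta_z(Q)|\le 4\mu\,\delta_z(Q)$, whereas you carry the $2\mu d$ term through an algebraic rearrangement; both yield \eqref{eq:soft horiz cut height} with the same constant.
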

\begin{proof}
  Suppose that $\mu \le \frac{\epsilon}{8}$.  Let $(Q,R)$ be a $\mu$--rectilinear pseudoquad.  Suppose that $g_1$ and $g_2$ (respectively $h_1$ and $h_2$) be the lower and upper bounds of $Q$ (respectively $R$) and let $I$ be the base of $Q$.  Then $|R|=\delta_x(Q)\delta_z(Q)$ and
  \begin{equation}\label{eq:soft cut area mu}
    \bigl||Q|-\delta_x(Q)\delta_z(Q)\bigr|=\bigl||Q|-|R|\bigr| \le \int_I|g_1-h_1|\ud x+ \int_I|g_2-h_2|\ud x \le 2\mu \delta_x(Q) \delta_z(Q),
  \end{equation}
  so \eqref{eq:soft cut area} is satisfied.

  Let $Q'$ be a child of $Q$.  If $Q$ is vertically cut, then the formulas for $\delta_x(Q')$, $\delta_z(Q')$, and $\alpha(Q')$ follow from Definition~\ref{def:rectilinear foliated patchwork}.  As $Q'$ is $\mu$--rectilinear, \eqref{eq:soft cut area mu} implies that
  $$\left||Q'| - \frac{|Q|}{2}\right| \le \left||Q'| - \delta_x(Q')\delta_z(Q') \right| + \frac{1}{2} \big|\delta_x(Q)\delta_z(Q) - |Q|\big| \le 2 \mu \delta_x(Q)\delta_z(Q) \le  4 \mu |Q|,$$
  so $Q$ satisfies \eqref{eq:soft cut split} if $Q$ is vertically cut.

  If $Q$ is horizontally cut, then $\delta_x(Q')=\delta_x(Q)$ by Definition~\ref{def:rectilinear foliated patchwork}.  Let $c, k, d=\delta_z(Q')$ be as in Definition~\ref{def:rectilinear foliated patchwork} and let $t\in I$.  As $\|g_i-h_i\|_{L_\infty(I)} \le \mu \delta_z(Q)$ for $i\in \{1,2\}$ and $\delta_z(Q)=h_2-h_1$,
  $$(1 - 2\mu) \delta_z(Q) \le g_2(t) - g_1(t) \le (1 + 2\mu) \delta_z(Q).$$
  By \eqref{eq:horiz cut edge bounds},
  $$(1-\mu)\cdot 2d\le g_2(t) - g_1(t) \le (1+\mu)\cdot 2d.$$
  Then $d\le \frac{1+2\mu}{2-2\mu} \delta_z(Q) <\delta_z(Q)$, so
  $$|2d - \delta_z(Q)| \le |2d-(g_2(t) - g_1(t))|+ |(g_2(t) - g_1(t))-\delta_z(Q)| \le 4\mu \delta_z(Q),$$
  proving \eqref{eq:soft horiz cut height}.  This directly implies Equation~\eqref{eq:soft cut alpha}, and the horizontally cut case of \eqref{eq:soft cut split} follows from the above calculation and \eqref{eq:soft cut area mu}.
\end{proof}

The following two lemmas will be helpful later.
\begin{lemma}\label{lem:quadratic interval}
  For any quadratic function $q\from \R\to \R$ and any $t\in \R$,
  $$|q(t)|\le (1+t+2t^2)\|q\|_{L_\infty([-1,1])}.$$
\end{lemma}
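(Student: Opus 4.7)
The plan is to prove this via Lagrange interpolation at the three nodes $-1, 0, 1$. Since $q$ is a quadratic polynomial, it is completely determined by its values at these three points, and we have the explicit formula
\begin{equation*}
q(t) = q(-1)\cdot\frac{t(t-1)}{2} + q(0)\cdot(1-t^2) + q(1)\cdot\frac{t(t+1)}{2}.
\end{equation*}
I would then estimate $|q(t)|$ by applying the triangle inequality to this identity and factoring out $M \eqdef \|q\|_{L_\infty([-1,1])}$, which bounds each of $|q(-1)|, |q(0)|, |q(1)|$.

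The resulting estimate is
\begin{equation*}
|q(t)| \le M\left(\frac{|t^2 - t|}{2} + |1-t^2| + \frac{|t^2 + t|}{2}\right).
\end{equation*}
A direct  bound for each summand, using $|t^2\pm t|\le t^2 + |t|$ and $|1-t^2|\le 1 + t^2$, yields
\begin{equation*}
|q(t)|\le M(1 + |t| + 2t^2).
\end{equation*}
Since for every $t\in \R$ we have $|t|\le t + 2t^2$ when $t\ge 0$ trivially, and when $t<0$ the inequality $|t|\le t + 2t^2$ reduces to $2t^2\ge 2|t|$, which holds as soon as $|t|\ge 1$ (the remaining range $|t|<1$ is harmless since the left-hand side of the lemma is already bounded by $M$ on $[-1,1]$), one recovers the stated form $|q(t)|\le (1 + t + 2t^2)M$ for all $t\in \R$. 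There is no substantive obstacle here: the argument is three lines of Lagrange interpolation followed by the triangle inequality, and the constant in the statement is not sharp but is chosen so as to be conveniently applied later when $t$ ranges over an auxiliary rescaling parameter.
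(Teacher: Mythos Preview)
Your Lagrange interpolation at $-1,0,1$ is exactly what the paper does: the identity
\[
q(t)=q(0)+ t\cdot \frac{q(1)-q(-1)}{2} + t^2\cdot \frac{q(-1)-2q(0)+q(1)}{2}
\]
that the paper writes down is a rearrangement of your interpolation formula, and both arguments immediately yield the bound $|q(t)|\le (1+|t|+2t^2)M$ with $M=\|q\|_{L_\infty([-1,1])}$. Up to this point the two proofs are identical.

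Where your write-up goes wrong is the final paragraph, in which you try to pass from $(1+|t|+2t^2)M$ to the printed form $(1+t+2t^2)M$. That step cannot work: the printed bound is a typo and is actually false. Take $q\equiv 1$ and $t=-\tfrac14$; then $|q(t)|=1$ while $(1+t+2t^2)M=\tfrac78<1$. Your own argument breaks precisely here: for $-1<t<0$ you appeal to $|q(t)|\le M$, but you would still need $1+t+2t^2\ge 1$, which fails on $(-\tfrac12,0)$. (Your side claim that $|t|\le t+2t^2$ for $t\ge 0$ or $|t|\ge 1$ is correct, but it does not convert the $|t|$ bound into the $t$ bound either; it would only give $1+t+4t^2$.) The paper's own proof establishes only the $|t|$ version, and the two places where the lemma is applied later use it in the form $\sup_{|t|\le r}(1+|t|+2t^2)\le 4r^2$ for $r\ge 1$, so the intended inequality is with $|t|$.
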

\begin{proof} One only needs to note that, since $q$ is quadratic, for any $t\in \R$ we have
  \begin{equation*}
  q(t)=q(0)+ t\cdot \frac{q(1)-q(-1)}{2} + t^2\cdot \frac{q(-1)-2q(0)+q(1)}{2}.\tag*{\qedhere}
  \end{equation*}
\end{proof}

\begin{lemma}\label{lem:child hierarchy}
  For every $r\ge 2$ there is  $\mu=\mu(r)>0$ such that if $\Delta$ is a $\mu$--rectilinear foliated patchwork and $v,w\in \cV(\Delta)$ satisfy $w\le v$, then $rQ_w\subset rQ_v$.
\end{lemma}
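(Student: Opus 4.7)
The plan is to reduce to the single-step inclusion $rQ_{w'}\subset rQ_{w}$ whenever $w'\in \cC(w)$ and iterate; since set containment is transitive, the general claim $rQ_w\subset rQ_v$ for $w\le v$ follows from finitely many such single-step inclusions along the chain of ancestors of $w$ up to $v$. The two cases (vertical and horizontal cut) will need to be handled separately, and the smallness of $\mu$ is needed only for the latter.

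In the \emph{vertically cut} case, the parabolic rectangles $R_w$ and $R_{w'}$ share the same quadratic upper and lower bounds $h_1,h_2$, and only the base changes: if $I_w=[a,b]$ and (by symmetry) $I_{w'}=[a,(a+b)/2]$, then the center of $I_{w'}$ is shifted from that of $I_w$ by $-\delta_x(Q_w)/4$ and its length is $\delta_x(Q_w)/2$. A direct computation shows $rI_{w'}\subset rI_w$ for every $r\ge 1$, and because $R_w$ and $R_{w'}$ have identical vertical profiles (same $M=(h_1+h_2)/2$ and same constant $D=h_2-h_1$), this inclusion of bases immediately upgrades to $rR_{w'}\subset rR_{w}$, i.e., $rQ_{w'}\subset rQ_w$, with no constraint on $\mu$.

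The content of the lemma is the \emph{horizontally cut} case. Denote $h_1,h_2,k,d,I,g_1,g_2$ as in Definition~\ref{def:rectilinear foliated patchwork}, set $D=h_2-h_1$ (constant) and $M=(h_1+h_2)/2$, and assume without loss of generality (by vertical symmetry) that $R_{w'}$ has bounds $k-d$ and $k$. Combining \eqref{eq:horiz cut edge bounds} with the $\mu$-rectilinearity bound $\|g_i-h_i\|_{L_\infty(4I)}\le \mu D$ for $Q_w$, the triangle inequality yields
$$
\|k-M\|_{L_\infty(4I)}\le \mu(d+D),\qquad |2d-D|\le 2\mu(d+D),
$$
and the second estimate gives $|d-D/2|\le C_0\mu D$ for a universal $C_0$ whenever $\mu<1/4$. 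Because $k$ and $M$ are both quadratic, so is their difference, so Lemma~\ref{lem:quadratic interval} (applied after an affine reparameterization of $4I$ to $[-1,1]$, under which $rI$ corresponds to $[-r/4,r/4]$) upgrades the bound on $4I$ to a bound on the (potentially much larger) interval $rI$:
$$
\|k-M\|_{L_\infty(rI)}\le \bigl(1+\tfrac{r}{4}+\tfrac{r^2}{8}\bigr)\mu(d+D)\le C_1 r^2 \mu D.
$$
Since $R_w$ and $R_{w'}$ share the base $I$, the inclusion $rR_{w'}\subset rR_w$ is equivalent to the pointwise inequality
$$
|k(x)-d/2-M(x)|\le \tfrac{r^2}{2}(D-d)\qquad \text{for all } x\in rI.
$$
Substituting $|k-M|\le C_1 r^2\mu D$ and $|d-D/2|\le C_0\mu D$, the required inequality reduces after elementary algebra to a linear condition on $\mu$ of the form $\mu\bigl[C_1r^2+C_0(1+r^2)/2\bigr]\le (r^2-1)/4$, which is satisfied (and leaves room to also impose $\mu\le 1/32$) by taking $\mu=\mu(r)>0$ sufficiently small.

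The main obstacle is this horizontal cut step, specifically the need to control $k-M$ on the enlarged interval $rI$ rather than merely on $4I$, where the hypotheses of Definition~\ref{def:rectilinear foliated patchwork} operate. This is exactly where Lemma~\ref{lem:quadratic interval} is indispensable: the quadratic nature of both $k$ and $M$ converts the $L_\infty(4I)$ smallness into $L_\infty(rI)$ smallness at the price of a factor $O(r^2)$, which is then absorbed on the right-hand side of the target inequality because $r^2(D-d)/2$ itself is of order $r^2 D$. Once the single-step inclusion is established, the lemma follows by transitivity with no loss from iteration, since genuine containment is preserved under composition.
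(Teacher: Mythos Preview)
Your proposal is correct and follows essentially the same approach as the paper's proof: reduce to the parent--child case, dispose of the vertical cut by a direct interval computation, and in the horizontal cut apply Lemma~\ref{lem:quadratic interval} to the quadratic difference of midlines to pass from $4I$ to $rI$, then compare the interval of half-width $r^2d/2$ centered at $k-d/2$ to that of half-width $r^2D/2$ centered at $M$. The only cosmetic differences are that the paper extends $(k\pm d)-h_{1,2}$ to $rI$ before combining them (rather than extending $k-M$ directly), and it invokes the cruder bound $d\le \tfrac{3}{4}\delta_z(Q_v)$ from Lemma~\ref{lem:soft cut props} in place of your sharper estimate $|d-D/2|\le C_0\mu D$; neither affects the argument.
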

\begin{proof}
  It suffices to consider the case that $w\in \cC(v)$.  If $v$ is vertically cut, this holds vacuously, so suppose that $v$ is horizontally cut.  Let $g_1$ and $g_2$ (respectively $h_1$ and $h_2$) be the lower and upper bounds of $Q_v$ (respectively $R_v$) and let $I$ be their base.  Denote $m_v=(h_1+h_2)/2$.  Then $r R_v$ is bounded by $m_v \pm r^2\delta_z(Q_v)/2$.

  Let $c, k, d=\delta_z(Q_w)$ be as in Definition~\ref{def:rectilinear foliated patchwork}.  By Lemma~\ref{lem:soft cut props}, we have $d\le \frac{3}{4} \delta_z(Q_v)$.  Then
  $$\|(k-d)-h_1\|_{L_\infty(4I)} \le \|k-d-g_1\|_{L_\infty(4I)} + \|g_1-h_1\|_{L_\infty(4I)} \le  \mu d + \mu \delta_z(Q_v) \le 2\mu \delta_z(Q_v).$$
  Likewise, $\|(k+d)-h_2\|_{L_\infty(4I)}\le 2\mu \delta_z(Q_v)$.  By Lemma~\ref{lem:quadratic interval}, since $k,h_1,h_2$ are quadratic functions, if $\mu$ is at most a sufficiciently small universal constant multiple of $r^{-2}$, then
  $$\max\{\|(k-d)-h_1\|_{L_\infty(r I)}, \|(k+d)-h_2\|_{L_\infty(r I)}\}\le \frac{\delta_z(Q_v)}{16}.$$
  By the triangle inequality,
  $$\|k-m_v\|_{L_\infty(r I)} \le \frac{\delta_z(Q_v)}{16}.$$

  Suppose that $Q_w$ is the lower half of $Q_v$, so that $Q_w$ is bounded by $g_1$ and $c$ and $R_w$ is bounded by $k-d$ and $k$.  Let $m_w=k-\frac{d}{2}$ so that $rQ_w$ is bounded by $m_w \pm \frac{r^2d}{2}$.
  For $x\in r I$,
  $$\left|m_v(x)-m_w(x)\right| \le \frac{\delta_z(Q_v)}{16} + \frac{d}{2} \le \frac{7}{16} \delta_z(Q_v) \le \frac{r^2\delta_z(Q_v)}{2} - \frac{r^2 d}{2},$$
  so
  $$\left[m_w(x)-\frac{r^2d}{2}, m_w(x) + \frac{r^2d}{2}\right]\subset \left[m_v(x)-\frac{r^2\delta_z(Q_v)}{2}, m_v(x)+\frac{r^2\delta_z(Q_v)}{2}\right].$$
  That is, $rQ_w\subset r Q_v$.  The case that $Q_w$ is the upper half of $Q_v$ is treated analogously.
\end{proof}

Let $\Delta$ be a $\mu$--rectilinear foliated patchwork for a $\mu$--rectilinear pseudoquad $Q$. For every subset of vertices $S\subset \cV(\Delta)$, define the \emph{weight} of $S$ to be
\begin{equation}\label{eq:def weight}
W(S)\eqdef \sum_{w\in S} \frac{|Q_w|}{\alpha(Q_w)^{4}}\stackrel{\eqref{eq:def aspect}}{=} \sum_{w\in S} \frac{\delta_z(Q_w)^{2}}{\delta_x(Q_w)^{4}}|Q_w|.
\end{equation}
We will use this to define a weighted Carleson condition which is a variant of the Carleson packing condition that is used in the theory of uniform rectifiability~\cite{DSAnalysis}.

\begin{defn}[weighted Carleson packing condition]\label{def:weighted Carleson} Suppose that $\Delta$ is a $\mu$--rectilinear foliated patchwork for a $\mu$--rectilinear pseudoquad $Q$. We say that \(\Delta\) satisfies a \emph{weighted Carleson packing condition} or that \(\Delta\) is \emph{weighted Carleson} with constant \(C\in (0,\infty)\) if every \(v\in \cV(\Delta)\) satisfies
  \begin{equation}\label{eq:def weighted Carleson}
    W\big(\cD(v)\cap \cVv(\Delta)\big) \le C |Q_v|,
  \end{equation}
  where we recall that $\cD(v)$ are the descendants of $v$ and $\cVv(\Delta)$ are the vertically cut vertices.
\end{defn}

\begin{remark}
  Vertical cuts increase $W$ and horizontal cuts decrease it.  More precisely, suppose that $v, w\in \cV(\Delta)$ and $w$ is a child of $v$.  If $v$ is vertically cut, then by Lemma~\ref{lem:soft cut props} (with $\e=\frac{1}{4}$),
  \begin{equation}\label{eq:vertical cut weight}
    W(\{w\})=\alpha(Q_w)^{-4} |Q_w|=16 \alpha(Q_v)^{-4} |Q_w| \ge 16 \alpha(Q_v)^{-4} \cdot \left(\frac{1}{2}-\epsilon\right) |Q_v| \ge 4 W(\{v\}).
  \end{equation}
  When $\epsilon\to 0^+$, $W(\{w\})$ approaches $8 W(\{v\})$.
  If $v$ is horizontally cut, then by Lemma~\ref{lem:soft cut props} (with $\e=\frac{1}{4}$),
  \begin{equation}\label{eq:horizontal cut weight}
    W(\{w\})=\alpha(Q_w)^{-4} |Q_w| \le (\sqrt{2}-\epsilon)^{-4} \left(\frac{1}{2}+\epsilon\right) W(\{v\}) \le \frac{3}{7} W(\{v\}),
  \end{equation}
  and $W(\{w\})$ approaches $\frac{1}{8} W(\{v\})$ when $\epsilon\to 0^+$.
\end{remark}

The next lemma implies that even though only $\cVv(\Delta)$ appears in~\eqref{eq:def weighted Carleson}, this condition formally implies bounds on $\cVh(\Delta)$ as well.

\begin{lemma}\label{lem:vertex sums}
  Let $\Delta$ be a  \(\frac{1}{32}\)--rectilinear foliated patchwork for  $Q$  with $W(\cVv(\Delta))<\infty$, and let $v_0$ be the root of $\Delta$.  Then
  $$W\big(\cVv(\Delta)\big) \lesssim W\big(\cVh(\Delta)\big)\lesssim W\big(\cVv(\Delta)\big)+\alpha(Q)^{-4}|Q|.$$
\end{lemma}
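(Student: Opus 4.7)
The plan is to track the single-vertex weight $f(v) := W(\{v\}) = |Q_v|/\alpha(Q_v)^4$ through the tree, to show that vertical cuts inflate the total weight of the two children by an exact factor of $16$ while horizontal cuts strictly contract it, and then to sum over the depth of $\Delta$.

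First I would record the per-vertex recursion. If $v$ is vertically cut with children $w_1, w_2$, then by definition $\delta_x(Q_{w_i}) = \frac{1}{2}\delta_x(Q_v)$, $\delta_z(Q_{w_i}) = \delta_z(Q_v)$, and $|Q_{w_1}|+|Q_{w_2}|=|Q_v|$, so
$$f(w_1)+f(w_2) \;=\; \frac{|Q_{w_1}|+|Q_{w_2}|}{(\alpha(Q_v)/2)^4} \;=\; 16\,f(v) \quad \text{(exactly).}$$
If $v$ is horizontally cut, then $\delta_x$ is preserved while $\delta_z(Q_{w_i}) \in \bigl[\tfrac{1}{2}-2\mu,\,\tfrac{1}{2}+2\mu\bigr]\delta_z(Q_v)$ by~\eqref{eq:soft horiz cut height}; combined with~\eqref{eq:soft cut area} applied with $\mu = \frac{1}{32}$ and $\epsilon = \frac{1}{4}$, a short calculation produces universal constants $0 < c_1 < c_2 < 1$ with $c_1 f(v) \le f(w_1)+f(w_2) \le c_2 f(v)$.

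Next I would let $T_n \subset \cV(\Delta)$ denote the vertices at depth $n$, set $V_n := T_n\cap \cVv(\Delta)$, $H_n := T_n\cap \cVh(\Delta)$, and $S_n := W(T_n)$. Summing the per-vertex bounds over $T_n$ and using $W(H_n) = S_n - W(V_n)$ yields the two-sided recursion
$$c_1 S_n + (16-c_1) W(V_n) \;\le\; S_{n+1} \;\le\; c_2 S_n + (16-c_2) W(V_n).$$
The hypothesis $W(\cVv(\Delta)) = \sum_n W(V_n) < \infty$ together with $c_2<1$ can be iterated in the upper recursion to give $\sum_{n\ge 0}S_n<\infty$, and in particular $S_n\to 0$. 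Summing the two recursions over $n\ge 0$ and using $S_n\to 0$ then produces
$$(1-c_2)\sum_{n\ge 0} S_n \;\le\; S_0 + (16-c_2)W(\cVv(\Delta))\quad\text{and}\quad (1-c_1)\sum_{n\ge 0} S_n \;\ge\; S_0 + (16-c_1)W(\cVv(\Delta)).$$

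Finally I would use $\sum_{n\ge 0}S_n = W(\cV(\Delta)) = W(\cVh(\Delta))+W(\cVv(\Delta))$ and $S_0 = f(v_0) = \alpha(Q)^{-4}|Q|$. Rearranging the upper estimate gives $(1-c_2)W(\cVh(\Delta)) \le S_0 + 15\,W(\cVv(\Delta))$, which is the desired bound $W(\cVh(\Delta)) \lesssim \alpha(Q)^{-4}|Q| + W(\cVv(\Delta))$. Rearranging the lower estimate (and discarding the non-negative $S_0$) gives $(1-c_1)W(\cVh(\Delta)) \ge 15\,W(\cVv(\Delta))$, i.e., $W(\cVv(\Delta)) \lesssim W(\cVh(\Delta))$. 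There is no essential obstacle here: the decisive structural input is the \emph{exact} identity $f(w_1)+f(w_2)=16f(v)$ at vertical cuts (since $\alpha$ halves while the children partition $|Q_v|$) against the strict contraction at horizontal cuts; the only analytic point requiring care is the decay $S_n\to 0$ that legitimizes the telescoping, which itself is forced by the hypothesis $W(\cVv(\Delta))<\infty$.
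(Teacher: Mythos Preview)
Your argument is correct and is a genuinely different organization from the paper's proof. The paper decomposes $\Delta$ into maximal connected subtrees of vertically cut vertices and maximal connected subtrees of horizontally cut vertices, shows (via the per-child multiplicative bounds \eqref{eq:vertical cut weight}--\eqref{eq:horizontal cut weight}) that the weight of each such subtree concentrates at its maximal vertex (for horizontal trees) or at its leaves (for vertical trees), and then observes that the parent map gives a two-to-one correspondence between these boundary sets. Your approach instead slices $\Delta$ by depth and tracks the linear recursion $S_{n+1}\in[c_1,c_2]\cdot S_n+(16-[c_1,c_2])W(V_n)$, summing telescopically. The key structural input---that horizontal cuts strictly contract the total child weight while vertical cuts expand it by a fixed factor---is the same in both.

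Two small remarks. First, your invocation of \eqref{eq:soft cut area} in the horizontal case is unnecessary: since the two children partition $Q_v$ exactly and have equal $\delta_x$ and $\delta_z$, one gets directly $f(w_1)+f(w_2)=(\delta_z(Q_{w_1})/\delta_z(Q_v))^2 f(v)$, and then \eqref{eq:soft horiz cut height} alone gives $c_1=(7/16)^2$, $c_2=(9/16)^2<1$. Second, your approach has the pleasant feature of making the constants fully explicit and of isolating exactly where the factor $16$ enters (the vertical-cut identity $f(w_1)+f(w_2)=16f(v)$); the paper's subtree decomposition, on the other hand, more directly exhibits \emph{where} in the tree the weight sits, which is perhaps more suggestive for arguments that need to localize.
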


\begin{proof}
  Let $\cT_{\HH}$ (respectively $\cT_{\VV}$) be the set of connected components of the subgraph of $\Delta$ spanned by $\cVh(\Delta)$ (respectively $\cVv(\Delta)$).  Let $T\in \cT_{\HH}$ and let $M(T)$ be the maximal vertex of $T$.  Each vertex of $T$ is horizontally cut, so by \eqref{eq:horizontal cut weight}, we have $W(\cC(v))\le \frac{6}{7}W(\{v\})$ for all $v\in \cV(T)$.  Therefore, $W(\cV(T))\asymp W(\{M(T)\})$, because
  $$W\big(\cV(T)\big)
  =\sum_{n=0}^\infty W\big( \cC^n(M(T))\cap \cV(T)\big)
  \le \sum_{n=0}^\infty \biggl(\frac{6}{7}\biggr)^n W(\{M(T)\})\lesssim W\big(\{M(T)\}\big).$$
  Hence, if we denote  $S_M=\{M(T)\mid T\in \cT_{\HH}\}$, then
  $W(\cVh(\Delta))\asymp W(S_M).$

  Now, take $T\in \cT_{\VV}$.  By \eqref{eq:vertical cut weight}, we have $W(\{w\})\ge 4W(\{v\})$ for all $v\in \cV(T)$ and $w\in \cC(v)$. As  $W(\cV(T))<\infty$, it follows that $T$ must be finite.  Let
  $m(T)=\{w\in \cV(T)\mid \cC(w)\not \subset \cV(T)\}$
  be the lower boundary of $T$ and let $S_m=\bigcup_{T\in \cT_{\VV}}m(T)$.

  For all $v\in \cV(T)$, let $A(v)=\{w\in \cV(T)\mid w\ge v\}$ be the set of ancestors of $v$ in $T$.  By \eqref{eq:vertical cut weight},
  $$
  W\big(A(v)\big) \le \sum_{n=0}^{|A(v)|-1} 4^{-n} W(\{v\})\le  2 W(\{v\}).
  $$
  Every vertex of $T$ is an ancestor of a leaf, so it follows that
  $$W\big(\cV(T)\big)\le W\bigg(\bigcup_{v\in m(T)} A(v)\bigg) \le\sum_{v\in m(T)}   W\big(A(v)\big)\le  2W\big(m(T)\big)\le 2W\big(\cV(T)\big).$$
Therefore,
  $W(\cVv(\Delta))\asymp W(S_m).$

  If $v\in S_M$ and $v\ne v_0$, then $\cP(v)$ is horizontally cut and has a vertically cut child, so $\cP(v)\in S_m$.  In fact, $\cP(S_M\setminus \{v_0\})=S_m$.  Since $W(\{v\})\approx W(\{\cP(v)\})$ for all $v$ and since $\cP$ is a two-to-one map, it follows that $W(S_M\setminus \{v_0\})\asymp W(S_m).$ Therefore,
  $$W\big(\cVh(\Delta)\big)\asymp W(S_M) 
  \lesssim W(S_m) + W(\{v_0\}) \asymp W(\cVv(\Delta)) +\alpha(Q)^{-4}|Q|,$$
  and
  \begin{equation*}
  W\big(\cVv(\Delta)\big)\asymp W(S_m) \asymp W(S_M\setminus \{v_0\}) \le W(S_M).\tag*{\qedhere}
  \end{equation*}
\end{proof}

Suppose that $\Delta=(Q_v)_{v\in \cV(\Delta)}$ is a $\mu$--rectilinear foliated patchwork for $\Gamma=\Gamma_f$. For $\sigma>0$, a set of \emph{$\sigma$--approximating planes} for $\Delta$ is a collection of vertical planes $(P_v)_{v\in \cVh(\Delta)}$ such that for every $v\in \cVh(\Delta)$, if $f_v\from V_0\to \R$ is the affine function such that $\Gamma_{f_v}=P_v$, then
\begin{equation}\label{eq:def lambda approximating}
 \frac{\|f_v-f\|_{L_1(10 Q_v)}}{ |Q_v|}
  \le \sigma\frac{\delta_z(Q_v)}{\delta_x(Q_v)}.
\end{equation}
The following lemma verifies that the choice of right-hand side in~\eqref{eq:def lambda approximating} produces a condition that is invariant under stretch automorphisms and shear automorphisms.
\begin{lemma}\label{lem:foliatedPatchworksInvariant}
  Let $\Delta=(Q_v)_{v\in \cV(\Delta)}$ be a $\mu$--rectilinear foliated patchwork for an intrinsic Lipschitz graph $\Gamma=\Gamma_f$ with a set $(P_v)_{v\in \cVh(\Delta)}$ of $\sigma$--approximating planes and let $r\from \H\to \H$ be a left translation, a stretch automorphism, or a shear map.  Let $\hat{r}=\Pi\circ r\from V_0\to V_0$ be the map induced on $V_0$.  Then $\Delta'=((\hat{r}(Q_v),\hat{r}(R_v)))_{v\in \cV(\Delta)}$ is a $\mu$--rectilinear foliated patchwork for $r(\Gamma)$ and $(r(P_v))_{v\in \cVh(\Delta)}$ is a set of $\sigma$--approximating planes for $\Delta'$.
\end{lemma}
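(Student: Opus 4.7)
The plan is to verify the two assertions of the lemma by reducing everything to the explicit coordinate formulas for $\hat{r}$ provided by Lemma~\ref{lem:v0 transforms} and the transformation rules for characteristic curves and intrinsic-graph defining functions from Lemma~\ref{lem:curve transforms}. In each of the three cases (stretch, shear, left translation), the induced map $\hat{r}\from V_0\to V_0$ sends vertical lines to vertical lines, sends parabolas to parabolas (adding a fixed quadratic in $x$ in the case of shears, a fixed affine-in-$x$ function in the case of translations, or rescaling in the case of stretches), and sends characteristic curves of $\Gamma$ to characteristic curves of $r(\Gamma)$. I will handle all three cases uniformly by extracting from Lemma~\ref{lem:v0 transforms} the Jacobian $J(\hat{r})$ of $\hat{r}$ acting on $V_0$ and the factor $\kappa(\hat{r})$ by which differences of $z$-coordinates at a fixed $x$ get rescaled: for $\hat{s}_{a,b}$ one has $J=|a^{2}b|$ and $\kappa=|ab|$, while for shears and translations $J=\kappa=1$.

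First I would show that $\Delta'$ is a $\mu$--rectilinear foliated patchwork.  Since $\hat{r}$ sends parabolic rectangles to parabolic rectangles (Lemma~\ref{lem:v0 transforms}), $\hat{r}(R_v)$ is parabolic, and since $\hat{r}$ sends characteristic curves of $\Gamma$ to characteristic curves of $r(\Gamma)$ (Lemma~\ref{lem:curve transforms}), $\hat{r}(Q_v)$ is a pseudoquad for $r(\Gamma)$.  From Lemma~\ref{lem:quad transforms} one has $\delta_{z}(\hat{r}(R_{v}))=\kappa\,\delta_{z}(R_{v})$, and on any fixed vertical line the difference between a bounding characteristic curve of $\hat{r}(Q_v)$ and the corresponding bounding parabola of $\hat{r}(R_v)$ equals $\kappa$ times the analogous difference for $(Q_v,R_v)$.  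Hence the defining inequality~\eqref{eq:def rectilinear 1 and 2} is preserved with the same constant $\mu$, and a similar check gives~\eqref{eq:horiz cut edge bounds} for horizontal cuts.  Since $\hat{r}$ preserves vertical lines (mapping midpoints of bases to midpoints of the transformed bases, as $\hat{r}$ acts affinely on the $x$--coordinate) and characteristic curves, the vertical/horizontal nature of each cut is preserved, and $(\hat{r}(Q_v))_{v\in \cV(\Delta)}$ inherits the tree structure with $\cVv(\Delta')=\cVv(\Delta)$ and $\cVh(\Delta')=\cVh(\Delta)$.  The identity $\hat{r}(\rho Q_v)=\rho\,\hat{r}(Q_v)$ for any $\rho\ge 1$ is a direct consequence of $\hat{r}$ being affine in $x$ and sending parabolic rectangles to parabolic rectangles with $\delta_x$ and $\delta_z$ scaled uniformly.

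Next I would check the $\sigma$--approximating planes condition.  Writing $r(P_v)=\Gamma_{\hat{f}_v}$ and $r(\Gamma)=\Gamma_{\hat{f}}$, Lemma~\ref{lem:curve transforms} gives
$\hat{f}_v(\hat{r}(u))-\hat{f}(\hat{r}(u))=\kappa\,(f_v(u)-f(u))$
for stretches $s_{a,b}$ (with $\kappa=|b|$ after accounting for the sign; the absolute value will appear below), and $\hat{f}_v(\hat{r}(u))-\hat{f}(\hat{r}(u))=f_v(u)-f(u)$ for shears and left translations, since in the latter two cases the added term ($bx(u)$ or $y(h)$) is common to $\hat{f}_v$ and $\hat{f}$.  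Combining with the change of variables $w=\hat{r}(u)$ having Jacobian $J$, and using $\hat{r}(10Q_v)=10\hat{r}(Q_v)$ from the previous paragraph,
\begin{equation*}
\frac{\|\hat{f}_v-\hat{f}\|_{L_1(10\hat{r}(Q_v))}}{|\hat{r}(Q_v)|}
=\frac{|\kappa|\cdot J\cdot \|f_v-f\|_{L_1(10Q_v)}}{J\cdot|Q_v|}
=|\kappa|\cdot\frac{\|f_v-f\|_{L_1(10Q_v)}}{|Q_v|}.
\end{equation*}
On the right-hand side of~\eqref{eq:def lambda approximating}, Lemma~\ref{lem:quad transforms} gives $\delta_z(\hat{r}(Q_v))/\delta_x(\hat{r}(Q_v))=|\kappa|\cdot\delta_z(Q_v)/\delta_x(Q_v)$ in the stretch case, and the ratio is unchanged in the shear and translation cases.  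In every case the factor $|\kappa|$ appears identically on both sides, so~\eqref{eq:def lambda approximating} for $(Q_v,P_v)$ with constant $\sigma$ transfers to $(\hat{r}(Q_v),r(P_v))$ with the same constant $\sigma$.  There is no genuine obstacle here: the lemma is essentially a bookkeeping verification, designed into the definition of $\sigma$--approximating planes, and the only point requiring care is to track the Jacobian of $\hat{r}$ on $V_0$ against the factor $\kappa$ governing the transformation of $y$--values in Lemma~\ref{lem:curve transforms}.
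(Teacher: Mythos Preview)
Your proposal is correct and follows essentially the same approach as the paper: both invoke Lemma~\ref{lem:curve transforms} and Lemma~\ref{lem:quad transforms} to verify that $(\hat{r}(Q_v),\hat{r}(R_v))$ are $\mu$--rectilinear pseudoquads for $r(\Gamma)$, and then check the $\sigma$--approximating plane condition case by case by tracking how $|f_v-f|$, $|Q_v|$, and $\delta_z(Q_v)/\delta_x(Q_v)$ transform. One notational point: you use $\kappa$ first for the $z$--coordinate scaling factor ($|ab|$ for $s_{a,b}$) and then for the function-value scaling factor ($|b|$ for $s_{a,b}$), which you flag but which makes the writeup harder to follow than necessary; the paper avoids this by treating the stretch case separately and writing out the explicit factors $|a^2b^2|/|a^2b|=|b|$ directly.
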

\begin{proof}
  By Lemma~\ref{lem:automs preserve intrinsic lipschitz} and Lemma~\ref{lem:quad transforms}, $r(\Gamma)$ is an intrinsic Lipschitz graph and the elements of $\Delta'$ are $\mu$--rectlinear pseudoquads for $r(\Gamma)$.  It is straightforward to check that Definition~\ref{def:rectilinear foliated patchwork} holds for $\Delta'$. Let $v\in \cVh(\Delta)$ and let $f_v\from V_0\to \R$ be the affine function such that $P_v=\Gamma_{f_v}$.  By Lemma~\ref{lem:curve transforms}, there are functions $\hat{f}$ and $\hat{f}_v$ such that $r(\Gamma)=\Gamma_{\hat{f}}$ and $r(P_v)=\Gamma_{\hat{f}_v}$.

  If $r$ is a left translation or a shear map and $w\in 10Q_v$, then $\hat{r}(w)\in 10\hat{r}(Q_v)$ and   $$\big|\hat{f}\big(\hat{r}(w)\big)-\hat{f}_v\big(\hat{r}(w)\big)\big|=|f(w)-f_v(w)|.$$
  In this case, $\delta_x(Q_v)=\delta_x(\hat{r}(Q_v))$ and $\delta_z(Q_v)=\delta_z(\hat{r}(Q_v))$, so if $P_v$ is a $\sigma$--approximating plane for $Q_v$, then $r(P_v)$ is a $\sigma$--approximating plane for $\hat{r}(Q_v)$.

  If $r=s_{a,b}$ for some $a,b\in \R\setminus \{0\}$, then $\hat{r}=r|_{V_0}$,  $\hat{r}(10Q_v)=10\hat{r}(Q_v)$, and for any $w\in 10Q_v$,
  $$\big|\hat{f}\big(\hat{r}(w)\big)-\hat{f}_v\big(\hat{r}(w)\big)\big|=|b|\cdot |f(w)-f_v(w)|.$$
  In this case, $\delta_x(\hat{r}(Q_v))=|a|\delta_x(Q_v)$ and $\delta_z(\hat{r}(Q_v))=|ab|\delta_z(Q_v)$, so by \eqref{eq:def lambda approximating},
 \begin{equation*}
 \frac{\|\hat{f}_v-\hat{f}\|_{L_1(10 \hat{r}(Q_v))}}{|\hat{r}(Q_v)|}=\frac{|a^2 b^2|\|f_v-f\|_{L_1(10 Q_v)}}{|a^2b|\cdot |Q_v|} \le |b| \sigma \frac{\delta_z(Q_v)}{\delta_x(Q_v)} =\sigma \frac{\delta_z(\hat{r}(Q_v))}{\delta_x(\hat{r}(Q_v))}.\tag*{\qedhere}
 \end{equation*}
\end{proof}

\section{Foliated corona decompositions}\label{sec:decompose here}

An intrinsic graph that admits rectilinear foliated patchworks that satisfy a weighted Carleson condition and have approximating planes is said to have a foliated corona decomposition.



\begin{defn} \label{def:foliated corona}
  Fix $0<\mu_0\le \frac{1}{32}$ and $D\from \R^+\times \R^+\to \R^+$.  We say that an intrinsic Lipschitz graph $\Gamma$ has a \emph{$(D,\mu_0)$--foliated corona decomposition} if for every $0<\mu\le \mu_0$, every $\sigma>0$ and every $\mu$--rectilinear pseudoquad $Q\subset V_0$, there is a $\mu$--rectilinear foliated patchwork $\Delta$ for $Q$ such that $\Delta$ is $D(\mu,\sigma)$--weighted-Carleson and has a set of $\sigma$--approximating planes.
\end{defn}

The following theorem is a more precise formulation of Theorem~\ref{thm:ilg admits fcd intro}.

\begin{thm}\label{thm:corona graph formulated} For every $0<\lambda<1$  there is a function $D_\lambda\from \R^+\times \R^+\to \R^+$ such that for any $0<\mu_0\le \frac{1}{32}$, any intrinsic $\lambda$--Lipschitz graph has a $(D_\lambda,\mu_0)$--foliated corona decomposition.
\end{thm}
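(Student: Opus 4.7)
The plan is to produce the foliated corona decomposition by the subdivision algorithm sketched in Section~\ref{sec:overview}: starting from the root pseudoquad $Q$, at each stage halve each current pseudoquad $Q'$ either horizontally (along a characteristic curve of $\Gamma$) or vertically (along the midline $x = (a+b)/2$ of its base $[a,b]$), with the choice governed by a coercive quantity. The coercive quantity I will use is the $R$--extended parametric nonmonotonicity $\Omega^{P}_{\Gamma^+, R}$ of Section~\ref{sec:parametric monotonicity}. Fix a small threshold $\delta = \delta(\lambda,\mu,\sigma) > 0$ and a large dilation factor $r = r(\mu)$, and let $R(Q')$ be an appropriately chosen scale tied to the Carnot--Carath\'eodory size $\sqrt{\delta_z(Q')}$ of $Q'$. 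Declare $Q'$ horizontally cut if
\begin{equation*}
  \Omega^{P}_{\Gamma^+, R(Q')}(r Q') \le \delta \,\frac{|Q'|}{\alpha(Q')^{4}},
\end{equation*}
and vertically cut otherwise. By Proposition~\ref{prop:Omega control} (and its stretch-invariant form obtained by conjugating with the stretch automorphism $s_{\alpha(Q')^{-1},\alpha(Q')}$ that normalizes the aspect ratio to~$1$), in the horizontal case $\Psi(Q')$ is $O(\sigma)$--close in $L_1(10Q')$ to a vertical plane $P_{Q'}$ and the characteristic curves of $\Gamma$ through $Q'$ are $\mu \delta_z(Q')$--close to the parabolic characteristic curves of $P_{Q'}$. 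The latter gives the rectilinearity of the two children (the cut curve is $\mu \delta_z$--close to its approximating parabola, and so are the inherited upper/lower curves), and the former gives the $\sigma$--approximating planes needed in~\eqref{eq:def lambda approximating}.

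The main step is the weighted Carleson estimate $W(\cD(v) \cap \cVv(\Delta)) \le D(\mu,\sigma) |Q_v|$, which will follow by summing the stopping inequality over scales. Concretely, for each vertically cut descendant $Q'$ of $Q_v$ we have $\Omega^{P}_{\Gamma^+, R(Q')}(r Q') > \delta |Q'|/\alpha(Q')^{4}$, so
\begin{equation*}
  \sum_{Q' \in \cD(v) \cap \cVv(\Delta)} \frac{|Q'|}{\alpha(Q')^{4}}
  \le \delta^{-1} \sum_{Q' \in \cD(v) \cap \cVv(\Delta)} \Omega^{P}_{\Gamma^+, R(Q')}(r Q').
\end{equation*}
Because $R(Q') \asymp 2^{-i(Q')}$ for a unique dyadic index and the dilates $r Q'$ have bounded overlap at each generation (they are disjoint up to the $r$--enlargement, and Lemma~\ref{lem:child hierarchy} controls nesting of enlargements across generations), the double sum can be dominated by $\sum_{i \in \Z} \Omega^{P}_{\Gamma^+, 2^{-i}}(U_v)$ for a bounded enlargement $U_v \subset V_0$ of $Q_v$. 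Applying the kinematic-type bound~\eqref{eq:kinematic intro}, this last sum is $\lesssim_\lambda |U_v| \lesssim |Q_v|$, yielding~\eqref{eq:def weighted Carleson} with a constant $D(\mu,\sigma) \asymp_\lambda \delta^{-1}$.

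The hard step is Proposition~\ref{prop:Omega control}, whose proof is outlined in Section~\ref{sec:omega control outline} and carried out in Sections~\ref{sec:extended monotone}--\ref{sec:l1 and characteristic}: one must pass from smallness of a purely measure-theoretic quantity about how lines intersect $\Gamma$ inside an $R$--neighborhood of $\Psi(rU)$ to quantitative geometric closeness of both $\Gamma$ and its characteristic curves to an affine object, and the estimates must be independent of the intrinsic Lipschitz constant $\lambda$ (since $\lambda$ degrades under stretch automorphisms, which we use freely to reduce to $\alpha(Q') = 1$). Once this stretch- and shear-invariant coercivity is in hand, Lemma~\ref{lem:foliatedPatchworksInvariant} lets me run the argument at aspect ratio $1$ and transport it back to arbitrary $Q'$; the bookkeeping of horizontally cut pseudoquads follows from Lemma~\ref{lem:vertex sums}, which upgrades the bound on $\cVv(\Delta)$ to a bound on all of $\cV(\Delta)$ and completes the verification that $\Delta$ is weighted Carleson with a constant depending only on $\lambda,\mu,\sigma$.
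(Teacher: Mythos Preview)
Your outline follows the paper's strategy (subdivision algorithm governed by paramonotonicity, Proposition~\ref{prop:Omega control} for rectilinearity and approximating planes, then summing the stopping inequality against the kinematic bound~\eqref{eq:kinematic intro}), but the passage from the stopping inequality to the weighted Carleson estimate contains a genuine gap.

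Your claim that ``the dilates $rQ'$ have bounded overlap at each generation'' is not justified, and it fails for the grouping you propose. If you tie $R(Q')$ to $\sqrt{\delta_z(Q')}$ and group by height, a chain of successive vertical cuts produces arbitrarily many nested vertically cut pseudoquads of the \emph{same} height, so the dilates have unbounded multiplicity at that scale; Lemma~\ref{lem:child hierarchy} only gives ancestor--descendant nesting $rQ_w\subset rQ_v$ and says nothing about overlap among unrelated pseudoquads. The paper instead ties the scale to the \emph{width}: the paramonotone criterion uses $R\delta_x(Q')$ (Definition~\ref{def:paramonotone}), and one groups vertically cut pseudoquads into columns $F_{j,m}$ of fixed width $2^{-j}\delta_x(Q_v)$ and fixed $x$--interval (Lemma~\ref{lem:vcut partitions}). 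These have disjoint interiors, but their $r$--dilates can still overlap badly since their heights may differ by arbitrary factors. The paper resolves this with a Vitali-type selection (Lemma~\ref{lem:column coverings}): within each column one greedily picks pseudoquads of maximal height whose $r$--dilates are disjoint, and an expansion lemma (Lemma~\ref{lem:pseudoquad expansions}) shows the selected subfamily carries comparable weight. That expansion lemma requires $\mu\le \frac{1}{32r^2}$, which is why $r$ must be the \emph{universal} constant from Proposition~\ref{prop:Omega control} rather than $r=r(\mu)$ as you wrote: one first runs the whole argument at $\mu_0=\frac{1}{32r^2}$ and only afterwards invokes Lemma~\ref{lem:increas mu0} to reach $\mu_0=\frac{1}{32}$. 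This constraint, and the Vitali step itself, are precisely what Remark~\ref{rem:stronger-conclusion} isolates as the delicate point.
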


Definition~\ref{def:foliated corona} requires the root of the foliated patchwork to be $\mu$--rectilinear; the next lemma shows that intrinsic Lipschitz graphs have many $\mu$--rectilinear pseudoquads.
\begin{lemma}\label{lem:small alpha rectilinear}
  Let $\mu_0>0$, let $0<\lambda<1$, and let $\Gamma=\Gamma_f$ be an intrinsic $\lambda$--Lipschitz graph.  There is an $\alpha_0>0$ with the following property.  Let $Q$ be a pseudoquad for $\Gamma$, let $v$ be a point in the lower boundary of $Q$ and suppose that $vZ^s$ is in the upper boundary.  Let $r=\delta_x(Q)$.  If $\frac{r}{\sqrt{s}} \le \alpha_0$, then there is a parabolic rectangle $R$ such that $(Q,R)$ is $\mu_0$--rectilinear.
\end{lemma}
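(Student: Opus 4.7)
The plan is to take the candidate parabolic rectangle $R$ to have affine (degenerate parabolic) top and bottom edges: pick the tangent direction $m=g_1'(x_0)$ to the lower characteristic curve at the base point, set $h_1(x)=g_1(x_0)+m(x-x_0)$ and $h_2(x)=h_1(x)+s$, and let $R$ be the parabolic rectangle with base $I=[a,b]$ (so $r=b-a=\delta_x(Q)$) bounded by $h_1,h_2$. By construction $R$ has the same base as $Q$, and $\delta_z(R)=s$. It then suffices to verify that, for a suitably small $\alpha_0=\alpha_0(\lambda,\mu_0)>0$ and any $x\in 4I$ (for which $|x-x_0|\le 5r/2$), one has $|g_i(x)-h_i(x)|\le \mu_0 s$ for $i\in\{1,2\}$.

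The proof relies on two inputs, with $L=\lambda/\sqrt{1-\lambda^2}$. First, the $C^{1,L}$-type estimate for characteristic curves in Lemma~\ref{lem:char curve z bounds} (equation~\eqref{eq:g Taylor}) gives for $x\in 4I$,
\begin{equation*}
|g_i(x)-g_i(x_0)-g_i'(x_0)(x-x_0)| \le \frac{L(x-x_0)^2}{2}\le \frac{25 L r^2}{8},
\end{equation*}
which immediately controls $|g_1(x)-h_1(x)|$. Second, I need to bound the slope gap $|g_1'(x_0)-g_2'(x_0)|=|f(v_1)-f(v_2)|$, where $v_i=(x_0,0,g_i(x_0))$. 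Setting $p_i=v_iY^{f(v_i)}\in \Gamma$, a direct Heisenberg group-law computation gives $p_2^{-1}p_1=(0,\,f(v_1)-f(v_2),\,g_1(x_0)-g_2(x_0))$, so by the ball-box inequality~\eqref{eq:metric approximation},
\begin{equation*}
d(p_1,p_2)\le |f(v_1)-f(v_2)|+4\sqrt{s}.
\end{equation*}
Since $p_1,p_2\in\Gamma$, the intrinsic $\lambda$-Lipschitz hypothesis yields $|f(v_1)-f(v_2)|=|y(p_1)-y(p_2)|\le \lambda\, d(p_1,p_2)$, which rearranges to $|f(v_1)-f(v_2)|\le \frac{4\lambda}{1-\lambda}\sqrt{s}$.

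Combining these two ingredients, for every $x\in 4I$,
\begin{equation*}
|g_2(x)-h_2(x)|\le |g_2(x)-g_2(x_0)-g_2'(x_0)(x-x_0)|+|g_1'(x_0)-g_2'(x_0)|\cdot |x-x_0|\le \frac{25 L r^2}{8}+\frac{10\lambda}{1-\lambda}\,r\sqrt{s},
\end{equation*}
and the analogous (easier) bound holds for $|g_1(x)-h_1(x)|$. Under the hypothesis $r/\sqrt{s}\le \alpha_0$, the right-hand side is at most $C_\lambda(\alpha_0^2+\alpha_0)s$, and choosing $\alpha_0=\alpha_0(\lambda,\mu_0)$ small enough makes this at most $\mu_0 s=\mu_0\delta_z(R)$, showing that $(Q,R)$ is $\mu_0$-rectilinear.

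The only substantive step is the slope-gap estimate in the second paragraph, which is the one place that invokes the intrinsic Lipschitz condition itself; the remainder is just an application of Lemma~\ref{lem:char curve z bounds} plus bookkeeping. I expect no hidden difficulty beyond ensuring the constants in the ball-box inequality and~\eqref{eq:g Taylor} are tracked carefully so that $\alpha_0$ can be chosen in terms of $\lambda$ and $\mu_0$ only.
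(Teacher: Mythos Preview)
Your proof is correct and follows essentially the same approach as the paper's. The paper first translates so that $v=\mathbf{0}$ and $f(v)=0$ (hence $g_1'(0)=0$), which lets it take $h_1\equiv 0$, $h_2\equiv s$; you skip the normalization and take $h_1$ to be the tangent line to $g_1$ at $x_0$, which amounts to the same thing. For the slope gap the paper invokes Lemma~\ref{lem:ilg line distance}, while you redo that estimate directly from the intrinsic Lipschitz condition and the ball-box inequality; again, these are the same argument.
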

\begin{proof}
Denote
  $$L\eqdef \frac{\lambda}{\sqrt{1-\lambda^2}}\qquad\mathrm{and}\qquad \alpha_0\eqdef \min\left\{ \sqrt{\frac{\mu_0}{16L}}, \frac{\mu_0 (1-\lambda)}{24}\right\}.$$
Let $g_1,g_2\from \R\to \R$ be the lower and upper bounds of $Q$ and let $I$ be its base.  After a translation, we may suppose that $v=\mathbf{0}$ and $f(v)=0$.  Then $I\subset [-r,r]$, $g_1(0)=0$, $g_2(0)=s$, and $g_1'(0)=-f(\mathbf{0})=0$.  Let $R=I\times [0,s]$; we claim that $(Q,R)$ is a $\mu_0$--rectilinear pseudoquad.

  It suffices to show that for all $t\in [-4r,4r]$ and $i\in \{1,2\}$, we have $|g_i(t)-g_i(0)|\le \mu_0 s$.  By Lemma~\ref{lem:char curve z bounds}, for all $t\in [-4r,4r]$, we have
  $$\big|g_i(t) - \big(g_i(0) + t g_i'(0)\big)\big|\le  8 r^2 L \le \frac{\mu_0 s}{2}.$$
  In particular, $|g_1(t)|\le \mu_0 s$. Lemma~\ref{lem:ilg line distance} implies that
  $$|g_2'(0)|=|f(Z^s)-f(\mathbf{0})|\le \frac{3}{1-\lambda} d(\mathbf{0}, Z^s) = \frac{3\sqrt{s}}{1-\lambda} \le \frac{\mu_0}{8}\cdot \frac{s}{r},$$
  so if $|t|\le 4r$, then
  \begin{equation*}\label{eq:small alpha small boundary variation}
    |g_2(t) - g_2(0)|\le \frac{\mu_0}{8}\cdot \frac{s}{r}\cdot  4r + \frac{\mu_0s }{2} \le \mu_0 s.\tag*{\qedhere}
  \end{equation*}
\end{proof}

\begin{cor}\label{cor:small alpha}
  Continuing with the setting and notation of Lemma~\ref{lem:small alpha rectilinear}, any $\frac{1}{32}$--rectilinear pseudoquad $Q$ such that $\alpha(Q)\le \frac{\alpha_0}{2}$ is $\mu_0$--rectilinear.
\end{cor}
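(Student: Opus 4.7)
The plan is to reduce Corollary~\ref{cor:small alpha} directly to Lemma~\ref{lem:small alpha rectilinear} by producing a suitable point $v$ on the lower boundary of the $\tfrac{1}{32}$--rectilinear pseudoquad $Q$ together with a scalar $s>0$ so that $vZ^s$ lies on the upper boundary and the ratio $\delta_x(Q)/\sqrt{s}$ is bounded by the threshold $\alpha_0$ produced by that lemma. The only issue to overcome is that Lemma~\ref{lem:small alpha rectilinear} is phrased for a generic pseudoquad and asks for a specific vertical distance $s$ between the lower and upper boundaries, whereas the hypothesis of Corollary~\ref{cor:small alpha} gives us instead control on the nominal height $\delta_z(Q)$ of the associated parabolic rectangle. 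Thus the main step is to verify that the $\tfrac{1}{32}$--rectilinear condition already transfers $\delta_z(Q)$ into uniform control on the true pointwise height $g_2(x_0)-g_1(x_0)$.

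Concretely, suppose $(Q,R')$ is $\tfrac{1}{32}$--rectilinear with base $I$, lower and upper bounds $g_1,g_2$ of $Q$ and $h_1,h_2$ of $R'$. Pick any $x_0\in I$ and set $v=(x_0,0,g_1(x_0))$, which lies on the lower boundary of $Q$, and $s=g_2(x_0)-g_1(x_0)$, so that $vZ^s=(x_0,0,g_2(x_0))$ lies on the upper boundary. Since $h_2-h_1\equiv \delta_z(Q)$ and $\|g_i-h_i\|_{L_\infty(4I)}\le \tfrac{1}{32}\delta_z(Q)$ for $i\in\{1,2\}$, the triangle inequality yields
\[
  s\ge \delta_z(Q)-\tfrac{1}{16}\delta_z(Q)=\tfrac{15}{16}\delta_z(Q).
\]
With $r=\delta_x(Q)$, this gives
\[
  \frac{r}{\sqrt{s}}\le \sqrt{\tfrac{16}{15}}\,\alpha(Q)\le \sqrt{\tfrac{16}{15}}\cdot \frac{\alpha_0}{2}<\alpha_0,
\]
so the hypothesis of Lemma~\ref{lem:small alpha rectilinear} is met.

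Applying that lemma produces a parabolic rectangle $R$ (possibly different from $R'$) with base $I$ such that $(Q,R)$ is $\mu_0$--rectilinear, which is exactly the conclusion of the corollary. No further estimates are required; the corollary is a packaging of Lemma~\ref{lem:small alpha rectilinear} in which the size $s$ between the two boundaries is replaced by the more geometric invariant $\delta_z(Q)$, with the small extra factor $\sqrt{16/15}$ absorbed by taking $\alpha(Q)\le\alpha_0/2$ rather than $\alpha(Q)\le\alpha_0$.
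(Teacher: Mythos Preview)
Your proof is correct and follows essentially the same approach as the paper's: both arguments pick a point $v$ on the lower boundary, use the $\tfrac{1}{32}$--rectilinear condition to deduce that the vertical gap $s$ to the upper boundary satisfies $s\ge \tfrac{15}{16}\delta_z(Q)$, and then apply Lemma~\ref{lem:small alpha rectilinear}. The only difference is cosmetic: the paper absorbs the numerical factor via $\tfrac{\alpha_0}{2}\sqrt{\delta_z(Q)}\le \alpha_0\sqrt{s}$, while you write out the intermediate $\sqrt{16/15}$ explicitly.
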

\begin{proof}
  Let $v$ be in the lower boundary of $Q$.  Then there is an $s\ge (1-\frac{1}{16}) \delta_z(Q)$ such that $vZ^s$ is in the upper boundary.  If $\alpha(Q)\le \frac{\alpha_0}{2}$, then
  $$\delta_x(Q) \le \frac{\alpha_0}{2} \sqrt{\delta_z(Q)} \le \alpha_0\sqrt{s},$$
  so Lemma~\ref{lem:small alpha rectilinear} implies that $Q$ is $\mu_0$--rectilinear.
\end{proof}

The following lemma shows that the choice of $\mu_0$ is not important; we can increase $\mu_0$ at the cost of an increase in $D$.
\begin{lemma}\label{lem:increas mu0}
 For any $\lambda>0$ and $0<\mu_0<\mu_0'\le \frac{1}{32}$, and any $D\from \R^+\times \R^+\to \R^+$, there exists $D'\from \R^+\times \R^+\to \R^+$ such that if $\Gamma$ is an intrinsic $\lambda$--Lipschitz graph that has a $(D,\mu_0)$--foliated corona decomposition, then $\Gamma$ also has a $(D',\mu'_0)$--foliated corona decomposition.
\end{lemma}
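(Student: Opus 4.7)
The strategy is to obtain the $(D',\mu_0')$--decomposition for $Q$ by first performing a bounded-weight sequence of vertical cuts that reduces to pseudoquads whose aspect ratio is small enough to guarantee $\mu_0$--rectilinearity, then attaching the existing $(D,\mu_0)$--decompositions as subtrees at those pieces.  Fix $0<\mu\le \mu_0'$ and $\sigma>0$.  When $\mu\le \mu_0$ the conclusion is immediate from the hypothesis, so assume $\mu_0<\mu\le \mu_0'\le \tfrac1{32}$.  Let $\alpha_0=\alpha_0(\mu_0,\lambda)$ be the constant supplied by Lemma~\ref{lem:small alpha rectilinear} and Corollary~\ref{cor:small alpha}, so that any $\tfrac1{32}$--rectilinear pseudoquad of aspect ratio at most $\alpha_0/2$ is automatically $\mu_0$--rectilinear.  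Let $Q$ be a $\mu$--rectilinear pseudoquad.  If $\alpha(Q)\le \alpha_0/2$, apply the hypothesis at parameter $\mu_0$ to $Q$ directly, obtaining a $\mu_0$--rectilinear (hence $\mu$--rectilinear) patchwork that is $D(\mu_0,\sigma)$--weighted-Carleson with $\sigma$--approximating planes.  Otherwise, set $k\eqdef \lceil\log_2(2\alpha(Q)/\alpha_0)\rceil$ and build a complete binary tree of depth $k$ rooted at $Q$ in which every internal vertex is vertically cut.  A direct check using $4I_L,4I_R\subset 4I$ shows that vertical cuts preserve $\mu$--rectilinearity (heights are unchanged while the $L_\infty$ bounds of~\eqref{eq:def rectilinear 1 and 2} restrict to smaller intervals), so every vertex of this preliminary tree is a $\mu$--rectilinear pseudoquad.

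Each of the $2^k$ leaves $Q^i$ satisfies $\alpha(Q^i)=\alpha(Q)/2^k\le \alpha_0/2$ and so is $\mu_0$--rectilinear.  Apply the hypothesized $(D,\mu_0)$--foliated corona decomposition at parameters $(\mu_0,\sigma)$ to each $Q^i$, obtaining a $\mu_0$--rectilinear patchwork $\Delta^i$ that is $D(\mu_0,\sigma)$--weighted-Carleson with $\sigma$--approximating planes, and graft each $\Delta^i$ onto the corresponding leaf.  The resulting combined tree $\Delta$ is a $\mu$--rectilinear foliated patchwork for $Q$ (its only horizontally cut vertices lie in some $\Delta^i$, so~\eqref{eq:horiz cut edge bounds} holds with the stronger constant $\mu_0\le \mu$), and the approximating planes supplied by the $\Delta^i$ give $\sigma$--approximating planes for every horizontally cut vertex of $\Delta$.

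It remains to check the weighted Carleson estimate at every $v\in \cV(\Delta)$.  If $v$ lies inside some $\Delta^i$, the Carleson condition for $\Delta^i$ at $v$ gives $W(\cD(v)\cap \cVv(\Delta))\le D(\mu_0,\sigma)|Q_v|$, so there is nothing to prove.  If $v$ lies in the preliminary tree, split $\cD(v)\cap \cVv(\Delta)$ into the vertically cut vertices inside the preliminary tree and those inside the $\Delta^i$ with $Q^i\subset Q_v$.  The latter contribute at most $\sum_{Q^i\subset Q_v} D(\mu_0,\sigma)|Q^i|\le D(\mu_0,\sigma)|Q_v|$ by disjointness.  For the former, there are $2^j$ preliminary descendants of $v$ at each depth $0\le j<k_v\eqdef \log_2(2\alpha(Q_v)/\alpha_0)+O(1)$, each of area $\asymp |Q_v|/2^j$ and aspect ratio $\alpha(Q_v)/2^j$ by Lemma~\ref{lem:soft cut props}, so their total weight is
\begin{equation*}
\sum_{j=0}^{k_v-1} 2^j\cdot\frac{|Q_v|/2^j}{(\alpha(Q_v)/2^j)^4}\asymp \sum_{j=0}^{k_v-1}\frac{16^j\,|Q_v|}{\alpha(Q_v)^4}\lesssim \frac{16^{k_v}|Q_v|}{\alpha(Q_v)^4}\lesssim \frac{|Q_v|}{\alpha_0^4},
\end{equation*}
where the last step uses $2^{k_v}\asymp \alpha(Q_v)/\alpha_0$.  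Combining, the Carleson condition holds with
\begin{equation*}
D'(\mu,\sigma)\eqdef D(\mu_0,\sigma)+C\alpha_0(\mu_0,\lambda)^{-4}
\end{equation*}
for a universal constant $C$.  The argument has no serious obstacle; its only conceptual content is that the $\alpha^{-4}$ normalization in the definition of $W$ (together with the halving of $\alpha$ under a vertical cut, which produces the growth factor $16^j$ above) is perfectly tuned so that a preliminary cascade of vertical cuts down to a fixed aspect ratio contributes only a bounded multiple of $|Q|$ to the weighted Carleson sum, uniformly in the initial aspect ratio.
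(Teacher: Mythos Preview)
Your proof is correct and follows essentially the same approach as the paper: cut vertically until the aspect ratio drops below the threshold $\alpha_0/2$ from Corollary~\ref{cor:small alpha}, attach the hypothesized $(D,\mu_0)$--decompositions at the leaves, and bound the extra weight of the preliminary vertically cut tree by a geometric sum dominated by $\alpha_0^{-4}|Q_v|$. The only cosmetic differences are that you separate out the case $\mu\le\mu_0$ explicitly and organize the Carleson estimate as a direct sum over depths rather than via the ancestor chains $A(p_j)$ as in the paper; both yield the bound $D'(\mu,\sigma)=D(\mu_0,\sigma)+C\alpha_0^{-4}$.
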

\begin{proof}
  Fix $0<\mu<\mu_0'$ and $0<\sigma<1$.   Let $\alpha_0>0$ be as in Lemma~\ref{lem:small alpha rectilinear}. Suppose that we are given a $\mu$--rectilinear pseudoquad $Q$.  We wish to construct a rectilinear foliated patchwork for $Q$ with a set of $\sigma$--approximating planes.  If $\alpha(Q)<\frac{\alpha_0}{2}$, then by Corollary~\ref{cor:small alpha}, $Q$ is $\mu_0$--rectilinear, and since $\Gamma$ admits a $(D,\mu_0)$--foliated corona decomposition, the desired patchwork and set of approximating planes for $Q$ exist.

  We thus suppose that $\alpha(Q)\ge \frac{\alpha_0}{2}$ and denote
  $$i_0=\left\lceil \log_2\frac{\alpha(Q)}{\alpha_0} \right\rceil+1.$$
  We will construct a rectilinear foliated patchwork for $Q$ by first cutting $Q$ vertically $i_0$ times into pseudoquads $P_1,\dots, P_{2^{i_0}}$ of width $2^{-i_0}\delta_x(Q_0)$,  height $\delta_z(Q)$, and aspect ratio
  $$\alpha(P_i)=\frac{\delta_x(P_i)}{\sqrt{\delta_z(P_i)}}=2^{-i_0}\alpha(Q) < \frac{\alpha_0}{2}.$$
  By Corollary~\ref{cor:small alpha}, each $P_i$ is $\mu_0$--rectilinear and thus admits a $D(\mu,\sigma)$--weighted Carleson rectilinear foliated patchwork and a set of $\sigma$--approximating planes.  Combining these patchworks, we obtain a rectilinear foliated patchwork $\Delta'$ for $Q_0$.  Let $v_0$ be its root vertex.  Note that for any $0\le m\le i_0$ and any $w\in \cC^m(v_0)$, we have $\alpha(Q_w)=2^{-m}\alpha(Q_0)$.

  It remains to check that $\Delta'$ is weighted Carleson.  Let $p_1,\dots, p_{2^{i_0}}\in \cV(\Delta')$ be the vertices such that $P_j=Q_{p_j}$.  If $v\in \cV(\Delta')$ and $v\le p_j$ for some $j$, then $Q_v$ satisfies the weighted Carleson condition \eqref{eq:def weighted Carleson}  with constant at most $D(\mu,\sigma)$.

  Otherwise, $v$ is an ancestor of some $p_j$ and $Q_v=P_a\cup \dots\cup P_b$ for some $a\le b$.  For each $w\in \cV(\Delta')$, let $A(w)$ be the set of ancestors of $w$.  Every ancestor of $p_j$ except possibly $p_j$ itself is vertically cut, so by \eqref{eq:vertical cut weight}, the weight of $\cP^k(p_j)$ decays exponentially.  Thus
  $$W\big(A(p_j)\big)= \sum_{k=0}^{i_0} W\big(\{\cP^k(p_j)\}\big) \stackrel{\eqref{eq:vertical cut weight}}{\le} \sum_{k=0}^{i_0} 4^{-k} W(\{p_j\}) \le 2 W(\{p_j\}).$$

  For each $w\in \cV(\Delta')$, let $\cDv(w)=\cD(w)\cap \cVv(\Delta')$ be the set of vertically cut descendants of $w$.
 As every element of $\cDv(v)$ is a descendant or an ancestor of some $p_j$ with $a\le j\le b$,
  \begin{multline*}
    W\big(\cDv(v)\big)
    \le \sum_{j=a}^b \left[W\big(\cDv(p_j)\big) + W\big(A(p_j)\big)\right] \le \sum_{j=a}^b \left[ D(\mu,\sigma) |P_j| + 2W(\{p_j\}) \right] \\
    = D(\mu,\sigma) |Q_v| + 2\cdot \alpha(P_a)^{-4} |Q_v|
    \le D(\mu,\sigma) |Q_v| + \alpha_0^{-4} |Q_v|.
  \end{multline*}
  Therefore, $\Delta'$ is $\left(D(\mu,\sigma)+\alpha_0^{-4}\right)$--weighted Carleson.
\end{proof}
\section{Vertical perimeter and foliated corona decompositions}\label{sec:fcd bounds vper}
In this section we will assume Theorem~\ref{thm:corona graph formulated} and prove the following theorem, which bounds the vertical perimeter of half-spaces bounded by intrinsic Lipschitz graphs.
\begin{thm}\label{thm:fcd bounds vper} For any $0<\lambda<1$ and $r>0$, if $\Gamma$ is an intrinsic $\lambda$--Lipschitz graph, then
  $$\big\|\vpfl{B_r(\0)}\big(\Gamma^+\big)\big\|_{L_4(\R)}\lesssim_{\lambda} r^3.$$
\end{thm}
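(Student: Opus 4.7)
The plan is to deduce Theorem~\ref{thm:fcd bounds vper} from Theorem~\ref{thm:corona graph formulated} via a reduction to a parametric vertical perimeter bound on a pseudoquad of bounded aspect ratio, followed by a summation in which the $\alpha(Q)^{-4}$ weight in the Carleson condition is matched against the exponent $4$ in the $L_4$ norm.

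First, I would reduce to a clean pseudoquad setup. By Lemma~\ref{lem:vpP and vpfl}, $\vpfl{B_r(\0)}(\Gamma^+)(a)\le \vpP{E,\psi}(a)$ whenever $\Gamma=\Gamma_\psi$ and $E\subset V_0$ contains $\Pi(B_r(\0))$; by the ball--box inequality~\eqref{eq:metric approximation}, $\Pi(B_r(\0))$ lies in a Euclidean rectangle of width $\lesssim r$ and height $\lesssim r^2$. Using Lemma~\ref{lem:pvp is invariant} with $s_{r,r}$ we rescale to $r=1$, and then Lemma~\ref{lem:small alpha rectilinear} lets us cover $\Pi(B_1(\0))$ by $O_\lambda(1)$ many $\mu_0$-rectilinear pseudoquads of aspect ratio at most $\alpha_0/2$. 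It therefore suffices to prove that any $\mu_0$-rectilinear pseudoquad $Q_0$ of $\Gamma_\psi$ with $\alpha(Q_0)=O_\lambda(1)$ satisfies
\begin{equation}\label{eq:scaled goal}
\|\vpP{Q_0,\psi}\|_{L_4(\R)}\lesssim_\lambda \delta_z(Q_0)^{3/2},
\end{equation}
the scaling being forced by Lemma~\ref{lem:pvp is invariant} and Lemma~\ref{lem:quad transforms} and undoing to $r^3$ in the original statement.

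For suitably small $\mu=\mu(\lambda)$ and $\sigma=\sigma(\lambda)$, Theorem~\ref{thm:corona graph formulated} provides a weighted Carleson $\mu$-rectilinear foliated patchwork $\Delta$ for $Q_0$, equipped with $\sigma$-approximating planes $\{P_v\}_{v\in\cVh(\Delta)}$ whose affine defining functions $f_v\from V_0\to\R$ depend only on $x$. For each scale $a\in\R$, I would consider the tiling $\cD_a\subset\cV(\Delta)$ of maximal vertices $v$ with $\delta_z(Q_v)\le 2^{-2a}$. A key observation is that vertical cuts preserve $\delta_z$, so every $v\in\cD_a\setminus\{v_0\}$ must have $\cP(v)\in\cVh(\Delta)$ and $\delta_z(Q_v)\asymp 2^{-2a}$. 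Using the $z$-independence of $f_{\cP(v)}$, together with Lemma~\ref{lem:quad translates} and Lemma~\ref{lem:child hierarchy} to see $Q_vZ^{-2^{-2a}}\subset 10Q_{\cP(v)}$, the approximation bound~\eqref{eq:def lambda approximating} yields
\[
\vpP{Q_v,\psi}(a)\le 2\cdot 2^a\|\psi-f_{\cP(v)}\|_{L_1(10Q_{\cP(v)})}\lesssim 2^a\sigma\,\delta_z(Q_{\cP(v)})^2 \lesssim \sigma\,\delta_z(Q_v)^{3/2}.
\]
The complementary regime in which $v_0\in\cD_a$ is handled by the oscillation estimate $\vpP{Q_0,\psi}(a)\lesssim_\lambda\min\{|Q_0|,2^a\delta_z(Q_0)^{1/2}|Q_0|\}$ coming from Lemma~\ref{lem:ilg line distance}, whose $L_4(\R)$ contribution integrates directly to $\lesssim_\lambda\delta_z(Q_0)^{3/2}$.

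The heart of the argument is the summation of these pointwise bounds. The essential step is a Cauchy--Schwarz inequality calibrated to the Carleson weight $|Q_v|/\alpha(Q_v)^4=\delta_z(Q_v)^3/\delta_x(Q_v)^3$:
\[
\Big(\sum_{v\in\cD_a}\delta_z(Q_v)^{3/2}\Big)^2\le \Big(\sum_{v\in\cD_a}\frac{|Q_v|}{\alpha(Q_v)^4}\Big)\Big(\sum_{v\in\cD_a}\delta_x(Q_v)^3\Big),
\]
with the second factor bounded via the tiling identity $\sum_{v\in\cD_a}|Q_v|=|Q_0|$ and $|Q_v|\asymp\delta_x(Q_v)\cdot 2^{-2a}$, and the first factor regrouped by horizontally cut parents so that integrating over $a$ yields $\lesssim W(\cVh(\Delta))$. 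Lemma~\ref{lem:vertex sums} combined with the weighted Carleson hypothesis~\eqref{eq:def weighted Carleson} gives $W(\cVh(\Delta))\lesssim_\lambda|Q_0|$, and folding these estimates together produces~\eqref{eq:scaled goal}. The hard part will be getting the Hölder/Cauchy--Schwarz calibration exactly right: the exponent $4$ in $\alpha(Q)^{-4}$ and the exponent $4$ in the $L_4$ norm are locked together by the stretch-invariance encoded in Lemma~\ref{lem:foliatedPatchworksInvariant} and Lemma~\ref{lem:pvp is invariant}, and the reduction to $\alpha(Q_0)=O_\lambda(1)$ is essential for absorbing aspect-ratio losses in the Cauchy--Schwarz step into the implicit $O_\lambda(1)$ constant.
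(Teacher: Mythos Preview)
Your strategy is the paper's strategy: reduce to a pseudoquad, invoke the foliated corona decomposition, at each vertical scale $a$ partition $Q_0$ into pseudoquads of height $\asymp 2^{-2a}$, bound $\vpP{}$ on each piece via the approximating plane of its (horizontally cut) parent, and sum using the weighted Carleson condition. Two steps, however, do not close as written.

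\textbf{The low-frequency regime.} Your intermediate goal $\|\vpP{Q_0,\psi}\|_{L_4(\R)}\lesssim_\lambda \delta_z(Q_0)^{3/2}$ is false in general, and the claimed oscillation bound $\vpP{Q_0,\psi}(a)\lesssim_\lambda 2^a\delta_z(Q_0)^{1/2}|Q_0|$ does not follow from Lemma~\ref{lem:ilg line distance}. That lemma only gives $|\psi(v)-\psi(vZ^{-2^{-2a}})|\lesssim_\lambda 2^{-a}$, hence $\vpP{Q_0,\psi}(a)\lesssim_\lambda |Q_0|$ with no decay as $a\to-\infty$; since an intrinsic $\lambda$--Lipschitz $\psi$ may grow like $\sqrt{|z|}$ along vertical lines, this is sharp and $\|\vpP{Q_0,\psi}\|_{L_4((-\infty,t_0])}$ can be infinite. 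The paper bounds $\|\vpP{Q_0,\psi}\|_{L_4}$ only on $[t_0,\infty)$ (this is Lemma~\ref{lem:pvp on pseudoquads}) and handles $(-\infty,t_0]$ for $\vpfl{B_r}(\Gamma^+)$ directly via the trivial volume bound $\vpfl{B_r}(\Gamma^+)(a)\le 2^a\cH^4(B_r)$, which does decay.

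\textbf{The summation step.} Your Cauchy--Schwarz gives $\bigl(\sum_{v\in\cD_a}\delta_z(Q_v)^{3/2}\bigr)^2\le W(\cD_a)\,\sum_{v}\delta_x(Q_v)^3$, so after raising to the fourth power you face $W(\cD_a)^2\bigl(\sum_v\delta_x(Q_v)^3\bigr)^2$. Only $\int W(\cD_a)\,\mathrm{d}a$ is controlled by $W(\cVh(\Delta))$, not $\int W(\cD_a)^2\,\mathrm{d}a$, and $\sum_v\delta_x(Q_v)^3$ is not controlled by the tiling identity alone (take one piece with $\delta_x=\epsilon$ and one with $\delta_x=1$ at the same height: $W\cdot(\sum\delta_x^3)^2\asymp \epsilon^{-3}$, whereas $(\sum|Q_v|)^3\asymp 1$). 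The correct calibration is H\"older with exponents $(4/3,4)$, writing $\alpha^{-1}|Q_v|=|Q_v|^{3/4}\cdot(\alpha^{-4}|Q_v|)^{1/4}$ to obtain
\[
\Bigl(\sum_{v\in\cD_a}\alpha(Q_v)^{-1}|Q_v|\Bigr)^4\le\Bigl(\sum_{v\in\cD_a}|Q_v|\Bigr)^3\Bigl(\sum_{v\in\cD_a}\alpha(Q_v)^{-4}|Q_v|\Bigr)=|Q_0|^3\,W(\cD_a),
\]
which places exactly one power of $W(\cD_a)$ on the right and integrates over $a$ to $|Q_0|^3\,W(\cV(\Delta))$. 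With this correction and the low-frequency fix above, your outline becomes the paper's proof.
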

This coincides with the bound~\eqref{eq:use previous corona} needed in Section~\ref{sec:corona intro}.  Combined with the reduction from arbitrary sets to intrinsic Lipschitz graphs described in that section, this completes the proof of Theorem~\ref{thm:XYD}.

\subsection{Vertical perimeter for graphs with foliated corona decompositions}

Theorem~\ref{thm:fcd bounds vper} is a consequence of the following lemma.
\begin{lemma}\label{lem:pvp on pseudoquads} Suppose that $f\from V_0\to \Gamma$ is intrinsic Lipschitz and denote $\Gamma=\Gamma_f$.  Fix $\sigma>0$.  Let $Q\subset V_0$ be a $\frac{1}{32}$--rectilinear pseudoquad.  Let $\Delta$ be a $\frac{1}{32}$--rectilinear foliated patchwork for $Q$ and let $(P_v)_{v\in \cVh(\Delta)}$ be a set of $\sigma$--approximating planes.  Denoting $t_0=-\log_4 \delta_z(Q)$, we have
  \begin{equation}\label{eq:pvp on pseudoquads}
    \big\|\vpP{Q,f}\big\|_{L_4([t_0,\infty))}\lesssim \sigma |Q|^{\frac{3}{4}}W\big(\cV(\Delta)\big)^{\frac{1}{4}}.
  \end{equation}
\end{lemma}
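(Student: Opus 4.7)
The plan is to exploit a key feature of the approximating planes: since any vertical plane not orthogonal to $V_0$ is the intrinsic graph $\Gamma_{f_v}$ of an affine function $f_v(x,0,z)=ax+b$ that is \emph{independent of $z$}, one has $f_v(w)=f_v(wZ^{-c})$ for every $w\in V_0$ and $c\in\R$. Consequently $\vpP{E,f_v}(a)\equiv 0$ on any measurable $E$, and controlling $\vpP{Q_w,f}(a)$ on a piece $Q_w$ approximated by $P_v=\Gamma_{f_v}$ reduces to controlling the $L_1$ approximation error $\|f-f_v\|_{L_1}$, which is exactly what~\eqref{eq:def lambda approximating} supplies.

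For each $a>t_0$ I would work with the partition
\[
\cS_a\eqdef\big\{v\in\cV(\Delta):\delta_z(Q_v)\le 2^{-2a}<\delta_z(Q_{\cP(v)})\big\}.
\]
Since vertical cuts preserve $\delta_z$ while horizontal cuts shrink it by a factor in $(\tfrac12-2\mu,\tfrac12+2\mu)$ (Lemma~\ref{lem:soft cut props}), the family $\{Q_v\}_{v\in\cS_a}$ partitions $Q$ up to a null set, each $v\in\cS_a$ satisfies $\delta_z(Q_v)\asymp 2^{-2a}$, and the strict decrease of $\delta_z$ from $\cP(v)$ to $v$ forces $\cP(v)\in\cVh(\Delta)$. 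For such $v$, writing $f^*_v\eqdef f_{\cP(v)}$, the triangle inequality combined with the $z$-invariance of $f^*_v$ gives
\[
\int_{Q_v}\big|f(w)-f(wZ^{-2^{-2a}})\big|\ud w\le\int_{Q_v}|f-f^*_v|\ud w+\int_{Q_vZ^{-2^{-2a}}}|f-f^*_v|\ud w'.
\]
Because $2^{-2a}/\delta_z(Q_v)=O(1)$, Lemma~\ref{lem:quad translates} embeds $Q_vZ^{-2^{-2a}}$ in a bounded dilate of $Q_v$, which in turn sits inside $10Q_{\cP(v)}$ by Lemma~\ref{lem:child hierarchy} once the rectilinearity parameter is small enough. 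Both integrals are therefore bounded by $\|f-f^*_v\|_{L_1(10Q_{\cP(v)})}\le\sigma|Q_{\cP(v)}|\delta_z(Q_{\cP(v)})/\delta_x(Q_{\cP(v)})$, and multiplying by $2^a\asymp\delta_z(Q_v)^{-1/2}$ while invoking the parent--child relations $|Q_{\cP(v)}|\asymp 2|Q_v|$, $\delta_x(Q_{\cP(v)})=\delta_x(Q_v)$, and $\delta_z(Q_{\cP(v)})\asymp 2\delta_z(Q_v)$ yields the clean per-vertex bound
\[
2^a\int_{Q_v}\big|f(w)-f(wZ^{-2^{-2a}})\big|\ud w\lesssim\sigma\frac{|Q_v|}{\alpha(Q_v)}.
\]

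Summing over $v\in\cS_a$ and applying H\"older's inequality to the factorization $|Q_v|/\alpha(Q_v)=(|Q_v|/\alpha(Q_v)^4)^{1/4}\cdot|Q_v|^{3/4}$ (conjugate exponents $4$ and $4/3$) together with $\sum_{v\in\cS_a}|Q_v|=|Q|$ gives
\[
\vpP{Q,f}(a)^4\lesssim\sigma^4|Q|^3\sum_{v\in\cS_a}\frac{|Q_v|}{\alpha(Q_v)^4}.
\]
Integrating over $a\in[t_0,\infty)$ and interchanging summation with integration,
\[
\|\vpP{Q,f}\|_{L_4([t_0,\infty))}^4\lesssim\sigma^4|Q|^3\sum_{v\in\cV(\Delta)\setminus\{v_0\}}\frac{|Q_v|}{\alpha(Q_v)^4}\cdot\big|\{a>t_0:v\in\cS_a\}\big|.
\]
For each $v$ whose parent is horizontally cut, the condition $v\in\cS_a$ confines $a$ to an interval of length $\tfrac12\log_2(\delta_z(Q_{\cP(v)})/\delta_z(Q_v))=O(1)$; for $v$ whose parent is vertically cut, the indicator set is empty. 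Hence the Lebesgue factor above is $O(1)$, the outer sum is $\lesssim W(\cV(\Delta))$, and the claim follows. The main obstacle is the geometric bookkeeping that keeps the translated region $Q_vZ^{-2^{-2a}}$ inside the approximation set $10Q_{\cP(v)}$; this is handled by coupling the scale matching $\delta_z(Q_v)\asymp 2^{-2a}$ with Lemmas~\ref{lem:quad translates} and~\ref{lem:child hierarchy}, which is precisely where the small rectilinearity constant is used.
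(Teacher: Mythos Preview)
Your approach is essentially the same as the paper's: decompose $Q$ at each scale into a partition, use the $z$--independence of the approximating affine functions to bound $\vpP{Q_v,f}$ by the $L_1$ error, apply H\"older, and sum over scales. The paper discretizes into intervals $[i,i+1)$ and uses the partition $C_i=\min\{v:\delta_z(Q_v)\ge 2^{-2i-1}\}\subset\cVh(\Delta)$ from Lemma~\ref{lem:hcut partitions}, applying the approximating plane at $v$ itself; you instead use the children $\cS_a$ of these minimal vertices and pull the plane from the parent $\cP(v)\in\cVh(\Delta)$. Since parent and child quantities are comparable, the two routes are equivalent.

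One small point: your invocation of Lemma~\ref{lem:child hierarchy} ``once the rectilinearity parameter is small enough'' is shaky, since the statement fixes $\mu=\tfrac{1}{32}$ and Lemma~\ref{lem:child hierarchy} needs $\mu\le\mu(r)$ with $\mu(r)\asymp r^{-2}$. The containment $Q_vZ^{-2^{-2a}}\subset 10Q_{\cP(v)}$ follows more directly by applying Lemma~\ref{lem:quad translates} to the parent: since $Q_v\subset Q_{\cP(v)}$ and $2^{-2a}<\delta_z(Q_{\cP(v)})$, one has $Q_vZ^{-2^{-2a}}\subset Q_{\cP(v)}Z^{-2^{-2a}}\subset 2Q_{\cP(v)}\subset 10Q_{\cP(v)}$. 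This is in fact how the paper handles the analogous step (with $Q_v$ in place of $Q_{\cP(v)}$, since there $v$ itself is horizontally cut and carries the plane).
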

Note that while the intrinsic Lipschitz constant of $f$ appears in Theorem~\ref{thm:fcd bounds vper}, it does not appear in~\eqref{eq:pvp on pseudoquads}.  Indeed, this bound is invariant under scalings and stretch automorphisms; if $\Gamma$, $Q$, and $(\Delta,(Q_v)_{v\in \cV(\Delta)})$ are as in Lemma~\ref{lem:pvp on pseudoquads}, $a, b>0$, $s=s_{a,b}$, and $\hat{s}=\Pi\circ s|_{V_0}=s|_{V_0}$, then, by Lemma~\ref{lem:foliatedPatchworksInvariant}, $\hat{s}(Q)$ is a  pseudoquad in $s(\Gamma)=\Gamma_{\hat{f}}$, where $\hat{f}=b f\circ \hat{s}^{-1}$. Furthermore, $\Delta'=(\hat{s}(Q_v))_{v\in \cV(\Delta)}$ is a foliated patchwork for $\hat{s}(Q)$ and $(s(P_v))_{v\in \cVh(\Delta)}$ is a set of $\sigma$--approximating planes.

By Lemma~\ref{lem:quad transforms}, $\alpha(\hat{s}(Q_v))=\sqrt{\frac{a}{b}}\alpha(Q_v)$ and $|\hat{s}(Q_v)|=a^2b|Q_v|$, so $W(\cV(\Delta'))=b^{3}W(\cV(\Delta))$ and
$$|\hat{s}(Q)|^{\frac{3}{4}}W\big(\cV(\Delta')\big)^{\frac{1}{4}}=(a^2b)^{\frac{3}{4}}|Q|^{\frac{3}{4}}b^{\frac{3}{4}} W\big(\cV(\Delta)\big)^{\frac{1}{4}}=(ab)^{\frac{3}{2}}|Q|^{\frac{3}{4}} W\big(\cV(\Delta)\big)^{\frac{1}{4}}.$$
If~\eqref{eq:pvp on pseudoquads} holds for $f$ and $Q$, then, by Lemma~\ref{lem:pvp is invariant},
\begin{multline*}
  \big\|\vpP{\hat{s}(Q),\hat{f}}\big\|_{L_4([t_0-\log_4(ab),\infty))}=(ab)^{\frac{3}{2}}\big\|\vpP{Q,f}\big\|_{L_4([t_0,\infty))} \\
  \lesssim \sigma (ab)^{\frac{3}{2}} |Q|^{\frac{3}{4}} W\big(\cV(\Delta)\big)^{\frac{1}{4}} = \sigma |\hat{s}(Q)|^{\frac{3}{4}}W\big(\cV(\Delta')\big)^{\frac{1}{4}}.
\end{multline*}
That is, \eqref{eq:pvp on pseudoquads} holds for $\hat{f}$ and $s(Q)$.

To prove Lemma~\ref{lem:pvp on pseudoquads}, we will need some lemmas on partitions and coherent sets.  A collection $\{Q_1,\dots, Q_n\}$ of pseudoquads is a \emph{partition} of $Q$ if $Q=\bigcup_{i=1}^n Q_i$ and if the $Q_i$ overlap only along their boundaries.  A \emph{coherent subtree} of $T$ is a connected subtree such that for every $v\in T$, either all children of $v$ are contained in $T$ or none of them are.  A \emph{coherent subset} of $\cV(\Delta)$ is the vertex set of a coherent subtree.

\begin{lemma}\label{lem:coherent partitions}
  Let $\Delta$ be a rectilinear foliated patchwork for $Q$ and suppose that $S\subset \cV(\Delta)$ is coherent.  Let $M=\max S$ be the maximal element of $S$ and let $\min S$ be the set of minimal elements of $S$.  Denote
  $$F_1=F_1(S)\eqdef \big\{p\in Q_M\mid \textup{there are infinitely many $v\in S$ such that $p\in Q_v$}\big\}.$$
  Then
  \begin{equation}\label{eq:coherent partitions}
    Q_M=F_1\bigcup \bigg(\bigcup_{w\in \min S} Q_w\bigg).
  \end{equation}
  The interiors of  $\{Q_w: w\in \min S\}$ are pairwise disjoint and disjoint from $F_1$.  If $S$ is finite, then $\min S$ is a partition of $Q_M$.
\end{lemma}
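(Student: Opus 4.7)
The plan is to trace descent paths through the coherent subtree, making use of two elementary inductive facts about the patchwork: first, if $w\le v$ then $Q_w\subset Q_v$, and second, if $c_1\ne c_2$ are the two children of some common vertex $u$, then $Q_{c_1}\cap Q_{c_2}\subset \partial Q_{c_1}\cap\partial Q_{c_2}$. Both assertions propagate from Definition~\ref{def:rectilinear foliated patchwork} by induction on tree depth, and together they imply that pseudoquads associated with incomparable vertices in the tree have disjoint interiors.

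To establish the cover identity~\eqref{eq:coherent partitions}, I would fix a point $p\in Q_M$ and attempt to build a descending chain $M=v_0\ge v_1\ge\dots$ in $S$ with $p\in Q_{v_i}$ at each step: if $v_i\in\min S$, the chain terminates and $p$ lies in $Q_{v_i}$, contributing to the right-hand side of~\eqref{eq:coherent partitions}; otherwise, coherence of $S$ forces both children of $v_i$ to lie in $S$, and since they cover $Q_{v_i}$ at least one of them contains $p$, which we select as $v_{i+1}$. If this process never terminates, then the chain exhibits infinitely many $v\in S$ whose pseudoquads contain $p$, placing $p\in F_1$.

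For the interior disjointness claims, I first note that distinct elements of $\min S$ are pairwise incomparable in the tree order (by minimality combined with coherence), so the inductive fact above shows that their pseudoquads have pairwise disjoint interiors. Next, I would argue that if $p\in \inter(Q_w)$ for some $w\in\min S$, then every $v\in S$ with $p\in Q_v$ must be an ancestor of $w$: a strict descendant of $w$ inside $S$ is forbidden by minimality, and if $v$ were incomparable to $w$, then taking a common ancestor $u$ of $v$ and $w$ would show that $v$ and $w$ descend through distinct children of $u$, forcing $p$ to belong simultaneously to two sibling pseudoquads whose interiors are disjoint, contradicting $p\in \inter(Q_w)$. Since the ancestors of $w$ in $S$ form a finite chain from $w$ up to $M$, only finitely many $v\in S$ contain $p$, so $p\notin F_1$.

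The final assertion is then automatic: if $S$ is finite, no infinite descending chain can exist, so $F_1=\emptyset$, and~\eqref{eq:coherent partitions} together with the preceding interior-disjointness statement shows that $\{Q_w\}_{w\in\min S}$ is a partition of $Q_M$. The only real subtlety throughout is the careful handling of boundary overlaps between sibling pseudoquads, but this is cleanly resolved because the induction on depth only requires disjointness of \emph{interiors} rather than of the closed regions themselves.
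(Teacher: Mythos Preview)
Your proposal is correct and follows essentially the same approach as the paper. The only differences are in presentation: for the cover identity~\eqref{eq:coherent partitions}, the paper takes $p\in Q_M\setminus F_1$, observes that $\{v\in S:p\in Q_v\}$ is then finite, and shows that any minimal element of this set lies in $\min S$ by coherence, whereas you build the descending chain explicitly --- these are dual formulations of the same argument. For the interior-disjointness claims, the paper directly invokes the fact (recorded immediately after Definition~\ref{def:rectilinear foliated patchwork}) that if $\inter Q_v$ meets $Q_w$ then $v$ and $w$ are comparable, while you re-derive this via sibling-disjointness and a common-ancestor argument; both routes are fine, and yours is slightly more self-contained.
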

\begin{proof}
  Let $v\in \min S$ and let $p\in \inter Q_v$.  If $u\in S$ and $p\in Q_u$, then either $u<v$ or $v\le u$.  The first is impossible by the minimality of $v$, so $v\le u$.  It follows that there are only finitely many $w\in S$ such that $p\in Q_w$ and no such $w$ is minimal except $v$.  That is, $\inter Q_v$ is disjoint from $F_1$ and if $u\in \min S$ and $u\ne v$, then $\inter Q_v$ is disjoint from $\inter Q_u$.

  If $p\in Q_M\setminus F_1$, then the set $\{v\in S\mid p\in Q_v\}$ is finite and thus has a minimal element $v_0$. Let $w$ be a child of $v_0$ such that $p\in Q_w$. The minimality of $v_0$ implies that $w\not \in S$, so $v\in \min(S)$ by the coherence of $S$.  This implies \eqref{eq:coherent partitions}.
\end{proof}

\begin{lemma}\label{lem:hcut partitions}
  Fix $0<\mu\le \frac{1}{32}$ and let $(\Delta,(Q_v)_{v\in \Delta})$ be a $\mu$--rectilinear foliated patchwork for $Q$ with $W(\cVv(\Delta))<\infty$.  For any $0<\sigma\le \delta_z(Q)$, denote
  $S_\sigma=\{v\in \cV(\Delta)\mid \delta_z(Q_v)\ge \sigma\}$
  and let $F_\sigma=\min S_\sigma$.  Then $\{Q_v\}_{v\in F_\sigma}$ is a partition of $Q$ into horizontally cut pseudoquads such that $\sigma \le \delta_z(Q_v)<4\sigma$ for all $v\in F_\sigma$.
\end{lemma}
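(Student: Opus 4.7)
The plan is to apply Lemma~\ref{lem:coherent partitions} to the set $S_\sigma$ itself. First I would verify that $S_\sigma$ is a coherent subtree of $\Delta$ rooted at $v_0$. Connectedness (closure under taking ancestors) is immediate, since both horizontal and vertical cuts produce children with $\delta_z(Q_{\text{child}}) \le \delta_z(Q_v)$, so $\delta_z$ is non-increasing along descents and $v \in S_\sigma \Rightarrow \cP(v)\in S_\sigma$. For coherence, Definition~\ref{def:rectilinear foliated patchwork} arranges that the two children of any vertex have \emph{the same} $\delta_z$ (this is obvious for vertical cuts and built into the horizontal-cut case via the common parameter $d$), so ``$v$ has a child in $S_\sigma$'' is equivalent to ``both children of $v$ are in $S_\sigma$''.

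The crux is showing that $S_\sigma$ is finite, and this is where the hypothesis $W(\cVv(\Delta))<\infty$ enters. Suppose not; then by K\"onig's lemma there is an infinite descending chain $v_0>v_1>v_2>\dots$ contained in $S_\sigma$. By~\eqref{eq:soft horiz cut height} every horizontal cut shrinks $\delta_z$ by a factor at most $\tfrac12+2\mu\le \tfrac{9}{16}$, so the chain can contain only finitely many horizontally cut vertices. Hence for some $N$, every $v_n$ with $n\ge N$ is vertically cut; then $\delta_z(Q_{v_n})$ is constant and $\delta_x(Q_{v_n})=2^{-(n-N)}\delta_x(Q_{v_N})$, so using \eqref{eq:soft cut area} we have
\begin{equation*}
\frac{|Q_{v_n}|}{\alpha(Q_{v_n})^{4}}\asymp 2^{3(n-N)}\frac{|Q_{v_N}|}{\alpha(Q_{v_N})^{4}}.
\end{equation*}
Summing this geometric blow-up over the infinitely many vertically cut vertices $\{v_n\}_{n\ge N}\subset \cVv(\Delta)$ contradicts $W(\cVv(\Delta))<\infty$. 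Thus $S_\sigma$ is finite.

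Now Lemma~\ref{lem:coherent partitions} applied to the coherent subtree $S_\sigma$ (with $\max S_\sigma=v_0$, so $Q_M=Q$) produces the partition $\{Q_v\}_{v\in F_\sigma}$ of $Q$, since $F_1(S_\sigma)=\varnothing$ by finiteness. It remains to identify the cut type and height of each $v\in F_\sigma$. By minimality combined with coherence, no child of $v$ lies in $S_\sigma$, i.e.\ every child $w$ of $v$ satisfies $\delta_z(Q_w)<\sigma$. If $v$ were vertically cut, both children would inherit $\delta_z(Q_w)=\delta_z(Q_v)\ge \sigma$, a contradiction; hence $v$ is horizontally cut. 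Finally, $\delta_z(Q_v)\ge\sigma$ because $v\in S_\sigma$, while \eqref{eq:soft horiz cut height} and $\mu\le\tfrac{1}{32}$ give $\delta_z(Q_w)\ge(\tfrac12-2\mu)\delta_z(Q_v)\ge\tfrac{7}{16}\delta_z(Q_v)$ for each child $w$, so $\tfrac{7}{16}\delta_z(Q_v)<\sigma$, which yields $\delta_z(Q_v)<\tfrac{16}{7}\sigma<4\sigma$, completing the proof.

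The main obstacle is the finiteness of $S_\sigma$: everything else is essentially bookkeeping once Lemma~\ref{lem:coherent partitions} is invoked, but finiteness is the only place where the assumption $W(\cVv(\Delta))<\infty$ is actually used, and it rests on the geometric amplification $2^{3k}$ of the weight $|Q|/\alpha(Q)^4$ along a vertical-cut chain --- exactly the phenomenon that makes the Carleson weight~\eqref{eq:def weight} coercive.
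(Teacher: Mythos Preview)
Your proof is correct and follows the same four-step outline as the paper: verify $S_\sigma$ is coherent, show it is finite, apply Lemma~\ref{lem:coherent partitions}, and analyze the minimal elements. The one genuine difference is in the finiteness step. The paper argues that every $v\in S_\sigma$ has $W(\{v\})\asymp \delta_z(Q_v)^3\delta_x(Q_v)^{-3}\ge \sigma^3\delta_x(Q)^{-3}$, a uniform positive lower bound; it then invokes Lemma~\ref{lem:vertex sums} (which, from $W(\cVv(\Delta))<\infty$, gives $W(\cV(\Delta))<\infty$) to conclude $|S_\sigma|<\infty$. Your K\"onig's-lemma argument is a valid and slightly more self-contained alternative: it bypasses Lemma~\ref{lem:vertex sums} by directly locating an infinite tail of vertically cut vertices whose weights diverge. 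Both routes exploit the same underlying fact that $|Q|/\alpha(Q)^4\asymp\delta_z^3/\delta_x^3$; the paper's version additionally yields an explicit cardinality bound on $S_\sigma$, which is not needed here.
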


\begin{proof}
  By Definition~\ref{def:rectilinear foliated patchwork} and Lemma~\ref{lem:soft cut props}, the height of every pseudoquad of $\Delta$ is equal to the height of its sibling and at most the height of its parent.  Therefore, $S_\sigma$ is coherent.  If $v\in S_\sigma$, then
  $$W(\{v\})=\alpha(Q_v)^{-4}|Q_v|\approx \delta_z(Q_v)^3 \delta_x(Q_v)^{-3}\ge \sigma^{3} \delta_x(Q)^{-3},$$
  which is bounded away from $0$, so Lemma~\ref{lem:vertex sums} implies that $S_\sigma$ is finite.  By Lemma~\ref{lem:coherent partitions}, $F_\sigma$ partitions $Q$.

  Suppose $v\in F_\sigma$ and let $w\in \cC(v)$.  By the minimality of $v$, we have $v\in S_\sigma$ and $w\not\in S_\sigma$, so $\delta_z(Q_v)\ge \sigma>\delta_z(Q_w)$.  Since $\delta_z(Q_w)<\delta_z(Q_v)$, $v$ is horizontally cut.  Furthermore, by Lemma~\ref{lem:soft cut props}, $\sigma>\delta_z(Q_w)\ge \frac{1}{4}\delta_z(Q_v)$, so $v$ is a horizontally cut pseudoquad such that $\sigma \le \delta_z(Q_v)<4\sigma$, as desired.
\end{proof}

We will use these partitions to decompose the parametric vertical perimeter of $f$ and prove Lemma~\ref{lem:pvp on pseudoquads}.
\begin{proof}[{Proof of Lemma~\ref{lem:pvp on pseudoquads}}]
  By the remarks after Lemma~\ref{lem:pvp on pseudoquads}, condition~\eqref{eq:pvp on pseudoquads} is invariant under scaling, so we may rescale so that $\delta_z(Q)=1$.  Let $\Delta$ be a $\frac{1}{32}$--rectilinear foliated patchwork for $Q$ and let $(P_v)_{v\in \cVh(\Delta)}$ be a set of $\sigma$--approximating planes.
  Without loss of generality, we suppose that $W(\cV(\Delta))<\infty$. For each $v\in \cVh(\Delta)$, let $f_v\from V_0\to \R$ be the affine function such that $\Gamma_{f_v}=P_v$.  For $i\in \N\cup\{0\}$ let $C_i=F_{2^{-2i-1}} \subset \cVh(\Delta)$ be as in Lemma~\ref{lem:hcut partitions}, so that $\{Q_v\}_{v\in C_i}$ is a partition of $Q$ into horizontally-cut pseudoquads with heights in  $[2^{-2i-1} ,2^{-2i+1})$.  No vertex of $\Delta$ appears in more than one of the $C_i$'s.

  We start by bounding  $\vpP{Q_v,f}(t)$ from above for each $v\in C_i$ for a fixed $i\in \N\cup\{0\}$.  Then we have $2^{-2i} \le 2\delta_z(Q_v)$, so Lemma~\ref{lem:quad translates} implies that $Z^{-2^{-2t}} Q_v\subset 10Q_v$ for any $t\in [i,i+1]$.  Therefore, since $f_v$ is constant on vertical lines,
  \begin{align*}
    \vpP{Q_v,f}(t) &= 2^{t} \int_{Q_v} \big|f(w)-f\big(w Z^{-2^{-2t}}\big)\big|\ud w \\
           &\le 2^t \int_{Q_v} \Big(|f(w)-f_v(w)|+\big|f_v\big(w Z^{-2^{-2t}}\big)-f\big(w Z^{-2^{-2t}}\big)\big|\Big)\ud w \\
           &= 2^t \left(\|f-f_v\|_{L_1(Q_v)}+\|f-f_v\|_{L_1(Z^{-2^{-2t}}Q_v)}\right)\\
           &\stackrel{\mathclap{\eqref{eq:def lambda approximating}}}{\le} \;2^{t+1} |Q_v| \sigma  \frac{\delta_z(Q_v)}{\delta_x(Q_v)}
           \stackrel{\eqref{eq:soft cut area}}{\approx} \sigma \delta_z(Q_v)^{\frac{3}{2}} \approx \sigma  \alpha(Q_v)^{-1} |Q_v|.
  \end{align*}
Since  $\{Q_v\}_{v\in C_i}$ is a partition of $Q$, we have $\vpP{Q,f}(t)=\sum_{v\in C_i}\vpP{Q_v,f}(t)$ for all $t\in \R$.  Thus
  \begin{equation}\label{eq:on i, i+1}
    \big\|\vpP{Q,f}\big\|_{L_4([i,i+1))} \le \sum_{v\in C_i} \big\|\vpP{Q_v,f}\big\|_{L_4([i,i+1])} \lesssim \sum_{v\in C_i} \sigma \alpha(Q_v)^{-1} |Q_v|.
  \end{equation}
Consequently,
  \begin{multline*}
    \big\|\vpP{Q,f}\big\|^4_{L_4([0,\infty))}=\sum_{i=0}^\infty \big\|\vpP{Q,f}\big\|^4_{L_4([i,i+1))}\stackrel{\eqref{eq:on i, i+1}}{\lesssim} \sigma^4\sum_{i=0}^\infty\bigg(\sum_{v\in C_i}  \alpha(Q_v)^{-1} |Q_v|\bigg)^4 \\ \le \sigma^4\sum_{i=0}^\infty \bigg(\sum_{v\in C_i}  |Q_v|\bigg)^3\bigg(\sum_{v\in C_i} \alpha(Q_v)^{-4} |Q_v|\bigg)\stackrel{\eqref{eq:def weight}}{=}\sigma^4 |Q|^3\sum_{i=0}^\infty W(C_i)\le \sigma^4 |Q|^3W\big(\cV(\Delta)\big),
    \end{multline*}
where the third step is an application of H\"older's inequality.
\end{proof}

Finally, we use Lemma~\ref{lem:vpP and vpfl} and Lemma~\ref{lem:pvp is invariant} to prove Theorem~\ref{thm:fcd bounds vper}.

\begin{proof}[{Proof of Theorem~\ref{thm:fcd bounds vper}}]
    After scaling, it suffices to prove the theorem in the case that $r=1$, i.e., that if $\Gamma$ is the intrinsic graph of an intrinsic $\lambda$--Lipschitz function $f\from V_0\to \R$, then
  \begin{equation}\label{eq:goal at r=1}
    \big\|\vpfl{B_1}\big(\Gamma^+\big)\big\|_{L_4(\R)}\lesssim_{\lambda} 1.
    \end{equation}

By the definition \eqref{eq:def vert per},
 $$
\forall t\in \R,\qquad  \vpfl{B_1}\big(\Gamma^+\big)(t)=2^t \big|B_1\cap \big(\Gamma^+\symdiff \Gamma^+Z^{2^{-2t}}\big)\big|\le 2^t |B_1|\lesssim 2^t.
 $$
 Hence,
\begin{equation*}
\forall  a\in \R,\qquad  \big\|\vpfl{B_1}\big(\Gamma^+\big)\big\|_{L_4((-\infty,a])}=\bigg(\int_{-\infty}^a  \vpfl{B_1}\big(\Gamma^+\big)(t)^4\ud t\bigg)^{\frac14}\lesssim \bigg(\int_{-\infty}^a2^{4t}\ud t\bigg)^{\frac14}\lesssim 2^{a},
\end{equation*}
 and therefore  we have the following simple \emph{a priori} bound.
 \begin{equation}\label{eq:trivial a priori vertical}
\forall  a\in \R,\qquad  \big\|\vpfl{B_1}\big(\Gamma^+\big)\big\|_{L_4(\R)}\lesssim 2^a+\big\|\vpfl{B_1}\big(\Gamma^+\big)\big\|_{L_4((a,\infty))}.
 \end{equation}

We will first treat the (trivial) case  $B_5\cap \Gamma=\emptyset$, so that either $B_5\subset \Gamma^+$ or $B_5\subset \Gamma^-$. Without loss of generality, suppose that  $B_5\subset \Gamma^+$. This implies that $B_1\subset \Gamma^+ \cap Z^{2^{-2t}}\Gamma^+$ for any $t\ge 0$, so $\vpfl{B_1}(\Gamma^+)(t)=0$, and therefore in this case~\eqref{eq:goal at r=1} follows from the case $a=0$ of~\eqref{eq:trivial a priori vertical}.

We may thus suppose from now on that $B_{5}\cap \Gamma\neq \emptyset$.  Fix any point $p\in B_{5}\cap \Gamma$.  Then $d(p,\langle Y\rangle)\le 5$ and $p=vY^{f(v)}$ for some $v\in V_0$ with $|f(v)|\le 5$, so by Lemma~\ref{lem:ilg line distance}, we have
  $$|f(\mathbf{0})|\le |f(v)| + |f(v) - f(\mathbf{0})| \le 5 + \frac{2}{1-\lambda} d(p, \langle Y\rangle) \lesssim \frac{1}{1-\lambda}.$$
  Likewise, for any $t\in \R$,
  $$|f(Z^t)|\le |f(\mathbf{0})|+\frac{2}{1-\lambda}d(\mathbf{0},Z^t)\lesssim  1+\frac{\sqrt{|t|}}{1-\lambda}.$$
  For $t\in \R$, let $g_t\from \R\to \R$ be a function such that $g_t(0)=t$ and the graph of $g_t$ is characteristic for $f$. By \eqref{eq:horizontal curve eq}, $g_t'(0)=-f(Z^t)$, so by Lemma~\ref{lem:char curve z bounds} and the estimate above,
\begin{equation}\label{eq:more slowly than t}\max_{x\in [-1,1]} |g_t(x)-t|\le |g_t'(0)| + \frac{\lambda}{2\sqrt{1-\lambda^2}} =
  |f(Z^{t})|+\frac{\lambda}{2\sqrt{1-\lambda^2}}\lesssim_\lambda 1+\sqrt{|t|}.\end{equation}
The right hand side of~\eqref{eq:more slowly than t} grows slower than $|t|$ as $|t|\to \infty$, so there is $t_0=t_0(\lambda) > 1$ such that the pseudoquad $Q$ that is bounded by the lines $x=\pm1$ and $z=g_{\pm t_0}(x)$ is $\frac{1}{32}$--rectilinear and contains the projection $\Pi(B_1)$.

Theorem~\ref{thm:corona graph formulated} applied with the choice of parameters $\mu_0=\frac{1}{32}$ and $\sigma=1$ shows that $Q$ has a foliated patchwork $\Delta$ and a set of $1$--approximating planes that satisfy
$$
  W\big(\cVv(\Delta)\big)\lesssim_{\lambda} |Q|\lesssim_{\lambda} 1.
$$
  By Lemma~\ref{lem:vertex sums}, this implies that
    \begin{equation}\label{eq:asymp lambda}
    W\big(\cV(\Delta)\big)\lesssim W\big(\cVv(\Delta)\big)+\alpha(Q)^{-4}|Q|\lesssim_{\lambda} 1.
    \end{equation}
  By Lemma~\ref{lem:vpP and vpfl} and Lemma~\ref{lem:pvp on pseudoquads}, we conclude as follows.
  \begin{multline*}\big\|\vpfl{B_1}\big(\Gamma^+\big)\big\|_{L_4(\R)}\lesssim \frac{1}{\sqrt{\delta_z(Q)}}+\big\|\vpfl{B_1}\big(\Gamma^+\big)\big\|_{L_4([-\log_4\delta_z(Q) ,\infty))}\\
  \le   \frac{1}{\sqrt{\delta_z(Q)}}+ \big\|\vpP{Q,f}\big\|_{L_4([-\log_4\delta_z(Q) ,\infty))}\lesssim   \frac{1}{\sqrt{\delta_z(Q)}}+ |Q|^{\frac{3}{4}}W\big(\cV(\Delta)\big)^{\frac{1}{4}}\lesssim_{\lambda}1,\end{multline*}
  where the first step is an application of~\eqref{eq:trivial a priori vertical} with $a=-\log_4\delta_z(Q)$, the second step is an application of Lemma~\ref{lem:vpP and vpfl} because $Q\supset \Pi(B_1)$, the third step is an application of Lemma~\ref{lem:pvp on pseudoquads}, and the final step holds due to~\eqref{eq:asymp lambda} and because $|Q|\asymp \delta_z(Q)\asymp_\lambda 1$.
\end{proof}

\section{The subdivision algorithm: constructing a foliated corona decomposition}\label{sec:subdiv algorithm}

In this section, we will formulate an iterative subdivision algorithm (Lemma~\ref{lem:subdivision algorithm} below) and prove that, given certain propositions on the geometry of pseudoquads, this algorithm produces a foliated corona decomposition.  In the following sections, we will prove these geometric propositions.  Together, these arguments establish Theorem~\ref{thm:corona graph formulated}.

Fix $\lambda,\sigma\in (0,1)$. Let $f\from V_0\to \R$, and suppose that $\Gamma=\Gamma_f$ is an intrinsic $\lambda$--Lipschitz graph.  Let $0<\mu\le \frac{1}{32}$.  To show that $\Gamma$ admits a foliated corona decomposition, we must show that for any $\mu$--rectilinear pseudoquad $Q$, there is a $\mu$--rectilinear foliated patchwork $\Delta$ for $Q$ which has a set of $\sigma$--approximating planes and such that $\Delta$ is weighted-Carleson.

In order to describe the subdivision algorithm that produces $\Delta$, we  will introduce the \emph{$R$--extended parametric normalized nonmonotonicity} of $\Gamma$, denoted by $\Omega^P_{\Gamma^+ ,R}$, which is a measure on $V_0$ with density based on how horizontal line segments of length at most $R>0$ intersect $\Gamma$.  If $\Gamma$ is a plane, for instance, then $\Omega^P_{\Gamma^+ ,R}=0$, while $\Omega^P_{\Gamma^+ ,R}$ has positive density when $\Gamma$ is bumpy at scale $R$.

This is in the spirit of the quantitative nonmonotonicity used in \cite{CKN} and \cite{NY18}, but it counts different segments, and, like the parametric vertical perimeter, it is defined in terms of the function $f$. We will give a full definition in Section~\ref{sec:parametric monotonicity} and discuss the relationship between extended nonmonotonicity and quantitative nonmonotonicity in Remarks~\ref{rem:ENM-vs-NM} and \ref{rem:stronger-conclusion}.
In Section~\ref{sec:weighted Carleson}, we will show that there is  $c>0$ depending on the intrinsic Lipschitz constant of $\Gamma$ such that the following kinematic formula (inequality) holds for every measurable subset $U\subset V_0$.
\begin{equation*}\label{eq:a priori bound kinematic Omega}
\sum_{i\in \Z} \Omega^P_{\Gamma^+,2^{-i}}(U)\le c |U|.
\end{equation*}

\begin{defn}\label{def:paramonotone}
  Suppose that $\eta,r,R>0$ and $Q$ is a $\frac{1}{4}$--rectilinear pseudoquad.  We say that $\Gamma$ is \emph{$(\eta, R)$--paramonotone} on $rQ$ if it satisfies the following bound.
  \begin{equation}\label{eq:def paramonotone}
   \frac{\Omega^P_{\Gamma^+,R\delta_x(Q)}(r Q)}{|Q|}\le \frac{\eta}{\alpha(Q)^{4}}.
  \end{equation}
  This condition is invariant under scalings, stretch maps, and shear maps; see the discussion immediately after the proof of Lemma~\ref{lem:Omega scaling} below.
\end{defn}

One of the main results of \cite{CKN} was that for small $\eta>0$, any $\eta$--monotone set is close to a plane in $\H$; this is a ``stability version'' of the characterization of monotone sets in~\cite{CheegerKleinerMetricDiff}.  The following proposition, which we will prove in Sections~\ref{sec:omega control outline}--\ref{sec:l1 and characteristic}, states not only that paramonotone pseudoquads are close to vertical planes in $\H$, but also that their characteristic curves are close to the characteristic curves of their approximating planes.

\begin{prop}\label{prop:Omega control}
  There is a universal constant $r>1$ such that for any $\sigma>0$ and $0<\zeta\le \frac{1}{32}$, there are $\eta, R > 0$ such that if $\Gamma=\Gamma_f$ is the intrinsic Lipschitz graph of $f\from V_0\to \R$, and if $Q$ is a $\frac{1}{32}$--rectilinear pseudoquad for $\Gamma$ such that $\Gamma$ is $(\eta, R)$--paramonotone on $rQ$, then
  \begin{enumerate}
  \item \label{it:omega control plane}
    There is a vertical plane $P\subset \H$ (a $\sigma$--approximating plane) and an affine function $F\from V_0\to \R$ such that $P$ is the intrinsic graph of $F$ and
    \begin{equation}
      \frac{\|F-f\|_{L_1(10 Q)}}{|Q|} \le \sigma \frac{\delta_z(Q)}{\delta_x(Q)}.
    \end{equation}
  \item \label{it:omega control characteristics}
    Let $u\in 4 Q$ and let $g_\Gamma, g_P\from \R\to \R$ be  such that $\{z=g_\Gamma(x)\}$ (respectively $\{z=g_P(x)\}$) is a characteristic curve for $\Gamma$ (respectively $P$) that passes through $u$.  Then $$\|g_P-g_\Gamma\|_{L_\infty(4 I)}\le \zeta \delta_z(Q).$$

  \end{enumerate}
\end{prop}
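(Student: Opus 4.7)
The plan is a normalize-then-contradict argument, resting on a characterization of ``pointwise monotone'' intrinsic Lipschitz graphs as vertical planes, together with a quantitative tracking of characteristic curves by paramonotonicity.

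First I would normalize. The paramonotonicity condition \eqref{eq:def paramonotone} is invariant under left translations, stretch maps $s_{a,b}$, and shear maps $\tilde A_b$: this is built into the factor $\alpha(Q)^{-4}$, matching how $|Q|$ and $\alpha(Q)$ transform in Lemma~\ref{lem:quad transforms} and how $\Omega^P_{\Gamma^+,R}$ is designed to transform (in analogy with Lemma~\ref{lem:pvp is invariant}). Using the explicit normalizing map from Remark~\ref{rem:normalizing rectilinear}, it suffices to prove the proposition when the approximating parabolic rectangle of $Q$ equals $R_0=[-1,1]\times\{0\}\times[-1,1]$. In this normalized setting $\frac{1}{32}$--rectilinearity forces $|f|\le 1+\frac{1}{16}$ on a fixed neighborhood of $4Q$, giving uniform $L_\infty$ control on $f$ independent of the intrinsic Lipschitz constant of $\Gamma$ (which can be arbitrarily large after a stretch).

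Next I would argue both \eqref{it:omega control plane} and \eqref{it:omega control characteristics} by contradiction. Assuming failure for some $\sigma,\zeta>0$, extract sequences $\eta_n\downarrow 0$, $R_n\uparrow\infty$, intrinsic Lipschitz graphs $\Gamma_n=\Gamma_{f_n}$, and normalized $\frac{1}{32}$--rectilinear pseudoquads $Q_n$ such that $\Gamma_n$ is $(\eta_n,R_n)$--paramonotone on $rQ_n$, yet some conclusion fails. Passing to a subsequence, the uniform $L_\infty$ bound yields $L_1_{\mathrm{loc}}$ convergence $f_n\to f_\infty$ on a fixed neighborhood of $Q_\infty=\lim Q_n$. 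A lower-semicontinuity argument for $\Omega^P$ with respect to $L_1$ convergence (using that $\Omega^P$ is built from indicator functions of $\Gamma_n^+$, and that horizontal line intersection counts are lower semicontinuous) shows that $\Omega^P_{\Gamma_\infty^+, R}(rQ_\infty)=0$ for every $R>0$.

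The main task is now to identify the limit. The vanishing of $\Omega^P_{\Gamma_\infty^+,R}$ at every scale should translate, via a Fubini argument and the precise definition of extended parametric nonmonotonicity from Section~\ref{sec:parametric monotonicity}, into the pointwise statement that every horizontal segment lying in a neighborhood of $\Psi(Q_\infty)$ crosses $\Gamma_\infty^+$ monotonically. By the monotone-set characterization of Cheeger--Kleiner~\cite{CheegerKleinerMetricDiff}, $\Gamma_\infty^+$ must locally be the intrinsic epigraph cut by a vertical plane $P_\infty$, and $f_\infty$ must be affine on a neighborhood of $Q_\infty$. Applying this along the subsequence produces an $L_1$ approximating plane for $\Gamma_n$ and contradicts the failure of \eqref{it:omega control plane}. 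The first part is thus reduced to a careful translation between the measure-theoretic condition $\Omega^P=0$ and pointwise monotonicity of horizontal traces; this is presumably the content of Section~\ref{sec:extended monotone}.

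The main obstacle lies in \eqref{it:omega control characteristics}: $L_1$ convergence of $f_n$ to an affine $F_\infty$ is too weak to directly control solutions of the characteristic ODE $g_n'(x)=-f_n(x,0,g_n(x))$, and the estimate \eqref{eq:g Taylor} depends on the intrinsic Lipschitz constant, which is not uniformly bounded under normalization. The compactness argument by itself does not deliver the required $L_\infty$ bound on the curves. Instead one must argue directly that paramonotonicity forces characteristic curves of $\Gamma$ to stay near the parabolic characteristic curves of the best-approximating plane $P$. The key geometric input is that a horizontal curve in $\Gamma$ projects (under $\Pi$) to a characteristic curve by Lemma~\ref{lem:char}, so tracking how horizontal line segments through points of $4Q$ meet $\Gamma$ quantitatively controls where characteristic curves go; smallness of $\Omega^P$ bounds the number and size of ``reversals'' along such horizontal segments, and hence the deviation of a characteristic curve from the characteristic parabola of the limiting plane. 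Carrying out this quantitative translation—from smallness of $\Omega^P$ to an explicit $L_\infty$ bound $\|g_P-g_\Gamma\|_{L_\infty(4I)}\le \zeta\delta_z(Q)$—is the main technical obstacle and will be the content of Section~\ref{sec:l1 and characteristic}, where the $L_1$ plane approximation of the first part is bootstrapped into pointwise control on characteristic curves by exploiting paramonotonicity on slightly enlarged pseudoquads.
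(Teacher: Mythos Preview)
Your normalization step contains a genuine error: $\tfrac{1}{32}$--rectilinearity does \emph{not} give $|f|\le 1+\tfrac{1}{16}$ on a neighborhood of $4Q$. Rectilinearity only says that the two bounding characteristic curves $g_1,g_2$ are $L_\infty$--close to the constants $\pm 1$ on $4I$; it says nothing about $f$ in the interior of $Q$, and even on the boundary curves it only controls $\int g_i'=\int (-f\circ\gamma_i)$, not $f$ pointwise. The paper makes exactly this point in Section~\ref{sec:omega control outline}: after normalization the intrinsic Lipschitz constant is uncontrolled, so $f$ ``may still take on large values'' on a set of small measure, and one must introduce new arguments to rule this out. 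Without a uniform bound on $f_n$ you cannot extract an $L^1_{\mathrm{loc}}$--convergent subsequence, and your compactness scheme for part~\eqref{it:omega control plane} does not get off the ground.

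The paper's route is different in structure. The compactness/Cheeger--Kleiner argument you sketch is indeed used (Proposition~\ref{prop:stability of extended monotone}), but it is run on the \emph{sets} $\Gamma_n^+$ via their indicator functions, where compactness is free, yielding only that $\Gamma^+$ is $\cH^4$--close to a half-space on a ball. Converting this to an $L_1$ bound on $f-F$ requires a separate and nontrivial tail estimate: Lemma~\ref{lem:supercharacteristic bounds} shows that whenever a region is bounded above and below by supercharacteristic curves, the measure of $\{|f|\ge t\}$ decays like $t^{-2}\Omega^P$, which yields $\|f\|_{L_1(Q)}\lesssim 1$ (Lemma~\ref{lem:paramonotone is L1 bounded}). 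This in turn pins down the slope and intercept of the approximating plane $P$, one shears $P$ to $V_0$, reruns the supercharacteristic tail argument on $10\hat Q$ (Lemma~\ref{lem:paramonotone squares L1 close}) to get $\|\hat f\|_{L_1}$ small, and only then proves the characteristic-curve bound directly (Lemma~\ref{lem:close foliation curves squares}) by exhibiting, for any putative large excursion of a characteristic curve, an explicit family of horizontal lines that each contribute a definite amount to $\Omega^P$. Your final paragraph correctly anticipates the spirit of this last step, but the missing ingredient throughout is the supercharacteristic-curve tail bound replacing the nonexistent $L_\infty$ control.
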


It is important to observe that the bounds in Proposition~\ref{prop:Omega control} do not depend on the intrinsic Lipschitz constant of $f$.  Indeed, this proposition holds when $\Gamma$ is merely the intrinsic graph of a continuous function. 
This is important because paramonotonicity is invariant under stretch automorphisms; a bound that depended on the intrinsic Lipschitz constant of $\Gamma$ would not be invariant.

Proposition~\ref{prop:Omega control} allows us to construct a $\mu$--rectilinear foliated patchwork and a collection of $\sigma$--approximating planes by recursively subdividing $Q$ according to a greedy algorithm.
\begin{lemma}\label{lem:subdivision algorithm}
  Let $r$ be as in Proposition~\ref{prop:Omega control}.  Fix $0<\mu \le\frac{1}{32}$ and $\sigma >0$.  There are $\eta,R>0$ with the following property.  Let $\Gamma$ be an intrinsic Lipschitz graph and let $Q$ be a $\mu$--rectilinear pseudoquad. There is a $\mu$--rectilinear foliated patchwork $\Delta$ for $Q$, such that for all $v\in \cV(\Delta)$, $Q_v$ is horizontally cut if and only if $\Gamma$ is $(\eta,R)$--paramonotone on $rQ$, and $\Delta$ admits a set of $\sigma$--approximating planes.
\end{lemma}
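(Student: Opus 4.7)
The plan is to run the obvious greedy recursion. Given $\mu$ and $\sigma$, I will apply Proposition~\ref{prop:Omega control} with the same $\sigma$ and with $\zeta = \mu/4 \le 1/128$; the resulting constants $\eta, R > 0$ are the ones claimed in the lemma. Starting from the root $Q_{v_0} = Q$, at each vertex $v$ of the tree the algorithm tests whether $\Gamma$ is $(\eta, R)$-paramonotone on $rQ_v$: if not, declare $v$ \emph{vertically cut} and split $(Q_v, R_v)$ into its left and right halves along the vertical line through the center of the base; if yes, declare $v$ \emph{horizontally cut}, use part~\eqref{it:omega control plane} of Proposition~\ref{prop:Omega control} to take $P_v$ to be the approximating vertical plane promised there, and construct the cutting data $(c, k, d)$ for the two children via part~\eqref{it:omega control characteristics} as explained below.

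For vertical cuts, the base of each child is a subinterval $I_w$ of the parent base $I = [a,b]$ with $4 I_w \subset 4I$ (an elementary one-variable computation), so the $L_\infty(4 I_w)$ bounds in the definition of $\mu$-rectilinearity for the children follow immediately from monotonicity of the $L_\infty$ norm applied to the parent's rectilinearity bounds; thus vertical cuts preserve $\mu$-rectilinearity with no geometric input. The horizontal case is the real work. Let $g_1, g_2$ be the bounds of $Q_v$. I will apply Proposition~\ref{prop:Omega control}\eqref{it:omega control characteristics} to $u_1 = (a, 0, g_1(a)) \in Q_v$ and $u_2 = (a, 0, g_2(a)) \in Q_v$ to obtain parabolas $g_P^1$ and $g_P^2$ that are characteristic for $P_v$ and satisfy $\|g_P^i - g_i\|_{L_\infty(4I)} \le \zeta \delta_z(Q_v)$ for $i \in \{1,2\}$. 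Because $P_v$ is a vertical plane, its characteristic curves are parallel parabolas, so $g_P^2 - g_P^1$ equals a positive constant $2d$ and $k := (g_P^1 + g_P^2)/2$ is again a characteristic parabola of $P_v$. Setting $x_0$ to be the midpoint of $I$ and $u := (x_0, 0, k(x_0))$, a short check using $g_2 - g_1 \ge (1-2\mu)\delta_z(Q_v)$ shows $u \in Q_v \subset 4 Q_v$, so a third application of Proposition~\ref{prop:Omega control}\eqref{it:omega control characteristics} furnishes a characteristic curve $c$ of $\Gamma$ through $u$ with $\|c - k\|_{L_\infty(4I)} \le \zeta \delta_z(Q_v)$.

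With these choices of $(c, k, d)$, each of the three quantities in the horizontal-cut rectilinearity condition~\eqref{eq:horiz cut edge bounds} is bounded by $\zeta \delta_z(Q_v)$. The only step requiring any care is checking $\zeta \delta_z(Q_v) \le \mu d$: since $d$ is the constant value of $(g_P^2 - g_P^1)/2$, the triangle inequality together with the $\mu$-rectilinearity of the parent gives $d \ge (1/2 - \mu - \zeta) \delta_z(Q_v) \ge \delta_z(Q_v)/4$ when $\mu \le 1/32$ and $\zeta = \mu/4$, so $\zeta \delta_z(Q_v) = (\mu/4)\delta_z(Q_v) \le \mu d$, as required. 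The same estimate also shows $g_1 < c < g_2$ strictly on $I$, so the new pseudoquads $Q_w$, $Q_{w'}$ are nondegenerate and partition $Q_v$ up to boundaries. Part~\eqref{it:omega control plane} of Proposition~\ref{prop:Omega control} already certifies that $P_v$ is a $\sigma$-approximating plane for $v$, and the recursion yields the desired $\mu$-rectilinear foliated patchwork $\Delta$ satisfying the horizontally-cut-iff-paramonotone criterion.

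Because the recursion rule is essentially dictated by the lemma's statement, the only piece of genuine content is the horizontal-cut construction above; the main (small) obstacle there is the calibration $\zeta = \mu/4$, needed so that the error in the parallel-parabola approximation coming from Proposition~\ref{prop:Omega control}\eqref{it:omega control characteristics} remains small compared to the height $d$ of the new parabolic rectangles. I emphasize that the deep statement is not this lemma but the weighted Carleson bound for $\Delta$, which is the subject of Section~\ref{sec:weighted Carleson}; the present lemma is the purely constructive prelude to that analysis.
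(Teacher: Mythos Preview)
Your proof is correct and follows essentially the same approach as the paper: choose $\zeta=\mu/4$ in Proposition~\ref{prop:Omega control}, cut vertically when paramonotonicity fails, and when it holds, use part~\eqref{it:omega control characteristics} to approximate the top, bottom, and a middle characteristic curve by parallel parabolas, verifying $d\ge \tfrac{1}{4}\delta_z(Q_v)$ so that the $\zeta\delta_z(Q_v)$ errors become $\le \mu d$. The only cosmetic difference is that the paper anchors the comparison parabolas at the midpoint $x_0=\tfrac{a+b}{2}$ (taking $u_i=(x_0,0,g_i(x_0))$ and $m$ their midpoint) rather than at the left endpoint $a$ as you do; this changes nothing in the estimates.
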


\begin{proof}
  Let $r, \eta,$ and $R$ be positive constants so that Proposition~\ref{prop:Omega control} is satisfied with $\zeta=\frac{\mu}{4}$.

  We construct $\Delta$ by a greedy algorithm.  Denote the root vertex of $\Delta$ by $v_0$ and let $Q_{v_0}=Q$; by assumption, it is $\mu$--rectilinear.  Suppose by induction that we have already constructed a $\mu$--rectilinear pseudoquad $(Q_v,R_v)$.  Let $v\in \cV(\Delta)$ be a vertex with children $w$ and $w'$.  Let $I=[a,b]$ be the base of $Q_v$ and let $g_1,g_2\from \R \to \R$ be its lower and upper bounds, respectively.

  Suppose that $\Gamma$ is not $(\eta,R)$--paramonotone on $rQ_v$.  The vertical line $x=\frac{a+b}{2}$ cuts $Q_v$ and $R_v$ vertically into two halves.  Let $Q_w$ and $Q_{w'}$ be the halves of $Q_v$ and let $R_w$ and $R_{w'}$ be the halves of $R_v$.  Since $(Q_v,R_v)$ is $\mu$--rectilinear, $(Q_w,R_w)$ and $(Q_{w'},R_{w'})$ are both $\mu$--rectilinear.

  Now suppose that $\Gamma$ is $(\eta,R)$--paramonotone on $rQ_v$.    Proposition~\ref{prop:Omega control} states that there is a $\sigma$--approximating plane $P$ for $Q_v$ such that for every $u\in 4Q_v$, any characteristic curve of $\Gamma$ that passes through $u$ is $\zeta \delta_z(Q)$--close to the characteristic curve of $P$ that passes through $u$.  For $i\in \{1,2\}$, let $u_i=\bigl(\frac{a+b}{2}, g_i(\frac{a+b}{2})\bigr)$, and let $m$ be the midpoint of $u_1$ and $u_2$.

  Let $g_3\from \R\to \R$ be a function whose graph is a characteristic curve for $\Gamma$ that passes through $m$.  Let $Q_w$ and $Q_{w'}$ be the pseudoquads with base $I$ that are bounded by the graphs of $g_1$, $g_3$, and $g_2$.

  The characteristic curves of $P$ that pass through $u_1$, $u_2$, and $m$ are parallel evenly-spaced parabolas; let $h_1,h_2,h_3\from V_0\to \R$ be the corresponding quadratic functions and let $d=h_2-h_3=h_3-h_1$ be the constant distance between them.  Let $R_w$ and $R_{w'}$ be the parabolic rectangles with base $I$ that are bounded by these three parabolas.  By Proposition~\ref{prop:Omega control}, we have $\|g_i-h_i\|_{L_\infty(4I)}\le \zeta \delta_z(Q)$ for $i\in \{1,2,3\}$.  In particular, every $x\in I$ satisfies
  $$|\delta_z(Q)-2d|\le |\delta_z(Q)-(g_2(x)-g_1(x))| + |g_2(x)-g_3(x) - d| + |g_3(x)-g_1(x) - d| \le 3\zeta \delta_z(Q),$$
  so $d\ge \frac{1}{4} \delta_z(Q)$ and $\|g_i-h_i\|_{L_\infty(4I)}\le  4\zeta d=\mu d$ for $i\in \{1,2,3\}$.  That is, $(Q_w,R_w)$ and $(Q_{w'},R_{w'})$ are $\mu$--rectilinear and satisfy Definition~\ref{def:rectilinear foliated patchwork} with $k=h_3$. We construct the desired rectilinear foliated patchwork by repeating this process for every vertex of $\Delta$.
\end{proof}

Pseudoquads that are not paramonotone contribute to the nonmonotonicity of $\Gamma$, so, as in \cite{NY18}, the total number and size of these pseudoquads is bounded by the measure of $\Gamma$.  In Section~\ref{sec:weighted Carleson}, we will use an argument based on the Vitali Covering Lemma to prove that rectilinear foliated patchworks constructed using Lemma~\ref{lem:subdivision algorithm} satisfy a weighted Carleson condition, as stated in the following proposition.
\begin{prop}\label{prop:algorithm is weighted Carleson}
  Let $r>1$ and let $0<\mu\le \frac{1}{32r^2}$.  Let $\eta, R>0$ and let $0<\lambda<1$.  Let $\Gamma$ be an intrinsic $\lambda$--Lipschitz graph, let $\Delta$ be a $\mu$--rectilinear foliated patchwork for $\Gamma$, and suppose that for all $v\in \cV(\Delta)$, the pseudoquad $Q_v$ is horizontally cut if and only if $\Gamma$ is $(\eta,R)$--paramonotone on $rQ_v$.  Let $W\from 2^{\cV(\Delta)}\to [0,\infty]$ be as in~\eqref{eq:def weight}.  Then for any $v\in \cV(\Delta)$,
  \begin{equation}\label{eq:weight bounded by perim}
    W\big(\{w\in \cVv(\Delta)\mid w\le v\}\big)  \lesssim_{\eta, r, R, \lambda} |Q_v|.
  \end{equation}
\end{prop}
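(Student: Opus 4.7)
The plan is to unfold the defining property of the algorithm into a measure-theoretic inequality and then sum over all vertically cut descendants of $v$ using the kinematic inequality on extended nonmonotonicity.  By hypothesis each $w \in \cVv(\Delta)$ fails $(\eta,R)$-paramonotonicity on $rQ_w$, so Definition~\ref{def:paramonotone} yields the pointwise bound
$$
\frac{|Q_w|}{\alpha(Q_w)^4} \;\le\; \frac{1}{\eta}\,\Omega^P_{\Gamma^+,\,R\delta_x(Q_w)}(rQ_w) \qquad \text{for every } w \in \cVv(\Delta),
$$
whose left-hand side is exactly the summand of $W(\{w\in\cVv(\Delta) : w\le v\})$. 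Thus the task reduces to bounding the right-hand side summed over $w \in \cVv(\Delta)\cap \cD(v)$ by a constant multiple of $|Q_v|$.

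Three structural inputs drive the proof.  First, since extended nonmonotonicity is absolutely continuous with uniformly bounded density (a direct consequence of the single-scale bound $\Omega^P_{\Gamma^+,t}(U)\lesssim_\lambda |U|$ that follows from~\eqref{eq:kinematic intro}), the inequality above forces an automatic aspect-ratio lower bound $\alpha(Q_w) \gtrsim_{\eta,r,\lambda} 1$ for every $w \in \cVv(\Delta)$: nonmonotonicity cannot exceed ambient measure, so $\eta/\alpha(Q_w)^4 \lesssim_\lambda r^3$.  Second, stratifying by the $x$-scale $\delta_x(Q_w) = 2^{-k}\delta_x(Q_v)$ into layers $\mathcal{A}_k(v)$ and observing that any two distinct elements of $\mathcal{A}_k(v)$ are incomparable in the tree order (one being a strict descendant of the other would require a vertical cut on the path, strictly decreasing the width), the pseudoquads in $\mathcal{A}_k(v)$ have pairwise disjoint interiors, their $x$-bases are among the $2^k$ dyadic subintervals of the base of $Q_v$, and within each column of common $x$-base their $z$-ranges stack disjointly.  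Third, Lemma~\ref{lem:child hierarchy} (which applies because $\mu \le 1/(32 r^2)$) places every $rQ_w$ with $w \le v$ inside $rQ_v$.

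Putting these together, I would prove a single-scale estimate of the form
$$
\sum_{w \in \mathcal{A}_k(v)} \Omega^P_{\Gamma^+,\,R\cdot 2^{-k}\delta_x(Q_v)}(rQ_w) \;\lesssim_{\eta,r,\lambda}\; \Omega^P_{\Gamma^+,\,R\cdot 2^{-k}\delta_x(Q_v)}(rQ_v)
$$
by applying a Vitali/Besicovitch-type selection adapted to the parabolic dilation $Q\mapsto rQ$ (which enlarges widths by $r$ and heights by $r^2$), the aspect-ratio bound controlling individual heights in each column, and the column disjointness controlling their packing.  Summing this estimate over $k \ge 0$ and invoking the full kinematic inequality~\eqref{eq:kinematic intro} on $U = rQ_v$ then yields
$$
\sum_{w \in \cVv(\Delta)\cap\cD(v)} \Omega^P_{\Gamma^+,\,R\delta_x(Q_w)}(rQ_w) \;\lesssim_{\eta,r,\lambda}\; \sum_{i\in\Z}\Omega^P_{\Gamma^+,2^{-i}}(rQ_v) \;\lesssim_\lambda\; |rQ_v| \;\lesssim_r\; |Q_v|,
$$
which delivers the required bound $W(\{w\in\cVv(\Delta) : w\le v\}) \lesssim_{\eta,r,R,\lambda} |Q_v|$.

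The delicate point, and the main obstacle, is the single-scale estimate.  The parabolic nature of $Q\mapsto rQ$ means that vertical intervals in a column of horizontally cut descendants dilate by the factor $r^2$, so sequences of geometrically decreasing heights can produce dilated intervals whose pointwise multiplicity is unbounded, and a naive multiplicity count therefore fails.  The correct bookkeeping combines the aspect-ratio bound $\alpha(Q_w)\gtrsim 1$ (which caps individual heights in terms of widths) with the disjointness of the original $z$-ranges, reducing the single-scale overlap estimate to a geometric-series summation analogous to the horizontal-cut weight recursion~\eqref{eq:horizontal cut weight} that underlies the proof of Lemma~\ref{lem:vertex sums}.
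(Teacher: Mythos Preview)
Your high-level plan --- stratify $\cVv(\Delta)\cap \cD(v)$ by width into layers $\mathcal{A}_k$, exploit within each layer the disjointness of the pseudoquads, run a Vitali-type argument column by column, and close via the kinematic inequality --- matches the paper's. The containment $rQ_w \subset rQ_v$ via Lemma~\ref{lem:child hierarchy}, the incomparability of the elements of $\mathcal{A}_k$, and the aspect-ratio lower bound $\alpha(Q_w)\gtrsim_{\eta,r,\lambda} 1$ (which is correct, though the paper does not use it) are all fine.

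The gap is in the single-scale step. You set as your target
\[
\sum_{w \in \mathcal{A}_k(v)} \Omega^P_{\Gamma^+,\,R\cdot 2^{-k}\delta_x(Q_v)}(rQ_w) \;\lesssim_{\eta,r,\lambda}\; \Omega^P_{\Gamma^+,\,R\cdot 2^{-k}\delta_x(Q_v)}(rQ_v),
\]
but this is the wrong target and is not what a Vitali selection delivers. A Vitali selection produces a subfamily $D\subset \mathcal{A}_k$ with pairwise disjoint $r$-expansions; for $D$ the displayed bound holds by additivity. Passing from $D$ back to all of $\mathcal{A}_k$ \emph{at the level of $\Omega^P$} would require bounded overlap of $\{rQ_w : w\in\mathcal{A}_k\}$, which, as you yourself note, fails: within a column heights can shrink without lower bound (your aspect-ratio inequality only caps them from above), so the pointwise multiplicity of the family is unbounded. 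Your appeal to a ``geometric-series summation analogous to~\eqref{eq:horizontal cut weight}'' does not apply here: the pseudoquads in a column of $\mathcal{A}_k$ are pairwise incomparable in the tree, not linked by a chain of horizontal cuts, so there is no geometric recursion to sum.

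The paper sidesteps this by running the Vitali argument at the level of $W$ rather than at the level of $\Omega^P$. Within a fixed column one selects greedily by \emph{height}: pick the tallest remaining pseudoquad $Q_w$, discard every $Q_{w'}$ whose $r$-expansion meets $rQ_w$, and repeat. Lemma~\ref{lem:pseudoquad expansions} forces each discarded $Q_{w'}$ to lie in $3rQ_w$; since $\delta_z(Q_{w'})\le \delta_z(Q_w)$ with equal widths, $\alpha(Q_{w'})\ge \alpha(Q_w)$, and combined with disjointness of the $Q_{w'}$ this gives
\[
W(\{w'\text{ discarded by }w\}) \le \alpha(Q_w)^{-4}\sum_{w'}|Q_{w'}|\le \alpha(Q_w)^{-4}|3rQ_w|\lesssim_r W(\{w\}),
\]
so $W(\text{column})\approx_r W(D)$. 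Now the pointwise paramonotonicity inequality and disjointness yield $W(D)\le \eta^{-1}\Omega^P(\text{strip}\cap rQ_v)$, and one sums over columns (bounded overlap of the strips) and over $k$ via the kinematic inequality. The point you missed is that the $\alpha^{-4}$ factor in $W$ is exactly what makes the Vitali comparison close, while a comparison of raw $\Omega^P$-masses does not.
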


With these tools at hand, Theorem~\ref{thm:corona graph formulated} follows directly.
\begin{proof}[{Proof of Theorem~\ref{thm:corona graph formulated} assuming Proposition~\ref{prop:Omega control} and Proposition~\ref{prop:algorithm is weighted Carleson}}]
  Let $r$ be as in Proposition~\ref{prop:Omega control} and write $\mu_0=\frac{1}{32r^2}$.  Fix $0<\mu\le \mu_0$ and $\sigma>0$, and let $\eta, R$ be as in Lemma~\ref{lem:subdivision algorithm}.  Since $\Gamma$ is an intrinsic $\lambda$--Lipschitz graph, Lemma~\ref{lem:subdivision algorithm} produces a $\mu$--rectilinear foliated patchwork $\Delta$ rooted at $Q$ with a set of $\sigma$--approximating planes.  By Proposition~\ref{prop:algorithm is weighted Carleson}, this patchwork is weighted--Carleson with a constant depending on $\eta, r, R,\sigma$, and $\lambda$.  Since $r>1$ is universal and $\eta,R$ depend only on $\mu,\sigma$, we obtain Theorem~\ref{thm:corona graph formulated} by using Lemma~\ref{lem:increas mu0} to increase $\mu_0=\frac{1}{32r^2}$ to $\mu_0=\frac{1}{32}$.
\end{proof}

Observe in passing that since in the above proof the patchwork that established Theorem~\ref{thm:corona graph formulated} was obtained from Proposition~\ref{prop:Omega control}, we actually derived the following more nuanced formulation of Theorem~\ref{thm:corona graph formulated}; it is worthwhile to state it explicitly here because this is how it will be used in forthcoming work of the second named author.

\begin{thm}\label{thm:future-use}
  For every $0<\lambda<1$  there is a function $D_\lambda\from \R^+\times \R^+\to \R^+$, and for every $0<\mu \le\frac{1}{32}$ and $\sigma >0$ there are $\eta=\eta(\mu,\sigma),R=R(\mu,\sigma)>0$ with the following properties.  Suppose that $\Gamma\subset \H$  is an intrinsic $\lambda$--Lipschitz graph over $V_0$ and $Q\subset V_0$ is a $\mu$--rectilinear pseudoquad for $\Gamma$. Then there is a $\mu$--rectilinear foliated patchwork $\Delta$ for $Q$ such that $\Delta$ is $D_\lambda(\mu,\sigma)$--weighted-Carleson and has a set of $\sigma$--approximating planes. Moreover, for all vertices $v\in \cV(\Delta)$, the associated pseudoquad $Q_v$ is horizontally cut if and only if $\Gamma$ is $(\eta,R)$--paramonotone on $rQ$, where $r>1$ is the universal constant in Proposition~\ref{prop:Omega control}.
\end{thm}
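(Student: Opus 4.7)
The plan is to assemble Theorem~\ref{thm:future-use} from the three ingredients already isolated in the excerpt: the subdivision algorithm (Lemma~\ref{lem:subdivision algorithm}), the weighted Carleson estimate for any patchwork produced by it (Proposition~\ref{prop:algorithm is weighted Carleson}), and the rectilinearity upgrade (Lemma~\ref{lem:increas mu0}). Let $r>1$ be the universal constant of Proposition~\ref{prop:Omega control}. First I would handle the ``small'' range of rectilinearity parameters, namely $0<\mu\le \frac{1}{32 r^2}$. Given such a $\mu$ and given $\sigma>0$, Lemma~\ref{lem:subdivision algorithm} supplies constants $\eta=\eta(\mu,\sigma)$ and $R=R(\mu,\sigma)$ and, for every intrinsic $\lambda$--Lipschitz graph $\Gamma$ and every $\mu$--rectilinear pseudoquad $Q$, a $\mu$--rectilinear foliated patchwork $\Delta$ rooted at $Q$ together with a set of $\sigma$--approximating planes, and such that each $v\in\cV(\Delta)$ is horizontally cut precisely when $\Gamma$ is $(\eta,R)$--paramonotone on $rQ_v$. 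This last property is exactly the ``moreover'' clause of Theorem~\ref{thm:future-use}, so it is built into the construction at no extra cost.

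Next I would invoke Proposition~\ref{prop:algorithm is weighted Carleson} on this $\Delta$. Because the cutting rule in $\Delta$ coincides with the paramonotonicity dichotomy used in the hypothesis of that proposition, it applies verbatim and yields
\[
W\big(\{w\in\cVv(\Delta)\mid w\le v_0\}\big)\lesssim_{\eta,r,R,\lambda}|Q|,
\]
where $v_0$ is the root of $\Delta$. The same bound holds for any descendant $v\in\cV(\Delta)$ since the restriction of $\Delta$ below $v$ is itself a patchwork of the same type for the $\mu$--rectilinear pseudoquad $Q_v$. Together with Lemma~\ref{lem:vertex sums}, which controls $W(\cVh(\Delta)\cap\cD(v))$ in terms of $W(\cVv(\Delta)\cap\cD(v))$ and $\alpha(Q_v)^{-4}|Q_v|$, this packages as a weighted Carleson condition with constant $D_\lambda(\mu,\sigma)$ depending only on $\lambda,\mu,\sigma$ (since $r$ is universal and $\eta,R$ depend only on $\mu,\sigma$).

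Finally, to remove the restriction $\mu\le \frac{1}{32 r^2}$ and obtain the theorem in the full range $\mu\le \frac{1}{32}$ stated in Theorem~\ref{thm:future-use}, I would apply Lemma~\ref{lem:increas mu0} with $\mu_0=\frac{1}{32r^2}$ and $\mu_0'=\frac{1}{32}$. That lemma exchanges a $(D,\mu_0)$--foliated corona decomposition for a $(D',\mu_0')$--one at the cost of enlarging the Carleson constant by a universal additive term, by first cutting the root pseudoquad vertically a bounded (depending on $\alpha_0$) number of times to reduce the aspect ratio, then applying the existing patchworks on the pieces and concatenating. Since the additional vertical cuts near the root do not alter the paramonotonicity dichotomy governing any pseudoquad of diameter below the initial scale (they are made before the recursive procedure begins and can be checked separately against the paramonotone criterion, or absorbed into the constants $\eta,R$), the ``moreover'' clause survives the reduction. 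The only potential obstacle I see is bookkeeping at the root: one must verify that each of the finitely many vertical cuts inserted by Lemma~\ref{lem:increas mu0} is indeed vertically cut by the paramonotone criterion, which may require shrinking $\eta$ a bounded number of times. This is purely combinatorial and does not affect the structure of the argument, so the overall proof reduces to careful stitching of the previously established lemmas.
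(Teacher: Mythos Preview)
Your argument for $0<\mu\le\frac{1}{32r^2}$ coincides with the paper's: Theorem~\ref{thm:future-use} is presented there simply as a byproduct of the proof of Theorem~\ref{thm:corona graph formulated}, combining Lemma~\ref{lem:subdivision algorithm} with Proposition~\ref{prop:algorithm is weighted Carleson}. (Your appeal to Lemma~\ref{lem:vertex sums} is unnecessary: the weighted Carleson condition in Definition~\ref{def:weighted Carleson} concerns only $\cVv(\Delta)$, which Proposition~\ref{prop:algorithm is weighted Carleson} bounds directly.)

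Your proposed treatment of the range $\frac{1}{32r^2}<\mu\le\frac{1}{32}$ has a genuine gap, and the fix you suggest cannot work. You want the forced vertical cuts inserted by Lemma~\ref{lem:increas mu0} to obey the paramonotone criterion after shrinking $\eta$, but this is impossible in general: if $\Gamma$ is itself a vertical plane then $\Omega^P_{\Gamma^+,R}\equiv 0$, so every pseudoquad is $(\eta,R)$--paramonotone for every $\eta>0$, and the criterion mandates a \emph{horizontal} cut at every step. Moreover, the number $i_0$ of forced cuts depends on $\alpha(Q)$ and not only on $(\mu,\sigma)$, so no uniform choice of $\eta$ is available anyway. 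The paper gives no separate proof of Theorem~\ref{thm:future-use} and does not address this point; the ``moreover'' clause follows directly from the construction only for $\mu\le\frac{1}{32r^2}$.
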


\begin{remark}\label{rem:ruled-surfaces}
  While the results in this paper rely only on approximating a intrinsic Lipschitz graph by vertical planes to bound its vertical perimeter, Theorem~\ref{thm:future-use} allows one to glue vertical planes together to approximate an intrinsic Lipschitz graph by ruled surfaces. Indeed, with notation as in Theorem~\ref{thm:future-use}, let $F\subset \cV(\Delta)$ be a finite coherent subset such that every vertex in $F$ is horizontally cut. Let $v_F$ be the maximal element of $F$ and let $m(F)$ be the set of minimal elements of $F$. Then $\{Q_v\mid v\in m(F)\}$ is a partition of $Q_{v_F}$ into a stack of pseudoquads $Q_1,\dots, Q_k$ that are vertically adjacent. The characteristic curves bounding these pseudoquads can be approximated by parabolas, denoted $h_0,\dots, h_k$, and the $\mu$--rectilinearity of $\Delta$ implies that these parabolas do not intersect inside $Q_{v_F}$; see the proof of Lemma~\ref{lem:pseudoquad expansions}. We can then construct a foliation of $Q_{v_F}$ by parabolas by linearly interpolating between the $h_i$'s. Since any parabola is the projection of a horizontal line to $V_0$, this foliation is the set of characteristic curves of a ruled surface $\Sigma\subset \H$. By passing to a limit, one can construct a ruled surface corresponding to any coherent subset of horizontally cut vertices.  This procedure is roughly analogous to the method used in \cite{DavidSemmesSingular} to approximate stopping-time regions in uniformly rectifiable sets in $\R^n$ by Lipschitz graphs. In our setting, we can use linear interpolation instead of using a partition of unity as in \cite{DavidSemmesSingular} or \cite{NY18}.

  By Proposition~\ref{prop:Omega control}, $\Sigma$ approximates $\Gamma$ and the characteristic curves of $\Sigma$ approximate the characteristic curves of $\Sigma$ inside $Q_{v_F}$ (with accuracy depending on the heights of the $Q_i$'s). In fact, if $v\in \cVh(\Delta)$ is a vertex such that every descendant of $v$ is horizontally cut (i.e., $\cD(v)\subset \cVh(\Delta)$), then $\Sigma$ coincides with $\Gamma$ over $Q_v$.
  We omit the details of these approximations because they are not needed in the current work, but complete details will be given in forthcoming work of the second named author where they will be used to analyze intrinsic Lipschitz functions.
\end{remark}

We will prove Proposition~\ref{prop:Omega control} and Proposition~\ref{prop:algorithm is weighted Carleson} in the following  sections.  Specifically, in Section~\ref{sec:parametric monotonicity}, we will define extended nonmonotonicity and extended parametric normalized nonmonotonicity and prove some of their basic properties.  In Section~\ref{sec:weighted Carleson}, we will prove that Proposition~\ref{prop:Omega control} implies Proposition~\ref{prop:algorithm is weighted Carleson}.  Finally, in Sections~\ref{sec:omega control outline}--\ref{sec:l1 and characteristic}, we will prove Proposition~\ref{prop:Omega control}.

\section{Extended nonmonotonicity}\label{sec:parametric monotonicity}

\subsection{Extended nonmonotonicity in $\R$}\label{sec:parametric monotonicity R}

In this section, we define the \emph{extended nonmonotonicity} and \emph{extended parameterized nonmonotonicity} of a set $E\subset \H$.  Like the quantitative  nonmonotonicity that was defined  in \cite{CKN} and the horizontal width that was defined in \cite{FasslerOrponenRigot18}, these measure how horizontal lines intersect $\partial E$.

We first define these quantities on subsets of lines, then define them on subsets of $\H$ by integrating over the space of horizontal lines.  Let $\cL$ be the space of horizontal lines in $\H$.  Let $\cN$ be the Haar measure on $\cL$, normalized so that the measure of the set of lines that intersect the ball of radius $r$ is equal to $r^3$.

Recall that a measurable subset $S\subset \R$ is \emph{monotone}~\cite{CheegerKleinerMetricDiff} if its indicator function is a monotone function (i.e., $S$ is equal to either $\emptyset, \R$, or some ray).  For a measurable set $U\subset \R$, we define the \emph{nonmonotonicity} of $S$ on $U$ by
\[\NM_S(U)\eqdef  \inf\big\{\cH^1\big(U\cap(M\symdiff S)\big) \mid M \text{ is monotone}\big\},\]
where, as usual, $M\symdiff S=(M\setminus S) \cup (S\setminus M)$ is the symmetric difference of $M$ and $S$.

For $S\subset \R$, we say that $S$ has \emph{finite perimeter} if $\partial_{\cH^1} S$ is a finite set, where we recall the notation~\eqref{eq:def partial mu} for  measure theoretical boundary, which in the present setting becomes
$$\partial_{\cH^1} S\eqdef \big\{x\in \R\mid 0<\cH^1\big((x-\epsilon, x+\epsilon)\cap S\big)<2\epsilon \text{ for all }\epsilon>0\big\}.$$

If $S\subset \R$ is a set of finite perimeter, then there is a unique collection of disjoint closed intervals of positive length $\cI(S)=\{I_1(S),I_2(S),\dots\}$ such that $S\symdiff \bigcup \cI(S)$ has measure zero.  For any $R>0$, we define as follows a point measure $\omega_{S,R}$ supported on the boundaries of the intervals in $\cI(S)$ of length at most $R$.
$$\omega_{S,R}\eqdef \sum_{\substack{I\in \cI(S)\\ \cH^1(I)\le R}} \cH^1(I) \cdot (\delta_{\min I}+\delta_{\max I}).$$
Let
$$\widehat{\omega}_{S,R}=\frac{\omega_{S,R}+\omega_{\R\setminus S,R}}{2}.$$

These measures are inspired  by analogous measures $\{\widehat{w}_i\}_{i\in \Z}$ used in \cite{CKN}.  It was shown in~\cite{CKN} that if $\delta>0$ is sufficiently small, then the nonmonotonicity of $S$ at scale $\delta^i$ is bounded in terms of a measure $\widehat{w}_i$ that counts the set of endpoints of intervals in $S$ or $\R\setminus S$ of length between $\delta^i$ and $\delta^{i+1}$.  The main difference between $\widehat{w}_i$ and $\widehat{\omega}_{S,\delta^i}$ is that $\widehat{w}_i$ ignores intervals of length less than $\delta^{i+1}$, but $\widehat{\omega}_{S, \delta^i}$ weights them by their lengths.

For $U\subset \R$, we call $\widehat{\omega}_{S,R}(U)$ the \emph{$R$--extended nonmonotonicity of $S$ on $U$}.  (We will typically use this notation when $R>\diam U$.)  We use the term ``extended'' here because it depends not only on $S\cap U$, but also on the behavior of $S$ outside $U$.  For example, let $U=[a,b]$ and suppose that $S\subset \R$ is a set with locally finite perimeter.  If $\widehat{\omega}_{S,R}(U)=0$ for all $R>0$, then there can be no finite-length interval in $\cI(S)$ or $\cI(\R\setminus S)$ with a boundary point in $U$.  That is, $U\cap \partial_{\cH^1}S$ is empty or, up to a measure-zero set, $S=[c,\infty)$ or $S=(-\infty,c]$ for some $c\in [a,b]$. Similarly, when $S=[a,b]$ and $R > b-a$, if $\widehat{\omega}_{S ,R}(U)$ is much smaller than $b-a$, then either $U\cap \partial_{\cH^1} S$ is almost empty or $U$ is almost monotone on an $R$--neighborhood of $S$.  This follows from the following two lemmas.  The first lemma is based on the bounds in Proposition~4.25 of \cite{CKN} and Lemma~3.4 of \cite{FasslerOrponenRigot18}.
\begin{lemma}\label{lem:omega boundary diam NM}
  Let $a,b\in \R$, let $U \subset [a,b]$, and let $R\ge b-a$.  For any finite-perimeter set $S\subset \R$,
  $$\NM_{S}(U)\le \diam \big( (a,b)\cap \partial_{\cH^1} S\big) \le \widehat{\omega}_{S,R}\big((a,b)\big).$$
\end{lemma}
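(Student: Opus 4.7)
The plan is to handle the two inequalities separately, both exploiting the fact that since $S$ has finite perimeter, the set $P \eqdef (a,b)\cap \partial_{\cH^1}S$ is a finite collection of points $c_1<c_2<\dots<c_k$ at which the alternating structure of $S$ on $(a,b)$ switches between density $0$ and density $1$.

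For the first inequality $\NM_S(U)\le \diam P$, I would construct an explicit monotone witness set. The subintervals $(a,c_1)$ and $(c_k,b)$ contain no measure-theoretic boundary points, so on each of them $S$ has constant density $0$ or $1$. Depending on which of the four configurations occurs, I would take $M\in \{\emptyset,\ \R,\ (-\infty,c_1],\ [c_k,\infty)\}$ so that $M$ and $S$ agree in density on $(a,c_1)$ and $(c_k,b)$. Then $(M\symdiff S)\cap(a,b)$ is contained up to a null set in $[c_1,c_k]$, so
\[\cH^1\big(U\cap(M\symdiff S)\big)\le \cH^1\big(U\cap [c_1,c_k]\big)\le c_k-c_1=\diam P.\]
The degenerate cases $|P|\le 1$ give $\NM_S(U)=0$ directly, since $S\cap (a,b)$ is already (essentially) monotone.

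For the second inequality $\diam P\le \widehat{\omega}_{S,R}((a,b))$, I would decompose the gap $c_k-c_1=\sum_{i=1}^{k-1}(c_{i+1}-c_i)$ and observe that each consecutive interval $[c_i,c_{i+1}]$ is a complete constituent interval of exactly one of $\cI(S)$ or $\cI(\R\setminus S)$ (the collections alternate as $i$ increases), has length $c_{i+1}-c_i\le b-a\le R$, and has both endpoints in $(a,b)$. Hence it contributes mass $2(c_{i+1}-c_i)$ to exactly one of $\omega_{S,R}((a,b))$ or $\omega_{\R\setminus S,R}((a,b))$. Summing,
\[\omega_{S,R}\big((a,b)\big)+\omega_{\R\setminus S,R}\big((a,b)\big)\ge 2\sum_{i=1}^{k-1}(c_{i+1}-c_i)=2(c_k-c_1),\]
and dividing by two delivers $\widehat{\omega}_{S,R}((a,b))\ge \diam P$.

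There is no serious obstacle here; the only thing requiring care is the bookkeeping at the boundary of $(a,b)$. In particular, the extremal points $c_1$ and $c_k$ may lie on rays of $\cI(S)\cup\cI(\R\setminus S)$ extending outside $(a,b)$, and such rays have infinite length and so are not counted by $\omega_{\cdot,R}$. This is harmless for the argument because it is the \emph{inner} intervals $[c_i,c_{i+1}]$ between consecutive boundary points, not the outer rays, that produce the mass we need; the bound $R\ge b-a$ is exactly what guarantees every such inner interval is short enough to be counted.
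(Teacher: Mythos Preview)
Your proof is correct and follows essentially the same approach as the paper. The paper phrases the argument in terms of the intervals $J_1,\dots,J_n\in\cI(S)\cup\cI(\R\setminus S)$ meeting $(a,b)$ rather than the boundary points $c_1<\dots<c_k$, but the inner intervals $J_2,\dots,J_{n-1}$ are exactly your $[c_i,c_{i+1}]$, and both the monotone witness for the first inequality and the mass-counting for the second are identical in substance.
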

\begin{proof}
  Let $\delta=\diam \big( (a,b)\cap \partial_{\cH^1} S\big)$.  Consider the following set of closed intervals.
  $$\bJ\eqdef \{I\in \cI(S)\cup \cI(\R\setminus S)\mid I\cap (a,b)\ne \emptyset\}.$$
  This set is finite, so we may label its elements $J_1,\dots, J_n$ in increasing order.  After changing $S$ on a measure-zero subset, the interiors of the $J_i$'s are alternately contained in $S$ and disjoint from $S$.  If $n=1$, then $\NM_{S}(U)=0$ and $\delta=0$, so we suppose that $n\ge 2$.  Then
  $$\delta=\min(J_n)-\max(J_1)=\sum_{i=2}^{n-1} \cH^1(J_i)$$
  and
  $$\widehat{\omega}_{S,R}\big((a,b)\big)\ge 2 \sum_{m=2}^{n-1} \cH^1(J_m)\ge \delta.$$
Regardless of whether $J_1$ and $J_n$ are in or out of $S$, there is a monotone subset $M\subset \R$ such that $\one_M$ agrees with $\one_S$ on $J_1$ and $J_n$.  Then
 \begin{equation*}
 \NM_{S}(U)\le \cH^1\big(U\cap (S\symdiff M)\big) \le \cH^1\big([a,b]\setminus (J_1\cup J_n)\big) = \delta.\tag*{\qedhere}
 \end{equation*}
\end{proof}

A similar reasoning gives the following lower bound.  Recall that $\supp_{\cH^1}$ and $\inter_{\cH^1}$ denote measure-theoretic support and interior, see \eqref{eq:def supp mu}--\eqref{eq:def inter mu}.
\begin{lemma}\label{lem:BWB extended}
 Fix $R,a,b\in \R$ with $a<b$ and  $R\ge b-a$.  Let $S\subset\R$ have locally finite perimeter such that $a,b\in \supp_{\cH^1} (\R\setminus S)$.  For any closed interval $I\subset [a,b]$, either $I \subset \inter_{\cH^1} S$ or
  $$\widehat{\omega}_{S, R}(I) \ge \frac{\cH^1(S\cap I)}{2}.$$
\end{lemma}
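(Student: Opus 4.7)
The plan is to directly compare $\cH^1(S\cap I)$ with the sum of lengths of those constituent intervals of $S$ (elements of $\cI(S)$) that contribute to $\widehat{\omega}_{S,R}(I)$. I would begin by reducing to the non-trivial alternative: suppose that $I \not\subset \inter_{\cH^1} S$. Following the remarks that already appear in the excerpt just before Lemma~\ref{lem:BWB extended}, if there were some $J=[c,d]\in \cI(S)$ with $I\subset J^{\circ}=(c,d)$, then every point of $I$ would have a small neighborhood contained (up to a null set) in $J\subset S$, so $I$ would lie in $\inter_{\cH^1} S$. Thus we may assume no such enclosing $J$ exists.

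Next I would exploit the hypothesis $a,b\in \supp_{\cH^1}(\R\setminus S)$ to control the length of every $J\in \cI(S)$ that meets $I$ with positive measure. If $J=[c,d]\in \cI(S)$ had $c<a$, then $a\in (c,d)\subset \inter_{\cH^1} S$, contradicting $a\in \supp_{\cH^1}(\R\setminus S)$. The symmetric argument gives $d\le b$. Hence every such $J$ satisfies $J\subset [a,b]$, so $\cH^1(J)\le b-a\le R$, and therefore both endpoints of $J$ are weighted points of $\omega_{S,R}$.

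Now enumerate the intervals $J_1,\ldots, J_n\in \cI(S)$ (finite by local finite perimeter) that intersect $I=[\alpha,\beta]$ in positive measure; by the reduction step, none of them contains $I$ in its interior. For each $J_i=[c_i,d_i]$, at least one of $c_i,d_i$ lies in $I$: otherwise we would have $c_i<\alpha$ and $d_i>\beta$, i.e.\ $I\subset J_i^{\circ}$, the excluded case. Thus $J_i$ contributes at least $\cH^1(J_i)$ to $\omega_{S,R}(I)$, and hence at least $\tfrac12\cH^1(J_i)\ge \tfrac12\cH^1(J_i\cap I)$ to $\widehat{\omega}_{S,R}(I)$. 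Summing,
\[
\widehat{\omega}_{S,R}(I)\ge \sum_{i=1}^n \tfrac12\cH^1(J_i\cap I)=\tfrac12 \cH^1(S\cap I),
\]
which is the required bound. Contributions from $\cI(\R\setminus S)$ only strengthen the inequality and need not be used.

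The only delicate point I anticipate is the book-keeping for intervals $J_i$ that straddle exactly one endpoint of $I$, where only one of the two endpoint masses of $\omega_{S,R}$ falls inside $I$. The factor $\tfrac12$ in the conclusion is precisely what absorbs this loss, since in that situation $\cH^1(J_i\cap I)\le \cH^1(J_i)$. This is why the hypothesis $a,b\in \supp_{\cH^1}(\R\setminus S)$ (which confines all relevant $J_i$ to $[a,b]$ and rules out the doubly-straddling case) combined with $R\ge b-a$ is exactly what is needed.
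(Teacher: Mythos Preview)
Your proof is correct and follows essentially the same route as the paper's: assume $I\not\subset\inter_{\cH^1}S$, argue that every interval of $\cI(S)$ meeting $I$ has an endpoint in $I$ and is contained in $[a,b]$ (hence has length $\le R$), and sum the resulting contributions to $\omega_{S,R}(I)$. The paper's version is terser but the logic is identical; your write-up simply makes explicit the ``no enclosing interval'' reduction and the endpoint dichotomy that the paper leaves to the reader.
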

\begin{proof}
  Suppose that $I \not \subset \inter_{\cH^1} S$.  Let $I_1,\dots, I_n$ be the intervals in $\cI(S)$ that intersect $I$.  By assumption, each of the intervals $I_1,\dots, I_n$  has at least one endpoint in $I$.  Furthermore, since $a,b\in \supp_{\cH^1} (\R\setminus S)$, we have $I_j\subset [a,b]$ for all $j\in \n$. In particular, $\max_{j\in \n}\ell(I_j)\le R$.  Up to a null set, we have $S\cap I\subset \bigcup_{j=1}^n I_j$, so
  \begin{equation*}
  \widehat{\omega}_{S, R}(I)\ge \sum_{j=1}^n \frac{\cH^1(I_{j})}{2} \ge \frac{\cH^1(S\cap I)}{2}.\tag*{\qedhere}
  \end{equation*}
\end{proof}

These lemmas yield  the following description of sets with small extended nonmonotonicity, which states that points in their measure theoretic boundary  must be either very close to each other, or very far from each other.
\begin{prop}
  Let $S\subset \R$ be a set with locally finite perimeter and fix $c,d\in \R$ with $c<d$.  Let $R\ge d-c$ and suppose that $0<\epsilon<\frac{d-c}{8}$ and $\widehat{\omega}_{S,R}((c,d))<\epsilon$.   Then,
\begin{equation}\label{eq:R-t}
  \forall  t\in  [c+4\epsilon,d-4\epsilon]\cap \partial_{\cH^1}S,\qquad \diam\big( (t-R,t+R)\cap \partial_{\cH^1} S\big)<\epsilon.
  \end{equation}
\end{prop}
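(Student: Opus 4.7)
The plan is to reduce the claim to the simpler bound $\diam((c,d)\cap\partial_{\cH^1}S)<\epsilon$, which follows immediately from Lemma~\ref{lem:omega boundary diam NM} applied to the interval $(c,d)$ itself (the hypothesis $R\ge d-c$ is precisely what that lemma requires). I would then enumerate $\partial_{\cH^1}S$ as an increasing sequence and write $p_{k_1}\le p_{k_2}$ for its extremal elements inside $(c,d)$. Since $t\in(c,d)\cap\partial_{\cH^1}S$, we have $t\in[p_{k_1},p_{k_2}]$, and combined with $p_{k_2}-p_{k_1}<\epsilon$ and $t\in[c+4\epsilon,d-4\epsilon]$ this pushes $p_{k_1},p_{k_2}$ into $[c+3\epsilon,d-3\epsilon]$.

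The main step, and the only nontrivial one, is to show that the larger window $(t-R,t+R)$ contains no boundary points outside $(c,d)$. For this I would consider the next boundary point $p_{k_2+1}$ to the right of $p_{k_2}$, if it exists: since there are no boundary points of $S$ in $(p_{k_2},d)$, we must have $p_{k_2+1}\ge d$, so the interval $[p_{k_2},p_{k_2+1}]\in\cI(S)\cup\cI(\R\setminus S)$ has length at least $d-p_{k_2}\ge 3\epsilon$. If this length were at most $R$, then that interval would deposit mass $\tfrac{1}{2}\cH^1([p_{k_2},p_{k_2+1}])\ge \tfrac{3\epsilon}{2}$ into $\widehat{\omega}_{S,R}$ at the endpoint $p_{k_2}\in(c,d)$, giving $\widehat{\omega}_{S,R}((c,d))\ge \tfrac{3\epsilon}{2}>\epsilon$ and contradicting the hypothesis. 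Hence $p_{k_2+1}>p_{k_2}+R\ge t+R$, and all later boundary points are even further to the right. A symmetric argument on the left gives $p_{k_1-1}<t-R$ whenever $p_{k_1-1}$ exists.

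Combining these, $(t-R,t+R)\cap\partial_{\cH^1}S\subset[p_{k_1},p_{k_2}]$, whose diameter is at most $p_{k_2}-p_{k_1}<\epsilon$. The essential conceptual point, and the main thing to keep track of, is the asymmetric contribution in the definition of $\widehat{\omega}_{S,R}$: intervals with only one endpoint in $(c,d)$ still contribute half their length to $\widehat{\omega}_{S,R}((c,d))$, and this is precisely the mechanism that lets the $4\epsilon$ buffer around $t$ inside $(c,d)$ translate into an exclusion zone throughout the larger window $(t-R,t+R)$. The constant $4$ in the hypothesis is tight enough to make $\tfrac{3\epsilon}{2}>\epsilon$ in the contradiction step.
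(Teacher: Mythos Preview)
Your proof is correct and follows the same two-step outline as the paper: first invoke Lemma~\ref{lem:omega boundary diam NM} to get $\diam((c,d)\cap\partial_{\cH^1}S)<\epsilon$, then show the inclusion $(t-R,t+R)\cap\partial_{\cH^1}S\subset(c,d)$. The only difference is in how the inclusion is obtained: the paper argues by contradiction via Lemma~\ref{lem:BWB extended} (applied to $S$ and to $\R\setminus S$ on $[t,d]$, yielding $d-t<4\epsilon$), whereas you work directly from the definition of $\widehat{\omega}_{S,R}$ by looking at the single interval $[p_{k_2},p_{k_2+1}]\in\cI(S)\cup\cI(\R\setminus S)$ straddling $d$ and reading off its half-length contribution at $p_{k_2}$. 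Your route is slightly more hands-on and avoids the auxiliary lemma, at the cost of the bookkeeping needed to enumerate consecutive boundary points; both arguments exploit the $4\epsilon$ buffer in the same spirit.
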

\begin{proof}
  Fix $t\in  [c+4\epsilon,d-4\epsilon]\cap \partial_{\cH^1}S$.  We will prove that this implies that
  \begin{equation}\label{eq:included in cd}
    (t-R,t+R)\cap \partial_{\cH^1}S\subset  (c,d).
  \end{equation}
  Equation~\eqref{eq:R-t} is a consequence of the inclusion~\eqref{eq:included in cd}, since by  Lemma~\ref{lem:omega boundary diam NM},
    $$\diam \big((t-R,t+R)\cap \partial_{\cH^1}S\big) \stackrel{\eqref{eq:included in cd}}{\le} \diam \big((c,d)\cap \partial_{\cH^1}S\big)  \le \widehat{\omega}_{S,R}\big((c,d)\big)<\epsilon.$$

  Suppose by way of contradiction that~\eqref{eq:included in cd} fails. So, there is $u\in (t-R,t+R)\cap \partial_{\cH^1}S$ with $u\ge d$ or $u\le c$. We will treat only the case $u\ge d$ since the case $u\le c$ is analogous. Lemma~\ref{lem:BWB extended} applied with $[a,b]=[t,u]$ and $I=[t,d]$, gives
  $$\frac{\cH^1(S\cap [t,d])}{2}\le \widehat{\omega}_{S, R}([t,d])<\epsilon.$$
  If we replace $S$ by $\R\setminus S$, the Lemma~\ref{lem:BWB extended} gives
  $$\frac{\cH^1([t,d]\setminus S)}{2}\le \widehat{\omega}_{S, R}([t,d])<\epsilon.$$
  So $d-t<4\epsilon$, which contradicts the choice of $t$.
\end{proof}

\begin{remark}\label{rem:ENM-vs-NM}
  Despite the name ``extended nonmonotonicity,'' there is no direct comparison between the extended nonmonotonicity of $S$ on $U$ and the nonmonotonicity of $S$ on a neighborhood of $U$. For example, if $R>0$, $0<\epsilon<1$, and $S=[-\epsilon,\epsilon] \cup [R,\infty)$, then $\NM_S(\R)=4\epsilon$, but $\widehat{\omega}_{S,R}$ is the point measure
  $$\widehat{\omega}_{S,R}=\epsilon\delta_{-\epsilon} + \left(\frac{\epsilon}{2}+\frac{R}{2}\right)\delta_{\epsilon} + \frac{R}{2}\delta_{R},$$
  so $\widehat{\omega}_{S,R}([-1,1])$ is large despite $S$ having small nonmonotonicity. Conversely, for any $T\subset \R$ that contains $[-1,1]$, the boundary $\partial_{\cH^1} T$ is disjoint from $(-1,1)$, so $\widehat{\omega}_{T,R}((-1,1))=0$ regardless of the behavior of $T$ on the rest of $\R$.
\end{remark}

\subsection{Extended nonmonotonicity in $\H$}

We have defined $\NM$ and $\widehat{\omega}$ for subsets of $\R$, but the same definitions are valid for subsets of any line $L\in \cL$.  This lets us define the nonmonotonicity of a subset of $\H$ by integrating over horizontal lines.

When $U, E\subset \H$ are measurable sets, we define the nonmonotonicity of $E$ on $U$ by
$$\NM_{E}(U)\eqdef\int_{\cL} \NM_{E\cap L}(U\cap L)\ud \cN(L).$$
(Note that this definition differs from the definition in \cite{CKN}.  Specifically, in \cite{CKN}, this was only defined in the case that $U=B_r(x)$ for some $r\in (0,\infty)$ and $x\in \H$, and was normalized by a factor of $r^{-3}$ to make it scale-invariant.)

\begin{defn}\label{def:extended monotone} Fix $R>0$.
  Let $E\subset \H$ be a set with finite perimeter.  By the kinematic formula (Section~\ref{sec:kinematic}), for almost every $L\in \cL$, the intersection $E\cap L$ is a set with finite perimeter, and we define for $U\subset \H$,
  \begin{equation}\label{eq:first hat}
  \wwidehat{\omega}_{E,R}(U,L)\eqdef \widehat{\omega}_{E\cap L,R}(U\cap L).
  \end{equation}
We then define a measure $\ENM_{E, R}$ on $\H$ by setting
  $$\ENM_{E, R}(U)\eqdef \int_{\cL}\wwidehat{\omega}_{E,R}(U, L)\ud \cN(L).$$
  We call $\ENM_{E, R}(U)$ the \emph{$R$--extended nonmonotonicity} of $E$ on $U$, and for $\nu>0$ we say that $E$ is \emph{$(\nu,R)$--extended monotone} on $U$ if $\ENM_{E,R}(U)\le \nu$.  Like $\widehat{\omega}_{S,R}(\cdot)$, $\ENM_{E,R}(U)$ depends on the behavior of $E$ in an $R$--neighborhood of $U$. If $R\le R'$, then $\ENM_{E, R}\le \ENM_{E, R'}$.
\end{defn}

When we say that a subset $U\subset \H$ is  \emph{convex}, we will always mean that it is convex as a subset of the vector space $\R^3$.  For every $g\in \H$, the map $v\mapsto gv$ is an affine map from $\H$ to itself, so convexity is preserved by left multiplication.

\begin{lemma}\label{lem:compare NM omega convex}
  Let $U\subset \H$ be a measurable bounded set and let $K\subset U$ be convex.   Let $E\subset \H$ be a finite-perimeter set.  Then, for every $R>\diam U$ we have
  $$\NM_{E}(K) \le \ENM_{E,R}(U).$$
\end{lemma}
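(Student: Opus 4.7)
The plan is to deduce this line-by-line from Lemma~\ref{lem:omega boundary diam NM} and then integrate over the space of horizontal lines, so the proof should be short and mechanical; the convexity of $K$ is used exactly where one needs $K\cap L$ to be a single interval on each horizontal line $L$.

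First I would fix an arbitrary horizontal line $L\in \cL$ and parametrize it by arclength, identifying $L$ with $\R$. Since $K$ is convex in $\R^3$ and $L$ is an affine line, the intersection $K\cap L$ is either empty or a compact interval $[a,b]\subset \R$ (possibly degenerate). When $K\cap L=\emptyset$ the inequality $\NM_{E\cap L}(K\cap L)\le \wwidehat{\omega}_{E,R}(U,L)$ is trivial, so assume $K\cap L=[a,b]$. Because horizontal lines are geodesics we have $b-a=\cH^1(K\cap L)\le \diam(K)\le \diam(U)<R$, so the hypothesis $R\ge b-a$ of Lemma~\ref{lem:omega boundary diam NM} is satisfied. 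By the kinematic formula (Section~\ref{sec:kinematic}), $E\cap L$ has finite perimeter for $\cN$--almost every $L$, so we may apply Lemma~\ref{lem:omega boundary diam NM} with the choice $S=E\cap L$ and ``$U$''$=[a,b]$ there, obtaining
\[
\NM_{E\cap L}(K\cap L)\le \widehat{\omega}_{E\cap L,R}\big((a,b)\big).
\]

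Next I would upgrade the right-hand side to $\wwidehat{\omega}_{E,R}(U,L)$ by monotonicity of measures. Indeed, $(a,b)\subset K\cap L\subset U\cap L$ and $\widehat{\omega}_{E\cap L,R}$ is a non-negative Borel measure on $\R\cong L$, so
\[
\widehat{\omega}_{E\cap L,R}\big((a,b)\big)\le \widehat{\omega}_{E\cap L,R}(U\cap L)\stackrel{\eqref{eq:first hat}}{=}\wwidehat{\omega}_{E,R}(U,L).
\]

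Finally, I would integrate the pointwise (in $L$) inequality $\NM_{E\cap L}(K\cap L)\le \wwidehat{\omega}_{E,R}(U,L)$ against $\cN$, yielding
\[
\NM_{E}(K)=\int_{\cL}\NM_{E\cap L}(K\cap L)\ud\cN(L)\le \int_{\cL}\wwidehat{\omega}_{E,R}(U,L)\ud\cN(L)=\ENM_{E,R}(U),
\]
as desired. I do not anticipate a real obstacle here; the only things to verify carefully are that convexity of $K$ forces $K\cap L$ to be a single interval (so that Lemma~\ref{lem:omega boundary diam NM} applies with $[a,b]=K\cap L$) and that $\diam(K\cap L)\le \diam U<R$ so the length hypothesis of that lemma is met.
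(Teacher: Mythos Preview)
Your proposal is correct and follows essentially the same approach as the paper's own proof: reduce to a single horizontal line using convexity to get an interval, apply Lemma~\ref{lem:omega boundary diam NM}, use monotonicity to pass from $K\cap L$ to $U\cap L$, and integrate over $\cL$. The paper's version is slightly terser but the structure is identical.
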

\begin{proof}
  Let $L\in \cL$ be a horizontal line.  By convexity, the intersection $I=L\cap K$ is an interval and $\ell(I)\le \diam U$.  By Lemma~\ref{lem:omega boundary diam NM},
  $$\NM_{E\cap L}(I)\le \widehat{\omega}_{E\cap L,R}(I) \le \wwidehat{\omega}_{E,R}(U,L).$$
  Integrating both sides of this inequality with respect to $\cN$ yields the desired bound.
\end{proof}


We will also define a parametric version of extended nonmonotonicity that is better adapted to intrinsic Lipschitz graphs.  This is based on a different measure on the space of horizontal lines, denoted $\cN_P$, which we next describe.

Let $W_0=\{x=0\}$ be the $yz$--plane and let $\cL_P$ be the set of horizontal lines that are not parallel to $W_0$.  Each $L\in \cL_P$ intersects $W_0$ in a single point $w(L)$, called the \emph{intercept} of $L$, and has a unique \emph{slope} $m(L)\in \R$ such that $L=w(L)\cdot \langle X+m(L)Y\rangle$.

The map $(m,w)\from \cL_P\to \R\times W_0$ is a bijection, and we define $\cN_P$ to be the pullback of the Lebesgue measure on $\R\times W_0$ under this bijection. This measure is preserved by shear maps and translations. If $a,b>0$ and if $L_{(0,y,z),m}$ is the line with slope $m$ and intercept $(0,y,z)$, then $s_{a,b}(L_{(0,y,z),m})=L_{(0,by,abz),m\frac{b}{a}}$, so for any measurable set $A\subset \cL_P$,
\begin{equation}\label{eq:transform N}
  \cN_P\big(s_{a,b}(A)\big)=b^3 \cN_P(A).
\end{equation}

Let $E \subset \H$.  For any $R>0$, any $U\subset V_0$, and any $L\in \cL_P$, we define
\begin{equation}\label{eq:define hat OmegaP}
  \wwidehat{\omega}^P_{E, R}(U, L) \eqdef \widehat{\omega}_{x(E \cap L),R}\Big(x\big(\Pi^{-1}(U)\cap L\big)\Big).
\end{equation}
This is similar to $\wwidehat{\omega}_{E, R}(\Pi^{-1}(U), L)$ in~\eqref{eq:first hat}, but the projection to the $x$--coordinate that appears in~\eqref{eq:define hat OmegaP} changes the measures and lengths involved by a constant factor.

When $E$ is a finite-perimeter subset of $\H$, we define a measure $\Omega^P_{E,R}$ on $V_0$ by setting for any measurable subset $U\subset V_0$,
\begin{equation}\label{eq:define OmegaP}
  \Omega^P_{E,R}(U)\eqdef \frac{1}{R}\int_{\cL_P} \wwidehat{\omega}^P_{E,R}(U,L)\ud \cN_P(L).
\end{equation}
We call $\Omega^P_{E, R}(U)$ the $R$--\emph{extended parametric normalized nonmonotonicity} of $E$ on $U$.  Note that the definition~\eqref{eq:define OmegaP} includes an $R^{-1}$ factor that does not appear in Definition~\ref{def:extended monotone}; we will see that this normalization allows for  the kinematic formula \eqref{eq:kinematic intro} to hold.

In general, the measure $\Omega^P_{E, R}$ is not necessarily locally finite. Indeed, if $B\subset \H$ is a ball, then the set of lines that pass through $B$ has infinite $\cN_P$--measure.  But when $\Gamma$ is an intrinsic $\lambda$--Lipschitz graph, any line with sufficiently large slope intersects $\Gamma$ exactly once.  If $E=\Gamma^+$ and $L\in \cL_P$ is a line such that $L\cap E$ is nonmonotone, then $L$ has bounded slope; it follows that $\Omega^P_{\Gamma^+,R}(K)$ is finite for any compact $K\subset V_0$. Furthermore, $\Omega^P_{\Gamma^+,R}$ is bounded below by $\ENM_{\Gamma^+,R}$.

\begin{lemma}\label{lem:measure comparison}
  Let $R>0$ and let $x\in \H$.  Suppose that $E$ is a finite-perimeter subset of $\H$ and let $U\subset \H$ be measurable.  Then
  $$\ENM_{E,R}(U) \lesssim R \Omega^P_{E,R}\big(\Pi(U)\big).$$
\end{lemma}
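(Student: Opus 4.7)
The plan is to deduce the inequality in two main steps: (a) pass to a point-wise comparison of $\wwidehat{\omega}_{E,R}(U,L)$ and $\wwidehat{\omega}^P_{E,R}(\Pi(U),L)$ on the full-$\cN$-measure set of lines $\cL_P\subset \cL$; and (b) integrate against the Radon--Nikodym density between $\cN$ and $\cN_P$ on $\cL_P$.

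For step (a), the family $\cL\setminus \cL_P$ consists of horizontal lines of the form $p\langle Y\rangle$ (those parallel to $W_0$), and this subfamily is $\cN$--null since it fixes the direction $\theta=\pi/2$, a codimension-one condition in~\eqref{eq:pointed line identity}. For each $L\in \cL_P$ of slope $m=m(L)$, the projection $x\from L\to \R$ is an affine bijection along which the sub-Riemannian length satisfies $d\cH^1|_L=\sqrt{1+m^2}\,dx$. Therefore $\cI(E\cap L)\cup \cI(L\setminus E)$ is in bijection with $\cI(x(E\cap L))\cup \cI(\R\setminus x(E\cap L))$, with each $\cH^1$--length divided by $\sqrt{1+m^2}$, so every interval of $\cH^1$--length at most $R$ also has $x$--length at most $R$ and is hence counted in~\eqref{eq:define hat OmegaP}. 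Moreover, since $U\subset \Pi^{-1}(\Pi(U))$, every endpoint $e\in U\cap L$ satisfies $x(e)\in x(\Pi^{-1}(\Pi(U))\cap L)$. Comparing the two defining sums term by term yields the point-wise bound
\begin{equation}\label{eq:plan-pointwise}
\wwidehat{\omega}_{E,R}(U,L)\le \sqrt{1+m^2}\,\wwidehat{\omega}^P_{E,R}\bigl(\Pi(U),L\bigr).
\end{equation}

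For step (b), a direct but slightly fiddly change-of-variables computation gives $d\cN\asymp (1+m^2)^{-3/2}\,d\cN_P$ on $\cL_P$: writing $m=\tan\theta$ yields $dm=(1+m^2)\,d\theta$, and at fixed $\theta$ the linear map sending an intercept on $W_0$ to the intercept on the plane perpendicular to $\cos\theta\,X+\sin\theta\,Y$ has Jacobian $\cos\theta=(1+m^2)^{-1/2}$; combining these two factors with the standard product representation $d\cN\asymp d\theta\,dA$ in direction--perpendicular-intercept coordinates gives the claimed density. Substituting this and~\eqref{eq:plan-pointwise} into $\ENM_{E,R}(U)=\int_{\cL_P}\wwidehat{\omega}_{E,R}(U,L)\,d\cN(L)$ gives
$$\ENM_{E,R}(U)\lesssim \int_{\cL_P}\frac{\wwidehat{\omega}^P_{E,R}(\Pi(U),L)}{1+m^2}\,d\cN_P(L)\le \int_{\cL_P}\wwidehat{\omega}^P_{E,R}\bigl(\Pi(U),L\bigr)\,d\cN_P(L)=R\,\Omega^P_{E,R}\bigl(\Pi(U)\bigr),$$
as required. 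The main (essentially routine but somewhat tedious) obstacle is the Jacobian computation in (b); the crucial structural point is that the $\sqrt{1+m^2}$ factor produced by step (a) is exactly absorbed by $(1+m^2)^{-3/2}$, leaving the integrable kernel $(1+m^2)^{-1}\le 1$, which is what enables the bound to hold with a universal constant.
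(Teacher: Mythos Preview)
Your proof is correct and follows essentially the same route as the paper's: a pointwise comparison $\wwidehat{\omega}_{E,R}(U,L)\le \sqrt{1+m^2}\,\wwidehat{\omega}^P_{E,R}(\Pi(U),L)$ for $L\in\cL_P$, followed by the change of measure $d\cN\asymp (1+m^2)^{-3/2}\,d\cN_P$, leaving the bounded kernel $(1+m^2)^{-1}$. The paper records step~(a) as the equality $\wwidehat{\omega}^P_{E,R}(\Pi(U),L)=\phi(m)^{-1}\wwidehat{\omega}_{E,\phi(m)R}(\Pi^{-1}(\Pi(U)),L)$ and then uses monotonicity in both arguments, but this is the same argument in slightly different packaging.
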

\begin{proof}
  Let $L\in \cL_P$ and let $m=m(L)$ be the slope of $L$, so that the restriction $x|_L$ shrinks lengths by a factor of $\phi(m)=\sqrt{1+m^2}$.  Then
  $$\wwidehat{\omega}^P_{E,R}(\Pi(U),L)=\frac{\wwidehat{\omega}_{E ,\phi(m) \cdot R}(\Pi^{-1}(\Pi(U)),L)}{\phi(m)}\ge \frac{\wwidehat{\omega}_{E,R}(U,L)}{\phi(m)}.$$
 For $w\in W_0$ and $m\in \R$, let $L_{w,m}$ be the line $L_{w,m}=w\cdot \langle X+mY\rangle\in \cL_P$.  Then it follows that
  \begin{align*}
    R \Omega^P_{E,R}\big(\Pi(U)\big)
    =\int_{W_0}\int_\R \wwidehat{\omega}^P_{E,R}(\Pi(U), L_{w,m}) \ud m \ud w
    \ge \int_{W_0}\int_\R \frac{\wwidehat{\omega}_{E,R}(U, L_{w,m})}{\sqrt{1+m^2}} \ud m \ud w.
  \end{align*}

  For $\theta\in \R$, let $R_\theta\from \H\to \H$ be the rotation by angle $\theta$ around the $z$--axis. Since $\cN$ is invariant under translations and rotations, there is $c>0$ such that for any measurable $f\from \cL\to \R$,
  $$\int_{\cL} f(M) \ud \cN(M) = c\int_{W_0} \int_{-\frac{\pi}{2}}^{\frac{\pi}{2}} f\big(R_{\theta}(L_{g,0})\big) \ud \theta \ud g.$$
Any line in $\cL_P$ can be written as $R_\theta(L_{g,0})$ for some $\theta\in \R$ and $g\in W_0$.  Specifically, for $w\in W_0$ and $m\in \R$, let $\theta(m)=\arctan m$ and let $g_m(w)$ be the $W_0$--intercept of $R_{-\theta(m)}(L_{w,m}))$, so that $L_{w,m}=R_{\theta(m)}(L_{g_m(w),0})$.
  Writing $g_m$ in coordinates as $g_m=(0, b_m,c_m)$, its Jacobian is
  $$J_{g_m}(y, z) = \det\begin{pmatrix}
    \frac{\ud  b_m}{\ud  y} & \frac{\ud  b_m}{\ud  z} \\
    \frac{\ud  c_m}{\ud  y} & \frac{\ud  c_m}{\ud  z}
  \end{pmatrix} = \det\begin{pmatrix}
    \cos(\arctan m)& 0 \\
    \frac{d c_m} {d y} & 1
  \end{pmatrix} = \frac{1}{\sqrt{1+m^2}}.$$
Consequently,
  \begin{align*}\int_{W_0} \int_{-\frac{\pi}{2}}^{\frac{\pi}{2}} f\big(R_{\theta}(L_{g,0})\big) \ud \theta \ud g &= \int_{W_0} \int_\R f(L_{w,m}) \frac{\ud \theta}{\ud  m} J_{g_m}(w) \ud m \ud w\\& = \int_{W_0} \int_\R \frac{f(L_{w,m})}{(1+m^2)^{\frac{3}{2}}} \ud m \ud w.\end{align*}
Thus
  \begin{equation*}
    \ENM_{E,R}(U)
    = \int_{\cL} \wwidehat{\omega}_{E,R}(U, L) \ud\cN(L)
    = c \int_{W_0} \int_\R \frac{\wwidehat{\omega}_{E,R}(U, L_{w,m})}{(1+m^2)^{\frac{3}{2}}} \ud m \ud w
    \le c R \Omega^P_{E,R}\big(\Pi(U)\big),
  \end{equation*}
  as desired.
\end{proof}

One advantage of $\Omega^P$ over $\ENM$ is that $\Omega^P$ scales nicely under automorphisms.

\begin{lemma}\label{lem:Omega scaling}
  Fix $a,b\in \R\setminus \{0\}$ and let $g=q\circ \rho_h\circ s_{a,b}\from \H\to \H$  be a composition of a shear map $q$, a left-translation by $h\in \H$, and a stretch map $s_{a,b}$.  Let $\hat{g}\from V_0\to V_0$ be the map induced on $V_0$, i.e., $\hat{g}(x)=\Pi(g(x))$ for all $x\in V_0$.  Let $E\subset \H$ be a set with finite perimeter.  For any measurable $U\subset V_0$ and any $R>0$, if $\Omega^P_{E, R}(U)$ is finite, then
  \begin{equation}\label{eq:b cubed}\Omega^P_{g(E), |a|R}(\hat{g}(U))=|b|^3\Omega^P_{E, R}(U),\end{equation}
  and
  \begin{equation}\label{eq:Omega scaling density}
    \frac{\Omega^P_{g(E), |a|R}(\hat{g}(U))}{|\hat{g}(U)|}=\frac{b^2}{a^2} \cdot \frac{\Omega^P_{E, R}(U)}{|U|}.
  \end{equation}
  In particular, if $g$ is a composition of a scaling, shear, and translation, i.e., when $a=b$ above,  then $g$ preserves the density of $\Omega^P_{E, R}$.
\end{lemma}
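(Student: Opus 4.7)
\medskip

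\noindent\textbf{Proof proposal.}
The plan is to reduce both identities to a change-of-variables computation for the integral defining $\Omega^P$, by separately tracking how $g$ acts on (i) a single line $L\in\cL_P$ and its intersection data with $E$, and (ii) the measure $\cN_P$ on $\cL_P$. The two steps combine to give~\eqref{eq:b cubed}; the density identity~\eqref{eq:Omega scaling density} follows by dividing by the Jacobian of $\hat g$ on $V_0$.

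For step (i), first observe that since each of $q$, $\rho_h$, and $s_{a,b}$ preserves cosets of $\langle Y\rangle$, so does $g$, and hence by~\eqref{eq:preserve-cosets} we have $\Pi^{-1}(\hat g(U))=g(\Pi^{-1}(U))$.  Also $g$ sends horizontal lines bijectively to horizontal lines, and since a direct computation gives $x\circ g=|a|\cdot(\pm x)+h_1$ (the shear and translation leave $x$ unchanged, while $s_{a,b}$ scales it by $a$), the map $x\circ g|_L$ is an affine bijection $L\to g(L)$ with multiplicative factor $|a|$.  Consequently, for any $L\in\cL_P$,
\[
x\bigl(g(E)\cap g(L)\bigr)=a\,x(E\cap L)+h_1,\qquad x\bigl(\Pi^{-1}(\hat g(U))\cap g(L)\bigr)=a\,x(\Pi^{-1}(U)\cap L)+h_1.
\]
The measure $\widehat\omega_{S,R}$ on $\R$ transforms by $\widehat\omega_{aS+h_1,\,|a|R}(aU+h_1)=|a|\,\widehat\omega_{S,R}(U)$ under affine maps (it is translation invariant and scales linearly with the underlying dilation), so
\[
\wwidehat\omega^P_{g(E),\,|a|R}(\hat g(U),g(L))=|a|\,\wwidehat\omega^P_{E,R}(U,L).
\]

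For step (ii), I will verify that shears and left translations preserve $\cN_P$ while $s_{a,b}$ contributes the factor $|b|^3$ from~\eqref{eq:transform N}; for the translation this is a short computation in the $(m,y,z)$ coordinates on $\cL_P$ (the corresponding Jacobian is $1$), and for the shear $\tilde A_c$ one checks that $(m,y,z)\mapsto(m+c,y,z)$.  Composing, $g_*\cN_P=|b|^{-3}\cN_P$, which in the change-of-variables formula $L'=g(L)$ gives $\int f(L')\,d\cN_P(L')=|b|^3\int f(g(L))\,d\cN_P(L)$.  Plugging into the definition~\eqref{eq:define OmegaP} and using step (i),
\[
\Omega^P_{g(E),|a|R}(\hat g(U))=\frac{1}{|a|R}\int_{\cL_P}|a|\,\wwidehat\omega^P_{E,R}(U,L)\,|b|^3\,d\cN_P(L)=|b|^3\,\Omega^P_{E,R}(U),
\]
which is~\eqref{eq:b cubed}.  (The finiteness hypothesis is only used to rule out $\infty-\infty$ ambiguities in the change of variables.)

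For~\eqref{eq:Omega scaling density}, it remains to compute the Jacobian of $\hat g$ on $V_0$.  Using the explicit formulas from Lemma~\ref{lem:v0 transforms}, the shear map and left translation each induce area-preserving maps on $V_0$, while the stretch $s_{a,b}$ restricts to $(x,0,z)\mapsto(ax,0,abz)$ with Jacobian $a^2|b|$.  Hence $|\hat g(U)|=a^2|b|\cdot|U|$, and dividing~\eqref{eq:b cubed} by this gives the claimed ratio $b^2/a^2$.  I do not anticipate a genuine obstacle here --- the main care required is bookkeeping of signs (so that~\eqref{eq:transform N} is applied with $|b|^3$, not $b^3$, when $b<0$) and checking that the shear/translation preserve $\cN_P$; beyond that, the argument is a direct change of variables.
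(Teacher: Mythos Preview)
Your proposal is correct and follows essentially the same route as the paper's proof: a change of variables $L\mapsto g(L)$ in the defining integral~\eqref{eq:define OmegaP}, using that $\wwidehat\omega^P_{g(E),|a|R}(\hat g(U),g(L))=|a|\,\wwidehat\omega^P_{E,R}(U,L)$ and that $\cN_P$ transforms with factor $|b|^3$, followed by the Jacobian computation $|\hat g(U)|=a^2|b|\cdot|U|$ from Lemma~\ref{lem:v0 transforms}. You have simply spelled out more of the bookkeeping (the invariance of $\cN_P$ under shears and translations, and the affine behavior of $x\circ g$) that the paper either asserts just before~\eqref{eq:transform N} or leaves implicit.
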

\begin{proof} The identity~\eqref{eq:b cubed} is verified by computing as follows, using~\eqref{eq:transform N}.
  \begin{align*}
    \Omega^P_{g(E), |a|R}\big(\hat{g}(U)\big)
    &=\frac{1}{|a|R} \int_{\cL_P} \wwidehat{\omega}^P_{g(E),|a| R}\big(\hat{g}(U),L\big)\ud \cN_P(L)\\
    &=\frac{|b|^3}{|a|R}\int_{\cL_P} \wwidehat{\omega}^P_{g(E),|a| R}\big(\hat{g}(U),g(L)\big)\ud \cN_P(L)\\
    &=\frac{|b|^3}{|a|R} \int_{\cL_P} |a| \wwidehat{\omega}^P_{E, R}(U,L)\ud \cN_P(L)\\
    &=|b|^3 \Omega^P_{E, R}(U),
  \end{align*}
 By Lemma~\ref{lem:v0 transforms} we have $|\hat{g}(U)|=a^2|b|\cdot  |U|$, which implies~\eqref{eq:Omega scaling density}.
\end{proof}

Suppose that $Q$ is a pseudoquad for an intrinsic Lipschitz graph $\Gamma\subset \H$ and that $g$ is as in Lemma~\ref{lem:Omega scaling}.  If $\Gamma$ is $(\eta,R)$--paramonotone on $rQ$ as in Definition~\ref{def:paramonotone}, then the density of $\Omega^P_{\Gamma^+,R\delta_x(Q)}$ is bounded as follows:
$$\frac{\Omega^P_{\Gamma^+,R\delta_x(Q)}(r Q)}{|Q|} \le \frac{\eta}{ \alpha(Q)^{4}}.$$
Let $\hat{Q}=\hat{g}(Q)$ and $\hat{\Gamma}=g(\Gamma)$.  Then \eqref{eq:Omega scaling density} and Lemma~\ref{lem:quad transforms} imply that
$$\frac{\Omega^P_{\hat{\Gamma}^+,R\delta_x(\hat{Q})}(r \hat{Q})}{|\hat{Q}|} \le \frac{\eta b^2}{a^{2}\alpha(Q)^{4}} = \frac{\eta}{ \alpha(\hat{Q})^{4}},$$
so $\hat{\Gamma}$ is $(\eta,R)$--paramonotone on $r\hat{Q}$ if and only if $\Gamma$ is $(\eta,R)$--paramonotone on $rQ$.

In particular, it follows from Lemma~\ref{lem:measure comparison} that if $\Gamma$ is $(\eta,R)$--paramonotone on $rQ$, then
\begin{equation}\label{eq:ENM of paramonotone}
  \ENM_{\Gamma^+,R \delta_x(Q)}\big(\Pi^{-1}(rQ)\big)\lesssim R \delta_x(Q) \Omega^P_{\Gamma^+,R\delta_x(Q)}(r Q) \le \frac{\delta_x(Q) |Q|} {\alpha(Q)^{4}} \eta R \approx_Q \eta R.
\end{equation}
\section{The kinematic formula and the proof of Proposition~\ref{prop:algorithm is weighted Carleson}}\label{sec:weighted Carleson}
In this section, we prove Proposition~\ref{prop:algorithm is weighted Carleson} using two lemmas.  The first bounds the total weight of the vertically cut descendants of a vertex $v\in \cV(\Delta)$ in terms of $\Omega^P$.
\begin{lemma}\label{lem:weight bounded by Omega}
  Let $r$, $\eta$, $R$, $\lambda$, $\Gamma$, and $\Delta$ be as in Proposition~\ref{prop:algorithm is weighted Carleson}.  Then for any $v\in \cV(\Delta)$,
  \begin{equation}\label{eq:weight bounded by Omega}
    W(\{w\in \cVv(\Delta)\mid w\le v\}) \lesssim_{\eta, r,R} \sum_{i=0}^\infty \Omega^P_{\Gamma^+,2^{-i} R\delta_x(Q_v)}(rQ_v).
  \end{equation}
\end{lemma}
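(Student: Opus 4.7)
The plan is to convert the failure of paramonotonicity at each vertically cut vertex $w\le v$ into a local lower bound on the extended parametric nonmonotonicity of $\Gamma^+$, and then to sum these local bounds by grouping the vertices according to the width of their pseudoquad, with overlaps at each scale controlled by a Vitali-type argument.

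For the local step, fix $w\in \cVv(\Delta)$ with $w\le v$. Since $Q_w$ is vertically cut, $\Gamma$ fails to be $(\eta,R)$--paramonotone on $rQ_w$, and Definition~\ref{def:paramonotone} immediately yields
\[
\frac{|Q_w|}{\alpha(Q_w)^{4}} \;\le\; \frac{1}{\eta}\,\Omega^P_{\Gamma^+,R\delta_x(Q_w)}(rQ_w).
\]
For each integer $i\ge 0$ set $\mathcal{W}_i\eqdef \{w\in \cVv(\Delta):\; w\le v,\ \delta_x(Q_w)=2^{-i}\delta_x(Q_v)\}$; every vertically cut descendant lies in exactly one $\mathcal{W}_i$, and on $\mathcal{W}_i$ the scale of $\Omega^P$ is $R\delta_x(Q_w)=2^{-i}R\delta_x(Q_v)$, so summing the above inequality gives
\[
W\big(\{w\in \cVv(\Delta):\; w\le v\}\big)\;\le\;\frac{1}{\eta}\sum_{i=0}^{\infty}\sum_{w\in \mathcal{W}_i}\Omega^P_{\Gamma^+,2^{-i}R\delta_x(Q_v)}(rQ_w).
\]

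It then suffices to prove, for each fixed $i$, the scale-wise comparison
\[
\sum_{w\in \mathcal{W}_i}\Omega^P_{\Gamma^+,2^{-i}R\delta_x(Q_v)}(rQ_w)\;\lesssim_{r}\;\Omega^P_{\Gamma^+,2^{-i}R\delta_x(Q_v)}(rQ_v),
\]
after which the lemma follows by summing in $i$. The inclusion $rQ_w\subset rQ_v$ is guaranteed by the hypothesis $\mu\le 1/(32r^2)$ together with Lemma~\ref{lem:child hierarchy}, so what remains is to bound the multiplicity of the collection $\{rQ_w\}_{w\in \mathcal{W}_i}$ by a constant depending only on $r$. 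All pseudoquads in $\mathcal{W}_i$ share the common width $2^{-i}\delta_x(Q_v)$, so the set of base intervals organises into at most $2^i$ pairwise-disjoint vertical columns; within each column the pseudoquads from $\mathcal{W}_i$ have pairwise-disjoint interiors in $V_0$ and thus stack as a disjoint sequence in the $z$--direction. Horizontally, the $r$--expansion multiplies widths by $r$, so a fixed point lies in the expanded bases of $O(r)$ columns; vertically, within a single column the $r^2$--expanded $z$--extents of pairwise-disjoint intervals have multiplicity $O(r^2)$ by the classical one-dimensional Vitali-type argument, giving a total multiplicity of $O(r^3)$.

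The main obstacle is the rigorous verification of this vertical multiplicity inside a single column, since $\mu$--rectilinear pseudoquads are bounded by characteristic curves rather than by parabolas, and since successive horizontal cuts may use distinct quadratic cut curves so that ``cousin'' pseudoquads in one column have non-parallel midline parabolas $m_w$ of their associated parabolic rectangles. I would handle this by combining $\mu$--rectilinearity (which forces each $m_w$ to lie within $\mu\delta_z(Q_w)$ of the midpoint of $Q_w$ at every point of $4I_w$) with Lemma~\ref{lem:quadratic interval}, which propagates $L_\infty$ bounds on quadratic functions from $I_w$ to $rI_w$ with a loss bounded by a power of $r$. This should show that at any fixed $x\in rI_w$ the $r$--expanded $z$--extents of distinct pseudoquads in the same column remain essentially disjoint up to an error that can be absorbed into the multiplicity constant, at which point the one-dimensional Vitali argument applies cleanly. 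The condition $\mu\le 1/(32 r^2)$ in Proposition~\ref{prop:algorithm is weighted Carleson} is precisely what makes both this approximation tight enough and Lemma~\ref{lem:child hierarchy} applicable at the same time.
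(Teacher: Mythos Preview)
Your local step and the grouping by width are exactly right, and coincide with the paper's setup. The gap is in your multiplicity claim. You assert that within a single column $\mathcal{W}_{i,m}$ (vertically cut descendants with base $I_{i,m}$), the sets $\{rQ_w\}$ have overlap $O(r^2)$ in the $z$--direction ``by the classical one-dimensional Vitali-type argument''. This is false: Vitali extracts a disjoint subfamily from a cover, it does not bound the overlap of expansions of an already-disjoint family. And the heights in a column are not comparable. Concretely, starting from a pseudoquad of width $W$ and height $H$, one can horizontally cut, vertically cut the top half, horizontally cut the bottom half, vertically cut its top half, and so on; this produces vertically cut pseudoquads $A_1,A_2,\dots$ all of width $W$, stacked in a single column, with heights $H/2,H/4,H/8,\dots$. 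For $r\ge \sqrt{3}$ every $rA_k$ contains the common limit point at the top of the stack, so the multiplicity of $\{rQ_w\}_{w\in\mathcal{W}_{i,m}}$ is infinite. Hence $\sum_{w\in\mathcal{W}_i}\Omega^P(rQ_w)\lesssim_r \Omega^P(rQ_v)$ cannot be obtained by a pointwise multiplicity bound.

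The paper's fix is to run Vitali in the correct direction and exploit the specific form of the weight. Within each column one greedily selects (by decreasing height) a subfamily $D_{i,m}$ with pairwise disjoint $r$--expansions; Lemma~\ref{lem:pseudoquad expansions} forces every unselected $Q_w$ into $3rQ_{v_k}$ for the selected $v_k$ that eliminated it. The unselected pseudoquads need not be few, but they have \emph{smaller height} than $Q_{v_k}$, hence larger aspect ratio, and since at fixed width $W(\{w\})=\alpha(Q_w)^{-4}|Q_w|$ is monotone in the height, one gets
\[
W(S_k)\le \alpha(Q_{v_k})^{-4}\sum_{w\in S_k}|Q_w|\le \alpha(Q_{v_k})^{-4}\,|3rQ_{v_k}|\approx r^3\,W(\{v_k\}).
\]
Thus $W(F_{i,m})\approx_r W(D_{i,m})$, and now one sums the paramonotonicity lower bound only over the \emph{disjoint} family $D_{i,m}$, obtaining $W(F_{i,m})\lesssim_r\eta^{-1}\Omega^P(S_m\cap rQ_v)$. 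Summing over the $2^i$ columns (whose $r$--expanded bases have overlap $O(r)$) and then over $i$ finishes. In short, the Vitali step must be applied to the weights before passing to $\Omega^P$, not to the measures $\Omega^P(rQ_w)$ themselves.
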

The second is a kinematic formula bounding $\Omega^P$ in terms of Lebesgue measure on $V_0$.
\begin{lemma}\label{lem:sums of OmegaP}
  Let $0<\lambda<1$ and let $\Gamma$ be an intrinsic $\lambda$--Lipschitz graph.  For any measurable set $U\subset V_0$,
  \begin{equation}\label{eq:sums of OmegaP}
    \sum_{i\in \Z} \Omega^P_{\Gamma^+,2^{-i}}(U)\lesssim_{\lambda} |U|.
  \end{equation}
\end{lemma}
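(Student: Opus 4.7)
The plan is to dyadically sum the definition of $\Omega^P$ on each horizontal line and then apply a kinematic change of variables on $\cL_P$.

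I first establish a pointwise dyadic bound on each line. For any locally finite-perimeter $S\subset\R$, each point $a\in \partial_{\cH^1} S$ is an endpoint of at most one interval in $\cI(S)$, so using the elementary estimate $\sum_{i\in\Z,\,2^{-i}\ge d} 2^i\le 2/d$ gives $\sum_{i\in\Z}\omega_{S,2^{-i}}(\{a\})/2^{-i}\le 2$. Combined with the analogous bound for $\R\smallsetminus S$ and the averaging in the definition of $\widehat\omega$, we obtain $\sum_i\widehat\omega_{S,2^{-i}}(A)/2^{-i}\le 2\#(A\cap\partial_{\cH^1}S)$ for every Borel $A\subset\R$. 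Applying this with $S=x(\Gamma^+\cap L)$ and $A=x(\Pi^{-1}(U)\cap L)$, and using that for almost every $L\in\cL_P$ one has $\partial_{\cH^1}x(\Gamma^+\cap L)=x(L\cap\Gamma)$, yields
\begin{equation}\label{eq:dyadic-line-bound}
\sum_{i\in\Z}\frac{\wwidehat{\omega}^P_{\Gamma^+,2^{-i}}(U,L)}{2^{-i}}\le 2\,\#\bigl(L\cap\Gamma\cap\Pi^{-1}(U)\bigr).
\end{equation}

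To integrate \eqref{eq:dyadic-line-bound} against $\cN_P$, I exploit the intrinsic Lipschitz condition to restrict to lines of bounded slope. If $p,p'\in L\cap\Gamma$ for some $L\in\cL_P$ of slope $m$, then $p'=p\cdot(X+mY)^s$ for some $s\in\R$, so $|y(p')-y(p)|=|ms|$ while $d(p,p')=|s|\sqrt{1+m^2}$, and the intrinsic $\lambda$--Lipschitz condition forces $|m|\le L_\lambda\eqdef\lambda/\sqrt{1-\lambda^2}$. Hence any line $L\in\cL_P$ with $|m(L)|>L_\lambda$ meets $\Gamma$ in at most one point; then $\cI(x(\Gamma^\pm\cap L))$ contains no finite intervals and the left-hand side of \eqref{eq:dyadic-line-bound} vanishes. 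Integrating against $\cN_P$ and restricting the domain accordingly gives
$$\sum_{i\in\Z}\Omega^P_{\Gamma^+,2^{-i}}(U)\le 2\int_{\{L\in\cL_P\,:\,|m(L)|\le L_\lambda\}}\#\bigl(L\cap\Gamma\cap\Pi^{-1}(U)\bigr)\,\ud\cN_P(L).$$

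Finally, I evaluate this integral by parametrizing the incidence set $\{(L,p):L\in\cL_P,\,p\in L\cap\Gamma\}$ via $(v,m)\in V_0\times\R$, where $p=vY^{\psi(v)}$ and $m$ is the slope of $L$. Writing $v=(t,0,\zeta)$, a direct Heisenberg product computation shows that the $W_0$--intercept of the line through $vY^{\psi(v)}$ of slope $m$ is $w(v,m)=(0,\psi(v)-mt,\zeta+t\psi(v)-t^2m/2)$, and expanding the $3\times 3$ determinant of $(t,\zeta,m)\mapsto(y(w),z(w),m)$ along its last row gives Jacobian $\partial_\psi\psi(v)-m$. The area formula then produces
$$\int_{\{|m(L)|\le L_\lambda\}}\#\bigl(L\cap\Gamma\cap\Pi^{-1}(U)\bigr)\,\ud\cN_P(L)=\int_U\int_{-L_\lambda}^{L_\lambda}|\partial_\psi\psi(v)-m|\,\ud m\,\ud v,$$
and since by \eqref{eq:upper hor der} we have $|\partial_\psi\psi(v)|\le L_\lambda$ almost everywhere, the integrand is bounded by $2L_\lambda$ on the domain of integration, whence the right-hand side is at most $4L_\lambda^2|U|\lesssim_\lambda |U|$, proving the lemma.

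The main technical obstacle is the application of the area formula in the last step: $\psi$ is only intrinsic Lipschitz, so the incidence map $(v,m)\mapsto L_m(v)$ is not classically Lipschitz. I would handle this by mollifying $\psi$ along its characteristic curves to obtain smooth approximations with uniformly controlled intrinsic Lipschitz constant, applying the area formula to each, and passing to the limit using that the pointwise bound $|\partial_\psi\psi|\le L_\lambda$ is preserved under such limits and that both sides of the desired inequality are lower semicontinuous with respect to the appropriate mode of convergence.
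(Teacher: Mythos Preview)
Your argument follows the same architecture as the paper's: a pointwise dyadic bound on each line, then restriction to lines of bounded slope, then integration over $\cL_P$. Your inequality $\sum_i 2^i\widehat\omega_{S,2^{-i}}\le 2\Per_S$ is exactly \eqref{eq:less than per}, and your slope cutoff at $L_\lambda$ matches the paper's restriction to the set $\cM$ of lines that meet $\Gamma$ at least twice. The difference is in the final integration. The paper packages it as Lemma~\ref{lem:parametric kinematic}: on bounded-slope lines one has $\cN_P\asymp_\lambda\cN$, so the kinematic formula \eqref{eq:kinematic} gives $\int_\cM\Per_{\Gamma^+,L}(\Pi^{-1}(U))\,d\cN_P(L)\lesssim_\lambda\Per_{\Gamma^+}(\Pi^{-1}(U))$, and then Lemma~\ref{lem:intrinsic Lipschitz perimeter} bounds this by $|U|$. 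You instead parametrize the incidence set $\{(L,p):p\in L\cap\Gamma\}$ by $(v,m)\in U\times[-L_\lambda,L_\lambda]$ and compute the Jacobian $|\partial_\psi\psi-m|$ directly; the calculation is correct and yields an explicit constant $4L_\lambda^2$, but the area formula requires regularity. Your proposed fix is workable in spirit, though the phrase ``both sides are lower semicontinuous'' is imprecise: the dyadic sum $\sum_i\Omega^P_{\Gamma^+,2^{-i}}(U)$ itself is \emph{not} lower semicontinuous under approximation (threshold effects at interval lengths equal to $2^{-i}$ break this). What does survive is lower semicontinuity of the \emph{intermediate} perimeter integral $\int_{|m|\le L_\lambda}\#(L\cap\Gamma\cap\Pi^{-1}(U))\,d\cN_P(L)$ under $L^1_{\mathrm{loc}}$ convergence of $\Gamma^+$, which is standard BV theory; combining that with a smooth approximation $\psi_\epsilon\to\psi$ having uniformly bounded intrinsic Lipschitz constant (known, but not a one-liner) closes the gap. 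The paper's route sidesteps this entirely by quoting \eqref{eq:kinematic} and Lemma~\ref{lem:intrinsic Lipschitz perimeter}, both of which hold for arbitrary intrinsic Lipschitz graphs without any smoothing.
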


Proposition~\ref{prop:algorithm is weighted Carleson} follows from Lemma~\ref{lem:weight bounded by Omega} and Lemma~\ref{lem:sums of OmegaP}.
\begin{proof}[{Proof of Proposition~\ref{prop:algorithm is weighted Carleson} assuming Lemma~\ref{lem:weight bounded by Omega} and Lemma~\ref{lem:sums of OmegaP}}]
  Fix $v\in \cV(\Delta)$ and denote $\delta=\delta_x(Q_v)$.  Due to Lemma~\ref{lem:weight bounded by Omega},
  $$W(\{w\in \cVv(\Delta)\mid w\le v\}) \lesssim_{\eta, r,R} \sum_{i=0}^\infty \Omega^P_{\Gamma^+,2^{-i} R\delta}(rQ_v).$$
  Let $k$ be the integer such that $2^{k-1}\le R\delta <2^{k}$.  Then
  \begin{align*}
    \sum_{i=0}^\infty \Omega^P_{\Gamma^+,2^{-i} R\delta}(rQ_v)
     \le \sum_{i=0}^\infty 2 \Omega^P_{\Gamma^+,2^{-i+k}}(rQ_v)
     \stackrel{\eqref{eq:sums of OmegaP}}{\lesssim}_{\lambda} |rQ_v|.
  \end{align*}
  Thus, $W(\{w\in \cVv(\Delta)\mid w\le v\}) \lesssim_{\eta, r,\lambda, R} |Q_v|.$
\end{proof}

We first establish Lemma~\ref{lem:weight bounded by Omega}, which we prove using an argument based on the Vitali Covering Lemma.  The first step is to construct partitions of $Q$ into pseudoquads with dyadic widths.  As in Lemma~\ref{lem:hcut partitions}, we construct these partitions from coherent subtrees.

\begin{lemma}\label{lem:vcut partitions}
  Let $0<\mu\le \frac{1}{32}$ and let $(\Delta,(Q_v)_{v\in \Delta})$ be a $\mu$--rectilinear foliated patchwork for a $\mu$--rectilinear pseudoquad $Q$. Fix $j\in \N\cup\{0\}$.  For $v\in \cV(\Delta)$, let $\cDv(v)\subset \cV(\Delta)$ denote the set of vertically cut descendants of $v$, and let
  $$F_j(v)\eqdef\left\{w\in \cDv(v)\mid \delta_x(Q_w)=2^{-j}\delta_x(Q) \right\}.$$
  Then, for any $w,w'\in F_j(v)$, if $w\ne w'$, then $Q_w$ and $Q_{w'}$ have disjoint interiors.
\end{lemma}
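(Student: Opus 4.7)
The plan is to argue by contradiction, exploiting the observation (recorded just before Lemma~\ref{lem:soft cut props}) that if $Q_w$ and $Q_{w'}$ have intersecting interiors then $w$ and $w'$ are comparable in the ancestor partial order. The key mechanism is that the widths of pseudoquads are nonincreasing as one descends the tree, and that a vertical cut strictly halves this width.

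Concretely, I would first record the following monotonicity statement that is immediate from Definition~\ref{def:rectilinear foliated patchwork}: if $u\in \cV(\Delta)$ has a child $u'$, then
\[
\delta_x(Q_{u'})=\begin{cases} \tfrac{1}{2}\delta_x(Q_u) & \text{if $u$ is vertically cut,}\\ \delta_x(Q_u) & \text{if $u$ is horizontally cut.}\end{cases}
\]
By iteration, if $w\le w'$ in $\cV(\Delta)$ then $\delta_x(Q_w)\le \delta_x(Q_{w'})$, with strict inequality (indeed, $\delta_x(Q_w)\le\tfrac{1}{2}\delta_x(Q_{w'})$) whenever the descent from $w'$ to $w$ passes through at least one vertically cut vertex.

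Now suppose toward a contradiction that $w,w'\in F_j(v)$ are distinct and that $Q_w$ and $Q_{w'}$ have overlapping interiors. As noted above, this forces $w\le w'$ or $w'\le w$; after relabeling, assume $w< w'$, i.e., $w$ is a proper descendant of $w'$. Since $w'\in \cVv(\Delta)$, the unique child $u$ of $w'$ with $w\le u$ satisfies $\delta_x(Q_u)=\tfrac{1}{2}\delta_x(Q_{w'})$. Combining this with the monotonicity bound gives
\[
\delta_x(Q_w)\le \delta_x(Q_u)=\tfrac{1}{2}\delta_x(Q_{w'}),
\]
which directly contradicts the equality $\delta_x(Q_w)=\delta_x(Q_{w'})=2^{-j}\delta_x(Q)$ imposed by membership in $F_j(v)$.

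The only mildly subtle point is the invocation that intersecting interiors forces comparability; this is the ``either $v\le w$ or $w\le v$'' remark following Definition~\ref{def:rectilinear foliated patchwork}, and can alternatively be extracted from Lemma~\ref{lem:coherent partitions} applied to the coherent set $\{u\in \cV(\Delta): u\le v\}$. I do not anticipate any genuine obstacle: once the width-monotonicity bookkeeping is in place, the argument is a one-line observation about the tree.
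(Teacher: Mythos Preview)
Your argument is correct and close to the paper's: the paper observes that $F_j(v)=\min R_j$ for the coherent set $R_j=\{w\in\cD(v):\delta_x(Q_w)\ge 2^{-j}\delta_x(Q_v)\}$ and then applies Lemma~\ref{lem:coherent partitions}, which unwinds to exactly the width-monotonicity contradiction you carry out directly. Your version is slightly more self-contained in that it bypasses the coherent-set packaging, but the underlying mechanism is the same.
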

\begin{proof}
  Let $\cD(v)$ be the set of descendants of $v$ and let
  $$R_j=\left\{w\in \cD(v)\mid \delta_x(Q_w)\ge 2^{-j}\delta_x(Q_v) \right\}.$$
  By Lemma~\ref{lem:soft cut props}, this is a coherent set and $F_j(v)=\min R_j$, so Lemma~\ref{lem:coherent partitions} implies that $F_j(v)$ consists of pseudoquads with disjoint interiors.
\end{proof}

Let $v_0$ be the root of $\Delta$ (so $Q=Q_{v_0}$).  For each $j\in \N\cup \{0\}$ we write $F_j=F_j(v_0)$.  Denote $I=x(Q)$ and   $l_j=2^{-j}\ell(I)=2^{-j}\delta_x(Q)$. Let $I_{j,1},\dots, I_{j,2^j}$ be the partition of $I$ into $2^j$ intervals of length $l_j$, so that for any $v\in \cV(\Delta)$, there are $j,m\in \N\cup \{0\}$ such that $x(Q_v)=I_{j,m}$.  We partition $F_j$ into columns as follows.
\begin{equation}\label{eq:def Fjm}
\forall m\in \{1,\ldots, 2^j\},\qquad F_{j,m}\eqdef \{w\in F_j \mid x(Q_w)=I_{j,m}\}.
\end{equation}
Each column satisfies the following version of the Vitali Covering Lemma.
\begin{lemma}\label{lem:column coverings}
  For each $j\in \N\cup\{0\}$ and $m\in \{1,\ldots,2^j\}$, there is a (possibly finite) sequence of vertices $D_{j,m}=\{v_1,v_2,\ldots\}\subset  F_{j,m}$ such that  $rQ_{v_1},rQ_{v_2},\ldots$ are pairwise disjoint and
  $$W(D_{j,m})\approx_r W(F_{j,m}).$$
\end{lemma}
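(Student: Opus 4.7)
The plan is to run a standard Vitali-type greedy selection on $F_{j,m}$, ordered by decreasing height, and show that the discarded pseudoquads can be charged to the kept ones with only an $O_r(1)$ loss in total weight. Since all elements of $F_{j,m}$ have the same base $I_{j,m}$ (by the definition~\eqref{eq:def Fjm} together with Definition~\ref{def:rectilinear foliated patchwork}), they are ``stacked vertically,'' and Lemma~\ref{lem:vcut partitions} gives that their interiors are pairwise disjoint. Consequently the sum $\sum_{v\in F_{j,m}}|Q_v|\le |I_{j,m}|\cdot\delta_z(Q)$ is finite, so for each $h>0$ only finitely many $v\in F_{j,m}$ satisfy $\delta_z(Q_v)\ge h$. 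We may therefore enumerate $F_{j,m}=\{w_1,w_2,\ldots\}$ with $\delta_z(Q_{w_1})\ge \delta_z(Q_{w_2})\ge\cdots$.

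We construct $D_{j,m}=\{v_1,v_2,\ldots\}$ greedily: set $v_1=w_1$, and having chosen $v_1,\dots,v_k$, let $v_{k+1}$ be the first $w_\ell$ in the enumeration with $rQ_{w_\ell}$ disjoint from $rQ_{v_1},\dots,rQ_{v_k}$. By construction the expansions $\{rQ_{v_i}\}_{i\ge 1}$ are pairwise disjoint. For every $w\in F_{j,m}\setminus D_{j,m}$ there exists $v_i\in D_{j,m}$ chosen earlier in the enumeration such that $rQ_w\cap rQ_{v_i}\neq\emptyset$, and hence $\delta_z(Q_{v_i})\ge \delta_z(Q_w)$.

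Next I would show that if $w,v_i\in F_{j,m}$ satisfy $rQ_w\cap rQ_{v_i}\neq\emptyset$ and $\delta_z(Q_{v_i})\ge \delta_z(Q_w)$, then $Q_w\subset C_rQ_{v_i}$ for a constant $C_r$ depending only on $r$. Indeed, both $rQ_w$ and $rQ_{v_i}$ are parabolic rectangles with the same base $rI_{j,m}$ (since $F_{j,m}$ shares the base $I_{j,m}$), so they intersect iff their $z$-intervals overlap at some $x\in rI_{j,m}$. Writing $m_w$ and $m_{v_i}$ for the (quadratic) midline functions of their parabolic-rectangle approximations, this overlap yields
\[
\|m_w-m_{v_i}\|_{L_\infty(rI_{j,m})}\le \tfrac{r^2}{2}\bigl(\delta_z(Q_w)+\delta_z(Q_{v_i})\bigr)\le r^2\delta_z(Q_{v_i}),
\]
at the single point where they overlap, and then Lemma~\ref{lem:quadratic interval} (together with the $\frac{1}{32}$--rectilinearity of $Q_w$ and $Q_{v_i}$ on their $4$-dilates) propagates this to the whole of $I_{j,m}$ with a worse constant. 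A straightforward comparison with Lemma~\ref{lem:quad translates} then gives $Q_w\subset C_rQ_{v_i}$.

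Finally I would bound $W(F_{j,m})$. Grouping the elements of $F_{j,m}$ by the associated $v_i\in D_{j,m}$, using $\alpha(Q_v)^{-4}=\delta_z(Q_v)^2/l_j^4$, the fact that $\delta_z(Q_v)\le\delta_z(Q_{v_i})$, and the disjointness of the interiors of the $Q_v$, yields
\[
\sum_{v\text{ assigned to }v_i}\!\!W(\{v\})\le \frac{\delta_z(Q_{v_i})^2}{l_j^4}\sum_v|Q_v|\le \frac{\delta_z(Q_{v_i})^2}{l_j^4}|C_rQ_{v_i}|\lesssim_r \frac{\delta_z(Q_{v_i})^3}{l_j^3}\asymp W(\{v_i\}),
\]
where we used $|C_rQ_{v_i}|\asymp C_r^3|Q_{v_i}|\asymp_r l_j\delta_z(Q_{v_i})$ (note that scaling a pseudoquad by $t$ scales its area by $t^3$ but preserves its aspect ratio, by Lemma~\ref{lem:quad transforms} applied with the shear/stretch/translation that normalizes $Q_{v_i}$, as in Remark~\ref{rem:normalizing rectilinear}). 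Summing over $v_i\in D_{j,m}$ gives $W(F_{j,m})\lesssim_r W(D_{j,m})$, while the reverse inequality $W(D_{j,m})\le W(F_{j,m})$ is trivial.

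I expect the main obstacle to be verifying the containment $Q_w\subset C_rQ_{v_i}$ cleanly: the pseudoquads are only rectilinear approximations of parabolic rectangles, so one overlap of $rQ_w$ and $rQ_{v_i}$ must be propagated across the whole base using Lemma~\ref{lem:quadratic interval} and the rectilinearity defect $\mu\delta_z$. Everything else is bookkeeping once the geometric comparison between two vertically-overlapping $\frac{1}{32}$--rectilinear pseudoquads with a common base is in hand.
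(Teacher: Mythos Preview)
Your overall strategy---greedy Vitali selection by decreasing height, then charging each discarded $w$ to the selected $v_i$ via a containment $Q_w\subset C_r Q_{v_i}$ and disjointness---is exactly the paper's approach, and your final bookkeeping is correct. The gap is in the containment step, which you correctly flag as the obstacle but do not actually resolve.

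Your proposed justification does not work. Knowing that $rQ_w\cap rQ_{v_i}\ne\emptyset$ gives $|m_w(x_0)-m_{v_i}(x_0)|\le r^2\delta_z(Q_{v_i})$ at a \emph{single} point $x_0\in rI_{j,m}$. Lemma~\ref{lem:quadratic interval} bounds a quadratic function outside an interval in terms of its sup on that interval; it gives nothing from a one-point bound. The $\frac{1}{32}$--rectilinearity of $Q_w$ and $Q_{v_i}$ only compares each pseudoquad's characteristic boundaries to its \emph{own} parabolic midline on $4I_{j,m}$; it does not relate $m_w$ to $m_{v_i}$. So from your hypotheses alone, $m_w-m_{v_i}$ could be an arbitrary quadratic vanishing at $x_0$, and $Q_w$ need not sit inside any fixed dilate of $Q_{v_i}$.

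What is missing is precisely the content of Lemma~\ref{lem:pseudoquad expansions}. The paper uses the \emph{tree structure} of the patchwork: from the ancestors of $v_i$ and $w$ (and their siblings) it extracts a finite stack $Q_{u_a}=Q_{v_i},Q_{u_{a+1}},\dots,Q_{u_b}=Q_w$ of pseudoquads, each with base containing $I_{j,m}$, such that consecutive ones share a characteristic boundary curve. Rectilinearity then gives $\|(h_{i+1}-\tfrac{d_{i+1}}{2})-(h_i+\tfrac{d_i}{2})\|_{L_\infty(I)}\le \mu(d_i+d_{i+1})$ for each $i$, and telescoping controls $h_b-h_a$ on all of $I$ (not just at one point). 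Only then does Lemma~\ref{lem:quadratic interval} legitimately extend the bound to $rI$, and this is where the hypothesis $\mu\le\frac{1}{32r^2}$ of Proposition~\ref{prop:algorithm is weighted Carleson} (rather than merely $\mu\le\frac{1}{32}$) is used. Finally, the one-point overlap is invoked to bound the accumulated height $D=\sum\frac{d_k+d_{k+1}}{2}$, yielding $Q_w\subset 3rQ_{v_i}$. You should either invoke Lemma~\ref{lem:pseudoquad expansions} directly or reproduce this stack argument; the shortcut you sketch cannot be made to work.
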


We prove Lemma~\ref{lem:column coverings} using the following expansion property.

\begin{lemma}\label{lem:pseudoquad expansions}
  Let $r>1$ and $0<\mu\le \frac{1}{32r^2}$.  Let $\Delta$ be a $\mu$--rectilinear foliated patchwork.  Let $v,w\in \cV(\Delta)$ be vertices such that $x(Q_{v})=x(Q_{w})$ and suppose that $rQ_{v} \cap rQ_{w}$ is nonempty. If $\delta_z(Q_v)\ge \delta_z(Q_w)$, then $Q_{w}\subset 3r Q_{v}$, and  if $\delta_z(Q_w)\ge \delta_z(Q_v)$, then $Q_{w}\subset 3r Q_{v}$.
\end{lemma}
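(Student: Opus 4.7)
The proof naturally divides into two cases according to whether $v$ and $w$ are comparable in the tree $\Delta$. If they are comparable, then since $x(Q_v)=x(Q_w)$ and $\delta_z(Q_v)\ge \delta_z(Q_w)$, I would argue that $w\le v$ is forced (the alternative $v\le w$ with $v\ne w$ would force $\delta_z(Q_v)<\delta_z(Q_w)$, a contradiction), hence $Q_w\subset Q_v$; Lemma~\ref{lem:quad translates} then immediately gives $Q_w\subset 2Q_v\subset 3rQ_v$ (using $r>1$). This disposes of the easy case.

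The substance of the argument lies in the incomparable case. First I would observe that the lowest common ancestor $u$ of $v$ and $w$ must be horizontally cut (if $u$ were vertically cut, its children would have disjoint bases, contradicting $x(Q_v)=x(Q_w)$), with children $u_1,u_2$ satisfying, without loss of generality, $v\le u_1$ and $w\le u_2$, and $Q_v$ stacked above $Q_w$ on the common base $I=x(Q_v)=x(Q_w)$. The structural observation driving the proof is that vertical cuts appearing along the descent from $u_1$ to $v$ (or from $u_2$ to $w$) do not alter the midline polynomial; they only restrict its domain. Consequently $m_v$ arises from $m_{u_1}$ purely through the horizontal cuts along the chain, and analogously for $m_w$. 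Tracking these cuts with the rectilinearity inequality $\|k_i-m_{u^{(i)}}\|_{L_\infty(4I_{u^{(i)}})}\le \mu d_{i+1}$ at each horizontal step, together with the geometric decay $\sum d_{i+1}\le 2d$ where $d=\delta_z(Q_{u_1})$, I would decompose $m_v-m_{u_1}=S_v+P_v$ into a constant $|S_v|\le d$ (the accumulated half-height shifts $\pm d_{i+1}/2$) plus a quadratic $P_v$ with $\|P_v\|_{L_\infty(4I)}\le 2\mu d$, and analogously $m_w-m_{u_2}=S_w+P_w$. Since $m_{u_1}-m_{u_2}\equiv d$ is a constant, this yields $m_v-m_w=K+P$ with $K\in\R$ a constant and $P=P_v-P_w$ a quadratic satisfying $\|P\|_{L_\infty(rI)}\lesssim \mu d$ (for $r\le 4$ this follows from $rI\subset 4I$; for larger $r$ it follows by an application of Lemma~\ref{lem:quadratic interval}).

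Next I would combine the $\mu$-rectilinearity of $Q_v,Q_w$ with the strict stacking of $Q_w$ below $Q_v$ on $I$ to derive $K+P(x)\ge (\delta_z(Q_v)+\delta_z(Q_w))/2-O(\mu d)$ on $I$, ensuring $m_v>m_w$ on $I$. The overlap hypothesis then provides some $x_0\in rI$ with $|K+P(x_0)|\le r^2(\delta_z(Q_v)+\delta_z(Q_w))/2\le r^2\delta_z(Q_v)$, yielding $|K|\le r^2\delta_z(Q_v)+\|P\|_{L_\infty(rI)}$, and hence $\|m_v-m_w\|_{L_\infty(I)}\le r^2\delta_z(Q_v)+O(\mu d)$. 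For any $(x,z)\in Q_w$, rectilinearity bounds $|z-m_w(x)|\le (1/2+\mu)\delta_z(Q_w)\le \delta_z(Q_v)$, so that $|z-m_v(x)|\le r^2\delta_z(Q_v)+O(\mu d)+\delta_z(Q_v)$. The hypothesis $\mu\le 1/(32r^2)$ is calibrated precisely to absorb the $O(\mu d)$ error into an $O(r^2\delta_z(Q_v))$ term, thereby closing $|z-m_v(x)|\le 9r^2\delta_z(Q_v)/2$ and delivering $(x,z)\in 3rQ_v$.

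The main technical obstacle will be the careful bookkeeping in the last step, verifying that the $O(\mu d)$ errors are indeed absorbed into the $9r^2\delta_z(Q_v)/2$ allowance under the quantitative threshold $\mu\le 1/(32r^2)$; this is where the precise form of the hypothesis enters the computation, since the cross-terms between the quadratic coefficient bound on $P$ and the half-height $d$ must cancel cleanly against the $r^2$-gain coming from the expansion $3rQ_v$ of a parabolic rectangle. The symmetric inclusion with the roles of $v$ and $w$ interchanged when $\delta_z(Q_w)\ge \delta_z(Q_v)$ follows from the same argument.
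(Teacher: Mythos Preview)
Your approach has a genuine gap in the final absorption step. The error term you accumulate is $O(\mu d)$ where $d=\delta_z(Q_{u_1})$ is the height at the level of the lowest common ancestor's children. After extending $P$ from $4I$ to $rI$ via Lemma~\ref{lem:quadratic interval}, this becomes $O(r^2\mu d)$, and with $\mu\le \frac{1}{32r^2}$ you only get $O(d)$, not $O(\mu d)$ as you wrote. The problem is that $d$ is not controlled by $\delta_z(Q_v)$ or by the overlap hypothesis: if $v$ is obtained from $u_1$ by $N$ horizontal cuts, then $d\asymp 2^N\delta_z(Q_v)$, which can be arbitrarily large. Concretely, take $v$ to be the bottom-most sliver of $Q_{u_1}$ after $N$ horizontal cuts and $w$ the top-most sliver of $Q_{u_2}$ after $N$ horizontal cuts; then $Q_v$ and $Q_w$ are adjacent thin strips on either side of the cut $c$, so $rQ_v\cap rQ_w\ne\emptyset$ for any $r>1$, yet $d=2^N\delta_z(Q_v)$. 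Your bound $|z-m_v(x)|\lesssim r^2\delta_z(Q_v)+d$ then fails to close.

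The paper's proof avoids this by telescoping along a different path. Rather than going up from $v$ to $u_1$ and down from $u_2$ to $w$, it builds a \emph{stack} $u_a=v,u_{a+1},\dots,u_b=w$ of vertically adjacent pseudoquads directly from $v$ to $w$ (these are essentially $v$, $w$, and the siblings of the horizontally-cut ancestors lying between them). The telescoping error is then $\le 2\mu D$ where $D=\sum_{k=a}^{b-1}\frac{d_k+d_{k+1}}{2}$ is the total height of the stack; extending to $rI$ gives error $\le 8r^2\mu D\le \frac{D}{4}$. The crucial point is that this error is a small \emph{fraction of $D$ itself}, so $m_w-m_v\in[\frac{3}{4}D,\frac{5}{4}D]$ on $rI$. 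The overlap hypothesis then gives $\frac{3}{4}D\le r^2\delta_z(Q_v)$, bounding $D$ and hence the error. In your decomposition the error is proportional to $d$, which bears no such relation to the quantity $K$ that the overlap controls. (In the adjacent-sliver example, the paper's stack is just $\{v,w\}$ with $D\approx\delta_z(Q_v)$, whereas your $d\approx 2^N\delta_z(Q_v)$.)
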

\begin{proof}
  Write $I=[-1,1]$.  By rescaling and translating, we may suppose without loss of generality  that $x(Q_{v})=x(Q_{w})=I$.  Also,  we may suppose that $Q_v$ is vertically below $Q_w$.  We first construct a stack of pseudoquads of width at least $2$ that connects $Q_v$ and $Q_w$.

  For $u\in \cV(\Delta)$, let $A(u)=\{t\in \cV(\Delta)\mid t\ge u\}$ be the set of ancestors of $u$.  If $u\ne v_0$, let $S(u)$ be the sibling of $u$ and let $P(u)$ be the parent of $u$.  Let
  $$J=A(v)\cup A(w) \cup S\big(A(v)\cup A(w)\big).$$
  Since $A(v)\cup A(w)$ spans a connected subtree of $\Delta$, so does $J$, and $J$ is a coherent subset of $\cV(\Delta)$.  Furthermore, $J$ is finite, so $K=\min J$ is a partition of $Q$.

  If $u\in K$, then $u$ is either an ancestor of $v$ or $w$ or a sibling of such an ancestor.  In either case, $\delta_x(Q_u)\ge 2$, and the base of $Q_u$ either contains $I$ or its interior is disjoint from $I$.  Let $K'=\{u\in K\mid I\subset x(Q_u)\}$.  For each $u\in K'$, $Q_{u}$ intersects the $z$--axis in an interval.  We denote the elements of $K'$ by $u_1,\dots, u_n$, in order of increasing $z$--coordinate.  These pseudoquads form a stack; each pseudoquad $Q_{u_{i}}$ is vertically adjacent to $Q_{u_{i+1}}$.  We suppose that $u_a=v$ and $u_b=w$, with $a<b$.

  Rectilinearity implies that the boundaries of the $Q_{u_i}$'s have similar slopes.  For each $i\in \n$, let $g_i$ be the lower bound of $Q_{u_i}$ and let $g_{i+1}$ be its upper bound.  These may be defined on different domains, but all of their domains contain $I$.  For each $i\in \n$, let $R_{u_i}$ be the parabolic rectangle associated to $Q_{u_i}$ and let  $d_i=\delta_z(Q_{u_i})$, so that there are quadratic functions $h_i\from \R\to \R$ satisfying
  \begin{align*}\label{eq:choice of dh}
    \left\|g_i-\left(h_i-\frac{d_i}{2}\right)\right\|_{L_\infty(I)}  \le \mu d_i\qquad\mathrm{and}\qquad     \left\|g_{i+1}-\left(h_i+\frac{d_i}{2}\right)\right\|_{L_\infty(I)}  \le \mu d_i.
  \end{align*}
   Then
  \begin{equation}\label{eq:gi-hi-close}
    \left\|\left(h_{i+1} - \frac{d_{i+1}}{2}\right) - \left(h_{i} + \frac{d_i}{2}\right)\right\|_{L_\infty(I)}  \le \mu(d_i+d_{i+1}).
  \end{equation}
 Hence,  for any $i,j\in \n$ with $i<j$,
  $$\left\|h_{j}-h_i-\sum_{k=i}^{j-1} \frac{d_k+d_{k+1}}{2}\right\|_{L_\infty(I)}\le \mu \sum_{k=i}^{j-1} (d_k+d_{k+1}).$$
  Since $h_{j}-h_i-\sum_{k=i}^{j-1} \frac{d_k+d_{k+1}}{2}$  is  quadratic, by Lemma~\ref{lem:quadratic interval} it follows that
  $$\left\|h_{j}-h_i-\sum_{k=i}^{j-1} \frac{d_k+d_{k+1}}{2}\right\|_{L_\infty([-r,r])} \le 4r^2 \mu \sum_{k=i}^{j-1} (d_k+d_{k+1}) \le \sum_{k=i}^{j-1} \frac{d_i+d_{i+1}}{8}.$$
Denoting
  $$D=\sum_{k=a}^{b-1} \frac{d_k+d_{k+1}}{2},$$
 it follows that for all $x\in [-r,r]$ we have
 \begin{equation}\label{eq:3454}
 \frac{3}{4}D\le h_b(x)-h_a(x)\le \frac{5}{4}D.
 \end{equation}

Suppose that  $\delta_z(Q_w)\le \delta_z(Q_v)$.  For each $i\in\n$, the definition~\eqref{eq:def rho R} of $r Q$ states
  $$r Q_{u_i}=\left \{(x,z)\in V_0\mid x\in [-r,r]\ \mathrm{and}\  |z-h_i(x)|\le \frac{r^2 d_i}{2}\right\}.$$
  Since $\delta_z(Q_w)\le \delta_z(Q_v)$ and  $rQ_v$ intersects $rQ_w$, there is  $t\in [-r,r]$ such that
  $$h_b(t) - h_a(t) \le \frac{r^2(d_b+d_a)}{2} = \frac{r^2(\delta_z(Q_v)+\delta_z(Q_w))}{2}\le r^2\delta_z(Q_v),$$
  and thus by~\eqref{eq:3454} we have $D\le \frac43r^2\delta_z(Q_v)$.

  Let $(x,z)\in Q_w$. By \eqref{eq:gi-hi-close},
  $$z\in [g_b(x),g_{b+1}(x)]\subset \left[h_b(x)-\frac{3}{4}\delta_z(Q_w), h_b(x)+\frac{3}{4}\delta_z(Q_w)\right],$$
  so
  \begin{equation*}
    |h_a(x) - z|
    \le |h_a(x)-h_b(x)|+|h_b(x)-z|
    \le \frac{5}{4} D + \frac{3}{4}\delta_z(Q_w)
  \le \frac{(3r)^2 \delta_z(Q_v)}{2},
  \end{equation*}
  where the penultimate step uses~\eqref{eq:3454} and the final step uses the upper bound on $D$ that we derived above and the assumption $\delta_z(Q_w)\le \delta_z(Q_v)$. It follows that $(x,z)\in 3rQ_v$ and thus $Q_w\subset 3rQ_v$. If   $\delta_z(Q_v)\le \delta_z(Q_w)$, then the analogous reasoning shows that $Q_v\subset 3rQ_w$.
\end{proof}

\begin{proof}[{Proof of Lemma~\ref{lem:column coverings}}] Similarly to the proof of the Vitali covering lemma, we define inductively a sequence $S_0,S_1\ldots,$ of subsets of $F_{j,m}$ as follows.
  Let $S_0=\emptyset$.  For each $i\in \N$, let $v_i$ be an element of $F_{j,m}\setminus \bigcup_{k=0}^{i-1}S_i$ that maximizes $\delta_z(Q_{v_i})$.  Define
  $$S_{i}=\left\{w\in F_{j,m}\setminus \bigcup_{k=0}^{i-1}S_k\mid rQ_{v_i}\cap rQ_w\ne \emptyset\right\}.$$
  If $\bigcup_{k=1}^{i}S_k=F_{j,m}$,  we stop.  By construction, $rQ_{v_1},rQ_{v_2},\ldots$ are disjoint.  We will show that the set  $D_{j,m}=\{v_1,v_2,\ldots\}\subset F_{j,m}$ satisfies the desired properties.

  We first claim that $F_{j,m}=S_1\cup S_2\cup\ldots$, where this holds by construction if  there are only finitely many $v_i$'s. So,  suppose that there are infinitely many $v_i$'s and let $w\in F_{j,m}$.  There are only finitely many elements of $F_{j,m}$ with height greater than $\delta_z(Q_w)$, so there is  $i\in \N$ such that $\delta_z(Q_{v_i})<\delta_z(Q_w)$.  By the maximality of $\delta_z(Q_{v_i})$, this implies that $w\in S_1\cup\ldots\cup S_{i-1}.$

  We next show that $W(D_{j,m})\approx_r W(F_{j,m})$.  As $D_{j,m}\subset F_{j,m}$, we have  $W(D_{j,m})\le W(F_{j,m})$.  Conversely, if $w\in S_i$, then $rQ_w$ intersects $rQ_{v_i}$ and $\delta_z(Q_{w})\le \delta_z(Q_{v_i})$, so Lemma~\ref{lem:pseudoquad expansions} implies that $Q_w\subset 3rQ_{v_i}$.  Since the elements of $F_{j,m}$ are pairwise disjoint (Lemma~\ref{lem:vcut partitions}) pseudoquads of the same width, we have $\alpha(Q_{w})\ge \alpha(Q_{v_i})$ and
  \begin{multline*}
  W(S_i)=\sum_{w\in S_i} \alpha(Q_w)^{-4}|Q_w| \le \alpha(Q_{v_i})^{-4}  \sum_{w\in S_i} |Q_w|\\= \alpha(Q_{v_i})^{-4} \Big|\bigcup_{w\in S_i} Q_w\Big|  \le \alpha(Q_{v_i})^{-4}\bigl|3rQ_{v_i}\bigr| \approx r^3W(\{v_i\}).
  \end{multline*}
By summing this bound over $j$, we conclude
 \begin{equation*}W(F_{j,m})=W(S_1)+W(S_2)+\ldots \lesssim r^3 \bigl(W(\{v_1\})+W(\{v_2\})+\ldots\bigl) =r^3 W(D_{j,m}).\tag*{\qedhere}\end{equation*}
\end{proof}

We are now ready to prove Lemma~\ref{lem:weight bounded by Omega}.
\begin{proof}[{Proof of Lemma~\ref{lem:weight bounded by Omega}}]
  It suffices to treat the case where $v$ is the root of $\Delta$, so $Q_v=Q$.  Fix $j\in \N\cup \{0\}$ and $m\in \{1,\ldots,2^j\}$. Let $F_{j,m}$ and $D_{j,m}$ be as in Lemma~\ref{lem:column coverings}.

  Since, by definition,   $F_{j,m}$ consists only of vertices that are vertically cut, by hypothesis, $\Gamma$ is not $(\eta, R)$--paramonotone on $Q_w$  for each $w\in F_{j,m}$, i.e.,
  $$\forall w\in F_{j,m},\qquad \Omega^P_{\Gamma^+,R l_j}(r Q_w) > \eta \alpha(Q_w)^{-4}|Q_w|=\eta W(\{w\}).$$
  Let $S_m=r I_{j,m}\times\{0\}\times  \R\subset V_0$.  The sets $\{rQ_w\}_{w\in D_{j,m}}$ are disjoint subsets of $S_m\cap r Q$, so
  $$W(F_{j,m}) \approx_r W(D_{j,m})\le \eta^{-1} \sum_{w\in D_{j,m}}\Omega^P_{\Gamma^+,R l_j}(r Q_w) \le \eta^{-1} \Omega^P_{\Gamma^+,R l_j}(S_m\cap rQ).$$
By summing this bound over $m\in \{1,\ldots,2^j\}$ we get
  \begin{equation*}\label{eq:Fi packing condition}
    W(F_{j}) \lesssim_r \sum_{m=1}^{2^j}\eta^{-1} \Omega^P_{\Gamma^+,R l_j}(S_m\cap rQ) \lesssim_r \eta^{-1} \Omega^P_{\Gamma^+,R l_j}(rQ),
  \end{equation*}
  where the last step holds because the scaled intervals $r I_{j,1},\ldots,r I_{j,2^j}$ have bounded overlap (depending on $r$).
By summing this bound over $j$, we conclude as follows.
  \begin{equation*}W\big(\cVv(\Delta)\big)=\sum_{j=0}^\infty W(F_{j}) \lesssim_{r}\eta^{-1} \sum_{j=0}^\infty \Omega^P_{\Gamma^+,R 2^{-j}\delta_x(Q)}(r Q).\tag*{\qedhere}\end{equation*}
\end{proof}

Next, we prove Lemma~\ref{lem:sums of OmegaP} using the following kinematic formula for intrinsic Lipschitz graphs.  Recall (Section~\ref{sec:Prelim heis}) that for a measurable subset $E\subset \H$, we let $\Per_E$ denote the perimeter measure of $E$; this measure is supported on $\partial E$, and when $E$ is bounded by an intrinsic Lipschitz graph, it differs from $3$--dimensional Hausdorff measure on $\partial E$ by at most a multiplicative constant.  For any horizontal line $L\in \cL$, let $\partial_{\cH^1|_L}E$ be the measure-theoretic boundary of $E$ in $L$ and let $\Per_{E,L}$ be the counting measure on $\partial_{\cH^1|_L}E$.
\begin{lemma}\label{lem:parametric kinematic}
  Fix $0<\lambda<1$. Let $\psi\from V_0\to \R$ be intrinsic $\lambda$--Lipschitz and let $\Gamma=\Gamma_\psi$ be its intrinsic graph.  Let $U\subset V_0$ be a measurable set.  For almost every $L\in \cL_P$, the intersection $L\cap \Gamma^+$ has locally finite perimeter.  If $\cM\subset \cL_P$ is the set of lines that intersect $\Gamma$ at least twice, then
  \begin{equation}\label{eq:lesssim lambda |U|}\int_{\cM} \Per_{\Gamma^+,L}\big(\Pi^{-1}(U)\big)\ud \cN_P(L)\lesssim_\lambda |U|.\end{equation}
\end{lemma}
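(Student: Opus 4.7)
The plan is to reduce the main estimate to the standard kinematic formula~\eqref{eq:kinematic} by observing that the intrinsic Lipschitz condition confines $\cM$ to lines of bounded slope, on which $\cN_P$ is dominated by $\cN$. Concretely, I claim that every $L\in\cM$ satisfies $|m(L)|\le \lambda/\sqrt{1-\lambda^2}$. A horizontal line in direction $X+mY$ has unit tangent $(1,m,0)/\sqrt{1+m^2}$, which by~\eqref{eq:formula for intersection with cone} lies in $\mathsf{H}\cap\mathrm{Cone}_\lambda$ precisely when $|m|/\sqrt{1+m^2}>\lambda$. Since $\mathrm{Cone}_\lambda$ is invariant under the scalings $s_{t,t}$ and symmetric under inversion, if this tangent direction lies in $\mathrm{Cone}_\lambda$ at some $p\in L\cap\Gamma$, then $L\setminus\{p\}\subset p\cdot\mathrm{Cone}_\lambda$, which by Definition~\ref{def:intrinsic lipschitz graph} is disjoint from $\Gamma$. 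Hence $L$ meets $\Gamma$ in at most one point and $L\notin\cM$.

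I would then combine this bounded-slope information with the measure comparison implicit in the proof of Lemma~\ref{lem:measure comparison}: in the $(w,m)$-parametrization of $\cL_P$ one has $d\cN(L)=c(1+m^2)^{-3/2}\,dm\,dw$, while by definition $d\cN_P(L)=dm\,dw$. On the slope-bounded set these measures are comparable, $d\cN_P\lesssim_\lambda d\cN$. Together with~\eqref{eq:kinematic} and Lemma~\ref{lem:intrinsic Lipschitz perimeter} (and the fact that $\Pi|_\Gamma\from\Gamma\to V_0$ is a bijection, so that $\Pi(\Pi^{-1}(U)\cap\Gamma)=U$), this yields
\begin{equation*}
\int_\cM \Per_{\Gamma^+,L}\bigl(\Pi^{-1}(U)\bigr)\,d\cN_P(L)\lesssim_\lambda \int_\cL \Per_{\Gamma^+\cap L}\bigl(\Pi^{-1}(U)\cap L\bigr)\,d\cN(L)\asymp \Per_{\Gamma^+}\bigl(\Pi^{-1}(U)\bigr)\asymp_\lambda |U|,
\end{equation*}
which is the desired bound.

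Finally, for local finiteness of perimeter: any $L\in\cL_P\setminus\cM$ meets $\Gamma$ in at most one point, so $L\cap\Gamma^+$ equals (up to a null set) either $L$, $\emptyset$, or a half-line, and therefore has locally finite perimeter trivially. For $L\in\cM$, I would exhaust $V_0$ by a nested sequence of bounded measurable sets $U_1\subset U_2\subset\cdots$; the main bound applied to each $U_n$ forces $\Per_{\Gamma^+,L}(\Pi^{-1}(U_n))<\infty$ for all $n$ for $\cN_P$--a.e.\ $L\in\cM$, and since the restriction of $\Pi$ to bounded subarcs of $L$ is proper, this gives local finiteness of perimeter on $L$. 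The only step in which the intrinsic Lipschitz hypothesis plays a genuine role is the slope bound on $\cM$; the rest is measure bookkeeping. The main subtlety is recognizing that restricting to $\cM$ is precisely what controls the slope: integrating $\cN_P$ over all of $\cL_P$ would diverge on the abundant high-slope lines that meet $\Gamma$ exactly once.
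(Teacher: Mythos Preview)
Your proposal is correct and follows essentially the same route as the paper: bound the slope of every $L\in\cM$ by $\lambda/\sqrt{1-\lambda^2}$ via the cone condition, compare $\cN_P$ with $\cN$ on this bounded-slope set, and then invoke the kinematic formula~\eqref{eq:kinematic} together with Lemma~\ref{lem:intrinsic Lipschitz perimeter}. The only organizational difference is that the paper obtains the almost-everywhere local finiteness of $L\cap\Gamma^+$ directly from~\eqref{eq:kinematic} (since $\Gamma^+$ has locally finite perimeter), whereas you deduce it for $L\in\cM$ a posteriori from the main estimate via an exhaustion; both arguments are valid.
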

\begin{proof}
  The measures $\cN_P$ and $\cN$ are absolutely continuous with respect to each other.  Indeed, for each $m>0$, if $D\subset \cL_P$ is a set of lines with slopes that lie in $[-m,m]$, then $\cN_P(D)\approx_m\cN(D)$. By \eqref{eq:kinematic}, there is  $c>0$ such that for any measurable $A\subset \H$,
  $$\Per_{\Gamma^+}(A) = c \int_{\cL} \Per_{\Gamma^+,L}(A) \ud \cN(L).$$
  Since $\Gamma^+$ has locally finite perimeter, this implies that for almost every line $L\in \cL_P$, the intersection $L\cap\Gamma^+$ has locally finite perimeter. For $L\in \cL_P$ let $m(L)$ be the slope of $L$ as in Section~\ref{sec:parametric monotonicity}. Suppose that $p\in L\cap \Gamma$.  By~\eqref{eq:formula for intersection with cone},  if $|m(L)|> \lambda/\sqrt{1-\lambda^2}$, then $L\subset p\cdot \mathrm{Cone}_\lambda$ and thus $L$ intersects $\Gamma$ exactly once.  Consequently, $|m(M)|\le \lambda/\sqrt{1-\lambda^2}$ for every $M\in \cM$, and hence $\cN_P(D)\approx_\lambda \cN_P(D)$ for every measurable $D\subset \cM$. So, by \eqref{eq:kinematic} and Lemma~\ref{lem:intrinsic Lipschitz perimeter},
  \begin{align*}
    \int_{\cM} \Per_{\Gamma^+,L}\big(\Pi^{-1}(U)\big)\ud \cN_P(L)
     \lesssim_\lambda \Per_{\Gamma^+}\big(\Pi^{-1}(U)\big) \approx_\lambda |U|.\tag*{\qedhere}
  \end{align*}
\end{proof}


\begin{proof}[{Proof of Lemma~\ref{lem:sums of OmegaP}}]
  For a finite-perimeter set $S\subset \R$ and $R>0$, let $\cI(S)$ and $$\widehat{\omega}_{S, R}=\frac{\omega_{S,R}+\omega_{\R\setminus S,R}}{2}$$ be as in Section~\ref{sec:parametric monotonicity R}.  Divide $\cI(S)$ according to the length of the intervals as follows.
  $$\forall j\in \Z,\qquad C_j(S)\eqdef\left\{I\in \cI(S)\mid 2^{-j-1}< |I| \le 2^{-j}\right\}.$$
  Let $\cE_j(S)\subset \R$ be the set of endpoints of the intervals in $C_j(S)$.  Let $\lambda_{S,j}$ be the counting measure on $\cE_j(S)$ and let $$\widehat{\lambda}_j(S)\eqdef \frac{\lambda_{S,j}+\lambda_{\R\setminus S,j}}{2}.$$  Then $\sum_{j\in \Z}\widehat{\lambda}_{S,j}\le \Per_{S}$.  (This isn't necessarily an equality as the left hand side is influenced only by bounded intervals while the right hand side could have a contribution from rays.)

  For each $k\in \Z$, the measure $\widehat{\omega}_{S, 2^{-k}}$ is a point measure supported on the set
  $$\bigcup_{j=k}^\infty \big(\cE_j(S) \cup \cE_j(\R\setminus S)\big),
   $$
   that weights each point according to the lengths of the intervals it bounds.  In particular, $$\supp\big(\widehat{\omega}_{S,2^{-k}}-\widehat{\omega}_{S,2^{-k-1}}\big)\subset \cE_k(S)\cup \cE_k(\R\setminus S),
    $$
    and
    $$\forall p\in \cE_k(S)\cup \cE_k(\R\setminus S),\qquad 2^{-k-2}\le \widehat{\omega}_{S,2^{-k}}(p) - \widehat{\omega}_{S,2^{-k-1}}(p)\le 2^{-k}.$$
    Consequently, if we denote
  $$\widehat{\kappa}_{S,k} \eqdef 2^{k}\big(\widehat{\omega}_{S,2^{-k}} - \widehat{\omega}_{S,2^{-k-1}}\big),$$
  then $\widehat{\kappa}_{S,j}\approx \widehat{\lambda}_{S,j}$ and
  $$
  \sum_{j\in \Z} \widehat{\kappa}_{S,j}=\sum_{j\in \Z} 2^{j+1} \widehat{\omega}_{S,2^{-j}} - \sum_{j\in \Z} 2^{j} \widehat{\omega}_{S,2^{-j}}=   \sum_{j\in \Z} 2^j \widehat{\omega}_{S,2^{-j}}.
  $$
It follows that
  \begin{equation}\label{eq:less than per}\sum_{j\in \Z} 2^j \widehat{\omega}_{S,2^{-j}} \approx \sum_{j\in \Z} \widehat{\lambda}_{S,j}\le \Per_{S}.
  \end{equation}

For every measurable $E\subset \H$ and $U\subset V_0$, and every $L\in \cL_P$, we have
  \begin{equation}\label{eq:scale sums omegaP}
    \sum_{j\in \Z} 2^j \wwidehat{\omega}^P_{E, 2^{-j}}(U, L) \stackrel{\eqref{eq:define hat OmegaP}\wedge \eqref{eq:less than per}}{\lesssim} \Per_{x(E\cap L)}\big(x(\Pi^{-1}(U)\cap L)\big) = \Per_{E,L}\big(\Pi^{-1}(U)\big).
  \end{equation}
 Let $\cM\subset \cL_P$ be the set of lines that intersect $\Gamma$ at least twice.  If $L\in \cL_P\setminus \cM$, then $\cI(L\cap \Gamma^+)$ consists of infinite rays, so $\widehat{\omega}_{\Gamma^+, R}(U,L)=0$ for any $U\subset V_0$.  Thus,
  \begin{multline*}
    \sum_{j\in \Z} \Omega^P_{\Gamma^+,2^{-j}}(U)
    \stackrel{\eqref{eq:define OmegaP}}{=}\sum_{j\in \Z} \int_{\cM} 2^j \wwidehat{\omega}^P_{\Gamma^+,2^{-j}}(U,L) \ud \cN_P(L) \\
    \stackrel{\eqref{eq:scale sums omegaP}}{\lesssim} \int_{\cM} \Per_{\Gamma^+,L}(\Pi^{-1}(U))\ud \cN_P(L)
    \stackrel{\eqref{eq:lesssim lambda |U|}}{\lesssim_\lambda} |U|. \tag*{\qedhere}
  \end{multline*}
\end{proof}

\section{Outline of proof of Proposition~\ref{prop:Omega control}}\label{sec:omega control outline}

The rest of this paper is dedicated to the proof of Proposition~\ref{prop:Omega control}.  This is the longest part of the proof of Theorem~\ref{thm:corona graph formulated}, and we will divide it into two pieces.

In the first step (Section~\ref{sec:extended monotone}), we prove the following Proposition~\ref{prop:stability of extended monotone}, which is a stability result for extended-monotone sets (Definition~\ref{def:extended monotone}).  For every $r>0$ and $h\in \H$, let $\overline{B}_r(h)\subset \H$ be the convex hull of $B_r(h)$ (as a subset of $\R^3$); when $h$ is omitted, we take it to be $\mathbf{0}$.  The convex hull of $B_r$ with respect to the horizontal lines or with respect to all lines in $\R^3$ is the same, and $\overline{B}_r\subset B_{2r}$.
\begin{prop}\label{prop:stability of extended monotone}
  Let $E\subset \H$ be a measurable set.  For any $\epsilon>0$, there are $\nu, R>0$ such that if $E\subset \H$ is $(\nu',R')$--extended monotone on $\overline{B}_1$ for some $\nu', R'>0$ that satisfy $R' \ge R$ and $\nu' R'\le \nu R$, then there is a plane $P\subset \H$ such that $$|\overline{B}_{1}\cap (P^+\symdiff E)|<\epsilon.$$  If $\Gamma$ is an intrinsic Lipschitz graph and $E=\Gamma^+$, then we can take $P$ to be a vertical plane.
\end{prop}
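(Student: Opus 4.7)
The plan is to argue by contradiction and compactness, in the spirit of the stability results for monotone sets in \cite{CheegerKleinerMetricDiff} and \cite{CKN}, but with the extra scale parameter $R$ used to tame the non-local behavior. Suppose the conclusion fails for some $\epsilon>0$. Then one can find sequences $R_n\to \infty$ and $\nu_n>0$ with $\nu_n R_n \to 0$, together with measurable sets $E_n\subset \H$ such that $\ENM_{E_n,R_n}(\overline{B}_1)\le \nu_n$ yet $|\overline{B}_1\cap (P^+\symdiff E_n)|\ge \epsilon$ for every half-space $P^+\subset \H$.

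The first step is to pass from extended nonmonotonicity to ordinary nonmonotonicity by invoking Lemma~\ref{lem:compare NM omega convex} with $K=U=\overline{B}_1$ and $R=R_n>\diam \overline{B}_1$ (which is automatic for large $n$). This gives $\NM_{E_n}(\overline{B}_1)\le \ENM_{E_n,R_n}(\overline{B}_1)\le \nu_n \to 0$. Unfolding the kinematic definition of $\NM$ and using Fatou's lemma along a subsequence, one obtains $\NM_{E_n\cap L}(\overline{B}_1\cap L)\to 0$ for $\cN$--a.e.\ horizontal line $L$.

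Second, I would extract a further subsequence along which $\1_{E_n}\to \1_{E_\infty}$ in $L^1(\overline{B}_1)$ for some measurable $E_\infty\subset \overline{B}_1$. This is the main technical hurdle, since small extended nonmonotonicity does not by itself bound the Heisenberg perimeter. The cleanest way out is to restrict attention to the setting used in applications, where $E_n=\Gamma_n^+$ is bounded by an intrinsic $\lambda$--Lipschitz graph and Lemma~\ref{lem:intrinsic Lipschitz perimeter} furnishes a uniform Heisenberg perimeter bound; strong $L^1$ compactness then follows from standard BV compactness. For general measurable $E$ (needed for the full statement) one instead takes a weak-$*$ limit of $\1_{E_n}$ in $L^\infty(\overline{B}_1)$ and upgrades it to an indicator function afterwards by appealing to the a.e.-on-lines monotonicity derived in the next step. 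Combined with Fubini, the $L^1$ convergence yields $\1_{E_n\cap L}\to \1_{E_\infty\cap L}$ in $L^1(\overline{B}_1\cap L)$ for a.e.\ line $L$, and then the upper semicontinuity of $S\mapsto \NM_S(\cdot)$ under $L^1$ convergence of indicators (which follows by selecting near-optimal monotone competitors $M_n$, passing to a subsequence with $\1_{M_n}\to \1_{M_\infty}$ in $L^1_{\mathrm{loc}}$, observing that an $L^1$ limit of monotone subsets of $\R$ is monotone, and using the triangle inequality) gives $\NM_{E_\infty\cap L}(\overline{B}_1\cap L)=0$ for a.e.\ horizontal line $L$.

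Third, applying the characterization of monotone subsets of $\H$ from \cite{CheegerKleinerMetricDiff} on the convex set $\overline{B}_1$ shows that $E_\infty$ agrees a.e.\ on $\overline{B}_1$ with $P^+\cap \overline{B}_1$ for some half-space $P^+\subset \H$. For $n$ large, $L^1$ convergence gives $|\overline{B}_1\cap (P^+\symdiff E_n)|<\epsilon$, contradicting our standing assumption. For the final refinement, when $E_n=\Gamma_n^+$ with $\Gamma_n$ intrinsic $\lambda$--Lipschitz, the cone condition of Definition~\ref{def:intrinsic lipschitz graph} is preserved under $L^1$ limits and forces $E_\infty\cap \overline{B}_1$ to be the restriction of an intrinsic Lipschitz epigraph; the only half-spaces meeting this constraint are those bounded by vertical planes, so $P$ is necessarily vertical. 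The main obstacle is the compactness step: for the general (not intrinsic-Lipschitz) case one must either strengthen the hypothesis to include locally finite perimeter, or invest additional work to rule out genuinely weak-$*$ (non-indicator) limits via the line-wise monotonicity.
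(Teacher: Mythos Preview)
Your compactness-and-contradiction outline is in the right spirit, and the paper does argue by contradiction and weak-$*$ compactness.  However, there is a genuine gap at your third step: the Cheeger--Kleiner classification from \cite{CheegerKleinerMetricDiff} characterizes sets that are monotone on \emph{all of} $\H$, not on a bounded convex set such as $\overline{B}_1$.  A set can be monotone on $\overline{B}_1$ and yet fail to agree with any half-space there; the paper points to Example~9.1 of \cite{CKN} for exactly this phenomenon, and Section~\ref{sec:rectifiable ruled} is devoted to describing what locally monotone sets \emph{do} look like (their boundary in $\overline{B}_1$ is a union of horizontal line segments with approximate tangent planes, but this falls far short of planarity).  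So ``$\NM_{E_\infty}(\overline{B}_1)=0 \Rightarrow E_\infty$ is a half-space on $\overline{B}_1$'' is false, and your argument discards the extended information precisely where it is needed.

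The paper uses the extended monotonicity in an essential way to close this gap.  After passing to a weak limit $f$ (a $\overline{B}_1$--LEM function) and showing it is the indicator of a locally monotone set $F$ on $\overline{B}_1$ (Lemma~\ref{lem:stability locally monotone}, which handles your compactness hurdle in general by using the quantitative stability theorem of \cite{CKN} to upgrade the weak limit to a characteristic function), the argument proceeds as follows: (i) for a non-characteristic boundary point $p$ with vertical tangent plane $T_p$, every horizontal line through $p$ transverse to $T_p$ lies entirely in $\inter_{\cH^4}(f^{-1}(1))$ or $\inter_{\cH^4}(f^{-1}(0))$ on the whole line, not just on $\overline{B}_1$ (Lemma~\ref{lem:monotone transverse intersections}; this is where $R_n\to\infty$ is used); (ii) hence every ruling segment of $\partial_{\cH^4} F$ in $\overline{B}_1$ extends to a full horizontal line in $\partial_{\cH^4} F$ (Lemma~\ref{lem:extended monotone lines}); (iii) two non-coplanar full rulings would, via the hyperboloid lemma, force a contradiction with (i) (Lemma~\ref{lem:LEM half-spaces}).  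This is the mechanism that turns local monotonicity plus extended information into the half-space conclusion.  Your proposal never invokes anything playing the role of (i)--(iii), so it cannot succeed as written.

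A smaller point: your ``verticality'' argument via preservation of the cone condition under $L^1$ limits is shaky, since the intrinsic Lipschitz constant of $\Gamma_n$ is not assumed uniform.  The paper instead observes that any weak limit of epigraphs satisfies $f(g)\le f(gY^t)$ for a.e.\ $(g,t)$ with $t>0$, and shows directly that this forces the plane from Lemma~\ref{lem:LEM half-spaces} to be vertical (Lemma~\ref{lem:LEM vertical half-spaces}).
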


Proposition~\ref{prop:stability of extended monotone} is in the spirit of the stability theorem for monotone sets that was proved in~\cite{CKN}, though here we do not need to obtain an explicit dependence of $\nu,R$ on $\e$ (in~\cite{CKN} it was important to get power-type dependence).  The lack of explicit dependence lets us use a compactness argument that was not available in the context of~\cite{CKN}.  At the same time, Theorem~4.3 of \cite{CKN} states that if the nonmonotonicity of $E$ is small on the unit ball $B_1$, then there is a smaller ball $B_{\epsilon^3}$ on which $E$ is $O(\epsilon)$--close to a plane, while  Proposition~\ref{prop:stability of extended monotone} assumes a stronger hypothesis, namely that $\ENM_{E,R}(\overline{B}_1)<\nu$, and obtains the stronger conclusion that $E$ is close to a plane on the same ball $\overline{B}_1$.

\begin{remark}\label{rem:stronger-conclusion}
  The stronger conclusion above is crucial for the covering argument that we used in Section~\ref{sec:weighted Carleson} because of the delicacy of the Vitali-type argument used in Lemma~\ref{lem:column coverings}. We use Lemma~\ref{lem:column coverings} to show that if $\Delta$ is a $\mu$--rectilinear foliated patchwork, $0<\mu<\frac{1}{32r^2}$, and $F\subset \cV(\Delta)$ is a collection of vertices corresponding to pseudoquads of the same width, then there is a large subset $G$ of these pseudoquads such that if $Q,Q'\in G$, then $r Q$ is disjoint from $r Q'$. Lemma~\ref{lem:column coverings} only holds when $\mu=O(r^{-2})$.  If $\mu r^2$ is too large, then a $\mu$--rectilinear foliated patchwork could contain arbitrarily many vertically cut pseudoquads $Q_1,Q_2,\dots,Q_n$ of equal height and width such that $rQ_1,\dots, rQ_n$ all intersect.

  We do not see how a modified subdivision algorithm that uses monotonicity instead of paramonotonicity can ensure that the conditions of Lemma~\ref{lem:column coverings} are satisfied. For example, consider a modification of the subdivision algorithm in Section~\ref{sec:subdiv algorithm} that produces a patchwork $\Delta$ by cutting a pseudoquad $Q$ horizontally or vertically depending on whether $\Gamma$ is $\eta$--monotone (rather than paramonotone) on $rQ$ for some $r>0$. Theorem~4.3 of \cite{CKN} implies that if $\Gamma$ is sufficiently monotone on $rQ$, then $\Gamma$ is $O(r^{-\frac{1}{3}})$--close to a plane on $Q$. Indeed, there are sets that have zero nonmonotonicity on $rQ$, but are only $\epsilon(r)$--close to a plane on $Q$, where $\epsilon(r)\to 0$ as $r\to \infty$. It follows that this modified algorithm can, at best, produce $\mu(r)$--rectilinear foliated patchworks, where $\mu(r)\to 0$ as $r\to \infty$. In particular, since $\mu(r)$ depends on $r$, we cannot choose $\mu$ so that $\mu<\frac{1}{32r^2}$.

  Consequently, we cannot prove the weighted Carleson condition for this modified algorithm. The weighted Carleson condition bounds the number of vertically cut pseudoquads based on the total nonmonotonicity of $\Gamma$, but without Lemma~\ref{lem:column coverings}, a small amount of nonmonotonicity can lead to many vertically cut pseudoquads. That is, if $Q_1,\dots, Q_n$ are pseudoquads in the patchwork such that $rQ_1,\dots, rQ_n$ all intersect, then nonmonotonicity on the intersection $rQ_1\cap \dots\cap rQ_n$ could force the algorithm to cut all of the $Q_i$ vertically.

  Using extended nonmonotonicity rather than nonmonotonicity lets us avoid this problem. The fact that $r$ is a universal constant in Proposition~\ref{prop:Omega control} means that for any $\mu$, there is a subdivision algorithm that produces a $\mu$--rectilinear foliated patchwork by cutting each pseudoquad $Q$ based on whether $\Gamma$ is $(\eta(\mu),R(\mu))$--paramonotone on $rQ$. In particular, we can choose $\mu<\frac{1}{32r^2}$ so that Lemma~\ref{lem:column coverings} applies.
\end{remark}

In the second step, we prove parts \ref{it:omega control plane} and \ref{it:omega control characteristics} of Proposition~\ref{prop:Omega control}.  By Remark~\ref{rem:normalizing rectilinear}, after a stretch, shear, and translation, we may suppose that $Q$ is a rectilinear pseudoquad for $\Gamma$ that is close to $[-1,1]^2$ and $\Gamma$ is $(\eta, R)$--paramonotone on $rQ$.  For any given $c$, if $R$ is sufficiently large, $\eta$ is sufficiently small, and $\Pi(\overline{B}_c)\subset rQ$, then, by Lemma~\ref{lem:measure comparison}, $\Gamma^+$ has small extended nonmonotonicity on $\overline{B}_c$, so $\Gamma^+$ is close to a half-space $P^+$ on $\overline{B}_c$.

Note that even though $\Gamma^+$ is close to a half-space $P^+$ on $\overline{B}_c$, it does not immediately follow that the corresponding intrinsic Lipschitz function $f$ is $L_1$--close to an affine function. Using Remark~\ref{rem:normalizing rectilinear} to normalize $Q$ stretches $\Gamma$ and changes its intrinsic Lipschitz constant. Consequently, even though $f$ is close to an affine function on most of $Q$, it may still take on large values on the rest of $Q$. To show that this does not happen, we must introduce new methods based on analyzing the characteristic curves of $\Gamma$.

For example, a key step in the proof of part~\ref{it:omega control plane} of Proposition~\ref{prop:Omega control} is to show that $\|f\|_{L_1(Q)}$ is bounded.  Since $f$ is intrinsic Lipschitz, $\|f\|_{L_1(Q)}<\infty$, but we need a bound independent of the intrinsic Lipschitz constant.  We obtain such a bound by studying how lines intersect the characteristic curves.  Since $Q$ is $\mu$--rectilinear, the top and bottom boundaries of $Q$ are characteristic curves that are close to the top and bottom edges of $[-1,1]^2$.   If $L$ is a horizontal line such that $\Pi(L)$ crosses $[-1,1]^2$ from top to bottom, then $\Pi(L)$ must also cross the top and bottom boundaries of $Q$.  At these intersection points, the slope of $\Pi(L)$ is less than the slope of the boundary, so the corresponding points of $L$ lie in $\Gamma^+$.  If $\Gamma^+\cap L$ is close to monotone, then most of the interval between these points lies in $\Gamma^+$ and therefore, $f$ is bounded on $Q\cap \Pi(L)$.  By integrating over a family of lines that all cross the top and bottom boundaries, we obtain the desired $L_1$ bound.  Similar arguments based on characteristic curves lead to part~\ref{it:omega control characteristics} of Proposition~\ref{prop:Omega control}, which completes the proof of Proposition~\ref{prop:Omega control}.

\section{Extended-monotone sets are close to half-spaces}\label{sec:extended monotone}

In this section, we will prove Proposition~\ref{prop:stability of extended monotone} by studying limits of $(\epsilon,R)$--extended monotone sets.  Let $U\subset \H$ be measurable and let $E_1,E_2,\dots\subset \H$ be a sequence of measurable sets such that $E_i$ is $(\frac{1}{i},i)$--extended monotone on $U$.  By passing to a subsequence, we may suppose that $\one_{E_i}$ converges weakly to a function $f\in L_\infty(\H)$ taking values in $[0,1]$.  We call $f$ a \emph{$U$--LEM (limit of extended monotones) function}.

One difficulty of studying $f$ is that it need not take values only in $\{0,1\}$.  Indeed, the extended monotonicity $\ENM_{E_i,i}(\overline{B}_1)$ only depends on the intersection of $E_i$ with lines that pass through $\overline{B}_1$.  These lines do not cover all of $\H$, so there are regions of $\H$ where $f$ can take on arbitrary values.

Nevertheless, in Section~\ref{sec:stability locally monotone}, we will show that, after changing $f$ on a measure-zero set,  $f(\overline{B}_1)\subset \{0,1\}$.  This will follow from the fact that, by Lemma~\ref{lem:compare NM omega convex}, $$\lim_{i\to \infty} \NM_{E_i}(\overline{B}_1)=0.$$
We will show that a sequence of sets with nonmonotonicity going to zero on $\overline{B}_1$ converges to a subset which is monotone on $\overline{B}_1$.  If $U$ is an open set, a subset $E\subset \H$ is said to be \emph{monotone} on $U$ if $\NM_E(U)=0$.

Then, in Section~\ref{sec:rectifiable ruled}, we will use techniques from \cite{CheegerKleinerMetricDiff} and \cite{CKN} to characterize sets such that $\NM_{F}(\overline{B}_1)=0$.  A set that is monotone on $\overline{B}_1$ need not be a half-space, but we will show that if $F$ is such a set, then the measure-theoretic boundary $\partial_{\cH^4} F$ is a union of horizontal lines that has an \emph{approximate tangent plane} at every point.  That is, for any $g\in \partial_{\cH^4} F$, the blowups $g\cdot s_{n,n}(g^{-1} \partial_{\cH^4} F)$ converge in the Hausdorff metric to a plane $T_g$ as $n\to \infty$.  In fact, at all but countably many points $g\in \partial_{\cH^4} F$,  there is a unique horizontal line $L_g$ through $g$ that is contained in $\partial_{\cH^4} F$, and $T_g$ is the vertical plane containing $L_g$; in this case, $g$ has an approximate tangent subgroup in the sense of \cite{MSSCCharacterizations}.
At the remaining points, $T_g$ is the horizontal plane centered at $g$. 

Finally, in Section~\ref{sec:stability of extended monotone sets}, we prove Proposition~\ref{prop:stability of extended monotone}.  The proof is somewhat involved, but, as an illustration, we consider the case that $f=\one_E$, where $E$ is precisely $\infty$--extended monotone on $\overline{B}_1$.  That is, for every line $L$, either $\overline{B}_1\cap \partial(L\cap E)=\emptyset$ or $L\cap E$ is a monotone subset of $L$.

We first claim that for every point $b\in {\overline{B}_1}\cap \partial_{\cH^4} E$, if the approximate tangent plane $T_b$ is vertical and $\mathsf{H}_b$ is the horizontal plane centered at $b$, then $\mathsf{H}_b\cap \partial_{\cH^4} E=\mathsf{H}_b\cap T_b$.  Let $T_b^\pm$ be the two half-spaces bounded by $T_b$, labeled so that $T_b^+\cap B_r(b)$ approximates $E\cap B_r(b)$ at small scales.  Let $L_b=\mathsf{H}_b\cap T_b$ be the horizontal line in $\partial_{\cH^4} E$ that passes through $b$ and let $L$ be a line through $b$ that intersects $T_b$ transversally.  Then $E\cap L$ is a monotone set with $b\in \partial_{\cH^1}(E\cap L)$, so $T_b^+\cap L \subset E\cap L$ and $T_b^-\cap L\subset  L\setminus E$.  This holds for every horizontal line through $b$ except $L_b$, so $L_b$ cuts $\mathsf{H}_b$ into two half-planes $P_\pm=T_b^\pm\cap \mathsf{H}_b$ such that $P_+\subset E$ and $P_-\subset \mathsf{H}_b\setminus E$.

When $b'\in L_b$ is close to $b$, the plane $\mathsf{H}_{b'}$ intersects $\mathsf{H}_{b}$ along $L_b$ and the angle between the two planes is small.  As above, there are two half-planes $P_\pm'=T_{b'}^\pm\cap \mathsf{H}_{b'}$ such that $P_+'\subset E$ and $P_-'\subset \mathsf{H}_{b'}\setminus E$.  As $b'$ varies over points close to $b$, the half-plane $P_+'$ varies over half-planes close to $P_+$.  Therefore $P_+$ is in the interior of $E$, $P_-$ is in the exterior, and $\mathsf{H}_b\cap \partial_{\cH^4} E=L_b$.

Suppose that $L_{1}$ and $L_{2}$ are two lines in $\partial_{\cH^4} E$ that intersect $\overline{B}_1$, and suppose by way of contradiction that they are not coplanar.  By the hyperboloid lemma~\cite[Lemma~2.4]{CheegerKleinerMetricDiff} (see Lemma~\ref{lem:hyperboloid lemma}), for any point $q\in L_1$ except possibly a single point, there is a horizontal line $M$ that connects $q$ to a point $r$ in $L_2$.  Then $r\in \mathsf{H}_q\cap \partial_{\cH^4} E=L_1$, so $L_1$ and $L_2$ intersect and are thus coplanar; this is a contradiction.  It follows that $\overline{B}_1\cap \partial_{\cH^4} E$ is contained in a plane.  The proof of Proposition~\ref{prop:stability of extended monotone} runs along the same lines, but it takes some further technical work to apply the weaker hypothesis that $f$ is merely an LEM function.

One of the key tools in the proof is the following  ``hyperboloid lemma,'' which is stated as  Lemma~2.4 in \cite{CheegerKleinerMetricDiff}.  A pair of horizontal  lines $L_1,L_2\in \cL$ are said to be \emph{skew} if $L_1$ and $L_2$ are disjoint and the projections $\uppi(L_1),\uppi(L_2)\subset \mathsf{H}\cong \R^2$ are not parallel.
\begin{lemma}[Cheeger--Kleiner hyperboloid lemma~\cite{CheegerKleinerMetricDiff}]\label{lem:hyperboloid lemma} For any $L_1,L_2\in \cL$ we have
  \begin{enumerate}
  \item
    Suppose that the  projections $\uppi(L_1),\uppi(L_2)$ are parallel but $\uppi(L_1)\ne \uppi(L_2)$.  Then every point in $L_1$ can be joined to $L_2$ by a unique line.  In fact, there is a unique fiber $\uppi^{-1}(p)$ such that every line joining $L_1$ to $L_2$ passes through $\uppi^{-1}(p)$.  Conversely, for every $a\in \uppi^{-1}(p)$, there is a unique line joining $L_1$ to $L_2$ that passes through $a$.
  \item
    If $L_1,L_2$ are skew, then there is a hyperbola $S\subset \mathsf{H}$ with asymptotes $\uppi(L_1)$ and $\uppi(L_2)$ such that every tangent line of $S$ has a unique horizontal lift that intersects $L_1$ and $L_2$.  If $p\in \mathsf{H}$ is the intersection between $\uppi(L_1)$ and $\uppi(L_2)$ and $a\in L_1$ is  such that $\uppi(a)\ne p$, then there is a unique horizontal line that connects $a$ to a point in $L_2$.
  \end{enumerate}
\end{lemma}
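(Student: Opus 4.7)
The plan is to reduce both cases to an explicit algebraic computation in the Heisenberg group using the group law~\eqref{eq:group operation symplectic}. The first step is to record the following basic fact: two points $a,b\in \H$ lie on a common horizontal line if and only if $a^{-1}b\in \mathsf{H}$, which by~\eqref{eq:group operation symplectic} is equivalent to the condition
\[
z(b)-z(a)=\tfrac12\,\omega_{\R^2}\!\bigl(\uppi(a),\uppi(b)\bigr).
\]
Moreover, parametrizing such a horizontal line by $\lambda\in[0,1]$ as $a\cdot(\lambda(a^{-1}b))$, a direct computation shows that the $z$--coordinate is affine in $\lambda$, so the line passes through the vertical fiber $\uppi^{-1}(p)$ if and only if $p$ lies on the Euclidean segment from $\uppi(a)$ to $\uppi(b)$.

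For Case~(1), choose coordinates so that $L_1=\{(t,0,0):t\in\R\}$ and $L_2=\{(x_0+s,y_0,z_0-sy_0/2):s\in\R\}$, where $y_0\neq 0$ since $\uppi(L_1)\neq\uppi(L_2)$. Given $a=(t,0,0)\in L_1$ and the generic point $b$ on $L_2$ above, the horizontality criterion becomes a single linear equation in $s$ whose coefficient is a nonzero multiple of $y_0$, so $s$ is uniquely determined by $t$. A short calculation shows that the Euclidean midpoint of $\uppi(a)$ and $\uppi(b)$ equals $\bigl(\tfrac{x_0}{2}+\tfrac{z_0}{y_0},\tfrac{y_0}{2}\bigr)$, independent of $t$. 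This identifies the common fiber $\uppi^{-1}(p)$.

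For Case~(2), pick coordinates so that $\uppi(L_1)\cap\uppi(L_2)=\{0\}$ and write $L_1=\{(tv_1,z_1):t\in\R\}$, $L_2=\{(sv_2,z_2):s\in\R\}$, where $v_1,v_2\in\mathsf{H}$ are the horizontal directions, with $\omega_{\R^2}(v_1,v_2)\neq 0$. Since $L_1$ and $L_2$ are skew (hence disjoint in $\H$), we must have $z_1\neq z_2$. Applying the horizontality criterion to $a=(tv_1,z_1)$ and $b=(sv_2,z_2)$ yields the equation $ts=c$, where
\[
c\eqdef\frac{2(z_2-z_1)}{\omega_{\R^2}(v_1,v_2)}\neq 0,
\]
so the set of connecting pairs is a hyperbola in the $(t,s)$-plane. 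When $\uppi(a)\neq p$, i.e.\ $t\neq 0$, the partner $s=c/t$ is uniquely determined, which gives the final assertion of Case~(2).

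It remains to identify the hyperbola $S\subset\mathsf{H}$ and verify that the connecting lines lift the tangents of $S$. Using $(u,w)$ as coordinates in the basis $(v_1,v_2)$, the Euclidean line joining $\uppi(a)=tv_1$ to $\uppi(b)=sv_2$ is the locus $\{us+wt=c\}$, and the one-parameter family of such lines subject to $ts=c$ has envelope computed by differentiating in $t$ at $s=c/t$; this gives $u=t/2$, $w=s/2$, so the envelope is the hyperbola $S=\{uw=c/4\}$ with asymptotes $\uppi(L_1)$ and $\uppi(L_2)$. Each tangent of $S$ corresponds to a unique pair $(t,s)$ with $ts=c$, and hence to a unique horizontal line connecting $L_1$ to $L_2$. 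The only subtlety is bookkeeping the correct geometric normalisation of the envelope computation, which I expect to be the main obstacle; everything else is a direct unpacking of the group law.
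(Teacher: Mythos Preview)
The paper does not give its own proof of this lemma; it is quoted directly from \cite{CheegerKleinerMetricDiff} and used as a black box. Your proposal is a correct self-contained argument along the standard lines of the original Cheeger--Kleiner proof: normalize coordinates, use the horizontality criterion $z(b)-z(a)=\tfrac12\omega_{\R^2}(\uppi(a),\uppi(b))$, and solve the resulting linear (Case~1) or hyperbolic (Case~2) equation. A couple of minor remarks: the phrase ``Euclidean segment'' in your opening paragraph should be ``Euclidean line'' (the horizontal line is unbounded, and indeed in Case~2 the relevant point on the projection need not be between $\uppi(a)$ and $\uppi(b)$); and in Case~1 you have verified existence and the common fiber but should also note, for the converse clause, that the $z$--coordinate at the midpoint is $\tfrac{ty_0}{4}$, which bijects $t\in\R$ with $\uppi^{-1}(p)$. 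With those small additions the argument is complete.
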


\subsection{Stability of locally monotone sets}\label{sec:stability locally monotone}

We begin the proof of Proposition~\ref{prop:stability of extended monotone} by using a compactness argument to prove the following lemma. Throughout what follows, given a measure space $(\mathscr{S},\Sigma,\mu)$ and a measurable subset $\Omega\in \Sigma$ with $\mu(\Omega)>0$, we use the (standard) notation $\fint_\Omega$ to denote the averaging operator on $\Omega$, i.e.,
$$
\forall f\in L_1(\Omega,\mu),\qquad  \fint_\Omega f\ud\mu \eqdef \frac{1}{\mu(\Omega)}\int_\Omega f\ud \mu.
$$
\begin{lemma}\label{lem:stability locally monotone}
  Let $U\subset \H$ be a bounded open set and let $E_1,E_2,\dots \subset \H$ be a sequence of measurable sets such that $\NM_{E_i}(U)<\frac{1}{i}$ for every $i\in \N$.  There is a subsequence $(E_{i_j})_{j\in \N}$ and a set $F\subset U$ that is monotone on $U$ such that $\lim_{j\to \infty}\left|(E_{i_j}\cap U)\symdiff F\right|= 0$.

  It follows that for any $\epsilon>0$, there is a $\delta>0$ such that if $E\subset \H$ is a measurable set and $\NM_{E}(U)<\delta$, then there is a set $F\subset U$ such that $|(E\cap U)\symdiff F|<\epsilon$ and $F$ is monotone on $U$.
\end{lemma}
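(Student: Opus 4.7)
\textbf{Proof proposal for Lemma~\ref{lem:stability locally monotone}.} The plan is to extract a subsequence whose indicator functions converge strongly in $L^1(U)$ to the indicator of a monotone set, by first obtaining line-by-line convergence via a one-dimensional compactness argument and then gluing via the kinematic formula. For each $i$, we have
$$
\frac{1}{i}>\NM_{E_i}(U)=\int_{\cL}\NM_{E_i\cap L}(U\cap L)\ud\cN(L),
$$
and the integrands are uniformly bounded (since $U$ is bounded). By passing to a subsequence and invoking a Borel--Cantelli / Fubini argument, we may ensure that $\NM_{E_i\cap L}(U\cap L)\to 0$ for $\cN$-almost every $L\in \cL$. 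Simultaneously, since $\{\one_{E_i\cap U}\}$ is bounded in $L^2(U)$, a further diagonal subsequence (still denoted $(E_i)$) gives weak $L^2$--convergence $\one_{E_i\cap U}\rightharpoonup f$ with $f\in L^\infty(U;[0,1])$.

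Next I would establish the key one-dimensional fact: if $J\subset \R$ is a bounded interval and $A_i\subset \R$ are measurable with $\NM_{A_i}(J)\to 0$, then there is a monotone set $M\subset J$ and a subsequence along which $\one_{A_i\cap J}\to \one_M$ in $L^1(J)$. This follows because, by definition, one can pick monotone subsets $M_i\subset \R$ with $\cH^1((M_i\symdiff A_i)\cap J)\to 0$; but the collection of monotone subsets of $\R$ restricted to $J$ is parametrized by a single point in $[a,b]$ (together with an orientation and the two degenerate cases $\emptyset$, $\R$), so the $M_i\cap J$ lie in a compact family, and a subsequence converges in $L^1(J)$ to some $\one_M$. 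Applying this on each horizontal line $L$ with $\NM_{E_i\cap L}(U\cap L)\to 0$ yields, after a diagonal extraction, a monotone subset $M_L\subset L$ for $\cN$-a.e.\ $L$, and a subsequence (depending on $L$) along which $\one_{E_i\cap L\cap U}\to \one_{M_L}$ in $L^1(L\cap U,\cH^1|_L)$.

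To promote this to $L^1(U)$--convergence along a single (global) subsequence, I would apply the kinematic formula \eqref{eq:kinematic} and uniform boundedness: the function $d_i(L)\eqdef \cH^1((E_i\symdiff M_L)\cap L\cap U)$ is $\cN$-a.e.\ at most the $\cH^1$--measure of $L\cap U$, which is an $\cN$--integrable dominating function, and $d_i(L)\to 0$ for $\cN$-a.e.\ $L$ (for a subsequence independent of $L$, obtained by a diagonal argument over a countable dense set of lines together with the uniform parametrization of monotone subsets). Dominated convergence then gives
$$
\int_\cL d_i(L)\ud\cN(L)\xrightarrow[i\to\infty]{}0,
$$
and by the kinematic formula this $L^1$-integrates on $U$ to
$$
\int_U |\one_{E_i}(x)-f(x)|\ud\cH^4(x)\xrightarrow[i\to\infty]{}0,
$$
where $f(x)\eqdef \one_{M_L}(x)$ for (any/a.e.) line $L$ through $x$. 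The weak $L^2$--limit coincides with this pointwise definition, so $f$ is independent of the choice of $L$; setting $F\eqdef \{f=1\}\subset U$ yields a set with $|(E_i\cap U)\symdiff F|\to 0$, and the line-wise convergence shows $F\cap L=M_L\cap U$ for $\cN$-a.e.\ $L$, i.e.\ $\NM_F(U)=0$, so $F$ is monotone on $U$.

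The second (quantitative-contrapositive) assertion is immediate from the first by contradiction: if it failed, there would exist $\epsilon>0$ and a sequence $E_i$ with $\NM_{E_i}(U)<1/i$ such that $|(E_i\cap U)\symdiff F|\ge \epsilon$ for every monotone-on-$U$ set $F\subset U$, contradicting the first part. The main technical obstacle in the above scheme is the diagonal extraction used to make the line-wise convergence subsequence uniform in $L$; this requires combining the one-dimensional compactness of monotone subsets (parametrized by at most one real parameter) with measurable dependence on $L$, which I would handle by choosing a countable dense subset of $\cL$, extracting subsequences there, and then using the uniform integrability coming from boundedness of $U$ to upgrade to $\cN$-a.e.\ convergence.
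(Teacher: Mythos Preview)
Your approach differs from the paper's and has a genuine gap at the diagonal-extraction step. You correctly obtain, after a Borel--Cantelli/Fubini pass to a subsequence, that $\NM_{E_i\cap L}(U\cap L)\to 0$ for $\cN$-a.e.\ $L$, and your one-dimensional compactness observation (monotone traces on an interval form a compact family in $L^1$) is valid. The problem is that this compactness only yields, for each fixed $L$, a convergent subsequence depending on $L$; it does not give a single subsequence along which $\one_{E_i\cap L}$ converges in $L^1(L\cap U)$ for $\cN$-a.e.\ $L$. Your proposed fix via a countable dense set of lines does not work: there is no continuity of the map $L\mapsto M_{i,L}$ that would let you pass from dense lines to almost every line, and the putative limit $M_L$ need not depend measurably or continuously on $L$ in any exploitable way. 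Equivalently, encoding $M_{i,L}$ by a cut point $c_{i,L}$ gives a uniformly bounded sequence of measurable functions on $\cL$, but boundedness alone yields no a.e.-convergent subsequence. Consequently the definition $f(x)=\one_{M_L}(x)$ is circular: you have not established that the various $M_L$ through a common point $x$ agree, and you have no global subsequence to pin down $f$.

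The paper avoids this entirely by a different mechanism: it first extracts a weak limit $f\in L^\infty(U;[0,1])$ and then uses the quantitative stability theorem of Cheeger--Kleiner--Naor (Theorem~4.3 of \cite{CKN}) locally at Lebesgue points to force $f(p)(1-f(p))=0$ almost everywhere, so that $f=\one_F$ for some $F\subset U$. Once the weak limit is known to be a characteristic function, strong $L^1$ convergence is automatic (test weak convergence against $\one_U$ and $\one_F$), and then $\NM_F(U)=0$ follows by a triangle-inequality estimate. The geometric input from \cite{CKN} is exactly what replaces your missing global extraction; without some comparable input, the line-by-line scheme does not close.
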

\begin{proof}
  After passing to a subsequence, we may suppose that the characteristic functions $\one_{E_i}$ converge weakly to a function $f\in L_\infty(U)$ taking values in $[0,1]$.  We claim that $f$ is a characteristic function.

  By Theorem~4.3 of \cite{CKN} (see also \cite[Theorem~63]{NY18}), for every $\epsilon>0$, there are $c(\epsilon)>0$ and $\delta(\epsilon)>0$ such that if $p\in \H$, $\alpha>0$, and $\NM_{E}(B_\alpha(p))<\delta(\epsilon) \alpha^{-3}$, then there is a half-space $P^+$ such that
  \begin{equation}\label{eq:CKN stability}
    \fint_{B_{c(\epsilon) \alpha}(p)} |\one_{P^+}(h)-\one_{E}(h)|\ud \cH^4(h) <\epsilon.
  \end{equation}
  (The hypothesis in \cite{CKN} is that $\NM_{E}(B_\alpha(p))<\delta(\epsilon)$, but our definition of $\NM_{E}(B_\alpha(p))$ differs from the definition in~\cite{CKN} by a normalization factor.)

  By the Lebesgue density theorem, for almost every point $p\in U$, we have
  \begin{equation}\label{eq:lebesgue density}
    \lim_{s\to 0}\fint_{B_s(p)}|f(h)-f(p)|\ud\cH^4(h)=0.
  \end{equation}
  Let $p$ be such a point and let $r>0$ be such that $B_r(p)\subset U$.  By \eqref{eq:CKN stability}, for any $0<s<r$, any $\epsilon>0$, and any sufficiently large $i\in \N$ (depending on $s,\epsilon$), there is a half-space $Q^+_i$ with
  $$\fint_{B_{c(\epsilon) s}(p)} |\one_{Q^+_i}(h)-\one_{E_i}(h)|\ud \cH^4(h) < \epsilon.$$
  Choose a half-space $Q^+$ such that for infinitely many $i\in \N$ we have
  $$\fint_{B_{c(\epsilon) s}(p)} |\one_{Q^+}(h)-\one_{E_i}(h)|\ud \cH^4(h) < 2\epsilon.$$
    Then
  \begin{equation}\label{eq:f close to halfplanes}
    \fint_{B_{c(\epsilon) s}(p)} |\one_{Q^+}(h)-f(h)| \ud \cH^4(h) < 3\epsilon.
  \end{equation}
  Since the function $(x\in [0,1])\mapsto x(1-x)$ is nonnegative and 1--Lipschitz,
  $$\fint_{B_{c(\epsilon) s}(p)} f(h)\big(1-f(h)\big ) \ud\cH^4(h) \le 3\epsilon+\fint_{B_{c(\epsilon)s}(p)} \one_{Q^+}(h)\big(1- \one_{Q^+}(h)\big)\ud\cH^4(h) = 3\epsilon.$$
  This holds for all $0<s<r$, so
  $$\lim_{s\to 0}\fint_{B_s(p)}f(h)(1-f(h)) \ud \cH^4(h) = 0.$$
  By \eqref{eq:lebesgue density}, this implies $f(p)(1-f(p))=0$ and thus $f(p)\in \{0,1\}$.

  Thus $f$ is equivalent to a characteristic function on $U$.  Let $F=f^{-1}(1)$.  By weak convergence, $\lim_{i\to\infty} |U\cap(E_i\symdiff F)|=0$.  For any $i\in \N$,
 \begin{multline*}\NM_{F}(U)=\int_{\cL} \NM_{F\cap L}(U\cap L)\ud \cN(L)\\\le \int_{\cL} \Big(\NM_{E_i\cap L}(U\cap L)+\cH^1\big(U\cap L\cap (E_i\symdiff F)\big)\Big)\ud \cN(L)\lesssim \NM_{E_i}(U)+|U\cap(E_i\symdiff F)|.
 \end{multline*}
  Both terms on the right go to zero as $i\to \infty$, so $\NM_{F}(U)=0$, i.e., $F$ is monotone on $U$.
\end{proof}

\begin{cor}\label{cor:limits of monotone}
  Let $U\subset \H$ be a convex bounded open set and let $f\from \H\to [0,1]$ be a $U$--LEM function.  There is a monotone set $E\subset U$ such that $f|_U=\one_E$ up to a measure-zero set.
\end{cor}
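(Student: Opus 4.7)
The plan is to combine Lemma~\ref{lem:compare NM omega convex} with Lemma~\ref{lem:stability locally monotone} and to conclude by uniqueness of weak limits. By the definition of a $U$--LEM function, there is a sequence of measurable sets $E_1,E_2,\ldots\subset \H$ such that $E_i$ is $(\frac1i,i)$--extended monotone on $U$ and such that $\one_{E_i}\rightharpoonup f$ weakly in $L_\infty(\H)$ (i.e., in the weak-$*$ topology). In particular, $\one_{E_i}|_U\to f|_U$ weakly in $L_1(U)$, since $U$ is bounded.

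First, I would apply Lemma~\ref{lem:compare NM omega convex} with $K=U$ (which is convex, bounded, and contained in itself). For any $i>\diam U$ the hypothesis $R=i>\diam U$ is satisfied, so
\begin{equation*}
\NM_{E_i}(U)\le \ENM_{E_i,i}(U)\le \frac{1}{i}\xrightarrow[i\to\infty]{}0.
\end{equation*}

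Next, I would feed this into Lemma~\ref{lem:stability locally monotone}. It yields a subsequence $(E_{i_j})_{j\in\N}$ and a set $E\subset U$, monotone on $U$, such that
\begin{equation*}
\bigl|(E_{i_j}\cap U)\symdiff E\bigr|\xrightarrow[j\to\infty]{}0,
\end{equation*}
i.e., $\one_{E_{i_j}}|_U\to \one_E$ strongly in $L_1(U)$. In particular this convergence holds weakly in $L_1(U)$.

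Finally, the full sequence $\one_{E_i}|_U$ converges weakly in $L_1(U)$ to $f|_U$ (from $L_\infty$ weak-$*$ convergence against indicators of subsets of $U$, which are integrable since $U$ is bounded), and so does the subsequence $\one_{E_{i_j}}|_U$. By uniqueness of weak limits in $L_1(U)$, we conclude $f|_U=\one_E$ almost everywhere on $U$. There is essentially no obstacle here; the only point requiring mild care is book-keeping on the various notions of convergence (weak-$*$ in $L_\infty$ versus weak in $L_1$ versus strong in $L_1$), which is routine because $U$ has finite measure.
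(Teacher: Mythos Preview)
Your proof is correct and follows essentially the same approach as the paper's own proof: apply Lemma~\ref{lem:compare NM omega convex} with $K=U$ to bound $\NM_{E_i}(U)$ for large $i$, then invoke Lemma~\ref{lem:stability locally monotone}. The only difference is that you spell out explicitly the uniqueness-of-weak-limits step identifying the $L_1$ limit $\one_E$ with $f|_U$, whereas the paper absorbs this into the terse phrase ``by Lemma~\ref{lem:stability locally monotone}, $f|_U=\one_F$'' (relying implicitly on the fact that the proof of that lemma already works with a given weak limit).
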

\begin{proof}
  Suppose that $E_1,E_2,\dots\subset \H$ are measurable, $E_i$ is $(\frac{1}{i},i)$--extended monotone on $U$ for all $i\in \N$, and $\one_{E_i}$ converges weakly to $f$.  By Lemma~\ref{lem:compare NM omega convex}, for $i>\diam U$ we have
  $$\NM_{E_i}(U)\le \ENM_{E_i,i}(U) \le \frac{1}{i}.$$
  So, by Lemma~\ref{lem:stability locally monotone}, $f|_U=\one_F$ for some set $F\subset U$ that is monotone on $U$.
\end{proof}

\subsection{Locally monotone sets are bounded by rectifiable ruled surfaces}\label{sec:rectifiable ruled}

Here we will describe sets that are monotone on an open subset of $\H$, which we call \emph{locally monotone sets}.  Note that a locally monotone set need not be a half-space; see Example~9.1 of \cite{CKN}.  Regardless, we use the techniques developed in \cite{CheegerKleinerMetricDiff} and \cite{CKN} to describe such sets.

\begin{prop}\label{prop:local monotone rectifiability}
  Let $E\subset \H$ be a measurable set that is monotone on a convex open set $U\subset \H$.  Then
  \begin{enumerate}
  \item
    $U\cap \partial_{\cH^4} E$ has empty interior.
  \item
    For every $p\in U\cap \partial_{\cH^4} E$, there is a horizontal line $L$ through $p$ with $U\cap L\subset \partial_{\cH^4} E$.  If this line is not unique, then $U\cap \mathsf{H}_p\subset \partial_{\cH^4} E$, and we call $p$ a \emph{characteristic point}.
  \item $\partial_{\cH^4} E$ has an approximate tangent plane $T_p$ at every $p\in U\cap \partial_{\cH^4} E$.  The plane $T_p$ is horizontal if and only if $p$ is a characteristic point, and there are only countably many characteristic points in $U$.
  \item
    If $T_p$ is vertical, then it divides $\H$ into two half-spaces $T_p^+$ and $T_p^-$ such that the following holds.  For $\epsilon, t>0$, let
    $$W_{\epsilon,t}^\pm=\{v\in T_p^\pm \cap \overline{B}_t(p) \mid d(v,T_p)>\epsilon t\}$$
    For any $0<\epsilon<\frac{1}{10}$, there is $r>0$ such that if $0<\alpha<r$, then
    $$
    W_{\epsilon,\alpha}^+ \subset \inter_{\cH^4} (E)\qquad\mathrm{and}\qquad  W_{\epsilon,\alpha}^- \subset \inter_{\cH^4} (\H\setminus E).$$
  \end{enumerate}
\end{prop}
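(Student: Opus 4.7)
The plan is to adapt the structural analysis of monotone sets from \cite{CheegerKleinerMetricDiff,CKN} to the local setting on $U$. The starting point is that, by the kinematic formula~\eqref{eq:kinematic} together with $\NM_{E}(U)=0$, for $\cN$--almost every horizontal line $L\in \cL(U)$ the intersection $E\cap L$ is monotone on $U\cap L$, so $\partial_{\cH^1|_L}(E\cap L)\cap U$ consists of at most one point. For part~(1), if some open ball $B$ were contained in $U\cap \partial_{\cH^4} E$, then for a positive-measure family of lines $L$ through $B$, the intersection $B\cap L$ would be a nondegenerate interval lying in $\partial_{\cH^1|_L} E$, violating the one-point property above.

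For part~(2), I would fix $p\in U\cap \partial_{\cH^4} E$ and consider horizontal lines through $p$ on which $E\cap L$ is monotone on $U\cap L$. For each such line, either $L\cap U\subset \partial_{\cH^4} E$ (``line contained in boundary''), or $L\cap U$ crosses $\partial_{\cH^4} E$ at exactly $p$ and is split into an interval in $\inter_{\cH^4}E$ and an interval in $\inter_{\cH^4}(\H\setminus E)$. Using that $p$ is simultaneously in $\supp_{\cH^4}(E)$ and $\supp_{\cH^4}(\H\setminus E)$ together with convexity of $U$, an approximation argument (choosing sequences of boundary points converging to $p$ and taking limiting directions of connecting horizontal secants via Lemma~\ref{lem:hyperboloid lemma}) produces at least one horizontal line $L_p\subset U\cap \partial_{\cH^4} E$ through $p$. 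Characteristic points are by definition those where the line is not unique; in that case, the set of directions of such lines in $\mathsf{H}_p$ is closed under the limiting process just described and contains two independent directions, so an application of Lemma~\ref{lem:hyperboloid lemma} to a sequence of horizontal lines in $\partial_{\cH^4} E$ whose projections span $\mathsf{H}$ forces $U\cap \mathsf{H}_p\subset \partial_{\cH^4} E$.

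For part~(3), I would show that any two horizontal lines $L_1,L_2\subset U\cap \partial_{\cH^4} E$ whose closures meet $U$ cannot be skew. If they were, Lemma~\ref{lem:hyperboloid lemma} would provide a hyperbola's worth of horizontal transversals $M$ joining $L_1$ to $L_2$ with both endpoints in $\partial_{\cH^4} E$; monotonicity of $E\cap M$ on $U\cap M$ would then force $M\cap U\subset \partial_{\cH^4} E$, yielding an open subset of the boundary and contradicting part~(1). So the horizontal lines through points of $U\cap\partial_{\cH^4}E$ are either all parallel (with projections to $\mathsf{H}$ parallel) or meet at a characteristic point. At a non-characteristic point $p$, this rigidity identifies the candidate tangent plane as the vertical plane $T_p\supset L_p\cup \langle Z\rangle$, and the approximate tangency $r^{-1}(p^{-1}\partial_{\cH^4} E\cap B_r(p))\to T_p\cap B_1$ in Hausdorff distance follows by combining this coplanarity with Theorem~4.3 of \cite{CKN} applied at small scales around $p$. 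Countability of characteristic points is obtained because two characteristic points $p\ne p'$ with $p'\in \mathsf{H}_p$ would force $\mathsf{H}_p$ and $\mathsf{H}_{p'}$ both to lie in $\partial_{\cH^4} E$, but these horizontal planes are distinct unless $p'$ lies on the horizontal line $\mathsf{H}_p\cap \mathsf{H}_{p'}$, and their union would again produce the forbidden open piece of boundary; one then notes that distinct characteristic points come from disjoint horizontal planes and $\partial_{\cH^4} E\cap U$ has locally finite perimeter by the kinematic formula, forcing countability.

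For part~(4), once the tangent plane $T_p$ is vertical, the stability result \cite[Theorem~4.3]{CKN} applied at the scale $\alpha$ yields a half-space $T_p^+$ (up to choice of side) with $\fint_{B_{c\alpha}(p)}|\one_{E}-\one_{T_p^+}|<\delta(\epsilon)$ provided $\alpha$ is small. Combined with monotonicity of $E\cap L$ along horizontal lines $L$ through points of $W_{\epsilon,\alpha}^+$ transverse to $T_p$, standard Lebesgue density and covering arguments upgrade the ``close in measure'' statement to the claimed inclusions $W_{\epsilon,\alpha}^\pm\subset \inter_{\cH^4}(E)$ (resp.~$\inter_{\cH^4}(\H\setminus E)$), since on the wedge $W_{\epsilon,\alpha}^+$ one stays uniformly far from $T_p$ and the monotone structure rules out large oscillation. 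I expect the main obstacle to be part~(3): the global use of Lemma~\ref{lem:hyperboloid lemma} to rule out skew pairs of horizontal lines in $\partial_{\cH^4}E$ must be localized to the convex open set $U$, so one has to verify that the transversal $M$ obtained from the hyperboloid lemma actually enters $U$ (which uses convexity of $U$ together with a careful choice of the points on $L_1,L_2$), and the countability/isolation of characteristic points then requires a delicate argument ensuring that the ruling direction varies continuously away from characteristic points.
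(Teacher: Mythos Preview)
Your overall strategy---localize the Cheeger--Kleiner analysis to $U$---matches the paper's, but several of the individual steps are either incorrect or miss the key technical tool.

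\textbf{Part (1).} Your argument conflates $\partial_{\cH^4} E$ with $\partial_{\cH^1|_L} E$. If $B\subset \partial_{\cH^4}E$, it does \emph{not} follow that $B\cap L\subset \partial_{\cH^1|_L}(E\cap L)$: a point $p$ can lie in the 3-dimensional measure-theoretic boundary while $E\cap L$ is, say, a full neighborhood of $p$ on $L$. So the contradiction with ``at most one boundary point on $L$'' never fires. The paper instead invokes the specific line-segment configuration from \cite[Lemma~4.12]{CheegerKleinerMetricDiff}, which produces a point of $\inter_{\cH^4}(E)\cup\inter_{\cH^4}(\H\setminus E)$ in any ball.

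\textbf{Part (2).} Your ``approximation argument'' is too vague, and more importantly you are missing the engine of the whole section: the propagation lemma (Proposition~\ref{prop:local monotonicity along lines}), which says that if $p\in\supp_{\cH^4}(E)$ and $q\in\inter_{\cH^4}(E)$ lie on a horizontal segment in $U$, then every point between them (or beyond $q$, in the other case) is in $\inter_{\cH^4}(E)$. This is proved via two-segment broken geodesics, not by taking limits of secants. It is what allows you to pass from almost-every-line monotonicity to conclusions about \emph{specific} boundary points; without it, the existence of the ruling through $p$ (Lemma~\ref{lem:monotone is union of lines}) and the fact that two boundary points on a line force the whole segment into the boundary (Lemma~\ref{lem:monotone boundary U-convex}) have no footing.

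\textbf{Part (3).} Your skew-line argument has the right shape, and you correctly flag the localization issue. The paper's resolution is quantitative (Lemmas~\ref{lem:convex hull of perpendicular lines}--\ref{lem:definite angles intersect}): two rulings through $\overline{B}_\delta$ with angle $\ge\epsilon$ must intersect, with $\delta=\delta(\epsilon)$ explicit. This gives that near a non-characteristic point all rulings have nearly the same direction, which is what yields the approximate tangent plane directly---the paper does \emph{not} invoke \cite[Theorem~4.3]{CKN} here. Your countability argument via ``disjoint horizontal planes and locally finite perimeter'' is unclear; the paper's argument is simply that characteristic points are isolated (Lemma~\ref{lem:intersecting lines and horizontal planes} gives a neighborhood containing no other characteristic point), hence discrete, hence countable.

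\textbf{Part (4).} The paper again avoids \cite[Theorem~4.3]{CKN}; it argues directly that nearby rulings have small angle, so the boundary in $\overline{B}_\alpha(p)$ lies in an $\epsilon\alpha$-neighborhood of $T_p$, and then uses Proposition~\ref{prop:local monotonicity along lines} on lines close to the $y$-axis to determine which side is $\inter_{\cH^4}(E)$. Your route via the stability theorem could perhaps be made to work, but as written it gives closeness in measure to \emph{some} half-space, not specifically $T_p^+$, and measure closeness does not immediately upgrade to the pointwise inclusions $W^\pm_{\epsilon,\alpha}\subset\inter_{\cH^4}(E^\pm)$ without the propagation lemma.
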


We rely on the following proposition and lemmas, which adapt results from \cite{CheegerKleinerMetricDiff}.
\begin{prop}[generalization of~{\cite[Proposition~5.8]{CheegerKleinerMetricDiff}}]\label{prop:local monotonicity along lines}
  Let $E\subset \H$ be a measurable set that is monotone on a convex open set $U\subset \H$.  Let $L$ be a horizontal line and let $p, q\in L$ be points such that $p\ne q$ and the segment $[p,q]\subset L$ is contained in $U$.  We choose the linear order on $L$ so that $p<q$.  Suppose that $q\in \inter_{\cH^4}(E)$.
  \begin{enumerate}
  \item \label{it:p in supp E}
    If $p\in \supp_{\cH^4} (E)$ and $r\in L\cap U$ satisfies $p<r<q$, then $r\in \inter_{\cH^4}(E)$.
  \item \label{it:p in supp Ec}
    If $p\in \supp_{\cH^4} (\H\setminus E)$ and $r\in L\cap U$ satisfies $p<q<r$, then $r\in \inter_{\cH^4}(E)$.
  \end{enumerate}
\end{prop}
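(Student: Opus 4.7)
The strategy is to interpret $\NM_E(U)=0$ via the kinematic formula: for $\cN$-a.e.\ horizontal line $L'\in \cL$, the slice $E\cap L'\cap U$ is, up to an $\cH^1$-null set, a convex subset of the interval $L'\cap U$ (which is an interval because $U$ is convex and $L'\subset \R^3$ is linear). Monotonicity on $U$ is thus inherited by almost every line as the statement that $E\cap L'\cap U$ has a single ``cut-point'' in $L'$. The plan is to leverage this for lines $L'$ close to $L$, using the hypotheses at $p$ and $q$ to pin down where the cut-point can lie, and using the negation of the conclusion at $r$ (in a proof by contradiction) to produce a forbidden configuration.

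\textbf{Setup and main argument.} Fix $\epsilon,\delta,\eta>0$ small so that the closed sub-Riemannian balls $\overline{B}_\eta(p),\overline{B}_\epsilon(r),\overline{B}_\delta(q)$ all lie in $U$ --- possible since $U$ is open and convex with $[p,q]\subset U$ --- and, using $q\in\inter_{\cH^4}(E)$, so that $\cH^4(B_\delta(q)\setminus E)=0$. Parametrize a neighborhood $\cU\subset \cL$ of $L$ so that every $L'\in \cU$ admits points $p'(L'),r'(L'),q'(L')$ in $L'\cap U$, close in the sub-Riemannian sense to $p,r,q$ respectively, in the order inherited from $L$; convexity of $U$ ensures the relevant chords --- $[p'(L'),q'(L')]$ in case~\ref{it:p in supp E}, and $[p'(L'),r'(L')]$ in case~\ref{it:p in supp Ec} --- lie in $L'\cap U$. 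For part~\ref{it:p in supp E}, I argue by contradiction: if $r\notin \inter_{\cH^4}(E)$, then $r\in \supp_{\cH^4}(\H\setminus E)$, and I will produce a line $L'\in \cU$ satisfying simultaneously (a) $E\cap L'\cap U$ is monotone, (b) $\cH^1(L'\cap B_\delta(q)\setminus E)=0$, (c) $\cH^1(L'\cap B_\eta(p)\cap E)>0$, and (d) $\cH^1(L'\cap B_\epsilon(r)\setminus E)>0$. For such an $L'$, conditions (b) and (c) force the convex set $E\cap L'\cap U$ to have positive $\cH^1$-measure near both ends of $[p'(L'),q'(L')]$, hence to contain this entire segment up to $\cH^1$-null; but (d) exhibits a positive-measure subset of $L'\cap [p'(L'),q'(L')]$ outside $E$, a contradiction. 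Part~\ref{it:p in supp Ec} is treated identically, with the role of (c) played by $\cH^1(L'\cap B_\eta(p)\setminus E)>0$ from $p\in \supp_{\cH^4}(\H\setminus E)$: now the cut-point of the monotone set $E\cap L'\cap U$ must lie strictly between $p'(L')$ and $q'(L')$, hence to the left of $r'(L')>q'(L')$, forcing $L'\cap [q'(L'),r'(L')]\subset E$ up to $\cH^1$-null, again contradicting (d).

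\textbf{The hard part.} The key technical step will be producing a line $L'\in \cU$ satisfying (a)--(d) simultaneously. Items (a) and (b) hold on sets of full $\cN$-measure in $\cU$ by the kinematic formula applied respectively to the null sets witnessing $\NM_E(U)=0$ and to $B_\delta(q)\setminus E$. Items (c) and (d) are the delicate ones: the support hypotheses give, via Fubini, only \emph{positive} --- not full --- $\cN$-measure of lines with the required $\cH^1$-positivity, and a priori the intersection of two positive-measure subsets of $\cU$ could be empty. To place (c) and (d) on a common positive-measure family, I would invoke the Lebesgue density theorem to select density points $\tilde p\in B_\eta(p)\cap E^*$ of $E$ and (assuming the contradiction) $\tilde r\in B_\epsilon(r)\cap (\H\setminus E)^*$ of $\H\setminus E$; at such density points the required $\cH^1$-positivity on lines through a sufficiently small ball becomes generic (via Fubini combined with the density estimate $|B_\sigma(\tilde p)\cap E|\ge (1-\theta)|B_\sigma(\tilde p)|$), so the set of lines $L'\in \cU$ passing close to both $\tilde p$ and $\tilde r$ and enjoying (c) and (d) has positive $\cN$-measure. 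Intersecting with the full-measure sets from (a) and (b) yields the desired $L'$, closing the argument.
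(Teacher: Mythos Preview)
Your logical skeleton is right: a single horizontal line $L'$ with properties (a)--(d) would force the contradiction in both cases. The gap is exactly where you flag it, and it does not close. At a density point $\tilde p$ of $E$, Fubini plus the density estimate shows that the lines $L'$ through $B_\sigma(\tilde p)$ failing (c) form a small fraction of $\cL(B_\sigma(\tilde p))$; but the set of horizontal lines through \emph{both} $B_\sigma(\tilde p)$ and $B_\sigma(\tilde r)$ has $\cN$--measure only $\asymp \sigma^5/d(p,r)^2$, a vanishing fraction $\asymp\sigma^2$ of $\cN(\cL(B_\sigma(\tilde p)))\asymp\sigma^3$, and the Lebesgue density theorem gives no rate on $\theta(\sigma)\to 0$. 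In fact the obstruction is structural, not merely quantitative: writing $h(s)=z(s)-\tfrac{d(p,r)}{2}y(s)$, two points $s\in B_\eta(p)$ and $t\in B_\epsilon(r)$ lie on a common horizontal line only if $|z(t)-h(s)|\le \eta\epsilon$, so if $S_A,S_B\subset\R$ are positive-measure sets at distance larger than $\eta\epsilon$, then $A=B_\eta(p)\cap h^{-1}(S_A)$ and $B=B_\epsilon(r)\cap z^{-1}(S_B)$ are positive-$\cH^4$-measure sets with \emph{no} horizontal line meeting both. Your argument never uses the monotonicity of $E$ to rule out $B_\eta(p)\cap E$ and $B_\epsilon(r)\setminus E$ being of this adversarial shape, and doing so would amount to proving the proposition.

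The paper sidesteps the bilinear obstruction by using a \emph{two}-segment horizontal path and arguing directly rather than by contradiction. One fixes a single $x\in B_\epsilon(p)\cap E$ (no density needed --- the positive-measure hypothesis at $p$ is spent entirely on choosing this one point) and lets $(v_1,v_2)$ vary near $((p^{-1}r)^{1/2},(p^{-1}r)^{1/2})$. Both lines $\lambda_1=x\langle v_1\rangle$ and $\lambda_2=xv_1\langle v_2\rangle$ stay close to $L$ and can be extended to points near $q\in\inter_{\cH^4}(E)$; the conditions that each line lie in the full-measure monotone family and hit $E$ near $q$ are \emph{measure-zero} exclusions on $(v_1,v_2)$. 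Monotonicity along $\lambda_1$ (with $\lambda_1(0)=x\in E$ and $\lambda_1(t_1)\in E$ near $q$) forces $xv_1\in E$; then monotonicity along $\lambda_2$ forces $xv_1v_2\in E$. Since $(v_1,v_2)\mapsto xv_1v_2$ submerses onto a neighbourhood of $r$, this covers a full-measure neighbourhood of $r$ by points of $E$, giving $r\in\inter_{\cH^4}(E)$ directly. The second segment is precisely what decouples the two positive-measure constraints that a single line cannot reconcile.
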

\begin{proof}
  Proposition~5.8 of \cite{CheegerKleinerMetricDiff} proves this result in the case that $U=\H$, generalizing Proposition~4.6 of \cite{CheegerKleinerMetricDiff}, which proves it when $E$ is \emph{precisely monotone} (i.e., $M\cap E$ and $M\cap E^c$ are connected sets for every horizontal line $M$). The reasoning in Proposition~5.8 of~\cite{CheegerKleinerMetricDiff} only uses the fact that for almost every line segment $S$ in a small neighborhood of $[p,\max\{q,r\}]$, the intersection $S\cap E$ is monotone. This holds here, so the conclusion of Proposition~5.8 holds here as well.
  For completeness, we will sketch the argument of \cite{CheegerKleinerMetricDiff}.

  For any $x\in \H$, $v_1,v_2\in \mathsf{H}$, let $\gamma_{x,v_1,v_2}\from [0,2]\to \H$ be the broken geodesic
  $$\gamma_{x,v_1,v_2}(t)=\begin{cases}
    x v_1^t & t\in [0,1] \\
    x v_1 v_2^{t-1} & t\in [1,2].
  \end{cases}$$

  In case \eqref{it:p in supp E}, we have $p<r< q$ with $p\in \supp_{\cH^4}(E)$ and $q\in \inter_{\cH^4}(E)$. Given an $\epsilon>0$, one considers the paths $\gamma_{x,v_1,v_2}$ where $x\in B_\epsilon(p)\cap E$ and $v_1,v_2\in \mathsf{H}$ satisfy $\|v_i-(p^{-1}r)^{\frac{1}{2}}\|<\epsilon$. Then $\gamma_{x,v_1,v_2}(0)$ is close to $p$, $\gamma_{x,v_1,v_2}(2)$ is close to $r$, and $\gamma_{x,v_1,v_2}$ lies in a small neighborhood of $[p,q]$. Further, for any $x$, we can vary $v_1$ and $v_2$ so that $\gamma_{x,v_1,v_2}(2)=xv_1v_2$ covers a neighborhood of $r$.

  Suppose that $E$ is precisely monotone and that $q\in \inter(E)$. Let $x, v_1, v_2$ be as above and let $\lambda_1(t)=xv_1^t$ and $\lambda_2(t)=xv_1v_2^t$ be the two segments of $\gamma_{x,v_1,v_2}$. These are two lines that are close to $L$, so there are $t_1,t_2>1$ such that $\lambda_i(t_i)$ is close to $q$. Since $q\in \inter(E)$, if $\epsilon$ is sufficiently small, then $\lambda_i(t_i)\in E$. Since $\lambda_1(0)=x\in E$ and $\lambda_1(t_1)\in E$, precise monotonicity implies $\lambda_1(1)\in E$, and since $\lambda_2(0)=\lambda_1(1)\in E$ and $\lambda_2(t_2)\in E$, we have $\lambda_2(1)=xv_1v_2\in E$.  If we fix $x$ and let $v_1$ and $v_2$ vary, then $xv_1v_2$ covers a neighborhood of $r$, so $r\in \inter(E)$.

  In our case, $E$ is not precisely monotone and $q\in \inter_{\cH^4}(E)$, but the reasoning above still holds for almost every triple $(x, v_1, v_2)$. Since $\cH^4(B_\epsilon(p)\cap E)>0$, there is an $x\in B_\epsilon(p)\cap E)$ such that $xv_1v_2\in E$ for almost every pair $(v_1,v_2)$. Therefore, $r\in \inter_{\cH^4}(E)$.

  In case \eqref{it:p in supp Ec}, we have $p<q<r$ with $p\in \supp_{\cH^4}(\H\setminus E)$ and $q\in \inter_{\cH^4}(E)$.
  Let $s\in L\cap U$ be such that $p<q<r<s$ and consider $\gamma_{x,v_1,v_2}$ such that
  $x\in B_\epsilon(p)\setminus E$, $\|v_1- p^{-1}s\|<\epsilon$, and $\|v_2- s^{-1} r\|<\epsilon$. That is, $\gamma_{x,v_1,v_2}$ is a path from a neighborhood of $p$ to a neighborhood of $s$ to a neighborhood of $r$. Again, for any $x$, we can vary $v_1$ and $v_2$ so that $\gamma_{x,v_1,v_2}(2)=xv_1v_2$ covers a neighborhood of $r$. If $\epsilon$ is sufficiently small, we have $\gamma_{x,v_1,v_2}([0,2])\subset U$.

  Suppose again that $E$ is precisely monotone and that $q\in \inter(E)$. Let $\lambda_1(t)=xv_1^t$ and $\lambda_2(t)=xv_1v_2^t$. Since $\lambda_1$ and $\lambda_2$ are both close to $L$, if $\epsilon$ is sufficiently small, there are $t_1\in (0,1)$ and $t_2>1$ such that $\lambda_i(t_i)$ is close to $q$ and $\lambda_i(t_i)\in E$. Since $\lambda_1(0)=x\not\in E$ and $\lambda_1(t_1)\in E$, we have $\lambda_1(1)\in E$, and since $\lambda_2(0)=\lambda_1(1)\in E$ and $\lambda_2(t_2)\in E$, we have $\lambda_2(1)=xv_1v_2\in E$. For any fixed $x$, as $v_1$ and $v_2$ vary, $xv_1v_2$ covers a neighborhood of $r$.

  Again, when $E$ is not precisely monotone and $q\in \inter_{\cH^4}(E)$, the reasoning above fails for a null set of triples $(x, v_1, v_2)$. Since $p\in \supp_{\cH^4} (\H\setminus E)$, there is an $x\in B_\epsilon(p)\cap (\H\setminus E)$ such that $xv_1v_2\in E$ for all but a measure zero set of pairs $(v_1,v_2)$, so $r\in \inter_{\cH^4}(E)$.
\end{proof}

By Proposition~\ref{prop:local monotonicity along lines} and the proof of \cite[Lemma~4.8]{CheegerKleinerMetricDiff}, we get the following lemma.
\begin{lemma}[{generalization of \cite[Lemma~4.8]{CheegerKleinerMetricDiff}}]\label{lem:monotone boundary U-convex}
  Let $E\subset \H$ be a measurable set that is monotone on a convex open set $U\subset \H$.  If $L$ is a horizontal line such that $L\cap U$ contains at least two points of $\partial_{\cH^4} E$, then $L\cap U\subset \partial_{\cH^4} E$.
\end{lemma}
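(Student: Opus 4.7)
\medskip

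\noindent\textbf{Proof plan.} The plan is to deduce the lemma directly from Proposition~\ref{prop:local monotonicity along lines} by a case analysis based on the position of an arbitrary third point on $L\cap U$ relative to the two given boundary points.  Fix distinct $p,q\in L\cap U\cap \partial_{\cH^4}E$ and an arbitrary $r\in L\cap U$; we will show $r\in\partial_{\cH^4}E$.  Assume for contradiction that $r\notin\partial_{\cH^4}E$.  By the measure-theoretic trichotomy \eqref{eq:3disjoint union}, $r\in\inter_{\cH^4}(E)\sqcup\inter_{\cH^4}(\H\setminus E)$, and since $\H\setminus E$ is monotone on $U$ whenever $E$ is, we may assume (by swapping $E$ with $\H\setminus E$ if necessary) that $r\in\inter_{\cH^4}(E)$.

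Fix the linear order on $L$ inherited from its parametrization and distinguish three cases according to the position of $r$ relative to $p$ and $q$.  In all three cases, convexity of $U$ guarantees that every line segment connecting two of $p,q,r$ lies in $U$, so the hypotheses of Proposition~\ref{prop:local monotonicity along lines} are fulfilled.  Since $p,q\in\partial_{\cH^4}E$, by the definition \eqref{eq:def partial mu} we have $p,q\in\supp_{\cH^4}(E)\cap\supp_{\cH^4}(\H\setminus E)$.
\emph{Case A} ($r$ lies strictly between $p$ and $q$, say $p<r<q$): applying part~(\ref{it:p in supp Ec}) of Proposition~\ref{prop:local monotonicity along lines} with the point $r$ playing the role of ``$q$'' (it is in $\inter_{\cH^4}(E)$), the point $p$ playing the role of ``$p$'' (with $p\in\supp_{\cH^4}(\H\setminus E)$), and the point $q$ playing the role of ``$r$'' (with $p<r<q$ in the proposition's ordering), we deduce $q\in\inter_{\cH^4}(E)$, contradicting $q\in\partial_{\cH^4}E$.
\emph{Case B} ($r$ lies outside $[p,q]$, say $p<q<r$): applying part~(\ref{it:p in supp E}) of Proposition~\ref{prop:local monotonicity along lines} with $r$ as ``$q$'' (it is in $\inter_{\cH^4}(E)$), $p$ as ``$p$'' (with $p\in\supp_{\cH^4}(E)$), and $q$ as ``$r$'' (so that $p<q<r$), we conclude $q\in\inter_{\cH^4}(E)$, again contradicting $q\in\partial_{\cH^4}E$.
\emph{Case C} ($r<p<q$) is symmetric to Case~B (reverse the order on $L$ and interchange the roles of $p$ and $q$).

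In every case we obtain a contradiction, so $r\in\partial_{\cH^4}E$, which proves $L\cap U\subset\partial_{\cH^4}E$.  There is no substantive obstacle: the entire argument is a bookkeeping exercise in applying Proposition~\ref{prop:local monotonicity along lines}, whose statement has already been tailored to the convex-open-set setting.  The only subtle point is the handling of Case~C (or equivalently, the symmetry between $E$ and $\H\setminus E$), which is resolved by the observation that monotonicity on $U$ is preserved under complementation.
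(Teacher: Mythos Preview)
Your proof is correct and follows essentially the same approach as the paper: both arguments apply Proposition~\ref{prop:local monotonicity along lines} by contradiction to force any point of $L\cap U$ outside $\partial_{\cH^4}E$ to drag one of $p,q$ into the measure-theoretic interior. The only organizational difference is that you handle the ``between'' case $p<r<q$ directly via part~(\ref{it:p in supp Ec}) of the proposition, whereas the paper first shows $I\setminus(p,q)\subset\partial_{\cH^4}E$ and then bootstraps by choosing two new boundary points $r,s$ with $q<r<s$ so that $I\setminus(r,s)$ covers the gap $(p,q)$; your treatment of that case is slightly more direct.
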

\begin{proof}
  Let $I=L\cap U$.  Let $p,q\in I\cap \partial_{\cH^4} E$ be distinct points.  Choose the linear order on $L$ so that $p<q$.  Let $r\in I$ be such that $q<r$.  By part~\eqref{it:p in supp E} of Proposition~\ref{prop:local monotonicity along lines}, if $r\in \inter_{\cH^4} (E)$, then $q\in \inter_{\cH^4} (E)$, which is a contradiction.  Likewise, if $r\in \inter_{\cH^4} (\H\setminus E)$, then $q\in \inter_{\cH^4} (\H\setminus E)$, which is a contradiction, so $r\in \partial_{\cH^4} E$.  Thus $[q,\infty)\cap I \subset \partial_{\cH^4} E$.  By symmetry,
  $I\setminus (p,q)=I\cap \bigl((-\infty,p] \cup [q,\infty)\bigr)\subset \partial_{\cH^4} E$
  for any distinct points $p, q\in I\cap \partial_{\cH^4} E$. Let $r, s \in I\cap [q,\infty)$ be such that $r<s$.  Then $r,s\in I\cap \partial_{\cH^4} E$, so $I\setminus (r,s)\subset \partial_{\cH^4} E$.  Since $(r,s)$ and $(p,q)$ are disjoint, $I\subset \partial_{\cH^4} E$.
\end{proof}

Likewise, the following lemma is based on the proof of Lemma~4.9 of \cite{CheegerKleinerMetricDiff}.
\begin{lemma}[{generalization of \cite[Lemma~4.9]{CheegerKleinerMetricDiff}}]\label{lem:monotone is union of lines}
  Let $E\subset \H$ be a measurable set that is monotone on a convex open set $U\subset \H$.  For every $p\in U\cap \partial_{\cH^4} E$, there is a horizontal line $L$ such that $p\in L$ and $L\cap U\subset \partial_{\cH^4} E$.
\end{lemma}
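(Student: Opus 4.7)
The plan is to classify horizontal rays emanating from $p$ and exploit an antipodal symmetry. Fix $p\in U\cap\partial_{\cH^4}E$; for each horizontal direction $v\in\mathsf{H}\setminus\{0\}$, let $R_v\eqdef\{pv^t:0<t<r_v\}$ be the maximal ray from $p$ in direction $v$ staying in $U$ (here $r_v>0$ since $U$ is open). I first argue that each $R_v$ lies entirely in one of $\inter_{\cH^4}(E)$, $\inter_{\cH^4}(\H\setminus E)$, or $\partial_{\cH^4}E$: using $p\in\supp_{\cH^4}(E)\cap\supp_{\cH^4}(\H\setminus E)$, if some point $pv^{t_0}\in R_v$ lies in $\inter_{\cH^4}(E)$, then case~(\ref{it:p in supp E}) of Proposition~\ref{prop:local monotonicity along lines} propagates interiority backward to $0<t<t_0$, and case~(\ref{it:p in supp Ec}) propagates it forward to $t_0<t<r_v$; the case where $R_v$ meets $\inter_{\cH^4}(\H\setminus E)$ is symmetric. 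Labeling each $v$ by $+,-,$ or $0$ accordingly yields a partition $S^1=A^+\sqcup A^-\sqcup A^0$, and $A^+$ and $A^-$ are open in $S^1$ since $\inter_{\cH^4}(E)$ and $\inter_{\cH^4}(\H\setminus E)$ are open in $\H$.

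The central step is the antipodal symmetry $v\in A^0\Rightarrow -v\in A^0$. Suppose $v\in A^0$ but $-v\in A^+$, and pick $a\in R_v\subset\partial_{\cH^4}E\subset\supp_{\cH^4}(E)$ and $b\in R_{-v}\subset\inter_{\cH^4}(E)$; then $p$ lies strictly between $a$ and $b$ on the horizontal line $L_v$. Orienting $L_v$ so that $a<p<b$ and applying case~(\ref{it:p in supp E}) of Proposition~\ref{prop:local monotonicity along lines} with $a,b,p$ in the roles of ``$p$'', ``$q$'', ``$r$'' produces $p\in\inter_{\cH^4}(E)$, contradicting $p\in\partial_{\cH^4}E$. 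The possibility $-v\in A^-$ is ruled out symmetrically by applying the analogous statement with $E$ replaced by $\H\setminus E$. Thus as soon as $A^0\neq\emptyset$ is established, any $v\in A^0$ also satisfies $-v\in A^0$, giving the desired line $L=L_v$ with $L\cap U=R_v\cup\{p\}\cup R_{-v}\subset\partial_{\cH^4}E$.

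It remains to show $A^0\neq\emptyset$. Were this to fail, $A^+\sqcup A^-=S^1$ would be a disjoint open cover of the connected space $S^1$, forcing one of $A^\pm$ to coincide with $S^1$; we may assume $A^+=S^1$, so that $\mathsf{H}_p\cap(U\setminus\{p\})\subset\inter_{\cH^4}(E)$. Since $\inter_{\cH^4}(E)\cap\supp_{\cH^4}(\H\setminus E)=\emptyset$ but $p\in\supp_{\cH^4}(\H\setminus E)$, the closed set $\supp_{\cH^4}(\H\setminus E)$ meets $\mathsf{H}_p$ only at $p$ locally, so there exist $r_n\in\inter_{\cH^4}(\H\setminus E)\setminus\mathsf{H}_p$ with $r_n\to p$. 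Because $r_n\notin\mathsf{H}_p$, the horizontal planes $\mathsf{H}_p$ and $\mathsf{H}_{r_n}$ are transverse, intersecting in a line $\ell_n$; choosing $q_n\in\ell_n\cap U$ near $p$ places $q_n\in\mathsf{H}_p\cap U\setminus\{p\}\subset\inter_{\cH^4}(E)$ and $r_n\in\mathsf{H}_{q_n}$, so $q_n$ and $r_n$ lie on a common horizontal line $M_n$. After an arbitrarily small perturbation of $r_n$ within $\inter_{\cH^4}(\H\setminus E)$ placing $M_n$ among the a.e.\ monotone lines supplied by $\NM_E(U)=0$, monotonicity of $E\cap M_n$ on $M_n\cap U$ produces a boundary point $b_n\in\partial_{\cH^4}E$ strictly between $q_n$ and $r_n$ on $M_n$. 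Passing to a subsequence along which the directions of $M_n$ converge to some $v^*\in S^1$, a limiting argument should yield a boundary point of $E$ on $L_{v^*}\cap U$ genuinely distinct from $p$, and Lemma~\ref{lem:monotone boundary U-convex} then forces $L_{v^*}\cap U\subset\partial_{\cH^4}E$, contradicting $A^0=\emptyset$.

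The principal obstacle is this limiting step: since both $q_n$ and $r_n$ tend to $p$, the boundary points $b_n$ also tend to $p$, and extracting a limit boundary point genuinely distinct from $p$ requires quantitative control of the hyperboloid geometry from Lemma~\ref{lem:hyperboloid lemma}; a cleaner alternative worth exploring is to invoke the classification of monotone sets from~\cite{CheegerKleinerMetricDiff} on small convex sub-balls of $U$ containing $p$ and to read off the desired horizontal line directly from that local structure.
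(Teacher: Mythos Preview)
Your first two steps are correct and contain all the ingredients for a complete proof, but you overlook a one-line finish and instead embark on an unnecessary and (as you acknowledge) incomplete limiting argument in step~3.

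Once you have established that $A^+=S^1$ (so $\mathsf{H}_p\cap(U\setminus\{p\})\subset\inter_{\cH^4}(E)$), simply reuse the idea from your step~2: pick any $v\in S^1$, take $q\in R_v$ and $r\in R_{-v}$, both in $\inter_{\cH^4}(E)$, with $p$ strictly between them on the horizontal line $L_v$. Case~(\ref{it:p in supp E}) of Proposition~\ref{prop:local monotonicity along lines} (with $q$ playing the role of ``$p$'' there, since $q\in\inter_{\cH^4}(E)\subset\supp_{\cH^4}(E)$) gives $p\in\inter_{\cH^4}(E)$, contradicting $p\in\partial_{\cH^4}E$. The case $A^-=S^1$ is symmetric. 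This is exactly the paper's proof: it assumes by contradiction that $\mathsf{H}_p\cap B'\cap\partial_{\cH^4}E=\emptyset$, observes by connectedness of the punctured disk that $\mathsf{H}_p\cap B'$ lies in one of the measure-theoretic interiors, and then sandwiches $p$ between two antipodal interior points. Your construction involving $r_n$, the hyperboloid geometry, and a subsequential limit is not needed.
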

\begin{proof}
  Let $B\subset U$ be a ball centered at $p$ and let $\mathsf{H}_p$ be the horizontal plane centered at $p$.  Let $B'=B\setminus \{p\}$.  Suppose by way of contradiction that $\mathsf{H}_p\cap B'\cap \partial_{\cH^4} E =\emptyset$.  Since $\mathsf{H}_p\cap B'$ is connected, we have $\mathsf{H}_p\cap B'\subset \inter_{\cH^4} (E)$ or $\mathsf{H}_p \cap B'\subset \inter_{\cH^4} (\H\setminus E)$.  Without loss of generality, we assume that $\mathsf{H}_p\cap B'\subset \inter_{\cH^4} (E)$.

  Let $M$ be a line through $p$ and let $q,r\in M\cap B$ be two points on opposite sides of $p$.  Then $q,r\in \inter_{\cH^4}(E)$, so by part~\eqref{it:p in supp E} of Proposition~\ref{prop:local monotonicity along lines}, we have $p\in \inter_{\cH^4}(E)$.  This is a contradiction, so there exists some point $q$ lying in $\mathsf{H}_p\cap B'\cap  \partial_{\cH^4} E$.  Let $L$ be the line containing $p$ and $q$; then by Lemma~\ref{lem:monotone boundary U-convex}, $L\cap U\subset \partial_{\cH^4} E$, as desired.
\end{proof}

The fact that $U\cap \partial_{\cH^4} E$ has empty interior also follows from the techniques of \cite{CheegerKleinerMetricDiff}.
\begin{lemma}\label{lem:empty interior}
  If $E$ and $U$ are as in Lemma~\ref{lem:monotone is union of lines}, then $U\cap \partial_{\cH^4} E$ has empty interior.
\end{lemma}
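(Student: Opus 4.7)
The plan is to show the stronger statement that for any measurable $E\subset \H$, the measure-theoretic boundary $\partial_{\cH^4} E$ has $\cH^4$-measure zero, and then to conclude by observing that nonempty open subsets of $\H$ have positive $\cH^4$-measure. Neither the monotonicity of $E$ nor the convexity of $U$ is actually needed; these play a role in the neighboring Lemma~\ref{lem:monotone is union of lines} and Proposition~\ref{prop:local monotonicity along lines} but are redundant for the present statement.

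First, I would note that the metric measure space $(\H,d,\cH^4)$ is doubling. Indeed, the sub-Riemannian metric is $4$-Ahlfors regular with respect to Haar/Lebesgue measure (see Section~\ref{sec:Prelim heis}): there exists a constant $c>0$ such that $\cH^4(B_r(x))=cr^4$ for every $x\in \H$ and $r>0$. In particular the Lebesgue differentiation theorem applies: for any measurable $E\subset \H$, and $\cH^4$-a.e.~$x\in \H$, the limit
\[
    \lim_{r\to 0^+}\frac{\cH^4(B_r(x)\cap E)}{\cH^4(B_r(x))}
\]
exists and equals $\one_E(x)\in \{0,1\}$.

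Second, I would observe that at any $x$ where this density equals $1$ (respectively $0$), there is some $r>0$ such that $\cH^4(B_r(x)\setminus E)=0$ (respectively $\cH^4(B_r(x)\cap E)=0$), so by~\eqref{eq:def inter mu} the point $x$ lies in $\inter_{\cH^4}(E)$ (respectively $\inter_{\cH^4}(\H\setminus E)$). Actually, the density statement alone does not quite imply this, but combining the density characterization with the definition of $\partial_{\cH^4} E$ in~\eqref{eq:def partial mu} shows directly that any point of $\partial_{\cH^4} E$ has density strictly between $0$ and $1$ (in fact the lim inf of the relative $E$-mass is bounded away from both $0$ and $1$). The differentiation theorem rules out a set of positive measure of such points, so $\cH^4(\partial_{\cH^4} E)=0$.

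Finally, since $\cH^4$ coincides with Lebesgue measure on $\R^3\cong \H$ and the sub-Riemannian topology on $\H$ agrees with the Euclidean one (both by \eqref{eq:metric approximation}), every nonempty open subset of $\H$ has strictly positive $\cH^4$-measure. Consequently $\partial_{\cH^4} E$ cannot contain any nonempty open set; in particular $U\cap \partial_{\cH^4} E$ has empty interior, as desired. There is no real obstacle: the entire argument is a direct application of Lebesgue differentiation in a doubling space, and the main "danger" is simply overcomplicating matters by trying to exploit the monotonicity hypothesis geometrically (e.g.~via Lemma~\ref{lem:monotone is union of lines} and the hyperboloid lemma) when measure-theoretic density arguments already suffice.
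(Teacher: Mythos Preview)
Your argument has a genuine gap: the claim that $\partial_{\cH^4} E$ has $\cH^4$-measure zero for an \emph{arbitrary} measurable set $E$ is false under the paper's definition~\eqref{eq:def partial mu}. That definition places $x$ in $\partial_{\cH^4} E$ whenever the ratio $\cH^4(B_r(x)\cap E)/\cH^4(B_r(x))$ lies in $(0,1)$ for \emph{every} $r>0$; it says nothing about the limit as $r\to 0$. A point can perfectly well have Lebesgue density $1$ and still belong to $\partial_{\cH^4} E$, because the complement may have positive (though vanishing) mass in every ball. The paper even flags this explicitly after~\eqref{eq:def inter mu}: ``These definitions are nonstandard; other works define the measure-theoretic boundary as the set of points where the density of $S$ is not $0$ or $1$.'' Your Lebesgue-differentiation argument proves that the \emph{standard} essential boundary is null, not this one.

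Concretely, there exist measurable sets $E\subset\R$ with $0<|E\cap I|<|I|$ for every nondegenerate interval $I$ (a classical construction, iterating fat Cantor sets in complementary gaps). For such $E$ one has $\partial_{\cH^1}E=\R$, which has both positive measure and nonempty interior. Taking a product with $\R^2$ gives the same phenomenon in $\H$. So monotonicity is not redundant here; the paper's proof genuinely uses it, invoking the Cheeger--Kleiner argument that a set monotone on a ball cannot have its measure-theoretic boundary fill that ball, and then rescaling to conclude that $\inter_{\cH^4}(E)\cup\inter_{\cH^4}(\H\setminus E)$ is dense in $U$. Your sentence ``any point of $\partial_{\cH^4} E$ has density strictly between $0$ and $1$'' is exactly where the argument breaks.
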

\begin{proof}
  The measure-theoretic version of Lemma~4.12 of \cite{CheegerKleinerMetricDiff}, whose proof appears in (part (4) of) the proof of Theorem~5.1 of~\cite{CheegerKleinerMetricDiff}, asserts that if $F\subset \H$ is monotone on $\H$, then $\partial_{\cH^4} F\ne \H$.  That proof  relies on the monotonicity of a configuration of line segments, and it directly shows that there is a large enough universal constant  $r>0$ such that this configuration lies in the ball $B_r(\mathbf{0})$.  Consequently, if $B_r(\mathbf{0})\subset U$, then there is a point $p\in B_r(\mathbf{0})$ such that $p\not \in \partial_{\cH^4} E$.  By rescaling and translation, this is true with $B_r(\mathbf{0})$ replaced by an arbitrary ball, and thus $\inter_{\cH^4}(E)\cup \inter_{\cH^4}(\R\setminus E)$ is dense in $U$.
\end{proof}

Lemma~\ref{lem:empty interior} proves part (1) of Proposition~\ref{prop:local monotone rectifiability}. Lemma~\ref{lem:monotone boundary U-convex} and Lemma~\ref{lem:monotone is union of lines}  imply the first half of part (2) of Proposition~\ref{prop:local monotone rectifiability}.  Before proving the rest of Proposition~\ref{prop:local monotone rectifiability}, we make the following definition.

\begin{defn}
  Let $U\subset \H$ be a convex open set and let $A\subset \H$.  We say that $A$ is \emph{$U$--ruled} if for all $L\in \cL$, if $L\cap U$ intersects $A$ in two points, then $L\cap U\subset A$.  We call such a line $L$ a $U$--\emph{ruling} of $A$.
\end{defn}

Lemmas~\ref{lem:monotone boundary U-convex}--\ref{lem:empty interior} imply that $U\cap \partial_{\cH^4} E$ is $U$--ruled and has empty interior.  We will prove the rest of Proposition~\ref{prop:local monotone rectifiability} by studying  lines in the boundary of such a set. The following lemma is based on Step B3 in Section~8.2 of \cite{CKN}, which shows that the boundary of a monotone set cannot contain skew lines.
\begin{lemma}\label{lem:convex hull of perpendicular lines}
  Let $M_1$ be the line $\langle X\rangle$ and let $M_2$ be the line $Z\langle Y\rangle$.  There exists $r_0>1$ such that any $\overline{B}_{r_0}$--ruled set containing $(M_1\cup M_2)\cap \overline{B}_{r_0}$ has nonempty interior.
\end{lemma}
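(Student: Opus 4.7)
The plan is to apply the Cheeger--Kleiner hyperboloid lemma (Lemma~\ref{lem:hyperboloid lemma}) twice: first to the skew pair $(M_1,M_2)$ to show that $A$ contains a 2-dimensional piece of a quadric ruled surface, and then to pairs of points on this quadric piece to show that $A$ contains a 3-dimensional region swept out by horizontal secants.

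First, I note that $M_1 = \langle X\rangle$ and $M_2 = Z\langle Y\rangle$ are skew horizontal lines whose $\mathsf{H}$-projections are the coordinate axes in $\mathsf{H}$, which meet at the origin. A direct calculation with the group law~\eqref{eq:group operation} shows that for every $s\in \R\setminus\{0\}$ the unique horizontal line $L_s$ connecting $(s,0,0)\in M_1$ to $M_2$ produced by case~(2) of Lemma~\ref{lem:hyperboloid lemma} meets $M_2$ at $(0,2/s,1)$, and can be parametrized by $L_s = \{(s(1-\tau),2\tau/s,\tau) : \tau\in \R\}$. Eliminating $s$ and $\tau$ gives the quadric surface $\Sigma = \{(x,y,z) : xy = 2z(1-z)\}\subset \H$. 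Horizontal lines are affine lines in $\R^3$ and $\overline{B}_{r_0}$ is convex as a subset of $\R^3$, so the Euclidean segment $S_s$ of $L_s$ joining $(s,0,0)$ to $(0,2/s,1)$ is contained in $\overline{B}_{r_0}$ whenever both endpoints are. The ball-box inequality~\eqref{eq:metric approximation} ensures that for any fixed compact interval $I\subset (0,\infty)$ (say $I=[1,2]$) one can choose $r_0>1$ large enough that both endpoints of $L_s$ lie in $\overline{B}_{r_0}$ for every $s\in I$. Because both endpoints already lie in $A$, the $\overline{B}_{r_0}$--ruled property forces $S_s\subset A$, so $A$ contains the 2-dimensional piece $\Sigma_I = \bigcup_{s\in I}S_s$ of $\Sigma$.

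Next, I fix an interior point $p\in \Sigma_I$ and examine the horizontal plane $\mathsf{H}_p$. Parametrizing $\mathsf{H}_p = \{p(a,b,0) : a,b\in \R\}$ and substituting into the defining equation $xy - 2z(1-z) = 0$ of $\Sigma$, a direct computation shows that the resulting conic in $(a,b)$ factors as a product of two linear factors: one vanishing at $p$ (corresponding to the ruling $L_s$ through $p$) and another defining a second affine line in $\mathsf{H}_p$ disjoint from $p$ and lying on $\Sigma$. Consequently, every horizontal line through $p$ inside $\mathsf{H}_p$ whose direction is not parallel to either of these two lines meets $\Sigma$ in a second point $q\ne p$. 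For $p$ well inside $\Sigma_I$ and $r_0$ sufficiently large, $q$ lies in $\Sigma_I\subset A$ and the Euclidean segment from $p$ to $q$ is contained in $\overline{B}_{r_0}$, so the $\overline{B}_{r_0}$--ruled property pulls the whole secant segment into $A$. This produces, through each generic $p\in\Sigma_I$, a 1-parameter family of horizontal secant segments of $\Sigma$ contained in $A$.

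Finally, I show that these segments sweep out a 3-dimensional region. The plane $\mathsf{H}_p$ has Euclidean normal proportional to $(-y(p)/2,\,x(p)/2,\,-1)$, which rotates nondegenerately as $p$ moves along a curve in $\Sigma_I$ transverse to the rulings, for example along $\gamma(s) = (s/2, 1/s, 1/2)$ for $s$ near $1$. Choosing such a curve and parametrizing the secant segments produced above by a direction parameter $\alpha$ and a segment parameter $\lambda\in(0,1)$ yields a smooth map $\Phi\from (s,\alpha,\lambda)\mapsto \H$ whose image lies in $A$. At a suitable interior parameter the tangent vectors $\partial_\lambda \Phi$ and $\partial_\alpha\Phi$ both lie in $\mathsf{H}_{\gamma(s)}$ and are linearly independent, while $\partial_s\Phi$ acquires a component transverse to $\mathsf{H}_{\gamma(s)}$ because the normal to $\mathsf{H}_{\gamma(s)}$ rotates along $\gamma$; hence the differential of $\Phi$ has full rank there, and the inverse function theorem gives that $\Phi$ has open image. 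The principal technical step, and the one I expect to be most delicate, is this Jacobian computation: although the calculation is routine once a parametrization is fixed, one must confirm that the variation of the base point along $\gamma$ is genuinely transverse to $\mathsf{H}_{\gamma(s)}$, and this transversality ultimately rests on the explicit formula for the normal to $\mathsf{H}_p$ together with the fact that the $(x,y)$-projection of $\gamma$ is a nonconstant smooth curve.
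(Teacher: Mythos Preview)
Your overall strategy—use the hyperboloid lemma to get a surface patch of $\Sigma$ inside $A$, then find further horizontal secants of that patch to fill out a $3$–dimensional region—is exactly the paper's strategy.  The difficulty is in your second step.

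The claim ``for $p$ well inside $\Sigma_I$ and $r_0$ sufficiently large, $q$ lies in $\Sigma_I$'' is not justified and, as stated, is false.  Take $p=L_1(\tfrac12)=(\tfrac12,1,\tfrac12)$.  In the $(a,b)$–coordinates on $\mathsf H_p$ your conic factors as $(2a+b)(2a+b+4)=0$; the second factor parametrizes the secant endpoints $q=(\tfrac12+a,\,1+b,\,\tfrac12+\tfrac b4-\tfrac a2)$ with $2a+b=-4$, so $y(q)=1+b=-3-2a$.  For directions with $a>-\tfrac32$ this is negative, whereas every point of your $\Sigma_I=\bigcup_{s\in[1,2]}S_s$ has $y=2\tau/s\ge 0$.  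Thus the generic secant through $p$ hits the \emph{opposite} sheet of the hyperboloid, which you never placed inside $A$.  Enlarging $r_0$ does not help, since $\Sigma_I$ is pinned to $s\in[1,2]$ and $\tau\in[0,1]$.  One can check that for special directions (e.g.\ $a=-3$) the point $q$ does land on some $L_{s'}$ with $s'\in[1,2]$ but with $\tau'\notin[0,1]$, so even those $q$ fall outside the patch you constructed; recovering them would require first using the full strength of the ruled property to get $L_s\cap\overline B_{r_0}\subset A$ (not just the segment $S_s$) and then isolating an open set of good directions, neither of which you do.

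The paper sidesteps this by taking \emph{both} branches $t\in[-2,-1]\cup[1,2]$ from the outset, then exhibiting one explicit horizontal line $M$ joining a point $w=L_{\sqrt2}(\tfrac12)$ on one branch to $w'=s_{-1,-1}(w)=L_{-\sqrt2}(\tfrac12)$ on the other, and checking by a direct derivative computation that $M$ meets the surface transversally at $w$ and $w'$.  Transversality then gives a $2$–parameter family of nearby horizontal lines, each meeting the surface twice and hence lying in $A$; these lines sweep out an open set.  This replaces your somewhat delicate Jacobian argument in step~3 by a single transversality check, and it makes the location of the second intersection point automatic rather than something that has to be argued.
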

\begin{proof}
  Let $r_0$ be large enough that $[-2,2]^3\subset \overline{B}_{r_0}$.  Let $E$ be a $\overline{B}_{r_0}$--ruled set with $\overline{B}_{r_0}$--rulings $M_1,M_2\in \cL$. By Lemma~\ref{lem:hyperboloid lemma}, there is a hyperbola $S\subset \mathsf{H}$, asymptotic to the $x$--axis and the $y$--axis, such that every tangent line of $S$ has a unique horizontal lift that intersects $M_1$ and $M_2$.  Indeed, for every $t\ne 0$, the points $X^t\in M_1$ and $ZY^{2/t}\in M_2$ are connected by a horizontal line
  $$\forall u\in \R,\qquad L_t(u)\eqdef X^t \left(-t,\frac{2}{t},0\right)^u = \left( (1-u) t, \frac{2u}{t}, u\right).$$
  For $t\in [-2,-1]\cup [1,2]$ and $u\in [0,1]$, the point $L_t(u)$ lies on a horizontal line segment connecting two points in $E$, so $L_t(u)\in E$.  The resulting family of points
  $$S\eqdef\{L_t(u)\mid t\in [-2,-1]\cup [1,2], u\in [0,1]\}\subset E$$
  consists of two disjoint embedded surfaces.

  Let
  $$w \eqdef  L_{\sqrt{2}}\left(\frac{1}{2}\right) = \left(\frac{\sqrt{2}}{2},\frac{\sqrt{2}}{2},\frac{1}{2}\right)$$
  and let $w'=s_{-1,-1}(w)= L_{-\sqrt{2}}(\frac{1}{2})$. Let $M$ be the horizontal line from $w$ to $w'$. Then $M$ intersects $S$ twice, at $w$ and $w'$, so $M \cap \overline{B}_{r_0}\subset E$. One calculates
  \begin{align*}
    \frac{\ud}{\ud t} L_t(u)\big|_{(t,u)=(\sqrt{2}, \frac{1}{2})}
    &= ( 1-u, - 2u t^{-2}, 0) \big|_{(t,u)=(\sqrt{2}, \frac{1}{2})} = \left(\frac{1}{2},-\frac{1}{2},0\right)\\
    \frac{\ud}{\ud u} L_t(u) \big|_{(t,u)=(\sqrt{2}, \frac{1}{2})} &= ( -t, 2t^{-1}, 1) \big|_{(t,u)=(\sqrt{2}, \frac{1}{2})} = ( -\sqrt{2}, \sqrt{2}, 1),
  \end{align*}
  so $M$ intersects $S$ transversally at $w$ and $w'$. By transversality, any horizontal line $M'$ close to $M$ intersects $S$ near $w$ and $w'$, so $M' \cap \overline{B}_{r_0}\subset E$. These lines cover a neighborhood of $M$, so $E$ contains a nonempty open set.
\end{proof}

As shown in the next lemma, for any pair of skew lines, there is an automorphism of $\H$ that sends them to $M_1$ and $M_2$.  The next lemma uses this fact to show that nearby skew lines in $\partial_{\cH^4} E$ must have nearly parallel projections.  For $\phi\in\R$, let $R_\phi\from \H\to \H$ be the rotation by angle $\phi$ around the $z$--axis.
\begin{lemma}\label{lem:convex hull of skew lines}
  Let $r_0$ be as in Lemma~\ref{lem:convex hull of perpendicular lines}.  Let $L_1,L_2\in \cL$ be skew lines and let $p\in \mathsf{H}$ be the intersection of $\uppi(L_1)$ and $\uppi(L_2)$.  Suppose that the angle between $\uppi(L_1)$ and $\uppi(L_2)$ is $\theta\in (0,\frac{\pi}{2})$. For $i\in \{1,2\}$, let $q_i\in \H$ be the point where $\uppi^{-1}(p)$ intersects $L_i$.  Suppose that
  \begin{equation}\label{eq:r0 hypothesis}
    d(q_1,q_2) \le \frac{\sqrt{\theta}}{r_0\sqrt{2}}.
  \end{equation}
If $L_1,L_2\in \cL$ are $\overline{B}_1(q_1)$--rulings of an $\overline{B}_1(q_1)$--ruled set $S$, then $S$ has nonempty interior.
\end{lemma}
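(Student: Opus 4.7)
The plan is to reduce Lemma~\ref{lem:convex hull of skew lines} to Lemma~\ref{lem:convex hull of perpendicular lines} by finding a Heisenberg automorphism $T$ that sends $L_1$ to $M_1=\langle X\rangle$ and $L_2$ to $M_2=Z\langle Y\rangle$, and verifying that the image $T(\overline{B}_1(q_1))$ contains $\overline{B}_{r_0}$.  Since $S$ is $\overline{B}_1(q_1)$--ruled and contains $(L_1\cup L_2)\cap \overline{B}_1(q_1)$, the image $T(S)$ will then be $\overline{B}_{r_0}$--ruled and contain $(M_1\cup M_2)\cap \overline{B}_{r_0}$, so Lemma~\ref{lem:convex hull of perpendicular lines} produces a nonempty interior for $T(S)$, hence for $S$.

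First I would normalize the configuration.  After left-translating by $q_1^{-1}$ (which preserves horizontal lines, the intrinsic geometry, and the $\overline{B}_1(q_1)$--ruled property) I may assume $q_1=\mathbf{0}$.  Since $q_2\in \uppi^{-1}(p)=q_1\langle Z\rangle$, we have $q_2=Z^t$ for some $t\in\R$, and the ball-box inequality~\eqref{eq:metric approximation} together with the exact behavior of $d(\mathbf{0},Z^t)$ yield $\sqrt{|t|}\asymp d(q_1,q_2)$.  Applying the isometric rotation $R_\phi$ around the $z$-axis, I may further assume $\uppi(L_1)=\langle X\rangle$; then $L_1=\langle X\rangle$ and, since $L_2$ passes through $Z^t$ with horizontal direction making angle $\theta$ with $X$, we have $L_2=\{(s\cos\theta,s\sin\theta,t)\mid s\in\R\}$.

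Next I would build $T$ in two stages.  Let $A=\bigl(\begin{smallmatrix}1 & -\cot\theta\\ 0 & \csc\theta\end{smallmatrix}\bigr)\in GL_2(\R)$; this is the unique map fixing $(1,0)$ and sending $(\cos\theta,\sin\theta)$ to $(0,1)$, and $\det A=\csc\theta$.  The induced Heisenberg automorphism $\tilde A$ fixes $L_1$ as a set and sends $L_2$ to the vertical translate of $\langle Y\rangle$ at height $\csc\theta\cdot t$.  Composing with the scaling $s_{a,a}$ for $a=\sqrt{\sin\theta/|t|}$, one checks (using $s_{a,a}(\mathbf{0},0,c)=(\mathbf{0},0,a^2 c)$) that $T=s_{a,a}\circ\tilde A$ satisfies $T(L_1)=M_1$ and $T(L_2)=M_2$.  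As a linear map on $\R^3$ viewed in $(x,y,z)$--coordinates, $T$ has matrix $\bigl(\begin{smallmatrix}a & -a\cot\theta & 0\\ 0 & a\csc\theta & 0\\ 0 & 0 & a^2\csc\theta\end{smallmatrix}\bigr)$, whose horizontal part $B$ has $\|B^{-1}\|\lesssim 1/a$ (the entries of $B^{-1}$ are $1/a$, $\cos\theta/a$, and $1/(a\csc\theta)=\sin\theta/a$).

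The main step, and the one requiring the precise hypothesis, is to verify $T(\overline{B}_1)\supset\overline{B}_{r_0}$.  The hypothesis $d(q_1,q_2)\le\sqrt{\theta}/(r_0\sqrt 2)$ together with $d(\mathbf{0},Z^t)\gtrsim\sqrt{|t|}$ gives $|t|\lesssim\theta/r_0^2$, and hence
\[
  a=\sqrt{\sin\theta/|t|}\;\gtrsim\; r_0\sqrt{\sin\theta/\theta}\;\ge\; c_1 r_0
\]
for a universal constant $c_1>0$ (using $\sin\theta/\theta\ge 2/\pi$ on $(0,\pi/2]$).  Since $T^{-1}$ is a Heisenberg automorphism induced by $B^{-1}$, horizontal curves are rescaled in length by at most $\|B^{-1}\|\lesssim 1/a$, so $d(\mathbf{0},T^{-1}(h))\le \|B^{-1}\|\cdot d(\mathbf{0},h)\lesssim d(\mathbf{0},h)/a$ for every $h\in\H$.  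For $h\in\overline{B}_{r_0}\subset B_{2r_0}$ this gives $d(\mathbf{0},T^{-1}(h))\lesssim r_0/a\le 1/c_1$, and choosing $r_0$ large enough in Lemma~\ref{lem:convex hull of perpendicular lines} (and correspondingly refining the $\sqrt{2}$ in the hypothesis) ensures the bound is at most $1$, i.e., $T^{-1}(\overline{B}_{r_0})\subset\overline{B}_1$.  The hard part is this calibration: the $\sqrt{\theta}$ scaling in the hypothesis is exactly what counteracts the $\csc\theta$--stretch of the shear $\tilde A$ so that the resulting scaling factor $a$ grows linearly with $r_0$ uniformly in $\theta\in(0,\pi/2)$, allowing the same universal $r_0$ from Lemma~\ref{lem:convex hull of perpendicular lines} to work.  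Once $T(\overline{B}_1)\supset \overline{B}_{r_0}$ is established, $T(S)$ is $\overline{B}_{r_0}$--ruled (being ruled for a larger convex set is a stronger property, passing to subsets) and contains $(M_1\cup M_2)\cap \overline{B}_{r_0}$, so Lemma~\ref{lem:convex hull of perpendicular lines} yields a nonempty interior for $T(S)$, and hence for $S$.
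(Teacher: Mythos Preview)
Your strategy is exactly the paper's: translate and rotate to normalize, apply a Heisenberg automorphism sending $L_1,L_2$ to $M_1,M_2$, verify that $\overline{B}_{r_0}$ lands inside the image of $\overline{B}_1(q_1)$, then invoke Lemma~\ref{lem:convex hull of perpendicular lines}.  The paper's execution differs only in the normalization: it rotates so that $\uppi(L_1),\uppi(L_2)$ make angles $\pm\theta/2$ with the $x$--axis (applying $s_{1,-1}$ if needed to arrange $q_2=Z^h$ with $h>0$), and then uses the \emph{diagonal} stretch $s_{\sqrt{t},1/\sqrt{t}}$ with $t=\tan(\theta/2)$ rather than your shear.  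The advantage is that $\Lip(f^{-1})$ is then exactly $\sqrt{h/t}$, and the hypothesis gives $\sqrt{h}\le d(q_1,q_2)\le \sqrt{\theta}/(r_0\sqrt{2})$, so $\Lip(f^{-1})\le d(q_1,q_2)/\sqrt{\theta/2}\le 1/r_0$ on the nose.

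Your worry that the constants might not match is unfounded: your shear approach also yields the exact constant.  A direct computation gives $\|A^{-1}\|=\sqrt{1+\cos\theta}$ (the eigenvalues of $A^{-1}(A^{-1})^T$ are $1\pm\cos\theta$), hence $\Lip(T^{-1})=\sqrt{1+\cos\theta}/a$.  The ball-box inequality gives $\sqrt{|t|}\le d(q_1,q_2)$, so $a^2=\sin\theta/|t|\ge 2r_0^2\sin\theta/\theta$, and the requirement $r_0\Lip(T^{-1})\le 1$ becomes $2\sin\theta/\theta\ge 1+\cos\theta$, which (after the half-angle identities) is precisely $\tan(\theta/2)\ge\theta/2$.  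So no adjustment to $r_0$ or to the $\sqrt{2}$ is needed.  Two small gaps: you must handle the sign of $t$ (apply $s_{1,-1}$ to $S$ if $t<0$), and rather than passing through $\overline{B}_{r_0}\subset B_{2r_0}$, use that $T^{-1}$ is linear on $\R^3$ and hence commutes with convex hulls, giving $T^{-1}(\overline{B}_{r_0})\subset\overline{B}_{r_0\Lip(T^{-1})}$ without the spurious factor of~$2$.
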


\begin{proof}
  After applying a translation and rotation and possibly replacing $S$ with $s_{1,-1}(S)$, we may suppose that $q_1=\mathbf{0}$, $q_2=Z^{h}$ for some $h>0$ and that $\uppi(L_1)$ and $\uppi(L_2)$ form angles of $\frac{\theta}{2}$ with the $x$--axis.  (We cannot control which line forms a positive angle with the $x$--axis and which line forms a negative angle.)  Let $t=\tan\frac{\theta}{2}\in (0,1)$ so that the lines
  $$\uppi\Big(s_{\sqrt{t}, \frac{1}{\sqrt{t}}}(L_1)\Big)\qquad \mathrm{and}\qquad  \uppi\Big(s_{\sqrt{t}, \frac{1}{\sqrt{t}}}(L_2)\Big)$$
  are perpendicular.  There is an angle $\phi=\pm \frac{\pi}{4}$ such that if
  $$f\eqdef R_\phi\circ s_{\frac{1}{\sqrt{h}}, \frac{1}{\sqrt{h}}}\circ s_{\sqrt{t}, \frac{1}{\sqrt{t}}},$$
  then $f(L_1)=M_1$ and $f(L_2)=M_2$, where $M_1,M_2$ are the lines in Lemma~\ref{lem:convex hull of perpendicular lines}.  Now, by the ball-box inequality and our hypothesis on $d(q_1,q_2)$,
  $$\Lip(f^{-1})=\frac{\sqrt{h}}{\sqrt{t}} \stackrel{\eqref{eq:metric approximation}}{\le} \frac{d(q_1,q_2)}{\sqrt{\tan \theta/2}}\le \frac{d(q_1,q_2)}{\sqrt{\theta/2}} \stackrel{\eqref{eq:r0 hypothesis}}{\le} \frac{1}{r_0}.$$
  Thus, $f^{-1}(\overline{B}_{r_0})\subset \overline{B}_{r_0 \Lip(f^{-1})}\subset \overline{B}_1$, or $\overline{B}_{r_0}\subset f(\overline{B}_1)$. Since $f(S)$ is a $f(\overline{B}_1)$--ruled set and $M_1$ and $M_2$ are $f(\overline{B}_1)$--rulings of $f(S)$, by Lemma~\ref{lem:convex hull of perpendicular lines}, $f(S)$ has nonempty interior and thus $S$ has nonempty interior.
\end{proof}

It follows from Lemma~\ref{lem:empty interior} and Lemma~\ref{lem:convex hull of skew lines} that two lines in $\partial_{\cH^4} E$ with different angles must either intersect or stay at least a definite distance apart.  In the terminology of \cite{CKN}, every pair of rulings of $\partial_{\cH^4} E$ must form a degenerate initial condition.
\begin{lemma}\label{lem:definite angles intersect}
  For any $\epsilon>0$, there is $\delta>0$ such that if $S$ is a $\overline{B}_1$--ruled set with empty interior and $L_1, L_2$ are $\overline{B}_1$--rulings of $S$ that intersect $\overline{B}_\delta$ and such that $\angle(\uppi(L_1),\uppi(L_2))>\epsilon$, then $L_1$ and $L_2$ intersect.
\end{lemma}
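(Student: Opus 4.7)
The plan is to argue by contradiction: assume $L_1\cap L_2=\emptyset$, and use Lemma~\ref{lem:convex hull of skew lines} (a rescaled version of it) to deduce that $S$ has nonempty interior. Concretely, if $L_1,L_2$ are disjoint and $\angle(\uppi(L_1),\uppi(L_2))\ge \epsilon>0$, then their projections are not parallel, so by the second alternative of Lemma~\ref{lem:hyperboloid lemma} the lines $L_1,L_2$ are skew; let $p=\uppi(L_1)\cap\uppi(L_2)\in\mathsf{H}$ and let $q_i\in L_i$ be the unique point with $\uppi(q_i)=p$.

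The next step is to control the location of $q_1$ and $q_2$. Pick $a_i\in L_i\cap \overline{B}_\delta$. Since $\overline{B}_\delta\subset B_{2\delta}$ and the ball--box inequality~\eqref{eq:metric approximation} yields $\|\uppi(B_r)\|\le 4r$, we get $\|\uppi(a_i)\|\lesssim \delta$. Because $\uppi(L_1)$ and $\uppi(L_2)$ meet at $p$ with angle $\theta\ge\epsilon$, elementary planar geometry gives $\|\uppi(a_i)-p\|\lesssim \delta/\sin\epsilon$. Now $q_i$ lies on the horizontal line $L_i$ above $p$, and along any horizontal line the Carnot--Carath\'eodory distance equals the Euclidean distance of projections, so $d(a_i,q_i)=\|\uppi(a_i)-p\|\lesssim \delta/\sin\epsilon$. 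Combining with $d(\mathbf{0},a_i)\le\delta$ yields the two key bounds
\[
d(\mathbf{0},q_1)\lesssim \frac{\delta}{\sin\epsilon}\qquad\text{and}\qquad d(q_1,q_2)\lesssim \frac{\delta}{\sin\epsilon}.
\]

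With these bounds, choose $\delta=\delta(\epsilon)>0$ so small that $d(\mathbf{0},q_1)\le \tfrac12$ and $d(q_1,q_2)\le \tfrac{1}{2}\cdot\tfrac{\sqrt{\theta}}{r_0\sqrt{2}}$, where $r_0$ is the constant from Lemma~\ref{lem:convex hull of perpendicular lines}; the first condition is needed so that $\overline{B}_{1/2}(q_1)\subset\overline{B}_1(\mathbf{0})$ (here I use that $\overline{B}_\rho(q_1)\subset\overline{B}_{\rho+d(\mathbf{0},q_1)}(\mathbf{0})$ since $\overline{B}$ is the convex hull of metric balls). The inclusion $\overline{B}_{1/2}(q_1)\subset\overline{B}_1(\mathbf{0})$ immediately upgrades the hypothesis that $L_1,L_2$ are $\overline{B}_1$-rulings of $S$ to: $L_i\cap\overline{B}_{1/2}(q_1)\subset L_i\cap\overline{B}_1\subset S$ for $i\in\{1,2\}$, so $L_1,L_2$ are rulings of $S$ on $\overline{B}_{1/2}(q_1)$.

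It remains to apply Lemma~\ref{lem:convex hull of skew lines}, but at scale $1/2$ rather than $1$. This is done by conjugating by the scaling automorphism $s_{2,2}$, which is a homeomorphism of $\H$ that multiplies Carnot--Carath\'eodory distances by $2$, preserves angles of projections (hence skewness and $\theta$), and preserves the properties ``has empty interior'' and ``is $U$-ruled'' (sending $\overline{B}_{1/2}(q_1)$ to $\overline{B}_1(s_{2,2}(q_1))$). The hypothesis $d(q_1,q_2)\le \tfrac12\cdot\tfrac{\sqrt{\theta}}{r_0\sqrt{2}}$ becomes exactly $d(s_{2,2}(q_1),s_{2,2}(q_2))\le \tfrac{\sqrt{\theta}}{r_0\sqrt{2}}$, so Lemma~\ref{lem:convex hull of skew lines} applies to $s_{2,2}(S)$ and produces a nonempty interior, contradicting the empty interior of $S$. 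The main (and only) delicate point of the proof is precisely this scale-matching step: the given rulings live on $\overline{B}_1(\mathbf{0})$ while Lemma~\ref{lem:convex hull of skew lines} needs rulings on a unit ball centered at $q_1$, and the smallness of $\delta$ relative to $\sin\epsilon$ is what makes both the ball inclusion and the quantitative hypothesis on $d(q_1,q_2)$ hold simultaneously.
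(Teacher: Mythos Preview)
Your proposal is correct and takes essentially the same approach as the paper: both argue by contradiction, locate the points $q_i$ on $L_i$ above the intersection $p=\uppi(L_1)\cap\uppi(L_2)$, bound $d(\mathbf{0},q_1)$ and $d(q_1,q_2)$ by quantities of order $\delta/\sin\epsilon$, and then invoke Lemma~\ref{lem:convex hull of skew lines} on the smaller ball $\overline{B}_{1/2}(q_1)\subset\overline{B}_1$ to force nonempty interior. Your handling of the final rescaling step via $s_{2,2}$ is in fact more explicit than the paper's, which applies Lemma~\ref{lem:convex hull of skew lines} directly after noting $d(q_1,q_2)<\frac{\sqrt{\epsilon}}{5r_0}$ (a bound chosen precisely so that the factor-of-$2$ rescaling still lands under the threshold $\frac{\sqrt{\theta}}{r_0\sqrt{2}}$).
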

\begin{proof}
  We  suppose that $0<\epsilon<1$ and take $\delta=\frac{\epsilon^{\frac32}}{100 r_0}\le \frac{1}{100},$ where $r_0$ is as in Lemma~\ref{lem:convex hull of perpendicular lines}.

  Let $p\in \mathsf{H}$ be the intersection of the projections $\uppi(L_1)$ and $\uppi(L_2)$.  Since $\uppi(\overline{B}_\delta)$ is the ball $B^{\mathsf{H}}_\delta$ of radius $\delta$ in $\mathsf{H}$, the projections intersect $B^{\mathsf{H}}_\delta$ and form an angle of at least $\epsilon$, so
  $$\|p\|\le \frac{\delta}{\sin \frac{\epsilon}{2}}\le \frac{4\delta}{\epsilon}<\frac{1}{4}.$$
  For $i\in \{1,2\}$, let $q_i=\uppi^{-1}(p)\cap L_i$.  By assumption, $L_1$ and $L_2$ intersect $\overline{B}_\delta\subset B_{2\delta}$, so if $b_i\in L_i\cap \overline{B}_\delta$, then
  $$d(\0, q_i)\le d(\0,b_i)+d(b_i,q_i)=d(\0,b_i)+\|\uppi(b_i)-\uppi(q_i)\| \le d(\0,b_i)+\|\uppi(b_i)\|+\|p\| \le 3\delta + \|p\|.$$
  In particular, $d(\0,q_i)\le \frac{1}{2}$.  Hence $\overline{B}_{\frac{1}{2}}(q_1)\subset \overline{B}_1$, so $S$ is a $\overline{B}_{\frac{1}{2}}(q_1)$--ruled set.  Further,
  $$d(q_1,q_2)\le 2\|p\|+6\delta < \frac{20 \delta}{ \epsilon}\le \frac{\sqrt{ \epsilon}}{5 r_0}.$$
Because $S$ has empty interior, Lemma~\ref{lem:convex hull of skew lines} implies that $L_1$ and $L_2$ cannot be skew lines, and must therefore intersect.
\end{proof}

The next lemma completes the proof of part (2) of Proposition~\ref{prop:local monotone rectifiability}.
\begin{lemma}\label{lem:intersecting lines and horizontal planes}
  Suppose that $U$ is a convex open set and that $E\subset \H$ is monotone on $U$.  Let $p\in U$, and let $L_1$ and $L_2$ be two distinct $U$--rulings of $\partial_{\cH^4} E$ that intersect at $p$. Then $U\cap \mathsf{H}_p\subset \partial_{\cH^4} E$, and there is a neighborhood $A$ containing $p$ such that $A\cap \partial_{\cH^4} E=A\cap \mathsf{H}_p$, where we recall that $\mathsf{H}_p$ denotes  the horizontal plane through $p$.
\end{lemma}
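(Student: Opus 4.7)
I would prove the two claims of the lemma separately, using the first to derive the second.

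\emph{Step 1: $U \cap \mathsf{H}_p \subset \partial_{\cH^4} E$.} The key observation is that $L_1$ and $L_2$ lie in the affine $2$-plane $\mathsf{H}_p$ as distinct straight lines through $p$, and that any two distinct points $r_1, r_2 \in \mathsf{H}_p$ are joined by a unique horizontal line of $\H$, which lies entirely inside $\mathsf{H}_p$ (both facts follow from the explicit formula for $\mathsf{H}_q$ given in Section~\ref{sec:Prelim heis}). Given $q \in U \cap \mathsf{H}_p$, the case $q \in L_1 \cup L_2$ is immediate; otherwise I rotate a straight line $N$ through $q$ inside $\mathsf{H}_p$ slightly away from the direction of $\overline{pq}$. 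For the exact direction of $\overline{pq}$, $N$ passes through $p$ and meets each $L_i$ only at $p$, but for nearby directions $N$ misses $p$ and meets $L_1$ at a single point $r_1$ and $L_2$ at a single point $r_2$, with $r_1, r_2 \to p$ as the direction approaches that of $\overline{pq}$; along such a perturbation one automatically has $r_1 \ne r_2$. Openness of $U$ at $p$ lets me choose the direction so that $r_1, r_2 \in U$, and convexity of $U$ gives $[r_1, r_2] \subset N \cap U \ni q$. Since $r_1, r_2 \in \mathsf{H}_p$, the line $N$ is horizontal in $\H$, and it contains two distinct points $r_1, r_2 \in \partial_{\cH^4} E$ inside $U$. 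Lemma~\ref{lem:monotone boundary U-convex} then gives $N \cap U \subset \partial_{\cH^4} E$, hence $q \in \partial_{\cH^4} E$.

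\emph{Step 2: existence of the neighborhood $A$.} By Step 1 it suffices to produce a neighborhood $A$ of $p$ in $U$ with $A \cap \partial_{\cH^4} E \subset \mathsf{H}_p$. Suppose otherwise; then there is $q_0 \in U \cap \partial_{\cH^4} E \setminus \mathsf{H}_p$ arbitrarily close to $p$. By Lemma~\ref{lem:monotone is union of lines} there is a horizontal ruling $M_0$ of $\partial_{\cH^4} E$ through $q_0$; since $q_0 \notin \mathsf{H}_p$, $M_0 \not\subset \mathsf{H}_p$ and hence meets $\mathsf{H}_p$ in at most one point. Let $\theta = \angle(\uppi(L_1), \uppi(L_2)) > 0$; then $\uppi(M_0)$ makes angle at least $\theta/2$ with at least one of $\uppi(L_1), \uppi(L_2)$. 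Applying Lemma~\ref{lem:definite angles intersect} at this angular separation (using that $L_i$ and $M_0$ both intersect an arbitrarily small ball about $p$ once $q_0$ is sufficiently close to $p$) forces $M_0$ to actually intersect that $L_i$ inside $U$; after relabelling, say $M_0$ meets $L_1$ at $s_0 \in U$. Then $L_1, M_0$ are two distinct $U$-rulings of $\partial_{\cH^4} E$ through $s_0$, so Step 1 applied at $s_0$ produces $\mathsf{H}_{s_0} \cap U \subset \partial_{\cH^4} E$.

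I now iterate Step 1 along $L_1$. Since $\mathsf{H}_{s_0} \ne \mathsf{H}_p$ (as $q_0 \in \mathsf{H}_{s_0}\setminus \mathsf{H}_p$), fix any $q' \in \mathsf{H}_{s_0}\cap U$ with $q'\notin L_1$. For every $s' \in L_1$ sufficiently close to $p$, the straight line $N_{s'}$ in the affine plane $\mathsf{H}_{s_0}$ joining $q'$ to $s'$ is horizontal in $\H$ (again by the observation used in Step 1), and the segment $[q',s']$ lies in $\mathsf{H}_{s_0}\cap U \subset \partial_{\cH^4}E$ by convexity of $U$. Lemma~\ref{lem:monotone boundary U-convex} upgrades this to $N_{s'}\cap U \subset \partial_{\cH^4}E$, and $N_{s'} \ne L_1$ because $q'\notin L_1$. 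Hence $s'$ admits two distinct $U$-rulings $L_1, N_{s'}$, so Step 1 at $s'$ gives $\mathsf{H}_{s'}\cap U \subset \partial_{\cH^4}E$. Letting $s'$ vary over a small open arc $V\subset L_1$ around $p$ yields $\bigcup_{s'\in V}\mathsf{H}_{s'} \subset \partial_{\cH^4} E$ locally around $p$. A direct coordinate computation, normalising so that $p=\0$ and $L_1=\{(t,0,0):t\in \R\}$, gives $\mathsf{H}_{(t,0,0)}=\{z=ty/2\}$, so that $\bigcup_{|t|<\delta}\mathsf{H}_{(t,0,0)}=\{(x,y,z):|z|\le \tfrac{\delta}{2}|y|\}$ is a wedge with nonempty interior in $\H$. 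Hence $\partial_{\cH^4}E$ has nonempty interior near $p$, contradicting Lemma~\ref{lem:empty interior}.

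The hardest part I anticipate is the first half of Step 2: extracting from a hypothetical boundary point $q_0\not\in \mathsf{H}_p$ an honest second ruling through a point of $L_1$ (or $L_2$). This rests on Lemma~\ref{lem:definite angles intersect}, whose quantitative dichotomy (rulings either intersect or are nearly parallel) is exactly what is needed. Step 1, by contrast, reduces to planar geometry once one recognises that horizontal lines connecting pairs of points of a single horizontal plane remain inside that plane.
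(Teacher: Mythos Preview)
Your argument has a fundamental gap: the claim that every straight line in the affine plane $\mathsf{H}_p$ is a horizontal line of $\H$ is false. Take $p=\mathbf{0}$, so $\mathsf{H}_p=\{z=0\}$. The horizontal line through $X=(1,0,0)$ in the direction $Y$ is $X\langle Y\rangle=\{(1,t,t/2):t\in\R\}$, which leaves $\{z=0\}$ immediately; the Euclidean line $\{(1,t,0):t\in\R\}\subset\mathsf{H}_p$ is not horizontal. In general, for $q\in\mathsf{H}_p\setminus\{p\}$ one has $\mathsf{H}_q\cap\mathsf{H}_p$ equal to the single line through $p$ and $q$, so the \emph{only} horizontal line through $q$ that lies in $\mathsf{H}_p$ is the one through $p$. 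Your line $N$ in Step~1 is chosen precisely to avoid $p$ (so that $r_1\ne r_2$), hence it is not horizontal and Lemma~\ref{lem:monotone boundary U-convex} does not apply. The same error recurs in Step~2: the line $N_{s'}\subset\mathsf{H}_{s_0}$ joining $q'$ to $s'$ is horizontal only if it passes through $s_0$, which, since $q'\notin L_1$ and $s',s_0\in L_1$, forces $s'=s_0$; so your sweep over an arc $V\subset L_1$ collapses to a single point.

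The paper avoids this by proving the two conclusions in the opposite order. It first establishes the local statement directly: for every $q$ in a small ball $B$ around $p$, the ruling $M_q$ through $q$ must intersect \emph{both} $L_1$ and $L_2$ (this uses Lemma~\ref{lem:definite angles intersect} as you do, but combined with an auxiliary horizontal line joining $L_1$ to $M_q$ via the hyperboloid lemma, together with the observation that three distinct pairwise-intersecting horizontal lines are concurrent). Hence $M_q$ passes through $p$, so $q\in\mathsf{H}_p$ and $B\cap\partial_{\cH^4}E=B\cap\mathsf{H}_p$. The global inclusion $U\cap\mathsf{H}_p\subset\partial_{\cH^4}E$ is then immediate: every horizontal line $L$ through $p$ meets $\partial_{\cH^4}E$ at $p$ and at a second nearby point of $B\cap\mathsf{H}_p$, so Lemma~\ref{lem:monotone boundary U-convex} gives $L\cap U\subset\partial_{\cH^4}E$.
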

\begin{proof}
  Since $U$ is convex, $\partial_{\cH^4} E$ is $U$--ruled.
  After translating and applying an automorphism, we may suppose that $p=\mathbf{0}$ and that $L_1$ and $L_2$ are the $x$--axis and $y$--axis, respectively. Set $\epsilon=\frac{1}{40}$ and let $\delta>0$ satisfy Lemma~\ref{lem:definite angles intersect}. Suppose that $\overline{B}_\delta\subset U$.

  Fix $q\in B_{\frac{\delta}{8}}\cap \partial_{\cH^4} E$.  By Lemma~\ref{lem:monotone is union of lines}, $\partial_{\cH^4} E $ has a $U$--ruling $M_q$ that passes through $q$.  We will show that $M_q$ intersects both $L_1$ and $L_2$ and that any such line passes through $p$.

  For any horizontal line $L$, let $\overline{L}=\uppi(L)$.  Either $\angle(\overline{L_1},\overline{M_q})\ge \frac{\pi}{4}$ or $\angle(\overline{L_2},\overline{M_q})\ge \frac{\pi}{4}$. Therefore, by Lemma~\ref{lem:definite angles intersect}, $M_q$ intersects either $L_1$ or $L_2$.  Suppose by way of contradiction that $M_q$ intersects $L_2$ but not $L_1$.  By Lemma~\ref{lem:definite angles intersect}, this implies that $\angle(\overline{L_1},\overline{M_q})\le \epsilon$.  Let $r$ be the intersection of $\overline{M_q}$ with $\overline{L_2}$ and let $t=d(p,r)>0$ (see Figure~\ref{fig:line intersects L2}).  Straightforward trigonometry shows that $t<\frac{\delta}{4}$.

  \begin{figure}[h]
    \begin{centering}
      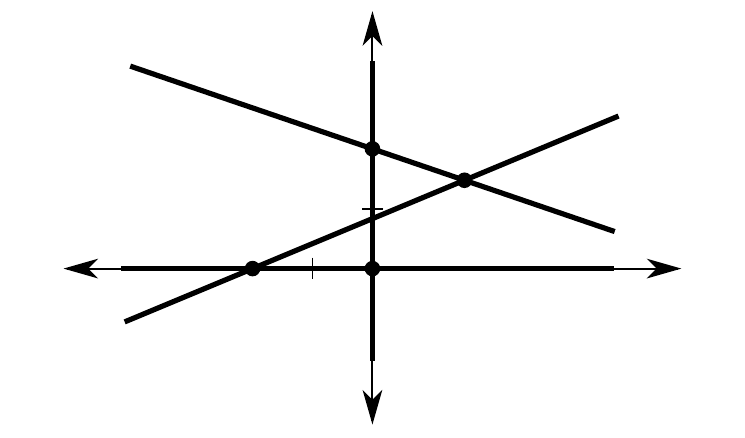
    \end{centering}
    \caption{If line $M_q$ intersects the $y$--axis $L_2$ but not the $x$--axis $L_1$, there must be a line $N$ intersecting $L_1$ and $M_q$ as seen above.  Lines above are projected to $\mathsf{H}$ by $\uppi$. \label{fig:line intersects L2}}
  \end{figure}

  Let $a=pX^{-t}\in L_1$.  By Lemma~\ref{lem:hyperboloid lemma}, there is a unique point $b\in M_q$ such that there is a horizontal line $N$ that passes through $a$ and $b$. Indeed, since $r$, $p$, $a$, and $b$ are the vertices of a quadrilateral $Q$ in $\H$ whose sides are horizontal lines, the projection $\uppi(Q)$ has zero signed area. Since the triangle $\triangle\uppi(p)\uppi(r)\uppi(a)$ has area $\frac{t^2}{2}$, the triangle $\triangle\uppi(b)\uppi(r)\uppi(a)$ must also have area $\frac{t^2}{2}$, so $\uppi(b)$ is the intersection of $\overline{M_q}$ with the line $\langle X+Y\rangle$. Because $\overline{M_q}$ has slope between $-\epsilon$ and $\epsilon$, this implies that $|\uppi(b)-(t,t)|\le 4\epsilon t \le \frac{t}{10}$. In particular, $d(r,b)=|\uppi(r)-\uppi(b)|\le 2t$, $\angle(\overline{L_1},\overline{N})> \epsilon$, and $\angle(\overline{L_2},\overline{N})> \epsilon$. Then $d(p,b)\le d(p,r)+d(r,b)\le 3t<\delta$, so $b\in \overline{B}_\delta$.

  Since $a, b\in U\cap \partial_{\cH^4} E$, $N$ is a $U$--ruling of $\partial_{\cH^4} E$. By Lemma~\ref{lem:definite angles intersect} and the fact that $\angle(\overline{L_2},\overline{N})> \epsilon$, $N$ intersects $L_2$. That is, $L_1$, $L_2$, and $N$ are three distinct lines in $\H$ that intersect pairwise. If three distinct lines intersect pairwise, then they must all intersect at the same point. Otherwise, their projections to $\mathsf{H}$ would contain a non-degenerate triangle that lifts to a horizontal closed curve in $\H$, but this is impossible since the signed area of the projection of a horizontal closed curve must vanish. But $L_1$ intersects $N$ at $a$ and intersects $L_2$ at $p$, where $d(p,a)=t>0$ by construction. This is a contradiction, so $M_q$ intersects $L_1$ and $L_2$. Since $M_q$, $L_1$, and $L_2$ are distinct lines that intersect pairwise, $M_q$ must intersect $L_1$ and $L_2$ at $p$.

  Hence, every point $q\in B_{\frac{\delta}{2}}\cap \partial_{\cH^4} E$ lies on the horizontal plane $\mathsf{H}_p$ through $p$.  The measure-theoretic boundary of $E$ disconnects $B_{\frac{\delta}{2}}$, so $B_{\frac{\delta}{2}}\cap \partial_{\cH^4} E=B_{\frac{\delta}{2}}\cap \mathsf{H}_p$.

  Consequently, any line $L$ through $p$ intersects $U\cap \partial_{\cH^4} E$ in at least two points, so $U\cap L \subset \partial_{\cH^4} E$.  The union of all such lines is $\mathsf{H}_p$, so $U\cap \mathsf{H}_p \subset \partial_{\cH^4} E$
\end{proof}

Finally, we prove parts (3) and (4) of Proposition~\ref{prop:local monotone rectifiability}.

\begin{proof}[{Proof of parts (3) and (4) of Proposition~\ref{prop:local monotone rectifiability}}]
  Due to Lemma~\ref{lem:intersecting lines and horizontal planes}, if $p$ is a characteristic point, then $\partial_{\cH^4} E$ has a horizontal approximate tangent plane at $p$.  Lemma~\ref{lem:intersecting lines and horizontal planes} also implies that if $p$ is a characteristic point, then there is a ball $B$ such that $B$ contains no characteristic points other than $p$.  That is, the characteristic points form a discrete subset of $\H$; since $\H$ is separable, there are only countably many characteristic points.

  Let $p\in U\cap \partial_{\cH^4} E$ be a non-characteristic point, so that there is a unique line $L$ through $p$.  Let $V$ be the vertical plane that contains $L$. Fix $0<\epsilon<\frac{1}{10}$.  We claim that there is $r>0$ such that if $0<\alpha\le r$, then $\overline{B}_{\alpha}(p)\cap \partial_{\cH^4} E$ is contained in the $\epsilon \alpha$--neighborhood of $V$.

  We translate, rotate, and rescale so that $p=\mathbf{0}$, $L$ is the $x$--axis, and $\overline{B}_1$ is a subset of $U$ that contains no characteristic points.  Then $V=V_0$ is the $xz$--plane.  Let $\Pi\from \H \to V_0$ be the projection to $V_0$ along cosets of $\langle Y\rangle$, as in Section~\ref{sec:intrinsic graphs}, so that $\Pi(x,y,z)=(x,0,z-\frac{xy}{2})$.

  For each point $s\in \overline{B}_{1}\cap \partial_{\cH^4} E$, there is a unique $U$--ruling $M_s$ passing through $s$.  By Lemma~\ref{lem:definite angles intersect}, there is  $\delta\in (0,1)$ such that $\angle(M_s,L)<\frac{\epsilon^2}{200}$ for every $s\in \overline{B}_{\delta}\cap \partial_{\cH^4} E$.  Let $r=\min\{\delta, \frac{\epsilon}{80}\}$ and let $0<\alpha\le r$.  Let $q\in \overline{B}_{\alpha}\cap \partial_\mu E$ and suppose by way of contradiction that $d(q,V_0)=|y(q)|> \epsilon \alpha$.  Without loss of generality, we may suppose that $y(q)>\epsilon \alpha$.

  Let $m\in \R$ be the slope of $\uppi(M_q)$, so that $M_q=q\cdot \langle X+mY\rangle$. Let $\gamma(t)=q\cdot (X+mY)^t$ parametrize $M_q$.  Then
  $$|m|=|\sin \angle(M_s,L)|<\frac{\epsilon^2}{200}.$$

  Since $q\in \overline{B}_{\alpha}\subset B_{2\alpha}$, we have $\Pi(q)\in B_{4\alpha}$ and thus $|z(\Pi(q))|\le 16\alpha^2$.  By~\eqref{eq:horizontal curve eq}, for all $t\in \R$,
  $$\frac{\ud}{\ud t} z\Big(\Pi\big(\gamma(t)\big)\Big) = - y\big(\gamma(t)\big) = -y(q) - mt.$$
  Consequently,
  $$\forall t\in \R,\qquad z\Big(\Pi\big(\gamma(t)\big)\Big)=z(q) - y(q)t - m \frac{t^2}{2}.$$
  Letting $s=\frac{20\alpha}{ \epsilon}$, it follows that
  $$z\Big(\Pi\big(\gamma(s)\big)\Big)\le 16\alpha^2 - \alpha \epsilon s + \frac{\epsilon^2}{200}\cdot \frac{s^2}{2} \le -3\alpha^2$$
  and
  $$z\Big(\Pi\big(\gamma(-s)\big)\Big)\ge - 16\alpha^2 + \alpha \epsilon s - \frac{\epsilon^2}{200} \cdot \frac{s^2}{2} \ge 3\alpha^2.$$
So, there is $t$ with $|t|<s\le \frac{1}{4}$ and $z(\Pi(\gamma(t)))=0$, i.e., $\Pi(\gamma(t))\in L$.  The coset $N=\gamma(t) \langle Y\rangle$ is thus a horizontal line that intersects $M_q$ at $\gamma(t)$ and intersects $L$ at $\Pi(\gamma(t))$.  Since
  $$d\big(\mathbf{0},\gamma(t)\big)\le d(\mathbf{0},q)+|t| \le 2\alpha + \frac{1}{4}\le \frac{1}{2},$$
  and
  $$d\Big(\mathbf{0},\Pi\big(\gamma(t)\big)\Big)\le 2 d\big(\mathbf{0},\gamma(t)\big)\le 1,$$
  $\gamma(t)$ and $\Pi(\gamma(t))$ belong to  $\overline{B}_1\cap \partial_{\cH^4} E$, so $N\cap \overline{B}_1\subset \partial_{\cH^4} E$.  Then $M_q$ and $N$ are distinct $U$--rulings of $\partial_{\cH^4} E$ passing through $\gamma(t)$, which contradicts the fact that there are no characteristic points in $\overline{B}_1$.  Therefore, $d(q,V_0)\le \epsilon \alpha$ for all $q\in \overline{B}_{\alpha}\cap \partial_{\cH^4} E$.

  Let $T_p=V_0$ and let $T_p^+$ and $T_p^-$ be the corresponding half-spaces.  The argument above shows that for any $0<\alpha \le r$, the sets $W_{\epsilon,\alpha}^\pm$ are disjoint from $\partial_{\cH^4} E$, so each set is contained in either $\inter_{\cH^4} (E)$ or $\inter_{\cH^4} (\H\setminus E)$.

  Consider $W_{\epsilon,r}^+$ and $W_{\epsilon,r}^-$.  Every line sufficiently close to the $y$--axis intersects both of these sets, so if both are contained in $\inter_{\cH^4} (E)$, then by Proposition~\ref{prop:local monotonicity along lines}, $p\in \inter_{\cH^4} (E)$ as well.  Likewise, if both are contained in $\inter_{\cH^4} (\H\setminus E)$, then $p\in \inter_{\cH^4} (\H\setminus E)$.  Either of these conclusions  is a contradiction, so one of $W_{\epsilon,r}^+,W_{\epsilon,r}^-$ is contained in $\inter_{\cH^4}(E)$ and the other is contained in $\inter_{\cH^4} (E)$.  If necessary, we switch $T_p^+$ and $T_p^-$ so that $W^+_{\epsilon,r}\subset \inter_{\cH^4} (E)$.

  We claim that $W^+_{\epsilon,\alpha}\subset \inter_{\cH^4} (E)$ for every $\alpha\in (0,r]$.  Fix $0<\beta\le r$ with $\frac{\beta}{2}<\alpha<\beta$.  Then $W^+_{\epsilon,\alpha}$ intersects $W^+_{\epsilon,\beta}$, so if $W^+_{\epsilon,\beta}\subset \inter_{\cH^4} (E)$, then $W^+_{\epsilon,\alpha}\subset \inter_{\cH^4} (E)$ as well.  By induction, $W^+_{\epsilon,\alpha}\subset \inter_{\cH^4} (E)$ for all $0<\alpha\le r$.  Likewise, $W^-_{\epsilon,\alpha}\subset \inter_{\cH^4} (\H\setminus E)$ for all $0<\alpha\le r$.
\end{proof}

\subsection{Stability of extended monotone sets}\label{sec:stability of extended monotone sets}

Here we prove Proposition~\ref{prop:stability of extended monotone}.  We show that there are $\nu>0$ and $R>0$ such that if $E$ is a set that is $(\nu,R)$--extended monotone on $\overline{B}_1$, then $E$ is close to a half-space on $\overline{B}_1$.  If $R' \ge R$ and $\nu' R'\le \nu R$, then $(\nu',R')$--extended monotonicity implies $(\nu, R)$--extended monotonicity, so this implies the full proposition.

To prove this, it suffices to show that if $f$ is a $\overline{B}_1$--LEM function, then $f|_{\overline{B}_1}$ is the characteristic function of a half-space.  Suppose that $f$ is a weak limit of a sequence $(\one_{E_i})_i$, where $E_1,E_2,\dots\subset \H$ are sets such that $E_i$ is $(\frac{1}{i},i)$--extended monotone on $\overline{B}_1$.  By Corollary~\ref{cor:limits of monotone}, $f|_{\overline{B}_1}$ is the characteristic function of a locally monotone subset $F\subset \overline{B}_1$, but this result only uses the fact that each $E_i$ is $\frac{1}{i}$--monotone on $\overline{B}_1$.  In this section, we  improve Corollary~\ref{cor:limits of monotone} by using the stronger hypothesis that  the $E_i$ are extended monotone sets.

The first issue is that $\ENM_{E_i,R}(\overline{B}_1)$ only depends on the intersection of $E_i$ with lines through $\overline{B}_1$.  These lines don't cover all of $\H$, so a $\overline{B}_1$--LEM function need not take values in $\{0,1\}$ outside $\overline{B}_1$.  The following lemma shows that it is takes values in $\{0,1\}$ on lines that intersect the boundary of $F$ transversally.  For $p \in \H$ and $V\in \mathsf{H}$ a horizontal vector, the coset $p\langle V\rangle$ is a horizontal line.  Let $p\langle V\rangle^+=\{pV^t\mid t>0\}$ and let $p\langle V\rangle^-=\{pV^t\mid t<0\}$.
\begin{lemma}\label{lem:monotone transverse intersections}
  Let $f$ be a $\overline{B}_1$--LEM function and let $F=f^{-1}(1)\cap \overline{B}_1$ be the corresponding locally monotone set.  Let $p\in \overline{B}_1\cap  \partial_{\cH^4} F $ be a point with a vertical approximate tangent plane $T_p$ and let $V\in \mathsf{H}_p$ be a horizontal vector pointing into $T_p^+$.  Then,
  \begin{equation}\label{eq:ray inclusions}
  p\langle V\rangle^+ \subset \inter_{\cH^4}\big( f^{-1}(1)\big)\qquad\mathrm{and}\qquad p\langle V\rangle^-\subset \inter_{\cH^4} \big(f^{-1}(0)\big).
  \end{equation}
\end{lemma}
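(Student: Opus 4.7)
The plan is to combine the $R$-extended monotonicity of the approximating sets $E_i$ with the vertical tangent plane structure at $p$ to show that, on most horizontal lines $L'$ close to $L = p\langle V\rangle$, the intersection $E_i\cap L'$ is essentially a half-line in the $V^+$-direction starting near the point where $L'$ meets the tangent plane $T_p$, and that for $i$ large this half-line extends past any prescribed $q = pV^t$ with $t > 0$. Together with the kinematic formula and the weak convergence $\one_{E_i}\to f$, this will force $f = 1$ a.e.\ on a neighborhood of $q$, yielding $q\in \inter_{\cH^4}(f^{-1}(1))$; the argument for $p\langle V\rangle^-$ is symmetric. Throughout, $E_1, E_2,\ldots\subset \H$ is any sequence realizing $f$ as a weak limit, with $\ENM_{E_i, i}(\overline{B}_1)\le i^{-1}$.

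\emph{Geometric setup.} By Proposition~\ref{prop:local monotone rectifiability}(4), for small $\epsilon, \alpha_0 > 0$ the wedges $W^\pm_{\epsilon,\alpha_0}(p)$ lie in $\inter_{\cH^4}(F)$ and $\inter_{\cH^4}(\H\setminus F)$ respectively; choose open $U^+\ni pV^{\delta}$ and $U^-\ni pV^{-\delta}$ inside these wedges, so that $U^\pm\subset \overline{B}_1$ and $f|_{U^+}\equiv 1$, $f|_{U^-}\equiv 0$ a.e. Fix $t > 0$, set $q = pV^t$, and pick a small open neighborhood $N\ni q$ such that
\[
\Omega\eqdef \{L'\in \cL : L'\cap U^-,\, L'\cap U^+,\, L'\cap N\ne \emptyset\}
\]
has positive $\cN$-measure and satisfies $\cH^1(L'\cap U^\pm),\, \cH^1(L'\cap N)\ge c_0 > 0$ uniformly on $\Omega$. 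For $q'\in N$ sufficiently close to $q$, the horizontal line $q'\langle V\rangle$ belongs to $\Omega$, so the union $\bigcup_{L'\in \Omega}(L'\cap N)$ contains a full neighborhood of $q$ in $N$.

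\emph{Line-by-line analysis.} Set $\epsilon_i\eqdef i^{-1/2}$ and
\[
\Omega_i\eqdef \{L'\in \Omega : \widehat{\omega}_{E_i\cap L',\, i}(\overline{B}_1\cap L')\le \epsilon_i\};
\]
by Markov, $\cN(\Omega\setminus \Omega_i)\to 0$. The kinematic formula~\eqref{eq:kinematic} and the weak convergence $\one_{E_i}\to f$ also yield $\int_\Omega \cH^1(L'\cap U^+\setminus E_i)\ud\cN\to 0$ and $\int_\Omega \cH^1(L'\cap U^-\cap E_i)\ud\cN\to 0$, so after further reducing $\Omega_i$ by a set of $\cN$-measure $o(1)$ we may also assume that on every $L'\in \Omega_i$ the segments $L'\cap U^+$ and $L'\cap U^-$ are at least half-covered (respectively, half-uncovered) by $E_i$. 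Fix such an $L'\in \Omega_i$. Lemma~\ref{lem:omega boundary diam NM} applied inside the interval $\overline{B}_1\cap L'$ forces $\diam(\partial_{\cH^1}(E_i\cap L')\cap \inter(\overline{B}_1\cap L'))\le \epsilon_i$; combined with $E_i\cap L'$ switching from mostly-absent on $U^-$ to mostly-present on $U^+$, this pins all boundary points of $E_i\cap L'$ inside $\inter(\overline{B}_1\cap L')$ near a single transition point $a'$, and there is a unique maximal interval $J=(a',b')\in \cI(E_i\cap L')$ containing essentially all of $L'\cap U^+$. If $b' < \infty$ and $b'-a'\le i$, then $a'\in \overline{B}_1$ alone contributes $\tfrac12(b'-a')$ to $\widehat{\omega}_{E_i\cap L',\, i}(\overline{B}_1\cap L')\le \epsilon_i$, which contradicts $\cH^1(J)\ge c_0/2$ for large $i$. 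Hence $b' = +\infty$ or $b'-a' > i$; since $a'\in \overline{B}_1$ is bounded and the $V^+$-parameter of $q$ along $L'$ is close to $t$, for $i$ large in terms of $t$ and $\diam N$, the entire segment $L'\cap N$ lies in $J\subset E_i$.

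\emph{Conclusion and main obstacle.} By the kinematic formula $|N\setminus E_i|\asymp \int_\cL \cH^1(L'\cap N\setminus E_i)\ud\cN$: the contribution from $\Omega_i$ vanishes by the previous step, the contribution from $\Omega\setminus \Omega_i$ is $o(1)$ since $\cH^1(L'\cap N)$ is uniformly bounded, and the contribution from $\cL\setminus \Omega$ can be made arbitrarily small by shrinking $N$ around $q$ (since lines through $N$ that fail to meet $U^\pm$ become rare as $N$ collapses to a single point on $L$, whose only horizontal lines through $q$ meeting both $U^\pm$ are close to the direction $V$). Passing to the limit gives $|N\cap E_i|\to |N|$, whence $\int_N f\ud\cH^4 = |N|$ by weak convergence, and $f=1$ a.e.\ on $N$ since $f\le 1$; thus $q\in \inter_{\cH^4}(f^{-1}(1))$. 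The principal obstacle is the line-by-line analysis in the third step: one must rule out configurations where $E_i\cap L'\cap \overline{B}_1$ is fragmented into several short pieces oscillating across the transition, and then force the surviving interval $J$ to extend past $q$. The first is precluded because $\widehat{\omega}$ penalizes short intervals of both $E_i$ and its complement with boundaries in $\overline{B}_1$, ruling out oscillations of positive width; the second follows because the large threshold $R_i = i$ makes it cheaper for $J$ to escape $\overline{B}_1$ entirely than to close up inside.
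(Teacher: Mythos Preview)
Your overall strategy and the line-by-line analysis are sound and closely parallel the paper's argument; in particular, your use of Lemma~\ref{lem:omega boundary diam NM} to cluster the boundary points of $E_i\cap L'$ inside $\overline{B}_1\cap L'$ and then force the surviving interval $J$ to have length $>i$ is a legitimate alternative to the paper's direct appeal to Lemma~\ref{lem:BWB extended}.

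There is, however, a genuine gap in the ``Conclusion'' step.  You invoke the identity $|N\setminus E_i|\asymp \int_{\cL}\cH^1(L'\cap N\setminus E_i)\ud\cN(L')$ and then assert that ``the contribution from $\cL\setminus\Omega$ can be made arbitrarily small by shrinking $N$ around $q$.''  That contribution is
\[
\int_{\{L'\in\cL\setminus\Omega:\,L'\cap N\ne\emptyset\}}\cH^1(L'\cap N\setminus E_i)\ud\cN(L'),
\]
and while it does go to zero as $N$ shrinks, it only does so at the same rate as $|N|$: among horizontal lines through $q$, those that meet both $U^+$ and $U^-$ are precisely the ones with direction close to $V$ (as your own parenthetical notes), so a \emph{positive fraction} of lines through $q$ miss $U^\pm$.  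Consequently your bound yields only $\limsup_i|N\setminus E_i|\le c|N|$ for some $c>0$, not $|N\setminus E_i|\to 0$, and you cannot conclude $f=1$ a.e.\ on $N$.

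The fix is simple and is exactly what the paper does: instead of integrating over all of $\cL$, restrict to the family $\cK_\delta$ of lines through $\overline{B}_\delta(q)$ with direction within $\delta$ of $V$, and use Fubini in that fixed direction (the paper's \eqref{eq:fubini sets heis}) to recover $\cH^4(\overline{B}_\delta(q)\setminus E_i)$.  For $\delta$ small enough, every $L'\in\cK_\delta$ is close to $L$ and therefore hits both $U^+$ and $U^-$, so $\cK_\delta\subset\Omega$ and there is no uncontrolled remainder.  Your Markov/line-by-line argument then applies verbatim to this smaller family and the conclusion goes through.
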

\begin{proof}
  Let $E_i\subset \H$ be a sequence of sets such that $E_i$ is $(\frac{1}{i},i)$--monotone on $\overline{B}_1$ and $\one_{E_i}$ converges weakly to $f$.  Let $L=p\langle V\rangle$, $L^\pm=p\langle V\rangle^\pm$ and $\theta=\angle(V, T_p)$.  Let $\epsilon=\frac{\theta}{20}$ and let $W^\pm_{\epsilon,t}$ be as in Proposition~\ref{prop:local monotone rectifiability}.  For  $t>0$, $L^\pm$ intersects $W^\pm_{\epsilon,t}$ in an interval of length at least $\frac{t}{2}$.

  Fix $t>0$ and let $q=pV^t$.  For the first inclusion in~\eqref{eq:ray inclusions}, the goal is to demonstrate that $q\in \inter_{\cH^4}( f^{-1}(1))$.  Let $0<\alpha<\frac{t}{2}$ be a radius such that $\overline{B}_\alpha(p)\subset \overline{B}_1$, $W^+_{\epsilon,\alpha}\subset F$ up to a null set, and $W^-_{\epsilon,\alpha}\subset \H\setminus F$ up to a null set.  For any $\delta>0$, let $\cK_\delta\subset \cL$ be the set of lines of the form $q'\langle V'\rangle$ where $q'\in \overline{B}_{\delta}(q)$ and $V'\in \mathsf{H}$ is a horizontal vector such that $\angle(V,V')<\delta$.  For $K\in \cK_\delta$, let $K^\pm=K\cap T_p^{\pm}$.

  Since the lines $\cK_\delta$ are all close to $L$, there is a $\delta$ depending on $\theta$ and $\alpha$ such that $0<\delta<\min\{\epsilon, \alpha\}$ and every line $K\in \cK_\delta$ intersects both $W^+_{\epsilon,\alpha}$ and $W^-_{\epsilon,\alpha}$ in intervals of length at least $\frac{\alpha}{4}$.  We claim that
  $$\lim_{i\to \infty} \cH^4\big( (\H\setminus E_i)\cap \overline{B}_{\delta}(q) \big)=0,$$
  and thus that $f=1$ almost everywhere on $\overline{B}_{\delta}(q)$.

  For each $i\in \N$ define
  $$\cT_i\eqdef \left\{K\in \cK_\delta \mid \cH^1\big(K \cap \overline{B}_1\cap (E_i\symdiff F)\big) < \frac{\alpha}{8}\right\}.$$
  By Fubini's theorem, for any measurable subset $A\subset \H$ and any horizontal vector $M\in \mathsf{H}$ that is not parallel to $T_p$, we have
  \begin{equation}\label{eq:fubini sets heis}
    \int_{T_p} \cH^1(b\langle M\rangle \cap A) \sin \big(\angle(M,T_p)\big) \ud \cH^3(b) \approx \cH^{4}(A).
  \end{equation}
  Therefore, $\lim_{i\to \infty}\cN(\cT_i)=\cN(\cK_\delta)$, and for almost every $K\in \cT_i$,
  $$\cH^1\big(K^+\cap F \cap \overline{B}_\alpha(p)\big) \ge \cH^1(K^+ \cap W^+_{\epsilon,\alpha}) > \frac{\alpha}{4}.$$
 By the definition of $\cT_i$, this implies that
  \begin{equation}\label{eq:k intersect alpha 8}
    \cH^1\big(K^+\cap E_i \cap \overline{B}_\alpha(p)\big) > \frac{\alpha}{8},
  \end{equation}
  and likewise,
  \begin{equation}\label{eq:k intersect alpha 8 comp}
    \cH^1\big(K^-\cap E_i^c \cap \overline{B}_\alpha(p)\big)> \frac{\alpha}{8}.
  \end{equation}

  Let
  $$\cS_i\eqdef\big\{K\in \cT_i\mid \cH^1\big(K \cap \overline{B}_\delta(q) \cap (\H\setminus E_i)\big)>0\big\}.
    $$
    Suppose that $i\ge d(p,q)+2\delta+2\alpha$ and $K\in \cS_i$.  By \eqref{eq:k intersect alpha 8}, \eqref{eq:k intersect alpha 8 comp}, and the definition of $\cS_i$, there are disjoint intervals $I_1=K^-\cap \overline{B}_\alpha(p)$, $I_2=K^+\cap \overline{B}_\alpha(p)$, and $I_3=K \cap \overline{B}_\delta(q)$ such that: $I_2$ is between $I_1$ and $I_3$; $I_1\cup I_2\cup I_3$ has diameter at most $i$; $\cH^1(I_1\cap (\H\setminus E_i))>\frac{\alpha}{8}$; $\cH^1(I_2\cap E_i)>\frac{\alpha}{8}$; and $\cH^1(I_3 \cap (\H\setminus E_i))>0$.  Lemma~\ref{lem:BWB extended} implies that
  $$\wwidehat{\omega}_{E_i,i}(\overline{B}_1,K) \ge \wwidehat{\omega}_{E_i,i}(\overline{B}_\alpha(p),K) \ge \frac{\cH^1(E_i\cap I_2)}{2} \ge \frac{\alpha}{16}.$$
 Hence,
  $$\frac{\alpha}{16} \cN(\cS_i)\le \int_\cL \wwidehat{\omega}_{E_i, i}(\overline{B}_1, K)\ud \cN(K)= \ENM_{E_i,i}(\overline{B}_1)\le \frac{1}{i},$$
  so $\lim_{i\to \infty} \cN(\cS_i)=0.$

  Let
  $$
  \cR_i\eqdef \big\{K\in \cK_\delta\mid \cH^1\big(K \cap \overline{B}_\delta(q) \cap (\H\setminus  E_i)\big)>0\big\}.
  $$  Then
  $\cN(\cR_i)\le \cN(\cS_i) + \cN(\cK_\delta\setminus \cT_i)$, and so $\lim_{i\to \infty} \cN(\cR_i)=0$.  By \eqref{eq:fubini sets heis},
  $$\cH^4\big(\overline{B}_\delta(q) \cap (\H\setminus E_i)\big) \approx_\delta \int_{\cK_\delta} \cH^1\big(K\cap \overline{B}_\delta(q) \cap (\H\setminus E_i)\big) \ud\cN(K)\le \int_{\cR_i} 2\delta \ud\cN(K),$$
  where the last inequality follows from the fact that $\cH^1(K\cap \overline{B}_\delta(q))\le 2\delta$ for any horizontal line $K$.  We therefore conclude as follows.
  \begin{equation*}
  \lim_{i\to \infty} \cH^4\big(\overline{B}_\delta(q) \cap (\H\setminus E_i)\big) \le \lim_{i\to \infty} 2\delta \cN(\cR_i)=0.
  \tag*{\qedhere}
  \end{equation*}
\end{proof}

By Lemma~\ref{lem:monotone is union of lines}, $\overline{B}_1\cap \partial_{\cH^4} F$ is a union of line segments.  Extended monotonicity implies that these line segments can be extended to lines.

\begin{lemma}\label{lem:extended monotone lines}
  Let $f$ be a $\overline{B}_1$--LEM function and let $F=f^{-1}(1)\cap \overline{B}_1$ be the corresponding locally monotone set.  Let $L$ be a horizontal line.  If an open subinterval $I\subset L$ is contained in $\overline{B}_1 \cap \partial_{\cH^4} F$, then $L\subset \partial_{\cH^4} F$.
\end{lemma}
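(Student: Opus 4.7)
The plan is to combine the local structure of $F$ furnished by Proposition~\ref{prop:local monotone rectifiability} with the quantitative control on intervals of $E_i\cap K$ that extended monotonicity gives for horizontal lines $K$ close to $L$. The mechanism mirrors (and is simpler than) the Fubini-type argument in the proof of Lemma~\ref{lem:monotone transverse intersections}.

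First I would fix a non-characteristic point $p_0\in I$; this exists because Proposition~\ref{prop:local monotone rectifiability} asserts that there are only countably many characteristic points. By part (2) of that proposition, the unique horizontal ruling through $p_0$ is $L$, and by part (3) there is a vertical approximate tangent plane $T\supset L$ to $\partial_{\cH^4}F$ at $p_0$, with half-spaces $T^\pm$ (say $T^+\subset \inter_{\cH^4}(f^{-1}(1))$ near $p_0$ and $T^-\subset \inter_{\cH^4}(f^{-1}(0))$, by Lemma~\ref{lem:monotone transverse intersections}). A continuity argument along $I$ then shows that $T_p=T$ for every non-characteristic $p\in I$: if two nearby non-characteristic points had distinct vertical tangent planes through $L$, the approximation property of part (4) would force $F$ to coincide near each with two genuinely different half-spaces, which is impossible.

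Next, fix $q\in L$ and a small ball $B$ around $q$; to show $q\in \partial_{\cH^4}F$ (that is, $f^{-1}(1)$ and $f^{-1}(0)$ each have positive measure in every neighborhood of $q$, with $F$ interpreted on its natural domain), I would consider a positive-measure family $\cK$ of horizontal lines $K$ close to $L$ that are not parallel to $L$ in horizontal projection, chosen so that each $K$ crosses $T$ at a point $r_K$ near $p_0$ and also intersects $B$ transversally to $T$. Part~(4) of Proposition~\ref{prop:local monotone rectifiability} forces $E_i$ to agree with $T^+\cap \overline{B}_\alpha(p_0)$ up to $o(1)$ measure for large $i$, so after passing to a density-one subfamily of $\cK$, the intersection $E_i\cap K$ has a boundary point $x_K^i$ close to $r_K$, with adjacent intervals of $E_i\cap K$ on the $T^+$ side and of $K\setminus E_i$ on the $T^-$ side. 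The definition of $\widehat\omega$ charges $x_K^i$ by (half of) the length of any adjacent interval of length at most $i$. Since
$$\int_{\cL}\widehat\omega_{E_i\cap K, i}(\overline{B}_1\cap K)\,\ud\cN(K)=\ENM_{E_i,i}(\overline{B}_1)<\tfrac1i,$$
for almost every $K\in\cK$ both adjacent intervals must have length exceeding $i$. For $i$ larger than $d(p_0,q)+\operatorname{diam}(B)$, these intervals therefore extend from near $p_0$ all the way through $B$.

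Because $K$ is not parallel to $L$ but touches $T$ at $r_K\approx p_0$ and passes near $q\in L\subset T$, the portion $K\cap B$ straddles $T$: part lies on the $T^+$ side and part on the $T^-$ side. The long $E_i$-interval emanating from $x_K^i$ therefore covers the $T^+$ part of $K\cap B$, while the long $E_i^c$-interval covers the $T^-$ part. Integrating over $\cK$ with Fubini (exactly as in \eqref{eq:fubini sets heis}), both $E_i\cap B\cap T^+$ and $E_i^c\cap B\cap T^-$ have positive $\cH^4$ measure for large $i$; passing to the weak limit shows that both $f^{-1}(1)$ and $f^{-1}(0)$ have positive measure in $B$, which gives $q\in\partial_{\cH^4}F$. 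The main obstacle will be justifying the extension step of the previous paragraph — verifying that a density-one subfamily of $\cK$ simultaneously (i) has a boundary point of $E_i\cap K$ close to $r_K$ coming from the tangent-plane approximation at $p_0$, and (ii) has the two adjacent intervals of length exceeding $i$ coming from the integrated extended-monotonicity bound — since these two qualitative statements must hold compatibly on the same generic line. A minor ancillary subtlety is the literal reading of $L\subset\partial_{\cH^4}F$: since $F\subset\overline{B}_1$ by definition, the content of the conclusion is really that every $q\in L$ is in the support of both $f^{-1}(0)$ and $f^{-1}(1)$, and the argument above delivers exactly this.
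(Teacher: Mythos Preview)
Your argument has a geometric error in the key step. You claim that for a line $K\in\cK$ that crosses $T$ at $r_K$ near $p_0$ and passes near $q\in L$, the portion $K\cap B$ straddles $T$. But a horizontal line meets a vertical plane in at most one point; since $K$ already crosses $T$ at $r_K\approx p_0$, the entire segment $K\cap B$ lies on a single side of $T$ (which side depends on whether $K$ tilts toward $T^+$ or $T^-$ at $r_K$). So for each individual $K$, only one of the two ``long intervals'' of $E_i\cap K$ and $K\setminus E_i$ emanating from $x_K^i$ meets $B$, and your single Fubini integration cannot simultaneously produce positive measure for both $f^{-1}(1)\cap B$ and $f^{-1}(0)\cap B$. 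The fix would be to use two separate families of lines, tilting into $T^+$ and into $T^-$ respectively, and run the argument twice---but at that point you are essentially reproving Lemma~\ref{lem:monotone transverse intersections} from scratch.

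The paper's proof is much shorter because it uses the full strength of that lemma rather than just its local consequence. You invoke Lemma~\ref{lem:monotone transverse intersections} only to say that ``$T^+\subset\inter_{\cH^4}(f^{-1}(1))$ near $p_0$,'' but the lemma actually asserts that the \emph{entire} open ray $p\langle V\rangle^+$ lies in $\inter_{\cH^4}(f^{-1}(1))$ whenever $V$ points into $T_p^+$. Applying this to every horizontal line through a single non-characteristic $p\in I$ (not through $q$) shows that $T_p^+\cap\mathsf{H}_p\subset\inter_{\cH^4}(f^{-1}(1))$ and $T_p^-\cap\mathsf{H}_p\subset\inter_{\cH^4}(f^{-1}(0))$. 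Since $L=T_p\cap\mathsf{H}_p$ is in the closure of both half-planes, every $q\in L$ lies in $\supp_{\cH^4}(f^{-1}(1))\cap\supp_{\cH^4}(f^{-1}(0))=\partial_{\cH^4}F$. No second Fubini pass, no family $\cK$, and no constancy-of-$T_p$ argument is needed.
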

\begin{proof}
  By Proposition~\ref{prop:local monotone rectifiability}, $\partial_{\cH^4} F$ has  at most countably many characteristic points.  Let $p\in I$ be non-characteristic.  Then the vertical plane $T_p$ containing $L$ is the approximate tangent plane to $\partial_{\cH^4} F$ at $p$.  Recalling that $\mathsf{H}_p$ is the horizontal plane centered at $p$, every horizontal line through $p$, other than $L$ itself, intersects $\partial_{\cH^4} F$ transversally at $p$, so by Lemma~\ref{lem:monotone transverse intersections}, we have $T_p^+\cap \mathsf{H}_p\subset\inter_{\cH^4} (F)$ and $T_p^-\cap \mathsf{H}_p\subset\inter_{\cH^4} (\H\setminus F)$.  Since $L$ lies in the closures of $T_p^+\cap \mathsf{H}_p$ and $T_p^-\cap \mathsf{H}_p$, we have $L\subset \supp_{\cH^4} (F)\cap \supp_{\cH^4} (\H\setminus F)=\partial_{\cH^4} F$.
\end{proof}

Finally, we show that if $\overline{B}_1\cap \partial_{\cH^4} F$ is nonplanar, then we can construct an arrangement of lines that leads to a contradiction.
\begin{lemma}\label{lem:LEM half-spaces}
  Let $f$ be a $\overline{B}_1$--LEM function.  There is a plane $Q\subset \H$ such that $f|_{\overline{B}_1}=\one_{Q^+}$ outside a null set. In fact, the same holds true in a larger set.  Let
  \begin{equation}\label{eq:def S union}
  S\eqdef  (Q \cap \overline{B}_1)\mathsf{H}
  \end{equation}
  be the union of the horizontal lines intersecting $Q \cap \overline{B}_1$.
  Then $f|_{S}=\one_{Q^+}$ outside a null set.
\end{lemma}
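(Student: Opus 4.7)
The plan is to port to the LEM setting the hyperboloid argument sketched at the start of Section~\ref{sec:stability of extended monotone sets}. I first apply Corollary~\ref{cor:limits of monotone} to obtain a locally monotone $F=f^{-1}(1)\cap \overline{B}_1$; in the trivial case $\overline{B}_1\cap \partial_{\cH^4}F=\emptyset$ I take $Q$ to be any plane disjoint from $\overline{B}_1$, so that $S=\emptyset$ and both assertions are vacuous. Otherwise Proposition~\ref{prop:local monotone rectifiability} supplies a non-characteristic $p\in \overline{B}_1\cap \partial_{\cH^4}F$ (the characteristic points are at most countable, so they cannot separate the connected set $\overline{B}_1$); this $p$ has vertical approximate tangent plane $T_p$ and a unique horizontal ruling $L_p\subset \partial_{\cH^4}F$ through it, which Lemma~\ref{lem:extended monotone lines} extends to a full horizontal line in $\partial_{\cH^4}(f^{-1}(1))\subset \H$.

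The decisive input is the sharp identity
\begin{equation}\label{eq:plan-key-identity}
\mathsf{H}_{p'}\cap \partial_{\cH^4}\bigl(f^{-1}(1)\bigr)=L_{p'},
\end{equation}
valid at every non-characteristic $p'\in \overline{B}_1\cap \partial_{\cH^4}F$. Indeed, Lemma~\ref{lem:monotone transverse intersections} places $\mathsf{H}_{p'}\cap T_{p'}^+$ in $\inter_{\cH^4}f^{-1}(1)$ and $\mathsf{H}_{p'}\cap T_{p'}^-$ in $\inter_{\cH^4}f^{-1}(0)$, so $\mathsf{H}_{p'}\setminus L_{p'}$ is covered by two disjoint open sets and cannot meet $\partial_{\cH^4}(f^{-1}(1))$.

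Let $\mathcal{L}$ be the collection of horizontal lines in $\partial_{\cH^4}(f^{-1}(1))$ arising by extension from rulings through non-characteristic points of $\overline{B}_1\cap \partial_{\cH^4}F$, and set $Q\eqdef T_p$. I claim every $L_2\in \mathcal{L}$ lies in $Q$. Otherwise $\uppi(L_2)\ne \uppi(L_p)$, so Lemma~\ref{lem:hyperboloid lemma} applies (its two cases of Heisenberg-skewness and of parallel distinct projections together cover every configuration of horizontal lines with non-equal projections): for a non-characteristic $q\in L_p\cap \overline{B}_1$ whose $\uppi$-image avoids the single exceptional fibre, it yields a horizontal line from $q$ meeting some $r(q)\in L_2$, and \eqref{eq:plan-key-identity} at $q$ forces $r(q)\in L_p\cap L_2$. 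Since $L_p\ne L_2$, the intersection is at most one point $s$, so $r(q)\equiv s$ along the nontrivial arc of admissible $q$; but Lemma~\ref{lem:hyperboloid lemma} makes $q\mapsto r(q)$ injective on this arc, a contradiction. The complementary case where characteristic points already exist in $\overline{B}_1\cap \partial_{\cH^4}F$ is handled by Lemma~\ref{lem:intersecting lines and horizontal planes} combined with the same hyperboloid scheme: at a characteristic $r$ one has $\overline{B}_1\cap \mathsf{H}_r\subset \partial_{\cH^4}F$, and any hypothetical non-characteristic $p'\notin \mathsf{H}_r$ would produce a ruling $L_{p'}$ whose projection must, by the argument just given applied against rulings inside $\mathsf{H}_r$ with varying projections through $\uppi(r)$, coincide simultaneously with \emph{every} projection through $\uppi(r)$, which is absurd; so $\overline{B}_1\cap \partial_{\cH^4}F=\overline{B}_1\cap \mathsf{H}_r$ and I take $Q=\mathsf{H}_r$ instead.

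With $\overline{B}_1\cap \partial_{\cH^4}F\subset Q$ established, the two open connected pieces of $\overline{B}_1\setminus Q$ each lie, up to a null set, in $\inter_{\cH^4}f^{-1}(1)$ or $\inter_{\cH^4}f^{-1}(0)$ by Proposition~\ref{prop:local monotone rectifiability} and connectedness, giving $f|_{\overline{B}_1}=\one_{Q^+}$ almost everywhere after labelling the two half-spaces consistently. For the extension to $S=(Q\cap \overline{B}_1)\mathsf{H}$: when $Q$ is vertical, every $p'$ in the topological interior of $Q\cap \overline{B}_1$ lies in $\partial_{\cH^4}(f^{-1}(1))$ with approximate tangent $Q$ (Lebesgue density applied to the two $Q$-halves), and a final application of Lemma~\ref{lem:monotone transverse intersections} at $p'$ places $\mathsf{H}_{p'}\cap Q^\pm$ into the corresponding measure-theoretic interiors; a routine limit in $p'$ handles the relative boundary of $Q\cap \overline{B}_1$ inside $\partial \overline{B}_1$, and the horizontal-$Q$ case is immediate since then $S=Q$ is a null set in $\H$. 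The main obstacle will be rigorously justifying the non-constancy step in the hyperboloid dichotomy in every subcase (particularly the parallel-distinct-projection one), where one must verify that the connecting-line map restricted to the nontrivial subarc $L_p\cap \overline{B}_1$ cannot degenerate to the single point $L_p\cap L_2$; once this is in place, the rest of the proof is essentially bookkeeping.
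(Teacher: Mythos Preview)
Your argument has a real gap. You claim that the two cases of Lemma~\ref{lem:hyperboloid lemma} (skew lines, and lines with distinct parallel projections) ``cover every configuration of horizontal lines with non-equal projections,'' but this is false: two horizontal lines with non-parallel projections can \emph{intersect} in $\H$, and the hyperboloid lemma says nothing in that case. Concretely, take $f=\one_{\mathsf{H}_s^+}$ for some $s$ far outside $\overline{B}_1$ but with $\mathsf{H}_s\cap\overline{B}_1\ne\emptyset$; half-spaces have zero extended nonmonotonicity, so this is a $\overline{B}_1$--LEM function. Every point of $\overline{B}_1\cap\partial_{\cH^4}F=\overline{B}_1\cap\mathsf{H}_s$ is non-characteristic since $s\notin\overline{B}_1$, and the unique ruling through each such point is the horizontal line joining it to $s$. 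Any two such rulings $L_p,L_2$ meet at $s\notin\overline{B}_1$; the only horizontal line from a point $q\in L_p$ to $L_2$ is $L_p$ itself (since $\mathsf{H}_q\cap\mathsf{H}_s=L_p$), so there is no transverse connecting line, and your injectivity clause has no force because the hyperboloid lemma never applied. Worse, your candidate $Q=T_p$ is vertical while the correct plane here is the horizontal $\mathsf{H}_s$, so committing to $Q=T_p$ in advance cannot succeed.

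The paper sidesteps this by arguing by contradiction: assume $\overline{B}_1\cap\partial_{\cH^4}F$ is non-planar, and then run a short case analysis (if two rulings intersect, pick a third ruling outside their common horizontal plane; it must be skew to one of them) to produce a pair that \emph{is} skew or has distinct parallel projections. Only then is Lemma~\ref{lem:hyperboloid lemma} invoked, together with Lemma~\ref{lem:monotone transverse intersections}, to reach the contradiction. Your identity $\mathsf{H}_{p'}\cap\partial_{\cH^4}(f^{-1}(1))=L_{p'}$ is correct and would work just as well in that final step---in the skew and parallel-distinct cases $L_p\cap L_2=\emptyset$, so $r(q)\in L_p\cap L_2$ is already the contradiction and injectivity is never needed---but you cannot skip the reduction. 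A smaller error: when $Q$ is horizontal, $S=(Q\cap\overline{B}_1)\mathsf{H}=\bigcup_{p'\in Q\cap\overline{B}_1}\mathsf{H}_{p'}$ has positive measure and is not equal to $Q$, so the extension to $S$ in that case is not vacuous.
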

\begin{proof}
  Let $F=f^{-1}(1)\cap \overline{B}_1$ be the locally monotone set corresponding to $f$ and suppose by way of contradiction that $\overline{B}_1\cap \partial_{\cH^4} F$ is non-planar.  By part (2) of Proposition~\ref{prop:local monotone rectifiability} and by Lemma~\ref{lem:extended monotone lines}, for every point $p\in \overline{B}_1\cap \partial_{\cH^4} F$, there is a horizontal line $M_p$ through $p$ such that $M_p\subset \partial_{\cH^4} F$.

  Reasoning as in Lemma~4.11 of \cite{CheegerKleinerMetricDiff} shows that there are two $\overline{B}_1$--rulings of $F$ that satisfy one of the cases of Lemma~\ref{lem:hyperboloid lemma}, i.e., they are a pair of skew lines or a pair of lines with distinct parallel projections.  Indeed, suppose that $J$ and $K$ are $\overline{B}_1$--rulings of $F$ with parallel projections.  If $\uppi(J)\ne\uppi(K)$, we are done; otherwise, $J$ and $K$ are contained in a vertical plane $V$.  Let $L$ be a $\overline{B}_1$--ruling of $F$ not in $V$, which exists by the assumed non-planarity.  Then $L$ is skew to $J$ or $K$ or parallel to $V$ with a distinct projection. It remains to treat the case when any two $\overline{B}_1$--rulings of $F$ have nonparallel projections.  Let $J$ and $K$ be two such rulings.  If $J$ and $K$ are disjoint, we are done, so we suppose $J$ and $K$ intersect at a point $p$ and are thus contained in the horizontal plane $\mathsf{H}_p$ centered at $p$.  If $L$ is a $\overline{B}_1$--ruling of $F$ that is not contained in $\mathsf{H}_p$ (it exists by assumed non-planarity), then $L$ intersects $\mathsf{H}_p$ at a single point other than $p$, so $L$ is skew to either $J$ or $K$, as desired.

  This shows that there are two $\overline{B}_1$--rulings $L_1$ and $L_2$ of $F$ that are skew or have distinct parallel projections.  Let $I=L_1\cap \overline{B}_1$ and let $p\in I$ be a noncharacteristic point such that $\uppi(p)\not \in \uppi(L_2)$.  By Lemma~\ref{lem:hyperboloid lemma}, there is a horizontal line $M$ that goes through $p$ and intersects $L_2$ at $q$.  This line is not equal to $L_1$, so it intersects $\partial_{\cH^4} F$ transversally at $p$.  By Lemma~\ref{lem:monotone transverse intersections}, this implies that $q\in \inter_{\cH^4} (f^{-1}(0))$ or $q\in \inter_{\cH^4} (f^{-1}(1))$, but $q\in L_2\subset \partial_{\cH^4} F$, which is a contradiction.  Therefore, $\overline{B}_1\cap \partial_{\cH^4} F$ is planar and there is a plane $Q$ such that $F\cap \overline{B}_1=Q^+\cap \overline{B}_1$ up to a null set.  Since $f$ takes values in $\{0,1\}$ inside  $\overline{B}_1$, this implies the first part of Lemma~\ref{lem:LEM half-spaces}.

  With $S$ as in~\eqref{eq:def S union}, take $w\in Q^+\cap S$.  Then $w$ lies on a horizontal line that intersects $Q\cap \overline{B}_1$ transversally, and Lemma~\ref{lem:monotone transverse intersections} implies that $w\in \inter_{\cH^4} (f^{-1}(1))$.  It follows that $f=1$ almost everywhere in $Q^+\cap S$ and likewise that $f=0$ almost everywhere in $Q^-\cap S$.
\end{proof}

The second part of Proposition~\ref{prop:stability of extended monotone} states that extended monotone intrinsic graphs are close to vertical planes.  This follows from the fact that neighborhoods of the center of a horizontal plane cannot be approximated by intrinsic graphs.
\begin{lemma}\label{lem:LEM vertical half-spaces}
  Let $V_0$ be the $xz$--plane and let $E_1,E_2,\ldots\subset \H$ be a sequence of intrinsic graphs over $V_0$ such that $E_i^+$ is $(\frac{1}{i},i)$--extended monotone on $\overline{B}_1$ and $\one_{E_i^+}$ converges weakly to a function $f\in L_\infty(\H)$ as $i\to \infty$. There is a vertical plane $Q\subset \H$ such that $f|_{\overline{B}_1}=\one_{Q^+}$ outside a null set.  Furthermore, if $S$ is as in~\eqref{eq:def S union}, then  $f|_{S}=\one_{Q^+}$ outside a null set.
\end{lemma}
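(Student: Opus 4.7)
The plan is to derive a contradiction from the assumption that $Q$ is horizontal, exploiting the directional monotonicity $t\mapsto\one_{E_i^+}(vY^t)$ enjoyed by each intrinsic epigraph, which survives under weak-$*$ convergence. First I would establish the following $Y$--monotonicity of $f$: for every $T>0$ and every non-negative $\phi\in L_1(\H)$, the bi-invariance of $\cH^4$ combined with the inclusion $E_i^+\subset E_i^+Y^{-T}$ (an immediate consequence of the intrinsic epigraph property) yields
\[
\int_\H \phi(g)\one_{E_i^+}(gY^T)\ud\cH^4(g)\geqslant \int_\H \phi(g)\one_{E_i^+}(g)\ud\cH^4(g).
\]
Passing to the weak limit and varying $\phi\ge 0$, one obtains $f(gY^T)\ge f(g)$ for $\cH^4$--a.e.~$g\in\H$ and every $T>0$; equivalently, for a.e.~$v\in V_0$ the function $t\mapsto f(vY^t)$ is non-decreasing after modification on a $\cH^1$--null set.

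Next I would suppose for contradiction that $Q=\mathsf{H}_{p_0}$ is horizontal with center $p_0=(a_0,b_0,c_0)$. If $Q\cap \overline{B}_1$ has $2$--dimensional measure zero in $Q$, then $f$ is essentially constant on $\overline{B}_1$ and one may replace $Q$ by any vertical plane; so assume that $Q\cap \overline{B}_1$ has positive $2$--dimensional measure. A direct computation shows that for any $v=(x_0,0,z_0)\in V_0$ with $x_0\ne a_0$, the restriction $\one_{Q^+}|_{v\langle Y\rangle}$ is a step function whose direction of monotonicity in $t$ is governed by the sign of $\sigma(x_0-a_0)$, where $\sigma\in\{\pm 1\}$ indicates which of the two half-spaces bounded by $Q$ has been designated as $Q^+$.

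To exploit this, I would use that for any $q_0\in Q\cap \overline{B}_1$, the entire line $\Pi(q_0)\langle Y\rangle=q_0\langle Y\rangle$ is contained in $\mathsf{H}_{q_0}\subset S$, so by Lemma~\ref{lem:LEM half-spaces} the identity $f=\one_{Q^+}$ holds $\cH^1$--a.e.\ on this line, and $\one_{Q^+}|_{q_0\langle Y\rangle}$ is monotone in $t$ with direction determined by $\sigma(x(q_0)-a_0)$. If $Q\cap \overline{B}_1$ meets both $\{x<a_0\}$ and $\{x>a_0\}$ in positive $2$--dimensional measure, then neither choice of $\sigma$ is compatible with the $Y$--monotonicity of $f$ on the resulting positive-measure family of lines, which is a contradiction. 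Otherwise, assume $Q\cap \overline{B}_1\subset\{x\le a_0\}$, forcing $\sigma=-1$. In this remaining case I would employ the ``reflection through $p_0$'': for $q\in Q\cap \overline{B}_1$ with $x(q)<a_0$, set $r\eqdef p_0(p_0^{-1}q)^{-1}$; the computation $p_0^{-1}q=(x(q)-a_0,y(q)-b_0,0)\in \mathsf{H}$ gives $r=(2a_0-x(q),2b_0-y(q),c_0-(a_0y(q)-b_0x(q))/2)\in \mathsf{H}_{p_0}$, so $x(r)>a_0$, and $r\in \mathsf{H}_q\subset S$ because $r$ lies on the horizontal line $q\langle p_0^{-1}q\rangle\subset \mathsf{H}_q$. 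Letting $q$ vary in a $2$--dimensional open neighborhood inside $Q\cap \overline{B}_1$, the intersections $\Pi(r)\langle Y\rangle\cap \mathsf{H}_q$ parameterize an interval of $t$--values of positive $\cH^1$--measure about $r$ on $\Pi(r)\langle Y\rangle$; this whole interval lies in $S$. On it $f=\one_{Q^+}$ holds $\cH^1$--a.e., but since $x(r)>a_0$ and $\sigma=-1$, the function $\one_{Q^+}|_{\Pi(r)\langle Y\rangle}$ is strictly decreasing across the jump at $r$, contradicting the $Y$--monotonicity of $f$.

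The main technical obstacle I anticipate is keeping track of the ``modulo null set'' qualifications throughout: the identity $f|_S=\one_{Q^+}$ from Lemma~\ref{lem:LEM half-spaces} holds only $\cH^4$--a.e., the $Y$--monotonicity of $f$ holds only for a.e.~$v\in V_0$, and any individual line $v\langle Y\rangle$ has $\cH^4$--measure zero. Consequently the pointwise obstruction along a single line must be upgraded to an obstruction valid on a positive $2$--dimensional family of lines; this is exactly what is provided by varying $q$ in an open subset of $Q\cap \overline{B}_1$ and invoking Fubini to arrange that both the monotonicity of $f$ and the identity $f=\one_{Q^+}$ hold simultaneously on a positive-measure family of $Y$--lines.
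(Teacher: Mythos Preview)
Your proof is correct and follows essentially the same strategy as the paper: establish the $Y$--monotonicity $f(g)\le f(gY^t)$ from the epigraph property, invoke Lemma~\ref{lem:LEM half-spaces} to get a plane $Q$, assume $Q=\mathsf{H}_{p_0}$ is horizontal, and derive a contradiction from the fact that the direction of the jump of $\one_{Q^+}$ along $v\langle Y\rangle$ flips according to the sign of $x(v)-a_0$, together with a reflection through $p_0$ to produce points with the ``wrong'' sign.

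The differences are organizational. You split into Case~1 (both signs of $x-a_0$ occur with positive measure in $Q\cap\overline{B}_1$) and Case~2 (one sign only), whereas the paper treats both at once: it picks a single $p\in Q\cap\inter(\overline{B}_1)$ with $x(p)\ne x(p_0)$, sets $q=p_0(p_0^{-1}p)^{-1}$ (your reflection), and shows via an explicit one-parameter family $r_t\in Q\cap\overline{B}_1$ near $p$ that an interval of $q\langle Y\rangle$ about $q$ lies in $S$. Your Case~2 argument is exactly this, phrased as ``vary $q$ near $q_*$ to sweep out an interval on $\Pi(r_*)\langle Y\rangle$.'' Your Case~1 is a shortcut available when the convenient geometry is already present. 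You are also more explicit than the paper about the need to pass from a single line to a positive-measure family via Fubini; the paper's application of the pointwise inequality~\eqref{eq:f monotonicity} at the specific points $pY^{\pm\epsilon},qY^{\pm\epsilon}$ implicitly relies on the same upgrade (using that these points lie in the interior of regions where $f=\one_{Q^+}$), which you spell out.
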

\begin{proof}
  For any intrinsic graph $\Gamma$ and any $g\in \Gamma^+$, we have $gY^t\in \Gamma^+$ for every $t>0$.  Since $\cH^4$ is right-invariant, this implies that for any measurable set $U\subset \N$ and any $i\in \N$,
  $$\cH^4\big(U\cap E_i^+\big)\le \cH^4\big(U\cap E_i^+ Y^t\big).$$
  Therefore,
  $$\int_U f \ud \cH^4\le \int_{U Y^t} f \ud \cH^4.$$
  Consequently,
  \begin{equation}\label{eq:f monotonicity}
    f(g)\le f(gY^t)\qquad\text{for almost every $(g,t)\in \H\times (0,\infty)$}.
  \end{equation}

  If $f$ is almost-surely constant on $\overline{B}_1$, we can take $Q$ to be a vertical plane that does not intersect $\overline{B}_1$.  We thus suppose that $f|_{\overline{B}_1}$ is not almost-surely constant.  By Lemma~\ref{lem:LEM half-spaces}, there is a plane $Q$ that satisfies $f|_{S}=\one_{Q^+}$ outside a null set, where $S$ is given in~\eqref{eq:def S union}.

  Suppose for contradiction that $Q$ is horizontal.  Let $c\in \H$ be such that $Q = \mathsf{H}_c = c \mathsf{H}$ and let $p\in Q\cap \inter (\overline{B}_1)$ be such that $x(p)\ne x(c)$.  Let $L$ be the horizontal line from $c$ to $p$ and let $V=(x_V,y_V,0)$ be the horizontal vector such that $p=cV$.  Set $q = c V^{-1}=c(-x_V,-y_V,0)$. We claim that there is $\epsilon>0$ such that $\{pY^{\pm\epsilon}, q Y^{\pm \epsilon}\}\subset S$.  Choose $\epsilon>0$ so that $p Y^t\in \overline{B}_1$ and $r_t=c (x_V, y_V+t,0)\in \overline{B}_1\cap Q$ for all $t\in [-2\e,2\e]$.  Then
  \begin{multline*}
    r_t  \left(-2x_V, - 2y_V -\frac{3t}{2}, 0\right)
    \\= c  (x_V,y_V+t,0)  \left(-2x_V, -2y_V -\frac{3t}{2}, 0\right)
    = c  \left(-x_V,-y_V-\frac{t}{2}, \frac{x_V t}{4}\right)
    = q Y^{-\frac{t}{2}}.
  \end{multline*}
  It follows that  $q Y^{-\frac{t}{2}}\in r_t \mathsf{H} \subset S$.  In particular, $q Y^{\pm\epsilon}\in S$. At the same time, $p Y^{\epsilon}$ and $p Y^{-\epsilon}$ are on opposite sides of $Q$; equation \eqref{eq:f monotonicity} implies that $p Y^{\epsilon}\in Q^+$ and $p Y^{-\epsilon}\in Q^-$.  Likewise, $q Y^{\pm \epsilon}\in Q^\pm$.  But since $c$ is between $p$ and $q$, the points $p Y^{\epsilon}$ and $q Y^{\epsilon}$ are on opposite sides of $Q$, which is a contradiction.  Therefore, $Q$ is a vertical plane.
\end{proof}

\begin{proof}[{Proof of Proposition~\ref{prop:stability of extended monotone}}]
  If the first part of the proposition were false, then there would exist  $\epsilon>0$ and a sequence of measurable sets $(E_i)_{i=1}^\infty$ such that for any $i\in \N$, the set $E_i$ is $(\frac{1}{i},i)$--extended monotone on $\overline{B}_1$ and $|\overline{B}_{1}\cap (P^+\symdiff E_i)|>\epsilon$ for every plane $P\subset \H$.  There is a subsequence $(E_{i(j)})_{j=1}^\infty$ whose characteristic functions converge weakly to a $\overline{B}_1$--LEM function $f$.  By Lemma~\ref{lem:LEM half-spaces}, there is a plane $Q\subset \H$ such that $f=\one_{Q^+}$ almost everywhere on $\overline{B}_1$.  Then $\lim_{j\to \infty} |\overline{B}_{1}\cap (Q^+\symdiff E_{i(j)})|= 0$, which is a contradiction.

  Similarly, if the second part of the proposition were false, then there would exist $\epsilon>0$ and a sequence of intrinsic graphs $(E_i)_{i=1}^\infty$ over $V_0$ such that for any $i\in \N$, the epigraph $E_i^+$ is $(\frac{1}{i},i)$--extended monotone on $\overline{B}_1$ and $|\overline{B}_{1}\cap (P^+\symdiff E_i^+)|>\epsilon$ for every vertical plane $P\subset \H$.  Passing to a subsequence, we may suppose that the indicators $\one_{E_i^+}$ converge weakly to a $\overline{B}_1$--LEM function $f$. By Lemma~\ref{lem:LEM vertical half-spaces}, there is a vertical plane $Q\subset \H$ such that $f=\one_{Q^+}$ almost everywhere on $\overline{B}_1$.  Then $\lim_{i\to \infty} |\overline{B}_{1}\cap (Q^+\symdiff E_i^+)|= 0$, which is a contradiction.
\end{proof}

\section{$L_1$ bounds and characteristic curves on monotone intrinsic graphs}\label{sec:l1 and characteristic}

Here we complete the proof of Proposition~\ref{prop:Omega control}, which obtains $L_1$ bounds for paramonotone pseudoquads and bounds their characteristic curves.

Fix $0<\mu\le \frac{1}{32}$ and a $\mu$--rectilinear pseudoquad  $Q$ in an intrinsic Lipschitz graph $\Gamma=\Gamma_f$. Suppose that $\Gamma$ is $(\eta, R)$--paramonotone on $rQ$.  By Remark~\ref{rem:normalizing rectilinear}, we can normalize $Q$ and $\Gamma$ so that the corresponding parabolic rectangle is the square $[-1,1]\times \{0\}\times [-1,1]$; by Lemma~\ref{lem:Omega scaling} and the discussion immediately after its proof, the normalized pseudoquad remains paramonotone.  So, it suffices to prove Proposition~\ref{prop:Omega control} for such pseudoquads.

For $t>0$, denote  $D_t=[-t,t]\times\{0\}\times  [-t^2,t^2]\subset V_0$.  By our choice of normalization, we have $tQ=D_t$.  Furthermore, $D_t\subset B_{5t}$ and $\Pi(\overline{B}_{t})\subset D_t$. We will proceed in several steps.
\begin{enumerate}
\item
  First, we will prove in Lemma~\ref{lem:paramonotone is L1 bounded} that there is a universal constant $\kappa>0$ such that $\|f\|_{L_1(Q)}\le \kappa$ when $\eta$ is sufficiently small.  This relies on Lemma~\ref{lem:supercharacteristic bounds} that bounds the tails of $f$ in regions that are bounded above and below by supercharacteristic curves (projections of horizontal curves in $\Gamma\cup \Gamma^+$).
\item
  Next, we will show that $\Gamma$ is close to a plane on a ball around the origin.  Since $\|f\|_{L_1(Q)}\le \kappa$, the intersections $\Gamma^+\cap B_{\kappa}$ and $\Gamma^-\cap B_{\kappa}$ both have positive measure.  For any $r>0$, we have $\Pi(\overline{B}_{r})\subset rQ$, so $\ENM_{\Gamma^+,R}(\overline{B}_{r})\lesssim \eta R$.  When $\eta R$ is sufficiently small and $r$ and $R$ are sufficiently large, Proposition~\ref{prop:stability of extended monotone} implies that there is a vertical plane $P$ that intersects $B_{\kappa}$ and approximates $\Gamma$ on $B_r$, i.e.,
  $$\cH^4\left((\Gamma^+\symdiff P^+)\cap \overline{B}_{r}\right)<\epsilon.$$
  Furthermore, since $\|f\|_{L_1(Q)}\le \kappa$, the slope and $y$--intercept of $\uppi(P)$ are both at most some universal constant.

  We then apply an automorphism that sends $P$ to $V_0$.  Since the slope and $y$--intercept of $P$ are bounded, there is a universal constant $c>0$ and a map $q\from \H\to \H$ (a composition of a left translation in the $y$--direction and a shear) such that $q(P)=V_0$ and
  $B_{c^{-1}s-c}\subset q(B_s)\subset B_{cs+c}$
  for all $s>c^2$.  We let $\hat{\Gamma}=q(\Gamma)$, $\hat{Q}=\hat{q}(Q)=\Pi(q(Q))$, and let $\hat{f}$ be such that $\hat{\Gamma}=\Gamma_{\hat{f}}$.  Since $q$ preserves $\cH^4$,
  \begin{equation}\label{eq:hat gamma close on ball}
    \cH^4\left((\hat{\Gamma}^+\symdiff V_0^+)\cap \overline{B}_{c^{-1}r-c}\right)<\epsilon.
  \end{equation}
  This inequality controls $\hat{f}$ on $V_0\cap \overline{B}_{c^{-1}r-c}$, and we choose $r$ large enough that $11\hat{Q}\subset \overline{B}_{c^{-1}r-c}$.
\item
  By \eqref{eq:hat gamma close on ball},
  $$\int_{10 \hat{Q}} \min\left\{1,|\hat{f}(p)|\right\} \ud \cH^3(p) \le \epsilon,$$
  so a bound on the tails of $\hat{f}$ would lead to a bound on $\|\hat{f}\|_{L_1(10\hat{Q})}$.  We bound the tails in Lemma~\ref{lem:paramonotone squares L1 close}, by finding supercharacteristic curves above and below $10\hat{Q}$, then applying Lemma~\ref{lem:supercharacteristic bounds} again.  This implies that $\|\hat{f}\|_{L_1(10\hat{Q})}\lesssim \epsilon$ when $\eta$ is sufficiently small, which proves the first part of Proposition~\ref{prop:Omega control}.
\item
  Finally, we bound the characteristic curves of $\hat{\Gamma}$ in Lemma~\ref{lem:close foliation curves squares}, by showing that if $\hat{\Gamma}$ contains characteristic curves that are not nearly parallel to the $x$--axis, then either $\|\hat{f}\|_{L_1}$ is bounded away from zero or $\Omega^P_{\Gamma^+,R}$ is  bounded away from zero.  This completes the proof of Proposition~\ref{prop:Omega control}.
\end{enumerate}

We will use the following notation for horizontal lines. Every horizontal line in $\cL_P$ can be written uniquely as follows for some for some $w=(0,y_0,z_0) \in \H$ and $m\in \R$.
$$
L_{w,m}\eqdef w \langle X+ mY\rangle.
$$
Let $\rho_{L_{w,m}}\from \R\to L_{w,m}$ be the following parametrization, so that $x(\rho_L(t))=t$ for all $t\in \R$.
$$
\forall t\in \R,\qquad \rho_{L_{w,m}}(t)\eqdef w (X+mY)^t.
$$
For every $x\in \R$ define
\begin{equation}\label{eq:def gL}
g_{L_{w,m}}(x)\eqdef z\Big(\Pi\big(\rho_{L_{w,m}}(x)\big)\Big)=-\frac{m}{2} x^2- y_0 x + z_0.
\end{equation}
Note that since $L_{w,m}$ is horizontal, we have $y(\rho_{L_{w,m}}(x))=-g_{L_{w,m}}'(x)$.

\subsection{Bounding the tails of $f$}

We start by showing that if $Q$ is a rectilinear pseudoquad for $\Gamma=\Gamma_f$ such that $\Gamma^+$ is $(\eta, R)$--paramonotone on $rQ$, as in Proposition~\ref{prop:Omega control}, and $Q$ is normalized so that the corresponding parabolic rectangle is a $2\times 2$ square, as in Remark~\ref{rem:normalizing rectilinear}, then there is a universal constant $\kappa$ such that $\|f\|_{L_1(Q)}\le \kappa$ when $r$ and $R$ are sufficiently large and $\eta$ is sufficiently small.

This step relies on the following lemma, which will also be used in step 3.  A \emph{supercharacteristic curve} (respectively \emph{subcharacteristic curve}) for $\Gamma$ is the projection $\Pi(\gamma)$ of a horizontal curve $\gamma\from I\to \H$ such that $x(\gamma(t))=t$ for all $t\in I$ and $\gamma(I)\subset \Gamma\cup \Gamma^+$ (respectively $\gamma(I)\subset \Gamma\cup \Gamma^-$).

Such a curve can be written as a graph of the form $\{z=g(x)\}\subset V_0$. By the argument of Lemma~\ref{lem:char}, $g$ is differentiable almost everywhere and satisfies $g'(x)=y(\gamma(x))$ for almost every $x\in I$; since $g$ is locally Lipschitz, $g'(x)=y(\gamma(x))$ for every $x\in I$. In particular, $g'(x)\le -f(x,0,g(x))$ for all $x\in I$.  We then say that $g$ is a \emph{function with supercharacteristic graph}.

\begin{lemma}\label{lem:supercharacteristic bounds}
  Let $g_1,g_2 \from [-2,2]\to \R$ be functions with supercharacteristic graphs such that $\sup g_1([-2,2]) < \inf g_2([-2,2])$.  For $0\le r\le 2$, let
  $$U_r=\{(x,0,z) \in V_0\mid |x|\le r\ \mathrm{and}\  g_1(x)\le z\le g_2(x)\}.$$
  Denoting $H=\max\{\|g_1\|_{L_\infty([-2,2])}, \|g_2\|_{L_\infty([-2,2])}\}$, for any $t\ge 8H$ we have
  \begin{equation}\label{eq:supercharacteristic tail bound}
    |\{v\in U_1\mid f(v)\ge t\}| \lesssim \frac{1}{t^{2}} \Omega^P_{\Gamma^+,4}(U_2).
  \end{equation}
  Likewise, if $g_1,g_2 \from [-2,2]\to \R$ have subcharacteristic graphs and $U_r$ and $H$ are as above, then for any $t\ge 8H$ we have
  $$|\{v\in U_1\mid f(v)\le -t\}| \lesssim \frac{1}{t^{2}} \Omega^P_{\Gamma^+,4}(U_2).$$
\end{lemma}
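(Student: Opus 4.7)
The plan is to set $A_t:=\{v\in U_1:f(v)\ge t\}$ and prove $\Omega^P_{\Gamma^+,4}(U_2)\gtrsim t^2|A_t|$ by an integral-geometric argument over horizontal lines $L=L_{w,m}$ with $w=(0,y_0,\zeta)\in W_0$ and $|m|\le t/4$. For such lines the projected parabola is $g_L(x)=\zeta-y_0x-\tfrac{m}{2}x^2$, the $y$-coordinate of $\rho_L(x)$ equals $y_0+mx=-g_L'(x)$, and $\rho_L(x)\in\Gamma^+$ iff $y_0+mx>f(x,0,g_L(x))$. The strategy has two main steps: a pointwise geometric estimate on a single line, and a Fubini computation over the $\cN_P$-family using the unit-Jacobian substitution $(y_0,\zeta)\mapsto(y,z):=(y_0+mx,\,\zeta-y_0x-\tfrac{m}{2}x^2)$. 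The subcharacteristic case follows by interchanging $\Gamma^+\leftrightarrow\Gamma^-$ and replacing $[t/2,t]$ by $[-t,-t/2]$.

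For the pointwise step, fix $L$ with $|m|\le t/4$ passing over some $v_0=(x_0,0,z_0)\in A_t$ with $g_L(x_0)=z_0$ and $y^*:=-g_L'(x_0)\in[t/2,t]$. Since $-g_L'(x)=y^*+m(x-x_0)\in[t/4,5t/4]$ on $|x-x_0|\le 1$, $g_L$ is strictly decreasing there with slope $\le -t/4$. Using $|z_0|\le H$ and $t\ge 8H$, elementary one-variable estimates show $g_L$ exits the strip $\{g_1\le z\le g_2\}$ from above at some $x_2\in[x_0-8H/t,x_0)$ and from below at some $x_1\in(x_0,x_0+8H/t]$; combined with $|x_0|\le 1$ and $8H/t\le 1$ this gives $[x_2,x_1]\subset[-2,2]$ and $|x_1-x_2|\le 16H/t\le 2\le R=4$, so $[x_2,x_1]\subset x(\Pi^{-1}(U_2)\cap L)$. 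At each crossing $x_i$, the sign change of $g_L-g_i$ forces $g_L'(x_i)\le g_i'(x_i)$, and the supercharacteristic inequality $-g_i'\ge f$ on the graph of $g_i$ then yields $-g_L'(x_i)\ge f(x_i,0,g_i(x_i))$, i.e., $\rho_L(x_i)\in\Gamma^+\cup\Gamma$. Meanwhile $\rho_L(x_0)\in\Gamma^-$ since $y^*<t\le f(v_0)$. Hence the component of $L\cap\Gamma^-$ containing $\rho_L(x_0)$ is a bounded interval inside $[x_2,x_1]$ of length $\le R$, with both endpoints in $x(\Pi^{-1}(U_2)\cap L)$, and therefore contributes at least its length to $\wwidehat{\omega}^P_{\Gamma^+,4}(U_2,L)$. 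Applying this reasoning to every $x$-value at which $\rho_L$ passes over a point of $A_t$ with the right $y$-coordinate yields the pointwise bound
\[\wwidehat{\omega}^P_{\Gamma^+,4}(U_2,L)\;\gtrsim\;\int\one\big[(x,0,g_L(x))\in A_t,\ y_0+mx\in[t/2,t]\big]\,dx.\]

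For the Fubini step, integrate the preceding against $d\cN_P=dm\,dy_0\,d\zeta$ over $|m|\le t/4$ and apply the change of variables $(y_0,\zeta)\mapsto(y,z)$ (unit Jacobian for each fixed $(m,x)$):
\[\int_{|m|\le t/4}\wwidehat{\omega}^P_{\Gamma^+,4}(U_2,L)\,d\cN_P(L)\;\gtrsim\;\int_{|m|\le t/4}\!dm\int dx\,dy\,dz\,\one\big[(x,0,z)\in A_t,\ y\in[t/2,t]\big]\;=\;\tfrac{t}{2}\cdot|A_t|\cdot\tfrac{t}{2}.\]
Dividing by $R=4$ yields $\Omega^P_{\Gamma^+,4}(U_2)\gtrsim t^2|A_t|$, which is the required inequality. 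The main technical obstacle is the pointwise geometric step: one must verify uniformly that $g_L$ descends monotonically on $|x-x_0|\le 1$ (guaranteed by the choice $|m|\le t/4$), that $x_1,x_2$ land inside $[-2,2]$ (via $|x_i-x_0|\le 8H/t\le 1$ and $|x_0|\le 1$), and that the supercharacteristic slope comparison places $\rho_L(x_i)$ in $\Gamma^+\cup\Gamma$ even when $g_1,g_2$ are only measurable. The factor $t^2$ in the conclusion is precisely the product of the $m$-measure and $y$-measure, each of size $\sim t$; this is what allows the hypothesis $t\ge 8H$ together with the fixed scale $R=4$ to produce an estimate that improves quadratically in $t$.
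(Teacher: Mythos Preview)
Your overall architecture is right and matches the paper's: fix a family of horizontal lines whose projections descend steeply through $U_2$, show that whenever such a line passes over $A_t$ the resulting $\Gamma^-$--interval forces a contribution to $\wwidehat{\omega}^P_{\Gamma^+,4}(U_2,L)$, then integrate using the unit-Jacobian change of variables $(y_0,\zeta)\mapsto (y,z)$ to pick up one factor of $t$ from the $y$--range and one from the $m$--range. The Fubini step is fine.

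The gap is in the pointwise geometric step, specifically in the sentence ``$g_L$ exits the strip $\{g_1\le z\le g_2\}$ from above at some $x_2\in[x_0-8H/t,x_0)$ and from below at some $x_1\in(x_0,x_0+8H/t]$; \ldots\ so $[x_2,x_1]\subset x(\Pi^{-1}(U_2)\cap L)$.'' You have shown that $g_L(x_0-8H/t)\ge H\ge g_2$ and $g_L(x_0+8H/t)\le -H\le g_1$, so crossings with $g_2$ (on the left) and with $g_1$ (on the right) certainly exist. But you have \emph{not} shown that these are the first exits from the strip. Nothing in the hypotheses bounds $|g_i'|$: supercharacteristicity only gives $g_i'\le -f(\cdot,0,g_i)$, an \emph{upper} bound. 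So $g_1$ may have very negative slope (e.g.\ near a point where $f$ is large on the graph of $g_1$), and then going left from $x_0$ the curve $g_1$ can rise faster than $g_L$, forcing $\Pi(L)$ to dip below $g_1$ before it reaches $g_2$. At such a crossing the sign of $(g_L-g_1)'$ is wrong for your argument, and you cannot conclude $\rho_L(x_2)\in\Gamma^+\cup\Gamma$. Conversely, if you instead take $x_2$ to be the nearest crossing with $g_2$ (where the sign comparison does work), then $[x_2,x_0]$ need not stay inside the strip, so the endpoints of $I_0$ need not lie in $x(\Pi^{-1}(U_2)\cap L)$ and therefore need not contribute to $\wwidehat{\omega}^P_{\Gamma^+,4}(U_2,L)$.

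The paper closes exactly this gap by a global rather than local argument. It takes a slightly narrower parameter window ($|y_0-\tfrac{t}{2}|<\tfrac{t}{12}$, $|m|<\tfrac{t}{12}$), which forces $g_L$ to be strictly decreasing on \emph{all} of $[-2,2]$ with $g_L(-2)>H$ and $g_L(2)<-H$. Then $\Pi(L)\cap U_2$ is a finite union of intervals $[a_i,b_i]$; using $\sup g_1<\inf g_2$ one checks (case analysis) that each $[a_i,b_i]$ has at least one endpoint that is a \emph{negative} crossing with $g_1$ or $g_2$, hence lies in $x(L\cap\Gamma^+)$. Lemma~\ref{lem:BWB extended} applied with $[a,b]=[a_1,b_n]$ then yields
\[
\wwidehat{\omega}^P_{\Gamma^+,4}(U_2,L)\ \ge\ \tfrac12\,\cH^1\bigl(x(\Gamma^-\cap L\cap\Pi^{-1}(U_2))\bigr)\ \ge\ \tfrac12\,\cH^1\bigl(x(\Pi(L)\cap A_t)\bigr),
\]
after which your Fubini computation (with the narrower ranges, each still of size $\asymp t$) goes through unchanged. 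In short: your local exit-point argument does not control where $\Pi(L)$ might leave the strip between $x_0$ and the good crossing; the fix is to choose parameters so that $\Pi(L)$ traverses the whole of $U_2$ and then account for all of its components at once via Lemma~\ref{lem:BWB extended}.
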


Once we prove Lemma~\ref{lem:supercharacteristic bounds}, we will apply it to the case that $Q$ approximates $[-1,1]^2$ and $g_1$ and $g_2$ are the lower and upper bounds of $Q$.

\begin{proof}
  Fix  $t\ge 8H$ and $y_0,m,z_0\in \R$ such that $|y_0-\frac{t}{2}|< \frac{t}{12}$  and $|m|< \frac{t}{12}$.  Let $L=L_{(0,y_0,z_0) ,m}$.  For any $s\in [-2,2]$ we have
  \begin{equation}\label{eq:h derivative bound}
    \left|g_L'(s)+\frac{t}{2}\right| = \left|y\big(\rho_L(s)\big)-\frac{t}{2}\right| < \frac{t}{4},
  \end{equation}
  so $-\frac34 t < g_L'(s) < -\frac{1}{4}t$ on $[-2,2]$.

  We claim that for any almost every such $L$ we have
  \begin{equation}\label{eq:omega detects large values 2}
    \wwidehat{\omega}^P_{\Gamma^+, 4}(U_2, L) \ge \frac{1}{2} \cH^1\Big(x\big(\Gamma^-\cap L\cap \Pi^{-1}(U_1)\big)\Big).
  \end{equation}

  By \eqref{eq:h derivative bound}, we have
  $$
  g_L(-2)=g_L(s)-\int_{-2}^s g'_L(u)\ud u> -H+(s+2)\frac{t}{4}\ge -H+2H=H,
  $$
  and
  $$
  g_L(2)=g_L(s)+\int_s^2g_L'(u)\ud u<H-(2-s)\frac{t}{4}\le H-2H=-H.
  $$
  Hence, $\Pi(L)$ crosses $U_2$ negatively (from top to bottom), as depicted in Figure~\ref{fig:two horizontal curves and a line}.
  The curve $\Pi(L)$ only intersects the top and bottom of $U_2$, not the sides, so we say that $\Pi(L)$ is transverse to the boundary of $U_2$ if $\Pi(L)$ intersects the top and bottom boundaries transversally; that is, if $g_L(u)=g_i(u)$ for some $u\in [-2,2]$ and $i=1,2$, then $g_L'(u) \ne g_i'(u)$.

  Suppose that $\Pi(L)$ is transverse to the boundary of $U_2$ and that $L\cap \Gamma^+$ has finite perimeter; these are true for almost every $L$.  If $\Pi(L)$ does not intersect $U_1$, then the right side of \eqref{eq:omega detects large values 2} is 0 and the inequality holds trivially.  We thus suppose in addition that $L$ intersects $U_1$. In this case, there is some $s\in [-1,1]$ such that $|g_{L}(s)|\le H$.

 Fix $i\in \{1,2\}$ and suppose that $\Pi(L)$ crosses the graph of $g_i$ negatively at $(u,0,g_L(u))$. Let $v=\rho_L(u)$ be the point on $L$  over the intersection.  Then $g_L(u) = g_i(u)$ and $g_L'(u) < g_i'(u)$.  Since the graph of $g_i$ is supercharacteristic, $f(u,0,g_i(u)) \le -g_i'(u)$, and therefore
  $$y(v) = -g_L'(u) > -g_i'(u) \ge f\big(u,0,g_i(u)\big) = f\big(\Pi(v)\big).$$
  That is, $v\in \Gamma^+$.

  \begin{figure}[h]\label{fig:two horizontal curves and a line}
  \begin{centering}
    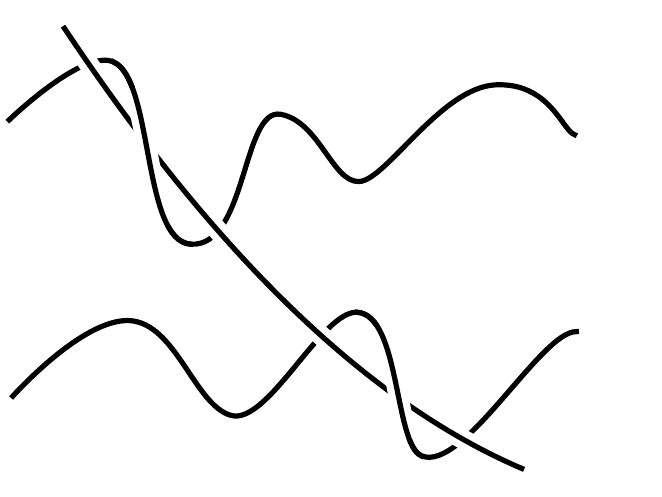
  \end{centering}
  \caption{
    Two characteristic curves $g_1$ and $g_2$ and a horizontal line $L$, projected to $V_0$; the positive $y$--axis points toward the reader.  Since $\Pi(L)$ crosses $U_2$ negatively, the segments of $L$ at the first and last crossings  lie in $\Gamma^+$, so the size of the intersection $L\cap \Gamma^-$ is bounded by $\wwidehat{\omega}^P_{\Gamma^+, 4}(U_2, LZ^t)$.
  }
\end{figure}

  Since $\Pi(L)$ is transverse to the boundary of $U_2$, the intersection $\Pi(L)\cap U_2$ consists of a collection of intervals.  Let $[a_1,b_1],\dots, [a_n,b_n]\subset \R$ be the disjoint intervals such that
  $$x(\Pi(L)\cap U_2)=[a_1,b_1]\cup \dots \cup [a_n,b_n],$$
  and these intervals are in ascending order.  The projection $\Pi(L)$ does not intersect the left or right boundary of $U_2$, so $\Pi(L)$ crosses the graph of $g_1$ or $g_2$ at each $a_i$ or $b_i$.  Since $g_L$ is decreasing and $\sup g_1([-2,2]) < \inf g_2([-2,2])$, the crossings of $g_2$ all have smaller $x$--coordinate than the crossings of $g_1$.

  Consider  $S=x(L \cap \Gamma^+)$.  Since $\Pi(L)$ crosses the graph of $g_2$ negatively at $a_1$ and crosses the graph of $g_1$ negatively at $b_n$, the argument above implies that $a_1,b_n\in S$.  Furthermore, for each $i\in \n$, one of three cases holds.
  \begin{enumerate}
  \item $\Pi(L)$ crosses the graph of $g_2$ negatively at $a_i$ and positively (from bottom to top) at $b_i$.
  \item $\Pi(L)$ crosses the graph of $g_2$ negatively at $a_i$ and crosses the graph of $g_1$ negatively at $b_i$.
  \item $\Pi(L)$ crosses the graph of $g_1$ positively at $a_i$ and negatively at $b_i$.
  \end{enumerate}
  In each case, $a_i\in S$ or $b_i\in S$.  By Lemma~\ref{lem:BWB extended} (applied with $[a,b]=[a_1,b_n]$),
  $$\widehat{\omega}_{S, 4}([a_i,b_i]) = \widehat{\omega}_{\R\setminus S, 4}([a_i,b_i]) \ge \frac{1}{2} \cH^1(x(\Gamma^-\cap L)\cap [a_i,b_i]).$$
  Summing over $i\in \n$, we find that
  \begin{equation}\label{eq:omega detects large values}
    \widehat{\omega}_{S, 4}\Big(\bigcup_{i=1}^n [a_i,b_i]\Big)=\wwidehat{\omega}^P_{\Gamma^+, 4}(U_2, L) \ge \frac{1}{2} \cH^1\Big(x\big(\Gamma^-\cap L \cap \Pi^{-1}(U_2)\big)\Big).
  \end{equation}
This proves \eqref{eq:omega detects large values 2}.

  Next, let $A=U_1 \cap f^{-1} ([t,\infty))$.  By \eqref{eq:h derivative bound}, $y(\rho_L(s)) < t$ for all $s\in [-2,2]$, so if $\Pi(\rho_L(s))\in A$, then $\rho_L(s)\in \Gamma^-$.  Therefore, by \eqref{eq:omega detects large values 2},
  $$\frac{1}{2} \cH^1\big(x(\Pi(L)\cap A)\big) \le \frac{1}{2} \cH^1\Big(x\big(\Gamma^-\cap L \cap \Pi^{-1}(U_1)\big)\Big) \le \wwidehat{\omega}^P_{\Gamma^+, 4}(U_2,L).$$
  By Fubini's Theorem, for any $y_0$ and $m$ as above,
  $$\frac{1}{2}|A|=\frac{1}{2} \int_\R \cH^1\big(x(L_{(0,y_0,z_0),m} \cap A)\big)\ud z_0
    \le \int_\R \wwidehat{\omega}^P_{\Gamma^+, 4}(U_2,L_{(0,y_0,z_0),m}) \ud z_0.$$
  Therefore, recalling the definition \eqref{eq:define OmegaP} of $\Omega^P$, we have
  \begin{align*}
    \Omega^P_{\Gamma^+,4}(U_2)
    &= \frac{1}{4} \int_{\cL} \wwidehat{\omega}^P_{\Gamma^+, 4}(U_2,L) \ud \cN_P(L)\\
    &\ge \frac{1}{4} \int_{-\frac{t}{12}}^{\frac{t}{12}} \int_{\frac{5t}{12}}^{\frac{7t}{12}} \int_{\R} \wwidehat{\omega}^P_{\Gamma^+, 4}(U_2,L_{(0,y_0,z_0),m}) \ud z_0 \ud y_0 \ud m \\
    &\ge \frac{1}{8} \int_{-\frac{t}{12}}^{\frac{t}{12}} \int_{\frac{5t}{12}}^{\frac{7t}{12}} |A|\ud y_0 \ud m\\
    &=\frac{t^2}{288} |A|.
  \end{align*}
  That is,
  $$|\{v\in U_1\mid f(v)\ge t\}| \lesssim \frac{1}{t^{2}} \Omega^P_{\Gamma^+,4}(U_2).$$
  This proves \eqref{eq:supercharacteristic tail bound}.

  We can show that
  $$|\{v\in U_1\mid f(v)\le -t\}| \lesssim \frac{1}{t^{2}} \Omega^P_{\Gamma^+,4}(U_2).$$
  when $g_1,g_2$ have subcharacteristic graphs by either applying a similar argument or by replacing $\Gamma$, $U_r$, etc. by $s_{1,-1}(\Gamma)$, $s_{1,-1}(U_r)$, etc.
\end{proof}

The desired bound on $\|f\|_{L_1(Q)}$ follows by integrating \eqref{eq:supercharacteristic tail bound} with respect to $t$.

\begin{lemma}\label{lem:paramonotone is L1 bounded}
  Let $f\from V_0\to \R$ be a continuous function and let $\Gamma$ be its intrinsic graph.  Let $(Q, [-1,1]\times \{0\}\times [-1,1])$ be a $\frac{1}{32}$--rectilinear pseudoquad for $\Gamma$. Suppose that $\Omega^P_{\Gamma^+,4}(2Q)\le 1$.  There is a universal constant $\kappa>0$ such that $\|f\|_{L_1(Q)}\le \kappa.$
\end{lemma}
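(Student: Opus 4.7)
The plan is to realize $Q$ as a region of the form $U_1$ in Lemma~\ref{lem:supercharacteristic bounds} and then recover the $L_1$ bound on $f$ by integrating the tail estimate~\eqref{eq:supercharacteristic tail bound} with respect to $t$. The key point that makes this applicable is that the top and bottom boundaries of $Q$ are graphs of functions $g_1,g_2\from \R\to \R$ whose graphs are genuine characteristic curves, i.e., projections of horizontal curves contained in $\Gamma$ itself; in particular, each such curve is simultaneously a supercharacteristic and a subcharacteristic graph for $\Gamma$, so Lemma~\ref{lem:supercharacteristic bounds} applies in both its ``super'' and its ``sub'' form with $g_1,g_2$ serving as the bounding curves.

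First I would verify the hypotheses of Lemma~\ref{lem:supercharacteristic bounds}. Since $(Q,[-1,1]\times\{0\}\times[-1,1])$ is $\frac{1}{32}$--rectilinear, the defining inequality~\eqref{eq:def rectilinear 1 and 2} (applied with the associated parabolic rectangle being the unit square, so $h_1\equiv -1$, $h_2\equiv 1$, $\delta_z=2$, $4I=[-4,4]$) gives $\|g_1+1\|_{L_\infty([-4,4])}\le \tfrac{1}{16}$ and $\|g_2-1\|_{L_\infty([-4,4])}\le \tfrac{1}{16}$, so in particular $\sup g_1([-2,2])<\inf g_2([-2,2])$ and $H\eqdef \max\{\|g_1\|_{L_\infty([-2,2])},\|g_2\|_{L_\infty([-2,2])}\}\le \tfrac{17}{16}$. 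Next, with $U_r=\{(x,0,z)\in V_0\mid |x|\le r,\ g_1(x)\le z\le g_2(x)\}$ for $r\in\{1,2\}$ as in Lemma~\ref{lem:supercharacteristic bounds}, the definition of $U_1$ is exactly $Q$, while $U_2\subset [-2,2]\times\{0\}\times [-\tfrac{17}{16},\tfrac{17}{16}]$ is contained in the parabolic rectangle $2Q=[-2,2]\times\{0\}\times [-4,4]$ by the definition~\eqref{eq:def rho R} of scaled parabolic rectangles. Hence the monotonicity of $\Omega^P_{\Gamma^+,4}$ as a measure yields $\Omega^P_{\Gamma^+,4}(U_2)\le \Omega^P_{\Gamma^+,4}(2Q)\le 1$.

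Applying Lemma~\ref{lem:supercharacteristic bounds} in both directions (to $g_1,g_2$ viewed as supercharacteristic and then as subcharacteristic bounds), we obtain a universal constant $C$ such that
\begin{equation*}
  \forall\, t\ge 8H,\qquad \big|\{v\in Q:\ |f(v)|\ge t\}\big|\le \frac{C}{t^{2}}\,\Omega^P_{\Gamma^+,4}(2Q)\le \frac{C}{t^{2}}.
\end{equation*}
Finally, by the layer-cake representation of $\|f\|_{L_1(Q)}$,
\begin{equation*}
  \|f\|_{L_1(Q)}=\int_0^\infty \big|\{v\in Q:\ |f(v)|\ge t\}\big|\ud t \le 8H\cdot |Q|+\int_{8H}^\infty \frac{C}{t^{2}}\ud t\lesssim H+\frac{C}{8H}\lesssim 1,
\end{equation*}
which gives the desired universal bound $\|f\|_{L_1(Q)}\le \kappa$.

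The routine arithmetic above hides essentially the only content of the argument; the conceptual work has already been done in Lemma~\ref{lem:supercharacteristic bounds}. I do not anticipate any real obstacle, but one item deserving a careful sanity check is the inclusion $U_2\subset 2Q$: this uses only the $L_\infty$ control of $g_1,g_2$ on $[-4,4]$ guaranteed by $\frac{1}{32}$--rectilinearity (which forces $|g_i|\le \tfrac{17}{16}$ on $[-2,2]$, comfortably inside the vertical half-thickness $2$ of $2Q$), so everything should go through with universal constants exactly as claimed.
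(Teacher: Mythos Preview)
Your proposal is correct and follows essentially the same approach as the paper's proof: identify $Q=U_1$ with the characteristic boundaries $g_1,g_2$ serving as both super- and subcharacteristic curves, use $U_2\subset 2Q$ to feed the hypothesis $\Omega^P_{\Gamma^+,4}(2Q)\le 1$ into Lemma~\ref{lem:supercharacteristic bounds}, and integrate the resulting tail bound via layer-cake. The only cosmetic difference is that the paper simply takes $H=2$ rather than your sharper $H\le\tfrac{17}{16}$, and your final chain $8H\cdot|Q|\lesssim H$ implicitly uses $|Q|\lesssim 1$, which is fine since $|Q|\asymp 4$.
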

\begin{proof}
  Let $g_1$ and $g_2$ be the lower and upper bounds of $Q$ and for $0\le r\le 2$, let $U_r$ be as in Lemma~\ref{lem:supercharacteristic bounds}.  Then $Q=U_1$ and $U_2\subset 2Q$.  Let $H=2$.  Since the graphs of $g_1$ and $g_2$ are supercharacteristic and $U_2\subset 2Q$, Lemma~\ref{lem:supercharacteristic bounds} implies that  for any $t\ge 16$,
  $$|\{v\in Q\mid f(v)\ge t\}| \lesssim t^{-2} \Omega^P_{\Gamma^+,4}(U_2)\le t^{-2} \Omega^P_{\Gamma^+,4}(2Q)\le t^{-2}.$$

  Since the graphs of $g_1$ and $g_2$ are also subcharacteristic, for any $t\ge 16$ we also have
  $$|\{v\in U_1\mid f(v)\le -t\}| \lesssim t^{-2}.$$
   Then
  \begin{equation*}\|f\|_{L_1(Q)}=\int_0^\infty |\{v\in Q\mid |f(v)|\ge t\}|\ud t\lesssim 16 |Q|+\int_{16}^\infty t^{-2}\ud t \lesssim 1.\tag*{\qedhere}\end{equation*}
\end{proof}
\subsection{Constructing the approximating plane}

Now we will use Lemma~\ref{lem:paramonotone is L1 bounded} and the results of Section~\ref{sec:extended monotone} to show that if $Q$ is a paramonotone pseudoquad for $\Gamma_f$, then $f$ is close on $Q$ to an affine function with bounded coefficients.

\begin{lemma}\label{lem:paramonotone approximate plane small} 
  Let $\kappa>0$ be the constant in Lemma~\ref{lem:paramonotone is L1 bounded}, and let $C=4\kappa$.  For any $0<\epsilon<1$ and $r \ge 2 \kappa + 6$, there are $0<\eta<\frac{1}{2}$ and $R>0$ with the following property.

  Let $\Gamma=\Gamma_f$ be an intrinsic graph such that $(Q, [-1,1]\times \{0\}\times [-1,1])$ is a $\frac{1}{32}$--rectilinear pseudoquad for $\Gamma$.  Let $g_1$ and $g_2$ be the lower and upper bounds of $Q$, respectively.  If $Q$ is $(\eta, R)$--paramonotone on $r Q$, then there is a vertical plane $P\subset \H$ such that
  \begin{equation}\label{eq:plane symdiff close}
    \cH^4\Big(\overline{B}_{r}\cap \big(P^+\symdiff \Gamma^+\big)\Big)<\epsilon.
  \end{equation}
  Moreover, $P$ is the graph of an an affine function $F\from V_0\to \R$ of the form $F(w)=a+bx(w)$, whose coefficients satisfy $\max \{|a|,|b|\}\le C$.
\end{lemma}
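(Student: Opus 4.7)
The plan is to derive the lemma from Proposition~\ref{prop:stability of extended monotone}, with Lemma~\ref{lem:paramonotone is L1 bounded} supplying pointwise control on the coefficients of the resulting affine function. First I would apply Lemma~\ref{lem:paramonotone is L1 bounded}: in the normalized setting $\delta_x(Q)=2$, $\alpha(Q)=\sqrt 2$, $|Q|\asymp 4$, the paramonotonicity hypothesis combined with the monotonicity of $\Omega^P$ in its scale parameter forces $\Omega^P_{\Gamma^+,4}(2Q)\le 1$ provided $R\ge 2$ and $\eta$ is small, so we obtain $\|f\|_{L_1(Q)}\le \kappa$.

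Next, using the bounds $|x|,|y|\le r$ (from $|\uppi(v)|\le d(\mathbf0,v)$) and the sharper estimate on $|z|$ on $\overline{B}_r$ coming from the exact distance formula of~\cite{Bla03} on the center of $\H$, I would verify the inclusion $\Pi(\overline{B}_r)\subset [-r,r]\times\{0\}\times[-r^2,r^2]=rQ$. Lemma~\ref{lem:measure comparison} combined with $\Omega^P_{\Gamma^+,2R}(rQ)\lesssim \eta$ then yields $\ENM_{\Gamma^+,2R}(\overline{B}_r)\lesssim R\eta$. A direct calculation using the scaling of $\widehat\omega$ on horizontal lines (by $r^{-1}$) and of $\cN$ on $\cL$ (by $r^{-3}$) shows that $\ENM_{s_{1/r}(\Gamma)^+,\,2R/r}(\overline{B}_1)=r^{-4}\ENM_{\Gamma^+,2R}(\overline{B}_r)\lesssim R\eta/r^4$. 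By Lemma~\ref{lem:automs preserve intrinsic lipschitz}, $s_{1/r}(\Gamma)$ is again intrinsic Lipschitz, so the second (vertical-plane) part of Proposition~\ref{prop:stability of extended monotone}, applied with accuracy $\epsilon/r^4$, produces a vertical plane whose image under $s_r$ is a vertical plane $P$ satisfying \eqref{eq:plane symdiff close}; the choice $R=R(\epsilon,r)$ sufficiently large and $\eta=\eta(\epsilon,r)<\tfrac12$ sufficiently small meets the two hypotheses of that proposition. Since $\Gamma$ is an intrinsic graph over $V_0$, so is $P$, and being vertical forces its defining affine function to have the form $F(w)=a+bx(w)$.

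The main technical obstacle is promoting the volume estimate \eqref{eq:plane symdiff close} to the pointwise bound $|a|,|b|\le 4\kappa$. The strategy is first to find a large subset $G'\subset Q$ on which $F$ is pointwise bounded. Set $G=Q\cap\{|f|\le \kappa\}$; Markov's inequality with $\|f\|_{L_1(Q)}\le\kappa$ gives $|G|\ge |Q|-1$. The ball-box inequality \eqref{eq:metric approximation}, combined with the hypothesis $r\ge 2\kappa+6$ and the (universal) lower bound on $\kappa$ coming from Lemma~\ref{lem:paramonotone is L1 bounded}, gives $I(v)\eqdef\{t:vY^t\in\overline{B}_r\}\supset [-(\kappa+3),\kappa+3]$ for every $v\in Q$. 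Then for $v\in G$ with $|F(v)|>\kappa+3$ the set $[\min(f,F),\max(f,F)]\cap I(v)$ has length at least $3$, while for $v\in G$ with $|F(v)|\le \kappa+3$ this set equals $[\min(f,F),\max(f,F)]$ of length $|F-f|$. Integrating via the identity
\[\cH^4\big(\overline{B}_r\cap(P^+\symdiff\Gamma^+)\big)=\int_{V_0}\big|I(v)\cap[\min(f,F),\max(f,F)]\big|\,\ud v,\]
we find $|\{v\in G:|F(v)|>\kappa+3\}|\le \epsilon/3$, and a Markov step at threshold $1$ on the remaining integral produces $G'\subset G$ with $|G'|\ge |Q|-O(\epsilon)$ on which $|F|\le \kappa+1$.

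Finally I would conclude by a slicing argument: let $T_0\subset[-1.1,1.1]$ be the projection of $G'$ to the $x$-axis. Fubini's theorem, combined with the fact that each vertical fiber $G'\cap\{x=x_0\}$ has length bounded by the $z$-extent of $Q$, yields $|T_0|$ strictly greater than $1$, and $|a+bx_0|\le \kappa+1$ for every $x_0\in T_0$. Since the set $\{x_0\in\R:|a+bx_0|\le \kappa+1\}$ is an interval of length $2(\kappa+1)/|b|$ when $b\ne 0$, the bound $|T_0|>1$ forces $|b|\le 2(\kappa+1)\le 4\kappa$. Moreover, because $|T_0|$ exceeds the measure of $[-1.1,1.1]\setminus [-c,c]$ for some small universal constant $c>0$, there exists $x_0\in T_0$ with $|x_0|\le c$, and then $|a|\le |a+bx_0|+|b|\cdot|x_0|\le (\kappa+1)+c|b|\le 4\kappa$, completing the proof.
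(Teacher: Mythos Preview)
Your overall strategy matches the paper's: apply Lemma~\ref{lem:paramonotone is L1 bounded} to get $\|f\|_{L_1(Q)}\le\kappa$, use $\Pi(\overline{B}_r)\subset rQ$ and Lemma~\ref{lem:measure comparison} to bound $\ENM_{\Gamma^+,2R}(\overline{B}_r)$, invoke Proposition~\ref{prop:stability of extended monotone} (after rescaling by $s_{1/r}$, which the paper leaves implicit) to produce the vertical plane $P$, and then argue that the affine function $F(w)=a+bx(w)$ has bounded coefficients. One quantitative slip: your Markov step at threshold $\kappa$ only gives $|G|\ge |Q|-1$, so the subsequent estimate should read $|G'|\ge |Q|-1-O(\epsilon)$, not $|Q|-O(\epsilon)$. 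Fortunately the numerics survive: since $(Q,[-1,1]^2)$ is $\tfrac{1}{32}$--rectilinear one has $|Q|\ge 3.75$ and each vertical fiber has length at most $2+\tfrac18$, so $|T_0|\ge (2.75-O(\epsilon))/2.125>1$, and your slicing argument then yields $|b|<2(\kappa+1)$ and $|a|\le (\kappa+1)+c|b|$ with $c\approx 0.36$, both of which are $\le 4\kappa$ once $\kappa\ge 1$.

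Where you genuinely diverge from the paper is in this last step. The paper avoids Markov and slicing entirely by a truncation trick: let $\bar f,\bar F$ be the pointwise nearest-point projections of $f,F$ to $[-2\kappa,2\kappa]$. Since $r\ge 2\kappa+6$, the intrinsic graphs of $\bar f$ and $\bar F$ over $Q$ lie in $\overline{B}_r$ (indeed $d(\mathbf 0,vY^t)\le d(\mathbf 0,v)+|t|\le 6+2\kappa$ for $v\in Q$, $|t|\le 2\kappa$), so
\[
\|\bar F-\bar f\|_{L_1(Q)}\le \cH^4\bigl(\overline{B}_r\cap(\Gamma_{\bar f}^+\symdiff\Gamma_{\bar F}^+)\bigr)\le \cH^4\bigl(\overline{B}_r\cap(\Gamma_f^+\symdiff\Gamma_F^+)\bigr)<\epsilon,
\]
whence $\|\bar F\|_{L_1(Q)}\le \epsilon+\|f\|_{L_1(Q)}\le 2\kappa$. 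But $F$ is constant in $z$, so if $|a|>2\kappa$ or $|b|>4\kappa$ then $\{v\in Q:|F(v)|>2\kappa\}$ contains a full vertical strip of width $>\tfrac12$ in $[-1,1]\times\{0\}\times[-\tfrac12,\tfrac12]\subset Q$, forcing $\|\bar F\|_{L_1(Q)}>2\kappa$, a contradiction. This is shorter and sidesteps the fiber-length and lower-bound-on-$\kappa$ bookkeeping that your approach needs; your route is correct but more laborious.
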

\begin{proof}
  We have $\delta_x(Q)=2$ and $\alpha(Q)=\sqrt{2}$. Also, $2\le |Q|\le 6$. Hence, recalling \eqref{eq:def paramonotone}, if $\Gamma$ is $(\eta, R)$--paramonotone on $rQ$, then assuming $R\ge 2$ and $\eta R<1$ we have
  $$\Omega^P_{\Gamma^+,4}(2Q)\le \frac{R}{2}\Omega^P_{\Gamma^+,2R}(rQ) \le \frac{R}{2}\eta \alpha(Q)^{-4} |Q| \le R\eta<1,$$
  so by Lemma~\ref{lem:paramonotone is L1 bounded} we have $\|f\|_{L_1(Q)} < \kappa$.

  Since $\Pi(\overline{B}_{r}) \subset r Q$, \eqref{eq:ENM of paramonotone} implies that
  $$\ENM_{\Gamma^+,2R}(\overline{B}_{r})\lesssim \eta R.$$
  By Proposition~\ref{prop:stability of extended monotone}, when $R$ is sufficiently large and $\eta R$ is sufficiently small, there is a half-space $P^+$ bounded by a vertical plane such that
  $$\cH^4\Big(\overline{B}_{r}\cap \big(P^+\symdiff \Gamma^+\big)\Big)<\epsilon.$$
  If necessary, we may rotate $P$ infinitesimally around the $z$--axis so that it is not perpendicular to $V_0$.  Then $P$ is the graph of an affine function $F\from V_0\to \R$.  Let $a,b\in \R$ be such that  $F(w)=a+b x(w)$ for all $w\in V_0$.

  For all $w\in V_0$, let $\bar{f}(w)$ (respectively $\bar{F}(w)$) be the element of $[-2\kappa,2\kappa]$ that is closest to $f(w)$ (respectively $F(w)$).  Since $r \ge 2\kappa+6$, the intrinsic graphs of $\bar{F}$ and $\bar{f}$ over $Q$ both lie in $\overline{B}_{r}$. Therefore,
  $$\left\|\bar{F}-\bar{f}\right\|_{L_1(Q)}\le \cH^4\left(\overline{B}_{r}\cap (\Gamma_{\bar{f}}^+ \symdiff \Gamma_{\bar{F}}^+)\right) \le \cH^4\left(\overline{B}_{r}\cap (\Gamma_{f}^+ \symdiff \Gamma_{F}^+)\right) \le \epsilon,$$
  and thus
  \begin{equation}\label{eq:barF less 2kappa}
    \left\|\bar{F}\right\|_{L_1(Q)}\le \epsilon + \left\|\bar{f}\right\|_{L_1(Q)} \le \epsilon + \|f\|_{L_1(Q)} \le 2\kappa.
  \end{equation}
  The map $F$ is affine, and $[-1,1]\times\{0\}\times  [-\frac{1}{2},\frac{1}{2}]\subset Q$, so $|\{q\in Q\mid |F(q)|>2\kappa \}| > 1$ if $|a|>2\kappa$ or $|b|>4\kappa$,  which implies that $\|\bar{F}\|_{L_1(Q)}>2\kappa$ in contradiction to \eqref{eq:barF less 2kappa}. So, $\max\{|a|,|b|\}\le 4\kappa$.
\end{proof}

We will next use Lemma~\ref{lem:paramonotone approximate plane small} to construct a new intrinsic Lipschitz graph $\hat{\Gamma}$ that is close to $V_0$ on a ball around $\mathbf{0}$.  Let $0<\epsilon<1$ and $r>0$ be numbers to be chosen later. Let $\eta,R,C,\Gamma,f,Q$ be as in Lemma~\ref{lem:paramonotone approximate plane small},  so that there is a vertical plane $P$ approximating $Q$ that is the graph of an affine function $F(w)=a+bx(w)$ with $\max\{|a|,|b|\}\le C$.

Let $q=q_{a,b}\from \H\to \H$ be the map given by
$$
\forall(x,y,z)\in \H,\qquad q(x,y,z)\eqdef Y^{-a} (x,y-bx,z)=\left(x,y-a-bx,z+\frac{ax}{2}\right).$$
This is a shear map that preserves the $x$--coordinate and sends $P$ to $V_0$.  Let $\hat{q}\from V_0\to V_0$ be the map that $q$ induces on $V_0$, i.e.,
\begin{equation}\label{eq:def q}
 \forall x,z\in \R,\qquad  \hat{q}(x,0,z)=\Pi\big(q(x,0,z)\big)=\left(x,0,z+ax+\frac{b}{2}x^2\right).
\end{equation}
Let $\hat{\Gamma}=q(\Gamma)$ and $\hat{Q}=\hat{q}(Q)$.  By Lemma~\ref{lem:curve transforms}, $\hat{Q}$ is a pseudoquad for $\hat{\Gamma}$ that contains $\mathbf{0}$ and $\hat{\Gamma}=\Gamma_{\hat{f}}$, where $$\hat{f}(v)=f(\hat{q}^{-1}(v))-a-bx(v)=f(\hat{q}^{-1}(v))-F(\hat{q}^{-1}(v)).$$ Since $a,b\in [-C,C]$, there is a universal constant $c>0$ such that for all $s>c^2$,
\begin{equation}\label{eq:hatq d nesting}
  D_{c^{-1}s-c}\subset \hat{q}(D_s)= s\hat{Q} \subset D_{cs+c},
\end{equation}
where we recall $D_s=[-s,s]\times \{0\}\times [-s^2,s^2]$, and
\begin{equation}\label{eq:hatq b nesting}
  B_{c^{-1}s-c}\subset q(B_s)\subset B_{cs+c}.
\end{equation}

Bounds on $\Gamma$ and $Q$ correspond directly to bounds on $\hat{\Gamma}$ and $\hat{Q}$.  For example, shear maps preserve $\cH^4$, so
\begin{equation}\label{eq:compare hat Gamma ch4}
  \cH^4\left(B_{c^{-1}r-c}\cap (V_0^+\symdiff \hat{\Gamma}^+)\right)
  \le \cH^4\left(q(B_{r}) \cap (V_0^+\symdiff \hat{\Gamma}^+)\right)
  = \cH^4\big(B_{r} \cap (P^+\symdiff \Gamma^+)\big)
  <\epsilon.
\end{equation}
In particular, when $r$ is sufficiently large,
\begin{equation}\label{eq:f-F-bulk}
  \|\min \{|f-F|,\frac{r}{2}\}\|_{L_1(10Q)}\le \cH^4\big(B_{r} \cap (P^+\symdiff \Gamma^+)\big) < \epsilon.
\end{equation}
Maps induced by shears preserve the Lebesgue measure $\cH^3$ on $V_0$, so by \eqref{eq:hatq d nesting},
\begin{equation}\label{eq:compare L1 hats}
  \|f-F\|_{L_1(10Q)}=\|\hat{f}\|_{L_1(10\hat{Q})} \le \|\hat{f}\|_{L_1(D_{11c})},
\end{equation}
and by Lemma~\ref{lem:Omega scaling}, $\hat{\Gamma}$ is $(\eta, R)$--paramonotone on $r\hat{Q}$.

\subsection{Bounding $\|f - F\|_{L_1(10Q)}$}
Next, we bound $\|f - F\|_{L_1(10Q)}$.  Lemma~\ref{lem:paramonotone is L1 bounded}, Lemma~\ref{lem:paramonotone approximate plane small}, and \eqref{eq:f-F-bulk} imply that $\|f - F\|_{L_1(Q)}$ and that $\|\min \{|f-F|,\frac{r}{2}\}\|_{L_1(10Q)}$ can be made arbitrarily small. It remains to show that $|f - F|$ does not have large tails on $10Q$. We previously used Lemma~\ref{lem:supercharacteristic bounds} to bound the tails of $f$ on $Q$, but this used the fact that $Q$ is bounded above and below by characteristic curves. We will have to do more work to find supercharacteristic curves above and below $10Q$.
In fact, we will show the following bound on $\hat{f}$, then use \eqref{eq:compare L1 hats} to show a similar bound on $|f-F|$.
\begin{lemma}\label{lem:paramonotone squares L1 close}
  For any $\delta>0$, there is  $\beta=\beta(\delta)>0$ with the following property.  Let $\hat{\Gamma}=\Gamma_{\hat{f}}$ be an intrinsic Lipschitz graph.  Let $\tau>0$ and suppose that
  \begin{equation}\label{eq:paramonotone D1 symdiff condition}
    \cH^4\big(B_{144\tau}\cap (\hat{\Gamma}^+\symdiff V_0^+)\big)<\beta \tau^4,
  \end{equation}
  and that the density of $\Omega^P_{\hat{\Gamma}^+,48\tau}$ on $D_{24\tau}$ is bounded by
  $$\tau^{-3}\Omega^P_{\hat{\Gamma}^+,48\tau}(D_{24\tau})<\beta.$$
  Then $\|\hat{f}\|_{L_1(D_{8\tau})}\le \delta \tau^4$.
\end{lemma}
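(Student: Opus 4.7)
The plan is to split $\|\hat{f}\|_{L_1(D_{8\tau})}$ into a bulk part where $|\hat{f}|\le C$ and a tail where $|\hat{f}|>C$, controlling the former by the measure-closeness hypothesis and the latter by Lemma~\ref{lem:supercharacteristic bounds}. First I would rescale via $s_{1/\tau,1/\tau}$, invoking Lemma~\ref{lem:Omega scaling}, so that I may take $\tau=1$: the hypotheses then read $\cH^4(B_{144}\cap(\hat{\Gamma}^+\symdiff V_0^+))<\beta$ and $\Omega^P_{\hat{\Gamma}^+,48}(D_{24})<\beta$, and the goal becomes $\|\hat{f}\|_{L_1(D_8)}\le\delta$. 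For the bulk, the ball--box inequality~\eqref{eq:metric approximation} would give $d(\mathbf{0},v)\le 40$ for every $v\in D_8$, so the vertical fibre $\{vY^y:|y|\le 100\}$ lies in $B_{144}$; since $\cH^1(\Pi^{-1}(\{v\})\cap(\hat{\Gamma}^+\symdiff V_0^+))=|\hat{f}(v)|$, Fubini yields
\begin{equation*}
  \int_{D_8}\min\bigl(|\hat{f}(v)|,100\bigr)\ud\cH^3(v)\;\le\;\cH^4\bigl(B_{144}\cap(\hat{\Gamma}^+\symdiff V_0^+)\bigr)\;<\;\beta.
\end{equation*}

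For the tail I would apply Lemma~\ref{lem:supercharacteristic bounds} in its rescaled form with $[-16,16]$ in place of $[-2,2]$, which demands a supercharacteristic $g_+\from[-16,16]\to\R$ with $g_+\ge 64$ (lying above $D_8$) and a subcharacteristic $g_-\le -64$ (lying below), both of bounded $L_\infty$-norm $H=O(1)$. I would produce them as projections of horizontal lines drawn from the three-parameter family $L(y_0,z_0,m)\eqdef L_{(0,y_0,z_0),m}$ over a compact region such as $(y_0,z_0,m)\in[5,8]\times[201,210]\times[-\tfrac{1}{16},\tfrac{1}{16}]$: each such line lies in $V_0^+\cap B_{144}$, and its projection $g_L(t)=z_0-y_0 t-mt^2/2$ stays above $D_8$ on $[-16,16]$. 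Since the parametrization $(t,y_0,z_0,m)\mapsto(t,y_0+mt,z_0-y_0 t/2,m)$ has unit Jacobian, Fubini together with the measure-closeness hypothesis would give the averaged bound
\begin{equation*}
\int\cH^1\bigl(L\cap(V_0^+\setminus\hat{\Gamma}^+)\cap\Pi^{-1}(D_{24})\bigr)\ud\cN_P(L)\;\lesssim\;\beta
\end{equation*}
over this family, showing that most lines in it are supercharacteristic in an $L^1$-average sense.

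The hardest step will be upgrading this averaged statement to the pointwise supercharacteristicity demanded by Lemma~\ref{lem:supercharacteristic bounds}, since even a single $t\in[-16,16]$ with $y(\rho_L(t))<\hat{f}(\Pi(\rho_L(t)))$ disqualifies $L$. Here paramonotonicity must enter: any connected component of $L\cap\hat{\Gamma}^-$ of length $\le 48$ whose endpoints project into $[-24,24]$ contributes its length to $\wwidehat{\omega}^P_{\hat{\Gamma}^+,48}(D_{24},L)$, so $\Omega^P_{\hat{\Gamma}^+,48}(D_{24})<\beta$ controls the $\cN_P$-mass of such short bad components, while long components (length $>48$) are already controlled by the Fubini estimate. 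Because $\hat{f}$ is continuous (Lemma~\ref{lem:ilg line distance}), the set of parameters giving a supercharacteristic line is closed in the compact parameter region while the bad set of any particular line is a union of open intervals; I would then use a topological/compactness argument to show that once $\beta$ is sufficiently small these combined controls force the existence of at least one good parameter triple. Were no such parameters to exist, continuity and compactness would supply a uniform positive lower bound on the badness of every line, hence a uniform positive lower bound on the $\cH^4$-measure of the bad region in $\H$, contradicting the smallness of $\beta$. The subcharacteristic line below $D_8$ arises from a reflected argument.

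With $g_\pm$ constructed, Lemma~\ref{lem:supercharacteristic bounds} would give $|\{v\in D_8:|\hat{f}(v)|\ge t\}|\lesssim t^{-2}\beta$ for all $t\ge 8H$. Writing $\|\hat{f}\|_{L_1(D_8)}=\int_0^\infty|\{|\hat{f}|\ge t\}|\ud t$ and splitting at $t=100$ and $t=8H$: the integral on $[0,100]$ is $\le\beta$ by the bulk bound, the integral on $[100,8H]$ is $\le(8H-100)\beta/100=O(\beta)$ (using $|\{|\hat{f}|\ge t\}|\le\beta/100$ for $t\ge 100$), and the integral on $[8H,\infty)$ is bounded by $\int_{8H}^\infty Ct^{-2}\beta\ud t=O(\beta/H)$. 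The total is $O(\beta)$, so choosing $\beta=\beta(\delta)$ sufficiently small would complete the proof.
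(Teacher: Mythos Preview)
Your overall plan matches the paper's: rescale to $\tau=1$, control the bulk via the $\cH^4$ hypothesis, then find super-/subcharacteristic projected lines above and below $D_8$ and invoke Lemma~\ref{lem:supercharacteristic bounds} for the tail. The gap is exactly where you flag it as ``hardest'': upgrading the averaged control to a single \emph{pointwise} supercharacteristic line. Your compactness argument does not close this. Lower-semicontinuity on a compact parameter box does give a positive infimum $\epsilon_0$ for the badness $\cH^1(\{t\in[-16,16]:\rho_L(t)\in\hat{\Gamma}^-\})$ when no line is supercharacteristic, but $\epsilon_0$ depends on $\hat{\Gamma}$, not just on $\beta$. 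Your Fubini estimate then yields only $\epsilon_0\cdot|\text{box}|\lesssim\beta$, i.e.\ $\epsilon_0\lesssim\beta$; and a bad interval of $x$-length $\epsilon_0$ contributes only $\approx\epsilon_0$ to $\wwidehat{\omega}^P$, so the paramonotonicity hypothesis likewise gives only $\epsilon_0\lesssim\beta$---no contradiction. Concretely, a narrow spike of $\hat f$ of height $\gg 8$ and width $\sim\beta$ placed at $x=0$ spoils supercharacteristicity for every line in your family while both hypotheses hold with small $\beta$.

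The paper's fix is to choose the line family so that each line is \emph{forced} to cross from $V_0^+$ to $V_0^-$ inside the window: with $y_0\in[1,2]$, $m\in[-y_0/20,-y_0/21]$, $z_0\in[200,201]$, one has $y(\rho_L(t))>\tfrac{1}{24}$ for $t\le 19$ and $y(\rho_L(t))<-\tfrac{1}{24}$ for $t\ge 22$. The argument is then a trichotomy: either $\Pi(\rho_L([-16,16]))$ is supercharacteristic, or $\int_{-24}^{24}|h_L|>\tfrac{1}{24}$ (with $h_L$ the clipped $\hat f$ along $L$), or else one finds $a\in[-16,16]$ with $\rho_L(a)\in\hat{\Gamma}^-$, points $b_1\in[16,17]$, $b_2\in[18,19]$ with $\rho_L(b_i)\in\hat{\Gamma}^+$, and $c\in[22,23]$ with $\rho_L(c)\in\hat{\Gamma}^-$; Lemma~\ref{lem:omega boundary diam NM} then gives the \emph{universal} bound $\wwidehat{\omega}^P_{\hat{\Gamma}^+,48}(D_{24},L)\ge b_2-b_1\ge 1$. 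The last two classes have $\cN_P$-measure $\lesssim\beta$ by the two hypotheses, so for $\beta$ small the first class is nonempty. Your family keeps $y(\rho_L)\ge 3.5$ on $[-24,24]$, so this forced crossing---the source of the universal lower bound on $\wwidehat{\omega}^P$---is unavailable with your parameter choice.
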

\begin{proof}
  Recall that by Lemma~\ref{lem:Omega scaling}, the density of $\Omega^P_{\hat{\Gamma}^+,48\tau}$ is invariant under scaling, so, after rescaling, it is enough to treat the case $\tau=1$.  Let
  $$\cU\eqdef \left\{L_{(0,y_0,z_0),m}\mid z_0\in [200,201]\ \wedge\  y_0\in [1,2]\ \wedge\  m \in \left[-\frac{y_0}{20}, -\frac{y_0}{21}\right]\right\}.$$
  We claim that there is some $L\in \cU$ such that the segment $\Pi(\rho_L([-16,16]))$ is a supercharacteristic curve above $D_8$.  A similar construction will produce a second supercharacteristic curve below $D_8$, so we can use Lemma~\ref{lem:supercharacteristic bounds} to bound $\hat{f}$ from above.

  We clip $\hat{f}$ between $-24$ and $24$ and call the result $h$; that is, for all $w\in V_0$, let $h(w)$ be the element of $[-24,24]$ that is closest to $\hat{f}(w)$.  For $L\in \cL_P$ and $t\in \R$, let $h_L(t)=h(\Pi(\rho_L(t)))$.  Define
  \begin{align*}
  \cU_1&\eqdef \left\{L\in \cU\mid \text{$\Pi(\rho_L([-16,16]))$ is supercharacteristic}\right\}\\
  \cU_2&\eqdef \biggl\{L\in \cU\mid \int_{-24}^{24} |h_L(t)|\ud t>\frac{1}{24}\biggr\}\\
  \cU_3&\eqdef \left\{L\in \cU\mid \wwidehat{\omega}^P_{\hat{\Gamma}^+,48}(D_{24},L)\ge 1\right\}.
  \end{align*}
  We claim that almost every $L\in \cU$ is contained in  $\cU_1\cup\cU_2\cup \cU_3$.

  Let $L\in \cU$ and suppose that $x(L\cap \hat{\Gamma}^+)$ is a subset of $\R$ with locally finite perimeter.  This is true for almost every $L$.  Suppose that $L\not\in \cU_1\cup \cU_2$.  Then $\Pi(\rho_L([-16,16]))$ is not supercharacteristic, so there is some $a\in [-16,16]$ such that $\rho_L(a)\in \hat{\Gamma}^-$.  Let $p$ be the intersection point of $L$ with $V_0$; by our choice of parameters, $x(p)\in [20,21]$.  Also, since $m < -\frac{1}{24}$, we have $y(\rho_L(t)) > \frac{1}{24}$ for $t\le 19$.  Since $L\not \in \cU_2$, there are $b_1\in [16,17]$ and $b_2\in [18,19]$ such that for $i\in \{1,2\}$ we have $h_L(b_i)\le \frac{1}{24} < y(\rho_L(b_i))$ and thus $\rho_L(b_i)\in \hat{\Gamma}^+$.  Similarly, $y(\rho_L(t)) < -\frac{1}{24}$ for all $t\ge 22$, so there is $c \in [22,23]$ such that $h_L(c) > y(\rho_L(c))$  and $\rho_L(c)\in \hat{\Gamma}^-$.  There is an element of $\partial_{\cH^1}x(L\cap \hat{\Gamma}^+)$ in $(a,b_1)$ and another in $(b_2,c)$.  Since $a,b_1,b_2,c\in [-24,24]$, Lemma~\ref{lem:omega boundary diam NM} implies that
  $$\wwidehat{\omega}^P_{\hat{\Gamma}^+,48}(D_{24},L)\ge b_2-b_1\ge 1$$
  and thus $L\in \cU_3$.

  Therefore, $\cU_1\cup \cU_2 \cup \cU_3$ contains all of $\cU$ except a null set.  We will next show that $\cN_P(\cU_2)$ and $\cN_P(\cU_3)$ are bounded by multiples of $\beta$.

  Suppose $L=L_{(0,y_0,z_0),m}$.  As in \eqref{eq:def gL}, let $g_L(t)=z(\Pi(\rho_L(t)))=-\frac{m}{2} t^2- y_0 t + z_0$.  For every $t\in [-24,24]$, we have
  \begin{equation}\label{eq:z bounds on L}
    |g_L(t)-200|\le 1+\frac{m}{2} t^2 + y_0 |t|\le 1+\frac{24^2}{20}+48\le 100,
  \end{equation}
  so $\Pi(\rho_L([-24,24]))\subset D_{24}$.  Furthermore, $D_{24}\subset B_{120}$, so for all $v\in D_{24}$ and $t\in [-24,24]$, we have $vY^t\in B_{144}$.  Thus
  \begin{equation}\label{eq:clipped f bounds}
    \|h\|_{L_1(D_{24})}\le \cH^4\big(B_{144}\cap (\hat{\Gamma}^+\symdiff V_0^+)\big)<\beta.
  \end{equation}
  Therefore, for any $y_0\in [1,2]$ and $m \in \left[-\frac{y_0}{20}, -\frac{y_0}{21}\right]$,
  $$\int_{200}^{201} \int_{-24}^{24} |h_L(t)|\ud t \ud z_0 \le \|h\|_{L_1(D_{24})}<\beta.$$
  It follows that $\{ z_0\in [200,201] \mid L_{(0,y_0,z_0),m}\in \cU_2\}$ has measure at most $24 \beta$ and thus
  $$\cN_P(\cU_2)\le \int_1^2\int_{-\frac{y_0}{20}}^{-\frac{y_0}{21}} 24 \beta \ud m \ud y_0\le 24 \beta.$$
  To bound $\cN_P(\cU_3)$, observe that
  $$\cN_P(\cU_3)\le \int_{\cL_P} \wwidehat{\omega}^P_{\hat{\Gamma}^+,48}(D_{24},L)\ud \cN_P(L)=48 \Omega^P_{\hat{\Gamma}^+,48}(D_{24})<48 \beta.$$

  It follows that if $\beta$ is sufficiently small, then
  $$\cN_P(\cU_1)\ge \cN_P(\cU)-\cN_P(\cU_2)-\cN_P(\cU_3)>0.$$
  Therefore $\cU_1$ is nonempty.  That is, there exists a line $L\in \cU$ with parametrization $\rho_L$ such that $S_2=\Pi(L)\cap \{-16\le x\le 16\}$ is a supercharacteristic curve.  By \eqref{eq:z bounds on L}, $S_2$ is above $D_8$ and $S_2\subset D_{24}$.  By symmetry, there  also exists a line $L'$ and a supercharacteristic curve $S_1=\Pi(L')\cap \{-16\le x\le 16\}$ that lies below $D_8$ and satisfies $S_1\subset D_{24}$.

  By Lemma~\ref{lem:supercharacteristic bounds} applied to a rescaling of $\hat{\Gamma}$, there is some $C>24$ such that for any $t>C$,
  $$|\{v\in D_8\mid \hat{f}(v)\ge t\}| \lesssim t^{-2} \Omega^P_{\hat{\Gamma}^+,48}(D_{24})\le t^{-2} \beta.$$
  Applying another symmetry, the analogous reasoning shows that for any $t>C$,
  $$|\{v\in D_8\mid \hat{f}(v)\le -t\}| \lesssim t^{-2}\beta.$$

  Then, for all sufficiently small $\beta$,
  \begin{align*}
    \|\hat{f}\|_{L_1(D_8)}
    &=\|h\|_{L_1(D_8)}+\int_{24}^\infty \left|\left\{v\in D_8\mid |\hat{f}(v)|\ge t\right\}\right| \ud t\\
    &\lesssim \|h\|_{L_1(D_8)}+C \left|\left\{v\in D_8\mid |\hat{f}(v)|\ge 24\right\}\right| + \int_{C}^\infty t^{-2}\beta\ud t\\
    &\le \beta+C \left|\left\{v\in D_8\mid |\hat{f}(v)|\ge 24\right\}\right|+\beta,
  \end{align*}
  where we use the fact $C>24$ to go from the first line to the second.
  But
  $$\left|\left\{v\in D_8\mid |\hat{f}(v)|\ge 24\right\}\right|=\bigl|\bigl\{v\in D_8\mid |h(v)|= 24\bigr\}\bigr|\le \frac{\|h\|_{L_1(D_8)}}{24}\le \frac{\beta}{24},$$
  so $\|\hat{f}\|_{L_1(D_8)}\lesssim \beta$.  This proves Lemma~\ref{lem:paramonotone squares L1 close}, for $\beta$ at most a constant multiple of $\delta$.
\end{proof}

We will use the following corollary in the proof of Proposition~\ref{prop:Omega control}.
\begin{cor}\label{cor:omega L1 bounds}
  Let $c$ be the universal constant in \eqref{eq:hatq d nesting}--\eqref{eq:compare L1 hats} and let $\kappa$ be the universal constant in Lemma~\ref{lem:paramonotone is L1 bounded}.  Denote
  \begin{align*}
    \tau \eqdef \frac{1}{8} \max\{100, 11c\}\qquad\mathrm{and}\qquad
    r  \eqdef  \max\big\{2 \kappa+6, 144 c \tau +c^2\big \}.
  \end{align*}
  For any $\lambda>0$, there are $\eta, R>0$ with the following property. Let $\Gamma=\Gamma_f$ be an intrinsic Lipschitz graph and $(Q, [-1,1]\times \{0\}\times [-1,1])$ a $\frac{1}{32}$--rectilinear pseudoquad for $\Gamma$. Suppose that $\Gamma$ is $(\eta, R)$--paramonotone on $rQ$, and $P$, $F$, and $\hat{\Gamma}=\Gamma_{\hat{f}}$ are as in Lemma~\ref{lem:paramonotone approximate plane small} and the remarks immediately  after its proof.  Then
  $$\|F-f\|_{L_1(10 Q)} \le \lambda |Q|\qquad \mathrm{and}\qquad \|\hat{f}\|_{L_1(D_{100})} \le \lambda.$$
\end{cor}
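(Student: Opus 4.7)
The plan is to combine Lemma~\ref{lem:paramonotone approximate plane small} (vertical plane approximation in $\cH^4$) with Lemma~\ref{lem:paramonotone squares L1 close} ($L_1$ closeness from $\cH^4$ closeness plus smallness of $\Omega^P$) by threading the output of the first into the hypotheses of the second. The numerical definitions of $\tau$ and $r$ in the statement are tailored precisely to this purpose: the inequalities $8\tau\ge 100$ and $8\tau\ge 11c$ ensure $D_{100},\,D_{11c}\subset D_{8\tau}$ so that the $L_1$ bound at scale $\tau$ descends to both target scales through \eqref{eq:compare L1 hats}, while $r\ge 144c\tau+c^2$ guarantees $B_{144\tau}\subset \hat q(B_r)$ and $D_{24\tau}\subset r\hat Q$ via \eqref{eq:hatq d nesting}--\eqref{eq:hatq b nesting}.

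Given $\lambda>0$, I would first set $\delta:=\lambda/\tau^4$ and let $\beta=\beta(\delta)>0$ be the constant supplied by Lemma~\ref{lem:paramonotone squares L1 close}. I would then invoke Lemma~\ref{lem:paramonotone approximate plane small} with error $\epsilon:=\beta\tau^4$ and the fixed $r$ (permissible since $r\ge 2\kappa+6$), obtaining constants $\eta_1,R_1>0$. After declaring $R:=\max\{R_1,\,24\tau\}$, I would finally set $\eta:=\min\{\eta_1,\,16\beta\tau^4/R\}$. Observe that $\eta$ must be chosen \emph{after} $R$ is fixed, because the $\Omega^P$ estimate below constrains $R\eta$ rather than $\eta$ alone.

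Now suppose $\Gamma$ is $(\eta,R)$--paramonotone on $rQ$. Lemma~\ref{lem:paramonotone approximate plane small} yields the affine function $F(w)=a+bx(w)$ with $|a|,|b|\le C$, and estimate \eqref{eq:compare hat Gamma ch4} combined with $B_{144\tau}\subset B_{c^{-1}r-c}$ supplies the first hypothesis $\cH^4(B_{144\tau}\cap(V_0^+\symdiff \hat\Gamma^+))<\beta\tau^4$ of Lemma~\ref{lem:paramonotone squares L1 close}. By Lemma~\ref{lem:Omega scaling}, $\hat\Gamma$ is also $(\eta,R)$--paramonotone on $r\hat Q$, which unfolds (using $\delta_x(\hat Q)=2$, $\alpha(\hat Q)=\sqrt 2$, $|\hat Q|\le 6$) to $\Omega^P_{\hat\Gamma^+,2R}(r\hat Q)\le 3\eta/2$. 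Since $48\tau\le 2R$ and $D_{24\tau}\subset r\hat Q$, the monotonicity of $\wwidehat\omega^P_{E,R}(U,L)$ in both $U$ and $R$ together with the $1/R$ weighting in \eqref{eq:define OmegaP} gives $48\tau\cdot\Omega^P_{\hat\Gamma^+,48\tau}(D_{24\tau})\le 2R\cdot\Omega^P_{\hat\Gamma^+,2R}(r\hat Q)\le 3R\eta$, and hence
\begin{equation*}
\tau^{-3}\,\Omega^P_{\hat\Gamma^+,48\tau}(D_{24\tau}) \;\le\; \frac{R\eta}{16\tau^4} \;\le\; \beta.
\end{equation*}
Lemma~\ref{lem:paramonotone squares L1 close} then delivers $\|\hat f\|_{L_1(D_{8\tau})}\le \delta\tau^4=\lambda$.

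Both conclusions follow at once. The second is immediate from $D_{100}\subset D_{8\tau}$. The first combines \eqref{eq:compare L1 hats} with $D_{11c}\subset D_{8\tau}$ to give $\|F-f\|_{L_1(10Q)}=\|\hat f\|_{L_1(10\hat Q)}\le \|\hat f\|_{L_1(D_{8\tau})}\le \lambda\le \lambda|Q|$, using $|Q|\ge 2$ as noted in the proof of Lemma~\ref{lem:paramonotone approximate plane small}. There is no genuine obstacle here: the argument is pure parameter bookkeeping, and the only subtlety is the dependence of $\eta$ on $R$ highlighted above.
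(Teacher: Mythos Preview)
Your approach is the same as the paper's: feed Lemma~\ref{lem:paramonotone approximate plane small} into Lemma~\ref{lem:paramonotone squares L1 close} at scale $\tau$, then use \eqref{eq:hatq d nesting}--\eqref{eq:compare L1 hats} to descend. The geometric bookkeeping (the inclusions $B_{144\tau}\subset q(B_r)$, $D_{24\tau}\subset r\hat Q$, $D_{100},D_{11c}\subset D_{8\tau}$) is all correct, and your $\Omega^P$ chain is fine.

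There is one small slip in the parameter choice. You set $\eta:=\min\{\eta_1,\,16\beta\tau^4/R\}$ and then invoke Lemma~\ref{lem:paramonotone approximate plane small} as if $(\eta,R)$--paramonotonicity with $\eta\le\eta_1$ and $R\ge R_1$ implies the conclusion there. But that lemma only guarantees its output for the specific pair $(\eta_1,R_1)$, and the monotonicity $R_1\,\Omega^P_{\hat\Gamma^+,2R_1}\le R\,\Omega^P_{\hat\Gamma^+,2R}$ shows that $(\eta,R)$--paramonotone implies $(\eta R/R_1,\,R_1)$--paramonotone, not $(\eta_1,R_1)$--paramonotone. If $R>R_1$ (which happens when $R_1<24\tau$), your condition $\eta\le\eta_1$ is not enough; you need $\eta R\le\eta_1 R_1$. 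The paper handles exactly this by taking ``$R>R_0$ and $\eta R<\eta_0 R_0$.'' The fix is trivial: replace $\eta_1$ by $\eta_1 R_1/R$ in your minimum. Ironically, you flagged the dependence of $\eta$ on $R$ for the $\Omega^P$ constraint but omitted it for the Lemma~\ref{lem:paramonotone approximate plane small} constraint.
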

\begin{proof}
  Set $\delta = \lambda \tau^{-4}$.  Let $\beta=\beta(\delta)$ be as in Lemma~\ref{lem:paramonotone squares L1 close}.  By Lemma~\ref{lem:paramonotone approximate plane small}, there are $\eta_0$, $R_0$ such that if $\Gamma$ is $(\eta_0, R_0)$--paramonotone on $rQ$, then
  $$\cH^4\left(\overline{B}_{144 c \tau +c^2} \cap (P^+\symdiff \Gamma^+)\right)<\beta \tau^4.$$
  By \eqref{eq:compare hat Gamma ch4}, this implies that
  $$\cH^4\left(\overline{B}_{144 \tau} \cap (V_0^+\symdiff \hat{\Gamma}^+)\right)<\beta \tau^4.$$

  We take $R>R_0$ and $\eta R<\eta_0 R_0$, so that $(\eta, R)$--para\-monotonicity implies $(\eta_0,R_0)$--para\-monotonicity.
  Then, by \eqref{eq:hatq d nesting} and the paramonotonicity of $Q$,
  \begin{multline*}
  \Omega^P_{\hat{\Gamma}^+,48\tau}(D_{24\tau}) \le \frac{R \delta_x(\hat{Q})}{48\tau}\Omega^P_{\hat{\Gamma}^+,R \delta_x(\hat{Q})}(D_{24\tau}) \\\le \frac{R}{24\tau}\Omega^P_{\Gamma^+,R \delta_x(Q)}(D_{24c\tau+c^2}) \le \frac{R}{24\tau} |Q| \eta\alpha(Q)^{-4} \lesssim \eta.\end{multline*}
  If $\eta$ is sufficiently small, then Lemma~\ref{lem:paramonotone squares L1 close} implies that
  $$\|\hat{f}\|_{L_1(D_{\max\{100, 11c\}})} < \lambda.$$
  By \eqref{eq:compare L1 hats}, this implies that $\|F-f\|_{L_1(10 Q)} \le \lambda |Q|$.
\end{proof}

\subsection{Characteristic curves are close to lines}

Finally, in this section we will show  that the characteristic curves of $\hat{\Gamma}$ are close to horizontal lines and prove Proposition~\ref{prop:Omega control}.  The key argument is that when characteristic curves fail to be horizontal, configurations like those in Figure~\ref{fig:horizontal curve and line} produce nonmonotonicity.

  \begin{figure}[h]
  \begin{centering}
    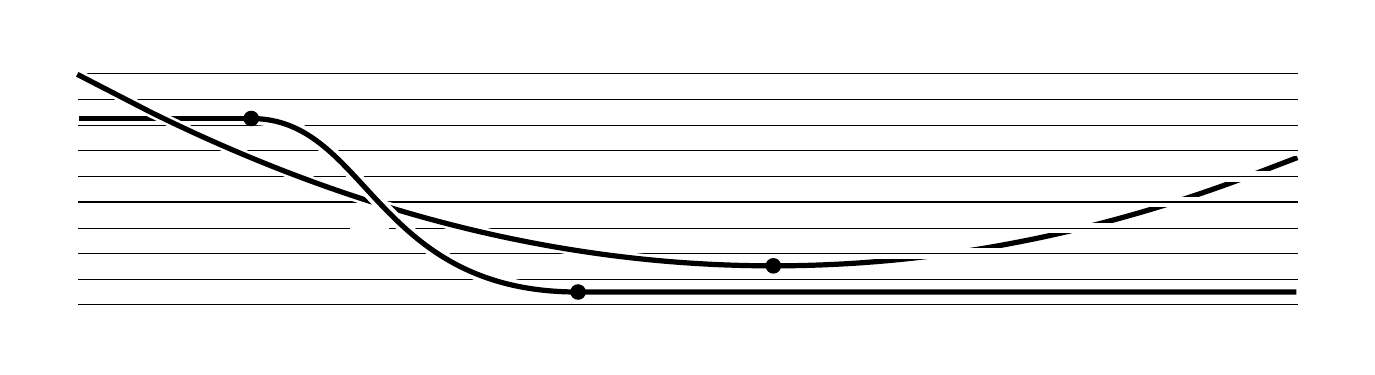
  \end{centering}
  \caption{\label{fig:horizontal curve and line} A characteristic curve $\gamma$ and a horizontal line $L$, projected to $V_0$.  The projection of $L$ crosses $\gamma$ positively at $p$, so $L$ passes behind $\hat{\Gamma}$ at $p$, and $L$ intersects $V_0$ (shown as parallel horizontal lines) at $q$.    If $\hat{f}$ is zero away from $\gamma$, then $L$ intersects $\hat{\Gamma}$ at least three times (twice near $p$ and once at $q$) and the contribution to $\wwidehat{\omega}^P$ is at least $\frac{x(q)-x(p)}{2}$}
\end{figure}

\begin{lemma}\label{lem:close foliation curves squares}
  For any $A>0$, there are $\delta=\delta(A), \theta=\theta(A)>0$ with the following property.  Let $\hat{\Gamma}=\Gamma_{\hat{f}}$ be an intrinsic Lipschitz graph.  Suppose that $$\Omega^P_{\hat{\Gamma}^+,16}(D_{8})< \theta \qquad \mathrm{and}\qquad  \left\|\hat{f}\right\|_{L_1(D_{8})}<\delta.$$  Let $\gamma\from \R\to V_0$ be a characteristic curve through $\mathbf{0}$ and write $\gamma(t)=(t,0,g(t))$ for $t\in \R$.  Then $|g(t)|<A$ for all $t\in [-1,1]$.
\end{lemma}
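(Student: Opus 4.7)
My plan is to argue by contradiction and a compactness-type argument. Suppose the conclusion fails, so that there exist sequences $\delta_n,\theta_n\downarrow 0$, intrinsic Lipschitz graphs $\hat{\Gamma}_n=\Gamma_{\hat f_n}$ satisfying the hypotheses, and characteristic curves $\gamma_n(t)=(t,0,g_n(t))$ through $\mathbf{0}$ with $|g_n(t_n)|\ge A$ for some $t_n\in[-1,1]$. Applying $s_{1,-1}$ if needed, assume $g_n(t_n)\ge A$ with $t_n\in(0,1]$. Since $g_n(0)=0$, the intermediate value theorem gives, for every $a\in[0,A]$, a point $s_n(a)\in(0,t_n]$ with $g_n(s_n(a))=a$, giving us plenty of places where we know the height of $\gamma_n$.

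I would then construct a three-parameter family $\cU_n\subset\cL_P$ of horizontal lines with $\cN_P(\cU_n)\gtrsim_A 1$, uniformly in $n$, modeled on Figure~\ref{fig:horizontal curve and line}. Parametrize lines $L=L_{(0,y_0,z_0),m}\in\cU_n$ by $(y_0,z_0,m)$ in bounded ranges depending only on $A$, chosen so that the projection $\Pi(L)$ is a parabola $g_L(t)=z_0-y_0t-\tfrac{m}{2}t^2$ crossing $\gamma_n$ transversally at a single point $p_L=(x_p,0,g_n(x_p))\in D_8$ with $x_p\in(0,t_n]$, and so that $L$ itself crosses $V_0$ at a distinct point $q_L\in D_8$ with $x(q_L)-x_p\gtrsim_A 1$ and $|x(q_L)-x_p|<16$. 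Such a family exists because $\gamma_n$ traverses vertical height at least $A$ over horizontal span at most $1$, giving a definite cone of parabolas that must intersect $\gamma_n$ transversally in $(0,t_n]$.

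Next, I would show that for $\cN_P$-almost every $L\in\cU_n$, the contribution $\wwidehat{\omega}^P_{\hat{\Gamma}_n^+,16}(D_8,L)$ is bounded below by a constant depending only on $A$. The smallness of $\|\hat f_n\|_{L_1(D_8)}$ implies, via Fubini and Chebyshev, that for all but a $\cN_P$-negligible subfamily, the restriction $\hat f_n\circ\Pi\circ\rho_L$ is small on $[-8,8]$ outside an arbitrarily small neighborhood of $x_p$. Thus, away from $p_L$, the sign of $y(\rho_L(t))$ determines whether $\rho_L(t)\in\hat{\Gamma}_n^+$ or $\hat{\Gamma}_n^-$, producing exactly one boundary point of $x(\hat{\Gamma}_n^+\cap L)$ near $x(q_L)$. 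At $p_L$, comparing the linear function $t\mapsto y(\rho_L(t))$ to the $y$-coordinate of the horizontal lift $\phi_n(t)=\gamma_n(t)Y^{\hat f_n(\gamma_n(t))}$ of $\gamma_n$, the figure's configuration produces two further boundary points close to $x_p$ (a dip of $L$ into the opposite side of $\hat{\Gamma}_n$). The bounded interval of $\cI(x(\hat{\Gamma}_n^+\cap L))$ bridging $p_L$ and $q_L$ has length $\asymp x(q_L)-x_p$, so Lemma~\ref{lem:BWB extended} and the definition~\eqref{eq:define hat OmegaP} give $\wwidehat{\omega}^P_{\hat{\Gamma}_n^+,16}(D_8,L)\gtrsim_A 1$. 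Integrating against $\cN_P$ over $\cU_n$ and dividing by $16$ yields $\Omega^P_{\hat{\Gamma}_n^+,16}(D_8)\gtrsim_A 1$, contradicting $\theta_n\to 0$.

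The main obstacle is the verification that the crossing at $p_L$ creates two boundary points rather than one. This requires analyzing $\phi_n$ in a neighborhood of $p_L$ and matching it with $L$ without access to a pointwise bound on $\partial_{\hat f_n}\hat f_n$ independent of the intrinsic Lipschitz constant of $\hat{\Gamma}_n$, which could be arbitrarily large. The key observation is that the $y$-coordinate of $\phi_n$ at $p_L$ equals $-g_n'(x_p)$, which is typically large in absolute value since $g_n$ climbs by at least $A$ over an interval of length at most $1$, while $y(\rho_L(\cdot))$ varies linearly with bounded slope; together with the $L_1$-smallness of $\hat f_n$ off $\gamma_n$, this forces $L$ to meet $\hat{\Gamma}_n$ twice in any small neighborhood of $p_L$. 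A further technical point is the possible branching of characteristic curves when $\hat f_n$ is only continuous (see \cite{BigolinCaravennaSerraCassano}); this is handled by restricting $\cU_n$ to lines crossing $\gamma_n$ at points of differentiability of $g_n$, which form a full-measure subset of $(0,t_n]$.
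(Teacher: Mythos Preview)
Your approach coincides with the paper's: choose a family of horizontal lines $L$ whose projections $\Pi(L)$ cross $\gamma$ and later cross $V_0$, use the $L_1$ smallness of $\hat f$ along $\Pi(L)$ (via Fubini in the $z_0$ parameter) to locate points where $\rho_L\in\hat\Gamma^+$, and deduce a lower bound on $\wwidehat{\omega}^P_{\hat\Gamma^+,16}(D_8,L)$. The sequence framing is unnecessary --- the paper sets $\delta=A^2/96$, $\theta=A^3/10^5$ and argues the contrapositive directly with the explicit line family $z_0\in(-A/2,0)$, $y_0\in[A/4,A/2]$, $m\in[-y_0/5,-y_0/6]$ --- but it is not wrong.

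Where your exposition stumbles is the ``main obstacle.'' You worry about producing two intersections of $L$ with $\hat\Gamma$ near $p_L$ and about lacking control on $\partial_{\hat f}\hat f$; neither is a genuine difficulty, and in fact only \emph{one} boundary point on the $q$--side of $p_L$ is needed. If $\Pi(L)$ crosses $\gamma$ positively at $x_p$ (i.e.\ $g_L-g$ goes from negative to positive there), then $g_L'(x_p)>g'(x_p)$, hence $y(\rho_L(x_p))=-g_L'(x_p)<-g'(x_p)=\hat f(p_L)$, so $\rho_L(x_p)\in\hat\Gamma^-$ automatically from the crossing direction alone --- no estimate on $|g'(x_p)|$ or the intrinsic Lipschitz constant is invoked anywhere. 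The $L_1$ bound on $\hat f$ along $\Pi(L)$ then yields $b_1\in[1,2]$, $b_2\in[3,4]$ with $\rho_L(b_i)\in\hat\Gamma^+$, and the sign of $y(\rho_L)$ past $q_L$ gives $c\in[7,8]$ with $\rho_L(c)\in\hat\Gamma^-$. Boundary points of $x(L\cap\hat\Gamma^+)$ lie in $(x_p,b_1)$ and $(b_2,c)$, and Lemma~\ref{lem:omega boundary diam NM} (rather than Lemma~\ref{lem:BWB extended}) gives $\wwidehat{\omega}^P_{\hat\Gamma^+,16}(D_8,L)\ge b_2-b_1\ge 1$ directly. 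Your branching concern is also moot: $g$ solves $g'=-\hat f(\cdot,0,g)$ with continuous right-hand side, so it is $C^1$ everywhere.
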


\begin{proof}
 We may suppose that $0<A<1$.  Choose $\delta=\frac{A^2}{96}$ and $\theta=\frac{A^3}{10^5}$.  Our goal is to show  that if $\|\hat{f}\|_{L_1(D_{8})}<\delta$ and if there is  $t_0\in [-1,1]$ with $|g(t_0)|\ge A$, then $\Omega^P_{\hat{\Gamma}^+,16}(D_8)\ge \theta.$
  After applying a symmetry, we may suppose that $t_0>0$ and that $g(t_0)\le -A$, as in Figure~\ref{fig:horizontal curve and line}.

 Take $z_0\in (-\frac{A}{2}, 0)$, $y_0\in [\frac{A}{4}, \frac{A}{2}]$, $m\in [-\frac{y_0}{5}, -\frac{y_0}{6}]$, and $w=(0,y_0,z_0)$.  Let $L=L_{w,m}$.  Suppose that $\Pi(L)$ and $\gamma$ intersect transversally and $L\cap \hat{\Gamma}^-$ has finite perimeter; these hold for almost every tuple $(y_0,z_0,m)$.  We will show that if
  \begin{equation}\label{eq:f L1 bound}
    \int_0^8 \big|\hat{f}\big(t,0,g_L(t)\big)\big| \ud t < \frac{A}{24},
  \end{equation}
  then $\wwidehat{\omega}^P_{\hat{\Gamma}^+,16}(D_8,L)\ge 1$, where $g_L=z(\Pi(\rho_L))$.

  Suppose that \eqref{eq:f L1 bound} holds.  For $t\in [-8,8]$, we have
  $$|g_L(t)|\le |z_0|+\frac{|m|}{2}t^2+|y_0 t|< 1 + \frac{64}{20}+4<64,$$
  so $\Pi(\rho_L([-8,8]))\subset D_8$.  The graphs of $g_L$ and $g$ intersect as depicted in Figure~\ref{fig:horizontal curve and line}.  That is, $g_L(0)=z_0<g(0)$, $g_L$ is decreasing on $[0,5]$, and $g_L(0)-g_L(1)=\frac{m}{2} + y_0<\frac{A}{2}$, so
  $$g_L(t_0) \ge g_L(1)>g_L(0)-\frac{A}{2} > -A\ge g(t_0).$$
  It follows that the graph of $g_L$ crosses $\gamma$ positively at some point $p=(a,0,g(a))$, where $a\in [0,t_0]$.  Since $g$ is characteristic,
  $$\hat{f}\big(a,0,g(a)\big)=-g'(a)>-g_L'(a)=y\big(\rho_L(a)\big),$$
  so $\rho_L(a)\in \hat{\Gamma}^-$.

  Let $q$ be the point where $L$ intersects $V_0$.  Then $x(q)=-\frac{y_0}{m}\in [5,6]$.  Since $m\le -\frac{A}{24}$, we have $y(\rho_L(t))\ge \frac{A}{24}$ for $t\le 4$ and $y(\rho_L(t))\le -\frac{A}{24}$ for $t\ge 7$.  By \eqref{eq:f L1 bound}, there are $b_1\in [1,2]$ and $b_2\in [3,4]$ such that
  $$\hat{f}\big(b_i,0,g_L(b_i)\big)<\frac{A}{24}\le y\big(\rho_L(b_i)\big).$$
  This implies $\rho_L(b_i)\in \hat{\Gamma}^+$.  Similarly, there is  $c\in [7,8]$ such that $y(\rho_L(c)) < \hat{f}(c,0,g_L(c))$ and thus $\rho_L(c)\in \hat{\Gamma}^-$.  There is an element of $\partial_{\cH^1}\left( x(L\cap \hat{\Gamma}^+)\right)$ in $(a,b_1)$ and another in $(b_2,c)$, and by Lemma~\ref{lem:omega boundary diam NM},
  $$\wwidehat{\omega}^P_{\hat{\Gamma}^+,16}(D_{8},L)\ge b_2-b_1\ge 1,$$
  as desired.

  Therefore, for almost every $(m,y_0,z_0)$ as above, regardless of whether~\eqref{eq:f L1 bound} holds,
  \begin{equation}\label{eq:sum version}
  \wwidehat{\omega}^P_{\hat{\Gamma}^+,16}(D_8, L)+\frac{24}{A}\int_0^8 \big|\hat{f}\big(t,0,g_{L}(t)\big)\big| \ud t\ge 1,
  \end{equation}
  since we showed that at least one of the summands on the left hand side of~\eqref{eq:sum version} is at least 1.  By integrating~\eqref{eq:sum version} with respect to $z_0$, we see that for almost every $(m,y_0)$ that satisfy  $y_0\in [\frac{A}{4}, \frac{A}{2}]$ and $m\in [-\frac{y_0}{5}, -\frac{y_0}{6}]$, we have
  \begin{multline*}
    \int_{-\frac{A}{2}}^{0}\wwidehat{\omega}^P_{\hat{\Gamma}^+,16}(D_8, L)\ud z_0 \ge \frac{A}{2} - \frac{24}{A} \int_{-\frac{A}{2}}^{0} \int_0^8 |\hat{f}(x,0,g_L(x))| \ud x \ud z_0 \\ \ge \frac{A}{2}-\frac{24}{A} \|\hat{f}\|_{L_1(D_8)}\ge \frac{A}{2}-\frac{24\delta}{A}=\frac{A}{4}.
  \end{multline*}
  By integrating this bound over $m$ and $y_0$ as above, we conclude as follows.
  \begin{align*}
    \Omega^P_{\hat{\Gamma}^+,16}(D_8)
    \ge \frac{1}{16} \int_{\frac{A}{4}}^{\frac{A}{2}}\int_{-\frac{y_0}{5}}^{-\frac{y_0}{6}}\int_{-\frac{A}{2}}^{0} \wwidehat{\omega}^P_{\hat{\Gamma}^+,16}(D_8, L) \ud z_0\ud m \ud y_0
    \ge  \frac{A^3}{10^5}.\tag*{\qedhere}
  \end{align*}
\end{proof}

Part \ref{it:omega control characteristics} of Proposition~\ref{prop:Omega control} follows from Lemma~\ref{lem:close foliation curves squares}.
\begin{cor}\label{cor:omega char bounds}
  For every $0<\zeta<1$ there are $\delta=\delta(\zeta)>0$ and $\theta=\theta(\zeta)>0$ with the following property.  Let $\hat{\Gamma}=\Gamma_{\hat{f}}$ be an intrinsic Lipschitz graph such that
  $$\Omega^P_{\hat{\Gamma}^+,128}(D_{100})< \theta\qquad\mathrm{and}\qquad \left\|\hat{f}\right\|_{L_1(D_{100})}<\delta.$$
  Let $\hat{Q}$ be a pseudoquad for $\hat{\Gamma}$ with $x(\hat{Q})=[-1,1]$ such that $\mathbf{0}\in \hat{Q}$ and $\delta_z(\hat{Q})=2$.  For $u\in 4 \hat{Q}$, if $g_u\from \R\to \R$ is such that $\{z=g_u(x)\}$ is a characteristic curve for $\hat{\Gamma}$ that passes through $u$, then $\|g-z(u)\|_{L_\infty([-4,4])}\le \zeta$.  That is, $\hat{Q}$ satisfies part \ref{it:omega control characteristics} of Proposition~\ref{prop:Omega control} for $P=V_0$.
\end{cor}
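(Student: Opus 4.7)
The approach is to reduce to Lemma~\ref{lem:close foliation curves squares} by transforming $\hat{\Gamma}$ so the given characteristic curve passes through $\mathbf{0}$ and so that the target interval rescales to $[-1,1]$. Write $u=(x_0,0,z_0)\in 4\hat{Q}$; since $\hat{Q}$ is $\tfrac{1}{32}$-rectilinear with $x(\hat{Q})=[-1,1]$ and $\delta_z(\hat{Q})=2$, one has $|x_0|\le 4$ and $|z_0|\lesssim 1$ (the $z$-center of the approximating parabolic rectangle is quadratic with universally bounded coefficients, and $u$ lies within height $16$ of that center).

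First I would apply the left-translation $\phi_1(v)\eqdef u^{-1}v$. Because $y(u)=0$, Lemma~\ref{lem:v0 transforms} shows that this acts on $V_0$ as the simple affine shift $\hat{\phi}_1(v')=v'-u$, and Lemma~\ref{lem:curve transforms} shows that $\hat{\Gamma}_1\eqdef u^{-1}\hat{\Gamma}$ is the intrinsic graph of $\hat{f}_1(v')=\hat{f}(v'+u)$, with the characteristic curve $(t,0,g_u(t))$ through $u$ sent to the characteristic curve $(s,0,g_1(s))$ through $\mathbf{0}$, where $g_1(s)=g_u(s+x_0)-z_0$. Next I would apply the scaling $\phi_2\eqdef s_{1/8,1/8}$, chosen so that the interval $[-8,8]$ (which contains $[-4,4]$ no matter what $x_0\in[-4,4]$ is) rescales to $[-1,1]$. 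By Lemma~\ref{lem:curve transforms} the further graph $\hat{\Gamma}_2\eqdef \phi_2(\hat{\Gamma}_1)$ is an intrinsic Lipschitz graph (Lemma~\ref{lem:automs preserve intrinsic lipschitz}) and the characteristic curve through $\mathbf{0}$ of $\hat{\Gamma}_2$ is $(s',0,g_2(s'))$ with $g_2(s')=g_1(8s')/64$. Consequently, a bound $|g_2(s')|<\zeta/64$ on $s'\in[-1,1]$ unwinds to $|g_1(s)|<\zeta$ on $[-8,8]$ and hence to $|g_u(t)-z_0|<\zeta$ for $t\in[x_0-8,x_0+8]\supset[-4,4]$, which is the desired estimate.

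It then remains to transfer the hypotheses. Lemma~\ref{lem:Omega scaling} applied to the translation (the case $a=b=1$, which preserves $\Omega^P$) followed by the scaling (the case $a=b=1/8$, which divides $\Omega^P$ by $512$ and the scale parameter by $8$) yields
\[
\Omega^P_{\hat{\Gamma}_2^+,16}(D_8)=\tfrac{1}{512}\,\Omega^P_{\hat{\Gamma}^+,128}(D_{64}+u)\le \tfrac{1}{512}\,\Omega^P_{\hat{\Gamma}^+,128}(D_{100})<\tfrac{\theta}{512},
\]
where the inclusion $D_{64}+u\subset D_{100}$ follows from $|x_0|\le 4$ and $|z_0|\lesssim 1$. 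A direct change of variables using $\hat{f}_2(x',0,z')=\tfrac{1}{8}\hat{f}_1(8x',0,64z')$ similarly gives
\[
\|\hat{f}_2\|_{L_1(D_8)}=\tfrac{1}{4096}\|\hat{f}\|_{L_1(D_{64}+u)}\le \tfrac{1}{4096}\|\hat{f}\|_{L_1(D_{100})}<\tfrac{\delta}{4096}.
\]
Letting $\delta_*=\delta(\zeta/64)$ and $\theta_*=\theta(\zeta/64)$ be the constants produced by Lemma~\ref{lem:close foliation curves squares} with $A=\zeta/64$, and setting $\delta\eqdef 4096\delta_*$ and $\theta\eqdef 512\theta_*$ in the hypotheses of the corollary, Lemma~\ref{lem:close foliation curves squares} applies to $\hat{\Gamma}_2$ and produces the bound needed to conclude.

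The main obstacle is essentially bookkeeping: choosing the correct scaling factor (the value $8$ is forced, since the interval $[-4,4]$ must be covered even after a shift by $x_0\in[-4,4]$), tracking how $\Omega^P$ and $\|\cdot\|_{L_1}$ transform under the composition of a left-translation and a Heisenberg scaling, and verifying that the pre-image $D_{64}+u$ of $D_8$ still lies inside $D_{100}$. No further geometric ideas are needed beyond those already encoded in Lemmas~\ref{lem:curve transforms}, \ref{lem:Omega scaling}, and~\ref{lem:close foliation curves squares}. The final identification of $P=V_0$ in part~\ref{it:omega control characteristics} of Proposition~\ref{prop:Omega control} is then automatic, since the characteristic curve of $V_0=\Gamma_{0}$ through $u$ is precisely the line $\{z=z(u)\}$.
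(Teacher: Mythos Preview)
Your approach is essentially the same as the paper's: translate so that the characteristic curve passes through $\mathbf{0}$, rescale by $s_{1/8,1/8}$, and invoke Lemma~\ref{lem:close foliation curves squares} with $A=\zeta/64$. The transformation formulas you wrote for $\Omega^P$ and for the $L_1$ norm are correct.

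However, there is a genuine gap in your justification of the containment $D_{64}+u\subset D_{100}$. You assert that for $u=(x_0,0,z_0)\in 4\hat{Q}$ one has $|z_0|\lesssim 1$ because ``the $z$-center of the approximating parabolic rectangle is quadratic with universally bounded coefficients.'' This is not given by the hypotheses. The corollary assumes only that $\hat{Q}$ is a $\tfrac{1}{32}$--rectilinear pseudoquad with $x(\hat{Q})=[-1,1]$, $\delta_z(\hat{Q})=2$, and $\mathbf{0}\in\hat{Q}$; nothing here bounds the quadratic or linear coefficients of the approximating parabolas $h_1,h_2$. Those coefficients are tied to the second derivatives of the boundary curves $g_1,g_2$, which in turn depend on the (unspecified) intrinsic Lipschitz constant of $\hat{\Gamma}$. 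Since $\delta$ and $\theta$ must depend only on $\zeta$, you cannot appeal to a Lipschitz bound here.

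The paper breaks this circularity by a two-step argument. First, it proves the characteristic-curve bound for \emph{every} $p\in D_{36}$ (this requires only $D_{64}(p)\subset D_{100}$, which is automatic). Second, it applies this already-established bound to the points $(0,0,g_i(0))\in D_3\subset D_{36}$ to conclude that the boundary curves $g_1,g_2$ of $\hat{Q}$ satisfy $\|g_i-g_i(0)\|_{L_\infty([-8,8])}\le\zeta$; \emph{this} is what forces $4\hat{Q}\subset D_{36}$, after which your argument (equivalently, another application of the first step) goes through for any $u\in 4\hat{Q}$. Your proof is missing precisely this bootstrap.
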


\begin{proof}
  For $p\in V_0$ and $t>0$, denote $D_t(p)=pD_t$.
  Let $A=\frac{\zeta}{64}$ and let $\delta, \theta>0$ be constants satisfying Lemma~\ref{lem:close foliation curves squares} for this choice of $A$.

  Let $p\in D_{36}$ so that $D_{64}(p)\subset D_{100}$.
  Then $\Omega^P_{\hat{\Gamma}^+,8\cdot 16}(D_{8^2}(p)) < \theta$ and $\|\hat{f}\|_{L_1(D_{8^2}(p))} < \delta$, so by Lemma~\ref{lem:Omega scaling}, the rescaling $s_{1/8,1/8}(p^{-1}\hat{\Gamma})$ satisfies Lemma~\ref{lem:close foliation curves squares}. Hence, if $\gamma=\{z=g_p(x)\}$ is a characteristic curve for $\hat{\Gamma}$ that passes through $p$, then $$\|g_p-z(p)\|_{L_\infty([x(p)-8,x(p)+8])}\le 64 A =\zeta.$$

  Let $g_1$ and $g_2$ be the lower and upper bounds of $\hat{Q}$, respectively.  Then $g_1(0)\in [-3,0]$ and $g_2(0)\in [0,3]$, so $\|g_1-g_1(0)\|_{L_\infty([-8,8])}\le \zeta$ and $\|g_2-g_2(0)\|_{L_\infty([-8,8])}\le \zeta$.  Therefore, $4\hat{Q}\subset D_{36}$.  If $u\in 4 \hat{Q}$ and $\{z=g_u(x)\}$ is a characteristic curve, then
  \begin{equation*}
  \|g_u-z(u)\|_{L_\infty([-4,4])} \le \|g_u-z(u)\|_{L_\infty([x(u)-8,x(u)+8])} \le \zeta.\tag*{\qedhere}
  \end{equation*}
\end{proof}

Finally, we combine the results of this section to prove Proposition~\ref{prop:Omega control}.
\begin{proof}[{Proof of Proposition~\ref{prop:Omega control}}]
  By Lemma~\ref{lem:curve transforms} and Lemma~\ref{lem:Omega scaling}, if $Q$ is a pseudoquad of $\Gamma$ and $h$ is a composition of a shear map, a translation, and a stretch map, then $Q$ and $\Gamma$ satisfy Proposition~\ref{prop:Omega control} if and only if $\hat{h}(Q)=\Pi(h(Q))$ and $h(\Gamma)$ do.  So, by Remark~\ref{rem:normalizing rectilinear}, it suffices to prove Proposition~\ref{prop:Omega control}  for rectilinear pseudoquads of the form $(Q,[-1,1]\times \{0\}\times [-1,1])$.

  Let $r$ be as in Corollary~\ref{cor:omega L1 bounds}; we may suppose $r>100$.  Let $\delta=\delta(\zeta),\theta=\theta(\zeta)>0$ as in Corollary~\ref{cor:omega char bounds}.  Then we can choose $R_0=R_0(\lambda,\zeta)>0$ and $\eta_0=\eta_0(\lambda,\zeta)>0$
  so that if $\Gamma$ is $(\eta_0, R_0)$--paramonotone on $rQ$ and $P$, $F$, and $\hat{\Gamma}=\Gamma_{\hat{f}}$ are as above, then
  \begin{equation}
    \|F-f\|_{L_1(10 Q)} \le \lambda |Q| \qquad \mathrm{and} \qquad \|\hat{f}\|_{L_1(D_{100})} \le \delta.\label{eq:final L1 bounds}
  \end{equation}

  Denote $R=\max\{R_0, 128\}$ and $\eta=\min\{\frac{\theta}{R}, \eta_0 \frac{R_0}{R}\}$.  Since $R\ge R_0$, $\eta R \le \eta_0 R_0$, and $\Gamma$ is $(\eta, R)$--paramonotone on $rQ$, it is also $(\eta, R)$--paramonotone, so $Q$ satisfies \eqref{eq:final L1 bounds}, which implies part \ref{it:omega control plane} of Proposition~\ref{prop:Omega control}.  Furthermore,
  $$\Omega^P_{\hat{\Gamma}^+,128}(D_{100})\le \frac{R}{128} \Omega^P_{\hat{\Gamma}^+,R}(rQ) \le \frac{R}{128} |Q| \alpha(Q)^{-4}\eta<\theta.$$
  Thus $\hat{\Gamma}$ satisfies the hypotheses of Corollary~\ref{cor:omega char bounds}, so $\hat{Q}$ satisfies part \ref{it:omega control characteristics} of Proposition~\ref{prop:Omega control}.  As $\hat{Q}$ is the image of $Q$ under a shear map,  part \ref{it:omega control characteristics} of Proposition~\ref{prop:Omega control} holds for $Q$ as well.
\end{proof}

\medskip

\noindent{\bf Acknowledgements.} We thank  Alexandros Eskenazis for a discussion that led to Remark~\ref{rem:J}. We are also grateful to the anonymous referees for their careful reading of this article and their many helpful corrections and suggestions.

Our former colleague Louis Nirenberg passed away as this project was being completed. Over the years, he made significant efforts (partially in collaboration with A.~N.) to answer the question that we resolve here, though in hindsight those attempts were doomed to fail because they aimed to prove~\eqref{eq:our main in intro q version} with $p=2$, which we now know does not hold. His deep mathematical insights,  his contagious joie de vivre, and his kindness are dearly missed.

\bibliographystyle{alphaabbrvprelim}
\bibliography{corona}

\def\cprime{$'$} \def\cprime{$'$} \def\cprime{$'$}
\begin{thebibliography}{GNRS04}
\expandafter\ifx\csname urlstyle\endcsname\relax
  \providecommand{\doi}[1]{doi:\discretionary{}{}{}#1}\else
  \providecommand{\doi}{doi:\discretionary{}{}{}\begingroup
  \urlstyle{rm}\Url}\fi

\bibitem[Amb01]{Amb01}
L.~Ambrosio.
\newblock Some fine properties of sets of finite perimeter in {A}hlfors regular
  metric measure spaces.
\newblock \emph{Adv. Math.}, 159(1):51--67, 2001.

\bibitem[ANT13]{AusNaoTes}
T.~Austin, A.~Naor, and R.~Tessera.
\newblock Sharp quantitative nonembeddability of the {H}eisenberg group into
  superreflexive {B}anach spaces.
\newblock \emph{Groups Geom. Dyn.}, 7(3):497--522, 2013.

\bibitem[ASCV06]{AVSCIntrinsic}
L.~Ambrosio, F.~Serra~Cassano, and D.~Vittone.
\newblock Intrinsic regular hypersurfaces in {H}eisenberg groups.
\newblock \emph{J. Geom. Anal.}, 16(2):187--232, 2006.

\bibitem[Ass83]{Ass83}
P.~Assouad.
\newblock Plongements lipschitziens dans {${\bf R}^{n}$}.
\newblock \emph{Bull. Soc. Math. France}, 111(4):429--448, 1983.

\bibitem[Bad09]{Bad09}
N.~Badr.
\newblock Real interpolation of {S}obolev spaces.
\newblock \emph{Math. Scand.}, 105(2):235--264, 2009.

\bibitem[Bal92]{Bal92}
K.~Ball.
\newblock Markov chains, {R}iesz transforms and {L}ipschitz maps.
\newblock \emph{Geom. Funct. Anal.}, 2(2):137--172, 1992.

\bibitem[Bal13]{Bal13}
K.~Ball.
\newblock The {R}ibe programme.
\newblock \emph{Ast\'erisque}, (352):Exp. No. 1047, viii, 147--159, 2013.
\newblock S{\'e}minaire Bourbaki. Vol. 2011/2012. Expos{\'e}s 1043--1058.

\bibitem[Bau07]{Bau07}
F.~Baudier.
\newblock Metrical characterization of super-reflexivity and linear type of
  {B}anach spaces.
\newblock \emph{Arch. Math. (Basel)}, 89(5):419--429, 2007.

\bibitem[BC05]{BC05}
B.~Brinkman and M.~Charikar.
\newblock On the impossibility of dimension reduction in {$l_1$}.
\newblock \emph{J. ACM}, 52(5):766--788 (electronic), 2005.

\bibitem[BCL94]{BCL94}
K.~Ball, E.~A. Carlen, and E.~H. Lieb.
\newblock Sharp uniform convexity and smoothness inequalities for trace norms.
\newblock \emph{Invent. Math.}, 115(3):463--482, 1994.

\bibitem[BCSC15]{BigolinCaravennaSerraCassano}
F.~Bigolin, L.~Caravenna, and F.~Serra~Cassano.
\newblock Intrinsic {L}ipschitz graphs in {H}eisenberg groups and continuous
  solutions of a balance equation.
\newblock \emph{Ann. Inst. H. Poincar\'e Anal. Non Lin\'eaire}, 32(5):925--963,
  2015.

\bibitem[BL76]{BL76}
J.~Bergh and J.~L{\"o}fstr{\"o}m.
\newblock \emph{Interpolation spaces. {A}n introduction}.
\newblock Springer-Verlag, Berlin-New York, 1976.
\newblock Grundlehren der Mathematischen Wissenschaften, No. 223.

\bibitem[Bla03]{Bla03}
S.~Blach{\`e}re.
\newblock Word distance on the discrete {H}eisenberg group.
\newblock \emph{Colloq. Math.}, 95(1):21--36, 2003.

\bibitem[Boc55]{Boc55}
S.~Bochner.
\newblock \emph{Harmonic analysis and the theory of probability}.
\newblock University of California Press, Berkeley and Los Angeles, 1955.

\bibitem[Bou85]{Bou85}
J.~Bourgain.
\newblock On {L}ipschitz embedding of finite metric spaces in {H}ilbert space.
\newblock \emph{Israel J. Math.}, 52(1-2):46--52, 1985.

\bibitem[Bou86]{Bou86}
J.~Bourgain.
\newblock The metrical interpretation of superreflexivity in {B}anach spaces.
\newblock \emph{Israel J. Math.}, 56(2):222--230, 1986.

\bibitem[BR96]{BR96}
A.~Bella\"{\i}che and J.-J. Risler, editors.
\newblock \emph{Sub-{R}iemannian geometry}, volume 144 of \emph{Progress in
  Mathematics}.
\newblock Birkh\"{a}user Verlag, Basel, 1996.
\newblock ISBN 3-7643-5476-3.
\newblock \doi{10.1007/978-3-0348-9210-0}.

\bibitem[CD14]{CD14}
J.~A. Ch\'{a}vez-Dom\'{\i}nguez.
\newblock Lipschitz factorization through subsets of {H}ilbert space.
\newblock \emph{J. Math. Anal. Appl.}, 418(1):344--356, 2014.

\bibitem[CDH10]{CDH10}
I.~Chatterji, C.~Dru\c{t}u, and F.~Haglund.
\newblock Kazhdan and {H}aagerup properties from the median viewpoint.
\newblock \emph{Adv. Math.}, 225(2):882--921, 2010.

\bibitem[CK10a]{CK10}
J.~Cheeger and B.~Kleiner.
\newblock Differentiating maps into {$L^1$}, and the geometry of {BV}
  functions.
\newblock \emph{Ann. of Math. (2)}, 171(2):1347--1385, 2010.

\bibitem[CK10b]{CheegerKleinerMetricDiff}
J.~Cheeger and B.~Kleiner.
\newblock Metric differentiation, monotonicity and maps to {$L\sp 1$}.
\newblock \emph{Invent. Math.}, 182(2):335--370, 2010.

\bibitem[CKN09]{CKN09}
J.~Cheeger, B.~Kleiner, and A.~Naor.
\newblock A {$(\log n)^{\Omega(1)}$} integrality gap for the sparsest cut
  {SDP}.
\newblock In \emph{2009 50th {A}nnual {IEEE} {S}ymposium on {F}oundations of
  {C}omputer {S}cience ({FOCS} 2009)}, pages 555--564. IEEE Computer Soc., Los
  Alamitos, CA, 2009.
\newblock \doi{10.1109/FOCS.2009.47}.

\bibitem[CKN11]{CKN}
J.~Cheeger, B.~Kleiner, and A.~Naor.
\newblock Compression bounds for {L}ipschitz maps from the {H}eisenberg group
  to {$L\sb 1$}.
\newblock \emph{Acta Math.}, 207(2):291--373, 2011.

\bibitem[Cla36]{Cla36}
J.~A. Clarkson.
\newblock Uniformly convex spaces.
\newblock \emph{Trans. Amer. Math. Soc.}, 40(3):396--414, 1936.

\bibitem[Czu17]{Czu17}
A.~Czuro\'{n}.
\newblock Property {$F\ell_q$} implies property {$F\ell_p$} for
  {$1<p<q<\infty$}.
\newblock \emph{Adv. Math.}, 307:715--726, 2017.

\bibitem[DFO20]{DFO20}
D.~{Di Donato}, K.~Fässler, and T.~Orponen.
\newblock Metric rectifiability of $\mathbb{H}$-regular surfaces with
  {H}\"older continuous horizontal normal, 2020.
\newblock Preprint, available at \url{https://arxiv.org/abs/1906.10215}.

\bibitem[DL97]{DL97}
M.~M. Deza and M.~Laurent.
\newblock \emph{Geometry of cuts and metrics}, volume~15 of \emph{Algorithms
  and Combinatorics}.
\newblock Springer-Verlag, Berlin, 1997.
\newblock ISBN 3-540-61611-X.
\newblock \doi{10.1007/978-3-642-04295-9}.

\bibitem[DLP13]{DLP13}
J.~Ding, J.~R. Lee, and Y.~Peres.
\newblock Markov type and threshold embeddings.
\newblock \emph{Geom. Funct. Anal.}, 23(4):1207--1229, 2013.

\bibitem[DS91]{DavidSemmesSingular}
G.~David and S.~Semmes.
\newblock Singular integrals and rectifiable sets in {${\bf R}\sp n$}: {B}eyond
  {L}ipschitz graphs.
\newblock \emph{Ast\'erisque}, (193):152, 1991.

\bibitem[DS93]{DSAnalysis}
G.~David and S.~Semmes.
\newblock \emph{Analysis of and on uniformly rectifiable sets}, volume~38 of
  \emph{Mathematical Surveys and Monographs}.
\newblock American Mathematical Society, Providence, RI, 1993.
\newblock ISBN 0-8218-1537-7.
\newblock \doi{10.1090/surv/038}.

\bibitem[Enf70]{Enf70}
P.~Enflo.
\newblock Uniform structures and square roots in topological groups. {I}, {II}.
\newblock \emph{Israel J. Math. 8 (1970), 230-252; ibid.}, 8:253--272, 1970.

\bibitem[Fig76]{Fig76}
T.~Figiel.
\newblock On the moduli of convexity and smoothness.
\newblock \emph{Studia Math.}, 56(2):121--155, 1976.

\bibitem[FJ09]{FJ09}
J.~D. Farmer and W.~B. Johnson.
\newblock Lipschitz {$p$}-summing operators.
\newblock \emph{Proc. Amer. Math. Soc.}, 137(9):2989--2995, 2009.

\bibitem[FOR20]{FasslerOrponenRigot18}
K.~F\"{a}ssler, T.~Orponen, and S.~Rigot.
\newblock Semmes surfaces and intrinsic {L}ipschitz graphs in the {H}eisenberg
  group.
\newblock \emph{Trans. Amer. Math. Soc.}, 373(8):5957--5996, 2020.

\bibitem[FSC07]{FSSC-AreaFormula}
B.~Franchi, R.~Serapioni, and F.~S. Cassano.
\newblock Regular submanifolds, graphs and area formula in heisenberg groups.
\newblock \emph{Advances in Mathematics}, 211(1):152 -- 203, 2007.

\bibitem[FSSC01]{FSSCRectifiability}
B.~Franchi, R.~Serapioni, and F.~Serra~Cassano.
\newblock Rectifiability and perimeter in the {H}eisenberg group.
\newblock \emph{Math. Ann.}, 321(3):479--531, 2001.

\bibitem[FSSC06]{FSSC06}
B.~Franchi, R.~Serapioni, and F.~Serra~Cassano.
\newblock Intrinsic {L}ipschitz graphs in {H}eisenberg groups.
\newblock \emph{J. Nonlinear Convex Anal.}, 7(3):423--441, 2006.

\bibitem[FSSC11]{FSSCDifferentiability}
B.~Franchi, R.~Serapioni, and F.~Serra~Cassano.
\newblock Differentiability of intrinsic {L}ipschitz functions within
  {H}eisenberg groups.
\newblock \emph{J. Geom. Anal.}, 21(4):1044--1084, 2011.

\bibitem[GNRS04]{GNRS04}
A.~Gupta, I.~Newman, Y.~Rabinovich, and A.~Sinclair.
\newblock Cuts, trees and {$l_1$}-embeddings of graphs.
\newblock \emph{Combinatorica}, 24(2):233--269, 2004.

\bibitem[Gro93]{Gro93}
M.~Gromov.
\newblock Asymptotic invariants of infinite groups.
\newblock In \emph{Geometric group theory, {V}ol. 2 ({S}ussex, 1991)}, volume
  182 of \emph{London Math. Soc. Lecture Note Ser.}, pages 1--295. Cambridge
  Univ. Press, Cambridge, 1993.

\bibitem[Gro96]{Gro96}
M.~Gromov.
\newblock Carnot-{C}arath\'eodory spaces seen from within.
\newblock In \emph{Sub-{R}iemannian geometry}, volume 144 of \emph{Progr.
  Math.}, pages 79--323. Birkh\"auser, Basel, 1996.

\bibitem[Han56]{Han56}
O.~Hanner.
\newblock On the uniform convexity of {$L^p$} and {$l^p$}.
\newblock \emph{Ark. Mat.}, 3:239--244, 1956.

\bibitem[HN19]{HN19}
T.~Hyt\"{o}nen and A.~Naor.
\newblock Heat flow and quantitative differentiation.
\newblock \emph{J. Eur. Math. Soc. (JEMS)}, 21(11):3415--3466, 2019.

\bibitem[Jam78]{Jam78}
R.~C. James.
\newblock Nonreflexive spaces of type {$2$}.
\newblock \emph{Israel J. Math.}, 30(1-2):1--13, 1978.

\bibitem[JL84]{JL84}
W.~B. Johnson and J.~Lindenstrauss.
\newblock Extensions of {L}ipschitz mappings into a {H}ilbert space.
\newblock In \emph{Conference in modern analysis and probability ({N}ew
  {H}aven, {C}onn., 1982)}, volume~26 of \emph{Contemp. Math.}, pages 189--206.
  Amer. Math. Soc., Providence, RI, 1984.
\newblock \doi{10.1090/conm/026/737400}.

\bibitem[JMS09]{JMS09}
W.~B. Johnson, B.~Maurey, and G.~Schechtman.
\newblock Non-linear factorization of linear operators.
\newblock \emph{Bull. Lond. Math. Soc.}, 41(4):663--668, 2009.

\bibitem[JNGV20]{JulNicVit}
A.~Julia, S.~Nicolussi~Golo, and D.~Vittone.
\newblock Area of intrinsic graphs and coarea formula in {C}arnot groups, 2020.
\newblock Available at \url{http://arxiv.org/abs/2004.02520}.

\bibitem[Jon90]{Jon90}
P.~W. Jones.
\newblock Rectifiable sets and the traveling salesman problem.
\newblock \emph{Invent. Math.}, 102(1):1--15, 1990.

\bibitem[JS09]{JS09}
W.~B. Johnson and G.~Schechtman.
\newblock Diamond graphs and super-reflexivity.
\newblock \emph{J. Topol. Anal.}, 1(2):177--189, 2009.

\bibitem[Kal08]{Kal08-survey}
N.~J. Kalton.
\newblock The nonlinear geometry of {B}anach spaces.
\newblock \emph{Rev. Mat. Complut.}, 21(1):7--60, 2008.

\bibitem[Kal12]{Kal12}
N.~J. Kalton.
\newblock The uniform structure of {B}anach spaces.
\newblock \emph{Math. Ann.}, 354(4):1247--1288, 2012.

\bibitem[KP62]{KP61}
M.~I. Kadec and A.~Pe{\l}czy\'{n}ski.
\newblock Bases, lacunary sequences and complemented subspaces in the spaces
  {$L_{p}$}.
\newblock \emph{Studia Math.}, 21:161--176, 1961/62.

\bibitem[KSC04]{KirSC04}
B.~Kirchheim and F.~Serra~Cassano.
\newblock Rectifiability and parameterization of intrinsic regular surfaces in
  the {H}eisenberg group.
\newblock \emph{Ann. Sc. Norm. Super. Pisa Cl. Sci. (5)}, 3(4):871--896, 2004.

\bibitem[Kwa72]{Kwa72}
S.~Kwapie\'{n}.
\newblock Isomorphic characterizations of inner product spaces by orthogonal
  series with vector valued coefficients.
\newblock \emph{Studia Math.}, 44:583--595, 1972.

\bibitem[Laa00]{Laa00}
T.~J. Laakso.
\newblock Ahlfors {$Q$}-regular spaces with arbitrary {$Q>1$} admitting weak
  {P}oincar\'{e} inequality.
\newblock \emph{Geom. Funct. Anal.}, 10(1):111--123, 2000.

\bibitem[Laa02]{Laa02}
T.~J. Laakso.
\newblock Plane with {$A_\infty$}-weighted metric not bi-{L}ipschitz embeddable
  to {${\Bbb R}^N$}.
\newblock \emph{Bull. London Math. Soc.}, 34(6):667--676, 2002.

\bibitem[LMN05]{LMN05}
J.~R. Lee, M.~Mendel, and A.~Naor.
\newblock Metric structures in {$L_1$}: dimension, snowflakes, and average
  distortion.
\newblock \emph{European J. Combin.}, 26(8):1180--1190, 2005.

\bibitem[LN04]{LN04}
J.~R. Lee and A.~Naor.
\newblock Embedding the diamond graph in {$L_p$} and dimension reduction in
  {$L_1$}.
\newblock \emph{Geom. Funct. Anal.}, 14(4):745--747, 2004.

\bibitem[LN05]{LN05}
J.~R. Lee and A.~Naor.
\newblock Extending {L}ipschitz functions via random metric partitions.
\newblock \emph{Invent. Math.}, 160(1):59--95, 2005.

\bibitem[LN06]{LN06}
J.~R. Lee and A.~Naor.
\newblock ${L}_p$ metrics on the {H}eisenberg group and the {G}oemans-{L}inial
  conjecture.
\newblock In \emph{Proceedings of 47th Annual IEEE Symposium on Foundations of
  Computer Science (FOCS 2006)}, pages 99--108. 2006.
\newblock Available at
  \url{https://web.math.princeton.edu/~naor/homepage\%20files/L_pHGL.pdf}.

\bibitem[LN14a]{LN14}
V.~Lafforgue and A.~Naor.
\newblock A doubling subset of {$L_p$} for {$p>2$} that is inherently infinite
  dimensional.
\newblock \emph{Geom. Dedicata}, 172:387--398, 2014.

\bibitem[LN14b]{LafforgueNaor}
V.~Lafforgue and A.~Naor.
\newblock Vertical versus horizontal {P}oincar\'e inequalities on the
  {H}eisenberg group.
\newblock \emph{Israel J. Math.}, 203(1):309--339, 2014.

\bibitem[LNP09]{LNP06}
J.~R. Lee, A.~Naor, and Y.~Peres.
\newblock Trees and {M}arkov convexity.
\newblock \emph{Geom. Funct. Anal.}, 18(5):1609--1659, 2009.

\bibitem[LP68]{LP68}
J.~Lindenstrauss and A.~Pe{\l}czy\'{n}ski.
\newblock Absolutely summing operators in {$L_{p}$}-spaces and their
  applications.
\newblock \emph{Studia Math.}, 29:275--326, 1968.

\bibitem[LP01]{LP01}
U.~Lang and C.~Plaut.
\newblock Bilipschitz embeddings of metric spaces into space forms.
\newblock \emph{Geom. Dedicata}, 87(1-3):285--307, 2001.

\bibitem[LTJ80]{LT80}
D.~R. Lewis and N.~Tomczak-Jaegermann.
\newblock Hilbertian and complemented finite-dimensional subspaces of {B}anach
  lattices and unitary ideals.
\newblock \emph{J. Functional Analysis}, 35(2):165--190, 1980.

\bibitem[Man72]{Man72}
P.~Mankiewicz.
\newblock On {L}ipschitz mappings between {F}r\'{e}chet spaces.
\newblock \emph{Studia Math.}, 41:225--241, 1972.

\bibitem[Mau74]{Mau74}
B.~Maurey.
\newblock \emph{Th\'{e}or\`emes de factorisation pour les op\'{e}rateurs
  lin\'{e}aires \`a valeurs dans les espaces {$L^{p}$}}.
\newblock Soci\'{e}t\'{e} Math\'{e}matique de France, Paris, 1974.
\newblock With an English summary, Ast\'{e}risque, No. 11.

\bibitem[Mau03]{Mau03}
B.~Maurey.
\newblock Type, cotype and {$K$}-convexity.
\newblock In \emph{Handbook of the geometry of {B}anach spaces, {V}ol.\ 2},
  pages 1299--1332. North-Holland, Amsterdam, 2003.
\newblock \doi{10.1016/S1874-5849(03)80037-2}.

\bibitem[MdlS20]{MS20}
A.~Marrakchi and M.~de~la Salle.
\newblock Isometric actions on ${L}_p$-spaces: dependence on the value of $p$,
  2020.
\newblock Available at \url{https://arxiv.org/abs/2001.02490}.

\bibitem[MM16]{MM16}
K.~Makarychev and Y.~Makarychev.
\newblock Metric extension operators, vertex sparsifiers and {L}ipschitz
  extendability.
\newblock \emph{Israel J. Math.}, 212(2):913--959, 2016.

\bibitem[MN04]{MN04}
M.~Mendel and A.~Naor.
\newblock Euclidean quotients of finite metric spaces.
\newblock \emph{Adv. Math.}, 189(2):451--494, 2004.

\bibitem[MN08]{MN-SOCG}
M.~Mendel and A.~Naor.
\newblock Markov convexity and local rigidity of distorted metrics [extended
  abstract].
\newblock In \emph{Computational geometry ({SCG}'08)}, pages 49--58. ACM, New
  York, 2008.
\newblock \doi{10.1145/1377676.1377686}.

\bibitem[MN13a]{MN13}
M.~Mendel and A.~Naor.
\newblock Markov convexity and local rigidity of distorted metrics.
\newblock \emph{J. Eur. Math. Soc. (JEMS)}, 15(1):287--337, 2013.

\bibitem[MN13b]{MN13-bary}
M.~Mendel and A.~Naor.
\newblock Spectral calculus and {L}ipschitz extension for barycentric metric
  spaces.
\newblock \emph{Anal. Geom. Metr. Spaces}, 1:163--199, 2013.

\bibitem[MN14]{MN14}
M.~Mendel and A.~Naor.
\newblock Nonlinear spectral calculus and super-expanders.
\newblock \emph{Publ. Math. Inst. Hautes \'{E}tudes Sci.}, 119:1--95, 2014.

\bibitem[Mon02]{Mon02}
R.~Montgomery.
\newblock \emph{A tour of subriemannian geometries, their geodesics and
  applications}, volume~91 of \emph{Mathematical Surveys and Monographs}.
\newblock American Mathematical Society, Providence, RI, 2002.
\newblock ISBN 0-8218-1391-9.

\bibitem[Mon05]{Mon05}
F.~Montefalcone.
\newblock Some relations among volume, intrinsic perimeter and one-dimensional
  restrictions of {BV} functions in {C}arnot groups.
\newblock \emph{Ann. Sc. Norm. Super. Pisa Cl. Sci. (5)}, 4(1):79--128, 2005.

\bibitem[MSSC10]{MSSCCharacterizations}
P.~Mattila, R.~Serapioni, and F.~Serra~Cassano.
\newblock Characterizations of intrinsic rectifiability in {H}eisenberg groups.
\newblock \emph{Ann. Sc. Norm. Super. Pisa Cl. Sci. (5)}, 9(4):687--723, 2010.

\bibitem[MTX06]{MTX06}
T.~Mart{\'{\i}}nez, J.~L. Torrea, and Q.~Xu.
\newblock Vector-valued {L}ittlewood-{P}aley-{S}tein theory for semigroups.
\newblock \emph{Adv. Math.}, 203(2):430--475, 2006.

\bibitem[Nao10]{Nao10}
A.~Naor.
\newblock {$L_1$} embeddings of the {H}eisenberg group and fast estimation of
  graph isoperimetry.
\newblock In \emph{Proceedings of the {I}nternational {C}ongress of
  {M}athematicians. {V}olume {III}}, pages 1549--1575. Hindustan Book Agency,
  New Delhi, 2010.

\bibitem[Nao12]{Nao12}
A.~Naor.
\newblock An introduction to the {R}ibe program.
\newblock \emph{Jpn. J. Math.}, 7(2):167--233, 2012.

\bibitem[Nao14]{Nao14}
A.~Naor.
\newblock Comparison of metric spectral gaps.
\newblock \emph{Anal. Geom. Metr. Spaces}, 2:1--52, 2014.

\bibitem[Nao18]{Nao18}
A.~Naor.
\newblock Metric dimension reduction: {A} snapshot of the {R}ibe program.
\newblock In \emph{Proceedings of the 2018 {I}nternational {C}ongress of
  {M}athematicians, Rio de Janeiro. {V}olume {I}}, pages 767--846. 2018.

\bibitem[Nao19]{Nao19}
A.~Naor.
\newblock An average {J}ohn theorem, 2019.
\newblock To appear in {\em Geom.\ Topol.}. Available at
  \url{https://arxiv.org/abs/1905.01280}.

\bibitem[NN12]{NN12}
A.~Naor and O.~Neiman.
\newblock Assouad's theorem with dimension independent of the snowflaking.
\newblock \emph{Rev. Mat. Iberoam.}, 28(4):1123--1142, 2012.

\bibitem[NP11]{NP11}
A.~Naor and Y.~Peres.
\newblock {$L_p$} compression, traveling salesmen, and stable walks.
\newblock \emph{Duke Math. J.}, 157(1):53--108, 2011.

\bibitem[NPS18]{NPS18}
A.~Naor, G.~Pisier, and G.~Schechtman.
\newblock Impossibility of dimension reduction in the nuclear norm [extended
  abstract].
\newblock In \emph{Proceedings of the {T}wenty-{N}inth {A}nnual {ACM}-{SIAM}
  {S}ymposium on {D}iscrete {A}lgorithms}, pages 1345--1352. SIAM,
  Philadelphia, PA, 2018.
\newblock \doi{10.1137/1.9781611975031.88}.

\bibitem[NPSS06]{NPSS06}
A.~Naor, Y.~Peres, O.~Schramm, and S.~Sheffield.
\newblock Markov chains in smooth {B}anach spaces and {G}romov-hyperbolic
  metric spaces.
\newblock \emph{Duke Math. J.}, 134(1):165--197, 2006.

\bibitem[NR03]{NR03}
I.~Newman and Y.~Rabinovich.
\newblock A lower bound on the distortion of embedding planar metrics into
  {E}uclidean space.
\newblock \emph{Discrete Comput. Geom.}, 29(1):77--81, 2003.

\bibitem[NY17]{NY-STOC}
A.~Naor and R.~Young.
\newblock The integrality gap of the {G}oemans-{L}inial {SDP} relaxation for
  sparsest cut is at least a constant multiple of {$\sqrt{\log n}$}.
\newblock In \emph{S{TOC}'17---{P}roceedings of the 49th {A}nnual {ACM}
  {SIGACT} {S}ymposium on {T}heory of {C}omputing}, pages 564--575. ACM, New
  York, 2017.

\bibitem[NY18]{NY18}
A.~Naor and R.~Young.
\newblock Vertical perimeter versus horizontal perimeter.
\newblock \emph{Ann. of Math. (2)}, 188(1):171--279, 2018.

\bibitem[Ost12]{Ost12}
M.~I. Ostrovskii.
\newblock Embeddability of locally finite metric spaces into {B}anach spaces is
  finitely determined.
\newblock \emph{Proc. Amer. Math. Soc.}, 140(8):2721--2730, 2012.

\bibitem[Ost13]{Ost13}
M.~I. Ostrovskii.
\newblock \emph{Metric embeddings}, volume~49 of \emph{De Gruyter Studies in
  Mathematics}.
\newblock De Gruyter, Berlin, 2013.
\newblock ISBN 978-3-11-026340-4; 978-3-11-026401-2.
\newblock \doi{10.1515/9783110264012}.
\newblock Bilipschitz and coarse embeddings into Banach spaces.

\bibitem[Pan89]{Pan89}
P.~Pansu.
\newblock M\'etriques de {C}arnot-{C}arath\'eodory et quasiisom\'etries des
  espaces sym\'etriques de rang un.
\newblock \emph{Ann. of Math. (2)}, 129(1):1--60, 1989.

\bibitem[Pan13]{Pan13}
P.~Pansu.
\newblock Difficult\'{e} d'approximation (d'apr\`es {K}hot, {K}indler,
  {M}ossel, {O}'{D}onnell,{$\dots$}).
\newblock \emph{Ast\'{e}risque}, (352):Exp. No. 1045, vii, 83--120, 2013.
\newblock S\'{e}minaire Bourbaki. Vol. 2011/2012. Expos\'{e}s 1043--1058.

\bibitem[Pis75]{Pis75}
G.~Pisier.
\newblock Martingales with values in uniformly convex spaces.
\newblock \emph{Israel J. Math.}, 20(3-4):326--350, 1975.

\bibitem[Pis86a]{Pis86}
G.~Pisier.
\newblock \emph{Factorization of linear operators and geometry of {B}anach
  spaces}, volume~60 of \emph{CBMS Regional Conference Series in Mathematics}.
\newblock Published for the Conference Board of the Mathematical Sciences,
  Washington, DC; by the American Mathematical Society, Providence, RI, 1986.
\newblock ISBN 0-8218-0710-2.
\newblock \doi{10.1090/cbms/060}.

\bibitem[Pis86b]{Pis86-varenna}
G.~Pisier.
\newblock Probabilistic methods in the geometry of {B}anach spaces.
\newblock In \emph{Probability and analysis ({V}arenna, 1985)}, volume 1206 of
  \emph{Lecture Notes in Math.}, pages 167--241. Springer, Berlin, 1986.
\newblock \doi{10.1007/BFb0076302}.

\bibitem[PX87]{PX87}
G.~Pisier and Q.~H. Xu.
\newblock Random series in the real interpolation spaces between the spaces
  {$v_p$}.
\newblock In \emph{Geometrical aspects of functional analysis (1985/86)},
  volume 1267 of \emph{Lecture Notes in Math.}, pages 185--209. Springer,
  Berlin, 1987.
\newblock \doi{10.1007/BFb0078146}.

\bibitem[Rao99]{Rao99}
S.~Rao.
\newblock Small distortion and volume preserving embeddings for planar and
  {E}uclidean metrics.
\newblock In \emph{Proceedings of the {F}ifteenth {A}nnual {S}ymposium on
  {C}omputational {G}eometry ({M}iami {B}each, {FL}, 1999)}, pages 300--306
  (electronic). ACM, New York, 1999.
\newblock \doi{10.1145/304893.304983}.

\bibitem[Rib76]{Rib76}
M.~Ribe.
\newblock On uniformly homeomorphic normed spaces.
\newblock \emph{Ark. Mat.}, 14(2):237--244, 1976.

\bibitem[Rig19]{RigotQuantitative}
S.~Rigot.
\newblock Quantitative notions of rectifiability in the {H}eisenberg groups,
  2019.
\newblock Available at \url{http://arxiv.org/abs/1904.06904}.

\bibitem[Tao19]{Tao19}
T.~Tao.
\newblock Embedding the {H}eisenberg group into a bounded dimensional
  {E}uclidean space with optimal distortion, 2019.
\newblock To appear in {\em Rev. Mat. Iberoam.}, preprint available at
  \url{https://arxiv.org/abs/1811.09223}.

\bibitem[Tes08]{Tes08}
R.~Tessera.
\newblock Quantitative property {A}, {P}oincar\'e inequalities,
  {$L^p$}-compression and {$L^p$}-distortion for metric measure spaces.
\newblock \emph{Geom. Dedicata}, 136:203--220, 2008.

\bibitem[Xie16]{Xie16}
X.~Xie.
\newblock Some examples of quasiisometries of nilpotent {L}ie groups.
\newblock \emph{J. Reine Angew. Math.}, 718:25--38, 2016.

\bibitem[Xu20]{Xu18}
Q.~Xu.
\newblock Vector-valued {L}ittlewood-{P}aley-{S}tein theory for semigroups
  {II}.
\newblock \emph{Int. Math. Res. Not. IMRN}, (21):7769--7791, 2020.

\end{thebibliography}

\appendix
\section{On the implicit dependence on $p$  in~\cite{LafforgueNaor}}\label{sec:littlewood paley}

A version of Theorem~\ref{thm:LN-XYD} was stated in~\cite{LafforgueNaor} with an implicit dependence on the exponent $p$.  In this section, we explain how the arguments in~\cite{LafforgueNaor} can be used to derive the explicit dependence on $p$ that we needed in Section~\ref{sec:dim reduction}.

Let $(E,\|\cdot\|_E)$ be a Banach space and fix $q\in [2,\infty]$. The $q$--uniform convexity constant of $X$, denoted $K_q(E)$, is defined~\cite{Bal92,BCL94} as the infimum over $K\in (0,\infty]$ such that
\begin{equation}\label{def:uniform convexity q}
\forall\, x,y\in E,\qquad \Big(\|x\|_E^q+\frac{1}{K^q}\|y\|_E^q\Big)^{\frac{1}{q}}\le \bigg(\frac12 \|x+y\|_E^q+\frac12\|x-y\|_E^q\bigg)^{\frac{1}{q}}.
\end{equation}

Setting $x=0$ in~\eqref{def:uniform convexity q} shows that necessarily $K\ge 1$. By convexity, \eqref{def:uniform convexity q} always holds  when $K=\infty$ or when $q=\infty$ and $K=1$. Thus, \eqref{def:uniform convexity q} quantifies the extent to which the norm $\|\cdot\|_E$ is strictly convex. An equivalent (but somewhat less convenient to work with) formulation of this fact (see~\cite{Fig76,BCL94}) is that $K_q(E)$ is bounded above and below by universal constant multiples of the infimum over those $C>0$ such that the sharpened triangle inequality $\|u+v\|_E\le 2-C^{-q}\|u-v\|_E^q$ holds for any two unit vectors $u,v\in E$.

Theorem~\ref{thm:LN-XYD} is the special case $E=\R$, $q=2$ and $1<p\le 2$ of the following theorem.

\begin{thm}\label{thm:constants that we get LP} For any $p>1$ and $q\ge 2$, if $(E,\|\cdot\|_E)$ is a Banach space with $K_q(E)<\infty$, then every smooth and compactly supported function  $f\from \H\to E$ satisfies
\begin{equation}\label{eq:quote LN with K and p}
\bigg(\int_0^\infty \|D_{\vv}^tf\|_{L_p(\cH^4;E)}^{\max\{p,q\}}\frac{\ud t}{t}\bigg)^{\frac1{\max\{p,q\}}}\lesssim \max\Big\{(p-1)^{\frac{1}{q}-1},K_{q}(E)\Big\} \|\nabla_\H f\|_{L_p(\cH^4;\ell_p^2(E))},
\end{equation}
where we use the (standard) notation $\nabla_\H f\eqdef(\XX f,\YY f)\in E\times E$ for the horizontal gradient.
\end{thm}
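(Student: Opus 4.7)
The approach is to revisit the Littlewood-Paley-Stein argument of~\cite{LafforgueNaor}, keeping track of every constant as $p\to 1^+$. Let $\mathcal{L}\eqdef -(\XX^2+\YY^2)$ be the sub-Laplacian on $\H$ and let $\{P_s\}_{s\ge 0}=\{e^{-s\mathcal{L}}\}_{s\ge 0}$ be the associated heat semigroup, whose $L_p(\cH^4;E)$-contractivity is standard. The first step is to represent the vertical difference operator $D^t_\vv$ in terms of the semigroup. Since $Z=[\XX,\YY]$ lies in the commutator of the horizontal vector fields, a Baker--Campbell--Hausdorff / heat kernel calculation (performed in~\cite{LafforgueNaor} and implicit already in~\cite{AusNaoTes}) yields a kernel representation of the form $D^t_\vv f=\int_0^\infty \kappa(s/t)\, s\partial_s P_s f\,\frac{ds}{s}$, where $\kappa$ is a fixed smooth rapidly-decreasing profile. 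Freezing $t$ and taking $L_p(\cH^4;E)$ norms dominates $\|D^t_\vv f\|_{L_p(\cH^4;E)}$ pointwise in $t$ by a dilated semigroup $g$-function of $f$.

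The central step is to apply the vector-valued Littlewood-Paley-Stein inequality of Mart\'inez--Torrea--Xu and Pisier: for a symmetric diffusion semigroup on $L_p(\cH^4;E)$ with $E$ of martingale cotype $q$, one has
\begin{equation}\label{eq:LPS-sharp}
\bigg(\int_0^\infty \|s\partial_s P_s g\|_{L_p(\cH^4;E)}^{\max\{p,q\}}\,\frac{ds}{s}\bigg)^{\!\frac{1}{\max\{p,q\}}}\!\lesssim\max\big\{(p-1)^{\frac{1}{q}-1},K_q(E)\big\}\,\|g\|_{L_p(\cH^4;E)}.
\end{equation}
Using Minkowski's inequality (since $\max\{p,q\}\ge p$ allows exchange of $L_p$ and the outer integral after a convolution estimate in the scale variable $t$), \eqref{eq:LPS-sharp} applied to $g=\mathcal{L}^{\frac12}f$ produces exactly the left-hand side of~\eqref{eq:quote LN with K and p}. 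Finally, the vector-valued $L_p$-boundedness of the horizontal Heisenberg Riesz transforms $\XX\mathcal{L}^{-\frac12}$ and $\YY\mathcal{L}^{-\frac12}$ (known for UMD-valued targets, and in particular for $q$-uniformly convex targets) gives
$$\|\mathcal{L}^{\frac12}f\|_{L_p(\cH^4;E)}\lesssim \|\nabla_\H f\|_{L_p(\cH^4;\ell_p^2(E))}$$
with a constant that, as $p\to 1^+$, is at most a universal constant multiple of $(p-1)^{-1}$, which is harmless since $(p-1)^{-1}\le (p-1)^{\frac{1}{q}-1}$ for $q\ge 2$ and $p\in (1,2]$.

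The main obstacle is verifying the sharp blow-up $(p-1)^{\frac{1}{q}-1}$ in~\eqref{eq:LPS-sharp}, since most statements in the literature record only the qualitative bound $C_p\lesssim_p 1$. This is where one must work harder: the plan is to pass through a Rota dilation of $P_s$ to a martingale filtration on an enlarged probability space, apply the martingale cotype-$q$ inequality (with constant $K_q(E)$) to the resulting martingale square function, and then invert the dilation using a vector-valued maximal inequality of Stein type whose $L_p$-norm blows up like $(p-1)^{-1}$. Extracting a $1/q$-th root at the cotype step consumes one power of $(p-1)^{-1/q}$, leaving a residual factor of $(p-1)^{\frac{1}{q}-1}$ multiplied by $K_q(E)$; optimizing between the two summands gives the maximum in~\eqref{eq:LPS-sharp}. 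A parallel treatment was used in~\cite{NY18} to obtain the $p=1$ endpoint in five dimensions, and interpolation between the Mart\'inez--Torrea--Xu estimate and the $L_2$ result of~\cite{AusNaoTes} yields the sharper form stated for $q=2$ in~\eqref{eq:quote LN q-1} and~\eqref{eq:quote NY interpolated}.
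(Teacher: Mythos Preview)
Your proposal has a concrete error and diverges substantially from the paper's argument. The inequality $(p-1)^{-1}\le (p-1)^{\frac{1}{q}-1}$ in your Riesz-transform step is false: for $q\ge 2$ one has $\frac{1}{q}-1\in(-1,-\tfrac12]$, so for $p-1\in(0,1)$ the correct direction is $(p-1)^{-1}\ge (p-1)^{\frac{1}{q}-1}$. The Riesz-transform loss of order $(p-1)^{-1}$ is therefore strictly \emph{worse} than the target $(p-1)^{\frac{1}{q}-1}$, and compounding it with whatever constant the Littlewood--Paley--Stein step produces cannot give~\eqref{eq:quote LN with K and p}. More structurally, the sharp vector-valued Littlewood--Paley--Stein inequality you need, with precisely the constant $\max\{(p-1)^{\frac{1}{q}-1},K_q(E)\}$ for a general symmetric diffusion semigroup, is not available; the paper explicitly leaves the analogous $\mathcal{G}$--function bound with the correct $p$--dependence as an open question, and your Rota-dilation/maximal-inequality sketch does not produce the claimed exponent (the Stein maximal inequality loses $(p-1)^{-1}$, and ``extracting a $1/q$-th root at the cotype step'' does not cancel this).

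The paper avoids both obstacles. It never uses the Heisenberg sub-Laplacian, the Heisenberg heat semigroup, or Heisenberg Riesz transforms. For $p=q$ it substitutes the one-dimensional heat/Poisson $g$--function bound of~\cite{HN19} (with constant $K_q(E)$) into the $z$--variable argument of~\cite{LafforgueNaor}, where the only non-universal constant in that argument is precisely this $g$--function constant. For $p<q$ it reduces to the case $p=q$ by an averaging trick: apply the already-proved $p=q$ inequality with $E$ replaced by the Bochner space $L_p(\cH^4;E)$ to the function $F_M(h)(g)=\beta_M(h)f(gh)$, where $\beta_M$ is a smooth cutoff supported in $B_{M+1}$, and invoke the known bound $K_q\big(L_p(\cH^4;E)\big)\lesssim \max\{(p-1)^{\frac{1}{q}-1},K_q(E)\}$ from~\cite{Nao14}. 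Letting $M\to\infty$ recovers the inequality for $f$. Thus the $p$--dependence enters through the $q$--uniform convexity constant of a Bochner space, not through tracking constants in LPS or maximal inequalities, and no Riesz transform bound is needed at all.
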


Theorem~\ref{thm:constants that we get LP} is due to~\cite{LafforgueNaor}, except that it is stated there with a factor that depends in an unspecified way on $p,q,E$ in place of the quantity $\max\{K_{q}(E),1/(p-1)^{1-1/q}\}$. This is because the proof of~\cite{LafforgueNaor} uses the vector-valued Littlewood--Paley--Stein inequality of~\cite{MTX06}, for which  explicit bounds on the relevant constants were not available in the literature at the time when~\cite{LafforgueNaor} was written. However, such bounds were subsequently derived in~\cite{HN19} (using in part an argument of~\cite{LafforgueNaor} itself), so we will next briefly explain how to obtain Theorem~\ref{thm:constants that we get LP} by incorporating this input into~\cite{LafforgueNaor}.

Let $\{h_t\}_{t>0}$ and $\{p_t\}_{t>0}$ be the heat and Poisson kernels on $\R$, respectively, i.e.,
$$
\forall\, s>0,\qquad h_t(s)\eqdef \frac{1}{2\sqrt{\pi t}} e^{-\frac{s^2}{4t}}\qquad\mathrm{and}\qquad p_t(s)\eqdef \frac{t}{\pi(s^2+t^2)}.
$$
It will be convenient to denote the time derivatives $\frac{\partial}{\partial t} h_t, \frac{\partial}{\partial t} p_t$ by $\dot{h}_t,\dot{p}_t$, respectively, i.e.,
$$
\forall\, s>0,\qquad \dot{h}_t(s)=\frac{s^2-2t}{8\sqrt{\pi}t^{\frac52}}e^{-\frac{s^2}{4t}}\qquad\mathrm{and}\qquad \dot{p}_t(s)=\frac{s^2-t^2}{\pi(s^2+t^2)^2}.
$$
By a straightforward evaluation of the integral in~\eqref{eq:subordination} below, one checks the following standard identity  (semigroup subordination; see e.g.~\cite[Section~4.4]{Boc55}).
\begin{equation}\label{eq:subordination}
\forall\, s>0,\qquad \dot{p}_t(s)=\frac{1}{\sqrt{\pi}}\int_0^\infty \frac{e^{-\frac{t^2}{4u}}}{\sqrt{u}}\dot{h}_u(s)\ud u.
\end{equation}

Fix $\phi\in L_q(\R;E)$ and $p\ge 1$. The following bound holds for any $t>0$.
\begin{align}\label{eq:point-wise LP}
\begin{split}
\big\|t\dot{p}_t*\phi\big\|_{L_q(\R,E)}^p&=2^p \bigg\|\int_0^\infty \frac{te^{-\frac{t^2}{4u}}}{2u\sqrt{\pi u}} u\dot{h}_u*\phi \ud u\bigg\|_{L_q(\R,E)}^p\\ &\le \frac{2^{p-1}t}{\sqrt{\pi}}\int_0^\infty u^{-\frac{3}{2}} e^{-\frac{t^2}{4u}} \|u\dot{h}_u*\phi\|_{L_q(\R;E)}^p \ud u.
\end{split}
\end{align}
The first step of~\eqref{eq:point-wise LP} is the representation~\eqref{eq:subordination}, and the second step of~\eqref{eq:point-wise LP} is Jensen's inequality, because $\int_0^\infty t\exp(-t^2/(4u))/(2u\sqrt{\pi u})\ud u=1$. Integration of~\eqref{eq:point-wise LP} gives
\begin{align}\label{eq:poisson bounded by heat}
\begin{split}
\int_0^\infty \|t\dot{p}_t*\phi\|_{L_q(\R;E)}^p\frac{\ud t}{t}&\le \frac{2^{p-1}}{\sqrt{\pi}}\int_0^\infty \bigg(\int_0^\infty e^{-\frac{t^2}{4u}} \ud t \bigg) u^{-\frac{3}{2}}\|u\dot{h}_u*\phi\|_{L_q(\R;E)}^p \ud u\\&=2^{p-1} \int_0^\infty \|u\dot{h}_u*\phi\|_{L_q(\R;E)}^p\frac{\ud u}{u}.
\end{split}
\end{align}
Now, if $q\ge 2$ and $K_q(E)<\infty$, then it was proved\footnote{\cite{HN19} states~\eqref{eq:quote HT} with the factor $K_q(E)$ in the right hand side replaced by a parameter $\mathfrak{m}_q(E)$ that is called~\cite{Pis86-varenna} the martingale cotype $q$ constant of $E$. There is no need to state the definition of $\mathfrak{m}_q(E)$ here because it will not have a role in the ensuing discussion; it suffices to recall that by the martingale inequality of~\cite{Pis75} we have $\mathfrak{m}_q(E)\lesssim K_q(E)$. So, \eqref{eq:poisson bounded by heat} is a formal consequence of~\cite{HN19}, but the above formulation is essentially (namely, up to $O(1)$--renorming) equivalent to that of~\cite{HN19}. For the reverse direction use the fact that there is a  norm $|||\cdot|||$ on $E$ that satisfies $\|x\|_E\asymp  |||x|||$ for all $x\in E$ and such that $K_q(E,|||\cdot|||)\lesssim \mathfrak{m}_q(E)$. This renorming statement is essentially due to the deep work~\cite{Pis75}, except that it is derived in~\cite{Pis75} with the weaker property  $\|x\|_E\le |||x|||\lesssim \mathfrak{m}_q(E)\|x\|_E$. The existence of such a norm which is $O(1)$--equivalent to $\|\cdot\|_E$  follows by combining~\cite{LNP06} and~\cite{MN13}, though we checked (details omitted) that one could adapt the reasoning in~\cite{Pis75} so as to obtain a proof of this fact which avoids any reference to the nonlinear considerations of~\cite{LNP06,MN13}. Alternatively, Gilles Pisier has recently showed us (private communication) a derivation of this $O(1)$--renorming result from the {\em statement} of~\cite[Theorem~3.1]{Pis75}.} in~\cite{HN19} that
\begin{equation}\label{eq:quote HT}
\bigg(\int_0^\infty \|t\dot{h}_t*\phi\|_{L_q(\R;E)}^q\frac{\ud t}{t}\bigg)^{\frac{1}{q}}\lesssim K_q(E) \|\phi\|_{L_q(\R;E)}.
\end{equation}
In combination with~\eqref{eq:poisson bounded by heat} we therefore see that also
\begin{equation}\label{eq:possin with const}
\bigg(\int_0^\infty \|t\dot{p}_t*\phi\|_{L_q(\R;E)}^q\frac{\ud t}{t}\bigg)^{\frac{1}{q}}\lesssim K_q(E)\|\phi\|_{L_q(\R;E)}.
\end{equation}

\begin{remark} The reason why we passed from the vector-valued Littlewood--Paley--Stein inequality~\eqref{eq:quote HT} for the heat semigroup to its counterpart~\eqref{eq:possin with const}  for the Poisson semigroup is that at the time when~\cite{LN14} was written this was known (with $K_q(E)$ in~\eqref{eq:possin with const} replaced by an unspecified constant factor) for the Poisson semigroup due to~\cite{MTX06}, while the validity of~\eqref{eq:quote HT} was an open question. For this reason, \cite{LN14} worked with the Poisson semigroup, so it is simplest to use~\eqref{eq:possin with const} when we refer below to steps in~\cite{LN14}. However, one could  repeat the reasoning of~\cite{LN14} mutatis mutandis while working directly with the heat semigroup and using~\eqref{eq:quote HT}. The above subordination argument is standard, but we included the quick derivation to verify that the constants are universal.
\end{remark}

The case $p=q$ of Theorem~\ref{thm:constants that we get LP} follows by substituting~\eqref{eq:possin with const} into~\cite{LafforgueNaor}. Specifically, we are asserting  that the implicit constant in~\cite[Theorem~2.1]{LafforgueNaor}  is $O(K_q(E))$ when $p=q$. To check this, note that in the proof of~\cite[Theorem~2.1]{LafforgueNaor} the only loss of a factor that is not a universal constant occurs in~\cite[equation~(18)]{LafforgueNaor}, which  is an instantiation of~\cite[inequality~(15)]{LafforgueNaor}; the latter inequality is the same as~\eqref{eq:possin with const} when $p=q$, except that the constant factor in the right hand side is now specified to be $O(K_q(E))$.

The case $p>q$ of Theorem~\ref{thm:constants that we get LP} follows from the case $p=q$.  When $p>q$, we have $K_{q}(E)\ge K_{p}(E)$ (for justification of this monotonicity, see~\cite{BCL94} or~\cite[Section~6.2]{MN14}) and $(p-1)^{1-1/q}\le (p-1)^{1-1/p}$ (since $p>q\ge 2$), so the constant on the right hand side of~\eqref{eq:quote LN with K and p} increases as $q$ decreases. We thus suppose from now that $1<p<q$.

For $M>1$, let $\beta_M\from \H\to [0,1]$ be a smooth bump function that is $O(1)$--Lipschitz (with respect to the Carnot--Carath\'eodory metric $d$), satisfies $\beta_M(h)=1$ for all $h\in B_M$, and has $\supp(\beta_M)\subset B_{M+1}$.

For  a smooth compactly supported $f\from \H\to E$, consider  $F_M\from \H\to L_p(\cH^4;E)$ given by
\begin{equation}\label{eq:def F bump}
\forall\, g,h\in \H,\qquad F_M(h)(g)\eqdef\beta_M(h)f(gh).
\end{equation}

We have $(q-1)^{\frac{1}{q}-1}\le 1 \le K_q(E)$, so the case $p=q$ of Theorem~\ref{thm:constants that we get LP} with $E$ replaced by  $L_p(\cH^4;E)$ gives
\begin{align}\label{eq:in Bochner space}
\begin{split}
\bigg(\int_0^\infty \|D_{\vv}^tF_M\|_{L_q(\cH^4;L_p(\cH^4;E))}^{q}&\frac{\ud t}{t}\bigg)^{\frac1{q}}\lesssim K_q\big(L_p(\cH^4;E)\big)\|\nabla_\H F_M\|_{L_q(\cH^4;\ell_q^2(L_p(\cH^4;E)))}\\
&\!\!\!\!\lesssim \max\Big\{(p-1)^{\frac{1}{q}-1},K_{q}(E)\Big\}\|\nabla_\H F_M\|_{L_q(\cH^4;\ell_q^2(L_p(\cH^4;E)))},
\end{split}
\end{align}
where the last step uses the fact that, by inequality (4.4) in~\cite{Nao14}\footnote{Formally,  \cite[inequality~(4.4)]{Nao14} is the dual of~\eqref{eq:q modulus of Bochner}; see~\cite[Lemma~5]{BCL94} for the relevant duality.}, we have
\begin{equation}\label{eq:q modulus of Bochner}
K_q\big(L_p(\cH^4;E)\big)\lesssim \max\Big\{(p-1)^{\frac{1}{q}-1},K_{q}(E)\Big\}.
\end{equation}

To bound the final term in~\eqref{eq:in Bochner space} from above, note that by the left invariance of $\nabla_\H$,
$$
\nabla_\H F_M(h)(g)=\big(\XX\beta_M(h)f(gh),\YY\beta_M(h)f(gh)\big)+\beta_M(h)\nabla_\H f(gh).
$$
Hence, for all $h\in \H$,
$$
\|\nabla_\H F_M(h)\|_{\ell_q^2(L_p(\cH^4; E))}\lesssim \|f\|_{L_\infty(\cH^4;E)}\1_{B_{M+1}(\0)\setminus B_M(\0)}(h)+ \|\nabla_\H f\|_{L_p(\cH^4;E)}\1_{B_{M+1}(\0)}(h).
$$
So,
\begin{equation}\label{eq:nabla with bump}
\|\nabla_\H F_M\|_{L_q(\cH^4;\ell_q^2(L_p(\cH^4;E)))}\lesssim M^{\frac{3}{q}} \|f\|_{L_\infty(\cH^4;E)}+M^{\frac{4}{q}}\|\nabla_\H f\|_{L_p(\cH^4;E)}.
\end{equation}

In order to bound the left hand side of~\eqref{eq:in Bochner space} from below, note  that by~\eqref{eq:metric approximation}, if $0<t<\frac{M^2}{16}$ and $h\in B_{M-4\sqrt{t}}(\0)$, then  $hZ^t\in B_M$, and therefore $\beta(h)=\beta(hZ^t)=1$. Hence,
$$
\forall\, h\in B_{M-4\sqrt{t}}(\0),\qquad \|D^t_\vv F_M(h)\|_{L_p(\cH^4;E)}=\|D^t_\vv f\|_{L_p(\cH^4;E)}.
$$
Consequently,
$$
\|D^t_\vv F_M\|_{L_q(\cH^4;L_p(\cH^4;E))}\ge \cH^4\big(B_{M-4\sqrt{t}}(\0)\big)^{\frac{1}{q}}\|D^t_\vv f\|_{L_p(\cH^4;E)}\asymp \Big(M-4\sqrt{t}\Big)^{\frac{4}{q}}\|D^t_\vv f\|_{L_p(\cH^4;E)}.
$$
Hence, for every $0<T<\frac{M}{4}$ we have
$$
\bigg(\int_0^{T^2} \|D_{\vv}^tF_M\|_{L_q(\cH^4;L_p(\cH^4;E))}^{q}\frac{\ud t}{t}\bigg)^{\frac1{q}}\gtrsim (M-4T)^{\frac{4}{q}}\bigg(\int_0^{T^2} \|D_{\vv}^tf\|_{L_p(\cH^4;E)}^{q}\frac{\ud t}{t}\bigg)^{\frac1{q}}.
$$
Combining this with~\eqref{eq:in Bochner space} and~\eqref{eq:nabla with bump}, letting $M\to \infty$ and then $T\to \infty$, gives Theorem~\ref{thm:constants that we get LP}.

\begin{remark} In the setting of the proof of Theorem~\ref{thm:constants that we get LP}, the Hardy--Littlewood--Stein (Poisson semigroup) $\mathcal{G}$--function of a function $\phi\in L_q(\R;E)$ is the function $\mathcal{G}_q(\phi)\from \R\to \R$ that  is defined by
\begin{equation}\label{eq:G function def}
\forall\, x\in E,\qquad \mathcal{G}_q(\phi)(x)\eqdef \bigg(\int_0^\infty \|t\dot{p}_t*\phi(x)\|_{E}^q\frac{\ud t}{t}\bigg)^{\frac{1}{q}}.
\end{equation}
By~\cite{MTX06}, if $K_q(E)<\infty$, then  for every $1<p<\infty$,
\begin{equation}\label{quote MTX for $G$ function}
\|\mathcal{G}_q(\phi)\|_{L_p(\R)}\lesssim_{p,q,K_q(E)} \|\phi\|_{L_p(\R;E)}.
\end{equation}
If the implicit constant  in~\eqref{quote MTX for $G$ function} were $O(\max\{K_{q}(E),1/(p-1)^{1-1/q}\})$  for $1<p<q$ (this is so when $p\ge q$ by~\eqref{eq:possin with const} and Jensen's inequality), then  Theorem~\ref{thm:constants that we get LP} would follow by direct substitution into~\cite{LafforgueNaor} without the need to consider the above averaging argument using the auxiliary function $F_M$ in~\eqref{eq:def F bump}. However, it seems that the interpolation argument~\cite{MTX06} does not yield this dependence. Determining the optimal dependence on  $p,q,K_q(E)$ in the $\mathcal{G}$--function bound~\eqref{quote MTX for $G$ function} remains an interesting open question.

The same question for the heat semigroup variant of~\eqref{quote MTX for $G$ function}, i.e., with $\dot{p}_t$ replaced by $\dot{h}_t$ in~\eqref{eq:G function def}, is  a  bigger mystery. That such an inequality for the vector-valued heat semigroup Hardy--Littlewood--Stein $\mathcal{G}$--function holds with any dependence on $p,q,K_q(E)$ was  established recently in~\cite{Xu18}, but as $p\to 1^+$  the dependence of~\cite{Xu18} seems suboptimal. Obtaining the analogue of~\eqref{eq:quote HT} for the $n$--dimensional heat semigroup (in which case $\phi$ is a mapping from $\R^n$ to $E$) would be very interesting. In~\cite{Xu18}, this is achieved with a constant that is independent of $n$ but has a much worse dependence on $K_q(E)$.
\end{remark}

A substitution of Theorem~\ref{thm:constants that we get LP} into the reasoning of~\cite{LafforgueNaor} yields the following restatement of the nonembedding result of~\cite{LafforgueNaor},  with explicit dependence on $K_q(E)$.

\begin{thm}\label{thm:nonembedding KE} For $q\ge 2$, if $E$ is a Banach space with $K_q(E)<\infty$, then for every $n\in \N$, the word-ball in $\H$ of radius $n$ has $E$--distortion
$$
\cc_E(\mathcal{B}_n)\gtrsim \frac{(\log n)^{\frac{1}{q}}}{K_q(E)}.
$$
\end{thm}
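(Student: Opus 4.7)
The plan is to substitute the explicit constant bound of Theorem~\ref{thm:constants that we get LP} into the Lafforgue--Naor nonembedding scheme while tracking constants throughout. I would begin by applying Theorem~\ref{thm:constants that we get LP} with $p=q$. For $q\ge 2$ we have $(q-1)^{1/q-1}\le 1\le K_q(E)$, so the prefactor $\max\{(p-1)^{1/q-1},K_q(E)\}$ collapses to $O(K_q(E))$, yielding the $E$-valued vertical-versus-horizontal Poincar\'e inequality
\begin{equation}\label{eq:plan continuous}
\bigg(\int_0^\infty \|D_{\vv}^t f\|_{L_q(\cH^4;E)}^q\,\frac{\ud t}{t}\bigg)^{1/q}\lesssim K_q(E)\,\|\nabla_\H f\|_{L_q(\cH^4;\ell_q^2(E))}
\end{equation}
for every smooth compactly supported $f\from \H\to E$.

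Next, I would discretize~\eqref{eq:plan continuous} following the procedure detailed in~\cite[\S 3]{NY18} (which is itself modeled on~\cite{LafforgueNaor}): given $f\from \H_\Z\to E$, extend $f$ to a smooth $\widetilde{f}\from \H\to E$ by convolution against a fixed compactly supported bump, multiply by a Lipschitz cutoff supported on a neighborhood of $\mathcal{B}_{\kappa n}$, and convert the resulting integrals into Riemann sums. All constants introduced at this step---the smoothing kernel, the cutoff, and the Riemann-sum approximation---are universal, independent of $q,E,K_q(E)$. After changing variables $t\mapsto c$, the output is a local discrete inequality of the following form: there is a universal constant $\kappa\ge 5$ such that every $n\in \N$ and every $f\from\H_\Z\to E$ satisfy
\begin{equation}\label{eq:plan local}
\bigg(\sum_{c=1}^{n^2}\frac{1}{c^{q/2+1}}\sum_{h\in \mathcal{B}_n}\|f(hZ^c)-f(h)\|_E^q\bigg)^{\!1/q}\lesssim K_q(E) \bigg(\sum_{h\in \mathcal{B}_{\kappa n}}\big(\|f(hX)-f(h)\|_E^q+\|f(hY)-f(h)\|_E^q\big)\bigg)^{\!1/q}.
\end{equation}

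Finally, I would test~\eqref{eq:plan local} against a bi-Lipschitz embedding $f\from \mathcal{B}_{\kappa n}\hookrightarrow E$ satisfying $d_W(g,h)\le \|f(g)-f(h)\|_E\le D\cdot d_W(g,h)$. By~\eqref{eq:equiv to Kor}, $d_W(hZ^c,h)=d_W(\0,Z^c)\asymp \sqrt{c}$ for $c\ge 1$, so the lower Lipschitz bound forces the left-hand side of~\eqref{eq:plan local} to be at least
\begin{equation*}
\bigg(\sum_{c=1}^{n^2}\frac{|\mathcal{B}_n|\cdot c^{q/2}}{c^{q/2+1}}\bigg)^{1/q}\asymp |\mathcal{B}_n|^{1/q}(\log n)^{1/q},
\end{equation*}
while the upper Lipschitz bound forces the right-hand side to be at most $K_q(E)\cdot D\cdot |\mathcal{B}_{\kappa n}|^{1/q}\asymp K_q(E)\cdot D\cdot |\mathcal{B}_n|^{1/q}$. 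Dividing yields $D\gtrsim (\log n)^{1/q}/K_q(E)$, as required. The main obstacle is the discretization in the second step, but this is a now-standard procedure that has been executed in detail for the $L_4$ case in~\cite[\S 3]{NY18} and for general $L_q$ in~\cite{LafforgueNaor}; the only new input here is the explicit $K_q(E)$ dependence that Theorem~\ref{thm:constants that we get LP} supplies in~\eqref{eq:plan continuous}, which then transfers cleanly through the discretization.
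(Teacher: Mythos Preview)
Your proposal is correct and follows exactly the route the paper indicates: the paper's proof is the single sentence ``A substitution of Theorem~\ref{thm:constants that we get LP} into the reasoning of~\cite{LafforgueNaor} yields\ldots'', and you have spelled out precisely that substitution (taking $p=q$ so the constant collapses to $K_q(E)$, discretizing via the standard scheme of~\cite{LafforgueNaor,NY18}, and testing on a bi-Lipschitz embedding). The only cosmetic point is to remark that the embedding need only be defined on $\mathcal{B}_{\kappa n}$ (with the inequality applied at scale $n/\kappa$), but this is the usual bookkeeping and changes nothing.
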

Since by~\cite{BCL94}, the Schatten--von Neumann trace class $\mathsf{S}_r$ has $K_2(\mathsf{S}_r)=\sqrt{r-1}$ when $1<r\le 2$, Theorem~\ref{thm:nonembedding KE} implies the lower bound on $\cc_{\mathsf{S}_r}(\mathcal{B}_n)$ that we used in Section~\ref{sec:dim reduction} (recall that the behavior as $r\to 1^+$ was important for that application). This also shows that the following question about a possible strengthening of Theorem~\ref{thm:nonembedding KE} would imply the distortion lower bound~\eqref{eq:fourth dist Sp} that we asked about in Section~\ref{sec:dim reduction}. In fact, a positive answer to this question would be a remarkable geometric result, which, as we explained in Section~\ref{sec:dim reduction}, would have strong implications; at present, we do not have sufficient evidence to conjecture that the answer is indeed positive in such great generality.

\begin{question} Can the conclusion of Theorem~\ref{thm:nonembedding KE} be improved to
$
\cc_E(\mathcal{B}_n)\gtrsim \left(\frac{\log n}{K_q(E)}\right)^{\frac{1}{q}}
$?
\end{question}

\end{document}